\newtheorem{theorem}{Theorem}[subsubsection]
\newtheorem{lemma}[theorem]{Lemma}
\newtheorem{prop}[theorem]{Proposition}
\newtheorem{cor}[theorem]{Corollary}
\newtheorem{no-text}[theorem]{}
\newtheorem{addendum}[theorem]{Addendum}
\theoremstyle{definition}
\theoremstyle{remark}
\newtheorem{note}{\bf N}[section]
\numberwithin{equation}{section}
\def\ssize{\scriptstyle}
\def\sssize{\scriptstyle}
\def\rmk{\small}
\def\arr#1#2{\arrow <1.5mm> [0.25,0.75] from #1 to #2}
\def\subfactor{\ \beginpicture
  \setcoordinatesystem units <0.1cm,0.2cm>
  \multiput{} at 0 -.7  2 1 /
  \plot 2 1.2  0 1.2  0 0.2  2 0.2 /
  \plot 2 -.1  0 -0.1 /\endpicture\ }
\def\above{\beginpicture
  \setcoordinatesystem units <0.1cm,0.3cm>
  \multiput{} at 0 0.1  2 1 /
  \plot 0 0  1 0  1 1  2 1 /\endpicture}
\def\car{\beginpicture
  \setcoordinatesystem units <.2cm,.2cm>
  \multiput{} at .5 -.5  3.5 1.5 /
  \plot 0 0  -.5 0  -.5 .5  0 1.5  2 1.5  2.5 1  3.5 .7  3.5 0  3 0 /
  \plot 1 0  2 0 /
  \circulararc 360 degrees from 0 0  center at 0.5 0 
  \circulararc 360 degrees from 2 0  center at 2.5 0 
  \endpicture}
\def\LT{\operatorname{\bold{LT}}}
\def\A{\operatorname{\bold A}}
\def\P{\operatorname{\bold P}}
\def\M{\operatorname{\bold M}}
\def\NC{\operatorname{\bold{Nc}}}
\def\E{\operatorname{\bold E}}
\def\F{\operatorname{\bold F}}
\def\B{\operatorname{\bold B}}
\def\H{\operatorname{\bold H}}
\def\a{\operatorname{\bold a}}
\def\c{\operatorname{\bold c}}
\def\d{\operatorname{\bold d}}
\def\f{\operatorname{\bold f}}
\def\t{\operatorname{\bold t}}
\def\diff{\text{\rm d}}
\def\op{{\text{\rm op}}}
\def\subfactor{\ \beginpicture
  \setcoordinatesystem units <0.1cm,0.2cm>
  \multiput{} at 0 -.7  2 1 /
  \plot 2 1.2  0 1.2  0 0.2  2 0.2 /
  \plot 2 -.1  0 -0.1 /\endpicture\ }
\DeclareMathOperator{\cox}{cox}
\DeclareMathOperator{\pdeg}{pdeg}
 \DeclareMathOperator{\Tr}{Tr}
\DeclareMathOperator{\rank}{rank}
\DeclareMathOperator{\soc}{soc}
\DeclareMathOperator{\topp}{top}
\DeclareMathOperator{\mo}{mod}
\DeclareMathOperator{\ind}{ind}
\DeclareMathOperator{\sinc}{sinc}
\DeclareMathOperator{\Der}{Der}
\DeclareMathOperator{\GL}{GL}
\DeclareMathOperator{\Hom}{Hom}
\DeclareMathOperator{\End}{End}
\DeclareMathOperator{\Aut}{Aut}
\DeclareMathOperator{\Sub}{Sub}
\DeclareMathOperator{\Quot}{Quot}
\DeclareMathOperator{\Ext}{Ext}
\DeclareMathOperator{\rad}{rad}
\DeclareMathOperator{\add}{add}
\DeclareMathOperator{\Ker}{Ker}
\DeclareMathOperator{\Imm}{Im}
\DeclareMathOperator{\bdim}{\mathbf{dim}}
  \def\ss{\ssize }
\newcommand\Rahmen[1]%
\begin{document}

\title{The Catalan Combinatorics of the Hereditary Artin Algebras}

\author{Claus Michael Ringel}
\address{
Fakult\"at f\"ur Mathematik\\
Universit\"at Bielefeld\\
POBox 100 131\\
D-33501 Bielefeld, Germany\\
and Department of Mathematics\\
Shanghai Jiao Tong University\\
Shanghai 200240, P. R. China.
}

\email{ringel at math.uni-bielefeld.de}
\thanks{}

\subjclass[2010]{Primary 
        05A19, 
        05A18,  
        05E10, 
        16G20, 
        16G60. 
Secondary:
        16D90, 
        16G70. 
}

\date{August 8, 2015}

\begin{abstract}
The Catalan numbers are 
{\it  one of the most ubiquitous and fascinating sequences 
of enumerative combinatorics} (Stanley), in particular they count the
number of non-crossing partitions of a finite set.
In the appendix of these notes
we will try to outline in which way the Catalan combinatorics could be seen
as the heart of the theory of finite sets, starting with the subsets of
cardinality two. If we fix a finite set $C$ of cardinality $n+1$, the subsets of
cardinality two may be considered as the positive roots of a root system
(in the sense of Lie theory) of Dynkin type $\Bbb A_n$, and 
there are recent proposals
to work with generalized non-crossing partitions, starting with any
root system (of Dynkin type $\Bbb A_n,
\Bbb B_n, \dots, \Bbb G_2)$. 
The Catalan combinatorics looks for sets of partitions of $C$
which are of relevance and relates them  
to subsets of the automorphism group $S_{n+1} = \Aut(C)$, this 
is the Weyl group of type $\mathbb A$.
The generalized Cartan combinatorics starts directly with a 
suitable subset of $G$, where $G$ is any Weyl (or, more generally, any Coxeter) group. 
It turns out that the representation theory of representation-finite hereditary
artin algebras $\Lambda$ can be used in order to categorify these generalized non-crossing 
partitions in the Weyl group case. 
In particular, for the case $\mathbb A_n$, one may use the ring
$\Lambda_n$ of all upper triangular $(n\times n)$-matrices with coefficients in a 
field. 
\end{abstract}

\maketitle

\section*{\bf Introduction}

A {\bf root system} is a finite set of vectors in a Euclidean vector space satisfying some strong symmetry conditions. Root systems are used as convenient index sets when dealing with semi-simple complex Lie algebras or algebraic groups, but play an important role also in other parts of mathematics. The 
(crystallographic) root systems have been classified by Killing and Cartan at the end of the 19th century, the different types of irreducible root systems are labeled by the Dynkin diagrams 
$\mathbb A_n, \mathbb B_n,..., \mathbb G_2.$ 
As we have mentioned, 
the definition of the root systems refers to symmetry properties, but it turns out that there are further hidden symmetries which are not at all apparent at first sight. They have been discovered only quite recently and extend the use of root systems considerably. 

Always, $\Lambda$ will be a hereditary artin algebra. 
If $\Lambda$ is of finite representation type, 
it is well-known that the indecomposable $\Lambda$-modules correspond bijectively 
to the positive roots of a root system. The positive roots form in a natural way a poset, 
these posets are called the {\bf root posets.} 
In the setting of $\Lambda$-modules, the ordering is given by looking at subfactors. 
Root posets play a decisive role in many parts of mathematics: of course in Lie theory, 
in geometry (hyperplane arrangements) and group theory (reflection groups), but also say in 
singularity theory, in topology, and even in free probability theory (non-crossing partitions). 
The aim of this survey will be to report on combinatorial properties of the root posets which 
have been found in recent years by various mathematicians, often in view of these applications. 
Several of the results which we will discuss have been generalized to the Kac-Moody root systems.
We will focus the attention to the relevance of these properties in the 
representation theory of hereditary artin algebras and the use of this categorification.

\subsection*{Outline} 
A root system $\Phi$ is a finite subset of a Euclidean space $V$.
If $x$ is a root, we denote by $H_x$ the hyperplane orthogonal to $x$, and by $\rho_x$ the
reflection at $H_x$. In this way, we attach to $\Phi$ (or $\Phi_+$)
a finite set $\H(\Phi)$ of hyperplanes in $V$,
such sets are called hyperplane arrangements. The reflections $\rho_x$ generate the 
corresponding Weyl group $W$. Using the reflections, one defines on $W$ a partial ordering, the
so-called absolute ordering $\le_a$. Given a Coxeter element $c$ in $W$, the set $\NC(W,c)$
of all element $w\in W$ with $w\le_a c$ is called the lattice of generalized non-crossing
partitions. In the case of a root system of type $\mathbb A$, one just obtains the
usual lattice of non-crossing partitions, as introduced by Kreweras and now used
for example in free probability theory. 

As we have mentioned, we always will consider a hereditary artin algebra $\Lambda$. 
Let $\mo\Lambda$ be the category of all (left) $\Lambda$-modules
of finite length. Recall that a full subcategory $\mathcal C$ of $\mo\Lambda$ is said to be
{\it thick} provided it is closed under kernels, cokernels, and extensions, thus it is an
abelian exact subcategory, and we say that a thick subcategory $\mathcal C$ of $\mo\Lambda$
is {\it exceptional} provided it is categorically equivalent to the module category $\mo\Lambda'$
where $\Lambda'$ is also a (necessarily hereditary) artin algebra, or, equivalently, 
provided $\mathcal C$
has a generator. We denote by $\A(\mo\Lambda)$ the poset of
exceptional subcategories of $\mo\Lambda$, this will be the main object of interest. The
central result to be shown asserts that 
	\smallskip 

\Rahmen{$\A(\mo\Lambda) \simeq \NC(W(\Lambda),c(\Lambda)),$}

\noindent
where $W(\Lambda)$ and $c(\Lambda)$ are the Weyl group and the
Coxeter element, respectively, corresponding to $\Lambda$, see Theorem \ref{IST}.
	\medskip 

{\bf Chapter 1} is devoted to {\bf numbers}
which arise from counting problems dealing with a representation-finite
hereditary artin algebra $\Lambda$. The numbers we are interested in will depend just
on the Dynkin type of $\Lambda$ (and not on the orientation). Thus,
here we deal with what we  call Dynkin functions:  A Dynkin function $\f$
attaches to any Dynkin diagram $\Delta$ an integer, or more
generally a real number, sometimes even a set
or a sequence of real numbers (for example the sequence of exponents);
thus a Dynkin function $\f$ consists of four sequences of numbers, namely 
$\f(\mathbb A_n),\ \f(\mathbb B_n),\ \f(\mathbb C_n),\ \f(\mathbb D_n)$
as well as five additional single values $
 \f(\mathbb E_6),\ \f(\mathbb E_7),\ \f(\mathbb E_8),\ \f(\mathbb F_4),\ \f(\mathbb G_2).$
Typical Dynkin functions are the number of indecomposable modules,
the number of tilting modules, the number of complete exceptional sequences.
We will analyze some of these Dynkin functions, of particular interest 
seem to be the prime factorizations of their values. 

As we will see, there is a unified, but quite mysterious way to deal with 
some of these Dynkin functions, namely to invoke the so-called exponents of $\Delta$.
Usually, the exponents just fall from heaven: either by looking at the 
invariant theory of the action of the Weyl group on the ambient space 
of the root system (Chevalley 1955),
or by dealing with the eigenvalues of a Coxeter element (Coxeter, 1951).
As Shapiro and Kostant (1959) have shown, there is a third
possibility to obtain the exponents, namely looking at the root poset:
if $r_t$ is the number of roots of height $t$, then
$(r_1,r_2,\dots)$  is a Young 
partition and the dual partition is the partition of the exponents.
It is of interest that one may determine the exponents
inductively, going up step by step in a chain of poset ideals $I$ of $\Phi_+$.
A recent result of Sommers-Tymoczko (and 
Abe-Barakat-Cuntz-Hoge-Terao) based on old investigations 
of Arnold and Saito (1979)
asserts that the set $\H = \H(I)$ of hyperplanes orthogonal to the roots in $I$
is a so-called free hyperplane arrangement. This means that the corresponding
module $D(\H)$ of $\H$-derivations is free, thus one may consider 
the degrees of a free generating system of $D(\H)$. We obtain in this way an
increasing sequence of Young partitions which terminates in the partition of the
exponents. 
	\smallskip

{\bf Chapter 2} concerns the classical {\bf tilting theory}, the study of (finitely generated)
tilting modules for a hereditary artin algebra. 
As we will see in this chapter, already the basic setting of tilting theory can be refined,
replacing the usually considered torsion pair
by a torsion triple or even a torsion quadruple. In this way, tilting theory is put into
the realm of the stability theory of King. The study of tilted algebras turns out 
to be just the study of sincere exceptional subcategories. 
We also will study perpendicular pairs of exceptional
subcategories. Altogether we obtain a wealth of bijections (the Ingalls-Thomas bijections) 
between sets of modules and subcategories. These bijections explain why 
we obtain the same Dynkin functions when dealing with quite different counting problems.
	\smallskip 
		
{\bf Chapter 3} presents the poset $\A(\mo\Lambda)$ of all {\bf 
exceptional antichains} in $\mo\Lambda$, or, equivalently, of all exceptional
 subcategories of $\mo\Lambda$. Using the results of Chapter 2,
it will be shown that this poset is self-dual
(it has a self-duality whose square is essentially the Auslander-Reiten translation).
Also, any interval in $\A(\mo\Lambda)$ is again of the form $\A(\mo\Lambda')$ for some
hereditary artin algebra $\Lambda'$, 
and the maximal chains in $\A(\mo\Lambda)$ correspond bijectively to the 
complete exceptional sequences of $\Lambda$-modules. On the other hand, we will see that
$\A(\mo\Lambda)$  can be identified with 
the poset $\NC(W(\Lambda),c(\Lambda))$. In this way the 
theory of generalized non-crossing partitions can be seen as part of the 
representation theory of hereditary artin algebras.
I should stress that the main results outlined in Chapters 2 and 3
are due to Ingalls and Thomas \cite{[IT]}, 
and a subsequent paper by Igusa and Schiffler \cite{[IS]}.
	\smallskip 

{\bf Chapter 4} deals with the special case of the {\bf Dynkin types $\mathbb A$}. We denote by $\Lambda_n$
the path algebra of the linearly oriented quiver of type $\mathbb A_n$.
We will show that the lattice $\A(\mo\Lambda_n)$ may be identified in a canonical
way with the lattice of non-crossing partitions as introduced by Kreweras; this is
now an important tool in several parts of mathematics, for example in free
probability theory. Thus, one may consider the module categories 
$\mo\Lambda_n$ as a natural frame for a categorification of the lattices 
of non-crossing partitions. In particular, here we deal with the Catalan numbers,
{\it  one of the most ubiquitous and fascinating sequences 
of enumerative combinatorics} (Stanley in \cite{[S1]}). 
Also, at the end of Chapter 4, 
we review some classical problems which are related to the maximal chains
in $\A(\mo\Lambda_n)$: namely, to count labeled trees (Sylvester, 1857, Borchardt, 1860,
Cayley, 1889), as well as parking functions (Pyke, 1959, Konheim-Weiss, 1966, Stanley, 1997).
	\smallskip 
		
The {\bf Appendix.} 
As an after-thought we will try to discuss the nature of Catalan combinatorics: 
it seems to us that it should be considered as the heart of the theory of finite sets. 
	\smallskip 

This report concerns the {\bf Catalan combinatorics} and the
corresponding \linebreak Narayana numbers. One may also
say that it is about the {\bf cluster complex.} 
Actually, I will mention the cluster complex only in passing by, but one should be aware that 
the cluster combinatorics in the Dynkin case
is really the combinatorics of the representation-finite hereditary artin algebras as
discussed in these lectures. 
Of course, we deal with the {\bf categorification of combinatorial data,}
this is the essence of our considerations. An axiomatic account of this categorification
can be found in a recent paper by Hubery and Krause \cite{[HK]}. 

As we have mentioned, Chapters 2 and 3 deal with hereditary artin algebras in general, whereas
Chapter 4 and the Appendix
restrict the attention to the special case $\mathbb A_n$, or better just to $\mathbb A_n$
endowed with the linear orientation (the corresponding path algebra will be denoted
by $\Lambda_n$ and will be used as the standard example throughout these lectures). 
In this way, we present first the general theory and
specialize afterwards in order to capture the classical theory of non-crossing
partitions in terms of the representation theory of artin algebras. Our account
should also allow the interested reader to go the opposite way:
to start with Chapter 4 in order to
see in which way $\Lambda_n$-modules are used for the categorification of partitions
(see Section 4.2)
and only afterwards to immerge into the general representation theory of artin algebras. 
	\smallskip

{\bf Too late?} This report comes late, very late, maybe too late.  
It concerns objects  which have been in the mainstream of
representation theory 40 years ago, now they seem to be standard and well understood.
The first chapter will focus the attention to a lot of numbers; 
such numbers had been calculated in the early days of representation theory,
but as it seems, never systematically, and only few records are available
(by Gabriel-de la Pe\~na and Bretscher-L\"aser-Riedtmann, as well as by  
Seidel, a student of Happel).
As Assem wrote to me: there should be many student theses at various universities 
devoted to such calculations, but one did not dare to publish them. 
The mathematicians working in the representation theory of algebras
felt that there would not be an independent interest in these numbers, 
the only exception may have been Gabriel \cite{[G2]}:
he pointed out that here the Catalan numbers play a role --- but as far as I know never in lectures
to a mathematical audience, just in a text written for amateurs and enthusiasts.
To repeat: a survey similar to the first parts of these notes may (and should) have been
given in the seventies or early eighties of the last century.

Actually, the numbers presented have been discussed, but usually outside of representation
theory. We should stress that Chapoton \cite{[C1]} presented already in 2002, 
thus more than 10 years ago,
the numbers of clusters, positive clusters and exceptional sequences on his web page, and 
there is a corresponding survey by Fomin and Reading \cite{[FR]} written 2005. 
Some of the numerology can be traced much further back, namely to 
considerations concerning singularity theory by Brieskorn and Deligne in the seventies.

Of course, there is an advantage of a late presentation: we are 
able to present a rather complete picture. But 
be aware: There are still many open questions. 
In particular, one misses an interpretation of the numerical data 
in terms of the exponents (see Chapter 1). Also, given a
hereditary artin algebra of Dynkin type $\Delta$, it is not clear how
to relate the antichains in the category $\mo\Lambda$  and the
antichains in the poset $\Phi_+(\Delta)$, thus to relate
non-crossing and non-nesting partitions in a satisfactory way.

Our survey is quite long, but unfortunately it is in no way complete.
There are many related topics which we do not touch at all; for example the geometrical
realizations of lattices and posets using polyhedra, or important hyperplane
arrangements such as the Shi arrangements; even the cluster approach (and 
the use of cluster categories) is not mentioned explicitly.  
On the other hand, the topics considered here are restricted to a very narrow 
setting: a general report should start with hereditary artinian
rings, not just artin algebras, in order to cover also non-crystallographic
Coxeter groups; it should avoid the restriction to hereditary rings by
looking at $\tau$-tilting modules instead of tilting modules; and it should
consider generalized (not necessarily finitely generated)
tilting modules in order to take into account thick subcategories without covers.
Concerning these general settings, many  satisfactory results are already
known, but a unified theory is still out of reach. 
Thus, one may say that 
it really is {\bf too early (not too late)} for a general presentation. 

As we have mentioned, the appendix outlines in which way the Catalan combinatorics 
can be seen as the heart of the theory of finite sets, starting with the subsets of
cardinality two, thus with the positive roots of a root system
of type $\mathbb A$. We do not know which kind of categories could replace the
category of finite sets in order to deal with the remaining root systems.
Also here, our considerations are open-ended. 
	\smallskip 

{\bf The approach.}
I will try to be as elementary as possible. I will prefer to consider individual modules
in contrast to subcategories (thus, instead of dealing with thick subcategories, I usually will
work with antichains: a thick subcategory $\mathcal C$ of $\mo\Lambda$ 
is an abelian exact subcategory closed under extensions, the corresponding antichain 
is given by the simple objects of $\mathcal C$, and $\mathcal C$
is obtained back from the antichain as its extension closure). Given an artin algebra $\Lambda$,
I will prefer to work with its module category $\mo\Lambda$ and will not touch the
corresponding derived category $D^b(\mo\Lambda)$. I know that triangulated
categories are now well-known and well-appreciated, but they will not be needed in an essential way.
	
The survey is based on lectures which were
addressed to mathematicians working in the representation theory of finite-dimensional 
algebras, and they deal with a topic all participants were familiar with, namely
the representation theory of hereditary artin algebras: first we consider just
representation-finite ones, say corresponding to 
quivers of finite type, or, more generally, to species of finite type,
later then hereditary artin algebras in general.
The literature usually restricts to quivers, and avoids species. As I mentioned, I want to be
as elementary as possible, but nevertheless we will take into account species.
The reason is the following: There is the division between the series $\mathbb A_n, \mathbb B_n, 
\mathbb C_n, \mathbb D_n$, and the exceptional cases. Let me look at the series: 
always, the case $\mathbb A_n$ is considered as the basic case, the three remaining cases 
$\mathbb B_n, \mathbb C_n, \mathbb D_n$ are deviations of $\mathbb A_n$ (note that for such a diagram 
$\mathbb B_n, \mathbb C_n, \mathbb D_n,$  a large portion is of type $\mathbb A$, 
there is a difference only at one of the ends). For many
counting problems as discussed  
in the first chapter, the cases $\mathbb B_n$ and $\mathbb C_n$ yield the same answer, 
and the formulas which one obtains are really neat, condensed and 
surprisingly easy to remember, whereas the formulas for $\mathbb D_n$ usually 
look much more complicated, and often they may be considered as variations of the
$\mathbb B_n$ formulas. Thus, in order to understand the formulas for $\mathbb D_n$ properly,
it seems to be advisable to look first at the numbers for $\mathbb A$ and $\mathbb B$ 
and only afterwards to the case $\mathbb D$.
This is one of the reasons why we definitely want to include the cases $\mathbb B$ (and $\mathbb C$), 
although this requires to work not only with path algebras of quivers, 
but with hereditary artin algebras in general.
	\smallskip 

{\bf References.} 
This is a survey dealing with contributions by a large number of mathematicians,
see also the note N\,\ref{new} at the end of the introduction. 
I will try to indicate the main sources, but to name all contributors seems to be a 
nearly hopeless task.
The material to be covered is vast and I am not at all an expert in several of the
topics, thus sometimes I have to be vague, and provide just some indications.
I am grateful to many mathematicians for introducing me to various questions, see the 
acknowledgments at the end of the introduction. 

In Chapter 1, we will deal with a large set of counting problems, and it will turn out that 
several of these problems yield the same answer. This is of course of great interest
and asks for some explanation: to provide natural bijections between the objects in
questions. However, this also tends to be a source for priority fights: just think of
say 100 equivalent counting problems (see the note N\,\ref{number}). 
Now any problem can be solved 
individually (so there should be 100 different 
proofs), or else one can show the equivalence to a similar problem where the answer is known 
(there are $\binom {100}2 = 4950$ equivalence proofs).
But the situation may be even more complex: in case we deal with a Dynkin function, one may
have to consider it case by case, or one can find a unified proof; one may need to rely
on computer calculations or find a conceptual proof. And the answer may be given 
by a magic formula, say in terms of the exponents, and a final proof should explain this!
	\smallskip

{\bf Acknowledgments.} This is the written account 
for a sequence of four lectures given at the ICRA Workshop August 2014 in Sanya, Hainan, and 
a related series of lectures September 2014 at  SJTU, Shanghai.
The author thanks the organizers of the ICRA conference and Zhang Pu from SJTU
for the invitations to give these lectures. We have omitted here part of the general considerations
on hyperplane arrangements (this was lecture 2), only the final result of Sommers-Tymoczko 
will be outlined in Section 1.5. The third lecture at Sanya was
devoted to the new vision of tilting theory following Ingalls and Thomas; this is now Chapter 2.
The material
which was presented in lecture 4 has been expanded considerably and now forms Chapters 3 and 4.

A central part of this report is based on a joint project with M.~Obaid, K.~Nauman, 
W.~Al Shammakh and W.~Fakieh 
from KAU, Jeddah, which was devoted to review and to complement known 
counting results for support-tilting modules and complete exceptional sequences, 
see the papers \cite{[ONAFR], [ONFR1], [ONFR2]}.
	
Of course, all my considerations concerning representations of hereditary
artin algebras rely on the old collaboration with V.~Dlab, those on tilted algebras
on the collaboration with D.~Happel. I am grateful to H.~Krause and L.~Hille for stressing the 
relevance of thick subcategories, and of King's stability theory, respectively. 
I have learned from G.~R\"ohrle the basic induction principle for hyperplane arrangements;
H.~Thomas made me aware of the Shapiro-Kostant relation between the exponents and the
height partition of the positive roots.
	
But as the main driving force I have to mention F.~G\"otze, the
chairman of the Bielefeld CRC 701. He advised me already several years ago to
study non-crossing partitions. He 
organized joint study groups of the Bielefeld research groups in probability theory
and in representation theory in order to raise the mutual interest --- for a long time, 
this seemed to be a hopeless endeavor. One of the topics he always stressed 
were the parking functions, but I realized only now, when writing up these notes, the
direct bijection between the parking functions and 
complete exceptional sequences for the linearly oriented quiver of type $\mathbb A_n$ as outlined at
the end of Chapter 4: I had been working on exceptional sequences without being aware
of such a relationship (but he seemed to know).
Thus, I have to thank the Bielefeld CRC 701 who has supported me in this way (see also the note 
N\,\ref{DFG}). 
I should add that the presentation has gained from the Bielefeld workshop 
on {\it Non-crossing Partitions in Representation Theory}
organized in June 2014 
by B.~Baumann, A.~Hubery, H.~Krause, Chr.~Stump, and the Bielefeld CRC has to be praised 
for providing the financial support. 
	
My earlier drawings of the various root posets have been improved by A.~Beineke, in addition I 
have to thank him for his permission to include in the first chapter 
some of his observations concerning
the cubical structure of the root posets. 
	
I am grateful to many mathematicians for answering questions and for 
helpful comments concerning the presented material, 
in particular to Th\.~Br\"ustle, F.~Chapoton, X.~W.~Chen, 
W.~Crawley-Boevey, M.~Cuntz, S.~Fomin, L.~Hille, H.~Krause, 
G.~R\"ohrle, and H.~Thomas.
	\bigskip

{\bf Notes.}
	\smallskip

\begin{note}\label{new}
We hope that our presentation adds some small improvements to the
present knowledge. 

This concerns in Chapter 2 the unified treatment of torsion and 
perpendicular pairs, invoking the stability theory of King; it relies on a systematic use
of normalizations of modules (see Section 2.2). 
Following \cite{[ONFR1]}, the artin algebras to be used in order to categorify the posets of
generalized non-crossing partitions are not assumed to be path algebras of quivers,
thus we cover all the symmetrizable Cartan matrices, not just the symmetric ones. 
There are new drawings of the root posets in Section \ref{comb}
which may be helpful for the reader. Our discussion of relevant Dynkin functions and
their prime factors reveals the strange prime factor 4759, see Table 2.
In Chapter 4 we outline a categorical interpretation of the bijection 
between non-crossing partitions and binary trees,
using perpendicular pairs  in $\mo\Lambda_n$, see Theorem \ref{tors-perp}. 
And there is an interpretation of Stanley's bijection (between maximal chains of
non-crossing partitions and parking functions) in terms of the representation
theory of the algebras $\Lambda_n$, see Theorem \ref{soc}.
Throughout the survey, we try to stress the relevance of antichains in additive
categories.
\end{note}

\begin{note}\label{number}
This is not an exaggeration: there is the famous list by R.~Stanley \cite{[S1]}
on problems which yield the Catalan numbers. There, he exhibits 66 different problems, 
and many additional ones can be found in his Catalan Addendum \cite{[S2]}.
\end{note}	

\begin{note}\label{DFG}
The author was project leader at the CRC 701 until June 2013, thus he
wants to thank the DFG for the corresponding financial support. 
\end{note}	
\vfill\eject

\section{\bf Numbers}

\subsection{The Setting} On the one hand, there is the combinatorial setting
of the (finite) root systems and the corresponding hyperplane arrangements. 
On the other hand, there is the representation theoretical
setting of dealing with a hereditary artin algebra $\Lambda$ and its module
category $\mo\Lambda$. 
	\medskip

\subsubsection{\bf The combinatorial setting.}\label{comb} 
We consider a (finite) root system $\Phi = \Phi(\Delta)$
in the Euclidean space $V$,
it is the disjoint union of irreducible root systems. 
A connected root system is of type $\Delta$,
where 
$\Delta = \Delta_n$ is a Dynkin diagram with $n$ vertices,
thus one of $\mathbb A_n, \dots, \mathbb G_2$. 
Given a root system $\Phi$ in a vector space $V$, 
we choose a root basis $\Delta$, this is a basis of $V$ which consists of elements of $\Phi$ 
(they are called {\it simple roots})
such that all elements of $\Phi$ are linear combinations of these basis elements with integer
coefficients which are either non-negative (the {\it positive roots}) or non-positive 
(the {\it negative roots}). We recall the relevant
definitions in note N\,\ref{system} at the end of the chapter.

We denote by $\Phi_+$ the set of positive roots, this is a poset
with respect to the ordering $x\le y$ iff $y-x$ is a non-negative linear combination of positive
roots (or of simple roots). 
The objects to be considered in these lectures are the root posets 
	\smallskip 

\Rahmen{$ \Phi_+ = (\Phi_+,\le)$}
	\medskip
	
If we write a positive root $\alpha$ 
as a linear combination of simple roots, the sum of the
coefficients is called the {\it height} of $\alpha$. For any pair $t\le t'$ of
natural numbers, we denote by $\Phi_{t,t'}$ the subposet of $\Phi_+$ given by
the positive roots with height $s$, where $t\le s \le t'.$  

The root posets
$\Phi_+(\mathbb A_n)$ may be identified with the poset of all intervals
of the form $[i,j]$ where $0 \le i < j \le n$ are integers, using as
ordering the set-theoretical inclusion, see the note N\,\ref{A}.
	\medskip 

Let us exhibit the Hasse diagrams of some of the root posets, namely 
the cases $\mathbb A_5, \mathbb B_5, \mathbb C_5, \mathbb D_6$, as well as
all the exceptional cases.
Let us mention some of the properties of a root system which can be seen quite
nicely by looking at these Hasse diagrams.
\begin{itemize}
\item The lowest row consists of the simple roots, there are 
 precisely $n$ simple roots.
\item The row above the lowest row consists of the roots of height $2$,
  there are  precisely $n-1$ roots of height $2$,
  they correspond bijectively to the edges of the Dynkin diagram. In fact, the
  subposet $\Phi_{1,2}$ given by the two lowest rows may be considered as 
  the incidence graph
  of the Dynkin diagram. 
\item There is a unique maximal element in $\Phi_+$: the highest
  root of $\Phi$.
\item The numbers $r_t$ of roots of height $t$ form a decreasing
 sequence, this will be discussed in detail in Section 1.4.
\item The Hasse diagram of $\Phi_+$ may be considered as the 2-dimensional
  projection of a 3-dimensional object formed by cubes, squares and intervals.
  This will be discussed in the note N\,\ref{3-dim}.
\end{itemize}

	\smallskip 

\noindent 
Note that the poset $\Phi_+(\mathbb B_n)$ and  $\Phi_+(\mathbb C_n)$ are isomorphic
for any $n$ (this is no longer the case, if we take into account
the different root lengths, see the note N\,\ref{long}).
  
$$
\hbox{\beginpicture
\setcoordinatesystem units <1.15cm,1cm>
\put{\beginpicture
\setcoordinatesystem units <.5cm,.5cm>
\multiput{$\bullet$} at  
  0 0  2 0  4 0  6 0  8 0 
  1 1  3 1  5 1  7 1
  2 2  4 2  6 2
  3 3  5 3
  4 4 /
\plot 0 0  4 4  8 0 /
\plot 1 1  2 0  5 3 /
\plot 2 2  4 0  6 2 /
\plot 3 3  6 0  7 1 /    
\endpicture} at 0 1
\put{\beginpicture
\setcoordinatesystem units <.45cm,.45cm>
\multiput{$\bullet$} at  
   0 0  2 0  4 0  6 0  8 0  
      1 1   3 1  5 1  7 1  
        2 2  4 2  6 2  8 2
            3 3  5 3  7 3
             4 4  6 4  8 4
                5 5  7 5
                  6 6  8 6
                     7 7
                       8 8 /
\plot 0 0  8 8 /
\plot 1 1  2 0  8 6  7 7 /
\plot 2 2  4 0  8 4  6 6 /
\plot 3 3  6 0  8 2  5 5 /
\plot 4 4  8 0 /
\endpicture} at 4 0 
\put{\beginpicture
\setcoordinatesystem units <.5cm,.5cm>
\multiput{$\bullet$} at 0 0  2 0  4 0  6 0  8 0  
     1 1  3 1  5 1  7 1
     2 2  4 2 
     3 3 
      6 1
     5 2  7 2
     4 3  6 3   
     3 4  5 4  7 4 
     4 5  6 5 
     5 6  7 6 
      6 7 
     7 8 
     7 0 /
               
\plot 0 0  3 3  6 0  7 1  8 0 /
\plot 1 1  2 0  4 2 /
\plot 2 2  4 0  5 1 /

 \plot 7 0  3 4  7 8 /
\plot 6 1  7 2  4 5 /
\plot 5 2  7 4  5 6 /
\plot 4 3  7 6  6 7 /
\plot 3 3  3 4 /
\plot 4 2  4 3 /
\plot 5 1  5 2 /
\plot 6 0  6 1 /
\plot 7 1  7 2 /

\setdashes <1mm>
\plot  3 3  4 4  7 1 /
\plot 4 4  4 5 /
\plot 4 2  5 3  5 4 /
\plot 5 1  6 2  6 3 /
   
\multiput{$\circ$} at 4 4 5 3  6 2 /
\setshadegrid span <.4mm>
\vshade 3 3 4  <,z,,> 6 0 1  <z,,,> 7 1 2 /

\endpicture} at 8 -1 
\put{\beginpicture
\setcoordinatesystem units <.5cm,.5cm>
\multiput{$\circ$} at 2.7 3  4.7 3  3.8 2  3.6 5  3.6 4 /
\multiput{$\bullet$} at
  0 0  2 0  4 0  6 0  8 0
    0.9 1  2.9 1  4.9 1  6.9 1
      1.8 2  5.8 2  / 
      \plot 0 0  1.8 2  /
      \plot 5.8 2   8 0 /
      \plot 0.9 1  2 0  2.8 1 /
      \plot 1.8 2  4 0  5.8 2 /
      \plot 4.9 1  6 0 6.9 1 /

      \setdashes <1mm>
      \plot 1.8 2  3.6 4  5.8 2 /
      \plot 2.8 1  4.7 3 /
      \plot  2.7 3  4.9 1 /
      \plot 3.8 2   3.8 3  /
      \plot 3.6 4  3.6 5 /
      \plot 2.7 4   3.6 5  4.7 4 /
      \plot 3.6 5  3.6 6  /

      \setsolid
      \multiput{$\bullet$} at 4 1
        2.9 2  4.9 2
	  1.8 3  3.8 3  5.8 3
	    2.7 4  4.7 4  / %
	    \plot 4 0  4 1 /
	    \plot   2.9 1   2.9 2 /
	    \plot 4.9 1  4.9 2 /
	    \plot  1.8 2    1.8 3  /
	    \plot 5.8 2  5.8 3 /
	    \plot  2.7 3    2.7 4  /
	    \plot 4.7 3  4.7 4 /
	    \plot   4 1  5.8 3  4.7 4 /
	    \plot 2.7 4
	        1.8 3  4 1 / 
		\plot 2.9 2  4.7 4 /
		\plot  4.9 2   2.7 4  /

		\multiput{$\bullet$} at 3.8 4
		    2.7 5  4.7 5
		        3.6 6 /
			\plot  3.8 3  3.8 4 /
			\plot 2.7 4  2.7 5  /
			\plot 4.7 4  4.7 5 /
			\plot 3.8 4
			    2.7 5   3.6 6  4.7 5
			       3.8 4 /

			       \setshadegrid span <.4mm>
			       \vshade 1.8 2 3  <,z,,> 4 0 1  <z,,,> 5.8 2 3 /
			       \vshade 2.7 4 5  <,z,,> 3.8 3 4  <z,,,> 4.7 4 5 /

			       \multiput{$\bullet$} at 1.6 6  5.6 6  2.5 7  4.5 7  3.4 8  /
			       \plot 2.7 5  1.6 6  3.4 8  5.6 6  4.7 5 /
			       \plot 2.5 7  3.6 6  4.5 7 /

			       \multiput{$\bullet$} at 3.4 9  3.4 10  /
			       \plot 3.4 8  3.4 10 /

			       \put{$\bullet$} at 3.4 0
			       \plot 3.4 0  4 1 /
			       
\endpicture} at 0 -5
\put{\beginpicture
\setcoordinatesystem units <.45cm,.45cm>

\multiput{$\circ$} at 3.8 2
       2.7 3  4.7 3
       3.6 4  5.6 4
       4.5 5   4.5 6 
       3.6 5 
       4.5 7  3.4 8 /

\multiput{$\bullet$} at
  0 0  2 0  4 0  6 0  8 0  10 0  
  0.9 1  2.9 1  4.9 1  6.9 1  8.9 1
  1.8 2  5.8 2  7.8 2
  6.7 3 /  
\plot 0 0  1.8 2  /
\plot 5.8 2   8 0  8.9 1 /
\plot 0.9 1  2 0  2.8 1 /
\plot 1.8 2  4 0  5.8 2 /
\plot 4.9 1  6 0  7.8 2 /
\plot 5.8 2  6.7 3  10 0 /

\setdashes <1mm>
\plot 1.8 2  3.6 4  5.8 2 / 
\plot 3.6 4  4.5 5  6.7 3 /
\plot 2.8 1  5.6  4 /
\plot  2.7 3  4.9 1 /
\plot 3.8 2   3.8 3  /
\plot 3.6 4  3.6 5 /
\plot 2.7 4   3.6 5  4.7 4 /
\plot 3.6 5  3.6 6  /
\plot 5.6 4  5.6 5 /
\plot 4.5 5  4.5 6 /
\plot 3.6 5  4.5 6  5.6 5 / 
\plot  4.5 6  4.5 7 /
\plot 2.5 7  3.4 8  5.6 6 /
\plot 3.6 6  4.5 7 / 
\plot 4.5 7  4.5 8 /

\plot 3.4 8  3.4 9 /

\setsolid 
\multiput{$\bullet$} at 4 1
  2.9 2  4.9 2 
  1.8 3  3.8 3  5.8 3
  2.7 4  4.7 4 
   6.7 4  5.6 5 / 
\plot 4 0  4 1 /
\plot 2.9 1   2.9 2 /
\plot 4.9 1  4.9 2 /
\plot 1.8 2    1.8 3  /
\plot 5.8 2  5.8 3 /
\plot 2.7 3    2.7 4  /
\plot 4.7 3  4.7 4 /
\plot 4 1  5.8 3  4.7 4 /
\plot 2.7 4  
    1.8 3  4 1 / 
\plot 2.9 2  4.7 4 /
\plot  4.9 2   2.7 4  /
\plot 6.7 3  6.7 4  5.8 3  /
\plot 6.7 4  5.6 5  4.7 4 /
\plot 5.6 5  5.6 6 /

\multiput{$\bullet$} at 3.8 4
    2.7 5  4.7 5
    3.6 6 /
\plot  3.8 3  3.8 4 /
\plot 2.7 4  2.7 5  /
\plot 4.7 4  4.7 5 /
\plot 3.8 4
    2.7 5   3.6 6  4.7 5
   3.8 4 /

\plot  5.6 6  4.7 5 /

\multiput{$\bullet$} at   0.3 10  1.4 9  2.5 8   3.6 7  4.7 6  
     1.2 11   2.3 10  3.4 9  4.5 8   5.6 7 
     2.1 12  3.2 11  4.3 10  5.4 9  6.5 8   
     3. 13  4.1 12
 /
\plot 2.1 12  3 13  4.1 12  3.2 11 /
\plot 0.3 10  4.7 6  6.5 8  2.1 12  0.3 10 /
\plot 1.2 11  5.6 7 /

\plot 1.4 9  3.2 11 /
\plot 2.5 8  4.3 10 /
\plot 3.6 7  5.4 9 /

\plot 4.7 5  4.7 6 /
\plot 3.6 6  3.6 7 /
\plot 5.6 6  5.6 7 /
\plot 2.5 7  2.5 8 /

\multiput{$\bullet$} at  3  14  3 15  3 16  /
\plot 3  13  3 16 /

\setshadegrid span <.4mm>
\vshade 1.8 2 3  <,z,,> 4 0 1  <z,,,> 6.7 3 4 /
\vshade 2.7 4 5  <,z,,> 3.8 3 4  <z,,,> 5.6  5 6 /
\vshade 2.5 7 8  <,z,,> 4.7 5 6  <z,,,> 5.6   6 7 /

\multiput{$\bullet$} at 1.6 6   5.6 6  2.5 7  /

\plot 2.7 5  1.6 6   2.5 7 /

\plot 2.5 7  3.6 6 /

\put{$\bullet$} at 3.4 0 
\plot 3.4 0  4 1 /

\endpicture} at 3.5 -7
\put{\beginpicture
\setcoordinatesystem units <.4cm,.4cm>
\multiput{$\circ$} at 3.8 2
       2.7 3  4.7 3
       3.6 4  5.6 4
       4.5 5  6.5 5
       5.4 6  
       3.6 5  
       4.5 6   
       5.4 7 
       4.5 7  3.4 8 
        5.4 8 
       4.3 9 
      2.1 12  3.2 11  4.3 10  5.4 9   /
\multiput{$\bullet$} at
  0 0  2 0  4 0  6 0  8 0  10 0  
  0.9 1  2.9 1  4.9 1  6.9 1  8.9 1
  1.8 2  5.8 2  7.8 2
  6.7 3 
  7.6 4  8.7 3  9.8 2  10.9 1  12 0 
   7.6 5  6.5 6  6.5 7 
/  
\plot 0 0  1.8 2  /
\plot 5.8 2   8 0  9.8 2 /
\plot 0.9 1  2 0  2.8 1 /
\plot 1.8 2  4 0  5.8 2 /
\plot 4.9 1  6 0  8.7 3 /
\plot 5.8 2  6.7 3  10 0  10.9 1 /
\plot  7.6 4  12 0 /
\plot 6.7 3  7.6 4  7.6 5  6.5 6  5.6 5 /
\plot 6.5 7  5.6 6 /
\plot 6.5 6  6.5 8 /
\plot 6.7 4 7.6 5 /

\setdashes <1mm>
\plot 1.8 2  3.6 4  5.8 2 / 
\plot 3.6 4  4.5 5  6.7 3 /
\plot 2.8 1  5.6  4 /
\plot  2.7 3  4.9 1 /
\plot 3.8 2   3.8 3  /
\plot 3.6 4  3.6 5 /
\plot 2.7 4   3.6 5  4.7 4 /
\plot 3.6 5  3.6 6  /
\plot 5.6 4  5.6 5 /
\plot 4.5 5  4.5 6 /
\plot 3.6 5  4.5 6  5.6 5 / 
\plot  4.5 6  4.5 7 /
\plot 2.5 7  3.4 8  5.6 6 /
\plot 3.6 6  4.5 7 / 
\plot 4.5 7  4.5 8 /
\plot 3.4 8  3.4 9 /

\plot 7.6 4  5.4 6  4.5 5 /
\plot 6.5 6  6.5 5  5.6 4 /
\plot 4.5 6  5.4 7  6.5 6 /
\plot 5.4 7  5.4 6 /

\plot 5.4 7  5.4 9 /
\plot 4.5 7  5.4 8   6.5 7 /
\plot 3.4 8  4.3 9  5.4 8 /
\plot 4.3 9  4.3 10 /

\setsolid 
\multiput{$\bullet$} at 4 1
  2.9 2  4.9 2 
  1.8 3  3.8 3  5.8 3
  2.7 4  4.7 4 
   6.7 4  5.6 5 / 
\plot 4 0  4 1 /
\plot 2.9 1   2.9 2 /
\plot 4.9 1  4.9 2 /
\plot 1.8 2    1.8 3  /
\plot 5.8 2  5.8 3 /
\plot 2.7 3    2.7 4  /
\plot 4.7 3  4.7 4 /
\plot 4 1  5.8 3  4.7 4 /
\plot 2.7 4  
    1.8 3  4 1 / 
\plot 2.9 2  4.7 4 /
\plot  4.9 2   2.7 4  /
\plot 6.7 3  6.7 4  5.8 3  /
\plot 6.7 4  5.6 5  4.7 4 /
\plot 5.6 5  5.6 6 /

\multiput{$\bullet$} at 3.8 4
    2.7 5  4.7 5
    3.6 6 /
\plot  3.8 3  3.8 4 /
\plot 2.7 4  2.7 5  /
\plot 4.7 4  4.7 5 /
\plot 3.8 4
    2.7 5   3.6 6  4.7 5
   3.8 4 /

\plot  5.6 6  4.7 5 /

\multiput{$\bullet$} at   0.3 10  1.4 9  2.5 8   3.6 7  4.7 6  
     1.2 11   2.3 10  3.4 9  4.5 8   5.6 7  6.5 8  /
\plot   1.2 11 0.3 10  4.7 6  6.5 8 /

\plot 1.2 11  5.6 7 /

\plot 1.4 9  2.3 10 /
\plot 2.5 8  3.4 9  /
\plot 3.6 7  4.5 8  /

\plot 4.7 5  4.7 6 /
\plot 3.6 6  3.6 7 /
\plot 5.6 6  5.6 7 /
\plot 2.5 7  2.5 8 /
\setdashes <1mm>
\plot  1.2 11  2.1 12  6.5 8  /
\plot  2.3 10  3.2 11  3.2 12  /
\plot  3.4 9   4.3 10  4.3 11 /
\plot  4.5 8   5.4 9   5.4 10 /
\plot 2.1 12  2.1 13 /
\plot 1.2 12  2.1 13  3.2 12 /

\plot 3 14  3 15 /
\plot 2.1 14  2.1 13  3 14  4.1 13 /
\setsolid 

\multiput{$\circ$} at 2.1 13  3.0 14  /
\multiput{$\bullet$} at    
    1.2 12  2.3 11  3.4 10  4.5 9  5.6 8 
    3.2 12  4.3 11  5.4 10  6.5 9 
    4.1 13  5.2 12  6.3 11  7.4 10 /
\plot 1.2 13  1.2 12  5.6 8  7.4 10  4.1 13  2.3 11  2.3 12  /
\plot 3.2 13  3.2 12  6.5 9 /
\plot  3.4 10  5.2 12 /
\plot 4.5 9  6.3 11 /

\plot 1.2 12  1.2 11 /
\plot 2.3 11  2.3 10 /
\plot 3.4 10  3.4 9 /
\plot 4.5  9  4.5 8 /
\plot 5.6  8  5.6 7 /
\plot 6.5  9  6.5 8 /

\plot 4.1 13  4.1 14 /

\setshadegrid span <.4mm>
\vshade 1.8 2 3  <,z,,> 4 0 1  <z,,,>  7.6 4 5 /
\vshade 2.7 4 5  <,z,,> 3.8 3 4  <z,,,> 6.5   6 7 /
\vshade 2.5 7 8  <,z,,> 4.7 5 6  <z,,,> 6.5   7 8 /
\vshade 1.2 11 12  <,z,,> 5.6 7 8  <z,,,> 6.5 8 9 /
\vshade 1.2 12 13  <,z,,> 2.3 11 12  <z,,,> 4.1 13 14 /

\multiput{$\bullet$} at 1.6 6   5.6 6  2.5 7  / 
\plot 2.7 5  1.6 6   2.5 7 /
\plot 2.5 7  3.6 6 /

\put{$\bullet$} at 3.4 0 
\plot 3.4 0  4 1 /

\multiput{$\bullet$} at 
  2.3 12  3.2 13  4.1 14  5 15 
  1.2 13  2.1 14  3 15  3.9 16 
  0.1 14  1 15  1.9 16  2.8 17  3.7 18 
  -1 15  -.1 16    .8 17  1.7 18  2.6 19  3.5 20  4.4 21 
  -2.1 16 -1.2 17 -.3 18  0.6 19  1.5 20    2.4 21  3.3 22               
                             1.3 22  2.2 23 /
\plot 3.3 22  -2.1 16  2.3 12  5 15  0.6 19 /

\multiput{$\bullet$} at  2.2 24  2.2  25  2.2 26  2.2 27  2.2 28 /
\plot 2.2 23  2.2 28  /
\plot 3.5 20  1.3 22  2.2 23  4.4 21  -1 15 /
\plot 0.1 14  3.7 18  1.5 20 /
\plot 1.2 13  3.9 16 /
\plot 3.2 13 -1.2 17 /
\plot 4.1 14  -.3 18 /

\endpicture} at 7.8 -10
\put{\beginpicture
\setcoordinatesystem units <.45cm,.45cm>

\multiput{$\bullet$} at 
     0 0  2 0  4 0  6 0 
     1 1  3 1  5 1
     2 2  3 2  4 2 
     2 3  4 3
     1 4  3 4  5 4
     2 5  4 5
     3 6  5 6
     4 7  4 8  4 9  4 10  /
\multiput{$\circ$} at 3 3 /
\plot  0 0  2 2  4 0 5 1  /
\plot 1 1  2 0  4 2  6 0 /
\plot 3 1  3 2 /
\plot 2 2  2 3  4 5 /
\plot 4 2  4 3  2 5 /
\plot 3 2  5 4  3 6  1 4  3 2 /
\plot 3 6  4 7  5 6  4 5 /
\plot 4 7  4 10 /

\setdashes <1mm>
\plot 2 2  3 3  4 2 /
\plot 3 3  3 4 /

\setshadegrid span <.4mm>
\vshade 2 2 3  <,z,,> 3 1 2  <z,,,>  4 2 3  /
\endpicture}  at 0 -12
\put{\beginpicture
\setcoordinatesystem units <.5cm,.5cm>

\multiput{$\bullet$} at 0 0  2 0
  1 1   1 2  1 3  1 4 /
  \plot 0 0  1 1  2 0 /
  \plot 1 1  1 4 /
  \endpicture} at 4 -14
\put{$\mathbb A_5$} at -1.5 2
\put{$\mathbb B_5$} at 4 1.8
\put{$\mathbb C_5$} at 4 1.4
\put{$\mathbb D_6$} at 8 1
\put{$\mathbb E_6$} at -.8 -2.2
\put{$\mathbb E_7$} at 2.3 -3.2
\put{$\mathbb E_8$} at 6.5 -4.3
\put{$\mathbb F_4$} at -.1 -9.7
\put{$\mathbb G_2$} at 3.5 -13
\endpicture}
$$
	\medskip 

\subsubsection{\bf Roots and hyperplanes.}
A root system $\Phi$ is a subset of a Euclidean space $V$.
If $x$ is a root, we denote by $H_x$ the hyperplane orthogonal to $x$, and by $\rho_x$ the
reflection at $H_x$. 
We denote by 
	 
\Rahmen{$\H(\Phi)$}
	\smallskip

\noindent
the set of these hyperplanes. These sets will be considered in detail in Section 1.5.
Note that for any root $x$, we have $H_x = H_{-x}$, and we have
$H_x\neq H_y$ 
in case $y \neq \pm x$. This shows that $\H(\Phi)$ can be indexed by the set
$\Phi_+$ of positive roots. 

We will denote by $W$ the Weyl group, it is the subgroup of $\GL(V)$ generated by the 
reflections $\rho_x$ with $x\in \Delta$. The Weyl group contains all the 
reflections $\rho_x$ with
$x\in \Phi$, these are just all the reflections in $W$ (a 
{\it reflection} in $\GL(V)$ is
an element of finite order which fixes pointwise precisely a hyperplane), see 
the note N\,\ref{reflections}.

If we consider the set $x(1),\dots,x(n)$ of simple roots with a fixed ordering,
the corresponding Weyl group element $c = \rho_{x(n)}\cdots \rho_{x(2)}\rho_{x(1)}$
is called a Coxeter element in $W$. Actually, instead of fixing such an ordering, 
we usually work with an orientation $\Omega$ of the diagram $\Delta$
(but in the more general setting of dealing with arbitrary finite graphs, we only
will be interested in orientations without oriented cyclic paths).
To choose an {\it orientation} of a graph
means to replace any edge $\{x,y\}$ of the graph by one of the ordered pairs $(x,y)$,
$(y,x)$; if it is replaced by $(x,y)$, we will indicate this by drawing an 
arrow $x\to y$. If there are no oriented cyclic paths, we can order 
the vertices $x(1),\dots,x(n)$ in such a way that
the existence of an arrow $i \leftarrow j$ implies that $i < j$, and then we attach to it
the Coxeter element $c_\Omega = \rho_{x(n)}\cdots \rho_{x(2)}\rho_{x(1)}$;
note that $c_\Omega$
only depends on the orientation $\Omega$ and not on the actual ordering. 

Since a Dynkin diagram does not have
cycles, we see that  
{\it there is a bijection between the orientations $\Omega$ 
of $\Delta$ and the Coxeter elements $c_\Omega$ in $W$.} For example, the Dynkin diagram
$\mathbb A_3$ has four orientations (thus four Coxeter elements):
$$
{\beginpicture
\setcoordinatesystem units <.9cm,1cm> 
\put{\beginpicture
\setcoordinatesystem units <1cm,1cm> 
\multiput{$\circ$} at 0 0  1 0  2 0 /
\put{$\ssize 1$} at 0 -.3
\put{$\ssize 2$} at 1 -.3
\put{$\ssize 3$} at 2 -.3
\arr{0.2 0}{0.8 0}
\arr{1.8 0}{1.2 0}
\endpicture} at 3.5 0
\put{$\sigma_1\sigma_3\sigma_2 = \sigma_3\sigma_1\sigma_2 $} at 3.5 -.7
\put{\beginpicture
\setcoordinatesystem units <1cm,1cm> 
\multiput{$\circ$} at 0 0  1 0  2 0 /
\put{$\ssize 1$} at 0 -.3
\put{$\ssize 2$} at 1 -.3
\put{$\ssize 3$} at 2 -.3
\arr{0.8 0}{0.2 0}
\arr{1.2 0}{1.8 0}
\endpicture} at 7 0
\put{$\sigma_2\sigma_1\sigma_3 = \sigma_2\sigma_3\sigma_1 $} at 7 -.7
\put{\beginpicture
\setcoordinatesystem units <1cm,1cm> 
\multiput{$\circ$} at 0 0  1 0  2 0 /
\put{$\ssize 1$} at 0 -.3
\put{$\ssize 2$} at 1 -.3
\put{$\ssize 3$} at 2 -.3
\arr{0.8 0}{0.2 0}
\arr{1.8 0}{1.2 0}
\endpicture} at 0 0
\put{$\sigma_3\sigma_2\sigma_1$} at 0 -.7
\put{\beginpicture
\multiput{$\circ$} at 0 0  1 0  2 0 /
\put{$\ssize 1$} at 0 -.3
\put{$\ssize 2$} at 1 -.3
\put{$\ssize 3$} at 2 -.3
\arr{0.2 0}{0.8 0}
\arr{1.2 0}{1.8 0}
\endpicture} at 10.5 0
\put{$\sigma_1\sigma_2\sigma_3$} at 10.5  -.7
\put{} at 2.5 -1
\endpicture}
$$
We should stress that using Dynkin graphs with orientations, there are
two kinds of arrows which should not be confused: namely, besides the arrow
heads describing the orientation, the non-simply-laced Dynkin graphs $\mathbb B_n,
\mathbb C_n, \mathbb F_4, \mathbb G_2 $ also use an arrow (usually drawn in the middle of the
edge) which indicates the relative length of the corresponding roots. Thus, for example,
there are two different Dynkin quivers of type $\mathbb B_2$, namely
$$
{\beginpicture
\setcoordinatesystem units <1.5cm,.7cm> 
\put{\beginpicture
\multiput{$\circ$} at 0 0  1 0 /
\put{$\ssize 1$} at 0 -.3
\put{$\ssize 2$} at 1 -.3
\plot 0.2 0.075  0.8 0.075 /
\plot 0.2 -.075  0.8 -.075 /
\plot  0.6 0.2  0.4 0  0.6 -.2 /
\plot  0.3 0.2  0.1 0  0.3 -.2 /
\endpicture} at 0 0
\put{$\sigma_2\sigma_1$} at 0 -.7

\put{\beginpicture
\multiput{$\circ$} at 0 0  1 0 /
\put{$\ssize 1$} at 0 -.3
\put{$\ssize 2$} at 1 -.3
\plot 0.2 0.075  0.8 0.075 /
\plot 0.2 -.075  0.8 -.075 /
\plot  0.6 0.2  0.4 0  0.6 -.2 /
\plot  0.7 0.2  0.9 0  0.7 -.2 /
\endpicture} at 2.5 0
\put{$\sigma_1\sigma_2$} at 2.5 -.7
\put{} at 2.5 -1
\endpicture}
$$
In both cases, the basis root with label $1$ is a short root, the root with
label $2$ a long root (as indicated by the arrow head in the middle of the edge). 
	\medskip 

\subsubsection{\bf The representation theoretical setting.}
We start with a hereditary artin algebra $\Lambda$
and consider its module category $\mo\Lambda$ (modules are left $\Lambda$-modules of finite length).
 The {\it quiver} $Q(\mathcal C)$ of an abelian category $\mathcal C$ has as vertices
the isomorphism classes $[S]$ of the simple objects $S$, and there is an arrow 
$[T]\rightarrow [S]$ provided $\Ext^1(T,S) \neq 0$. 
Any arrow carries a valuation which records the dimensions of $\Ext^1(T,S)$
as a module over the endomorphism rings of $S$ and $T$, respectively. 
For the category $\mathcal C = \mo\Lambda$, we just
write $Q(\Lambda)$ instead of $Q(\mo\Lambda)$.    
If $\Lambda$ is 
representation-finite, the underlying valued graph of $Q(\Lambda)$
turns out to be the disjoint union of Dynkin diagrams and all Dynkin diagram 
arise in this way. For further details see the note N\,\ref{quiver}.
In Chapter 1, we usually will assume that $\Lambda$ is representation-finite.
	
The representation theoretical objects to be considered are the categories 
$\mo\Lambda$ where $\Lambda$ is a hereditary artin algebra (in Chapters 1 and 4, they will be 
assumed to be
in addition representation-finite).
We denote by $K_0(\Lambda)$ the Grothendieck group of $\Lambda$ (of all $\Lambda$-modules modulo 
exact sequences); this is the free abelian group with basis the isomorphism classes $[S]$ of the simple
modules; if $M$ is a module, $\bdim M$ is the corresponding element in $K_0(\Lambda)$, thus
$\bdim S = [S]$, and $\bdim M = \sum [M:S][S],$ where $[M:S]$ is the Jordan-H\"older multiplicity
of $S$ in $M$.

\begin{theorem} 
{\bf (Gabriel 1972, Dlab-Ringel 1973).} Let $\Lambda$ be a hereditary
artin algebra. Then $\Lambda$ is representation-finite if and only if the valued
quiver $Q(\Lambda)$ is the disjoint union of quivers of 
Dynkin type. If $\Lambda$ is of Dynkin type $\Delta$, then the map $\bdim$ 
provides a bijection between the
isomorphism classes of the indecomposable
$\Lambda$-modules and the positive roots of the root system of type $\Delta$.
\end{theorem}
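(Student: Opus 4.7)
My plan is to combine the Tits quadratic form with the BGP reflection functors and the theory of preprojective modules, following Gabriel in the simply-laced case and Dlab-Ringel in the valued-quiver case.

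Setup. Since $\Lambda$ is hereditary, the Euler form $\langle \bdim M, \bdim N \rangle = \dim_k \Hom_\Lambda(M,N) - \dim_k \Ext^1_\Lambda(M,N)$ descends to a well-defined bilinear form on $K_0(\Lambda)$, suitably renormalized over the residue fields of the simples so as to cover the valued-quiver setting; its symmetrization is the Tits quadratic form $q$ of the underlying valued graph of $Q(\Lambda)$. For any indecomposable $M$ one has $q(\bdim M) \le 1$ in these normalizations, with equality iff $\Ext^1_\Lambda(M,M) = 0$; in particular, if $q$ is positive definite then the dimension vector of every indecomposable is a positive root of the corresponding root system.

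Dynkin case. Assume $Q(\Lambda)$ is a disjoint union of Dynkin diagrams, so $q$ is positive definite and $\Phi_+$ is finite. For each sink (respectively, source) $i$ of $Q(\Lambda)$, the BGP reflection functor $\sigma_i^+$ gives an equivalence between the subcategories of $\mo\Lambda$ and $\mo\sigma_i\Lambda$ consisting of modules with no $S_i$ as a direct summand, and acts on dimension vectors as the simple reflection $\rho_i$ in the Weyl group. Iterating over the sinks in an admissible ordering produces the Coxeter functor, which in the hereditary setting coincides with the Auslander-Reiten translate $\tau$. Starting from the indecomposable projectives and iterating $\tau^{-1}$, one must reach zero after finitely many steps, since otherwise the dimension vectors (being positive roots by the previous paragraph, and distinct along the orbit because $\tau$ is a categorical autoequivalence on its domain) would form an infinite subset of the finite set $\Phi_+$. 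This both shows that every indecomposable is preprojective and, tracking dimension vectors along $\tau^{-1}$-orbits, that $\bdim$ gives a bijection onto $\Phi_+$, each root being hit exactly once.

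Non-Dynkin case. Suppose, conversely, that some component of $Q(\Lambda)$ is not Dynkin; then it contains an extended Dynkin (Euclidean) subgraph, so $q$ admits a nonzero isotropic vector $\delta$ with nonnegative integer entries, the null root. The main obstacle, which I would tackle last, is to convert this analytic failure of positive definiteness into an explicit infinite family of pairwise non-isomorphic indecomposables of $\mo\Lambda$. The standard route is to work inside the exact subcategory of $\mo\Lambda$ corresponding to the Euclidean subquiver and exhibit a one-parameter family of regular indecomposables of dimension vector $\delta$ parametrized by points of $\P^1$ over a suitable field; equivalently, embed the Kronecker quiver as a convex subquiver and invoke its classical classification. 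This shows $\Lambda$ is representation-infinite, completing the equivalence.
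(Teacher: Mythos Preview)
The paper does not give a proof of this theorem; it is stated as a classical result with attribution to Gabriel and Dlab--Ringel and used as background for the rest of the survey. So there is no proof in the paper to compare against.

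Your sketch follows the standard route (Tits form plus BGP/Coxeter functors), and its architecture is sound, but one step deserves care. In the setup you assert that $q(\bdim M)\le 1$ for every indecomposable $M$, with equality iff $\Ext^1(M,M)=0$. This is true, but it is not elementary: over an algebraically closed field it is essentially a consequence of Kac's theorem, and you do not prove it. Since your Dynkin argument then invokes this (``being positive roots by the previous paragraph''), there is a gap. The clean way to avoid it is to argue directly with the Coxeter transformation: in the Dynkin case $c$ has finite order $h$ and no eigenvalue $1$, so $\sum_{k=0}^{h-1}c^{k}(\bdim M)=0$; if $\tau^{k}M\neq 0$ for all $k\ge 0$ this sum would be a strictly positive vector, a contradiction. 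Hence every indecomposable is preprojective, and tracking the $c$-orbits of the simple roots then gives the bijection with $\Phi_+$ without ever needing $q(\bdim M)\le 1$ as input.

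For the non-Dynkin direction your plan is correct but note two points: the Euclidean subgraph need not be $\widetilde{\mathbb A}_1$, so ``embed the Kronecker'' should be replaced by ``restrict to the full subquiver supporting a Euclidean diagram and use the one-parameter family of regular indecomposables of dimension $\delta$ there''; and in the valued (non-simply-laced) setting one must check that the notion of subgraph and the Tits form are handled with the correct symmetrization, which is where the Dlab--Ringel refinements enter.
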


We denote by $\ind\Lambda$ a set of indecomposable $\Lambda$ modules, one from each isomorphism
class. If $X,Y$ are in $\ind \Lambda$, we write $X \subfactor Y$ provided $X$ is a subfactor of $Y$,
thus provided there are submodules $Y'' \subseteq Y' \subseteq Y$ with $X$ isomorphic to $Y'/Y''$.

\begin{theorem} 
{\bf (Dlab-Ringel 1979).} 
Let $k$ be a field with at least $3$ elements. Let
$\Lambda$ be a hereditary finite-dimensional $k$-algebra of Dynkin type $\Delta$.
Then the map $\bdim$ 
provides  an isomorphism of posets
		\smallskip

\Rahmen{$\bdim\!: (\ind \Lambda,\subfactor)\longrightarrow  (\Phi_+,\le)$}
\end{theorem}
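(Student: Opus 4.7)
The preceding theorem supplies a bijection on underlying sets, so what remains is to show that both $\bdim$ and its inverse are order-preserving. The direction $X \subfactor Y \Rightarrow \bdim X \le \bdim Y$ is immediate from additivity: if $X \cong Y'/Y''$ with $Y''\subseteq Y'\subseteq Y$, the short exact sequences $0\to Y''\to Y'\to X\to 0$ and $0\to Y'\to Y\to Y/Y'\to 0$ give
\[
\bdim Y \;=\; \bdim X + \bdim Y'' + \bdim(Y/Y'),
\]
so $\bdim Y - \bdim X$ is a non-negative integer combination of the classes of the simple modules, which under the identification of simples with simple roots witnesses $\bdim X \le \bdim Y$ in the root poset.

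For the converse I would argue by induction on $d = \mathrm{ht}(\bdim Y) - \mathrm{ht}(\bdim X)$. The base $d=0$ forces $\bdim X = \bdim Y$, hence $X\cong Y$ by injectivity of $\bdim$. For the inductive step I would invoke the standard fact about Dynkin root posets: whenever $\alpha<\beta$ in $\Phi_+$, there exists a positive root $\gamma$ with $\alpha<\gamma\le\beta$ and $\gamma-\alpha$ a simple root; equivalently, the Hasse diagram of $\Phi_+$ is generated by the ``add a simple root'' moves, a point already visible in the pictures drawn above. Letting $Z\in\ind\Lambda$ be the indecomposable with $\bdim Z=\gamma$, the outer inductive hypothesis delivers $Z\subfactor Y$, so one is reduced to the \emph{covering case} where $\bdim Y = \bdim X + \epsilon_i$ for some simple root $\epsilon_i$.

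The covering case is the heart of the argument and the main obstacle: one must exhibit an actual short exact sequence $0\to S_i \to Y \to X \to 0$ or $0\to X \to Y\to S_i \to 0$, either of which realizes $X$ as a subfactor of $Y$. The natural tool is the BGP reflection functor at $i$: choose an orientation of $Q(\Lambda)$ in which $i$ is a source (respectively a sink), so that $S_i$ is projective (respectively injective), and exploit the fact that the reflection functor restricts to an equivalence between the subcategories of $\Lambda$- and $\sigma_i\Lambda$-modules having no summand isomorphic to $S_i$, intertwining $\bdim$ with the reflection at $\epsilon_i$. When neither $X$ nor $Y$ equals $S_i$, one transports the pair to the reflected algebra, where the analogous combinatorial configuration becomes simpler after enough iterations (using a secondary induction on the number of reflections needed to make $i$ admissible), and invokes the outer inductive hypothesis on the reflected modules; the exceptional cases where $X=S_i$ or $Y=S_i$ are handled directly, using the projectivity or injectivity of $S_i$ to write down the required extension.

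The hypothesis $|k|\ge 3$ enters precisely here: the BGP machinery over species (needed to cover the non-simply-laced types $\mathbb B,\mathbb C,\mathbb F,\mathbb G$, without which the theorem would not reach the full Dynkin list) requires enough non-proportional scalars to separate subspaces during the reflection construction, and this is the step whose delicacy dominates the proof. A conceptual alternative would be to replace reflection functors by the Auslander--Reiten combinatorics (hammocks in the AR-quiver of $\mo\Lambda$), which gives a more uniform but less elementary description of the set of indecomposable subfactors of a given indecomposable; either way, translating one step in the Hasse diagram of $\Phi_+$ into a short exact sequence in $\mo\Lambda$ is the technical crux.
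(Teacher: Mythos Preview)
The paper does not prove this theorem; it is quoted as a result of Dlab--Ringel (1979), and the only further discussion is note N\,\ref{why}, which explains why the hypothesis $|k|\ge 3$ is \emph{necessary}, not how it is used. So your sketch has to stand on its own.

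Your forward direction and the reduction to covering relations in $\Phi_+$ are fine. The gap is in the covering case, and in particular in your account of where $|k|\ge 3$ enters. BGP reflection functors are built from kernels and cokernels of canonical maps and work over every field, species included; the claim that they ``require enough non-proportional scalars'' is simply false. Moreover, reflecting at a vertex $j\neq i$ sends the difference $\epsilon_i$ to $s_j(\epsilon_i)$, which for $j$ adjacent to $i$ is no longer a simple root, so your transport does not stay inside the covering case; and reflecting at $i$ itself only swaps the roles of $X$ and $Y$ without progress.

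The actual obstruction is exactly the one in note N\,\ref{why}. For the $3$-subspace quiver with sink $0$, take $Y$ indecomposable with $\bdim Y=(2;1,1,1)$ and $X=I(0)$ with $\bdim X=(1;1,1,1)$; here $i=0$ is already a sink, so there is nothing left for your reflection step to do. A surjection $Y\twoheadrightarrow X$ amounts to a linear form on $Y_0\cong k^2$ not vanishing on any of the three distinct lines coming from the arms, and such a form exists iff $|\mathbb P^1(k)|>3$, i.e.\ $|k|\ge 3$; dually $X$ is injective, hence not a proper submodule of the indecomposable $Y$. Thus over $\mathbb F_2$ no short exact sequence realizes this particular covering relation. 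What the covering step genuinely needs is a genericity or counting argument inside $\Hom(Y,X)$ (or $\Hom(X,Y)$) producing a surjective (respectively injective) map, and \emph{that} is where the bound on $|k|$ is spent --- your sketch never reaches this point.
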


It should be stressed that the assumption $|k| \ge 3$ is necessary, see N\,\ref{why}.
	\medskip 

The theorem shows that the root poset $\Phi_+(\Delta)$ can be categorified by  
the indecomposable $\Lambda$-modules, where $\Lambda$ is a hereditary
artin algebra of Dynkin type $\Delta$. We should stress the following: whereas
the category $\ind\Lambda$ strongly depends on the orientation
of the Dynkin quiver $Q(\Lambda)$, the poset $(\ind \Lambda,\subfactor)$ does not
depend on the orientation (as the poset isomorphism  
$\bdim\!: (\ind \Lambda,\subfactor)\ \longrightarrow  (\Phi_+,\le)$ shows).
	\medskip

\subsubsection{} The aim of this survey is to exhibit 
combinatorial data which can be derived from the category $\ind\Lambda$, 
and we are going to emphasize
those which do not depend on the orientation of $Q(\Lambda)$ (at least in case
$Q(\Lambda)$ is a tree). Of particular importance seems to be the set of all  
antichains in $\mo\Lambda$: 
An {\it antichain} in an additive category $\mathcal C$ is a set of pairwise orthogonal bricks
(a brick is an object whose endomorphism ring is a division ring); the note
N\,\ref{antichain} will provide an explanation for the terminology. 
In case
$\mathcal C$ is abelian, starting with an antichain $A$, we can consider the full subcategory 
$\mathcal E(A)$ 
of all objects with a filtration with factors in the antichain: this is a {\it thick} subcategory 
(an exact abelian subcategory which is closed under extensions), again let us refer to some
comments in the note N\,\ref{thick}.
Conversely, given a thick subcategory of $\mathcal C$, the simple objects
in this subcategory form an antichain (this is just Schur's Lemma).
Thus, there is an obvious bijection between antichains and
thick subcategories. 
An antichain is said to be 
{\it exceptional} provided the quiver of $\mathcal E(A)$ has no oriented cyclic paths. 
For hereditary artin algebras $\Lambda$, every antichain is exceptional 
if and only if $\Lambda$ is
representation-finite.  Chapter 3 will be devoted to the study of the poset
	\medskip

\Rahmen{$\A(\mo\Lambda)$}
	\medskip

\noindent 
of exceptional subcategories of $\mo\Lambda.$

\subsection{Dynkin functions}

As the title of part 1 indicates, this part is devoted to numbers, to numbers
which arise from counting problems dealing mainly with representation-finite
hereditary artin algebras $\Lambda$. The numbers we are interested in will depend just
on the Dynkin type of $\Lambda$, but not on the orientation of $Q(\Lambda)$. Thus,
here we deal with what we want to call Dynkin functions.   
	\medskip

\subsubsection{\bf Definition.} 
A {\it Dynkin function} $\f$ attaches to any Dynkin diagram an integer (or more
generally a real number, sometimes even a set
or a sequence of real numbers, for example the sequence of exponents);
thus we get four sequences of numbers, namely 
$$
 \f(\mathbb A_n),\ \f(\mathbb B_n),\ \f(\mathbb C_n),\ \f(\mathbb D_n)
$$
as well as five additional single values 
$$
 \f(\mathbb E_6),\ \f(\mathbb E_7),\ \f(\mathbb E_8),\ \f(\mathbb F_4),\ \f(\mathbb G_2).
$$
(Sometimes, such a function is also defined for the remaining Coxeter diagrams $\mathbb I_2(t),
\mathbb H_3, \mathbb H_4.$)  

Let us draw the attention to Sloane's OEIS \cite{[Sl]}, the {\it Online Encyclopedia of Integer
Sequences.} This is a marvelous tool when dealing with integer sequences, however in
our context it would be nice to be able to use a similar data bank, an OEDF
({\it Online Encyclopedia of Dynkin Functions}), so that the integer sequences which arise
say in case $\mathbb A_n$ immediately refer to corresponding sequences which
arise in the cases $\mathbb B_n, \mathbb C_n, \mathbb D_n$, and also to the numbers which
occur for the exceptional cases. If we look at integral sequences $a = (a_1,a_2,a_3,\dots)$,
it is usually difficult to predict whether for a finite set of indices $n$, the values
$a_n$ are relevant to determine the whole sequence $a$. In the case of a Dynkin
function $\f$, we are in another realm: here the three numbers $\f(\mathbb E_n)$ with
$n = 6,7,8$ seem always to be exciting numbers.
	\medskip 

\subsubsection{\bf Examples.}
Here are some Dynkin functions which one may consider starting with 
hereditary artin algebras $\Lambda$ of Dynkin type $\Delta_n$.
Let us stress that formulations concerning counting of modules are meant as a short
form for counting isomorphism classes of modules.
	\medskip 

$r(\Delta_n)$ the number of indecomposable modules (thus, the number of positive roots).
	\medskip
 
$r_t(\Delta_n)$ the number of indecomposable modules of length $t$ (thus, the number of
positive roots of height $t$).
	\medskip 

$\sinc(\Delta_n)$ the number of sincere indecomposable modules (thus, 
   the number of sincere positive roots). We recall that the {\it support}
   of a module $M$ is the set of simple modules which occur as subfactor of $M$, and
   $M$ is called
   {\it sincere} provided any simple module belongs to its support. 
	\medskip 

$\d(\Delta_n)= x_1\cdots x_n\cdot x_{\text{th}}$, where $x = (x_1,\dots,x_n)$ 
    is the highest root
    (note that $x = \bdim M$, where $M$ is the indecomposable module of maximal length) 
     and $x_{\text{th}}$ is equal to  $1$ plus the number of indices $i$ with $x_i = 1$
    (it is well-known that $x_{\text{th}}$ is just the determinant of the 
    Cartan matrix of type $\Delta_n$).
   For example, the highest root for $\mathbb E_6$ is 
$$
 x = \smallmatrix & & 2\cr 1&2&3&2&1\endsmallmatrix,
$$
thus $\d(\mathbb E_6) = 72,$ namely $1\cdot2\cdot3\cdot2\cdot1\cdot2\cdot x_{\text{th}}$ and $x_{\text{th}} = 
1+2 = 3.$ Similarly,  the highest root for $\mathbb A_n$ is $x = \smallmatrix 1&1&\dots&1\endsmallmatrix$,
thus $x_{\text{th}} = 1+n$, therefore $\d(\mathbb A_n) = n+1.$ 
	\medskip 

$\c(\Delta_n)$ the number of complete exceptional sequences. Recall that a sequence
$(E_1,\dots,E_t)$ of indecomposable $\Lambda$-modules is said to be {\it exceptional}
provided \linebreak 
$\Ext^1(E_i,E_j) = 0$ for $i \ge j$ and $\Hom(E_i,E_j) = 0$ for $i > j$
(in case $t = 2$, one calls it an {\it exceptional pair}). 
An exceptional sequence $(E_1,\dots,E_t)$ is
said to be {\it complete} provided $t$ is equal to the number of simple $\Lambda$-modules.
	\medskip 

$\a(\Delta_n)$ the number of antichains in $\mo\Lambda$.
	\medskip 

$\a_t(\Delta_n)$ the number of antichains in $\mo\Lambda$ of cardinality $t$.
	\medskip 

$\t(\Delta_n)$ the number of (multiplicity-free) support-tilting modules. We recall that a 
    multiplicity-free module $T = \bigoplus_{i=1}^t$ with
    indecomposable direct summands $T_i$ is said to be a {\it support-tilting} module, provided
    $\Ext^1(T,T) = 0$ and $t$ is the cardinality of its support.
	\medskip
 
$\t_s(\Delta_n)$ the number of (multiplicity free) support-tilting modules which are direct sums
    of $s$ indecomposable modules (thus with support of cardinality $s$). 
    In particular:
	\medskip 

$\t_n(\Delta_n)$ is the number of (multiplicity free) tilting modules (since a sincere
    support-tilting module is just a tilting module)
	\bigskip

Of course, there are further important Dynkin functions, for example:
	\medskip 

$|W|$ the order of the Weyl group $W$.
	\medskip 

$|W_i|$ where $W_i$ is the set of elements 
$w\in W$ with fixed point space of dimension $n-i.$ In particular
$W_0$ consists just of the identity element, $W_1$ is the set of reflections.
	\medskip 

\subsubsection{\bf Verification.} 
In order to see that we deal with Dynkin functions, it is necessary to check in
any case that the numbers in question are independent of the orientation. For example, let us
do this for the number $\a_t(\Delta)$ of antichains of cardinality $t$
(or even for the number $\a_t^s(\Delta)$ of antichains of cardinality $t$ with support of
cardinality $s$), since finally these
antichains will be our main concern.
	
\begin{lemma}\label{BGP}
The number $\a_r^s$ of antichains of cardinality $r$
with support of cardinality $s$ does not depend on the orientation.
\end{lemma}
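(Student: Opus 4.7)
The plan is to proceed by induction on the rank $n$ of $Q(\Lambda)$. Any two orientations of a fixed underlying valued graph are connected by a sequence of BGP reflections at sinks (available in the species setting of Dlab--Ringel), so it suffices to compare $\Lambda$ with the algebra $\Lambda'$ obtained from $\Lambda$ by reflecting at a single sink $i$ of $Q(\Lambda)$. The vertex $i$ becomes a source of $Q(\Lambda')$, and crucially $Q(\Lambda)\setminus\{i\} = Q(\Lambda')\setminus\{i\}$, since the reflection only alters arrows incident to $i$. Write $\a_r(\Lambda) = \sum_s \a_r^s(\Lambda)$ for the total number of antichains of cardinality $r$, irrespective of support.

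The core step is to establish the weaker identity $\a_r(\Lambda) = \a_r(\Lambda')$. Split the antichains in $\mo\Lambda$ according to whether or not they contain $S_i$. The BGP reflection functor $R_i^+$ restricts to an equivalence between the full subcategories of $\mo\Lambda$ and $\mo\Lambda'$ consisting of modules with no direct summand isomorphic to $S_i$; since the equivalence preserves bricks, $\Hom$-spaces, and hence pairwise orthogonality, it induces a cardinality-preserving bijection between antichains of $\mo\Lambda$ not containing $S_i$ and antichains of $\mo\Lambda'$ not containing $S_i$. For antichains containing $S_i$: since $i$ is a sink, $S_i = P_i$ is simple projective and $\Hom(S_i,M) = M_i$, so an indecomposable $M$ is orthogonal to $S_i$ if and only if $i \notin \operatorname{supp}(M)$, i.e., $M \in \mo\Lambda_{Q\setminus\{i\}}$; the dual calculation gives the same description in $\mo\Lambda'$, where $i$ is a source and $S_i$ is simple injective. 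Because $Q(\Lambda)\setminus\{i\} = Q(\Lambda')\setminus\{i\}$ as valued quivers, the assignment $A \mapsto A\setminus\{S_i\}$ gives a bijection between antichains of $\mo\Lambda$ containing $S_i$ and antichains of $\mo\Lambda'$ containing $S_i$, shifting cardinality by one in the same way. Combining both cases yields $\a_r(\Lambda) = \a_r(\Lambda')$.

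To upgrade this to the statement for $\a_r^s$, observe that any antichain with support $T$ lies in the Serre subcategory $\mo\Lambda_{Q_T}$ on the full subquiver $Q_T$, and is sincere there. Hence
\[
\a_r^s(\Lambda) \;=\; \sum_{|T|=s}\bigl(\text{sincere antichains of cardinality $r$ in } \mo\Lambda_{Q_T}\bigr).
\]
For every $T$ with $|T|=s<n$, the algebras $\Lambda_{Q_T}$ and $\Lambda_{Q'_T}$ share the same underlying valued graph, so the inductive hypothesis gives equality of their sincere antichain counts. Summing yields $\a_r^s(\Lambda) = \a_r^s(\Lambda')$ for every $s<n$, and the remaining case $s=n$ follows by subtracting these equalities from $\a_r(\Lambda) = \a_r(\Lambda')$.

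The main obstacle is the categorical bookkeeping in the BGP step: one must verify that $R_i^+$ genuinely induces an equivalence of the subcategories excluding $S_i$ (in particular, that no other indecomposable is sent to $S_i$), so that bricks and $\Hom$-orthogonality transport across, and that the perpendicular category of the simple projective $S_i$ coincides with $\mo\Lambda_{Q\setminus\{i\}}$. Once these standard facts are on record, the induction is essentially formal.
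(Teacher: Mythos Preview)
Your proof is correct, and the bijection you use to establish $\a_r(\Lambda)=\a_r(\Lambda')$ is exactly the map $\eta$ the paper constructs. Where you diverge is in the passage from $\a_r$ to $\a_r^s$. The paper stays with the single bijection $\eta$ and argues directly that the support-size discrepancies it creates cancel: in Case~2 the only way $|\operatorname{supp}|$ can change is by $\pm 1$ at the vertex $x$, and the set of $\Lambda$-antichains on which $\eta$ \emph{adds} $x$ to the support (namely, antichains in $\mo\Lambda''$ touching a neighbour of $x$) coincides, as a set of $\Lambda''$-antichains, with the set of $\Lambda'$-antichains on which $\eta^{-1}$ adds $x$; this symmetry forces the counts to match level by level. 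Your route instead inducts on the rank, writing $\a_r^s(\Lambda)=\sum_{|T|=s}\a_r^{|T|}(\Lambda_{Q_T})$ and invoking the inductive hypothesis on the proper subquivers, then recovering $s=n$ by subtraction from the already-proven equality $\a_r(\Lambda)=\a_r(\Lambda')$. Your argument buys conceptual clarity --- it avoids the somewhat delicate bookkeeping of how reflection interacts with support near $x$ --- at the cost of an extra layer of induction; the paper's argument is more self-contained but requires the reader to track the four sub-cases of Case~2 carefully.
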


\begin{proof} 
We consider the set $\A_r^s(\mo\Lambda)$ of antichains of cardinality $r$
with support of cardinality $s$.
Let $x$ be a sink and $\rho_x$ the
BGP-reflection functor for $x$, let $\Lambda' =  \rho_x\Lambda.$
Let $S$ be the simple $\Lambda$-module with support $x$, and $S'$ the
simple $\Lambda'$-module with support $x$.

$$
{\beginpicture
\setcoordinatesystem units <1cm,1cm> 
\put{\beginpicture
\multiput{} at 0 -1  2 1 /
\plot .9 -1 2 -1  2 1  .9 1  .9 -1 /
\put{$x$} at 0 0
\arr{0.7 0.6}{0.3 0.3}
\arr{0.7 0}{0.3 0}   
\arr{0.7 -.6}{0.3 -.3}
\plot -.2 -.2  .2 -.2  .2 .2  -.2 .2  -.2 -.2 /
\put{$\Lambda$} at -.4 .9
\put{$\Lambda''$} at 1.45 0
\endpicture} at 0 0
\put{\beginpicture
\multiput{} at 0 -1  2 1 /
\plot .9 -1 2 -1  2 1  .9 1  .9 -1 /
\put{$x$} at 0 0
\arr{0.3 0.3}{0.7 0.6}
\arr{0.3 0}{0.7 0}   
\arr{0.3 -.3}{0.7 -.6}
\plot -.2 -.2  .2 -.2  .2 .2  -.2 .2  -.2 -.2 /
\put{$\Lambda'$} at -.4 .9
\put{$\Lambda''$} at 1.45 0
\endpicture} at 5 0
\endpicture}
$$
	\medskip 

We claim that there is a bijection
$$
  \eta\!: \A_r(\mo\Lambda) \to \A_r(\mo\Lambda').
$$
Thus, let $A$ be an antichain in $\mo\Lambda$ of cardinality $r$. 

Case 1: If $S$ belongs to $A$, 
then for the remaining modules $A_i$ in $A$, 
we have $(A_i)_x = \Hom(S,A_i) = 0$, thus $A_i$ may be considered as a 
$\Lambda'$-module and
$A' = A\setminus \{S\}\cup \{S'\}$ is an antichain  
in $\mo\Lambda'$ which contains $S'$. 

Case 2. Now assume that 
$S$ does not belong to $A$, then $A' = \{\rho_x(A_i)\mid
A_i\in A\}$ is an antichain in $\mo\Lambda'$ 
which does not contain $S'$. 
	\medskip

Now, let us refine this to deal with $\A_r^s.$

Case 1 is as before: If $S$ belongs to the antichain
$A$, then for the remaining modules $A_i$ in $A$, 
we have $(A_i)_x = \Hom(S,A_i) = 0$, thus $A_i$ may be considered 
as a $\Lambda'$-module and
$A' = A\setminus \{S\}\cup \{S'\}$ is an antichain  
in $\mo\Lambda'$ which contains $S'$. Clearly, both $A$ and $A'$ 
have the same support.

Case 2. Now assume that 
$S$ does not belong to $A$, then $A' = \{\rho_x(A_i)\mid
A_i\in A\}$ is an antichain in $\mo\Lambda'$ which does 
not contain $S'$. 

There are four possibilities, whether $x$ belongs to the support of $A$ or to the support
of $A'$. The cardinality of the support changes in case $x$ does not belong to the support of $A$
but to the support of $A'$ or the other way round.  
Let $\bold B$ be the antichains $A$ such that
$x$ does not belong to the support of $A$, but to the support of $A'$. 
Then these are the
antichains in $\mo\Lambda''$ such that at least one of the elements lives on a vertex which is a neighbor
of $x$. But then the antichains in $\bold B$ (considered as antichains 
of $\Lambda'$-modules)
are precisely the antichains $\rho_x(A)$, where $A$ is an antichain of cardinality $r$
with support of cardinality $s$ such that  $\rho_x(A)$ has support of cardinality $s-1$.
\end{proof}
	
In order to obtain $A'$ from $A$, we have distinguished two cases: we used the
functor $\rho_x$ whenever possible, otherwise we see that the support 
of any element $A_i$ of $A$
is either $\{x\}$ or does not involve $x$, so that $A_i$ is already a $\Lambda'$-module. Thus we can
write: $A' = A$. 
If we look at the corresponding dimension vectors, we see that we deal with {\bf piecewise linear}
functions: the dimension vectors of $A'$ are obtained from those of $A'$ partly
by using the reflection $\rho_x$, partly by taking the identity map.
	\medskip

\subsubsection{\bf Table 1.}

 Here are the values for some of these Dynkin functions.
$$
{\beginpicture
\setcoordinatesystem units <1.6cm,1cm> 
\put{\beginpicture
\multiput{} at -.2 2  9 2 /
\put{$\Delta_n$} at -0.2 2 
\put{$\mathbb A_n$} at 1 2 
\put{$\mathbb B_n$} at 2 2 
\put{$\mathbb D_n$} at 3 2 
\put{$\mathbb E_6$} at 4 2 
\put{$\mathbb E_7$} at 5 2 
\put{$\mathbb E_8$} at 6 2 
\put{$\mathbb F_4$} at 7 2 
\put{$\mathbb G_2$} at 8 2 
\endpicture} at 0 -.7
\plot -5 -1.2  4.2 -1.2 /
\setcoordinatesystem units <1.6cm,1cm> 
\put{\beginpicture
\multiput{} at -.5 0  9 0 /
\put{$h$} at -.2 0

\put{$n+1$} at 1 0
\put{$2n$} at 2 0
\put{$2(n-1)$} at 3 0
\put{$12$} at 4 0
\put{$18$} at 5 0
\put{$30$} at 6 0
\put{$12$} at 7 0
\put{$6$} at 8 0
\put{$\ssize 2^2\cdot 3$} at 4 -0.35
\put{$\ssize 2\cdot 3^2$} at 5 -0.35
\put{$\ssize 2\cdot 3\cdot 5$} at 6 -0.35
\put{$\ssize 2^2\cdot 3$} at 7 -0.35
\put{$\ssize 2\cdot 3^{\phantom2}$} at 8 -0.35
\endpicture} at 0 -2

\setcoordinatesystem units <1.6cm,1cm> 
\put{\beginpicture
\multiput{} at -.2 0  9 0 /

\put{$\d(\Delta_n)$} at -.2 -0
\put{$n+1$} at 1 -0
\put{$2^n$} at 2 -0
\put{$2^{n-1}$} at 3 -0
\put{$72$} at 4 0
\put{$576$} at 5 0
\put{$16740$} at 6 0
\put{$48$} at 7 0
\put{$6$} at 8 0
\put{$\ssize 2^3\cdot 3^2$} at 4 -0.35
\put{$\ssize 2^6\cdot 3^2$} at 5 -0.35
\put{$\ssize 2^7\cdot 3^3\cdot 5$} at 6 -0.35
\put{$\ssize 2^4\cdot 3$} at 7 -0.35
\put{$\ssize 2\cdot 3^{\phantom2}$} at 8 -0.35
\endpicture} at 0 -3

\setcoordinatesystem units <1.6cm,1cm> 
\put{\beginpicture
\multiput{} at -.2 0  9 0 /

\put{$\c(\Delta_n)$} at -.2 0
\put{$(n\!+\!1)^{n-1}$} at 1 0
\put{$n^n$} at 2 0
\put{$2(n\!-\!1)^n$} at 3 0
\put{$\ssize 2^9\cdot 3^4$} at 4 -.35
\put{$\ssize 2\cdot 3^{12}$} at 5 -.35
\put{$\ssize 2\cdot 3^5\cdot 5^7$} at 6 -.35
\put{$\ssize 2^4\cdot 3^3$} at 7 -.35
\put{$\ssize 2\cdot 3$} at 8 -.35

\put{$41472$} at 4 0
\put{$1062882$} at 5 0
\put{$37968750$} at 6 0
\put{$432$} at 7 0
\put{$6$} at 8 0
\endpicture} at 0 -4

\setcoordinatesystem units <1.6cm,1cm> 
\put{\beginpicture
\multiput{} at -.4 0  9 0 /

\put{$|W|$} at -.2 0
\put{$(n+1)!$} at 1 0
\put{$2^n\cdot n!$} at 2 0
\put{$2^{n-1}n!$} at 3 0
\put{$51840$} at 4 0
\put{$2903040$} at 5 0
\put{$696729600$} at 6.07 0
\put{$1152$} at 7.08 0
\put{$12$} at 8 0
\put{$\ssize 2^7\cdot 3^4\cdot 5$} at 4 -.35
\put{$\ssize 2^{10}\cdot 3^4\cdot 5 \cdot 7$} at 5 -.35
\put{$\ssize 2^{14}\cdot 3^5\cdot 5^2\cdot 7$} at 6 -.35
\put{$\ssize 2^7\cdot 3^2$} at 7 -.35
\put{$\ssize 2^2\cdot 3$} at 8 -.35
\endpicture} at 0 -5

\setcoordinatesystem units <1.6cm,1cm> 
\put{\beginpicture
\multiput{} at -.4 0  9 0 /
\put{$|\Phi_+|$} at -.2 0
\put{$\binom{n+1}2$} at 1 0
\put{$n^2_{\phantom g}$} at 2 0
\put{$n(n-1)$} at 3 0
\put{$36$} at 4 0
\put{$63$} at 5 0
\put{$120$} at 6 0
\put{$24$} at 7 0
\put{$6$} at 8 0
\put{$\ssize 2^2\cdot 3^2$} at 4 -0.35
\put{$\ssize 3^2\cdot 7$} at 5 -0.35
\put{$\ssize 2^3\cdot 3\cdot 5$} at 6 -0.35
\put{$\ssize 2^3\cdot 3$} at 7 -0.35
\put{$\ssize 2\cdot 3$} at 8 -0.35
\endpicture} at 0 -6

\endpicture}
$$
Observation concerning the prime factors which appear:
With the exception of $\mathbb A_n$, all the prime factors
are bounded by $n$, for $\mathbb A_n$ the bound is $n+1$.
	\medskip 

\subsubsection{\bf Table 2.} 
Here are the values for further Dynkin functions.
$$
{\beginpicture
\setcoordinatesystem units <1.4cm,.8cm> 
\put{\beginpicture
\multiput{} at -2 2  9 2 /
\put{$\Delta_n$} at -1 2 
\put{$\mathbb A_n$} at .8 2 
\put{$\mathbb B_n$} at 1.9 2 
\put{$\mathbb D_n$} at 3 2 
\put{$\mathbb E_6$} at 4 2 
\put{$\mathbb E_7$} at 5 2 
\put{$\mathbb E_8$} at 6 2 
\put{$\mathbb F_4$} at 7 2 
\put{$\mathbb G_2$} at 8 2 
\endpicture} at 0 -6.3
\plot -5.2 -6.8  4.8 -6.8 /

\setcoordinatesystem units <1.4cm,.9cm> 
\put{\beginpicture
\multiput{} at -2 0  9 0 /

\put{$\sinc(\Delta_n)$} at -1 0
\put{$1$} at .8 0
\put{$n$} at 1.9 0
\put{$n-2$} at 3 0
\put{$7$} at 4 0
\put{$16$} at 5 0
\put{$44$} at 6 0
\put{$10$} at 7 0
\put{$4$} at 8 0
\put{$\ssize 7$} at 4 -.35
\put{$\ssize 2^{4}$} at 5 -.35
\put{$\ssize 2^{2}\cdot 11$} at 6 -.35
\put{$\ssize 2\cdot 5$} at 7 -.35
\put{$\ssize 2^2$} at 8 -.35
\endpicture} at 0 -7.5

\setcoordinatesystem units <1.4cm,.9cm> 
\put{\beginpicture
\multiput{} at -2 0  9 0 /

\put{$\a(\Delta_n) = \t(\Delta_n)$} at -1 0
\put{$\frac1{n+2}\binom{2n+2}{n+1}$} at .8 0
\put{\small Catalan numbers} at 0.7 -.6
\put{$\binom{2n}{n}$} at 1.9 0
\put{$\frac{3n-2}n\binom{2n-2}{n-1}$} at 3 0
\put{833} at 4 0
\put{4160} at 5 0
\put{25080} at 6 0
\put{105} at 7 0
\put{8} at 8 0
\put{$\ssize 7^2\cdot17$} at 4 -.35
\put{$\ssize 2^6\cdot5\cdot13$} at 5 -.35
\put{$\ssize 2^3\cdot3\cdot5\cdot11\cdot19$} at 6 -.35
\put{$\ssize 3\cdot5\cdot 7$} at 7 -.35
\put{$\ssize 2^3$} at 8 -.35
\endpicture} at 0 -9

\setcoordinatesystem units <1.4cm,.9cm> 
\put{\beginpicture
\multiput{} at -2 0  9 0 /

\put{$\a_t(\Delta_n)$} at -1 0
\put{$\tfrac1{n+1}\binom{n+1}t\binom{n+1}{t+1}\qquad$} at .9 0
\put{$\binom nt^2_{\phantom g}$} at 1.95 0
\put{$\ssize \a_t(\mathbb D_n)$} at 3 0
\put{$\smallmatrix 1\cr 36\cr 204\cr 351 \cr 204 \cr 36\cr 1 \endsmallmatrix$} at 4 0
\put{$\smallmatrix 1\cr 63 \cr 546\cr  1470 \cr 1470\cr 546\cr 63 \cr 1
   \endsmallmatrix$} at 5 0
\put{$\smallmatrix 1\cr 120\cr 1540\cr 6120\cr 9518\cr 6120\cr 1540\cr 120 \cr 1 \endsmallmatrix$} at 6 0
\put{$\smallmatrix 1\cr 24\cr 55 \cr 24 \cr 1\endsmallmatrix$} at 7 0
\put{$\smallmatrix 1\cr 6\cr 1\endsmallmatrix$} at 8 0
\put{\small Narayana numbers} at 0.7 -.6
\endpicture} at 0 -11

\setcoordinatesystem units <1.4cm,.9cm> 
\put{\beginpicture
\multiput{} at -2 0  9 0 /

\put{$\t_s(\Delta_n)$} at -1 0
\put{$\tfrac{n-s+1}{n+1}\binom{n+s}s$\qquad} at 0.82 0
\put{$\binom{n+s-1}s$} at 1.85 0
\put{$\ \ \ssize \t_s(\mathbb D_n)$} at 2.95 0
\put{$\smallmatrix 1\cr 6\cr 20\cr 50 \cr 110 \cr 228\cr 418 \endsmallmatrix$} at 4 0
\put{$\smallmatrix 1\cr 7 \cr 27\cr  77 \cr 187\cr 429\cr 1001 \cr 2431
   \endsmallmatrix$} at 5 0
\put{$\smallmatrix 1\cr 8\cr 35\cr 112\cr 299\cr 728\cr 1771\cr 4784 \cr 17342 \endsmallmatrix$} at 6 0
\put{$\smallmatrix 1\cr 4\cr 10 \cr 24 \cr 66\endsmallmatrix$} at 7 0
\put{$\smallmatrix 1\cr 2\cr 5\endsmallmatrix$} at 8 0
\endpicture} at 0 -14

\setcoordinatesystem units <1.4cm,.9cm> 
\put{\beginpicture
\multiput{} at -2 0  9 0 /

\put{$\t_n(\Delta_n)$} at -1 0
\put{$\frac1{n+1}\binom{2n}{n}$} at .8 0
\put{\small Catalan numbers} at 0.7 -.6
\put{$\binom{2n-1}{\ n-1}$} at 1.8 0
\put{$\frac{3n-4}{2n-2}\binom{2n-2}{n-2}$} at 3 0
\put{418} at 4 0
\put{2431} at 5 0
\put{17342} at 6 0
\put{66} at 7 0
\put{5} at 8 0
\put{$\ssize 2\cdot11 \cdot19$} at 4 -.35
\put{$\ssize 11\cdot13\cdot17$} at 5 -.35
\put{$\ssize 2\cdot13\cdot23\cdot29$} at 6 -.35
\put{$\ssize 2\cdot3\cdot11$} at 7 -.35
\put{$\ssize 5$} at 8 -.35
\endpicture} at 0 -16

\endpicture}
$$
with 
$$
{\beginpicture
\setcoordinatesystem units <1cm,.6cm> 
\put{$ \a_t(\mathbb D_n)$} [r] at -.2 0
\put{$= \binom nt^2 - \frac n{n-1}\binom{n-1}{t-1}\binom{n-1}t$} [l] at 0 0
\put{$= \binom nt^2 - \binom{n}{t}\binom{n-2}{t-1}$} [l] at 0 -1
\put{$= \binom nt \binom{n-2}{t-1}\frac{n(n-1)-t(n-t)}{t(n-t)},$} [l] at 0 -2
\endpicture}
$$
for $1\le t \le n-1$ and
$$
  \t_s(\mathbb D_n) = \frac{n+2s-2}{n+s-2}\binom{n+s-2}s
$$
for $0\le s \le n$.
	\medskip 

Here are the missing factorizations for  $\a_t(\Delta_n)$ and $\t_s(\Delta_n)$:
$$
{\beginpicture
\setcoordinatesystem units <2.5cm,.9cm> 
\put{$\mathbb E_6$} at 1 -.2
\put{$\mathbb E_7$} at 2 -.2
\put{$\mathbb E_8$} at 3 -.2 
\put{$\mathbb F_4$} at 4 -.2
\put{$\a_t(\Delta_n)$} at 0 -1
\put{$\smallmatrix 
       204 = 2^2\cdot3\cdot17\cr 
       351 = 3^3\cdot 13 \endsmallmatrix$} at 1 -1
\put{$\smallmatrix  
       546 = 2\cdot3\cdot7\cdot 13 \cr  
       1470 = 2\cdot3\cdot5\cdot7^2\cr 
         \endsmallmatrix$} at 2 -1
\put{$\smallmatrix 
      1540 = 2^2\cdot 5\cdot7\cdot 11\cr 
      6120 = 2^3 \cdot 3^2\cdot 5\cdot 17\cr 
      9518 = 2\cdot 4759
      \endsmallmatrix$} at 3 -1
\put{$\smallmatrix  55 = 5 \cdot 11 
      \endsmallmatrix$} at 4 -1

\put{$\t_s(\Delta_n)$} at 0 -3
\put{$\smallmatrix 
       20 = 2^2\cdot5\cr 
       50 = 2\cdot 5^5 \cr
       110 = 2\cdot 5\cdot 11 \cr
       228 = 2^2\cdot3\cdot 19 \cr
       418 = 2\cdot 11\cdot 19 
       \endsmallmatrix$} at 1 -3
\put{$\smallmatrix  
       27 = 3^3 \cr  
       77 = 7\cdot11\cr 
       187 = 11\cdot17\cr 
       429 = 3\cdot11\cdot13\cr 
       1001 = 7\cdot11\cdot13\cr 
       2431 = 11\cdot13\cdot17
         \endsmallmatrix$} at 2 -3
\put{$\smallmatrix 
       35 = 5\cdot 7\cr 
       112 = 2^4 \cdot 7\cr 
       299 = 13\cdot 23\cr 
       728 = 2^3 \cdot 7\cdot13\cr 
       1771 = 7 \cdot 11\cdot 23\cr 
       4784 = 2^4 \cdot 13\cdot 23\cr 
       17342 = 2\cdot 13\cdot23\cdot29
      \endsmallmatrix$} at 3 -3
\put{$\smallmatrix  
       10 = 2 \cdot 5 \cr
       24 = 2^3 \cdot 3 \cr
       66 = 6 \cdot 11 
      \endsmallmatrix$} at 4 -3
\endpicture}
$$
\vfill\eject
		\medskip

Observations concerning the prime factors which appear in table 2 as well as
in the subsequent material:
	
The numbers $\a(\Delta)$ and $\a_t(\Delta)$:
{\it Always, at most one prime factor $p$ is greater than $h$} and with the
exception of the central coefficient for $\mathbb E_8$, one has $p < n(n-1)$.
(The central coefficient for $\mathbb E_8$ has the surprising prime factor $\bold {4759}$.)
This is clear for $\mathbb A_n$ and $\mathbb B_n$. In case $\mathbb D_n$, the prime factors
$p > h$ must divide $n(n-1)-t(n-t)$, thus $p < n(n-1)$, and only one such prime factor is
possible. For the exceptional cases, the factorizations are listed above.
	
The numbers $\t_s(\Delta_n)$: Here only primes bounded by $h$ play a role.
	\medskip 

\subsubsection{\bf Table 3.}
Let us draw the attention to the values $f(\Delta)$ 
which occur for the infinite sequences
$\Delta = \mathbb A_n, \mathbb B_n, \mathbb C_n, \mathbb D_n$. Actually, we prefer to order the columns
differently, namely, first $\mathbb A_n$, then $\mathbb D_n$, and finally the common values
for $\mathbb B_n, \mathbb C_n$.  
$$
{\beginpicture
\setcoordinatesystem units <2.5cm,.8cm> 
\put{\beginpicture
\multiput{} at -1.5 2  9 2 /
\put{$\Delta_n$} at -1 2 
\put{$\mathbb A_n$} at .75 2 
\put{$\mathbb B_n, \mathbb C_n$} at 3 2 
\put{$\mathbb D_n$} at 2 2 
\endpicture} at 0 -.7
\plot -5.2 -1.2  0 -1.2 /
\put{\beginpicture
\multiput{} at -1.5 0  9 0 /

\put{$h$} at -1 0
\put{$n+1$} at .75 0
\put{$2n$} at 3 0
\put{$2(n-1)$} at 2 0
\endpicture} at 0 -2

\put{\beginpicture
\multiput{} at -1.5 0  9 0 /

\put{$\d(\Delta_n) = x_1\cdots x_nx_{\text{th}}$} at -.9 -0
\put{$n+1$} at .75 -0
\put{$2^n$} at 3 -0
\put{$2^{n-1}$} at 2 -0
\endpicture} at 0 -3

\put{\beginpicture
\multiput{} at -1.5 0  9 0 /

\put{$\c(\Delta_n)$} at -1 0
\put{$(n\!+\!1)^{n-1}$} at .8 0
\put{$n^n$} at 3 0
\put{$2(n\!-\!1)^n$} at 2 0
\endpicture} at 0 -4

\put{\beginpicture
\multiput{} at -1.5 0  9 0 /

\put{$|W|$} at -1 0
\put{$(n+1)!$} at .75 0
\put{$2^n\cdot n!$} at 3 0
\put{$2^{n-1}n!$} at 2 0
\endpicture} at 0 -5

\put{\beginpicture
\multiput{} at -1.5 0  9 0 /
\put{$|\Phi_+|$} at -1 0
\put{$\binom{n+1}2$} at .75 0
\put{$n^2_{\phantom g}$} at 3 0
\put{$n(n-1)$} at 2 0
\endpicture} at 0 -6

\put{\beginpicture
\multiput{} at -1.5 0  9 0 /

\put{$\sinc(\Delta_n)$} at -1 0
\put{$1$} at .75 0
\put{$n$} at 3 0
\put{$n-2$} at 2 0
\endpicture} at 0 -7.5

\put{\beginpicture
\multiput{} at 0 0  9 0 /

\put{$\a(\Delta_n) = \t(\Delta_n)$} at -1 0
\put{$\left]\smallmatrix 2n+2\cr n+1\endsmallmatrix\right[$} at .75 0
\put{\rmk Catalan numbers} at 0.75 -.6
\put{$\left[\smallmatrix 2n-1\cr n-1\endsmallmatrix\right]$} at 2 0
\put{$\left(\smallmatrix 2n\cr n\endsmallmatrix\right)$} at 3 0
\endpicture} at 0 -9

\put{\beginpicture
\multiput{} at -1.5 0  9 0 /

\put{$\a_t(\Delta_n)$} at -1 0
\put{$\tfrac1{n+1}\binom{n+1}t\binom{n+1}{t+1}\qquad$} at .85 0
\put{$\binom nt^2_{\phantom g}$} at 3.05 0
\put{$\binom nt \binom{n-2}{t-1}\frac{n(n-1)-t(n-t)}{t(n-t)}$} at 2.05 0
\put{\rmk Narayana numbers} at 0.75 -.6
\endpicture} at 0 -11

\put{\beginpicture
\multiput{} at -1.5 0  9 0 /

\put{$\t_s(\Delta_n)$} at -1 0
\put{$\ssize 0 \le s < n$} at -1 -.5

\put{$\left]\smallmatrix n+s\cr s\endsmallmatrix\right[ $} at .75 0
\put{$\left[\smallmatrix n+s-2\cr s\endsmallmatrix\right]$} at 2 0
\put{$\left(\smallmatrix n+s-1\cr s\endsmallmatrix\right)$} at 3.03 0

\endpicture} at 0 -12.5

\put{\beginpicture
\multiput{} at -1.5 0  9 0 /

\put{$\t_n(\Delta_n)$} at -1 0
\put{$\left]\smallmatrix 2n\cr n\endsmallmatrix\right[ $} at .75 0
\put{\rmk Catalan numbers} at 0.75 -.6
\put{$\left[\smallmatrix 2n-2\cr n-2\endsmallmatrix\right]$} at 2 0
\put{$\left(\smallmatrix 2n-1\cr n-1\endsmallmatrix\right)$} at 3.03 0
\endpicture} at 0 -14

\endpicture}
$$
Here, in three rows, namely the rows for $\a(\Delta_n)= \t(\Delta_n)$ and
for all $\t_s(\Delta_n)$, we have used square-bracket notations in order to
indicate the parallelity to the binomial coefficients. The binomial
coefficients themselves arise in the cases $\mathbb B$ and $\mathbb C$ 
(thus in the last column). The notation 
$\left[\smallmatrix t\cr s \endsmallmatrix\right] = \frac{s+t}t\binom t s$ (used in the 
third column) was proposed by 
Bailey \cite{[Ba]}: this concerns the cases $\mathbb D$.
It has been suggested in \cite{[ONFR2]} to write similarly
$\left]\smallmatrix t\cr s\endsmallmatrix\right[ = \frac{t-2s+1}{t-s+1}\binom t s$.
This is done in the second column and concerns the classical case $\mathbb A$.

The reader should observe that for $\Delta_n$ equal to 
$\mathbb A_n$ or $\mathbb B_n$, the formula given for $\t_s(\Delta_n)$ and $0\le s < n$
works also for $s = n$. This is not the case for $\mathbb D_n$: 
Whereas $\binom{2n-2}{n-2} = \binom{2n-2}n$, the numbers 
$\left[{\smallmatrix 2n-2\cr n-2\endsmallmatrix}\right]$ and 
$\left[{\smallmatrix 2n-2\cr n\endsmallmatrix}\right]$ 
are different.
	\bigskip 

Four observations should be mentioned. 
	\medskip 

{\bf First} of all, it seems that often the numbers which arise for $\mathbb B$ (and $\mathbb C$)
are given by very simple expressions, whereas those for $\mathbb D$ tend to be similar
to those obtained for $\mathbb B$, but sometimes much 
more complicated (see for example the row $\a_t(\Delta_n)$). There has been a tendency in
representation theory to avoid working with non-simply-laced Dynkin diagrams,
thus restricting the attention to the cases $\mathbb A_n, \mathbb D_n, \mathbb E_6, \mathbb E_7$
and $\mathbb E_8$. But to avoid the really nice cases $\mathbb B_n$ seems to be a mistake!
The slight increase of difficulties when looking at species and not just quivers
is definitely honored by the unified numerical picture which one obtains.  

{\bf Second,} the numbers presented in the table are increasing from left to right
as soon as we rearrange the columns (as we have done): first the case $\mathbb A_n,$ then
the case $\mathbb D_n$ and finally the cases $\mathbb B_n$ and $\mathbb C_n$.
(Actually, there are further reasons to prefer the sequence $\mathbb A_n, \mathbb D_n,
\mathbb B_n, \mathbb C_n$ over the alphabetical order: This ordering corresponds to the
quite natural ordering of geometries, starting with affine geometry, followed
by the orthogonal geometry and ending with symplectic geometry --- this is
the sequence geometries are usually taught, with symplectic geometry as the
climax, or as an afterthought which is left to the students as an exercise.) 

{\bf Third.} We see that the Catalan numbers 
$$
 C_n = \tfrac1{n+1}\tbinom{2n}n= \left]\smallmatrix 2n\cr n\endsmallmatrix\right[ 
$$ 
appear as values for the Dynkin diagrams of type $\mathbb A$, looking
at two different counting problems, namely we have 
$$
 \a(\mathbb A_{n-1})=C_n, \quad\text{as well as}\quad \t_n(\mathbb A_n)= C_n
$$ 
(of course, there is an index shift). 
For the remaining Dynkin diagrams $\mathbb B_n,\dots,\mathbb G_2$, 
these Dynkin functions $\a$ and $\t_n$ take completely different values. 
Thus, we deal with two different generalizations of the Catalan combinatorics:
to look at the set of antichains in $\mo \Lambda$ (this yields the function $\a$)
and to look at the tilting modules in $\mo \Lambda$ (this yields the function $\t_n$).

{\bf Fourth.} The non-zero numbers $\t_s(\Delta_n)$ with $\Delta = \mathbb A, \mathbb B, \mathbb D$ 
yield three triangles which have similar properties, see \cite{[ONFR2]}.
The triangle for $\mathbb A$ is the Catalan triangle itself
(this is Sloane's sequence A009766). The triangle for $\mathbb B$ is the triangle A059481, it 
corresponds to the increasing part of the Pascal triangle
(thus it consists of the binomial coefficients $\binom ts$ with $2s\le t+1$).
The triangle for $\mathbb D$ is  an expansion of the increasing part of
the Lucas triangle  A029635: taking the increasing parts of the rows
in the Lucas triangle (thus the numbers $\left[\smallmatrix t\cr s \endsmallmatrix\right]$ with
$2s\le t+1$), one obtains numbers which occur in the triangle $\mathbb D$, 
namely the numbers $\t_s(\mathbb D_n)$
with $0 \le s < n$. The numbers $\t_n(\mathbb D_n)$ on the diagonal have to be treated separately:
recall that  $\t_s(\mathbb D_n) = \left[{\smallmatrix n+s-2\cr s\endsmallmatrix}\right]$,
whereas $\t_n(\mathbb D_n) = \left[{\smallmatrix 2n-2\cr n-2\endsmallmatrix}\right]$.
	\medskip

\subsubsection{\bf Why counting?} The result may give an indication about structural
similarities. If for two different counting problems we obtain 
the same numbers, one may ask whether there is a natural bijection
between the sets in question. 
And if we find one, it may turn out 
such a bijection exists in similar situations looking at sets which are
no longer finite. 

For example, if the answer to a counting
problem turns out to be $(n+1)^{n-1}$, as it is
for counting the number of complete exceptional sequences in case $\mathbb A_n$, 
one may try
to find a bijection between the complete exceptional sequences and
labeled trees. In Section 2.4 we will present a bijection between the
support-tilting modules and the 
normal partial tilting modules. One could have asked for such a bijection
as soon as one had observed that in the representation-finite cases
the numbers
of multiplicity-free modules which are 
support-tilting or normal coincide.  

We should stress that 
the appearance of the same Dynkin functions in different parts of mathematics
has been a great stimulus to look for corresponding relations. 
As examples, let us mention here questions in singularity theory \cite{[B1],[B2],[D],[Lo]} and
the study of ideals in Lie theory \cite{[CP1], [CP2], [Pa], [Sm]}.

\subsection{The Exponents}
	\medskip 

There is a unified, but quite mysterious way to deal with some of the Dynkin functions,
namely to invoke the so-called exponents.
If $\Delta = \Delta_n$ is a root system of rank $n$, there is attached a sequence 
$\epsilon_1 \le \epsilon_2 \le \cdots\le \epsilon_n$ of positive integers, the {\it exponents.}
	\medskip

\subsubsection{}
The relevance of the exponents is revealed by the classical definitions:
\begin{itemize}

\item The {\bf eigenvalues of a Coxeter transformation} $c$ are of the form $\zeta^{\epsilon_i}$,
  where $\zeta$ is a primitive $h$-th root of unity 
 (here, $h$ is the Coxeter number, this is the order of $c$),
  and $1\le i \le n.$  
\item The degrees of a basic set of
   {\bf invariants} of $W$ acting on $V$
   are $\epsilon_1\!+\!1,$ $\dots,\epsilon_n\!+\!1.$
\item The degrees of a basic set of {\bf $\H$-derivations} (see Section 1.5) 
  are $\epsilon_1,\dots,\epsilon_n.$
\end{itemize}

Some properties of the sequence of the exponents $(\epsilon_1,\epsilon_2,\cdots,\epsilon_n)$:

\begin{itemize} 

\item These are $n$ positive integers.
\item If $\epsilon_i$ is an exponent, also $h-\epsilon_i$ is an exponent 
     (in particular, $\epsilon_i < h$ for all $i$).
\item The exponents are usually pairwise different, the only exceptions are the
    cases $\mathbb D_{2n}$: here $2n-1$ is twice an exponent. 
\item Actually, the exponents are easy to remember for the series: for $\mathbb A_n$, 
    these are just the first $n$ numbers
    $1,2,\dots,n$; for $\mathbb B_n$ and $\mathbb C_n$, take the first 
    $n$ odd numbers: $1,3,5,\dots,2n-1$,
    finally
    for $\mathbb D_n$, take the first $n-1$ odd numbers as well as $n-1$ itself, thus
    $1,3,5,\dots,2n-3,n-1.$ (As we have mentioned above, often the cases $\mathbb D_n$ 
    turn out to be more complicated than the
    cases $\mathbb A_n$ and $\mathbb B_n$).
\item {\it If $p < h$ is a prime number with $(p,h)=1,$ then $p$ is an exponent} 
    (see for example \cite{[H2]}, 3.20). This result, together with the facts that
    the number of exponents is $n$, that $1$ is an exponent, and that with $\epsilon$ 
    also $h-\epsilon$ is an exponent,
    determines the exponents uniquely: For $\mathbb E_7$, one needs precisely one exponent which is
    different from $1$ and not a prime, this can be only $9$; for $\mathbb E_6$ 
    one needs precisely two exponents which are
    different from $1$ and not primes, these can be only the numbers $4$ and $8$
    (altogether we see that the exponents for the exceptional cases are prime powers).
\end{itemize}
	\medskip 

\subsubsection{\bf Formulas, using the exponents.}
$$
{\beginpicture
\setcoordinatesystem units <1cm,.8cm> 
\put{$(1)$} at -5 0 
\put{$h$} [r] at -.2 0
\put{$= \epsilon_n+1 $} [l] at 0 0 

\put{$(2)$} at -5 -1 
\put{$\d(\Delta_n)= x_1\cdots x_nx_{\text{th}} $} [r] at -.2 -1
\put{$= \prod \frac{\epsilon_i+1}i$} [l] at 0 -1

\put{$(3)$} at -5 -2 
\put{$\c(\Delta_n)$} [r] at -.2 -2
\put{$= \frac1{|W|} n!h^n  =
  \frac{h^n}{x_1\cdots x_nx_{\text{th}}}= h^n\prod \frac{i}{\epsilon_i+1}$} [l] at 0 -2

\put{$(4)$} at -5 -3 
\put{$|W|$} [r] at -.2 -3
\put{$= \prod (\epsilon_i+1) $} [l]  at 0 -3

\put{$(4$'$)$} at -5 -4
\put{$\sum _{i=1}^n |W_i|t^i$} [r] at -.2 -4
\put{$= \prod (1+\epsilon_it) $} [l] at 0 -4

\put{$(5)$} at -5 -5 
\put{$ |\Phi_+|$} [r] at -.2 -5
\put{$ = \tfrac12nh = \sum \epsilon_i,$}   [l] at 0 -5 

\put{$(6)$} at -5 -6
\put{$  \sinc(\Delta_n)$} [r] at -.2 -6
\put{$=nh\prod\nolimits_{i\ge 2}\frac{\epsilon_i\!-\!1}{\epsilon_i+1}$}  [l] at 0 -6

\put{$(7)$} at -5 -7
\put{$ \a(\Delta_n) $} [r] at -.2 -7
\put{$= \frac1{|W|}\prod (h+\epsilon_i+1) = \prod\frac{h+\epsilon_i+1}{\epsilon_i+1}$}  [l] at 0 -7

\put{$(8)$} at -5 -8
\put{$\t_n(\Delta_n))$} [r] at -.2 -8
\put{$= \frac1{|W|}\prod (h+\epsilon_i-1) = \prod\frac{h+\epsilon_i-1}{\epsilon_i+1}$}  [l] at 0 -8
\endpicture}
$$
	\medskip 

It is not completely clear to us, who first found these formulas.
Formula (4$'$) may have been the first one obtained, it
is due to Shephard-Todd \cite{[ST]}, 1954. 
Of course, formula (4) is a special case of (4$'$), namely $t=1.$ 
Some of the other formulas seem to be due to Chapoton. 

The formulas for $\a(\Delta_n)$ and $\t_n(\Delta_n)$
are taken from Fomin and Zelevinsky \cite{[FZ]}, 2003. 
They showed that $\a_n(\Delta)$ is the number of clusters for a cluster algebra of type
$\Delta_n$. They write:  
{\it we are grateful to Fr\'ed\'eric Chapoton who observed that
these expressions, which we obtained on a case by case basis, can be replaced by the
unifying formula $a(\Delta_n) = \prod\frac{h+\epsilon_i+1}{\epsilon_i+1}$. F.~Chapoton brought to our attention
that these numbers appear in the study of non-crossing partitions by V.~Reiner, C.~Athanasiadis and 
A.~Postnikov.} And they add:  
{\it The appearance of the exponents in the
formula $a(\Delta_n) = \prod\frac{h+\epsilon_i+1}{\epsilon_i+1}$ for the number of clusters is a mystery 
for us at the moment. To add to this mystery, a similar expression can be given for the number of 
positive clusters,} namely $a'_n(\Delta_n) = \prod\frac{h+\epsilon_i-1}{\epsilon_i+1}$.
It seems that this mystery has not been resolved until now.
So it is a challenge for the readers.

\subsection{The height partition}

It seems to be worthwhile to gather as much information as possible on the exponents,
in particular about the relationship between
the exponents and the positive roots. We saw already:
\begin{itemize} 
\item the number of exponents is the number of simple roots.
\item the sum of the exponents is just the number of positive roots.
\end{itemize} 

\noindent
But there is a more intense interrelation between the positive roots
and the exponents: the exponents may seem to be mysterious, but they are easily obtained
from the root poset!
Akyildiz and Carrell wrote a paper \cite{[AC]} with the  title:
{\it Betti numbers of smooth Schubert varieties 
and the remarkable formula of Kostant, Macdonald, Shapiro, and Steinberg.}
The result in question has been presented by Humphreys in his book on Reflection Groups,
we will recall it next.
The first published proof is by Kostant (1959),  with a reference to unpublished investigations
of Shapiro, further proofs were given by Macdonald (1972) and Steinberg.
	 \medskip 

\subsubsection{}
We say that a sequence of numbers $r_1,r_2,\dots,r_t$ is a {\it Young partition}
provided $r_1\ge r_2 \ge \cdots r_t \ge 0$ (see the note N\,\ref{Young}). 
Given a Young partition $r = (r_1,r_2,\dots,r_t)$
its {\it dual} partition $r' = (r'_1,r'_2,\dots)$ is defined as follows: $r'_j$ is the number of indices 
$i$ with $r_i \ge j.$
	\medskip 

\begin{theorem}
{\bf (Shapiro, Kostant).} Let $r_t$ be the number of roots of height $t$. 
Then $r = (r_1,r_2,\dots)$  is a Young
partition, called the height partition of $\Phi_+$. 
The dual partition of the height partition of $\Phi_+$
is the Young partition of the exponents.
\end{theorem}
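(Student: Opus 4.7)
The plan is to give Kostant's Lie-theoretic proof, which simultaneously establishes that $(r_1,r_2,\ldots)$ is a partition and identifies its dual with the partition of the exponents. One works over the complex semisimple Lie algebra $\mathfrak{g}$ of type $\Delta$, with Cartan $\mathfrak{h}$ and root-space decomposition $\mathfrak{g} = \mathfrak{h} \oplus \bigoplus_{\alpha \in \Phi}\mathfrak{g}_\alpha$.

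First I would install the height grading. Let $\rho^\vee = \sum_i \omega_i^\vee$ be the sum of the fundamental coweights, so that $\alpha(\rho^\vee) = \mathrm{ht}(\alpha)$ for every root $\alpha$. Then $\mathrm{ad}(\rho^\vee)$ grades $\mathfrak{g}$: setting $\mathfrak{g}_k = \bigoplus_{\mathrm{ht}(\alpha)=k}\mathfrak{g}_\alpha$ for $k \ne 0$ and $\mathfrak{g}_0 = \mathfrak{h}$, one has $\mathfrak{g} = \bigoplus_k \mathfrak{g}_k$ with $\dim \mathfrak{g}_k = r_k$ for every $k \ge 1$. Next, form a principal $\mathfrak{sl}_2$-triple $(e,h,f)$ with $e = \sum_{\alpha\text{ simple}}e_\alpha$; by construction $h = 2\rho^\vee$, and the eigenspace of $h/2$ for the eigenvalue $k$ is precisely $\mathfrak{g}_k$.

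The crux is Kostant's theorem on the principal three-dimensional subgroup: as an $\mathfrak{sl}_2$-module,
\[ \mathfrak{g} \cong V(2\epsilon_1) \oplus V(2\epsilon_2) \oplus \cdots \oplus V(2\epsilon_n), \]
where $V(2m)$ denotes the irreducible of highest weight $2m$ and $\epsilon_1,\ldots,\epsilon_n$ are the exponents. I would invoke this decomposition (its proof requires Chevalley's restriction theorem, which identifies the algebra of $W$-invariants on $\mathfrak{h}$ as a polynomial algebra with generators in degrees $\epsilon_i+1$, together with the regularity of $e$ which forces $\dim \mathfrak{g}^e = n$). Once $\mathfrak{g}$ is so decomposed, the operator $h/2$ acts on $V(2\epsilon_i)$ with the simple eigenvalues $-\epsilon_i,-\epsilon_i+1,\ldots,\epsilon_i$. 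Therefore, for every integer $k \ge 1$,
\[ r_k \;=\; \dim \mathfrak{g}_k \;=\; \#\{\,i : \epsilon_i \ge k\,\}. \]
The right-hand side is manifestly weakly decreasing in $k$, so $(r_1,r_2,\ldots)$ is a Young partition; and the identity $r_k = \#\{i:\epsilon_i\ge k\}$ is exactly the assertion that its dual is the partition obtained by listing the exponents in decreasing order.

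The main obstacle is the Kostant decomposition of $\mathfrak{g}$ under the principal $\mathfrak{sl}_2$: matching the $n$ highest weights with twice the exponents. This is where the deep identity \emph{exponents = (degrees of a basic set of $W$-invariants) $-1$} enters, via Chevalley restriction and the comparison between the algebra of $\mathrm{ad}(e)$-invariants on $\mathfrak{g}^*$ and the algebra of $W$-invariants on $\mathfrak{h}^*$. Every other step is a routine $\mathfrak{sl}_2$-weight count, and both halves of the theorem drop out simultaneously from the dimension tally of eigenspaces of $h/2$.
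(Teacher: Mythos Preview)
Your proposal is correct and is essentially Kostant's original argument via the principal $\mathfrak{sl}_2$, which the paper itself cites as the first published proof. The paper, being a survey, does not give a self-contained proof of the full theorem: it states the result, attributes it to Shapiro, Kostant, Macdonald, and Steinberg, and refers the reader to Humphreys.

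What the paper does supply is a combinatorial argument for the first half only (that $(r_1,r_2,\ldots)$ is a Young partition): it proves that the root poset $\Phi_+$ decomposes as a disjoint union of solid subchains, each containing a simple root, which immediately forces $r_1 \ge r_2 \ge \cdots$. This is more elementary and purely poset-theoretic, but it does not touch the identification of the dual partition with the exponents. Your approach handles both halves at once, at the cost of importing Kostant's decomposition $\mathfrak{g} \cong \bigoplus_i V(2\epsilon_i)$, which as you correctly flag is where the real work lies (Chevalley restriction plus regularity of the principal nilpotent). The paper also sketches, in Section~1.5, the hyperplane-arrangement route via freeness of ideal subarrangements (Sommers--Tymoczko, Abe--Barakat--Cuntz--Hoge--Terao), which gives an inductive path to the exponents; but again this is outlined rather than proved.
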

	\bigskip 

As a consequence, we see

\Rahmen{$
 r_t+r_{h+1-t} = n.
$}
	\medskip 

This follows directly from the  well-known symmetry condition for the
exponents: $\epsilon_t+\epsilon_{n-t+1} = h.$  
	\medskip 

For example: $r_1 = n,\ r_h = 0.$ Next, for a connected root system: $r_2 = n-1,\ r_{h-1} = 1$
(this means that there is a unique highest root).
	\medskip

For a similar result concerning the roots of fixed length, see
the note N\,\ref{fixed-height}.
. 
	\bigskip

Let us draw the corresponding Young diagrams. Actually, we will draw the Young diagram 
$Y = Y(\epsilon)$ 
of the
partition $\epsilon = (\epsilon_1,\dots, \epsilon_n)$ 
of the exponents, using shaded boxes. 
Thus as the dual partition we see the height partition 
$(r_1,r_2,\dots).$ 
We will draw the partition of exponents inside a rectangle $R$ with $n$ rows and $h$
columns (thus, in $R$, there are altogether $nh$ square boxes), and 
we may consider the square boxes in
$R\setminus Y$ as corresponding bijectively to the negative roots.


$$
\hbox{\beginpicture
\setcoordinatesystem units <.4cm,.4cm>

\put{\beginpicture
\put{\beginpicture
\multiput{} at 0 0  12 6 /

\put{$\mathbb A_6$} at -2 6
\setdots <.7mm>
\setplotarea x from 0 to 7, y from 0 to 6
\grid {7} {6} 
\setsolid
\plot 0 0  0 6  6 6  6 5  5 5  5 4  4 4  4 3  3 3  3 2  2 2  2 1  1 1  1 0  0 0  /
\put{$n=6$} at -1.5 3
\put{$h = 7$} at 3.5 -.7
\plot 0 -2  7 -2 /
\plot 1 -1.8  1 -2.2 /
\plot 2 -1.8  2 -2.2 /
\plot 3 -1.8  3 -2.2 /
\plot 4 -1.8  4 -2.2 /
\plot 5 -1.8  5 -2.2 /
\plot 6 -1.8  6 -2.2 /
\put{$\ssize \epsilon_i\strut$} at 0 -2.5
\put{$\ssize 1$} at 1 -2.5
\put{$\ssize 2$} at 2 -2.5
\put{$\ssize 3$} at 3 -2.5
\put{$\ssize 4$} at 4 -2.5
\put{$\ssize 5$} at 5 -2.5
\put{$\ssize 6$} at 6 -2.5
\setshadegrid span <.7mm>
\vshade 0 0 6 <,z,,>  1 0 6  <z,z,,> 
      1.01 1 6  <z,z,,> 2 1 6  <z,z,,> 
      2.01 2 6  <z,z,,> 3 2 6  <z,z,,> 
      3.01 3 6  <z,z,,> 4 3 6  <z,z,,> 
      4.01 4 6  <z,z,,> 5 4 6  <z,z,,> 
      5.01 5 6  <z,,,> 6 5 6  /

\setdots <.4mm>
\plot 6 6  7 6  7 0  1 0 /
\setdashes <1mm>
\plot 0 0  7 6 /
\endpicture} at 0 0
\endpicture} at 0 0

\put{\beginpicture  
\setcoordinatesystem units <.4cm,.4cm>
\put{\beginpicture
\multiput{} at 0 0  12 6 /

\put{$\mathbb B_6$} at -2 6
\put{$\mathbb C_6$} at -2 5
\setdots <.7mm>
\setplotarea x from 0 to 12, y from 0 to 6
\grid {12} {6} 
\setsolid
\plot 0 0  0 6  11 6  11 5  9 5  9 4  7 4  7 3  5 3  5 2  3 2  3 1  1 1  1 0  0 0  /
\put{$n=6$} at -1.5 3
\put{$h = 12$} at 6 -.7
\plot 0 -2  12 -2 /
\plot 1 -1.8  1 -2.2 /
\plot 3 -1.8  3 -2.2 /
\plot 5 -1.8  5 -2.2 /
\plot 7 -1.8  7 -2.2 /
\plot 9 -1.8  9 -2.2 /
\plot 11 -1.8  11 -2.2 /


\put{$\ssize \epsilon_i\strut$} at 0 -2.5
\put{$\ssize 1$} at 1 -2.5
\put{$\ssize 3$} at 3 -2.5
\put{$\ssize 5$} at 5 -2.5
\put{$\ssize 7$} at 7 -2.5
\put{$\ssize 9$} at 9 -2.5
\put{$\ssize 11$} at 11 -2.5

\setshadegrid span <.7mm>
\vshade 0 0 6 <,z,,>  1 0 6  <z,z,,> 
      1.01 1 6  <z,z,,> 3 1 6  <z,z,,> 
      3.01 2 6  <z,z,,> 5 2 6  <z,z,,> 
      5.01 3 6  <z,z,,> 7 3 6  <z,z,,> 
      7.01 4 6  <z,z,,> 9 4 6  <z,z,,> 
      9.01 5 6  <z,,,> 11 5 6  /

\setdashes <1mm>
\plot 0 0  12 6 /

\setdots <.4mm>
\plot 11 6  12 6  12 0  1 0 /

\endpicture} at 0 0
\endpicture} at 15 0 

\put{\beginpicture
\setcoordinatesystem units <.5cm,.5cm>
\put{\beginpicture
\multiput{} at 0 0  12 6 /

\put{$\mathbb D_6$} at -2 6
\setdots <.7mm>
\setplotarea x from 0 to 10, y from 0 to 6
\grid {10} {6} 
\setsolid
\plot 0 0  0 6  9 6  9 5  7 5  7 4  5 4   5 2  3 2  3 1  1 1  1 0  0 0  /
\put{$n=6$} at -1.5 3
\put{$h = 10$} at 5 -.7
\plot 0 -2  10 -2 /
\plot 1 -1.8  1 -2.2 /
\plot 3 -1.8  3 -2.2 /
\plot 4.97 -1.8  4.97 -2.2 /
\plot 5 -1.8  5 -2.2 /
\plot 5.03 -1.8  5.03 -2.2 /
\plot 7 -1.8  7 -2.2 /
\plot 9 -1.8  9 -2.2 /

\multiput{$\bullet$} at 1 -2  3 -2  7 -2  9 -2 /

\put{$\ssize \epsilon_i\strut$} at 0 -2.5
\put{$\ssize 1$} at 1 -2.5
\put{$\ssize 3$} at 3 -2.5
\put{$\ssize 5$} at 4.75 -2.5
\put{$\ssize 5$} at 5.25 -2.5
\put{$\ssize 7$} at 7 -2.5
\put{$\ssize 9$} at 9 -2.5
\setshadegrid span <.7mm>
\vshade 0 0 6 <,z,,>  1 0 6  <z,z,,> 
      1.01 1 6  <z,z,,> 3 1 6  <z,z,,> 
      3.01 2 6  <z,z,,> 5 2 6  <z,z,,> 
      5.01 4 6  <z,z,,> 7 4 6  <z,z,,> 
      7.01 5 6  <z,,,> 9 5 6 /

\setdashes <1mm>
\plot 0 0  10 6 /

\setdots <.4mm>
\plot 9 6  10 6  10 0  1 0 /

\endpicture} at 0 0
\endpicture} at 0 -11

\put{\beginpicture
\setcoordinatesystem units <.4cm,.4cm>
\put{\beginpicture
\multiput{} at 0 0  18 7 /
\put{$\mathbb D_7$} at -2 7
\setdots <.7mm>
\setplotarea x from 0 to 12, y from 0 to 7
\grid {12} {7} 
\setsolid
\plot 0 0  0 7  11 7  11 6  9 6  9 5  7 5  7 4  6 4  6 3  5 3  5 2  3 2  3 1  1 1  1 0  0 0  /

\put{$n=7$} at -1.5 3.5
\put{$h = 12$} at 6 -.7
\plot 0 -2  12 -2 /
\plot 1 -1.8  1 -2.2 /
\plot 3 -1.8  3 -2.2 /
\plot 5 -1.8  5 -2.2 /
\plot 6 -1.8  6 -2.2 /
\plot 7 -1.8  7 -2.2 /
\plot 9 -1.8  9 -2.2 /
\plot 11 -1.8  11 -2.2 /

\multiput{$\bullet$} at 1 -2  5 -2  7 -2  11 -2 /

\put{$\ssize \epsilon_i\strut$} at 0 -2.6
\put{$\ssize 1$} at 1 -2.6
\put{$\ssize 3$} at 3 -2.6
\put{$\ssize 5$} at 5 -2.6
\put{$\ssize 6$} at 6 -2.6
\put{$\ssize 7$} at 7 -2.6
\put{$\ssize 9$} at 9 -2.6
\put{$\ssize 11$} at 11 -2.6
\setshadegrid span <.7mm>
\vshade 0 0 7 <,z,,>  1 0 7  <z,z,,> 
      1.01 1 7  <z,z,,> 3 1 7  <z,z,,> 
      3.01 2 7  <z,z,,> 5 2 7  <z,z,,> 
      5.01 3 7  <z,z,,> 6 3 7  <z,z,,> 
      6.01 4 7  <z,z,,> 7 4 7  <z,z,,> 
      7.01 5 7  <z,z,,> 9 5 7  <z,z,,> 
      9.01 6 7  <z,,,> 11 6 7  /

\setdashes <1mm>
\plot 0 0  12 7 /

\setdots <.4mm>
\plot 11 7  12 7  12 0  1 0 /

\endpicture} at 0 0
\endpicture} at  18  -11
\endpicture}
$$
$$
\hbox{\beginpicture
\setcoordinatesystem units <.5cm,.5cm>
\put{\beginpicture
\multiput{} at 0 0  12 6 /

\put{$\mathbb E_6$} at -2 6
\setdots <.7mm>
\setplotarea x from 0 to 12, y from 0 to 6
\grid {12} {6} 
\setsolid
\plot 0 0  0 6  11 6  11 5  8 5  8 4  7 4  7 3  5 3  5 2  4 2  4 1  1 1  1 0  0 0  /
\put{$n=6$} at -1.5 3
\put{$h = 12$} at 6 -.7
\plot 0 -2  12 -2 /
\plot 1 -1.8  1 -2.2 /
\plot 4 -1.8  4 -2.2 /
\plot 5 -1.8  5 -2.2 /
\plot 7 -1.8  7 -2.2 /
\plot 8 -1.8  8 -2.2 /
\plot 11 -1.8  11 -2.2 /

\multiput{$\bullet$} at 1 -2  5 -2  7 -2  11 -2 /

\put{$\ssize \epsilon_i\strut$} at 0 -2.5
\put{$\ssize 1$} at 1 -2.5
\put{$\ssize 4$} at 4 -2.5
\put{$\ssize 5$} at 5 -2.5
\put{$\ssize 7$} at 7 -2.5
\put{$\ssize 8$} at 8 -2.5
\put{$\ssize 11$} at 11 -2.5

\setshadegrid span <.7mm>
\vshade 0 0 6 <,z,,>  1 0 6  <z,z,,> 
      1.01 1 6  <z,z,,> 4 1 6  <z,z,,> 
      4.01 2 6  <z,z,,> 5 2 6  <z,z,,> 
      5.01 3 6  <z,z,,> 7 3 6  <z,z,,> 
      7.01 4 6  <z,z,,> 8 4 6  <z,z,,> 
      8.01 5 6  <z,,,> 11 5 6  /

\setdashes <1mm>
\plot 0 0  12 6 /

\setdots <.4mm>
\plot 11 6  12 6  12 0  1 0 /

\endpicture} at 0 0
\endpicture}
$$
 \bigskip

$$
\hbox{\beginpicture
\setcoordinatesystem units <.4cm,.4cm>
\put{\beginpicture
\multiput{} at 0 0  18 7 /
\put{$\mathbb E_7$} at -2 7
\setdots <.7mm>
\setplotarea x from 0 to 18, y from 0 to 7
\grid {18} {7} 
\setsolid
\plot 0 0  0 7  17 7  17 6  13 6  13 5  11 5  11 4  9 4  9 3  7 3  7 2  5 2  5 1  1 1  1 0  0 0  /

\put{$n=7$} at -1.5 3.5
\put{$h = 18$} at 9 -.7
\plot 0 -2  18 -2 /
\plot 1 -1.8  1 -2.2 /
\plot 5 -1.8  5 -2.2 /
\plot 7 -1.8  7 -2.2 /
\plot 9 -1.8  9 -2.2 /
\plot 11 -1.8  11 -2.2 /
\plot 13 -1.8  13 -2.2 /
\plot 17 -1.8  17 -2.2 /

\multiput{$\bullet$} at 1 -2  5 -2  7 -2  11 -2  13 -2  17 -2 /

\put{$\ssize \epsilon_i\strut$} at 0 -2.6
\put{$\ssize 1$} at 1 -2.6
\put{$\ssize 5$} at 5 -2.6
\put{$\ssize 7$} at 7 -2.6
\put{$\ssize 9$} at 9 -2.6
\put{$\ssize 11$} at 11 -2.6
\put{$\ssize 13$} at 13 -2.6
\put{$\ssize 17$} at 17 -2.6

\setshadegrid span <.7mm>
\vshade 0 0 7 <,z,,>  1 0 7  <z,z,,> 
      1.01 1 7  <z,z,,> 5 1 7  <z,z,,> 
      5.01 2 7  <z,z,,> 7 2 7  <z,z,,> 
      7.01 3 7  <z,z,,> 9 3 7  <z,z,,> 
      9.01 4 7  <z,z,,> 11 4 7  <z,z,,> 
      11.01 5 7  <z,z,,> 13 5 7  <z,z,,> 
      13.01 6 7  <z,,,> 17 6 7  /
\setdashes <1mm>
\plot 0 0  18 7 /

\setdots <.4mm>
\plot 17 7  18 7  18 0  1 0 /

\endpicture} at 0 0
\endpicture}
$$
\bigskip

$$
\hbox{\beginpicture
\setcoordinatesystem units <.3cm,.3cm>
\put{\beginpicture
\multiput{} at 0 0  30 8 /
\put{$\mathbb E_8$} at -2 8
\setdots <.7mm>
\setplotarea x from 0 to 30, y from 0 to 8
\grid {30} {8} 
\setsolid
\plot 0 0  0 8  29 8   29 7  23 7  23 6  
  19 6  19 5  17 5  17 4  13 4  13 3  11 3  11 2  7 2  7 1  1 1  1 0  0 0  /

\put{$n=8$} at -2 4
\put{$h = 30$} at 15 -.7
\plot 0 -2  30 -2 /
\plot 1 -1.8  1 -2.2 /
\plot 7 -1.8  7 -2.2 /
\plot 11 -1.8  11 -2.2 /
\plot 13 -1.8  13 -2.2 /
\plot 17 -1.8  17 -2.2 /
\plot 19 -1.8  19 -2.2 /
\plot 23 -1.8  23 -2.2 /
\plot 29 -1.8  29 -2.2 /

\multiput{$\bullet$} at 1 -2  7 -2  11 -2  13 -2
   17 -2  19 -2  23 -2  29 -2  /

\put{$\ssize \epsilon_i\strut$} at 0 -2.8
\put{$\ssize 1$} at 1 -2.8
\put{$\ssize 7$} at 7 -2.8
\put{$\ssize 11$} at 11 -2.8
\put{$\ssize 13$} at 13 -2.8
\put{$\ssize 17$} at 17 -2.8
\put{$\ssize 19$} at 19 -2.8
\put{$\ssize 23$} at 23 -2.8
\put{$\ssize 29$} at 29 -2.8

\setshadegrid span <.7mm>
\vshade 0 0 8 <,z,,>  1 0 8  <z,z,,> 
      1.01 1 8  <z,z,,> 7 1 8  <z,z,,> 
      7.01 2 8  <z,z,,> 11 2 8  <z,z,,> 
      11.01 3 8  <z,z,,> 13 3 8  <z,z,,> 
      13.01 4 8  <z,z,,> 17 4 8  <z,z,,> 
      17.01 5 8  <z,z,,> 19 5 8  <z,z,,> 
      19.01 6 8  <z,z,,> 23 6 8  <z,z,,> 
      23.01 7 8  <z,,,> 29 7 8  /

\setdashes <1mm>
\plot 0 0  30 8 /

\setdots <.4mm>
\plot 29 8  30 8  30 0  1 0 /

\endpicture} at 0 0
\endpicture}
$$
	\bigskip 

$$
\hbox{\beginpicture
\setcoordinatesystem units <.5cm,.5cm>
\put{\beginpicture
\multiput{} at 0 0  6 4 /
\put{$\mathbb F_4$} at -2 4
\setdots <.7mm>
\setplotarea x from 0 to 12, y from 0 to 4
\grid {12} {4} 
\setsolid
\plot 0 0  0 4  11 4  11 3  7 3  7 2  5 2  5 1  1 1  1 0  0 0  /

\put{$n=4$} at -1.5 2
\put{$h = 12$} at 6 -.7
\plot 0 -2  12 -2 /
\plot 1 -1.8  1 -2.2 /
\plot 5 -1.8  5 -2.2 /
\plot 7 -1.8  7 -2.2 /
\plot 11 -1.8  11 -2.2 /

\multiput{$\bullet$} at 1 -2  5 -2  7 -2  11 -2 /

\put{$\ssize \epsilon_i\strut$} at 0 -2.6
\put{$\ssize 1$} at 1 -2.6
\put{$\ssize 5$} at 5 -2.6
\put{$\ssize 7$} at 7 -2.6
\put{$\ssize 11$} at 11 -2.6

\setshadegrid span <.7mm>
\vshade 0 0 4 <,z,,>  1 0 4  <z,z,,> 
      1.01 1 4  <z,z,,> 5 1 4  <z,z,,> 
      5.01 2 4  <z,z,,> 7 2 4  <z,z,,> 
      7.01 3 4  <z,,,> 11 3 4  /

\setdashes <1mm>
\plot 0 0  12 4 /

\setdots <.4mm>
\plot 11 4  12 4  12 0  1 0 /

\endpicture} at 0 0
\put{\beginpicture
\setcoordinatesystem units <.5cm,.5cm>
\multiput{} at 0 0  6 2 /
\put{$\mathbb G_2$} at -2 2
\setdots <.7mm>
\setplotarea x from 0 to 6, y from 0 to 2
\grid {6} {2} 
\setsolid
\plot 0 0  0 2  5 2   5 1  1 1  1 0  0 0  /

\put{$n=2$} at -1.5 1
\put{$h = 6$} at 3.5 -.7
\plot 0 -2  6 -2 /
\plot 1 -1.8  1 -2.2 /
\plot 5 -1.8  5 -2.2 /
\multiput{$\bullet$} at 1 -2  5 -2 / 

\put{$\ssize \epsilon_i\strut$} at 0 -2.6
\put{$\ssize 1$} at 1 -2.6
\put{$\ssize 5$} at 5 -2.6

\setshadegrid span <.7mm>
\vshade 0 0 2 <,z,,>  1 0 2  <z,z,,> 
      1.01 1 2  <z,,,> 5 1 2  /

\setdashes <1mm>
\plot 0 0  6 2 /

\setdots <.4mm>
\plot 5 2  6 2  6 0  1 0 /

\endpicture} at 14 0
\endpicture}
$$
	\medskip 

\subsubsection{\bf Solid subchains of $\Phi_+.$}
Let us have a detailed look at
the poset $\Phi_+$. Given a poset $P$, we call a subposet $P'$ a {\it solid} subposet
provided neighbors in $P'$ are neighbors in $P$ (if $x<y$ are neighbors in the subposet $P'$,
this interval cannot be refined in $P$).

\begin{prop}
Any root poset $\Phi_+$ is the disjoint union of solid subchains
which contain a minimal element of $\Phi_+$.
\end{prop}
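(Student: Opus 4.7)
The plan is to recast the statement as a matching problem. The cover relations in $\Phi_+$ are exactly the pairs $\alpha \lessdot \beta$ with $\beta - \alpha \in \Delta$: if $\alpha < \beta$ and $\operatorname{ht}(\beta) - \operatorname{ht}(\alpha) \geq 2$, a standard splitting argument produces an intermediate positive root. Hence a solid subchain is just a sequence $\beta_0 \lessdot \beta_1 \lessdot \cdots \lessdot \beta_k$ whose successive differences are simple roots, and the proposition is equivalent to the existence of an injective \emph{predecessor map} $p\colon \Phi_+ \setminus \Delta \to \Phi_+$ with $p(\beta) \lessdot \beta$. Given such a $p$, the iterated $p$-preimages of the simple roots form $n$ disjoint solid chains covering $\Phi_+$, each anchored at exactly one minimal element.

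Since $p(\beta)$ must have height $\operatorname{ht}(\beta) - 1$, the problem splits into one matching problem per height level: for each $t \geq 1$, one needs an injective map $p_t\colon R_{t+1} \to R_t$ (where $R_t$ denotes the positive roots of height $t$) sending every $\beta$ to one of its predecessors. I would invoke Hall's Marriage Theorem: such a matching exists iff for every $S \subseteq R_{t+1}$, the set $N(S) \subseteq R_t$ of predecessors of elements of $S$ satisfies $|N(S)| \geq |S|$. The Shapiro--Kostant theorem (recalled just above, in Section 1.4) already yields the global case $S = R_{t+1}$ via the inequality $r_{t+1} \leq r_t$.

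The crux is verifying Hall's condition for arbitrary $S$. A uniform argument can be given via the Chevalley action on $\mathfrak{n}^+ = \bigoplus_{\beta \in \Phi_+} \mathfrak{g}_\beta$: the bipartite graph at level $t$ records the linear maps $e_{\alpha_i}\colon \mathfrak{g}_\alpha \to \mathfrak{g}_{\alpha + \alpha_i}$, so a saturating matching for $R_{t+1}$ corresponds to a consistent choice of ``parent'' weight vector for each positive-root vector of height $t+1$, and the one-dimensionality of each $\mathfrak{g}_\beta$ together with the known root-string behaviour forces the desired inequality. This is the step I expect to be the main obstacle, since a clean uniform proof is elusive. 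A pedestrian fallback is case-by-case verification from the explicit combinatorics: for $\mathbb A_n$, the positive roots are intervals $[i,j]$ and the chains $C_i = \{[i,j] : i \leq j \leq n\}$ can be written down directly; analogous explicit families work for $\mathbb B_n$, $\mathbb C_n$, $\mathbb D_n$; and the five exceptional types reduce to finite verifications that can be read off from the Hasse diagrams displayed in Section 1.1.
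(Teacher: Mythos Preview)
Your reduction to level-by-level matchings and your case-by-case fallback are exactly the paper's approach: it declares $\mathbb A_n$ and $\mathbb B_n$ obvious, draws explicit chains for $\mathbb D_6$ and $\mathbb E_6$, and leaves the remaining types as finite checks, afterwards noting that the assertion amounts to a matching on each bipartite layer $\Phi_{t-1,t}$. Your Lie-theoretic sketch goes further and can in fact be made uniform, but the vague appeal to ``root-string behaviour'' does not do the job; the missing ingredient is Kostant's principal $\mathfrak{sl}_2$. For the regular nilpotent $e=\sum_i e_{\alpha_i}$, the decomposition of $\mathfrak g$ into irreducible $\mathfrak{sl}_2$-modules of highest weight $2\epsilon_i$ shows that $\operatorname{ad}(e)\colon \mathfrak g_t \to \mathfrak g_{t+1}$ is surjective for every $t\ge 1$ (this is in fact how Kostant proved the theorem the paper cites as Shapiro--Kostant). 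Since the matrix of $\operatorname{ad}(e)$ in the root-vector bases is supported precisely on the edges of the bipartite cover graph, the composite $\bigoplus_{\alpha\in N(S)}\mathfrak g_\alpha \hookrightarrow \mathfrak g_t \to \mathfrak g_{t+1} \twoheadrightarrow \bigoplus_{\beta\in S}\mathfrak g_\beta$ is already onto for every $S\subseteq R_{t+1}$, whence $|N(S)|\ge|S|$ and Hall's condition holds. This buys a proof free of case analysis, which the paper does not pursue.
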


This strengthens the assertion that the sequence $r_1,r_2,\dots,r_t$ is
a Young partition. Namely, write $\Phi_+$ as the disjoint union of solid subchains $C_i$
such that any $C_i$ contains a minimal element of $\Phi_+$. The minimal elements of $\Phi_+$ 
are just the simple roots, thus the number of subchains $C_i$ is $r_1$. It follows that
$r_j$ is the number of the subchains $C_i$ which have length at least $j$. It follows that
$r_1\ge r_2\ge r_3 \ge \cdots.$

Note that the Young partition property does not imply the solid subchain property.
The first example of a connected poset with the Young partition property but without
the solid subchain property is as follows:
$$
{\beginpicture
\setcoordinatesystem units <.5cm,.5cm> 
\multiput{$\bullet$} at 0 0  1 0  2 0  0 1  1 1  2 1 /
\plot 0 0  0 1  2 0  2 1 /
\plot 1 0  0 1 /
\plot 1 1  2 0 /
\endpicture}
$$
	\bigskip

The assertion of the proposition is obvious for the cases $\mathbb A_n$ and $\mathbb B_n$.
	\medskip 

Below we show solid subchains in the cases $\mathbb D_6$ and $\mathbb E_6,$ drawn by solid
lines. The largest elements of the solid subchains are always encircled (thus
these are suitable roots of height $\epsilon_i$, where $(\epsilon_1,\dots,\epsilon_n)$
is the exponent partition). The case $\mathbb D_6$ indicates a general rule how to obtain
a solid subchain decomposition in the cases $\mathbb D_n$ in general. To find such
a decomposition for the cases $\mathbb E_7, \mathbb E_8$ is more challenging. 
$$
\hbox{\beginpicture
\setcoordinatesystem units <.7cm,.55cm>
\put{\beginpicture
\setsolid
\plot 0 0  3 3  3 4  7 8 /
\plot 2 0  4 2  4 3  7 6 /
\plot 4 0  5 1  5 2  7 4 /
\plot 6 0  6 1  7 2 /
\plot 8 0  6.7 1.3 /
\plot 6.3 1.7  5.7 2.3 /
\plot 5.3 2.7  4.7 3.3 /
\plot 4 4  4.2 3.8 /
\multiput{$\bigcirc$} at 7 8  7 6  7 4  7 2  4 4  7 0 /

\put{$\mathbb D_6.$} at 2 8.5
\setdots <.7mm> 
\multiput{$\bullet$} at 0 0  2 0  4 0  6 0  8 0  
     1 1  3 1  5 1  7 1
     2 2  4 2 
     3 3 
      6 1
     5 2  7 2
     4 3  6 3   
     3 4  5 4  7 4 
     4 5  6 5 
     5 6  7 6 
      6 7 
     7 8 
     7 0 /
               
\plot 0 0  3 3  6 0  7 1  8 0 /
\plot 1 1  2 0  4 2 /
\plot 2 2  4 0  5 1 /

 \plot 7 0  3 4  7 8 /
\plot 6 1  7 2  4 5 /
\plot 5 2  7 4  5 6 /
\plot 4 3  7 6  6 7 /
\plot 3 3  3 4 /
\plot 4 2  4 3 /
\plot 5 1  5 2 /
\plot 6 0  6 1 /
\plot 7 1  7 2 /

\setdashes <.7mm>
\plot  3 3  4 4  7 1 /
\plot 4 4  4 5 /
\plot 4 2  5 3  5 4 /
\plot 5 1  6 2  6 3 /
   
\multiput{$\circ$} at 4 4 5 3  6 2 /
\setshadegrid span <.6mm>
\vshade 3 3 4  <,z,,> 6 0 1  <z,,,> 7 1 2 /

\endpicture} at 0 0
\put{\beginpicture
\setcoordinatesystem units <.75cm,.6cm>
\put{\bf $\mathbb E_6.$} at 1 9.5 
\multiput{$\bigcirc$} at 3.4 10  4.5 7  3.6 6  3.8 4  4.7 3  3.4 0   / 
\setsolid
\plot 3.4 10  3.4 8  1.6 6  2.7 5  2.7 4  1.8 3  1.8 2  0 0 /
\plot  4.5 7  5.6 6  4.7 5  4.7 4  5.8 3  4.9 2  4.9 1  6 0 /
\plot 3.6 6  3.6 4  2.7 3   3 2.7 /
\plot 3.5 2.3  3.8 2  3.5 1.75  /
 
\plot 2 0  3.1 1.3  /

\plot 3.8 4  3.8 3  2.9 2  4 1  4 0 /
\plot 4.7 3  5.1 2.65 /
\plot 5.5 2.3  8 0 /

\setdots <.5mm> 

\multiput{$\circ$} at 3.8 2
       2.7 3  4.7 3
       3.6 4  
       3.6 5 /

\multiput{$\bullet$} at
  0 0  2 0  4 0  6 0  8 0
  0.9 1  2.9 1  4.9 1  6.9 1
  1.8 2  5.8 2  / 
\plot 0 0  1.8 2  /
\plot 5.8 2   8 0 /
\plot 0.9 1  2 0  2.8 1 /
\plot 1.8 2  4 0  5.8 2 /
\plot 4.9 1  6 0 6.9 1 /

\setdashes <.5mm>
\plot 1.8 2  3.6 4  5.8 2 / 
\plot 2.8 1  4.7 3 /
\plot  2.7 3  4.9 1 /
\plot 3.8 2   3.8 3  /
\plot 3.6 4  3.6 5 /
\plot 2.7 4   3.6 5  4.7 4 /
\plot 3.6 5  3.6 6  /

\setdots <.5mm> 

\multiput{$\bullet$} at 4 1
  2.9 2  4.9 2 
  1.8 3  3.8 3  5.8 3
  2.7 4  4.7 4  / %
\plot 4 0  4 1 /
\plot   2.9 1   2.9 2 /
\plot 4.9 1  4.9 2 /
\plot  1.8 2    1.8 3  /
\plot 5.8 2  5.8 3 /
\plot  2.7 3    2.7 4  /
\plot 4.7 3  4.7 4 /
\plot   4 1  5.8 3  4.7 4 /
\plot 2.7 4  
    1.8 3  4 1 / 
\plot 2.9 2  4.7 4 /
\plot  4.9 2   2.7 4  /

\multiput{$\bullet$} at 3.8 4
    2.7 5  4.7 5
    3.6 6 /
\plot  3.8 3  3.8 4 /
\plot 2.7 4  2.7 5  /
\plot 4.7 4  4.7 5 /
\plot 3.8 4
    2.7 5   3.6 6  4.7 5
   3.8 4 /

\setshadegrid span <.6mm>
\vshade 1.8 2 3  <,z,,> 4 0 1  <z,,,> 5.8 2 3 /
\vshade 2.7 4 5  <,z,,> 3.8 3 4  <z,,,> 4.7 4 5 /

\multiput{$\bullet$} at 1.6 6  5.6 6  2.5 7  4.5 7  3.4 8  / 
\plot 2.7 5  1.6 6  3.4 8  5.6 6  4.7 5 /
\plot 2.5 7  3.6 6  4.5 7 / 

\multiput{$\bullet$} at 3.4 9  3.4 10  / 
\plot 3.4 8  3.4 10 /
  
\put{$\bullet$} at 3.4 0 
\plot 3.4 0  4 1 /

\endpicture} at 10 0 
\endpicture}
$$

	\medskip 

Actually, the existence of a solid subchain decomposition 
concerns a local property, namely it concerns the bipartite
subposets $\Phi_{t-1,t}$ of all roots of height $t-1$ and $t$: As we know, we have
$|\Phi_{t}| \le |\Phi_{t-1}|$ (this just means that the height partition is a Young partition).
The assertion is that there is a {\bf matching} which involves all the vertices of
$\Phi_{t}$.

In particular, if we want to construct a solid subchain decomposition, 
we should start at the top of the poset $\Phi_+$ and go down. If the 
subchains have reached the layer $\Phi_t$, we have to look at
$\Phi_{t-1,t}$ and we have to continue the path downwards inside a matching.
For example, in case $\mathbb B_3$, starting with the maximal element $z$, the next two
choices for a solid chain containing $z$ are arbitrary, but then in $\Phi_{2,3}$ we have to
be careful:
$$
{\beginpicture
\setcoordinatesystem units <.5cm,.5cm> 
\put{\beginpicture
\multiput{$\bullet$} at 0 0  1 1  2 2  3 3  4 4  2 0  3 1  4 2  4 0  /
\setdots <.6mm>
\plot 0 0  4 4 /
\plot 1 1  2 0  4 2  3 3 /
\plot 2 2  4 0 /
\setsolid 
\plot 4 4  1 1 /
\setdashes <2mm>
\plot -.5 0.8  5 0.8 /
\plot -.5 2.2  5 2.2 /
\put{$\Phi_{2,3}$} at -1 1.5
\endpicture} at 0 0
\put{\beginpicture
\multiput{$\bullet$} at 0 0  1 1  2 2  3 3  4 4  2 0  3 1  4 2  4 0  /
\setdots <.6mm>
\plot 0 0  4 4 /
\plot 1 1  2 0  4 2  3 3 /
\plot 2 2  4 0 /
\setsolid 
\plot 4 4  2 2  3 1 /
\setdashes <2mm>
\plot -.5 0.8  5 0.8 /
\plot -.5 2.2  5 2.2 /
\endpicture} at 7 0
\put{\beginpicture
\multiput{$\bullet$} at 0 0  1 1  2 2  3 3  4 4  2 0  3 1  4 2  4 0  /
\setdots <.6mm>
\plot 0 0  4 4 /
\plot 1 1  2 0  4 2  3 3 /
\plot 2 2  4 0 /
\setsolid 
\plot 4 4  3 3  4 2  3 1 /
\setdashes <2mm>
\plot -.5 0.8  5 0.8 /
\plot -.5 2.2  5 2.2 /
\endpicture} at 14 0
\endpicture}
$$
The choice in the middle does not work, since $\Phi_{2,3}$ has just one
matching namely:
$$
{\beginpicture
\setcoordinatesystem units <.5cm,.5cm> 
\multiput{$\bullet$} at 1 1  2 2  3 1  4 2 /
\setdots <.6mm>
\plot 2 2  3 1 /
\setsolid 
\plot 1 1  2 2 /
\plot 3 1  4 2 /
\endpicture}
$$

\subsection{Inductive determination of the exponents}
 
We have seen that there is a unified, but quite mysterious way to deal with 
some of the Dynkin functions: to invoke the exponents of $\Delta$.
Usually, the exponents seem to fall from heaven: either by looking at the 
invariant theory of the action of the Weyl group on the ambient space 
of the root system (Chevalley 1955),
or by dealing with the eigenvalues of a Coxeter element (Coxeter, 1951).
As we have mentioned, Shapiro and Kostant (1959) and later also Macdonald (1972)
have shown that there is a third
possibility to obtain the exponents, namely as the conjugate partition of
the height partition of the root poset.
It is of interest that in this way one may determine the exponents
inductively, going up step by step in a chain of poset ideals $I$ of $\Phi_+$.
A recent result of Sommers-Tymoczko \cite{[SoT]} (and Abe-Barakat-Cuntz-Hoge-Terao \cite{[A]})
based on old investigations 
of Arnold and Saito (1979)
asserts that the set $\H = \H(I)$ of hyperplanes orthogonal to the roots in $I$
is a so-called free hyperplane arrangement. This means that the corresponding
module $D(\H)$ of $\H$-derivations is free, thus one may consider 
the degrees of a free homogeneous generating system of $D(\H)$. We obtain in this way an
increasing sequence of Young partitions which terminates in the partition of the
exponents. 
	\medskip

\subsubsection{\bf Hyperplane arrangements.}
We consider a finite-dimensional vector space $V$ over the field of real numbers, say of dimension $n$.
A finite set $\H$ of (pairwise different) subspaces of dimension $n-1$ will be called
a {\it hyperplane $n$-arrangement,} or just an arrangement. As a general reference for 
hyperplane arrangements, we refer to the book \cite{[OT]} by Orlik and Terao, see also the
note N\,\ref{arrangements}.

An element $g\in \GL(V)$ is called a {\it reflection} provided $g$ is an involution (this means
$g^2 = 1 \neq g$) and fixes
pointwise a hyperplane.
A finite subgroup $G$ of $\GL(V)$  generated by reflections is called a (real) {\it reflection group.}

A hyperplane arrangement in $V$ is called a {\it Coxeter arrangement} if the hyperplanes
are those given by the reflections in a (real) reflection group $G\subseteq \GL(V)$.
The {\it Weyl arrangements} are those given by a
Weyl group, thus by a finite root system. Weyl arrangements are of course Coxeter arrangements.
	\medskip 

\subsubsection{\bf  The module $D(\H)$ of $\H$-derivations.} 
Let $S = \mathbb R[V]$ be the ring of regular functions $V \to \mathbb R$, this is the symmetric algebra
of $V^* = \Hom(V,\mathbb R)$. If we choose $x_1,\dots,x_n$ in $V^*$, then $S$ can be identified with
the ring $\mathbb R[x_1,\dots,x_n]$ of polynomials in the variables $x_1,\dots, x_n$.
We always consider $S$ as a $\mathbb Z$-graded ring with all variables having degree $1$.

Instead of looking at a hyperplane $H$, we may look at non-zero linear polynomials $\alpha
= \alpha_H$ (this is an element of $S$ of degree $1$); the corresponding hyperplane is just the kernel
of $\alpha_H$). Note that $\alpha_H$ is determined  by $H$ only up to
a non-zero scalar (but in the following, this usually will not matter).

Recall the definition of a {\it derivation} $\theta\!:S \to M$, where $M$ is an $S$-$S$-bimodule:
it is an $\mathbb R$-linear map such that 
$\theta(fg) = \theta(f)g+f\theta(g).$
The set $\Der(S)$ of derivations $S \to S$ is an $S$-module, using the following operations: 
Let $\theta, \theta'$ belong to $\Der(S),$ and $h\in S$.
Then $\theta+\theta'$ is defined by $(\theta+\theta')(s) = \theta(s)+\theta'(s)$; and
$h\theta$ is defined by $(h\theta)(f) = h\cdot \theta(f)$. 
Actually, the following is true (and easy to
verify): {\it The set $\Der(S)$ of derivations $S\to S$ is a free $S$-module 
with basis $D_i = \partial/\partial x_i$} (where we have chosen a basis $e_1,\dots,e_n$ of $V$).  
A derivation of the form $\theta = \sum f_iD_i$ 
with homogeneous polynomials $f_i$ of fixed degree $p$ is said to be {\it homogeneous
of (polynomial) degree $p = \pdeg\theta$}. Of course, any element of $\Der(S)$ can be written
as a sum of homogeneous polynomials. For $v\in V$, let $D_v$ be the derivation defined by
$$
 D_v(\alpha) = \alpha(v) \quad \text{for}\ \ \alpha\in S_1
$$
(and extended to all of $S$ by the derivation rule). In particular, $D_i = D_{e_i}$, where 
$e_1,\dots, e_n$ is the chosen basis of $V$.
Of course, the map
$$ 
  S\otimes_{\mathbb R} V \to \Der(S) \quad\text{defined by}\ s\otimes v \mapsto s D_v
$$
for $s\in S$ and $v\in V$
is an isomorphism of graded $S$-modules. 
	
A typical example of a derivation $S\to S$ is the {\it Euler derivation} 
$\theta_E = \sum x_i D_i$; its polynomial degree is $1$ and it has the
following important property: If $f\in S$ has degree $t$, then $\theta_E(f) = t\cdot f$,
in particular, if $\alpha$ is linear, then $\theta_E(\alpha) = \alpha.$
	
Let $\H$ be a hyperplane arrangement. 
The {\it module $D(\H)$ of $\H$-derivations}
is the set of all derivations $\theta$ of $S$ such that
$\theta(\alpha_H) \subseteq \alpha_HS$ for all $H\in \H$, note that this is 
an $S$-submodule of $\Der(S)$. 
For example, $\theta_E$ always belongs to $D(\H)$. 
	\medskip 

\subsubsection{\bf Free arrangements and their exponents.}
An arrangement $\H$ is said to be {\it free} provided the $S$-module $D(\H)$ is free.
If $D(\H)$ is free, then there is a free set of homogeneous generators
and {\it the weakly increasing sequence of degrees of  
a minimal set of homogeneous generators is uniquely determined} (see \cite{[OT]}, 4.18). 
We call a free set of homogeneous generators of $D(\H)$ a
{\it basic set of $\H$-derivations} and the weakly increasing sequence of degrees of the set
the {\it exponent partition for $\H.$}
	\medskip

Let $\H$ be a hyperplane arrangement and $H$ an element of $\H$.
If both $\H$ and 
$\H' = \H\setminus\{H\}$ are free arrangements, then there are basic sets for $\H$ and
$\H'$ which are nicely related to each other, namely:  
{\it There 
is a basic set $\{\theta_1,\dots,\theta_l\}$ of $\H'$-derivations and an index $i$ such that
$$
  \{\theta_1,\dots,\theta_{i-1},\alpha_H\theta_i,\theta_{i+1}\dots,\theta_l\}
$$ 
is a basic set of $\H$-derivations} (see  \cite{[OT]}, 4.46).
	\bigskip 

We call an ordering $\{H_1,\dots,H_m\}$ of the elements of $\H$ a {\it free ordering} provided 
all the subarrangements $\{H_1,\dots,H_t\}$ are free for $1\le t \le m$.
 	\bigskip

\subsubsection{\bf Ideal subarrangements of Weyl arrangements.}
An {\it ideal subarrangement} is given by the hyperplanes $H_x$, where $x$ belongs to
some fixed ideal $I$ in a root poset $\Phi_+$. 
If $I$ is an ideal in a root poset $\Phi_+$, let $r_I(t)$ be the number of roots in $I$ of height $t$.
We claim that {\it $r_I$ is a Young partition.} This follows directly from the fact that 
the root poset can be covered by $n$ solid subchains, see Section 1.4.
	
\begin{theorem} {\bf (Sommers-Tymoczko).} 
Every ideal of a root poset yields a free hyperplane arrangement whose exponent partition
is the dual partition of the height partition $r_I$.
\end{theorem}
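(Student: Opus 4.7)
The argument goes by induction on $|I|$, building the ideal one root at a time along a linear extension of the poset structure and applying Terao's addition-deletion theorem to carry freeness (with controlled exponents) forward at each step.

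\emph{Base case.} For $I=\emptyset$, the empty arrangement has $D(\emptyset)=\Der(S)$, which is free on $D_1,\dots,D_n$, all of polynomial degree $0$. The height partition $r_I$ is empty and its dual is $(0,\dots,0)$, matching.

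\emph{Inductive step.} Pick $x\in\Phi_+\setminus I$ which is minimal in the induced order, so that $I'=I\cup\{x\}$ is again an ideal. Write $H=H_x$, $\H=\H(I')$, $\H'=\H(I)=\H\setminus\{H\}$, and denote by $\H^H$ the restriction arrangement $\{H\cap K\mid K\in\H',\,K\neq H\}$ inside $H$. By induction, $\H'$ is free with exponent partition $\mu':=r_I^{\vee}$; what we must show is that $\H$ is free with exponent partition $\mu:=r_{I'}^{\vee}$. Because $x$ has some height $t$ and adding it promotes $r_I(t)$ by one to some value $k:=r_{I'}(t)$, the dual partitions $\mu'$ and $\mu$ differ in exactly one entry, namely $\mu_k=\mu'_k+1$. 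Here we use the solid-subchain decomposition of $\Phi_+$ from the previous subsection to know that $r_{I'}$ is again a Young partition, so the dual $\mu$ is well defined.

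\emph{Applying addition-deletion.} By the addition form of Terao's theorem, once we know $\H'$ is free, it suffices to verify
(a) the restriction $\H^H$ is a free arrangement on $H$, and
(b) its exponents agree with $\mu$ with the $k$-th entry removed.
If both hold, Terao's theorem then yields that $\H$ is free and its exponents are obtained from those of $\H'$ by incrementing exactly one of them by one, and the basic-set transformation recalled in the excerpt makes this concrete: one passes from a basic set $\{\theta_1,\dots,\theta_n\}$ of $\H'$-derivations to $\{\theta_1,\dots,\theta_{k-1},\alpha_H\theta_k,\theta_{k+1},\dots,\theta_n\}$ for $\H$, promoting $\pdeg\theta_k$ by one.

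\emph{The hard part.} Establishing (a) and (b) is the substantive content. The codimension-$2$ subspaces $H_x\cap H_y$ for $y\in I$ generally occur with multiplicities, so one must identify $\H^H$, with these natural multiplicities, as a free multi-arrangement. The clean route is via Yoshinaga's criterion, exactly as in the Abe-Barakat-Cuntz-Hoge-Terao argument: one recognizes the multi-restriction as an ideal (multi)arrangement inside a lower-rank Weyl arrangement obtained from the parabolic subsystem orthogonal to $x$, and then the height-partition/exponent correspondence propagates by induction on rank. This combinatorial identification of $\H^H$ with a lower-rank ideal arrangement, and the careful bookkeeping of the multiplicities coming from orthogonality relations in $\Phi$, is where most of the effort lies; everything else is formal consequence of Terao's theorem together with the Young-diagram arithmetic that shifts one entry of the dual partition up by one whenever a single box is added to the height partition.
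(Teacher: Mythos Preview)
The paper does not prove this theorem. It is a survey, and immediately after the statement it simply records the history: the original Sommers--Tymoczko argument handled all types except $\mathbb F_4,\mathbb E_6,\mathbb E_7,\mathbb E_8$; those were completed by Barakat via computer; and a unified proof was later given by Abe--Barakat--Cuntz--Hoge--Terao. So there is no ``paper's own proof'' to compare against.

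That said, your sketch is in the spirit of the ABCHT approach: induction along a linear extension of the ideal using (a version of) Terao's addition theorem, with the key bookkeeping being that adding one root of height $t$ bumps exactly one entry of the dual partition. You are honest that the substantive step is the freeness and exponent identification of the restriction $\H^H$, and you are right that this is where the work lies. Two remarks. First, the actual ABCHT paper organizes the induction differently: rather than adding one root at a time and controlling a single restriction, they prove a \emph{Multiple Addition Theorem} that adds all roots of a fixed height simultaneously, which lets them bypass the delicate multi-arrangement analysis of individual restrictions. Second, as written, your proposal is a strategy rather than a proof: the paragraph you label ``the hard part'' is a genuine gap. Identifying the multi-restriction with an ideal arrangement in a lower-rank root system and pushing the height-partition/exponent correspondence through that identification is nontrivial and does not follow from anything in the preceding sections of this paper; it would require either reproducing the ABCHT machinery or something equivalent.
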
 
	
The proof of Sommers-Tymoczko \cite{[SoT]} covered all cases but $\mathbb F_4, \mathbb E_6, 
\mathbb E_7$ and $\mathbb E_8$; the missing cases were verified by Barakat using a computer. 
A unified proof was later given by Abe-Barakat-Cuntz-Hoge-Terao \cite{[A]}.

\begin{cor} Let $\H$ be a Weyl arrangement.
Any total ordering which refines the partial ordering of $\Phi_+$ is a free ordering.
\end{cor}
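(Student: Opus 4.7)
The plan is to observe that the corollary is an immediate consequence of the Sommers--Tymoczko theorem once one notices that the initial segments of a linear refinement of $\Phi_+$ are precisely the order ideals of $\Phi_+$.

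More concretely, suppose a total ordering $H_1,H_2,\dots,H_m$ of the hyperplanes of $\H$ refines the partial ordering of $\Phi_+$ under the bijection $H_i \leftrightarrow x_i \in \Phi_+$ (so that $x_i \le x_j$ in $\Phi_+$ forces $i \le j$). For every $1 \le t \le m$, I would set $I_t = \{x_1,\dots,x_t\}$ and check that $I_t$ is an order ideal of $\Phi_+$: if $x_i \in I_t$ and $y \le x_i$ in $\Phi_+$, then $y = x_j$ for some $j$, and since the total order refines the partial order we get $j \le i \le t$, so $y \in I_t$.

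Then $\{H_1,\dots,H_t\}$ is exactly the ideal subarrangement associated to $I_t$, and the Sommers--Tymoczko theorem (stated just above) guarantees that this subarrangement is free. Since this holds for every $t$, the given ordering satisfies the definition of a free ordering, which is what had to be shown.

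There is really no obstacle here beyond the purely order-theoretic verification that initial segments of a linear extension are down-sets; the substantive content sits entirely in Sommers--Tymoczko, which we are allowed to invoke. The only point one should state explicitly is the identification $x_i \leftrightarrow H_{x_i}$ (together with $H_x = H_{-x}$, so that indexing the hyperplanes of $\H$ by $\Phi_+$ is unambiguous), after which the chain of ideals $\emptyset \subset I_1 \subset I_2 \subset \cdots \subset I_m = \Phi_+$ produces the required increasing chain of free subarrangements.
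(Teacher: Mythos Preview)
Your proof is correct and matches the paper's intended argument: the corollary is stated without proof precisely because it follows immediately from Sommers--Tymoczko once one notes that initial segments of a linear extension of $\Phi_+$ are order ideals, which is exactly what you verify.
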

	\medskip 

\subsubsection{\bf Basic sets of invariants, of differential $1$-forms, of derivations.}
We assume that $G$ is a (real) reflection group and $\H$  the corresponding hyperplane
arrangement. 
It remains to be seen that the exponents for $\H$ are just the exponents for $G$ (and we
will see again that $\H$ is free). We outline here only the main steps.
We start with the main theorem of the invariant theory of reflection groups:

\begin{theorem} {\bf (Chevalley, 1955)}. Let $G\subseteq \GL(V)$ be a reflection group, where $V$
is an $n$-dimensional vector space. Let $S$ be the ring of regular functions on $V$ and $R = S^G$.
Then there are homogeneous polynomials $f_1,\dots,f_n$ which generate $S^G$.
And there exists a finite-dimensional $G$-stable graded subspace $U$ of $S$ such that 
$S = R\otimes_{\mathbb R} U$
and the $G$-module $U$ is the regular representation.
\end{theorem}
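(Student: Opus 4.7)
The plan is to follow Chevalley's classical strategy, using the Reynolds averaging operator to project onto $R = S^G$ and exploiting the reflection hypothesis through the fact that for every reflection $\sigma \in G$ with fixed hyperplane $H$, the linear form $\alpha_H$ divides $h - \sigma h$ for every $h \in S$.

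First I would set up the Reynolds operator $\rho : S \to R$, $\rho(f) = \tfrac{1}{|G|}\sum_{g\in G} g \cdot f$, which is an $R$-linear graded projection. Hilbert's finite generation theorem shows that $R$ is a finitely generated graded $\mathbb{R}$-algebra, so I can pick homogeneous $f_1, \dots, f_m \in R_+$ that minimally generate $R_+$ as an $R$-ideal; by graded Nakayama they also minimally generate the Hilbert ideal $J := S \cdot R_+$ as an $S$-ideal.

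The central step, which I expect to be the main obstacle, is the following key lemma: if $h_1, \dots, h_m \in S$ are homogeneous with $\sum h_i f_i = 0$, then every $h_i \in J$. I would prove it by induction on $\deg h_1$. The base case $\deg h_1 = 0$ is immediate from minimality of the $f_i$. For the inductive step, apply $1 - \sigma$ to the relation for a reflection $\sigma$ fixing hyperplane $H$; since the $f_i$ are invariant, the result is $\sum (h_i - \sigma h_i) f_i = 0$, and dividing out the common factor $\alpha_H$ produces a relation $\sum h_i' f_i = 0$ of strictly lower degree. By induction each $h_i' \in J$, so $h_i \equiv \sigma h_i \pmod{J}$; since $G$ is generated by reflections, $h_i \equiv g \cdot h_i \pmod{J}$ for every $g \in G$, whence $h_i \equiv \rho(h_i) \pmod{J}$, and $\rho(h_i) \in R_+ \subseteq J$ finishes the step. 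This is precisely the point at which the reflection-group hypothesis is genuinely used; without it the divisibility that powers the induction fails.

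The remaining arguments follow standard lines. Algebraic independence of $f_1, \dots, f_m$ is obtained by applying the lemma to the relations $\sum_i (\partial p / \partial y_i)(f) \cdot \partial f_i / \partial x_j = 0$ arising from a minimal polynomial dependence $p(f_1, \dots, f_m) = 0$; combined with integrality of $S$ over $\mathbb{R}[f_1, \dots, f_m]$ (via $\prod_{g\in G}(T - g \cdot s) = 0$), transcendence-degree considerations force $m = n$. For the second assertion, I would use averaging to fix a $G$-stable graded complement $U$ of $J$ in $S$, so that $S = U \oplus J$; the multiplication map $R \otimes_{\mathbb{R}} U \to S$ is then surjective by induction on degree (using $S = U + R_+ S$) and injective by the key lemma applied to a relation $\sum r_i u_i = 0$ for a homogeneous basis $u_i$ of $U$. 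Finally, to identify $U$ with the regular $G$-representation, I would compare the Poincaré series of the $G$-isotypic components of $S$, computed by Molien's formula, with those of $R \otimes U$; since $R$ carries the trivial $G$-action, this forces each irreducible $G$-module to appear in $U$ with multiplicity equal to its dimension, so $U \cong \mathbb{R}[G]$.
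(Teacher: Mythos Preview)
The paper does not give its own proof here; it simply cites Bourbaki, Ch.~V, \S5.3, Th\'eor\`emes~1 and~2. Your sketch is precisely the classical Chevalley argument that Bourbaki presents, so you are in line with the intended reference, and your reconstruction of the averaging/key-lemma mechanism and the algebraic-independence step is accurate.

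One genuine gap: the injectivity of $R\otimes_{\mathbb R}U\to S$ does \emph{not} follow from your key lemma in the way you suggest. The key lemma controls syzygies among the $f_i$ --- it says that in a relation $\sum h_if_i=0$ each $h_i$ lies in $J$ --- but a relation $\sum r_iu_i=0$ with $r_i\in R$ and $u_i\in U$ has a different shape, and rewriting it as a syzygy among the $f_j$ only tells you the resulting coefficients lie in $J$, which is not enough to force $r_i=0$. (Equivalently: your key lemma says the first syzygy module of $(f_1,\dots,f_n)$ sits inside $J\cdot S^n$, but freeness of $S$ over $R$ needs the stronger fact that the syzygies are \emph{Koszul}, i.e.\ that $f_1,\dots,f_n$ is a regular sequence in $S$.) The standard fix, and the one implicit in Bourbaki, is to observe that $f_1,\dots,f_n$ is a homogeneous system of parameters in the Cohen--Macaulay ring $S$, hence a regular sequence; then $S$ is free over $R$ and $P_{S/J}(t)=\prod_i(1+t+\dots+t^{d_i-1})$, which simultaneously gives injectivity and the dimension count $\dim U=|G|$ you need for the regular-representation identification. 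Your Molien argument for $U\cong\mathbb R[G]$ is then correct as stated.
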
 

The polynomials $f_1,\dots,f_n$ which generate $S^G$ have to be
algebraically independent, thus $R$ is isomorphic to a polynomial ring with free generators 
$f_1,\dots,f_n$. We call $f_1,\dots,f_n$ a {\it basic set of invariants} and 
the weakly increasing sequence of degrees of such a set
the sequence of {\it degrees for $G.$}
	 
\begin{proof} Bourbaki \cite{[B]} V,5.3. Theor\`emes 1, 2.
\end{proof} 
	
The group $G$ operates on $\Der(S)$ as follows 
$$
 (g\theta)(v) = g\theta(g^{-1}v).
$$ 
Let
$\Der(S)^G$ be the set of $G$-invariant derivations of $S$, thus the set of derivations
$\theta$ of $S$ with $g\theta = \theta$. 
	\bigskip 

{\bf (1)} {\it The graded $S$-modules $S\otimes_R \Der(S)^G$ and $D(\H)$ are isomorphic.}
	\medskip

Proof: It is shown in \cite{[OT]}, 6.59 that $\Der(S)^G \subseteq D(\H)$, this yields a map
$$
  S\otimes_R \Der(S)^G \to D(\H).
$$
One uses the tensor factorization $S = U\otimes R$
in order to show that the map is an embedding. Further calculations in \cite{[OT]} (p.237/8) show the
surjectivity. 
	\bigskip 

{\bf (2)  }{\it The $R$-module  $\Der(S)^G$
is a free $R$-module of rank $n$.} 
	\medskip

For the proof see \cite{[OT]}, 6.48. We note that the 
(bijective) map $S\otimes V \to \Der(S)$ given by $s\otimes v \mapsto sD_v$ is
$G$-equivariant, thus $\Der(S)^G \simeq (S\otimes V)^G,$
therefore
$$
(S\otimes_{\mathbb R} V)^G = 
(R\otimes_{\mathbb R} U \otimes_{\mathbb R} V)^G = R\otimes_{\mathbb R}(U\otimes_{\mathbb R}V)^G
$$
shows that $(S\otimes_{\mathbb R} V)^G$ is a free $R$-module. 
Since $U$ is the regular representation of $G$, the dimension of the space
$(U\otimes_{\mathbb R}V)^G$ is equal to the dimension of $V$.
	\bigskip 

Let $\Omega(S) = \bigoplus S\diff x_i$ be the $S$-module of all differential $1$-forms, this
is the free $S$-module with basis $\diff x_1,\dots,\diff x_n$ 
(note that $\Omega(S)$ is often denoted by $\Omega^1[V]$;
for the general setting, see \cite{[OT]}, Section 3.5 and  Section 4.6, p.123).
As usual, given $f\in S$, we write $\diff f = \sum D_i(f)\diff x_i$, and obtain in this way 
a derivation $\diff\!:S \to \Omega(S).$ 
The map
$$ 
  S\otimes_{\mathbb R} V^* \to \Omega(S) \quad\text{defined by}\ s\otimes \alpha \mapsto s \diff\alpha
$$
for $s\in S$ and $\alpha\in V^*$ is an isomorphism of graded $S$-modules. 
Using this identification, we see that we may consider the elements $\omega\in \Omega(S)$ 
as maps $\Der(S) \to S$.

The group $G$ operates on $\Omega(S)$ as follows 
$$
 (g\omega)(\theta) = g\omega(g^{-1}\theta),
$$ 
where $\omega\in \Omega(S)$ and $\theta\in \Der(S).$
Let
$\Omega(S)^G$ be the set of $G$-invariant differential $1$-forms, thus the set of all 
$\omega$ in $\Omega(S)$ with $g\omega = \omega$. 
	\bigskip 

{\bf (3)} {\it The $R$-modules $\Omega(S)^G$ and $\Der(S)^G$
are isomorphic.} 
	\medskip

In order to prove (3), we first note: 
{\it For a (real) reflection group $G \subseteq \GL(V)$, the $G$-modules $V$ and $V^*$
are isomorphic,}  therefore also the $G$-modules $S\otimes V$ and $S\otimes V^*$ are
isomorphic. It follows that the $R$-modules $(S\otimes V)^G $ and $(S\otimes V^*)^G$
are isomorphic. 

The map $S\otimes V \to \Der(S)$ defined by $s\otimes v \mapsto s\cdot D_v$ 
and $S\otimes V^* \to \Omega(S)$ defined by $s\otimes v \mapsto s\cdot \diff v$ are bijective and
$G$-equivariant, thus we get isomorphisms of $R$-modules
$$
 (S\otimes V)^G \to \Der(S)^G,\quad 
 (S\otimes V^*)^G \to \Omega(S)^G.
$$
	\bigskip 

The assertions (1) and (3) provide isomorphisms of graded $S$-modules:
	\smallskip 

\Rahmen{$ 
 D(\H)
   \underset{(1)}\simeq  
 S\otimes_R\Der(S)^G 
   \underset{(3)}\simeq
 S\otimes_R\Omega(S)^G 
 $}
	\medskip 

\noindent
and (2) asserts that the module in the middle is free, thus $D(\H)$ is a free
$S$-module.
		\bigskip

{\bf (4) (Solomon 1964).} {\it If $f_1,\dots,f_n$ is a basic set of invariants for $G$, 
then $\diff f_1,\dots,\diff f_n$ is basic set for the $R$-module $\Omega(S)^G$,}
thus a basic set for the $S$-module $S\otimes_R\Omega(S)^G$.
	\bigskip\medskip 

\begin{cor} {\bf (V.I.Arnold, K.Saito, 1979).} 
Let $G$ be a reflection group and $\H$ the
corresponding reflection arrangement. Then $\H$ is a free arrangement.
Let $d_1 \ge d_2 \ge \cdots \ge d_n$ be the degrees for $G$, 
and let $\epsilon_1 \ge \epsilon_2 \ge \cdots \ge \epsilon_n$ be the exponents for the
corresponding hyperplane arrangement $\H$. Then $\epsilon_i = d_i-1$ for $1\le i \le n.$
\end{cor}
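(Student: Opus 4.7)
The plan is to combine the four ingredients (1)--(4) already assembled above and then to track the polynomial degree through the chain of isomorphisms.

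First I would establish freeness of $\H$. Assertion (2) gives that $\Der(S)^G$ is a free $R$-module of rank $n$, so after extending scalars to $S$ the module $S\otimes_R\Der(S)^G$ is a free $S$-module of rank $n$. Combined with the $S$-module isomorphism of (1), this identifies $D(\H)$ with a free $S$-module of rank $n$, which is exactly the definition of freeness of the arrangement $\H$.

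To identify the exponents, I would pick a basic set of invariants $f_1,\dots,f_n$ of $S^G=R$, so that $f_i$ is homogeneous of degree $d_i$. By Solomon's theorem (4), the differentials $\diff f_1,\dots,\diff f_n$ form an $R$-basis of $\Omega(S)^G$. Transporting first through the $R$-module isomorphism of (3) and then through the $S$-module isomorphism of (1) yields a homogeneous free $S$-basis $\theta_1,\dots,\theta_n$ of $D(\H)$. Since the exponent partition of $\H$ is defined as the weakly decreasing sequence of polynomial degrees of such a basis, everything reduces to verifying $\pdeg\theta_i = d_i-1$.

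The main obstacle is the bookkeeping of polynomial degree, because $\Omega(S)$ and $\Der(S)$ carry natural gradings that differ by a shift of one: writing $\Omega(S)=S\otimes V^*$ with $V^*=S_1$ forces $\diff x_j$ to sit in degree $1$, whereas in $\Der(S)=S\otimes V$ the derivations $D_j$ have polynomial degree $0$. The element $\diff f_i=\sum_j D_j(f_i)\,\diff x_j$ therefore has coefficients of degree $d_i-1$ and total degree $d_i$, while the $G$-equivariant identification $V^*\simeq V$ underlying (3) (coming from the $G$-invariant inner product) sends $\diff x_j$ to $D_j$. The image of $\diff f_i$ in $\Der(S)^G$ thus has polynomial degree $d_i-1$, and since extension of scalars and the isomorphism (1) are maps of graded $S$-modules, we conclude $\pdeg\theta_i = d_i-1$, i.e., $\epsilon_i=d_i-1$ for $1\le i\le n$.
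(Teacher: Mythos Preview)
Your proposal is correct and follows exactly the route the paper has laid out: the paper assembles ingredients (1)--(4), displays the boxed chain of graded $S$-module isomorphisms $D(\H)\simeq S\otimes_R\Der(S)^G\simeq S\otimes_R\Omega(S)^G$, observes freeness from (2), and then remarks that Solomon's result (4) ``explains nicely why the degrees and the exponents for a (real) reflection group differ by 1.'' You have simply made that last remark explicit by tracking the polynomial degree through the identification $V^*\simeq V$, which is precisely the missing computation behind the shift $\epsilon_i=d_i-1$.
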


{\bf Remark.} Solomon's result explains nicely why the degrees and the exponents for a (real)
reflection group differ by 1.
	\bigskip\bigskip

{\bf Notes.}

\begin{note}\label{system} {\bf 
Root systems, root bases, and the Dynkin diagrams.} 
We start with a finite-dimensional Euclidean vector space $V$, say of dimension $n$.
Given a non-zero element $\alpha\in V$, 
we may consider the hyperplane $H_\alpha$ orthogonal to
$\alpha$ and we denote by $\rho_\alpha\!:V \to V$ the reflection with respect
to $H_\alpha$: it is the identity map on $H_\alpha$ and it 
sends $\alpha$ to $-\alpha$. Note that if $\alpha\neq 0$ (so that $\rho_\alpha$
is defined), the difference vector $\rho_\alpha(\beta)-\beta$ is a multiple of $\alpha$,
for any $\beta \in V$.

A {\it root system} $\Phi$ in $V$ is a finite set of non-zero elements of $V$
(the {\it roots}) such that the following conditions are satisfied: The elements
of $\Phi$ generate $V$. If $\alpha,\beta$ belong to $V$ and generate the same real
subspace, then $\beta = \pm \alpha$. Finally (and this is the decisive condition):
If $\alpha, \beta$ belong to $\Phi$, then $\rho_\alpha(\beta)$ belongs 
again to $\Phi$ and the difference vector $\rho_\alpha(\beta)-\beta$ is an integral
multiple of $\alpha$.

By the very definition, we see that root systems are finite subsets of
Euclidean vector spaces with strong symmetry conditions. The subgroup of  $\GL(V)$
generated by the reflections $\rho_\alpha$ with $\alpha\in \Phi$ is called
the {\it Weyl group} for $\Phi$. It consists of orthogonal transformations of $V$
which map $\Phi$ into itself.

Root system play an important role in many parts of mathematics. In particular,
they have been used to obtain a structure theory for finite-dimensional semisimple 
complex Lie-algebras. Thus, any book which introduces such Lie algebras (for
example \cite{[H1]})  
provides a lot of information about root systems.   

Of importance is the following theorem:
{\it Given a root system $\Phi$ in $V$, there is a basis $\Delta$ of $V$ which consists of
elements on $\Phi$, such that any element of $\Phi$ is a linear combination of 
the elements of $\Delta$ using integral coefficients which are either all
only non-negative or all non-positive.} Such a basis $\Delta$ is called a {\it root
basis} of $\Phi$, it is unique up to an automorphism of $V$ given by an element of
the Weyl group $W$. Given a root basis $\Delta$, the elements of $\Phi$ which are
linear combinations of the elements of $\Delta$ with non-negative coefficients are
said to be the {\it positive} roots, the remaining ones the {\it negative} roots. 

Given a root basis $\Delta$, the possible angles between 
its elements are very restricted: Either the roots are orthogonal, or else the angle is
$120^0, 135^0$ or $150^0$. One uses the set $\Delta$ as the vertices of a graph,
connecting
two vertices of $\Delta$ by an edge provided they are not orthogonal. If the angle 
between $\alpha,\beta\in \Delta$ is
$135^0$ or $150^0$, then these vectors must have different length: in case of $135^0$
one draws not a single edge, but a double edge \; {\beginpicture 
\setcoordinatesystem units <.6cm,.7cm>
\multiput{$\circ$} at 0 0  1 0  /
\put{} at 0 0
\plot 0.2 0.075  0.8 0.075 /
\plot 0.2 -.075  0.8 -.075 /
\plot  0.65 0.2  0.35 0  0.65 -.2 /
\endpicture},\ decorated in the middle by an arrow head pointing
to the shorter root. Similarly, 
in case of $150^0$
one draws a triple edge \; {\beginpicture \setcoordinatesystem units <.6cm,.7cm>
\multiput{$\circ$} at 0 0  1 0  /
\put{} at 0 0
\plot 0.2 0.075  0.8 0.075 /
\plot 0.2 0  0.8 0 /
\plot 0.2 -.075  0.8 -.075 /
\plot  0.65 0.2  0.35 0  0.65 -.2 /
\endpicture}, 
such that the arrow head again points to the shorter root. One obtains in this way
the so-called {\it Dynkin diagram} $\Delta(\Phi)$ of $\Phi$. We exhibit below all the
connected Dynkin diagrams which arise in this way. 
	\medskip 
\vfill\eject 
{\bf The connected Dynkin diagrams (left) and the highest roots (right).}
$$
{\beginpicture
\setcoordinatesystem units <.6cm,.7cm>
\put{\beginpicture
\multiput{$\circ$} at 0 0  1 0  2 0  4 0  5 0 /
\plot 0.2 0  0.8 0 /
\plot 1.2 0  1.8 0 /
\plot 2.2 0  2.5 0 /
\plot 3.5 0  3.8 0 /
\plot 4.2 0  4.8 0 /
\setdots <1mm>
\plot 2.8 0  3.2 0 /
\multiput{} at  6 0 /
\put{$\mathbb A_n$} at -1 0
\endpicture} at 0 -.4 
\put{\beginpicture
\multiput{$\circ$} at 0 0  1 0  2 0  4 0  5 0 /
\plot 0.2 0.075  0.8 0.075 /
\plot 0.2 -.075  0.8 -.075 /
\plot  0.65 0.2  0.35 0  0.65 -.2 /
\plot 1.2 0  1.8 0 /
\plot 2.2 0  2.5 0 /
\plot 3.5 0  3.8 0 /
\plot 4.2 0  4.8 0 /
\setdots <1mm>
\plot 2.8 0  3.2 0 /
\multiput{} at  6 0 /
\put{$\mathbb B_n$} at -1 0
\endpicture} at 0 -2.2
\put{\beginpicture
\multiput{$\circ$} at 0 0  1 0  2 0  4 0  5 0 /
\plot 0.2 0.075  0.8 0.075 /
\plot 0.2 -.075  0.8 -.075 /
\plot  0.35 0.2  0.65 0  0.35 -.2 /

\plot 1.2 0  1.8 0 /
\plot 2.2 0  2.5 0 /
\plot 3.5 0  3.8 0 /
\plot 4.2 0  4.8 0 /
\setdots <1mm>
\plot 2.8 0  3.2 0 /
\multiput{} at  6 0 /
\put{$\mathbb C_n$} at -1 0
\endpicture} at 0 -4 
\put{\beginpicture
\setcoordinatesystem units <.6cm,.4cm>
\multiput{$\circ$} at 0 0  1 0  2 0  4 0  5 1  5 -1 /
\plot 0.2 0  0.8 0 /
\plot 1.2 0  1.8 0 /
\plot 2.2 0  2.5 0 /
\plot 3.5 0  3.8 0 /
\plot 4.2 0.2  4.8 0.8 /
\plot 4.2 -.2  4.8 -.8 /
\setdots <1mm>
\plot 2.8 0  3.2 0 /
\multiput{} at  6 0 /
\put{$\mathbb D_n$} at -1 0.5
\endpicture} at 0 -6

\put{\beginpicture
\setcoordinatesystem units <.6cm,.6cm>
\multiput{$\circ$} at 0 0  1 0  2 0  3 0  4 0  2 1  /
\plot 0.2 0  0.8 0 /
\plot 1.2 0  1.8 0 /
\plot 2.2 0  2.8 0 /
\plot 3.2 0  3.8 0 /
\plot 2 0.2  2 0.8 /
\multiput{} at 2 1.5  6  0 /
\put{${\mathbb E}_6$} at -1 0.5
\endpicture} at 0 -8

\put{\beginpicture
\setcoordinatesystem units <.6cm,.6cm>
\multiput{$\circ$} at 0 0  1 0  2 0  3 0  4 0  5 0  2 1  /
\plot 0.2 0  0.8 0 /
\plot 1.2 0  1.8 0 /
\plot 2.2 0  2.8 0 /
\plot 3.2 0  3.8 0 /
\plot 4.2 0  4.8 0 /
\plot 2 0.2  2 0.8 /
\put{} at 2 1.5
\multiput{} at 6 0 /
\put{${\mathbb E}_7$} at -1 0.5
\endpicture} at 0 -10
\put{\beginpicture
\setcoordinatesystem units <.6cm,.6cm>
\multiput{$\circ$} at 0 0  1 0  2 0  3 0  4 0  5 0  6 0  2 1  /
\plot 0.2 0  0.8 0 /
\plot 1.2 0  1.8 0 /
\plot 2.2 0  2.8 0 /
\plot 3.2 0  3.8 0 /
\plot 4.2 0  4.8 0 /
\plot 5.2 0  5.8 0 /
\plot 2 0.2  2 0.8 /
\put{} at 2 1.5
\multiput{} at 6 0 /
\put{${\mathbb E}_8$} at -1 0.5
\endpicture} at 0 -12
\put{\beginpicture
\multiput{$\circ$} at 0 0  1 0  2 0  3 0  /
\plot 0.2 0  0.8 0 /
\plot 1.2 0.075  1.8 0.075 /
\plot 1.2 -.075  1.8 -.075 /
\plot  1.65 0.2  1.35 0  1.65 -.2 /

\plot 2.2 0  2.8 0 /
\setdots <1mm>
\multiput{} at  6 0 /
\put{$\mathbb F_4$} at -1 0
\endpicture} at 0 -14.2

\put{\beginpicture
\multiput{$\circ$} at 0 0  1 0  /
\plot 0.2 0.075  0.8 0.075 /
\plot 0.2 0  0.8 0 /
\plot 0.2 -.075  0.8 -.075 /
\plot  0.65 0.2  0.35 0  0.65 -.2 /
\multiput{} at  6 0 /
\put{$\mathbb G_2$} at -1 0
\endpicture} at 0 -15.6

\put{\beginpicture
\put{$1$} at 0 0 
\put{$1$} at 1 0 
\put{$1$} at 2 0 
\put{$1$} at 4 0 
\put{$1$} at 5 0 
\plot 0.2 0  0.8 0 /
\plot 1.2 0  1.8 0 /
\plot 2.2 0  2.5 0 /
\plot 3.5 0  3.8 0 /
\plot 4.2 0  4.8 0 /
\setdots <1mm>
\plot 2.8 0  3.2 0 /
\multiput{} at  6 0 /
\endpicture} at 8 -.4 
\put{\beginpicture
\put{$2$} at 0 0 
\put{$2$} at 1 0 
\put{$2$} at 2 0 
\put{$2$} at 4 0 
\put{$1$} at 5 0 
\plot 0.2 0.075  0.8 0.075 /
\plot 0.2 -.075  0.8 -.075 /

\plot  0.65 0.2  0.35 0  0.65 -.2 /
\plot 1.2 0  1.8 0 /
\plot 2.2 0  2.5 0 /
\plot 3.5 0  3.8 0 /
\plot 4.2 0  4.8 0 /
\setdots <1mm>
\plot 2.8 0  3.2 0 /
\multiput{} at  6 0 /
\endpicture} at 8 -2.2
\put{\beginpicture
\put{$1$} at 0 0 
\put{$2$} at 1 0 
\put{$2$} at 2 0 
\put{$2$} at 4 0 
\put{$2$} at 5 0 
\plot 0.2 0.075  0.8 0.075 /
\plot 0.2 -.075  0.8 -.075 /
\plot  0.35 0.2  0.65 0  0.35 -.2 /

\plot 1.2 0  1.8 0 /
\plot 2.2 0  2.5 0 /
\plot 3.5 0  3.8 0 /
\plot 4.2 0  4.8 0 /
\setdots <1mm>
\plot 2.8 0  3.2 0 /
\multiput{} at  6 0 /
\endpicture} at 8 -4 
\put{\beginpicture
\setcoordinatesystem units <.6cm,.4cm>
\put{$1$} at 0 0 
\put{$2$} at 1 0 
\put{$2$} at 2 0 
\put{$2$} at 4 0 
\put{$1$} at 5 1 
\put{$1$} at 5 -1
\plot 0.2 0  0.8 0 /
\plot 1.2 0  1.8 0 /
\plot 2.2 0  2.5 0 /
\plot 3.5 0  3.8 0 /
\plot 4.2 0.2  4.8 0.8 /
\plot 4.2 -.2  4.8 -.8 /
\setdots <1mm>
\plot 2.8 0  3.2 0 /
\multiput{} at  6 0 /
\endpicture} at 8 -6

\put{\beginpicture
\setcoordinatesystem units <.6cm,.6cm>
\put{$1$} at 0 0 
\put{$2$} at 1 0 
\put{$3$} at 2 0 
\put{$2$} at 3 0 
\put{$1$} at 4 0 
\put{$2$} at 2 1
\plot 0.2 0  0.8 0 /
\plot 1.2 0  1.8 0 /
\plot 2.2 0  2.8 0 /
\plot 3.2 0  3.8 0 /
\plot 2 0.2  2 0.8 /
\multiput{} at 2 1.5  6  0 /
\endpicture} at 8 -8

\put{\beginpicture
\setcoordinatesystem units <.6cm,.6cm>
\put{$2$} at 0 0 
\put{$3$} at 1 0 
\put{$4$} at 2 0 
\put{$3$} at 3 0 
\put{$2$} at 4 0 
\put{$1$} at 5 0
\put{$2$} at 2 1 
\plot 0.2 0  0.8 0 /
\plot 1.2 0  1.8 0 /
\plot 2.2 0  2.8 0 /
\plot 3.2 0  3.8 0 /
\plot 4.2 0  4.8 0 /
\plot 2 0.2  2 0.8 /
\put{} at 2 1.5
\multiput{} at 6 0 /
\endpicture} at 8 -10
\put{\beginpicture
\setcoordinatesystem units <.6cm,.6cm>
\put{$2$} at 0 0 
\put{$4$} at 1 0 
\put{$6$} at 2 0 
\put{$5$} at 3 0 
\put{$4$} at 4 0 
\put{$3$} at 5 0
\put{$2$} at 6 0 
\put{$3$} at 2 1 
\plot 0.2 0  0.8 0 /
\plot 1.2 0  1.8 0 /
\plot 2.2 0  2.8 0 /
\plot 3.2 0  3.8 0 /
\plot 4.2 0  4.8 0 /
\plot 5.2 0  5.8 0 /
\plot 2 0.2  2 0.8 /
\put{} at 2 1.5
\multiput{} at 6 0 /
\endpicture} at 8 -12
\put{\beginpicture
\put{$2$} at 0 0 
\put{$4$} at 1 0 
\put{$3$} at 2 0 
\put{$2$} at 3 0 
\plot 0.2 0  0.8 0 /
\plot 1.2 0.075  1.8 0.075 /
\plot 1.2 -.075  1.8 -.075 /
\plot  1.65 0.2  1.35 0  1.65 -.2 /
\plot 2.2 0  2.8 0 /
\setdots <1mm>
\multiput{} at  6 0 /
\endpicture} at 8 -14.2
\put{\beginpicture
\put{$3$} at 0 0 
\put{$2$} at 1 0 
\plot 0.2 0.075  0.8 0.075 /
\plot 0.2 0  0.8 0 /
\plot 0.2 -.075  0.8 -.075 /
\plot  0.65 0.2  0.35 0  0.65 -.2 /
\multiput{} at  6 0 /
\endpicture} at 8 -15.6
\put{} at  8 -16.5
\endpicture}
$$
Recall that the vertices of a Dynkin diagram $\Delta = \Delta(\Phi)$ are the
elements of a root basis of $\Phi$. If we attach to these vertices $\alpha$
numbers $c_\alpha$ (as we do it on the right), we obtain as $\sum_\alpha c_\alpha\alpha$ 
an element of $V$.
The {\it highest root} displayed on the right is an element of $\Phi$ 
which is uniquely
determined by the property that all the coefficients $c_\alpha$ are maximal.
\end{note}

\begin{note}\label{A}
{\bf The root system $\mathbb A_n$.} The root poset $\Phi_+(\mathbb A_n)$ may be
identified with the set of intervals $[i,j]$ where $0 \le i < j\le n$ are integers,
using as partial ordering the (set-theoretical) inclusion of intervals. 
For example, here is the Hasse diagram of the interval poset for $n=4.$ 
$$
\hbox{\beginpicture
\setcoordinatesystem units <.8cm,.8cm>
\put{$[0,1]$} at 0 0 
\put{$[1,2]$} at 2 0
\put{$[2,3]$} at 4 0 
\put{$[3,4]$} at 6 0 
\put{$[0,2]$} at 1 1 
\put{$[1,3]$} at 3 1 
\put{$[2,4]$} at 5 1 
\put{$[0,3]$} at 2 2 
\put{$[2,4]$} at 4 2 
\put{$[0,4]$} at 3 3
\plot 0.3 0.3  0.7 0.7 /
\plot 1.3 0.7  1.7 0.3 /
\plot 2.3 0.3  2.7 0.7 /
\plot 3.3 0.7  3.7 0.3 /
\plot 4.3 0.3  4.7 0.7 /
\plot 5.3 0.7  5.7 0.3 /
\plot 1.3 1.3  1.7 1.7 /
\plot 2.3 1.7  2.7 1.3 /
\plot 3.3 1.3  3.7 1.7 /
\plot 4.3 1.7  4.7 1.3 /
\plot 2.3 2.3  2.7 2.7 /
\plot 3.3 2.7  3.7 2.3 /
\endpicture}
$$ 

A usual realization of the root system of type $\mathbb A_n$ (see for example
Humphreys \cite{[H1]}) is to start
with the Euclidean vector space $\mathbb R^{n+1}$ with orthonormal 
basis $e_0,e_1,\dots,e_n$. Let $V$ be the subspace of $\mathbb R^{n+1}$
orthogonal to the vector $\sum_i e_i.$
The vectors in $V$ with integer coefficients and of length $\sqrt 2$ 
are just the vectors of the form $e_i-e_j$
with $i\neq j$. The set of these vectors is a root system $\Phi$ and the 
vectors $e_{i-1}-e_{i}$ with $1\le i\le n$ 
form a root basis. One obtains a bijection between 
the interval poset and $\Phi_+$ by sending the interval $[i,j]$ with 
$0\le i < j \le n$ to the vector $e_i-e_j.$
\end{note}

\begin{note}\label{3-dim}
{\bf The root posets as 2-dimensional projections of 3-dimensional objects.}
The root posets may be considered as projections of
some 3-dimensional objects formed by cubes, squares and
intervals. For example, the root poset $\Phi_+(\mathbb E_7)$
shown below on the left may be thought of being obtained
from the constellation of ten cubes shown on the right
by adding 1- and 2-dimensional pieces (squares and intervals),
thus, on the right, we see its 3-dimensional ``core'':
$$
\hbox{\beginpicture
\setcoordinatesystem units <1cm,1cm>
\put{\beginpicture
\setcoordinatesystem units <.45cm,.45cm>

\multiput{$\circ$} at 3.8 2
       2.7 3  4.7 3
       3.6 4  5.6 4
       4.5 5   4.5 6 
       3.6 5 
       4.5 7  3.4 8 /

\multiput{$\bullet$} at
  0 0  2 0  4 0  6 0  8 0  10 0  
  0.9 1  2.9 1  4.9 1  6.9 1  8.9 1
  1.8 2  5.8 2  7.8 2
  6.7 3 /  
\plot 0 0  1.8 2  /
\plot 5.8 2   8 0  8.9 1 /
\plot 0.9 1  2 0  2.8 1 /
\plot 1.8 2  4 0  5.8 2 /
\plot 4.9 1  6 0  7.8 2 /
\plot 5.8 2  6.7 3  10 0 /

\setdashes <1mm>
\plot 1.8 2  3.6 4  5.8 2 / 
\plot 3.6 4  4.5 5  6.7 3 /
\plot 2.8 1  5.6  4 /
\plot  2.7 3  4.9 1 /
\plot 3.8 2   3.8 3  /
\plot 3.6 4  3.6 5 /
\plot 2.7 4   3.6 5  4.7 4 /
\plot 3.6 5  3.6 6  /
\plot 5.6 4  5.6 5 /
\plot 4.5 5  4.5 6 /
\plot 3.6 5  4.5 6  5.6 5 / 
\plot  4.5 6  4.5 7 /
\plot 2.5 7  3.4 8  5.6 6 /
\plot 3.6 6  4.5 7 / 
\plot 4.5 7  4.5 8 /

\plot 3.4 8  3.4 9 /

\setsolid 
\multiput{$\bullet$} at 4 1
  2.9 2  4.9 2 
  1.8 3  3.8 3  5.8 3
  2.7 4  4.7 4 
   6.7 4  5.6 5 / 
\plot 4 0  4 1 /
\plot 2.9 1   2.9 2 /
\plot 4.9 1  4.9 2 /
\plot 1.8 2    1.8 3  /
\plot 5.8 2  5.8 3 /
\plot 2.7 3    2.7 4  /
\plot 4.7 3  4.7 4 /
\plot 4 1  5.8 3  4.7 4 /
\plot 2.7 4  
    1.8 3  4 1 / 
\plot 2.9 2  4.7 4 /
\plot  4.9 2   2.7 4  /
\plot 6.7 3  6.7 4  5.8 3  /
\plot 6.7 4  5.6 5  4.7 4 /
\plot 5.6 5  5.6 6 /

\multiput{$\bullet$} at 3.8 4
    2.7 5  4.7 5
    3.6 6 /
\plot  3.8 3  3.8 4 /
\plot 2.7 4  2.7 5  /
\plot 4.7 4  4.7 5 /
\plot 3.8 4
    2.7 5   3.6 6  4.7 5
   3.8 4 /

\plot  5.6 6  4.7 5 /

\multiput{$\bullet$} at   0.3 10  1.4 9  2.5 8   3.6 7  4.7 6  
     1.2 11   2.3 10  3.4 9  4.5 8   5.6 7 
     2.1 12  3.2 11  4.3 10  5.4 9  6.5 8   
     3. 13  4.1 12
 /
\plot 2.1 12  3 13  4.1 12  3.2 11 /
\plot 0.3 10  4.7 6  6.5 8  2.1 12  0.3 10 /
\plot 1.2 11  5.6 7 /

\plot 1.4 9  3.2 11 /
\plot 2.5 8  4.3 10 /
\plot 3.6 7  5.4 9 /

\plot 4.7 5  4.7 6 /
\plot 3.6 6  3.6 7 /
\plot 5.6 6  5.6 7 /
\plot 2.5 7  2.5 8 /

\multiput{$\bullet$} at  3  14  3 15  3 16  /
\plot 3  13  3 16 /

\setshadegrid span <.4mm>
\vshade 1.8 2 3  <,z,,> 4 0 1  <z,,,> 6.7 3 4 /
\vshade 2.7 4 5  <,z,,> 3.8 3 4  <z,,,> 5.6  5 6 /
\vshade 2.5 7 8  <,z,,> 4.7 5 6  <z,,,> 5.6   6 7 /

\multiput{$\bullet$} at 1.6 6   5.6 6  2.5 7  /

\plot 2.7 5  1.6 6   2.5 7 /

\plot 2.5 7  3.6 6 /

\put{$\bullet$} at 3.4 0 
\plot 3.4 0  4 1 /

\endpicture} at 0 0
\put{\beginpicture
\setcoordinatesystem units <.6cm,.6cm>
\multiput{$\bullet$} at 0 2  1 1  2 0  3 1  4 2  5 3 
   0 3  1 4  1 5  2 6  1 7  1 8 
   1 2  2 3  3 4  4 5  
   2 4  3 5  4 6  3 6  4 7
   2 7  3 8  2 9 
   2 1  3 2  4 3  5 4 /
\plot 2 0  0 2  0 3  1 4  1 5  2 6  1 7  1 8  3 6  4 7 /
\plot 2 0  2 1 /
\plot 1 1  1 2  4 5 /
\plot 2 0  5 3  5 4  2 1  0 3 /
\plot 3 1  3 2  1 4 /
\plot 4 2  4 3  3 4 /
\plot 5 4  4 5  4 7  2 9  1 8  /
\plot 1 5  2 4  4 6 /
\plot 2 3  2 4 /
\plot 3 4  3 6 /
\plot 3 5  2 6  2 7  3 8 /
\put{} at 0 11
\setshadegrid span <.4mm>
\vshade 0 2 3  <,z,,> 2 0 1  <z,,,> 5 3 4 /
\vshade 1 4 5  <,z,,> 2 3 4  <z,,,> 4 5 6 /
\vshade 1 7 8  <,z,,> 3 5 6  <z,,,> 4 6 7 /
\endpicture} at 5 0
\endpicture}
$$
If $P$ is a finite poset, we may call an interval $[x,y]$ in $P$ a {\it cuboid}
provided it is the product of three proper chains, say
$C_1\times C_2\times C_3$;
its  volume is $c_1c_2c_2$, where $c_i+1$ is the number of vertices of the chain $C_i$.
A {\it cube} in $P$ is a cuboid of volume 1.

For the Dynkin graphs $\Delta$ of type $\mathbb A, \mathbb D, \mathbb E$, 
Beineke \cite{[Bn]} has observed that the map
$$
            [x,y] \mapsto x+y
$$
furnishes a bijection between
the cuboids in the root poset and the non-thin positive roots,
as well as a bijection between the cubes
and the non-thin positive roots with precisely 3 odd coefficients
(a root
is said to be {\it thin} provided its coefficients are bounded by $1$).
	
We may use the cubes in $\Phi_+$ in oder to describe different levels
inside the poset.
The root posets of type $\mathbb A_n, \mathbb B_n, \mathbb G_2$
have no cubes, thus there is just one level. Those of type
$\mathbb D_n$ and $\mathbb F_4$ have two levels, the
number of levels for $\mathbb E_6, \mathbb E_7, \mathbb E_8$
is 3, 4 and 6, respectively. It should be mentioned that
the set of roots belonging to a fixed level can be described by inequalities
concerning the coefficients, and for the higher levels, these sets are always intervals.
For example, in case $\mathbb E_7$, here is the description of the levels:
$$
\hbox{\beginpicture
\setcoordinatesystem units <.85cm,.7cm>
\multiput{} at 1 1  13 0 /
\put{\bf Level} at 1 1
\put{ 1} at 1 0
\put{ 2} at 1 -1
\put{ 3} at 1 -2
\put{ 4} at 1 -3
\put{Conditions}  at 3.5 1
\put{$u = 0$}  at 3.5 0
\put{$u = 1,\ c\le 1$}  at 3.5 -1
\put{$c=2,\ d=1$}  at 3.5 -2
\put{$d\ge 2$}  at 3.5 -3
\put{Minimal elements}  at 7 1
\put{Maximal element}  at 10.7 1 
\put{6 simple roots} at 7 0
\put{$\smallmatrix   &        &  0  \cr
                   1 & 1 & 1 & 1 & 1 & 1 \endsmallmatrix$} at 10.7 0 
\put{$\smallmatrix   &   & 1  \cr
                   0 & 0 & 0 & 0 & 0 & 0 \endsmallmatrix$} at 7  -1 
\put{$\smallmatrix   &   & 1  \cr
                   1 & 1 & 1 & 1 & 1 & 1\endsmallmatrix$} at 10.7  -1 
\put{$\smallmatrix   &   & 1  \cr
                   0 & 1 & 2 & 1 & 0 & 0 \endsmallmatrix$} at 7  -2 
\put{$\smallmatrix   &   & 1  \cr
                   1 & 2 & 2 & 1 & 1 & 1 \endsmallmatrix$} at 10.7  -2 
\put{$\smallmatrix   &   & 1  \cr
                   0 & 1 & 2 & 2 & 1 & 0 \endsmallmatrix$} at 7  -3 
\put{$\smallmatrix   &   & 2  \cr
                   2 & 3 & 4 & 3 & 2 & 1 \endsmallmatrix$} at 10.7  -3 
\plot 0.6 0.5  14.5 0.5 /
\plot 2 1.3  2 -3.4 /
\plot 5 1.3  5 -3.4 /
\plot 8.8 1.3  8.8 -3.4 /
\plot 12.5 1.3  12.5 -3.4 /

\put{number}  at 13.5 1.3
\put{of roots}  at 13.5 .85
\put{$21$} [r] at 13.6 0
\put{$13$} [r] at 13.6 -1
\put{$9$} [r] at 13.6 -2
\put{$20$} [r] at 13.6 -3
\endpicture}
$$
where we denote the coefficients as follows:
$$
\hbox{\beginpicture
\setcoordinatesystem units <.7cm,.6cm>
\multiput{$\circ$} at 0 0  1 0  2 0  3 0  4 0  5 0 
   2 1 /
\plot 0.1 0  0.9 0 /
\plot 1.1 0  1.9 0 /
\plot 2.1 0  2.9 0 /
\plot 3.1 0  3.9 0 /
\plot 4.1 0  4.9 0 /
\plot 2 0.1  2 0.9 /
\put{$a$\strut} at 0 -.5 
\put{$b$\strut} at 1 -.5 
\put{$c$\strut} at 2 -.5 
\put{$d$\strut} at 3 -.5 
\put{$e$\strut} at 4 -.5 
\put{$f$\strut} at 5 -.5 
\put{$u$\strut} at 2.4 1.2
\endpicture} 
$$

Let us add that the root posets of type $\mathbb D_n$ have precisely two
levels, each level consists of $\binom n2$ roots: thus, here we obtain 
a separation of the positive roots into two classes of equal cardinality. 
\end{note}

\begin{note}\label{long}
{\bf The root posets with the long roots being marked by circles.}

$$
\hbox{\beginpicture
\setcoordinatesystem units <1.25cm,1cm>
\put{\beginpicture
\setcoordinatesystem units <.45cm,.45cm>
\multiput{$\bullet$} at  
   0 0  -2 0  -4 0  -6 0  -8 0  
      -1 1   -3 1  -5 1  -7 1  
        -2 2  -4 2  -6 2  -8 2
            -3 3  -5 3  -7 3
             -4 4  -6 4  -8 4
                -5 5  -7 5
                  -6 6  -8 6
                     -7 7
                       -8 8 /
\multiput{$\bigcirc$} at  
   0 0  -2 0  -4 0  -6 0  
      -1 1   -3 1  -5 1 
        -2 2  -4 2  -8 2
            -3 3   -7 3
             -6 4  -8 4
                -5 5  -7 5
                  -6 6  -8 6
                     -7 7
                       -8 8 /
\plot 0 0  -8 8 /
\plot -1 1  -2 0  -8 6  -7 7 /
\plot -2 2  -4 0  -8 4  -6 6 /
\plot -3 3  -6 0  -8 2  -5 5 /
\plot -4 4  -8 0 /
\endpicture} at 0 0 
\put{\beginpicture
\setcoordinatesystem units <.45cm,.45cm>
\multiput{$\bullet$} at  
   0 0  -2 0  -4 0  -6 0  -8 0  
      -1 1   -3 1  -5 1  -7 1  
        -2 2  -4 2  -6 2  -8 2
            -3 3  -5 3  -7 3
             -4 4  -6 4  -8 4
                -5 5  -7 5
                  -6 6  -8 6
                     -7 7
                       -8 8 /
\multiput{$\bigcirc$} at -8 8  -8 6  -8 4  -8 2  -8 0  /
\plot 0 0  -8 8 /
\plot -1 1  -2 0  -8 6  -7 7 /
\plot -2 2  -4 0  -8 4  -6 6 /
\plot -3 3  -6 0  -8 2  -5 5 /
\plot -4 4  -8 0 /
\endpicture} at 4 0 
\put{\beginpicture
\setcoordinatesystem units <.45cm,.45cm>

\multiput{$\bullet$} at 
     0 0  2 0  4 0  6 0 
     1 1  3 1  5 1
     2 2  3 2  4 2 
     2 3  4 3
     1 4  3 4  5 4
     2 5  4 5
     3 6  5 6
     4 7  4 8  4 9  4 10  /
\multiput{$\bigcirc$} at  4 8  4 9  4 10 
     1 4  2 5  3 6 
     3 2  4 3  5 4
     4 0  5 1  6 0 /
\multiput{$\circ$} at 3 3 /
\plot  0 0  2 2  4 0 5 1  /
\plot 1 1  2 0  4 2  6 0 /
\plot 3 1  3 2 /
\plot 2 2  2 3  4 5 /
\plot 4 2  4 3  2 5 /
\plot 3 2  5 4  3 6  1 4  3 2 /
\plot 3 6  4 7  5 6  4 5 /
\plot 4 7  4 10 /

\setdashes <1mm>
\plot 2 2  3 3  4 2 /
\plot 3 3  3 4 /

\setshadegrid span <.4mm>
\vshade 2 2 3  <,z,,> 3 1 2  <z,,,>  4 2 3  /
\endpicture}  at 0 -5
\put{\beginpicture
\setcoordinatesystem units <.5cm,.5cm>

\multiput{$\bullet$} at 0 0  2 0
  1 1   1 2  1 3  1 4 /
  \plot 0 0  1 1  2 0 /
  \plot 1 1  1 4 /
\multiput{$\bigcirc$} at 2 0  1 3  1 4 /
  \endpicture} at 4 -5
\put{$\mathbb B_5$} at 0 1.8
\put{$\mathbb C_5$} at 4 1.8
\put{$\mathbb F_4$} at -1 -4
\put{$\mathbb G_2$} at 3 -4
\endpicture}
$$
Always, the lowest row consists of the simple roots. Here, the 
sequence of the simple roots is the same as in the pictures above
showing the Dynkin diagrams. 

\end{note}

\begin{note}\label{reflections}
{\bf Reflections in the Weyl group $W$.} 
Here we refer to Humphreys \cite{[H2]}, 1.2. It asserts: 
if $g$ is an orthogonal transformation, then $gs_x g^{-1}$ maps
$gx$ to $-gx$ and fixes the hyperplane $H_{gx}$ orthogonal to $x$ pointwise. This shows that
for $w\in W,$ we have $s_{wx} = ws_xw^{-1}$, thus $s_{wx}$ belongs to $W$.
\cite{[H2]} 1.14 asserts that any reflection $w\in W$ is $w = s_x$ for some root $x$.
\end{note}

\begin{note}\label{quiver}
{\bf The quiver $Q(\Lambda)$ of a hereditary artin algebra.}
We may assume that $\Lambda$ is connected, then $\Lambda$ is a finite-dimensional
$k$-algebra for some field $k$ (namely for $k$ the center of $\Lambda$).

The vertices of $Q(\Lambda)$ 
are the isomorphism classes $[S]$ of the simple $\Lambda$-modules
$S$ and there is an arrow $[T] \to [S]$ provided $\Ext^1(T,S) \neq 0.$
For $S$ a simple module, let $\Gamma(S) = \End(S)^{\text{op}}.$ Let $v$ be the
product of $\dim_{\Gamma(T)}\Ext^1(T,S)$ and 
$\dim\Ext^1(T,S)_{\Gamma(S)}$. Taking into account the function $v\!: Q(\Lambda)_1 \to 
\Bbb N_1$, the quiver $Q(\Lambda)$ is called the {\it valued quiver} of $\Lambda$.

If we assume that $\Lambda$ is representation-finite, then $v\le 3$ and we draw
the edge between $S$ and $T$ as a double edge, in case $v = 2$, and as
a triple edge, in case $v = 3.$ In the case of a double or triple edge between the
vertices $[S_1], [S_2]$, we
endow it in the middle with an arrow pointing to $[S_1]$ 
provided $\dim_k \Gamma(S_1)
< \dim_k \Gamma(S_2)$ (let us stress again that these middle arrows should
not be confused with the arrows given by the orientation).
	\medskip 

It is easy to construct hereditary artin algebras with simply-laced quivers,
say of type  $\mathbb A_n, \mathbb D_n, \mathbb E_6,\mathbb E_7,\mathbb E_8$: just take the
path algebra of a corresponding quiver. For constructing hereditary artin algebras with 
quiver of type $\mathbb B_n, \mathbb C_n$ and $\mathbb F_4$, we need a field extension
$K:k$ of degree 2 (or, more generally, division rings $D_1\subset D_2$ which are
finite-dimensional $k$-algebras such that $\dim_k D_2 = 2\dim_k D_1).$ 
For example, the matrix algebra 
 $\left[\smallmatrix \mathbb R & \mathbb C & \mathbb C \cr 
                                                 & \mathbb C & \mathbb C \cr
                                                 &        & \mathbb C \endsmallmatrix \right]$
is a hereditary artin algebra of type $\mathbb B_3$, whereas 
$\left[\smallmatrix \mathbb C & \mathbb C & \mathbb C \cr 
                                                 & \mathbb R & \mathbb R \cr
                                                 &        & \mathbb R \endsmallmatrix \right]$
is a hereditary artin algebra of type $\mathbb C_3$.
In order to realize $\mathbb G_2$, let $\Lambda = 
\left[\smallmatrix k & K \cr 
                            & K \endsmallmatrix \right]$,
where  $K:k$ is a field extension of degree 3. In general, for dealing with
non-simply-laced quivers, Gabriel has introduced the notion of a ``species'':
whereas for simply-laced quivers, it is sufficient to work with vertices, arrows
and one field $k$, a species is given by a set of division rings (indexed by the
vertices of a quiver) as well as bimodules (indexed by the arrows). For an outline
of the representation theory of species, 
we may refer to several joint papers with Dlab.
\end{note}

\begin{note}\label{why}
{\bf  Why $|k| > 2$ ?}
Here is an example to have in mind. Consider the 3-subspace quiver $Q$ with sink $0$,
and its $k$-representation, where $k$ is a field. 
Let $X = I(0)$ be the indecomposable injective module of length $4$ and $Y$ the maximal indecomposable
module. Then $\bdim X < \bdim Y$. Now $|X|$ and $|Y|$ differ by $1$, thus, if $X$ is a subfactor of
$Y$, it is a submodule or a factor module of $Y$. Since $X$ is injective, it cannot be a submodule of $Y$.
In order to analyze whether $X$ is a factor module of $Y$, we consider $\Hom(Y,X)$, this is a 
2-dimensional $k$-space. For any field $k$, there are 3 maps
$Y \to X$ which are non-zero and not surjective, all other non-zero maps $Y \to X$ are surjective.
Thus, if $k$ is the field with two elements, then there is no surjective map $Y \to X$.

A second example of interest: Consider an artin algebra of type $\mathbb B_2$, say with Auslander-Reiten quiver
$$
{\beginpicture
\setcoordinatesystem units <1cm,1cm> 
\put{$10$} at 0 0 
\put{$21$} at 1 1 
\put{$11$} at 2 0 
\put{$01$} at 3 1
\arr{0.3 0.3}{0.7 0.7} 
\arr{1.3 0.7}{1.7 0.3} 
\arr{2.3 0.3}{2.7 0.7} 
\setdots
\plot 0.5 0 1.5 0 /
\plot 1.5 1 2.5 1 /
\endpicture}
$$
Let $X = 11$ and $Y = 21$. Then $X$ is a factor module of $Y$, thus $X \subfactor Y$. 
But since $X$ is
the injective envelope of $10$ and the socle of $21$ is the direct sum of two copies of $10$,
we see that $Y$ is a submodule of the direct sum of two copies of $11$. This shows that 
$Y \subfactor X^2$. 
\end{note}

\begin{note}\label{antichain}
{\bf Antichains in a poset, antichains in an additive category.} 
Let us motivate the definition. We recall the following: 
Given a poset $P$, a chain in $P$ is a subset of pairwise comparable elements, 
whereas an antichain in $P$ is a subset of pairwise incomparable elements. 
Now consider the linearization $kP$ of $P$, were $k$ is a field: 
this is an additive $k$-category whose indecomposable
objects are the elements of $P$ such that $\Hom_{kP}(x,y) = k$ 
provided $x \le y$ in $P$ and $\Hom_{kP}(x,y) = 0$
otherwise, such that the composition of maps in  $kP$ 
is given  by the multiplication in $k$, and, finally, such that any object in
$kP$ is the direct sum of indecomposable objects. Of course, a subset
$A$ of $P$ is an antichain in $P$ if and only if  $A$ (considered as a set of objects in
 $kP$) consists of pairwise orthogonal bricks 
(thus, is an antichain in the additive category $kp$).
\end{note}

\begin{note}\label{thick} {\bf Thick subcategories of abelian categories.}
If $mathcal C$ is an abelian category, we say that a full subcategory $\mathcal U$ is a
{\it thick} subcategory, provided it is closed under kernels, cokernels, and extensions.
Such a thick subcategory is again an abelian category, and the embedding functor is
exact. On the other hand, 
assume that $\mathcal U$ is a full subcategory which is an abelian subcategory.
Then $\mathcal U$ is a thick subcategory of $\mathcal C$ if and only if the embedding functor
is exact and $\mathcal U$ is closed under extensions. Thick subcategories had been called
`wide'' by Hovey \cite{[Ho]}, the denomination ``thick'' seems to be due to Krause \cite{[K1]}. 
\end{note}

\begin{note}\label{Young}
{\bf Young partition.} In this survey, 
a sequence of numbers $(r_1,r_2,\dots,r_t)$ with $r_1\ge r_2 \ge \cdots r_t \ge 0$ is 
called a {\it Young partition}. In algebra and number theory, such 
sequences are usually just called ``partitions'', but we need also the concept of a 
partition as used in set theory (see Chapter 4 and the Appendix): 
a {\it partition of a set $M$} is a set of disjoint non-empty subsets $M_i$ of $M$
such that $M = \bigcup M_i$. The non-crossing partitions in type $\mathbb A_n$ considered 
in Chapter 4
are just certain (set-theoretical) partitions of the set $\{1,2,\dots,n\}$. Note that
the word {\it partition} in the formulation {\it non-crossing partitions}
refers to (set-theoretical) partitions. Of course, Young partitions may be considered as
special set-theoretical partitions:
namely, the Young partition $\lambda = (\lambda_1, \lambda_2, \dots,\lambda_t)$ with $n = \sum \lambda_i$
may be considered as the partition of the set $M=\{1,2,\dots,n\}$ with parts 
$M_j = \{x\in \mathbb Z\mid \sum_{i<j}\lambda_i < x \le \sum_{i\le j}\lambda_i\}$ for $1\le j\le t$.
\end{note}

\begin{note}\label{fixed-height}
{\bf The short roots of fixed height.} 
Let $r^s_t$ be the number of short roots of height $t$. We should stress that
this is usually not a Young partition. But still we have the following symmetry condition:
	\medskip 

\Rahmen{$r^s_t + r^s_{h+1-t} = r^s_1$}
	\medskip

\noindent
and $r^s_1$ is equal to $n-1$ in case $\mathbb B_n$, to $1$ in case $\mathbb C_n$ and $\mathbb G_2$,
and finally to $2$ in case $\mathbb F_4.$ 
	\bigskip

Here are the Young diagrams $Y$ for the exponent partitions in the cases $\mathbb B_6, \mathbb C_6,$
$\mathbb F_4, \mathbb G_2,$ always 
inserted into a rectangle $R$ with $n$ rows and $h$ columns. 
The boxes of the Young diagram correspond bijectively to the 
positive roots, we have shaded these boxes in two different ways: the darker shading
marks long roots. Of course, the square boxes in $R\setminus Y$ may be interpreted
as corresponding to the negative roots, here we use again a shading, but this time
we shade only one kind of the roots: the short ones in case $\mathbb B_6$  and the long
 ones in the cases $\mathbb C_6, \mathbb F_4, \mathbb G_2.$

$$
\hbox{\beginpicture
\setcoordinatesystem units <1cm,1cm>

\put{\beginpicture
\setcoordinatesystem units <.45cm,.45cm>
\put{\beginpicture
\multiput{} at 0 0  12 6 /

\put{$\mathbb B_6$} at -2 6

\setdots <.7mm>
\setplotarea x from 0 to 12, y from 0 to 6
\grid {12} {6} 
\setsolid
\plot 0 0  0 6  11 6  11 5  9 5  9 4  7 4  7 3  5 3  5 2  3 2  3 1  1 1  1 0  0 0  /
\put{$n=6$} at -1.5 3
\put{$h = 12$} at 6 -.7
\plot 0 -2  12 -2 /
\plot 1 -1.8  1 -2.2 /
\plot 3 -1.8  3 -2.2 /
\plot 5 -1.8  5 -2.2 /
\plot 7 -1.8  7 -2.2 /
\plot 9 -1.8  9 -2.2 /
\plot 11 -1.8  11 -2.2 /


\put{$\ssize \epsilon_i\strut$} at 0 -2.5
\put{$\ssize 1$} at 1 -2.5
\put{$\ssize 3$} at 3 -2.5
\put{$\ssize 5$} at 5 -2.5
\put{$\ssize 7$} at 7 -2.5
\put{$\ssize 9$} at 9 -2.5
\put{$\ssize 11$} at 11 -2.5

\setshadegrid span <.8mm>
\vshade 0 5 6  6 5 6  / 

\setshadegrid span <.4mm>
\vshade 0 0 5 <,z,,>  1 0 5  <z,z,,> 
      1.01 1 5  <z,z,,> 3 1 5  <z,z,,> 
      3.01 2 5  <z,z,,> 5 2 5  <z,z,,> 
      5.01 3 5  <z,z,,> 6 3 5  <z,z,,> 
      6.01 3 6  <z,z,,> 7 3 6  <z,z,,> 
      7.01 4 6  <z,z,,> 9 4 6  <z,z,,> 
      9.01 5 6  <z,,,> 11 5 6  /

\setshadegrid span <.7mm>
\vshade 6 0 1  12 0 1  / 

\setdashes <1mm>
\plot 0 0  12 6 /

\setdots <.4mm>
\plot 11 6  12 6  12 0  1 0 /

\endpicture} at 0 0
\endpicture} at 0 0 

\put{\beginpicture
\setcoordinatesystem units <.45cm,.45cm>
\put{\beginpicture
\multiput{} at 0 0  12 6 /

\put{$\mathbb C_6$} at -2 6

\setdots <.7mm>
\setplotarea x from 0 to 12, y from 0 to 6
\grid {12} {6} 
\setsolid
\plot 0 0  0 6  11 6  11 5  9 5  9 4  7 4  7 3  5 3  5 2  3 2  3 1  1 1  1 0  0 0  /
\put{$n=6$} at -1.5 3
\put{$h = 12$} at 6 -.7
\plot 0 -2  12 -2 /
\plot 1 -1.8  1 -2.2 /
\plot 3 -1.8  3 -2.2 /
\plot 5 -1.8  5 -2.2 /
\plot 7 -1.8  7 -2.2 /
\plot 9 -1.8  9 -2.2 /
\plot 11 -1.8  11 -2.2 /


\put{$\ssize \epsilon_i\strut$} at 0 -2.5
\put{$\ssize 1$} at 1 -2.5
\put{$\ssize 3$} at 3 -2.5
\put{$\ssize 5$} at 5 -2.5
\put{$\ssize 7$} at 7 -2.5
\put{$\ssize 9$} at 9 -2.5
\put{$\ssize 11$} at 11 -2.5

\setshadegrid span <1mm>
\vshade 0 0 6 <,z,,>  1 0 6  <z,z,,> 
      1.01 1 6  <z,z,,> 3 1 6  <z,z,,> 
      3.01 2 6  <z,z,,> 5 2 6  <z,z,,> 
      5.01 3 6  <z,z,,> 7 3 6  <z,z,,> 
      7.01 4 6  <z,z,,> 9 4 6  <z,z,,> 
      9.01 5 6  <z,,,> 11 5 6  /

\setshadegrid span <.3mm>
\vshade 0 5 6  1 5 6  /
\vshade 2 5 6  3 5 6  /
\vshade 4 5 6  5 5 6  /
\vshade 6 5 6  7 5 6  /
\vshade 8 5 6  9 5 6  /
\vshade 10 5 6  11 5 6  /

\setshadegrid span <.3mm>
\vshade 1 0 1  2 0 1  /
\vshade 3 0 1  4 0 1  /
\vshade 5 0 1  6 0 1  /
\vshade 7 0 1  8 0 1  /
\vshade 9 0 1  10 0 1  /
\vshade 11 0 1  12 0 1  /

\setdashes <1mm>
\plot 0 0  12 6 /

\setdots <.4mm>
\plot 11 6  12 6  12 0  1 0 /

\endpicture} at 0 0
\endpicture} at 8 0 

\put{\beginpicture
\setcoordinatesystem units <.5cm,.5cm>
\multiput{} at 0 0  6 4 /
\put{$\mathbb F_4$} at -2 4
\setdots <.7mm>
\setplotarea x from 0 to 12, y from 0 to 4
\grid {12} {4} 
\setsolid
\plot 0 0  0 4  11 4  11 3  7 3  7 2  5 2  5 1  1 1  1 0  0 0  /

\put{$n=4$} at -1.5 2
\put{$h = 12$} at 6 -.7
\plot 0 -2  12 -2 /
\plot 1 -1.8  1 -2.2 /
\plot 5 -1.8  5 -2.2 /
\plot 7 -1.8  7 -2.2 /
\plot 11 -1.8  11 -2.2 /


\put{$\ssize \epsilon_i\strut$} at 0 -2.6
\put{$\ssize 1$} at 1 -2.6
\put{$\ssize 5$} at 5 -2.6
\put{$\ssize 7$} at 7 -2.6
\put{$\ssize 11$} at 11 -2.6

\setshadegrid span <.8mm>
\vshade 0 0 4 <,z,,>  1 0 4  <z,z,,> 
      1.01 1 4  <z,z,,> 5 1 4  <z,z,,> 
      5.01 2 4  <z,z,,> 7 2 4  <z,z,,> 
      7.01 3 4  <z,,,> 11 3 4  /

\setshadegrid span <.35mm>
\vshade 0 2 4 <,z,,>  1 2 4  <z,z,,> 
      1.01 3 4  <z,z,,> 4 3 4 <z,z,,>
      4.01 2 4  <z,z,,> 5 2 4  <z,z,,> 
      5.01 3 4  <z,z,,> 7 3 4  /
\vshade 8 3 4 <z,z,,>  11 3 4 /

\setshadegrid span <.35mm>
\vshade 1 0 1  4 0 1 /
\vshade 5 0 1 <,z,,>  7 0 1  <z,z,,> 
      7.01 0 2  <z,z,,> 8 0 2  <z,z,,> 
      8.01 0 1  <z,z,,> 11 0 1  <z,z,,> 
      11.01 0 2  <z,,,> 12 0 2  /

\setdashes <1mm>
\plot 0 0  12 4 /

\setdots <.4mm>
\plot 11 4  12 4  12 0  1 0 /

\endpicture} at .3 -5
\put{\beginpicture
\setcoordinatesystem units <.5cm,.5cm>
\multiput{} at 0 0  6 2 /
\put{$\mathbb G_2$} at -2 2
\setdots <.7mm>
\setplotarea x from 0 to 6, y from 0 to 2
\grid {6} {2} 
\setsolid
\plot 0 0  0 2  5 2   5 1  1 1  1 0  0 0  /

\put{$n=2$} at -1.5 1
\put{$h = 6$} at 3.5 -.7
\plot 0 -2  6 -2 /
\plot 1 -1.8  1 -2.2 /
\plot 5 -1.8  5 -2.2 /

\put{$\ssize \epsilon_i\strut$} at 0 -2.6
\put{$\ssize 1$} at 1 -2.6
\put{$\ssize 5$} at 5 -2.6

\setshadegrid span <1mm>
\vshade 0 0 2 <,z,,>  1 0 2  <z,z,,> 
      1.01 1 2  <z,,,> 5 1 2  /

\setshadegrid span <.3mm>
\vshade 0 1 2  1 1 2 /
\vshade 3 1 2  5 1 2 /

\setshadegrid span <.35mm>
\vshade 1 0 1  3 0 1 /
\vshade 5 0 1  6 0 1 /

\setdashes <1mm>
\plot 0 0  6 2 /

\setdots <.4mm>
\plot 5 2  6 2  6 0  1 0 /

\endpicture} at 8 -5 
\endpicture}
$$
\end{note}
\medskip 

\begin{note}\label{arrangements}
{\bf Hyperplane arrangements.} 
A  hyperplane arrangement is a finite set of pairwise different 
hyperplanes in a fixed vector space $V$. If $V$ is of dimension $n$, one calls it
an $n$-arrangement. Such a set is called {\it real} provided the base field is the field of
real numbers and {\it central} provided 
all the hyperplanes are subspaces (that means: affine hyperplanes which contain the zero vector).
In the lectures, all the hyperplane arrangements considered are real and central. 

What is an ``arrangement''? Just a fancy word for a finite set. Of course,
any central hyperplane arrangement $\H$ may be considered as a representation of
a quiver, namely of the $t$-subspace quiver, where $t$ is the cardinality of $\H$,
the discussion in Section 1.5 should also be seen as an attempt 
to propagate a method to deal with such quiver representations, which 
has been found useful outside of representation theory, namely to look at modules of derivations. 
	
Let us repeat: Hyperplane $n$-arrangements of cardinality $t$ are representations 
of the $t$-subspace quiver with
dimension vector $(n;n\!-\!1,\dots,n\!-\!1)$. Using duality and reflections, they correspond
bijectively to the representations with dimension vector $(n;1,\dots,1)$, thus to
$t$-tuples of elements of the projective space $\mathbb P^{n-1}.$ 
A hyperplane arrangement is called {\it irreducible} provided the corresponding 
representation is indecomposable. 
	
Let $k$ be an arbitrary base field.
Let $Q$ be the $t$-subspace quiver (it has $t+1$ vertices and $t$ arrows; one vertex, say with label 
$0$ is a sink, the
remaining vertices $1,2\,dots, t$ are sources; the arrows are of the form $\alpha_i\!:i \to 0$
with $1\le i \le n$). 
A subspace representation of $Q$ is a representation of $Q$
which uses only inclusion maps (or at least monomorphisms). Let $M$ be a subspace representation of $Q$
such that all the vector spaces $M_i$ are one-dimensional, for $1\le i \le t.$ Then $M$
is a direct sum of indecomposable subspace representation $N$ of $Q$ such that all the vector spaces
$N_i$ with $1\le i \le t$ are at most one-dimensional. Thus, it seems reasonable for us to 
look for a moment at the full subcategory $\mathcal U$ of $\mo kQ$ of direct sums of 
indecomposable subspace representation $N$ of $Q$ such that all the vector spaces
$N_i$ with $1\le i \le t$ are at most one-dimensional. The indecomposable objects in $\mathcal U$
are of two different kinds: first of all, there is the simple representation $S(0)$ corresponding to the
vertex $0$;
let us denote by $\emptyset_t$ the direct sum of $t$ copies of $S(0)$ 
(it is denoted by $\Phi_t$ in \cite{[OT]}). The remaining indecomposable objects $N$ in $\mathcal U$
have the property that the sum of the images of the maps $\alpha_i$ is the total space $N_0$.
It is not difficult to see that {\it all the indecomposable objects in $\mathcal U$ have
endomorphism ring $k$.} 
This is clear for the simple representation $S(0) = \emptyset_1.$ 
We can assume that we deal with an indecomposable representation $X$
with $\bdim X = (n;1,\dots,1)$. 
Let $f\neq 0$ be an endomorphism with $f^2 = 0$. Then $\bdim X =
2\bdim \Ker(f) + \bdim \Imm(f)$. But then $\Ker(f)$ is of the form $\emptyset_t$ for some $t$, impossible. 
This shows that the endomorphism ring of $X$ is a division ring. 
The restriction map $f \mapsto f|X_1$ is a ring homomorphism from
$\End(X)$ onto $\End_k(X_1) = k$,  thus $\End(X) = k.$ 

Altogether we see: 
{\it Any representation in $\mathcal U$ is the direct sum of a representation $\emptyset_t$, 
and pairwise non-isomorphic non-simple representations with endomorphism ring $k$.} 
	\medskip 

{\bf Example.} Already the four subspace 
quiver shows that the category $\mathcal U$  may be quite complicated:
in particular, let us stress that 
there may be non-isomorphic indecomposable 
representations $X, X'$ in $\mathcal U$ 
with $\Hom(X,X') \neq 0$ and $\Hom(X',X) \neq 0.$
	\medskip 

$$
{\beginpicture
\setcoordinatesystem units <1cm,1cm> 

\arr{0.3 0.3}{0.7 1.3}
\arr{0.3 0.1}{0.7 .3}
\arr{0.3 -.1}{0.7 -.3}
\arr{0.3 -.3}{0.7 -1.3}

\arr{2.3 0.3}{2.7 1.3}
\arr{2.3 0.1}{2.7 .3}
\arr{2.3 -.1}{2.7 -.3}
\arr{2.3 -.3}{2.7 -1.3}

\arr{1.3 1.3}{1.7 0.3}
\arr{1.3 0.3}{1.7 .1}
\arr{1.3 -.3}{1.7 -.1}
\arr{1.3 -1.3}{1.7 -.3}

\arr{11.3 1.3}{11.7 0.3}
\arr{11.3 0.3}{11.7 .1}
\arr{11.3 -.3}{11.7 -.1}
\arr{11.3 -1.3}{11.7 -.3}
\put{\beginpicture
\setcoordinatesystem units <.2cm,.2cm> 
\put{$\ssize 1$} at 0 0
\put{$\ssize 1$} at 1 1.5
\put{$\ssize 1$} at 1 0.5
\put{$\ssize 0$} at 1 -.5
\put{$\ssize 0$} at 1 -1.5
\endpicture} at  5 1 
\put{\beginpicture
\setcoordinatesystem units <.2cm,.2cm> 
\put{$\ssize 2$} at 0 0
\put{$\ssize 1$} at 1 1.5
\put{$\ssize 1$} at 1 0.5
\put{$\ssize 1$} at 1 -.5
\put{$\ssize 1$} at 1 -1.5
\endpicture} at  6 2 
\put{\beginpicture
\setcoordinatesystem units <.2cm,.2cm> 
\put{$\ssize 1$} at 0 0
\put{$\ssize 0$} at 1 1.5
\put{$\ssize 0$} at 1 0.5
\put{$\ssize 1$} at 1 -.5
\put{$\ssize 1$} at 1 -1.5
\endpicture} at  7 1 
\put{\beginpicture
\setcoordinatesystem units <.2cm,.2cm> 
\put{$\ssize 2$} at 0 0
\put{$\ssize 1$} at 1 1.5
\put{$\ssize 1$} at 1 0.5
\put{$\ssize 1$} at 1 -.5
\put{$\ssize 1$} at 1 -1.5
\endpicture} at  8 2 
\put{\beginpicture
\setcoordinatesystem units <.2cm,.2cm> 
\put{$\ssize 1$} at 0 0
\put{$\ssize 1$} at 1 1.5
\put{$\ssize 1$} at 1 0.5
\put{$\ssize 0$} at 1 -.5
\put{$\ssize 0$} at 1 -1.5
\endpicture} at  9 1 

\arr{5.3 1.3}{5.7 1.7}
\arr{6.3 1.7}{6.7 1.3}
\arr{7.3 1.3}{7.7 1.7}
\arr{8.3 1.7}{8.7 1.3}
\setdashes <1mm>
\plot 5 1.4  5 2.5 /
\plot 9 1.4  9 2.5 /
\setdots <1mm>
\plot 5.5 1  6.5 1 /
\plot 7.5 1  8.5 1 /
\setsolid

\put{and 2 similar cycles} at 7 0
\put{\beginpicture
\setcoordinatesystem units <.2cm,.2cm> 
\put{$\ssize 2$} at 0 0
\put{$\ssize 1$} at 1 1.5
\put{$\ssize 1$} at 1 0.5
\put{$\ssize 1$} at 1 -.5
\put{$\ssize 1$} at 1 -1.5
\endpicture} at  7 -1
\put{parametrized by $k\setminus\{0,1\}$} at 7 -1.8
\plot 4.5 -.35 9.5 -.35 /

\put{\beginpicture
\setcoordinatesystem units <.2cm,.2cm> 
\put{$\ssize 1$} at 0 0
\put{$\ssize 0$} at 1 1.5
\put{$\ssize 0$} at 1 0.5
\put{$\ssize 0$} at 1 -.5
\put{$\ssize 0$} at 1 -1.5
\endpicture} at  0 0 
\put{\beginpicture
\setcoordinatesystem units <.2cm,.2cm> 
\put{$\ssize 1$} at 0 0
\put{$\ssize 1$} at 1 1.5
\put{$\ssize 0$} at 1 0.5
\put{$\ssize 0$} at 1 -.5
\put{$\ssize 0$} at 1 -1.5
\endpicture} at  1 1.5
\put{\beginpicture
\setcoordinatesystem units <.2cm,.2cm> 
\put{$\ssize 1$} at 0 0
\put{$\ssize 0$} at 1 1.5
\put{$\ssize 1$} at 1 0.5
\put{$\ssize 0$} at 1 -.5
\put{$\ssize 0$} at 1 -1.5
\endpicture} at  1 0.5
\put{\beginpicture
\setcoordinatesystem units <.2cm,.2cm> 
\put{$\ssize 1$} at 0 0
\put{$\ssize 0$} at 1 1.5
\put{$\ssize 0$} at 1 0.5
\put{$\ssize 1$} at 1 -.5
\put{$\ssize 0$} at 1 -1.5
\endpicture} at  1 -.5
\put{\beginpicture
\setcoordinatesystem units <.2cm,.2cm> 
\put{$\ssize 1$} at 0 0
\put{$\ssize 0$} at 1 1.5
\put{$\ssize 0$} at 1 0.5
\put{$\ssize 0$} at 1 -.5
\put{$\ssize 1$} at 1 -1.5
\endpicture} at  1 -1.5
\put{\beginpicture
\setcoordinatesystem units <.2cm,.2cm> 
\put{$\ssize 3$} at 0 0
\put{$\ssize 1$} at 1 1.5
\put{$\ssize 1$} at 1 0.5
\put{$\ssize 1$} at 1 -.5
\put{$\ssize 1$} at 1 -1.5
\endpicture} at  2 0
\put{\beginpicture
\setcoordinatesystem units <.2cm,.2cm> 
\put{$\ssize 2$} at 0 0
\put{$\ssize 0$} at 1 1.5
\put{$\ssize 1$} at 1 0.5
\put{$\ssize 1$} at 1 -.5
\put{$\ssize 1$} at 1 -1.5
\endpicture} at  3 1.5
\put{\beginpicture
\setcoordinatesystem units <.2cm,.2cm> 
\put{$\ssize 2$} at 0 0
\put{$\ssize 1$} at 1 1.5
\put{$\ssize 0$} at 1 0.5
\put{$\ssize 1$} at 1 -.5
\put{$\ssize 1$} at 1 -1.5
\endpicture} at  3 0.5
\put{\beginpicture
\setcoordinatesystem units <.2cm,.2cm> 
\put{$\ssize 2$} at 0 0
\put{$\ssize 1$} at 1 1.5
\put{$\ssize 1$} at 1 0.5
\put{$\ssize 0$} at 1 -.5
\put{$\ssize 1$} at 1 -1.5
\endpicture} at  3 -.5
\put{\beginpicture
\setcoordinatesystem units <.2cm,.2cm> 
\put{$\ssize 2$} at 0 0
\put{$\ssize 1$} at 1 1.5
\put{$\ssize 1$} at 1 0.5
\put{$\ssize 1$} at 1 -.5
\put{$\ssize 0$} at 1 -1.5
\endpicture} at  3 -1.5

\put{\beginpicture
\setcoordinatesystem units <.2cm,.2cm> 
\put{$\ssize 1$} at 0 0
\put{$\ssize 0$} at 1 1.5
\put{$\ssize 1$} at 1 0.5
\put{$\ssize 1$} at 1 -.5
\put{$\ssize 1$} at 1 -1.5
\endpicture} at  11 1.5
\put{\beginpicture
\setcoordinatesystem units <.2cm,.2cm> 
\put{$\ssize 1$} at 0 0
\put{$\ssize 1$} at 1 1.5
\put{$\ssize 0$} at 1 0.5
\put{$\ssize 1$} at 1 -.5
\put{$\ssize 1$} at 1 -1.5
\endpicture} at  11 0.5
\put{\beginpicture
\setcoordinatesystem units <.2cm,.2cm> 
\put{$\ssize 1$} at 0 0
\put{$\ssize 1$} at 1 1.5
\put{$\ssize 1$} at 1 0.5
\put{$\ssize 0$} at 1 -.5
\put{$\ssize 1$} at 1 -1.5
\endpicture} at  11 -.5
\put{\beginpicture
\setcoordinatesystem units <.2cm,.2cm> 
\put{$\ssize 1$} at 0 0
\put{$\ssize 1$} at 1 1.5
\put{$\ssize 1$} at 1 0.5
\put{$\ssize 1$} at 1 -.5
\put{$\ssize 0$} at 1 -1.5
\endpicture} at  11 -1.5
\put{\beginpicture
\setcoordinatesystem units <.2cm,.2cm> 
\put{$\ssize 1$} at 0 0
\put{$\ssize 1$} at 1 1.5
\put{$\ssize 1$} at 1 0.5
\put{$\ssize 1$} at 1 -.5
\put{$\ssize 1$} at 1 -1.5
\endpicture} at  12 0

\endpicture}
$$
On the left we see the 10 indecomposable preprojective modules which belong to $\mathcal U$
(the module farthest left is the simple module $S(0) = \emptyset_1$),
on the right the 5 indecomposable preinjective modules which belong to $\mathcal U$,  the middle
part exhibits indecomposable regular modules. Each of the 3 tubes of rank $2$ hosts 
4 indecomposable modules which belong to $\mathcal U$, namely the modules of regular length at most $2$,
here we see the cycles mentioned above. And there are the remaining indecomposable modules 
with dimension vector $(2;1,1,1,1)$, they are modules on the mouth of homogeneous tubes;
the number of such modules is $|k|-2$. 

\end{note}

\vfill\eject
\section{\bf Tilting Theory}

This chapter concerns the classical tilting theory, the study of (finitely generated)
tilting modules for a hereditary artin algebra. 
In my appendix to the 
{\it Handbook of Tilting Theory} (2007) I wrote: {\it At the time the Handbook was conceived} (= 2002) {\it there was a common  feeling that the tilted algebras (as the core of tilting theory)
were understood well and that this part of the theory had reached a sort of final shape.
But in the meantime 
this has turned out to be wrong: the tilted algebras have to be seen as factor algebras of the so called cluster tilted algebras, and it may very well be, that in future the
cluster tilted algebras and the cluster categories will topple the tilted algebras}
(\cite{[R6]}, p.~446). 

Actually, as we will see in this chapter, already the 
basic setting of tilting theory should be refined,
replacing the usually considered torsion pair
$(\mathcal F(T),\mathcal G(T))$ by a torsion triple $(\mathcal F(T),\mathcal N(T),\mathcal Q(T))$, 
a torsion triple which is defined by 
a linear form $\alpha_T\!:K_0(\Lambda) \to \mathbb Z$. Here, $\mathcal N(T)$ will be an
arbitrary sincere exceptional subcategory, and $\mathcal N(T)$ and $(\mathcal F(T),\mathcal G(T))$
determine each other. 
In this way, the study of tilted algebras turns out 
to be just the study of sincere exceptional subcategories. 
This refinement is due to Ingalls and Thomas (2009), it puts tilting theory into
the realm of the stability theory of King. Whereas the impetus for this refinement
came from the theory of cluster algebras and cluster tilted algebras, there is now no 
further need to refer to cluster theory. 

In this chapter, we usually will deal with an arbitrary hereditary artin algebra $\Lambda$, 
and do not restrict the attention to the representation-finite ones. 
The modules which we will consider are left $\Lambda$-modules of finite length. 
The chapter is essentially independent from Chapter 1 and is mainly devoted to advertise 
findings of Ingalls and Thomas. 
	\bigskip

Let us recall the basic definitions of tilting theory. 
A module $T$ will be called a {\it partial tilting} module
provided it is multiplicity-free (this means that it is the direct sum of pairwise
non-isomorphic indecomposable modules) and has no self-extensions 
(this means that $\Ext^1(T,T) = 0$).
A {\it tilting} module is a partial
tilting module which is the direct sum of $n$ indecomposable modules, where 
$n$ is the number of simple $\Lambda$-modules (thus the rank of the
Grothendieck group $K_0(\Lambda)$).

A {\it torsion pair} $(\mathcal F,\mathcal G)$ in $\mo\Lambda$ is a pair of full subcategories
$\mathcal F,\mathcal G$ which satisfies the following two properties: first,
$\Hom(G,F)= 0$ for all modules $F\in \mathcal F$ and $G\in \mathcal G$ (we just will
write $\Hom(\mathcal G,\mathcal F) = 0$ and use a similar convention also elsewhere). 
Second, any module
$X$ with $\Hom(X,\mathcal F) = 0$ belongs to $\mathcal G$ and every module
$Y$ with $\Hom(\mathcal G,Y) = 0$ belongs to $\mathcal F$.
Alternatively, one may replace the second condition by requiring that every module
$M$ has a submodule $M'$ which belongs to $\mathcal G$ such that $M/M'$ belongs to $\mathcal F$,
this submodule $M'$ is called the {\it torsion submodule} of $M$ and is a uniquely
determined submodule.  The class $\mathcal G$
is called a torsion class, the class $\mathcal F$ a torsionfree class, the torsion classes
are just the full subcategories closed under factor modules and extensions,
the torsionfree classes are the full subcategories closed under submodules and
extensions. A torsion pair $(\mathcal F,\mathcal G)$ is said to be {\it split} provided
any indecomposable module belongs to $\mathcal F$ or to $\mathcal G$ (or, equivalently,
provided the torsion submodule of any module is a direct summand).
  
A tilting module $T$ determines a torsion pair $(\mathcal F(T),\mathcal G(T))$
as follows: $\mathcal G(T)$ consists of the
modules which are generated by $T$ (thus the factor modules of modules of the form
$T^m$ for some natural number $m$), or equivalently, the modules $G$ satisfying
$\Ext^1(T,G) = 0.$ The modules in $\mathcal F(T)$ are the modules $F$
with $\Hom(T,F) = 0.$ 

If $T$ is a tilting module, one is interested in the ring $\Gamma(T) = 
\End(T)^{\text{op}}$; this is called a {\it tilted} artin algebra.
The indecomposable $\Gamma(T)$-modules are obtained from the indecomposable
$\Lambda$-modules as follows: The functor $\Hom(T,-)\!:\mo\Lambda \to 
\mo \Gamma(T)$ yields an equivalence between $\mathcal G(T)$ and its image category
$\mathcal Y(T)$, the functor $\Ext^1(T,-)\!:\mo \Lambda \to 
\mo \Gamma(T)$ yields an equivalence between $\mathcal F(T)$ and its image category
$\mathcal X(T)$, and the pair $(\mathcal Y(T),\mathcal X(T))$ is a torsion pair in $\mo 
\Gamma(T)$ which is split. The functors  $\Hom(T,-)$ and $\Ext^1(T,-)$
are called the {\it tilting functors} given by $T$. 

Let us stress that here we deal with torsion pairs in $\mo \Lambda$ and
$\mo \Gamma(T)$, namely with $(\mathcal F(T),\mathcal G(T))$ and $(\mathcal Y(T),\mathcal X(T))$,
such that the two given subcategories of $\mo \Lambda$ are equivalent to
the two given subcategories in $\mo \Gamma(T)$, but it is a cross-over: the torsion
class of $\mo \Lambda$ is equivalent to the torsionfree class of $\mo \Gamma(T)$, and
the torsionfree
class of $\mo \Lambda$ is equivalent to the torsion class of $\mo \Gamma(T)$.
One illustrates this as follows:
$$
{\beginpicture
\setcoordinatesystem units <1cm,.6cm>
\put{\beginpicture
\multiput{} at 0 0   8 2 /
\plot 2 0  0 0  0 2  2 2 /
\ellipticalarc axes ratio 1:2  -180 degrees from 2 0 center at 2 1    

\ellipticalarc axes ratio 1:2  -180 degrees from 2.5 0 center at 2.5 1    
\plot 2.5 0  5.5 0  5.5 2  2.5 2 /
\put{$\mathcal F(T)$} at 0.8 1
\put{$\mathcal G(T)$} at 3.9 1
\setdots <.5mm>
\plot 2 0  2.5 0 / 
\plot 2 2  2.5 2 /
\endpicture} at 0 0 
\put{\beginpicture
\multiput{} at 0 0   8 2 /

\ellipticalarc axes ratio 1:2  -180 degrees from 2.5 0 center at 2.5 1    
\plot 2.5 0  5.5 0  5.5 2  2.5 2 /
\put{$\mathcal X(T)$} at 6.6 1
\put{$\mathcal Y(T)$} at 3.9 1
\plot 7.7 0  5.7 0   5.7 2  7.7 2 /
\ellipticalarc axes ratio 1:2  -180 degrees from 7.7 0 center at 7.7 1    
\setdots <.5mm>
\plot 5.5 0  5.7 0 / 
\plot 5.5 2  5.7 2 /
\endpicture} at 0 -4 

\arr{0 -1.3}{0 -2.7}
\setquadratic
\plot -3 -1.3 -2.8 -1.8 -2.5 -2  0 -2
  2 -2  2.3 -2.2 2.5 -2.7 /
\arr{2.48 -2.65}{2.5 -2.75}
\put{$\ssize \Hom(T,-)$} [l] at 0.1 -1.5
\put{$\ssize \Ext^1(T,-)$} [l] at -2.8 -1.5
\put{$\mo \Lambda$} at 5 0 
\put{$\mo \Gamma(T)$}  at 5 -4 
\endpicture}
$$
It should be observed that usually, the torsion pair 
$(\mathcal F(T),\mathcal G(T))$ is not split (whereas $(\mathcal Y(T),\mathcal X(T))$ always splits).
Thus, going from $\mo \Lambda$ to $\mo \Gamma(T)$ via the tilting functors,
one ``loses'' some modules (the indecomposable $\Lambda$-modules which are neither
torsion, nor torsionfree). 
	\medskip 

\subsection{Linearity of tilting torsion pairs}
Following Ingalls and Thomas, we are going to show that the torsion pair 
$(\mathcal F(T),\mathcal G(T))$ for any tilting module $T$ is defined by 
a linear form $\alpha_T\!:K_0(\Lambda) \to \mathbb Z$.
	\bigskip 

\subsubsection{\bf First formulation.}
\begin{lemma}\label{lin1}  If $\alpha\!:K_0(\Lambda)
\to \mathbb R$ is a linear form, let
$$
{\beginpicture
\setcoordinatesystem units <1cm,.7cm>
\put{$\mathcal F(\alpha)$} [r] at -.2 0
\put{$\{M\mid \alpha(M') < 0\quad\text{for all submodules $0 \neq M'$ of $M$}\}\ $} [l] at 0 0
\put{$\mathcal G(\alpha)$} [r] at -.2 -1
\put{$\{M\mid \alpha(M'') \ge 0\quad\text{for all factor modules $M''$ of $M$}\}\ $} [l] at 0 -1
\endpicture}
$$
Then the pair $(\mathcal F(\alpha),\mathcal G(\alpha))$ is a torsion pair.
\end{lemma}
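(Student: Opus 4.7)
The plan is to verify the two conditions in the definition of a torsion pair directly: the orthogonality $\Hom(\mathcal G(\alpha),\mathcal F(\alpha))=0$, and the existence of a torsion submodule $t(M)\in\mathcal G(\alpha)$ with $M/t(M)\in\mathcal F(\alpha)$ for every $M$.

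First, for orthogonality, given $f\colon G\to F$ with $G\in\mathcal G(\alpha)$ and $F\in\mathcal F(\alpha)$, let $I=\operatorname{Im} f$. As a factor module of $G$, the module $I$ satisfies $\alpha(I)\ge 0$; as a submodule of $F$, either $I=0$ or $\alpha(I)<0$. Hence $I=0$ and $f=0$.

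Second, I would establish that $\mathcal G(\alpha)$ is closed under factor modules (immediate, since a factor of a factor is a factor) and under extensions: given a short exact sequence $0\to A\to M\to B\to 0$ with $A,B\in\mathcal G(\alpha)$ and any submodule $K\subseteq M$, the quotient $M/K$ fits into an exact sequence $0\to A/(A\cap K)\to M/K\to B/K'\to 0$, where $K'$ is the image of $K$ in $B$. The outer terms are factors of $A$ and $B$, so each has $\alpha\ge 0$, forcing $\alpha(M/K)\ge 0$. Because $M$ has finite length, closure under factor modules and extensions yields closure under finite sums of submodules, so one may define
\[
t(M)\;=\;\sum\{\,N\subseteq M\mid N\in\mathcal G(\alpha)\,\}\;\in\;\mathcal G(\alpha),
\]
the unique maximal submodule of $M$ lying in $\mathcal G(\alpha)$.

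Third, I would show $M/t(M)\in\mathcal F(\alpha)$. Suppose otherwise; then there is a nonzero submodule $L'\subseteq M/t(M)$ with $\alpha(L')\ge 0$, and I may choose such an $L'$ of minimal length. For any submodule $K\subseteq L'$, the quotient $L'/K$ satisfies $\alpha(L'/K)\ge 0$: when $K=0$ or $K=L'$ this is clear, and when $0\subsetneq K\subsetneq L'$ the minimality of $L'$ forces $\alpha(K)<0$, whence $\alpha(L'/K)=\alpha(L')-\alpha(K)>0$. Thus $L'\in\mathcal G(\alpha)$. Lifting to the submodule $L\subseteq M$ with $t(M)\subseteq L$ and $L/t(M)=L'$, extension closure gives $L\in\mathcal G(\alpha)$, contradicting the maximality of $t(M)$.

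The only mildly delicate step is the minimality argument producing an element of $\mathcal G(\alpha)$ inside $M/t(M)$ from a mere violation of the $\mathcal F(\alpha)$ condition; the key observation is that at a length-minimal witness, the inequality $\alpha(L')\ge 0$ automatically propagates to every factor of $L'$. Everything else reduces to the linearity of $\alpha$ on short exact sequences and bookkeeping with submodules and quotients.
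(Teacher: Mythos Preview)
Your proof is correct. The orthogonality argument, the extension closure of $\mathcal G(\alpha)$, the construction of $t(M)$ as the sum of all $\mathcal G(\alpha)$-submodules, and the minimality argument showing $M/t(M)\in\mathcal F(\alpha)$ are all valid.

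The paper takes essentially the same approach but in a slightly more elaborate form: rather than proving the torsion-pair statement directly, it establishes the stronger Lemma~\ref{lin} that $(\mathcal F(\alpha),\mathcal N(\alpha),\mathcal Q(\alpha))$ is a torsion \emph{triple} with $\mathcal N(\alpha)$ thick (see note~N\,\ref{linearity}), and the torsion pair $(\mathcal F(\alpha),\mathcal G(\alpha))$ with $\mathcal G(\alpha)=\mathcal N(\alpha)\above\mathcal Q(\alpha)$ falls out as a consequence. The paper's filtration argument accordingly proceeds in three nested cases, using a minimality argument to locate a submodule in $\mathcal Q(\alpha)$ (rather than in $\mathcal G(\alpha)$ as you do). Your single minimality step---choosing a length-minimal $L'$ with $\alpha(L')\ge 0$ and observing that every factor of $L'$ then has $\alpha\ge 0$---is a clean shortcut when only the torsion pair is needed, while the paper's version yields the finer three-term decomposition that is used repeatedly later in the chapter.
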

Such a torsion pair
will be said to be a {\it linear} torsion pair, or also the {\it torsion pair defined by} $\alpha$.
The proof of the lemma 
is not difficult, we will outline it later in a more general setting (see Lemma \ref{lin} and the 
note N\,\ref{linearity}).
	\medskip

Here is a first version of Theorem \ref{version4}, one of the  main results of the chapter.

\begin{no-text}\label{version1} Let $\Lambda$ be a hereditary artin
algebra and $T$ a tilting module. Then 
$(\mathcal F(T),\mathcal G(T))$ is a linear
torsion pair, it is defined by a linear form $\alpha\!:K_0(\Lambda)
\to \mathbb Z$.
\end{no-text} 
	
The proof will be given in Section \ref{N(T)}. It provides a clear recipe for
$\alpha$; in general, as we will see,  there will be many different linear forms on $K_0(\Lambda)$
which define the torsion pair  $(\mathcal F(T),\mathcal G(T))$.

The result may look quite innocent, but it has striking consequences. Namely,
such a linear form $\alpha$ is just a {\bf stability condition} 
as considered by King, and it determines not only a torsion {\bf pair,} but even 
a torsion {\bf triple}. Here is the definition of a torsion triple:
	
A {\it torsion triple} $(\mathcal F, \mathcal N, \mathcal Q)$ consists of three full subcategories
such that we have 
$\Hom(\mathcal N, \mathcal F) = \Hom(\mathcal Q,\mathcal F) = \Hom(\mathcal Q, \mathcal N) = 0$
and such that any module $M$ has a filtration $0 \subseteq M'' \subseteq M' \subseteq M$
such that $M''$ belongs to $\mathcal Q$, $M'/M''$ to $\mathcal N$ and $M/M'$ to $\mathcal F.$
	
If $\mathcal A, \mathcal B$ are subcategories of $\mo  \Lambda$ (or classes of 
$\Lambda$-modules), we write
$\mathcal A\above\mathcal B$ for the full subcategory of all modules 
$M$ with a submodule $M'$ in $\mathcal B$ such that
$M/M'$ is in $\mathcal A.$
       
\begin{lemma} 
If $(\mathcal F, \mathcal N, \mathcal Q)$ is a torsion triple, then 
$(\mathcal F, \mathcal N \above \mathcal Q)$ and
$(\mathcal F \above \mathcal N, \mathcal Q)$ are torsion pairs
and $\mathcal N = (\mathcal F \above \mathcal N)\cap (\mathcal N \above \mathcal Q).$
In particular, all three classes $\mathcal F, \mathcal N, \mathcal Q$ are closed under extensions, 
$\mathcal F$ is closed under submodules and $\mathcal Q$ is closed under factor modules. 
\end{lemma}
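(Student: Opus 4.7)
The plan is to verify the two torsion pair assertions directly from the definitions, then establish the intersection identity, and finally deduce the closure properties from standard facts about torsion pairs.

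For the torsion pair $(\mathcal F, \mathcal N \above \mathcal Q)$ I will argue as follows: given $X \in \mathcal N \above \mathcal Q$ with submodule $X' \in \mathcal Q$ and $X/X' \in \mathcal N$, and given $F \in \mathcal F$, any morphism $f\colon X \to F$ vanishes on $X'$ since $\Hom(\mathcal Q, \mathcal F) = 0$, so it factors through $X/X' \in \mathcal N$; but then the induced map is zero by $\Hom(\mathcal N, \mathcal F) = 0$. The required filtration of $M$ is read off directly from the torsion-triple filtration $0 \subseteq M'' \subseteq M' \subseteq M$, taking $M' \in \mathcal N \above \mathcal Q$ as the torsion submodule and $M/M' \in \mathcal F$ as its quotient. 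For the dual pair $(\mathcal F \above \mathcal N, \mathcal Q)$: any morphism $g\colon X \to Y$ with $X \in \mathcal Q$ and $Y \in \mathcal F \above \mathcal N$ (with submodule $Y' \in \mathcal N$ and $Y/Y' \in \mathcal F$) becomes zero after projecting $Y$ onto its $\mathcal F$-quotient (by $\Hom(\mathcal Q,\mathcal F) = 0$), hence factors through $Y' \in \mathcal N$, and is then zero by $\Hom(\mathcal Q, \mathcal N) = 0$. The filtration now produces $M'' \in \mathcal Q$ with $M/M'' \in \mathcal F \above \mathcal N$, since the image of $M'$ in $M/M''$ lies in $\mathcal N$ with quotient $M/M' \in \mathcal F$.

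For the intersection identity, the inclusion $\mathcal N \subseteq (\mathcal F \above \mathcal N) \cap (\mathcal N \above \mathcal Q)$ is immediate (take the zero submodule in each case). For the reverse inclusion I will take $X$ in both classes and fix submodules $X' \in \mathcal N$ with $X/X' \in \mathcal F$ together with $X'' \in \mathcal Q$ and $X/X'' \in \mathcal N$. The composite $X'' \hookrightarrow X \twoheadrightarrow X/X'$ is a morphism from $\mathcal Q$ to $\mathcal F$, hence zero, so $X'' \subseteq X'$; then the resulting inclusion $X'' \hookrightarrow X'$ is a morphism from $\mathcal Q$ to $\mathcal N$, hence zero, whence $X'' = 0$ and $X = X/X'' \in \mathcal N$.

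The closure assertions then follow at once: $\mathcal F$ is the torsionfree class of the first pair, so closed under submodules and extensions, while $\mathcal Q$ is the torsion class of the second pair, so closed under factor modules and extensions. To see that $\mathcal N$ is closed under extensions, I observe that $\mathcal N \above \mathcal Q$ (being a torsion class) and $\mathcal F \above \mathcal N$ (being a torsionfree class) are both closed under extensions, so any extension of two objects of $\mathcal N$ lies in both, and therefore in $\mathcal N$ by the intersection identity. The most delicate step is the intersection identity, and the key trick there is to chain the three Hom-vanishing conditions in sequence so as to force the $\mathcal Q$-submodule to be zero.
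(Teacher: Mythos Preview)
Your proof is correct. The paper states this lemma without proof, treating it as a routine verification from the definitions; your argument is exactly the natural direct check one would expect (Hom-vanishing plus the filtration characterisation of torsion pairs, then reading the closure properties off the two resulting torsion pairs).

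One tiny wording quibble: for the easy inclusion $\mathcal N \subseteq (\mathcal F \above \mathcal N)\cap(\mathcal N \above \mathcal Q)$ you say ``take the zero submodule in each case'', but for the inclusion into $\mathcal F \above \mathcal N$ you actually take the \emph{whole} module as the $\mathcal N$-submodule (so that the quotient is $0 \in \mathcal F$), while for the inclusion into $\mathcal N \above \mathcal Q$ you take $0$ as the $\mathcal Q$-submodule. This does not affect the correctness of the argument.
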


Thus, starting with a torsion triple $(\mathcal F, \mathcal N, \mathcal Q)$, we obtain two torsion pairs
$(\mathcal F, \mathcal G) = (\mathcal F, \mathcal N \above \mathcal Q)$ and 
$(\mathcal F',\mathcal G') = (\mathcal F \above \mathcal N, \mathcal Q)$ with $\mathcal F \subseteq \mathcal F'$.
{\it Conversely, let $(\mathcal F, \mathcal G)$ and $(\mathcal F',\mathcal G')$ be torsion pairs with $\mathcal F \subseteq \mathcal F'$.
Then $(\mathcal F, \mathcal G\cap \mathcal F', \mathcal G')$ is a torsion triple.}

\begin{lemma} 
Let $(\mathcal F, \mathcal N, \mathcal Q)$ and $(\mathcal F', \mathcal N', \mathcal Q')$ be torsion triples.
If $\mathcal F \subseteq \mathcal F',\ \mathcal N \subseteq \mathcal N',\ \mathcal Q \subseteq \mathcal Q',$  
then $\mathcal F = \mathcal F',\ \mathcal N = \mathcal N',\ \mathcal Q = \mathcal Q'$.
\end{lemma}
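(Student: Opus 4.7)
The plan is to leverage the two previous lemmas so that everything reduces to the standard uniqueness principle for torsion pairs: if $(\mathcal F_1,\mathcal G_1)$ and $(\mathcal F_2,\mathcal G_2)$ are torsion pairs in $\mo\Lambda$ and $\mathcal F_1 \subseteq \mathcal F_2$, then $\mathcal G_2 \subseteq \mathcal G_1$ (since $\mathcal G_i = \{M \mid \Hom(M,\mathcal F_i)=0\}$), and hence equality of either class forces equality of the other. I would state this monotonicity remark first, as all the real work below simply bookkeeps inclusions in the two directions.

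Next, from the first lemma, the given torsion triples produce torsion pairs $(\mathcal F, \mathcal N\above\mathcal Q)$ and $(\mathcal F',\mathcal N'\above\mathcal Q')$. Using $\mathcal N \subseteq \mathcal N'$ and $\mathcal Q \subseteq \mathcal Q'$, any filtration $0 \subseteq M'' \subseteq M$ with $M'' \in \mathcal Q$ and $M/M'' \in \mathcal N$ is also a filtration witnessing $M \in \mathcal N'\above\mathcal Q'$, so $\mathcal N\above\mathcal Q \subseteq \mathcal N'\above\mathcal Q'$. But the hypothesis $\mathcal F \subseteq \mathcal F'$ combined with the monotonicity remark gives the reverse inclusion $\mathcal N'\above\mathcal Q' \subseteq \mathcal N\above\mathcal Q$. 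Hence these torsionfree classes coincide, and the monotonicity remark then forces $\mathcal F = \mathcal F'$.

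Symmetrically, using the other torsion pairs $(\mathcal F\above\mathcal N,\mathcal Q)$ and $(\mathcal F'\above\mathcal N',\mathcal Q')$: the inclusions $\mathcal F \subseteq \mathcal F'$ and $\mathcal N \subseteq \mathcal N'$ yield $\mathcal F\above\mathcal N \subseteq \mathcal F'\above\mathcal N'$ (filter-by-filter), while $\mathcal Q \subseteq \mathcal Q'$ together with the monotonicity remark (applied to the torsion classes $\mathcal Q$) gives the reverse inclusion $\mathcal F'\above\mathcal N' \subseteq \mathcal F\above\mathcal N$. So these torsion classes coincide, whence $\mathcal Q = \mathcal Q'$. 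Finally, by the identity $\mathcal N = (\mathcal F\above\mathcal N) \cap (\mathcal N\above\mathcal Q)$ recorded in the first lemma (and the same for the primed version), we get $\mathcal N = \mathcal N'$.

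I do not expect a serious obstacle: the statement is essentially the observation that a torsion triple is rigidly determined by any one of its three constituent classes (through the intermediate torsion pairs), so the three one-sided inclusions cannot all be strict. The only point to keep an eye on is that $\mathcal A\above\mathcal B$ is genuinely monotone in both $\mathcal A$ and $\mathcal B$, which is immediate from the definition but must be invoked in the right direction each time.
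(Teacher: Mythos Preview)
The paper states this lemma without proof, so there is nothing to compare against; your argument is the natural one and is correct. One small terminological slip: in the torsion pair $(\mathcal F,\mathcal N\above\mathcal Q)$ the class $\mathcal N\above\mathcal Q$ is the \emph{torsion} class (and dually $\mathcal F\above\mathcal N$ is the torsionfree class in $(\mathcal F\above\mathcal N,\mathcal Q)$), so you have the names swapped in two places, but the inclusions and the logic are all the right way around.
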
 

Let $\alpha\!:K_0(\Lambda ) \to \mathbb R$ be a linear form. We have already
defined $\mathcal F(\alpha)$ (as well as $\mathcal G(\alpha))$. Let us 
define in a corresponding way  $\mathcal N(\alpha)$ and $\mathcal Q(\alpha)$.

Let  $\mathcal N(\alpha)$ be the full
subcategory of all modules $M$ with $\alpha(M) = 0$, such that $\alpha(M') \le 0$ for any submodule $M'$ of $M$
(or, equivalently, such that $\alpha(M'') \ge 0$ for any factor module $M''$ of $M$).
Following King, the modules in $\mathcal N(\alpha)$
are usually said to be {\it $\alpha$-semistable.}

Let $\mathcal Q(\alpha)$ be the full subcategory of all
the modules $M$ with $\alpha(M') > 0$ for any non-zero factor module $M''$ of $M$
(in particular, $\alpha(M) > 0$ provided $M \neq 0$).
   
\begin{lemma}\label{lin}
 Let $\alpha\!:K_0(\Lambda ) \to \mathbb R$ be a linear form. Then
$(\mathcal F(\alpha), \mathcal N(\alpha), \mathcal Q(\alpha))$ is a torsion triple such that 
$\mathcal N(\alpha)$ is a thick subcategory. {\rm 

We call $(\mathcal F(\alpha), 
\mathcal N(\alpha), \mathcal Q(\alpha))$ the} torsion triple defined by $\alpha$.
\end{lemma}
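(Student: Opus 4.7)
The plan is to verify first the three $\Hom$-vanishing conditions, then construct the filtration via the torsion theory of $\mathcal{Q}(\alpha)$ and $\mathcal{G}(\alpha)$, and finally check thickness of $\mathcal{N}(\alpha)$ by elementary linear-algebra on $\alpha$.

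For the $\Hom$-vanishing, the uniform trick is to inspect the image. If $f\colon X\to Y$ is non-zero with $X\in\mathcal{Q}(\alpha)$, $\mathcal{N}(\alpha)$, or $\mathcal{Q}(\alpha)$ respectively, and $Y\in\mathcal{F}(\alpha)$, $\mathcal{F}(\alpha)$, or $\mathcal{N}(\alpha)$, then $I=\operatorname{Im}(f)$ is a non-zero factor of $X$ and a non-zero submodule of $Y$. Reading off the sign of $\alpha(I)$ from the source gives $\alpha(I)>0$, $\alpha(I)\ge 0$, $\alpha(I)>0$ in the three cases, while reading it off from the target gives $\alpha(I)<0$, $\alpha(I)<0$, $\alpha(I)\le 0$; each pairing is contradictory. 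This handles all three vanishing statements in one stroke.

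Next I would establish the filtration. A direct check using linearity of $\alpha$ on short exact sequences shows that $\mathcal{G}(\alpha)$ is closed under factor modules and extensions, and dually that $\mathcal{F}(\alpha)$ is closed under submodules and extensions, so $(\mathcal{F}(\alpha),\mathcal{G}(\alpha))$ is a torsion pair (this is Lemma \ref{lin1}); the same argument shows $\mathcal{Q}(\alpha)$ is also a torsion class. For any $M$, let $M_Q\subseteq M_G\subseteq M$ be the corresponding torsion submodules; then $M_Q\in\mathcal{Q}(\alpha)$ and $M/M_G\in\mathcal{F}(\alpha)$ for free. The crux is to show $M_G/M_Q\in\mathcal{N}(\alpha)$. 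The key lemma I will prove is
\[
\mathcal{N}(\alpha)=\mathcal{G}(\alpha)\cap\mathcal{Q}(\alpha)^{\perp},
\]
where $\mathcal{Q}(\alpha)^{\perp}=\{N:\Hom(\mathcal{Q}(\alpha),N)=0\}$. Granting this, $M_G/M_Q$ lies in $\mathcal{G}(\alpha)$ because it is a factor of $M_G$, and it lies in $\mathcal{Q}(\alpha)^{\perp}$ because any non-zero $\mathcal{Q}$-submodule of $M_G/M_Q$ would lift to a submodule of $M_G$ strictly larger than $M_Q$ whose $\mathcal{Q}$-torsion strictly contains $M_Q$, contradicting maximality of $M_Q$ as the $\mathcal{Q}$-torsion of $M$.

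The main obstacle is the non-trivial inclusion $\mathcal{G}(\alpha)\cap\mathcal{Q}(\alpha)^{\perp}\subseteq \mathcal{N}(\alpha)$. Suppose $N\in\mathcal{G}(\alpha)\setminus\mathcal{N}(\alpha)$; since $N\in\mathcal{G}(\alpha)$ gives $\alpha(N)\ge 0$, failure of $N\in\mathcal{N}(\alpha)$ forces $\alpha(N)>0$. Among all non-zero submodules $K\subseteq N$ with $\alpha(K)>0$ (at least $K=N$ qualifies), choose one of minimal length. Then for any proper non-zero submodule $K'\subsetneq K$ we have $\alpha(K')\le 0$, hence $\alpha(K/K')=\alpha(K)-\alpha(K')>0$; this shows $K\in\mathcal{Q}(\alpha)$. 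The inclusion $K\hookrightarrow N$ is a non-zero element of $\Hom(\mathcal{Q}(\alpha),N)$, contradicting $N\in\mathcal{Q}(\alpha)^{\perp}$. Finally, thickness of $\mathcal{N}(\alpha)$ is straightforward: given $f\colon N_1\to N_2$ with $N_1,N_2\in\mathcal{N}(\alpha)$, the relations $\alpha(\ker f)+\alpha(\operatorname{Im} f)=0$ and $\alpha(\operatorname{Im} f)+\alpha(\operatorname{coker} f)=0$ combined with $\alpha(\ker f)\le 0$, $\alpha(\operatorname{Im} f)\le 0$, $\alpha(\operatorname{coker} f)\ge 0$ force all three to vanish, and the submodule/factor conditions for $\ker f$ and $\operatorname{coker} f$ are inherited from $N_1$ and $N_2$; closure under extensions is immediate from additivity of $\alpha$ and the snake-style bound $\alpha(M')\le \alpha(M'\cap N_1)+\alpha(M'/(M'\cap N_1))\le 0$ for any submodule $M'$ of a middle term.
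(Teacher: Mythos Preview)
Your proof is correct, and the core argument---choosing a submodule of minimal length with $\alpha>0$ and observing that it must lie in $\mathcal{Q}(\alpha)$---is exactly the same device the paper uses. The difference is organizational: the paper constructs the filtration directly in three cases (first showing that a module with no nonzero factor in $\mathcal{F}(\alpha)$ and no nonzero submodule in $\mathcal{Q}(\alpha)$ lies in $\mathcal{N}(\alpha)$, then reducing to this), whereas you package the same content via the identity $\mathcal{N}(\alpha)=\mathcal{G}(\alpha)\cap\mathcal{Q}(\alpha)^{\perp}$ and then let the two torsion radicals $M_Q\subseteq M_G$ produce the filtration automatically. Your route is a bit more conceptual and avoids the case distinction; the paper's route is slightly more self-contained in that it never needs to name $\mathcal{G}(\alpha)$ or invoke torsion-pair generalities. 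One small remark: your claim that $N\in\mathcal{G}(\alpha)\setminus\mathcal{N}(\alpha)$ forces $\alpha(N)>0$ is correct but deserves a word of justification (if $\alpha(N)=0$ then every submodule $N'$ has $\alpha(N')=\alpha(N)-\alpha(N/N')\le 0$ since $N/N'$ is a factor of $N\in\mathcal{G}(\alpha)$, so $N\in\mathcal{N}(\alpha)$ after all).
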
 

	\medskip 

Recall that a subcategory of an abelian category is said to be {\it thick}
provided it is closed under kernels, cokernels and extensions. 
Any thick subcategory
is an abelian category, and the embedding functor is exact.
	\medskip 

For the proof of \ref{lin} (and thus also of Lemma \ref{lin1}) see the note N\,\ref{linearity} 
at the end of the
chapter. The note N\,\ref{examples} draws the attention to some examples of torsion pairs: torsion
pairs usually are not linear, and 
linear torsion pairs are not necessarily tilting (or support-tilting) torsion
pairs. 
	\bigskip

{\bf Consequences.}
Let $(\mathcal F,\mathcal G)$ be the torsion pair defined by some tilting module $T$.
Thus, there is a torsion triple $(\mathcal F,\mathcal N, \mathcal Q)$. Whereas the subcategories
$\mathcal F, \mathcal G$ are not symmetric in nature, the categories 
$\mathcal F(\alpha),\mathcal Q(\alpha)$ defined by a linear from $\alpha$ are defined in 
a symmetric way!
	\smallskip 

As we have mentioned, the subcategories $\mathcal F, \mathcal G$ of $\mo \Lambda$ 
are usually drawn as follows
$$
{\beginpicture
\setcoordinatesystem units <1cm,.6cm>
\multiput{} at 0 0   7 2 /
\plot 2 0  0 0  0 2  2 2 /
\ellipticalarc axes ratio 1:2  -180 degrees from 2 0 center at 2 1    

\ellipticalarc axes ratio 1:2  -180 degrees from 2.5 0 center at 2.5 1    
\plot 2.5 0  5.5 0  5.5 2  2.5 2 /
\put{$\mathcal F$} at 0.8 1
\put{$\mathcal G$} at 3.5 1
\setdots <.5mm>
\plot 2 0  2.5 0 / 
\plot 2 2  2.5 2 /
\endpicture}
$$
Now we see that $\mathcal G = \mathcal N\above\mathcal Q$, thus we deal with the following
subcategories:
$$
{\beginpicture
\setcoordinatesystem units <1cm,.6cm>
\multiput{} at 0 0   7 2 /
\plot 2 0  0 0  0 2  2 2 /
\ellipticalarc axes ratio 1:2  -180 degrees from 2 0 center at 2 1    

\ellipticalarc axes ratio 1:2  -180 degrees from 2.5 0 center at 2.5 1    
\plot 2.5 0  3 0  /
\plot 2.5 2  3 2  /
\ellipticalarc axes ratio 1:2   180 degrees from 3 0 center at 3 1    

\ellipticalarc axes ratio 1:2   170 degrees from 3.5 0 center at 3.5 1    
\put{$\mathcal F$} at 0.8 1
\put{$\mathcal N$} at 2.75 1
\put{$\mathcal Q$} at 4.6 1
\plot 3.5 0  5.5 0  5.5 2  3.5 2 /
\setdots <.5mm>
\plot 2 0  2.5 0 / 
\plot 2 2  2.5 2 /
\plot 3.5 0  2.5 0 / 
\plot 3.5 2  2.5 2 /
\endpicture}
$$

The illustration concerning the tilting functors 
$\phi = \Hom(T,-)$ and $\Ext^1(T,-)$ has now to be refined as follows:
$$
{\beginpicture
\setcoordinatesystem units <1cm,.6cm>
\put{\beginpicture
\multiput{} at 0 0   8 2 /
\plot 2 0  0 0  0 2  2 2 /
\ellipticalarc axes ratio 1:2  -180 degrees from 2 0 center at 2 1    

\ellipticalarc axes ratio 1:2  -180 degrees from 2.5 0 center at 2.5 1    
\plot 2.5 0  3 0  /
\plot 2.5 2  3 2  /
\ellipticalarc axes ratio 1:2   180 degrees from 3 0 center at 3 1    

\ellipticalarc axes ratio 1:2   170 degrees from 3.5 0 center at 3.5 1    
\put{$\mathcal F$} at 0.8 1
\put{$\mathcal N$} at 2.75 1
\put{$\mathcal Q$} at 4.6 1
\plot 3.5 0  5.5 0  5.5 2  3.5 2 /

\setdots <.5mm>
\plot 2 0  2.5 0 / 
\plot 2 2  2.5 2 /
\plot 3.5 0  5.5 0  5.5 2  3.5 2 /
\plot 3.5 0  2.5 0 / 
\plot 3.5 2  2.5 2 /
\endpicture} at 0 0 
\put{\beginpicture
\multiput{} at 0 0   8 2 /

\put{$\mathcal X(T)$} at 6.6 1

\ellipticalarc axes ratio 1:2  -180 degrees from 2.5 0 center at 2.5 1    
\plot 2.5 0  3 0  /
\plot 2.5 2  3 2  /
\ellipticalarc axes ratio 1:2   180 degrees from 3 0 center at 3 1    

\ellipticalarc axes ratio 1:2   170 degrees from 3.5 0 center at 3.5 1    
\put{$\phi(\mathcal N)$} at 2.75 1
\put{$\phi(\mathcal Q)$} at 4.6 1
\plot 3.5 0  5.5 0  5.5 2  3.5 2 /

\plot 7.7 0  5.7 0   5.7 2  7.7 2 /
\ellipticalarc axes ratio 1:2  -180 degrees from 7.7 0 center at 7.7 1    
\setdots <.5mm>
\plot 5.5 0  5.7 0 / 
\plot 5.5 2  5.7 2 /

\setdots <.5mm>
\plot 3.5 0  2.5 0 / 
\plot 3.5 2  2.5 2 /

\endpicture} at 0 -4 
\arr{0 -1.3}{0 -2.7}
\arr{-1.25 -1.3}{-1.25 -2.7}
\setquadratic
\plot -3 -1.3 -2.8 -1.8 -2.5 -2  0 -2
  2 -2  2.3 -2.2 2.5 -2.7 /
\arr{2.48 -2.65}{2.5 -2.75}
\put{$\ssize \phi$} [l] at -1.15 -1.5
\put{$\ssize \phi$} [l] at 0.1 -1.5
\put{$\ssize \Ext^1(T,-)$} [l] at -2.8 -1.5
\put{$\mo \Lambda$} at 5 0 
\put{$\mo \Gamma(T)$}  at 5 -4 
\endpicture}
$$

	\bigskip 

{\bf Remark.} The name ''tilting'' was chosen by Brenner and Butler in view of 
the tilting of the coordinate axes which one observes when going from 
$K_0(\Lambda)$ to $K_0(\Gamma(T))$. The linearity assertion supports this
observation.
	\medskip

\subsubsection{\bf The support of a module, sincere modules, 
support tilting modules}\label{support-tilting-def}
As we have mentioned in Chapter 1, 
the {\it support} of a module $M$ consists of all the simple modules $S$ (or better, their
isomorphism classes) which occur as composition factors of $M$. Also, if all simple
modules occur as composition factors of $M$, then $M$ is said to be {\it sincere}.
Given a module $M$, its {\it support algebra} $\Lambda_M$ is the factor algebra
$\Lambda/I_M$, where $I_M$ is the two-sided ideal generated by all idempotents $e\in \Lambda$
with $eM = 0.$ Since $I_M$ annihilates the module $M$, one may consider $M$ as a
$\Lambda_M$-module; of course, $M$ considered as a $\Lambda_M$-module is sincere.
A module $T$ is called a {\it support-tilting} module, provided $T$ considered
as a $\Lambda_T$-module is a tilting module. 
	\bigskip 

{\bf Support tilting torsion pairs.} If $T$ is a support tilting module,
then we denote again by $\mathcal G(T)$ the full subcategory of all modules generated by
$T$ and by $\mathcal F(T)$ the class of all modules $Y$ with $\Hom(T,Y) = 0.$  
Then this is again a torsion pair (note that if $\bold S$ is the support of $T$,
then all the modules in $\mathcal G(T)$ have support in $\bold S$, whereas $\mathcal F(T)$
will contain modules whose support is not contained in $\bold S$ (provided $\bold S$
is not the set of all simple modules).
  
As we will see, \ref{version1} above holds true in this more general
situation (thus, here we have a second preliminary version of Theorem \ref{version4}): 
	
\begin{no-text}\label{version2}  Let $\Lambda$ be a hereditary artin
algebra and $T$ a support-tilting module. Then 
$(\mathcal F(T),\mathcal G(T))$ is a linear
torsion pair, it is defined by a linear form $\alpha\!:K_0(\Lambda)
\to \mathbb Z$.
\end{no-text} 

\subsection{Exceptional antichains and normal partial tilting modules}
Our main interest will be devoted to the exceptional antichains in $\mo\Lambda$.
We will show that the exceptional antichains can be obtained by normalizing a partial
tilting module. 
	\medskip 

\subsubsection{\bf Normality.}
Here we go back to the beginning of modern representation theory, namely to Roiter's
proof \cite{[Ro]} of the first Brauer-Thrall conjecture. The first topic which he discusses in his
paper is the normalization of a module. Actually, this concept was reinvented several times,
for example by Auslander-Smal\o{} when dealing with minimal covers of additive
subcategories (a normalization of a module $M$ is a minimal cover of the
subcategory $\add M$, where 
$\add M$ denotes the subcategory of all direct summands of direct sums of copies of $M$). 
	\medskip

We call a module $M$ {\it normal} provided no proper direct summand of $M$ generates $M$;
this means that $M = M'\oplus M''$ with  $M'$
generating $M''$ (thus even $M$) implies that
$M'' = 0$.
	\medskip

\begin{lemma}[\bf Roiter's Normalization-Lemma]
Let $\Lambda$ be an artin algebra. 
Any module $M$ can be decomposed $M = M'\oplus M''$ such that $M'$ is
normal and generates $M$ (or, equivalently, $M''$).
Such a decomposition is unique up to isomorphism.
\end{lemma}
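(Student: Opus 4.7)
The plan is to establish existence and uniqueness separately, both relying on Krull--Schmidt in the artin algebra setting.

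\emph{Existence.} The family of decompositions $M = M' \oplus M''$ in which $M'$ generates $M$ is nonempty (take $M' = M$, $M'' = 0$); among such decompositions I would pick one with the length $\ell(M')$ minimal. I claim this $M'$ is normal. Indeed, if $M' = N_1 \oplus N_2$ with $N_2 \neq 0$ and $N_1$ generating $M'$, then $N_1$ is a direct summand of $M$ (being a summand of the summand $M'$), and composing epimorphisms $N_1^{(k)} \twoheadrightarrow M'$ and $(M')^{(l)} \twoheadrightarrow M$ shows that $N_1$ generates $M$, while $\ell(N_1) < \ell(M')$ contradicts minimality.

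\emph{Uniqueness.} Suppose $M = M_1' \oplus M_1'' = M_2' \oplus M_2''$ with both $M_1', M_2'$ normal and generating $M$. First, normality forces each $M_j'$ to be multiplicity-free: if some indecomposable $N$ appeared in $M_j'$ with multiplicity $a \geq 2$, removing one copy would yield a proper direct summand of $M_j'$ still containing $N$ and all other indecomposable summands, which generates the same modules as $M_j'$ (since generation depends only on the \emph{set} of indecomposable summands once each appears with multiplicity at least one), and hence still generates $M$, contradicting normality. Writing $\{N_i\}_{i \in I}$ for the iso-classes of indecomposable summands of $M$, we thus obtain $M_j' \cong \bigoplus_{i \in S_j} N_i$ for some $S_j \subseteq I$, and normality of $M_j'$ becomes equivalent to $\subseteq$-minimality of $S_j$ among subsets $S \subseteq I$ for which $\bigoplus_{i \in S} N_i$ generates $M$.

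The remaining step is to show $S_1 = S_2$, and this is the main obstacle. The difficulty is that generation of an indecomposable $N_i$ by a direct sum $\bigoplus_j N_j^{c_j}$ need not factor through any single summand $N_j$, so a naive maximal-element argument on the generation preorder does not immediately apply. One handles this by exploiting Krull--Schmidt and the locality of the endomorphism ring $\End(N_i)$ for indecomposable finite-length $N_i$: assuming $i_0 \in S_1 \setminus S_2$, the epimorphism $\bigoplus_{i \in S_2} N_i^{c_i} \twoheadrightarrow N_{i_0}$ together with the description of $M_1'$ as a normal direct summand of $M$ allows one to construct a proper subset of $S_1$ still generating $M$, contradicting the minimality of $S_1$. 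Symmetric reasoning yields $S_1 = S_2$, whence $M_1' \cong M_2'$.
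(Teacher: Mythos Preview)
Your existence argument is correct. The uniqueness argument has the right architecture---reduce to multiplicity-free summands, interpret normality as $\subseteq$-minimality of the index sets $S_j$, then derive a contradiction from $S_1 \neq S_2$---but the final step is only an assertion. You say that locality of $\End(N_{i_0})$ ``allows one to construct a proper subset of $S_1$ still generating $M$,'' yet you do not say which subset or why it works.

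Here is what is missing. If $i_0 \in S_1 \setminus S_2$, composing an epimorphism from copies of $M'_2$ onto $N_{i_0}$ with epimorphisms from copies of $M'_1$ onto the summands of $M'_2$ yields a surjection $\bigoplus_{j \in S_1} N_j^{d_j} \twoheadrightarrow N_{i_0}$ in which every component $N_{i_0} \to N_{i_0}$ factors through $\add\bigl(\bigoplus_{i\in S_2} N_i\bigr)$ and hence lies in $\rad\End(N_{i_0})$; locality gives you that much. What you still need is the Nakayama-type fact: \emph{if $(f_1,\dots,f_t,g)\colon X^t \oplus Y \to X$ is surjective with each $f_k \in \rad\End(X)$, then $Y$ alone generates $X$.} One proves this by iterating $X = \sum_k f_k(X) + g(Y)$ and using that $\rad\End(X)$ is nilpotent for $X$ of finite length. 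The paper isolates exactly this as its fact~(b) and then runs essentially your argument (packaged as: the common part $N$ of $M_0$ and $M_0'$ already generates $M$, so normality forces $M_0 \cong N \cong M_0'$). Without this step your argument stops at ``the diagonal components are non-isomorphisms,'' which by itself does not prevent the sum of their images from being all of $N_{i_0}$.
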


For a proof, see the note N\,\ref{Roiter}. 
The module $M' = \nu(M)$ is called a {\it normalization} of $M$. If $M$
is multiplicity-free, we write $M = \nu(M) \oplus \nu'(M)$, where $\nu(M)$ 
is the normalization of $M$ (the assumption on $M$ to be multiplicity-free
assures that  $\nu(M)$ and $\nu'(M)$ have no
indecomposable direct summands which are isomorphic).
		\medskip 

\subsubsection{\bf Antichains.}
Let us recall from Chapter 1 the definition of an antichain (see also the note
N\,\ref{antichain}):
An {\it antichain} $A$ in an additive category
$\mathcal C$ is a family $A = \{A_i\}_{i}$ of pairwise orthogonal bricks $A_i$.
Given an antichain in an abelian category $\mathcal C$, we may consider 
its extension closure $\mathcal E(A)$, this is the full subcategory of all
objects in $\mathcal C$ having a filtration with factors in $A$. 
It has been shown in \cite{[R2]} that $\mathcal E(A)$
is a thick subcategory. Conversely, if $\mathcal C$ is a length category (an abelian category
in which any object has finite length), then any thick subcategory arises in this way:
we denote by $S (\mathcal C)$ the set of simple objects in $\mathcal C$; the
Schur Lemma asserts that this is an antichain and we have $\mathcal C = \mathcal E(S (\mathcal C))$. 
Antichains (and thick subcategories) have been considered in several papers. For example, antichains 
appear in \cite{[GP]} under the name {\it $\Hom$-free sets.}
	\medskip

Starting with an antichain 
$A = \{A_1,\dots,A_t\}$, its 
{\it $\Ext$-quiver} has $t$ vertices, and there is an arrow $j\to i$ provided 
$\Ext^1(A_j,A_i) \neq 0.$ We say that the antichain $A$ is 
{\it exceptional} provided its $\Ext$-quiver 
has no oriented cyclic path (thus, provided we may number the modules $A_i$ in such
a way that $\Ext^1(A_i,A_j) = 0$ for all $i\le j$). Clearly, {\it $A$ is exceptional
if and only if $\mathcal E(A)$ is exceptional.} 

	\bigskip 
\subsubsection{} The following bijections will be the first ones of a long list,
see Theorem \ref{IT-bijections}.

\begin{no-text}
There are bijections between: 
\begin{enumerate}
\item[\rm(1)] Exceptional antichains.
\item[\rm(1$'$)] Exceptional subcategories.
\item[\rm(1$''$)] Normal partial tilting modules.
\end{enumerate}
\end{no-text}

The bijection between (1) and (1$'$) has already been mentioned (with reference
to \cite{[R2]}, 1976):
$$
{\beginpicture
\setcoordinatesystem units <2cm,1cm> 
\put{\{antichains\}} at -.8 0 
\arr{0.1 0.1}{0.9 0.1}
\arr{0.9 -.1}{0.1 -.1}
\put{$\mathcal E(-)$} at 0.5 .4
\put{$S (-)$} at 0.5 -.4
\put{\{thick subcategories\}} at 2 0 
\endpicture}
$$
Given an antichain $A$, the subcategory $\mathcal E(A)$ is abelian and closed under
extensions and the simple objects in $\mathcal E(A)$ are the elements of $A$, thus $S (\mathcal E(A)) = A$.
Conversely, given a thick subcategory $\mathcal C$, the set $S (\mathcal C)$ consists of 
the simple objects in $\mathcal C.$ This is an antichain, and $\mathcal C = \mathcal E(S (\mathcal C)).$

Of course, this bijection induces a bijection of the following subsets:
$$
{\beginpicture
\setcoordinatesystem units <2cm,1cm> 
\put{antichains} at -1 -.165
\put{exceptional} at -1 .165
\multiput{$\Big\{$} at -1.55 0  1.7 0 /
\multiput{$\Big\}$} at -.45 0  2.9 0 /
\arr{0.1 0.1}{0.9 0.1}
\arr{0.9 -.1}{0.1 -.1}
\put{$\mathcal E(-)$} at 0.5 .4
\put{$S (-)$} at 0.5 -.4
\put{exceptional} at 2.3 .165 
\put{subcategories} at 2.3 -.165 
\endpicture}
$$
But here we can add a third class, namely the normal partial tilting modules:
$$
{\beginpicture
\setcoordinatesystem units <2cm,1cm> 
\put{antichains} at -1 -.165
\put{exceptional} at -1 .165
\multiput{$\Big\{$} at -1.55 0  1.7 0 /
\multiput{$\Big\}$} at -.45 0  2.9 0 /
\arr{0.1 0.1}{0.9 0.1}
\arr{0.9 -.1}{0.1 -.1}
\put{$\mathcal E(-)$} at 0.5 .4
\put{$S (-)$} at 0.5 -.4
\put{exceptional} at 2.3 .165 
\put{subcategories} at 2.3 -.165 
\put{\{normal partial tilting modules\}} at .5 -2 
\arr{1.5 -0.6}{.7 -1.6}
\setdashes <1mm>
\arr{.3 -1.6}{-.5 -0.6}
\put{$\Delta$} at -.3 -1.2
\put{minimal generator} at 1.95 -1.2
\endpicture}
$$
If $\mathcal C$ is an exceptional subcategory, then $\mathcal C$ has
a minimal generator $P$ (namely, $\mathcal C$ is equivalent to the module category of an artin
algebra $\Lambda'$, thus it has a progenerator). Clearly, 
$P$, considered as a $\Lambda$-module, is a normal partial tilting module. 
	\medskip 

What about the dashed arrow with the label $\Delta$? This will be discussed now.
	\medskip 

\subsubsection{}
We refer to considerations which have been useful in the study of quasi-hereditary
algebras (see \cite{[DR3]}): an exceptional antichain $A$ is a standardizable set, thus  
there is a quasi-hereditary algebra $B$ such that the 
subcategory $\mathcal E(A)$ is equivalent to the category of $\Delta$-filtered $B$-modules. Since the
standardizable set $A$ consists of pairwise orthogonal modules, the same is true 
for the $\Delta$-modules
of $B$, and consequently the $\Delta$-modules of $B$ are just the simple $B$-modules. This shows that
the category of $\Delta$-filtered $B$-modules is the whole category $\mo  B.$
In this way, we obtain the converse of the dashes arrow:
We start with an exceptional
antichain $A$, the paper \cite{[DR3]} shows how to  
construct indecomposable objects in $\mathcal E(A)$ which are the 
indecomposable projective objects in $\mathcal E(A)$. In this way, we obtain
a minimal generator for the
abelian category $\mathcal E(A)$ (thus a normal partial tilting module). 
	\medskip

Now let us discuss the dashed arrow itself.
Let $N = \bigoplus_{i=1}^t N_i$ be a normal partial tilting module with
indecomposable direct summands $N_i$. 
Then non-zero homomorphisms  between indecomposables in $\add N$ are monomorphisms
(namely, since $\Ext^1(N_j,N_i) = 0$, any non-zero homomorphism $N_i \to N_j$ has to be
either injective or surjective). 
Thus, the indecomposables in $\add N$ form a poset. 

Given an indecomposable direct summand $N_i$ of $N$, the
sink map for $N_i$ in $\add N$
cannot be surjective (since $N$ is normal), 
thus it has to be injective (since $\Ext^1(N,N) = 0$).
Let $\Delta(i)$ be its cokernel. Thus, there is an exact sequence
$$
{\beginpicture
\setcoordinatesystem units <1cm,1cm> 
\put{$0 \to N_i' \to N_i \to \Delta(i) \to 0$} [l] at 0 0
\put{$(*)$} [r] at -3 0
\put{} at 9 0 
\endpicture}
$$
with $N'_i\in \add N$. It is easy to see that these modules $\Delta(i)$ form
an antichain and by induction one sees that $N_i$ has a filtration with factors of the
form $\Delta(j)$. Of course, this antichain 
$\Delta(N) = \{\Delta(1),\dots,\Delta(t)\}$ is exceptional.
	\medskip 

\subsubsection{} It is worthwhile to look at the composition of $\Delta$ and $\mathcal E$.
If $N$ is a normal, partial tilting module, let us denote
$$
 \mathcal N(N) = \mathcal E(\Delta(N)),
$$
then {\it $\mathcal N(N)$ is the smallest thick subcategory containing $N$.} (Namely, since $N$ has
a filtration with factors in $\Delta(N)$, we see that $N$ belongs to $\mathcal N(N)$.
On the other hand, any object in $\mathcal N(N)$ has a filtration with factors in $\Delta(N)$,
and the modules in $\Delta(N)$ are cokernels of maps in $\add N$.)
Let us add the following obvious assertion: {\it If $N$ is sincere, then also $\mathcal N(T)$ is 
sincere.}
	\medskip 

The construction $\mathcal N(-)$ from the set of normal partial tilting modules to the
exceptional subcategories is the inverse of taking a minimal generator. 
$$
{\beginpicture
\setcoordinatesystem units <2cm,1cm> 
\put{antichains} at -1 -.165
\put{exceptional} at -1 .165
\multiput{$\Big\{$} at -1.55 0  1.7 0 /
\multiput{$\Big\}$} at -.45 0  2.9 0 /
\arr{0.1 0.1}{0.9 0.1}
\arr{0.9 -.1}{0.1 -.1}
\put{$\mathcal E(-)$} at 0.5 .4
\put{$S (-)$} at 0.5 -.4
\put{exceptional} at 2.3 .165 
\put{subcategories} at 2.3 -.165 

\put{\{normal partial tilting modules\}} at .5 -2.3 
\arr{.7 -1.8}{1.4 -.6}
\arr{1.5 -.7}{.8 -1.9}
\arr{.3 -1.8}{-.5 -0.6}
\put{$\Delta$} at -.2 -1.4
\put{$\mathcal N(-)$} at .7 -1.1
\put{minimal generator} at 2. -1.3
\endpicture}
$$

If $T$ is a support-tilting module, we define
$$
 \mathcal N(T) = \mathcal N(\nu(T)).
$$
The next section is devoted to a detailed study of the subcategories of the form $\mathcal N(T)$. 
	\bigskip

\subsection{The category $\mathcal N(T)$}\label{N(T)}
Given a tilting module $T$, it is the category $\mathcal N(T)$ which is of interest. 
In order to understand 
$\mathcal N(T)$, we need some prerequisites.
Let $\Gamma(T) = \End(T)^{\text{op}}$. 
	\medskip

\subsubsection{\bf The simple $\Gamma(T)$-modules.}
First, let us show that {\it the decomposition
$T = \nu(T)\oplus\nu'(T)$ corresponds to the distribution of the simple $\Gamma(T)$-modules
$S'(i)$ into $\mathcal Y(T)$ and $\mathcal X(T)$.}
	\medskip
 
We note that the indecomposable projective $\Gamma(T)$-modules are the modules
$\Hom(T,T_i)$, where $T_i$ is any indecomposable direct summand of $T$, thus
the simple $\Gamma(T)$-modules are the modules $S'(i) = \topp\Hom(T,T_i)$.
	\medskip

For any indecomposable direct summand $T_i$ of $T = \bigoplus T_j$, let $T^{(i)} =
T/T_i = \bigoplus_{j\neq i} T_j$ and let $g_i\!:T^i\to T_i$ be a minimal right 
$T^{(i)}$-approximation of $T_i$.
	\medskip 

\begin{prop}\label{prop1}  Let $T_i$ be an indecomposable direct summand of the tilting module
$T$. Then there are two possibilities: 

If $T_i$ is a direct summand of $\nu(T)$, then $g_i\!:T^i\to T_i$ is a monomorphism
say with cokernel $\Delta(i)$. In this case,
$S'(i) = 
\topp\Hom(T,T_i) = \Hom(T,\Delta(i))$ is a simple $\Gamma(T)$-module which belongs to $\mathcal Y(T)$.

If  $T_i$ is a direct summand of $\nu'(T)$, then $g_i\!:T^i\to T_i$ is an epimorphism
say with kernel $U(i)$. In this case,
$S'(i) = \topp\Hom(T,T_i) 
= \Ext^1(T,U(i))$ is a simple $\Gamma(T)$-module which belongs to $\mathcal X(T)$.
\end{prop}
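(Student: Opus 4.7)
The two cases are dual, and both hinge on the vanishing $\Ext^1(T,T^i)=0$, which holds since $T^i\in\add T$ and $T$ is tilting. The dichotomy for $g_i$ comes first. The characterization of the normalization gives that $T_i\in\nu'(T)$ iff $T_i$ is generated by $T^{(i)}$, in which case $g_i$ is immediately surjective, and $T_i\in\nu(T)$ iff $T_i$ is not so generated, in which case $g_i$ is not surjective. The standard exchange theorem for tilting modules then asserts that the minimal right $\add T^{(i)}$-approximation of an indecomposable summand of a tilting module fits into one of two dual exchange sequences, and so is forced to be injective in the first case; moreover the ``other complement'' satisfies $\Delta(i)=\Cok g_i\in\mathcal G(T)$ in Case 1 (as a quotient of $T_i\in\mathcal G(T)$) and $U(i)=\Ker g_i\in\mathcal F(T)$ in Case 2.

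For Case 1, applying $\Hom(T,-)$ to $0\to T^i\to T_i\to\Delta(i)\to 0$ and using $\Ext^1(T,T^i)=0$ yields
\[
 0\to \Hom(T,T^i)\to \Hom(T,T_i)\to \Hom(T,\Delta(i))\to 0.
\]
Since $T_i\notin\add T^i$, every composite $T\to T^i\to T_i$ has non-invertible $T_i$-component, so $\Imm\Hom(T,g_i)\subseteq\rad\Hom(T,T_i)$, and $\Hom(T,\Delta(i))$ surjects onto $S'(i)=\topp P'(i)$ where $P'(i)=\Hom(T,T_i)$. To upgrade this to an isomorphism I would show that $\Hom(T,\Delta(i))$ is itself simple, which via the equivalence $\Hom(T,-)\colon\mathcal G(T)\to\mathcal Y(T)$ reduces to the assertion that $\Delta(i)$ has no proper nonzero subobject lying in $\mathcal G(T)$. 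Any such subobject lifts to a submodule $T^i\subsetneq Y'\subsetneq T_i$ with $Y'\in\mathcal G(T)$; the right-approximation property forces $\Hom(T^{(i)},T_i/Y')=0$, so $Y'/T^i$ would be a nonzero proper subobject of $\Delta(i)$ inside the thick subcategory $\mathcal N(T)$, contradicting the simplicity of $\Delta(i)$ there.

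Case 2 is dual. The analogous long exact sequence gives $\Hom(T,U(i))=0$ (using $U(i)\in\mathcal F(T)$) together with $\Ext^1(T,U(i))\cong \Hom(T,T_i)/\Imm\Hom(T,g_i)$, and the same reasoning places this image in $\rad\Hom(T,T_i)$. The dual simplicity argument---showing $U(i)$ has no proper nonzero quotient in $\mathcal F(T)$---then identifies $\Ext^1(T,U(i))\cong S'(i)$, sitting in $\mathcal X(T)$ via the equivalence $\Ext^1(T,-)\colon\mathcal F(T)\to\mathcal X(T)$. The principal obstacle is the initial dichotomy for $g_i$ in Case 1 together with the placements $\Delta(i)\in\mathcal G(T)$ and $U(i)\in\mathcal F(T)$; both rest on standard exchange theory for tilting modules, and once they are granted, the simplicity assertions follow the uniform pattern of passing between submodules in $\mo\Lambda$ and subobjects relative to $\mathcal G(T)$, $\mathcal F(T)$, or $\mathcal N(T)$.
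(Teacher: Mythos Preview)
Your Case 1 is fine; indeed the paper streamlines it by observing directly that the approximation property gives $\Imm\Hom(T,g_i)=\rad\Hom(T,T_i)$ (every $T_j\to T_i$ with $j\neq i$ factors through $g_i$, and any composite $T_i\to T^i\to T_i$ is non-invertible, hence zero since $\End(T_i)$ is a division ring), so $\Hom(T,\Delta(i))\cong S'(i)$ without the detour through subobjects in $\mathcal N(T)$.

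The genuine gap is in Case 2: the claim $U(i)\in\mathcal F(T)$ is false, and exchange theory does not assert it. For $\Lambda=\Lambda_3$ and $T=S(1)\oplus P(3)\oplus S(3)$ one has $S(3)\in\nu'(T)$, the approximation is $P(3)\twoheadrightarrow S(3)$ with kernel $U=P(2)$, and $\Hom(S(1),P(2))\neq 0$, so $U\notin\mathcal F(T)$. (The paper itself, immediately after the proof, separates the cases $U(i)\in\mathcal F(T)$ and $U(i)\notin\mathcal F(T)$ when discussing the projective dimension of $S'(i)$.) Note the asymmetry with Case 1: there $\Delta(i)\in\mathcal G(T)$ is automatic as a quotient of $T_i$, but $U(i)$ is a \emph{submodule} of $T^i\in\mathcal G(T)$, which yields no such conclusion. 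This breaks both your ``dual simplicity'' argument via quotients of $U(i)$ in $\mathcal F(T)$ and your appeal to the equivalence $\Ext^1(T,-)\colon\mathcal F(T)\to\mathcal X(T)$. The paper's route avoids $\mathcal F(T)$ entirely: applying $\Hom(T_j,-)$ to $0\to U(i)\to T^i\to T_i\to 0$ for $j\neq i$, the approximation property makes $\Hom(T_j,g_i)$ surjective and $\Ext^1(T_j,T^i)=0$, whence $\Ext^1(T_j,U(i))=0$. Thus every composition factor of $\Ext^1(T,U(i))\cong\Cok\Hom(T,g_i)$ is $S'(i)$, and since the quiver of $\Gamma(T)$ has no loops this forces simplicity. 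That the result lies in $\mathcal X(T)$ holds because $\Ext^1(T,M)\in\mathcal X(T)$ for \emph{every} $M$, as one sees from the torsion filtration of $M$.
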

	\medskip 

Also, let us note:
	\medskip

{\it If $S,S'$ are simple $\Gamma(T)$-modules with an arrow $[S] \to 
[S']$ in the quiver of $\Gamma(T)$ (thus with $\Ext^1(S,S') \neq 0$), 
then, if $S$ belongs to 
$\mathcal Y(T)$, also $S'$ belongs to $\mathcal Y(T)$.} 
		\bigskip

\begin{prop}\label{prop2} Let $T$ be a tilting module and $\phi = \Hom(T,-)$ the
corresponding tilting functor. Then $\mathcal N(T)$ is the inverse image under $\phi$ of the
Serre subcategory in $\mo \Gamma(T)$ given by all $\Gamma(T)$-modules whose composition
factors belong to $\mathcal Y(T).$
\end{prop}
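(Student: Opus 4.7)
The plan is to realize Proposition \ref{prop2} as a direct transfer across the tilting equivalence $\phi \colon \mathcal G(T) \to \mathcal Y(T)$. First I would verify that $\mathcal N(T) \subseteq \mathcal G(T)$: by definition $\mathcal N(T) = \mathcal E(\Delta(\nu(T)))$ is the extension closure of the family $\{\Delta(i)\}$ with $T_i$ ranging over the indecomposable summands of $\nu(T)$, and by Proposition \ref{prop1} each such $\Delta(i)$ sits in a short exact sequence $0 \to T^i \to T_i \to \Delta(i) \to 0$ with $T^i,T_i \in \add T$; thus $\Delta(i)$ is a $T$-generated quotient of $T_i$, so lies in $\mathcal G(T)$. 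Since $\mathcal G(T)$ is a torsion class, hence extension-closed, we get $\mathcal N(T) \subseteq \mathcal G(T)$.

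Next I would transport $\mathcal N(T)$ across the equivalence. Because $\phi$ is an exact equivalence on $\mathcal G(T)$ and, again by Proposition \ref{prop1}, sends $\Delta(i)$ to $S'(i)$, it carries the extension closure $\mathcal N(T)$ bijectively onto the extension closure inside $\mathcal Y(T)$ of the family $\{S'(i) : T_i \in \nu(T)\}$. It remains to identify this latter closure with the Serre subcategory $\mathcal S$ of $\mo\Gamma(T)$ whose objects have all composition factors in $\mathcal Y(T)$. For this, Proposition \ref{prop1} identifies the simple $\Gamma(T)$-modules that lie in $\mathcal Y(T)$ as exactly the $S'(i)$ with $T_i \in \nu(T)$ (the remaining simples $S'(i)$ with $T_i \in \nu'(T)$ lie in $\mathcal X(T)$). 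Combined with the fact that $\mathcal Y(T)$, being a torsion-free class, is closed under subobjects and extensions, every $\Gamma(T)$-module whose composition factors all lie in $\mathcal Y(T)$ is automatically an object of $\mathcal Y(T)$ and is built from the relevant simples by iterated extensions. Hence $\phi(\mathcal N(T)) = \mathcal S$.

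The one point requiring care is the literal meaning of ``inverse image under $\phi$''. Using the torsion-pair decomposition $0 \to M_1 \to M \to M_2 \to 0$ with $M_1 \in \mathcal G(T)$ and $M_2 \in \mathcal F(T)$, one computes $\phi(M) = \phi(M_1)$, so modules in $\mathcal F(T)$ are invisible to $\phi$; the preimage must therefore be understood as taken inside $\mathcal G(T)$ (equivalently, $\mathcal N(T)$ is precisely the set of $M \in \mathcal G(T)$ with $\phi(M) \in \mathcal S$). With this convention the proof is the exact-equivalence transfer described above, and presents no further obstacle; the real content is supplied by Proposition \ref{prop1}, which makes the image of the antichain $\Delta(\nu(T))$ and the list of simple objects in $\mathcal Y(T)$ simultaneously transparent.
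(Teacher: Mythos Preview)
Your proof is correct and follows essentially the same approach as the paper: the paper's argument (in note N\,\ref{simple}) also uses that $\phi$ is an exact equivalence $\mathcal G(T)\to\mathcal Y(T)$, that $\phi(\Delta(i))=S'(i)$ by Proposition~\ref{prop1}, and hence that $\phi$ carries the extension closure $\mathcal N(T)=\mathcal E(\Delta(i))$ onto the Serre subcategory generated by the simples lying in $\mathcal Y(T)$. Your extra paragraph clarifying that ``inverse image'' must be read inside $\mathcal G(T)$ is a useful precision that the paper leaves implicit.
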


	\bigskip 
The proof of the Propositions \ref{prop1} and \ref{prop2} will be outlined in N\,\ref{simple}. 
	\bigskip 

\begin{prop} $T$ be a support tilting module with normal decomposition
$T = \nu(T)\oplus \nu'(T)$. Then
\end{prop}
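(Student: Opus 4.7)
The plan is to combine what has already been proved about normal partial tilting modules with the functorial description of $\mathcal N(T)$ given in Proposition \ref{prop2}, in order to show that $\nu(T)$ is a tilting module for the sincere exceptional subcategory $\mathcal N(T)$, and to identify the surrounding torsion triple $(\mathcal F(T),\mathcal N(T),\mathcal Q(T))$ that refines the torsion pair $(\mathcal F(T),\mathcal G(T))$ from \ref{version2}.

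First I would observe that since $\nu(T)$ is a normal partial tilting module, the sink-map construction recalled before the definition of $\mathcal N(-)$ produces an exceptional antichain $\Delta(\nu(T)) = \{\Delta(1),\dots,\Delta(t)\}$ whose extension closure is, by definition, $\mathcal N(T)$. Hence $\mathcal N(T)$ is automatically an exceptional subcategory of $\mo\Lambda$ with simple objects $\Delta(i)$, and the short exact sequences $(*)$ exhibit the indecomposable summands $N_i$ of $\nu(T)$ as the indecomposable $\Delta$-projective objects of $\mathcal N(T)$. To verify that $\nu(T)$ is then an $\mathcal N(T)$-tilting module I would check: multiplicity-freeness (inherited from $T$), the vanishing of $\Ext^1_{\mathcal N(T)}(\nu(T),\nu(T))$ (which follows from $\Ext^1_\Lambda(\nu(T),\nu(T))=0$ since $\mathcal N(T)$ is closed under extensions in $\mo\Lambda$), and the matching of counts: the $t$ summands $N_i$ of $\nu(T)$ are in bijection with the $t$ simple objects $\Delta(i)$ of $\mathcal N(T)$. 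Sincerity of $\nu(T)$ in $\mathcal N(T)$ is then immediate, since $\Delta(i)$ appears as the top composition factor of $N_i$ inside $\mathcal N(T)$.

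To identify the torsion triple, I would cross-check with Proposition \ref{prop2}, which realizes $\mathcal N(T)$ as $\phi^{-1}(\mathcal S)$ for the Serre subcategory $\mathcal S \subseteq \mo\Gamma(T)$ whose simples, by Proposition \ref{prop1}, are precisely the $S'(i)$ with $T_i$ a summand of $\nu(T)$. This gives a second, purely functorial description of $\mathcal N(T)$ and makes transparent that $\nu'(T)$ contributes exactly the simples of $\mathcal X(T)$ governing the third class $\mathcal Q(T)$. The hard part, I expect, will be the bookkeeping required to reconcile the two descriptions: verifying that the antichain $\Delta(\nu(T))$ really consists of $t$ pairwise non-isomorphic bricks (no unexpected collapses under the sink-map construction), and that the correspondence $N_i \leftrightarrow \Delta(i) \leftrightarrow S'(i)$ is an unambiguous bijection between summands of $\nu(T)$, simples of $\mathcal N(T)$, and simples of the Serre subcategory $\mathcal S$, so that the rank of the Grothendieck group of $\mathcal N(T)$ really equals the number of summands of $\nu(T)$.
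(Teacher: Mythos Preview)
You have aimed at the wrong target. The boxed conclusion of the proposition (which was cut off in the statement you were given) is the explicit formula
\[
\mathcal N(T)\;=\;\mathcal G(T)\cap \mathcal F(\nu'(T)),
\]
i.e.\ $\mathcal N(T)$ consists precisely of those modules generated by $T$ on which $\nu'(T)$ has no nonzero maps. Your proposal instead tries to show that $\nu(T)$ is a tilting object for $\mathcal N(T)$ and to identify the torsion triple; the first is essentially built into the construction of $\mathcal N(-)$ already recorded in Section~2.2, and the second is the content of the \emph{subsequent} subsection, which uses the formula above as its starting point. So nothing in your outline actually addresses the equality that has to be proved here.

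The paper's argument is short and direct, and worth knowing. For $\subseteq$: the inclusion $\mathcal N(T)\subseteq\mathcal G(T)$ is immediate since $\mathcal N(T)$ is obtained from $\nu(T)$ by cokernels and extensions. The key point is $\Hom(\nu'(T),\nu(T))=0$: a nonzero map $T_j\to T_i$ with $T_j$ a summand of $\nu'(T)$ and $T_i$ a summand of $\nu(T)$ can be neither surjective (else $T_i$ would lie in $\nu'(T)$) nor injective (else the split epi $T'\twoheadrightarrow T_j$ with $T'\in\add T$ would force a nonzero element of $\Ext^1(T_i,T')$). From this one gets $\Hom(\nu'(T),\Delta(i))=0$, hence $\mathcal N(T)\subseteq\mathcal F(\nu'(T))$. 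For $\supseteq$: given $M\in\mathcal G(T)\cap\mathcal F(\nu'(T))$, the minimal right $\add T$-approximation gives $0\to T''\to T'\to M\to 0$ with $T',T''\in\add T$; the condition $\Hom(\nu'(T),M)=0$ forces $T'\in\add\nu(T)$, and then $\Hom(\nu'(T),\nu(T))=0$ forces $T''\in\add\nu(T)$ as well, so $M$ is a cokernel of a map in $\add\nu(T)$ and lies in $\mathcal N(T)$. Your appeal to Proposition~\ref{prop2} is not needed for this, and the bookkeeping you worry about (bricks, bijections of simples) plays no role.
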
 
	\smallskip
\Rahmen{$\mathcal N(T) = \mathcal G(T)\cap \mathcal F(\nu'(T)).$}

	\medskip
\begin{proof} 
Since $\mathcal N(T)$ is obtained from $\nu(T)$ by forming cokernels and extensions,
we see that $\mathcal N(T)$ is contained in $\mathcal G(T).$

Next, we note that $\Hom(\nu'(T),\nu(T)) = 0.$ 
Namely, consider a map $f\!:T_j\to T_i$ where 
$T_i$ is an indecomposable direct summand of $\nu(T)$ and $T_j$ is
an indecomposable direct summand of $\nu'(T)$. It cannot be surjective, since otherwise
with $T_j$ also $T_i$ would be a direct summand of $\nu'(T)$. It also cannot be injective,
since there is a non-split epimorphism $T' \to T_j$ with $T'$ in $\add T$ and we have
$\Ext^1(T_i,T') = 0.$ Thus $f = 0.$  

As a consequence, we have $\Hom(\nu'(T),\Delta(i)) = 0$ (since 
$\Ext^1(T_i,\nu'(T)) = 0$) and therefore $\Hom(\nu'(T),\mathcal N(T)) = 0$. Thus
$\mathcal N(T) \subseteq \mathcal F(\nu'(T)).$

Conversely, assume that $M$ belongs to $\mathcal G(T)\cap \mathcal F(\nu'(T))$. Since $M$ is in
$\mathcal G(T)$, the minimal right $\add T$-approximation of $M$ provides 
an exact sequence $0 \to T'' \to T' \to M\to 0$ with
$T',T'' \in \add(T)$. Since $\Hom(\nu'(T),M) = 0$, we see that $T'$ belongs to $\add \nu(T)$.
Since $\Hom(\nu'(T),\nu(T) = 0),$ we also have $T'' \in \add \nu(T)$. Thus, we see
that $M$ is the cokernel of a map in $\add\nu(T)$ and therefore belongs to the thick
closure $\mathcal N(T)$ of $\nu(T)$.
\end{proof}

\subsubsection{\bf The linearity of the triple
$(\mathcal F(T),\mathcal N(T),\mathcal Q(T))$.}
 Let $T$ be a  tilting module.
We use the decomposition $T = \nu(T)\oplus \nu'(T)$ in order to define
$\mathcal N(T)$ and $\mathcal Q(T)$. Let $\mathcal N(T)$ be the category of all modules $M$
generated by $T$ such that $\Hom(\nu'(T),M) = 0)$ and let
$\mathcal Q(T)$ be the full subcategory of all modules generated by $\nu'(T)$.
Then {\it $(\mathcal F(T),\mathcal N(T),\mathcal Q(T))$ is a torsion triple.} 
	\medskip

Namely, let us denote by $\mathcal F'$ the full subcategory of all modules $M$
with $\Hom(\nu'(T),M) = 0$. We deal with the two torsion pairs
$(\mathcal F(T),\mathcal G(T))$ and $(\mathcal F',\mathcal Q(T))$, they satisfy $\mathcal F(T)\subseteq
\mathcal F'$. Since $\mathcal N(T) = \mathcal G(T)\cap \mathcal F'$, we see that 
$(\mathcal F(T),\mathcal N(T),\mathcal Q(T))$ is a torsion triple.
	\bigskip	

Given a $\Lambda$-module $M$, we may consider the linear form 
$\langle \bdim M,-\rangle$ on $K_0(\Lambda)$, we will denote it just by
$\langle M,-\rangle.$ Here is a third version of \ref{version1}, the final version
will be Theorem \ref{version4}

\begin{no-text}\label{version3} Let $T$ be a tilting module. 
Then $\alpha_T = \langle \nu'(T),-\rangle$ defines the torsion triple
$(\mathcal F(T),\mathcal N(T), \mathcal Q(T))$. In particular, $\mathcal N(T)$ is a thick subcategory.

{\rm More generally:} If $T'$ is a module which is Morita-equivalent to $\nu'(T)$, then
$\langle T',-\rangle$ defines the torsion triple $(\mathcal F(T),\mathcal N(T), \mathcal Q(T))$.
\end{no-text} 

Let us recall that modules $M, M'$ are said to be {\it Morita equivalent} 
provided $\add(M) = \add(M')$. 
	\medskip 

The final statement of \ref{version3} 
implies the following: {\it if $\nu'(T)$ is the direct sum of $r$
indecomposable modules, then there are $r$ linearly independent linear forms $\alpha_i$
 which define the torsion triple $(\mathcal F(T),\mathcal N(T), \mathcal Q(T))$.} Namely, let
$\nu'(T) = \bigoplus_{i=1}^r T_i$ with indecomposable direct summands $T_i$. Let
$T'_i = \nu'(T)\oplus T_i$. Then all the modules $T'_i$ with $1\le i \le r$ are
Morita equivalent to $\nu'(T)$, and the linear forms $\alpha_i = \langle T'_i,-\rangle$
 with $1\le i\le r$ are linearly independent.  

	\bigskip 
\begin{proof}[Proof of \ref{version3}.] Let $\alpha = \langle T',-\rangle$ for some module $T'$ 
which is Morita equivalent to
$\nu'(T)$. 
	\smallskip

(1) $\mathcal Q(T) \subseteq \mathcal Q(\alpha).$
	\smallskip 

Proof. Let $M\in \mathcal Q(T)$ be a non-zero module. Since $M$ is generated by $\nu'(T)$,
we must have $\Hom(T',M) \neq 0$. On the other hand, since $M$ is generated by
$\nu'(T)$, it belongs to $\mathcal G(T)$, thus $\Ext^1(T,M) = 0$, therefore also
$\Ext^1(T',M) = 0.$ This shows that $\alpha(M) > 0$. Of course, if $M''$ is a non-zero
factor module of $M$, then also $M''$ is a non-zero module in $\mathcal Q(T)$.
Therefore, as we have seen, $\alpha(M'') = 0.$
	\smallskip 

(2) $\mathcal N(T) \subseteq \mathcal N(\alpha).$
	\smallskip

Proof. Let $M$ belong to $\mathcal N(T)$. Since $M$ belongs to $\mathcal G(T)$, we have 
$\Ext^1(T,M) = 0$, thus $\Ext^1(T',M) = 0.$ Since $\Hom(\nu'(T),M)) = 0,$
we also have $\Hom(T',M) = 0.$ This shows that $\alpha(M) = 0.$ If $M''$ is any
factor module of $M$, then also $M''$ is generated by $T$, thus $\Ext^1(T',M) = 0$
and therefore $\alpha(M'') \ge 0.$
	\smallskip 

(3) $\mathcal F(T) \subseteq \mathcal F(\alpha).$
	\smallskip

Let $M$ be a non-zero module in $\mathcal F(T)$. Since $\Hom(T,M) = 0$, we also
have $\Hom(T',M) = 0$. Thus, it remains to show that $\Ext^1(T',M) \neq 0$,
or, equivalently, that there exists some indecomposable direct
summand $T_j$ of $\nu'(T)$ with $\Ext^1(T_j,M)\neq 0.$

We consider the $\Gamma(T)$-module $\psi(M) = \Ext^1(T,M)$ which
belongs to $\mathcal X(T)$. The support of its dimension vector is the set of all
indices $s$ with $\Ext^1(T_s,M) \neq 0.$ Now assume that $\Ext^1(T_j,M) = 0$
for all indecomposable direct summands of $\nu'(T)$.
Then all the composition factors of $\psi(M)$
are of the form $\Hom(T,\Delta(i))$ with $T_i$ an indecomposable direct summand of
$\nu(T)$. But these modules $\Hom(T,\Delta(i))$ belong to $\mathcal Y(T)$ and
$\mathcal Y(T)$ is closed under extensions. This implies that $\psi(M)$ belongs to
$\mathcal Y(T)$, a contradiction. Thus, we see that $\Ext^1(T',M) \neq 0$, therefore
$\alpha(M) < 0.$ 

Of course, if $M'$ is a non-zero submodule of $M$, then also $M'$ is a non-zero
module in $\mathcal F(T)$ and therefore $\alpha(M') < 0.$ 
\end{proof}

\begin{cor}\label{determines}
The direct summand $\nu'(T)$ of $T$ determines $T$.
\end{cor}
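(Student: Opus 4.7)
The plan is to read off $T$ from $\nu'(T)$ in two stages: first recover the thick subcategory $\mathcal{N}(T)$, then recover $\nu(T)$ as its minimal generator, and finally assemble $T = \nu(T)\oplus\nu'(T)$.

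First, given just the module $\nu'(T)$, form the linear functional $\alpha = \langle \nu'(T),-\rangle$ on $K_0(\Lambda)$. By Proposition \ref{version3}, this functional defines the torsion triple $(\mathcal{F}(T),\mathcal{N}(T),\mathcal{Q}(T))$; in particular the middle term $\mathcal{N}(T)=\mathcal{N}(\alpha)$ is completely determined by $\nu'(T)$, as the category of $\alpha$-semistable modules in $\mo\Lambda$.

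Second, recover $\nu(T)$ from $\mathcal{N}(T)$. Since $\mathcal{N}(T)$ is an exceptional thick subcategory, it is equivalent to $\mo\Lambda'$ for some hereditary artin algebra $\Lambda'$, whose simple objects are the modules $\Delta(i)$ arising from the sink sequences $(*)$ at the indecomposable summands $N_i$ of $\nu(T)$. The projective cover in $\mathcal{N}(T)$ of each $\Delta(i)$ is precisely $N_i$, so $\nu(T)=\bigoplus_i N_i$ is a minimal projective generator of $\mathcal{N}(T)$. Minimal projective generators of an abelian length category with enough projectives are unique up to isomorphism, so $\nu(T)$ is recovered from the subcategory $\mathcal{N}(T)$ alone.

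Third, combine: $T \cong \nu(T)\oplus\nu'(T)$ by Roiter's normalization lemma, and both summands on the right are now determined (the second one tautologically, the first via the two steps above). This finishes the argument. The only point that requires care — and which I would write out in detail — is the identification of $\nu(T)$ as the projective generator of $\mathcal{N}(T)$: one must verify, using the sink sequences $(*)$ defining $\Delta(i)$ and the fact that $N$ is normal and ext-orthogonal to itself, that inside $\mathcal{N}(T)$ each $N_i$ is projective with simple top $\Delta(i)$, and that $\{\Delta(i)\}$ is exactly the set of simples of $\mathcal{N}(T)$. Everything else is a direct invocation of the preceding theorem.
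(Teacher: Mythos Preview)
Your argument is correct, but it differs from the paper's route. The paper also starts from $\alpha = \langle \nu'(T),-\rangle$ and invokes \ref{version3}, but then extracts $\mathcal F(T)$ rather than $\mathcal N(T)$: from $\mathcal F(T)$ one gets the torsion pair $(\mathcal F(T),\mathcal G(T))$, and it is a standard fact of tilting theory that a tilting module is determined by its torsion class (as the sum of the indecomposable $\Ext$-projectives in $\mathcal G(T)$). Your approach instead extracts $\mathcal N(T)$ and recovers $\nu(T)$ as its minimal projective generator, then reassembles $T=\nu(T)\oplus\nu'(T)$. Both are short; the paper's version leans on classical tilting theory, while yours leans on the bijection between normal partial tilting modules and exceptional subcategories set up in Section~2.2.

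One remark on your final paragraph: the ``point that requires care'' --- that $\nu(T)$ is the minimal projective generator of $\mathcal N(T)$ --- does not need to be reproved. This is exactly the content of the bijection $\mathcal N(-)\leftrightarrow\text{minimal generator}$ established in Section~2.2 (via the $\Delta$-construction and the reference to \cite{[DR3]}), so you may simply cite it rather than rewrite it.
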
 
	
\begin{proof} 
The module $\nu'(T)$ determines  $\alpha_T = \langle \nu'(T),-\rangle$, thus $\mathcal F(T)$
and therefore 
$(\mathcal F(T),\mathcal G(T))$ and this in turn  determines the module $T$. 
\end{proof}

\subsubsection{\bf Remark}
For any tilting module $T$, we have $\mathcal G(T) = \mathcal N(T)\above
\mathcal Q(T)$: any module $M$ in $\mathcal G(T)$ has a (uniquely determined) submodule $M'$
which belongs to $\mathcal Q(T)$ such that $M/M'$ belongs to $\mathcal N(T)$. We should stress
that this submodule $M'$ of $M$ usually will not be a direct summand (thus, the
torsion pair $(\mathcal N(T),\mathcal Q(T))$ in the additive category $\mathcal G(T)$ may not be
split). As an example, consider the linearly oriented quiver of type $\mathbb A_3$ with
path algebra $\Lambda$,
with indecomposable projective modules $P(1) \subset P(2) \subset P(3)$ and choose
the tilting module $T = \bigoplus T_i$ with $T_1 = P(2),\ T_2 = P(3),\ T_3 = S(2)$
(see the left picture below, the bullets mark the modules $T_i$),
 here $\nu(T) = T_1\oplus T_2$ and $\nu'(T) = T_3.$
$$
{\beginpicture
\setcoordinatesystem units <.6cm,.6cm>
\put{\beginpicture
\multiput{} at 0 0  4 2 /
\multiput{$\bullet$} at 1 1  2 2  2 0 /
\plot 0 0  2 2  4 0 /
\plot 1 1  2 0  3 1 /
\put{$\ssize S(1)$} at 0 -.4  
\put{$\ssize T_3 = S(2)$} at 2 -.4
\put{$\ssize S(3)$} at 4 -.4
\put{$\ssize T_1 = P(2)$} [r] at .7 1
\put{$\ssize T_2= P(3)$} [r] at 1.7 2
\put{$T$} at 2 -1.5
\endpicture} at 0 0
\put{\beginpicture
\multiput{} at 0 0  4 2 /
\multiput{$\bullet$} at 1 1  2 2  4 0 /
\plot 0 0  2 2  4 0 /
\plot 1 1  2 0  3 1 /
\put{$*$} at 3 1 
\put{$\circ$} at 2 0 
\multiput{$\bigcirc$} at 3 1 /
\put{$\ssize \Delta(1)$} [r] at 0.7 1  
\put{$\ssize \Delta(2)\in \mathcal N(T)$} at 5 -.5  
\put{$\ssize T_3 \in \mathcal Q(T)$} at 2 -.5
\put{$\mathcal N(T)$} at 2 -1.5
\endpicture} at 7 0
\endpicture}
$$
On the right, the bullets mark the indecomposable modules which belong to 
$\mathcal N(T)$. Note that 
$S(3) = \Delta(2)$ belongs to $\mathcal N(T)$, whereas 
$T_3 = S(2)$ belongs to $\mathcal Q(T)$.
The module $P(3)/P(1)$ marked by a
big circle is a non-split extension of $T_3\in  \mathcal Q(T)$ by $\Delta(2)\in \mathcal N(T)$.
	\bigskip

Let us have another look at the 
torsion triple $(\mathcal F(T),\mathcal N(T),\mathcal Q(T))$ defined by a tilting module $T$.
The following picture 
$$
{\beginpicture
\setcoordinatesystem units <1cm,.6cm>
\multiput{} at 0 0   7 2 /
\plot 2 0  0 0  0 2  2 2 /
\ellipticalarc axes ratio 1:2  -180 degrees from 2 0 center at 2 1    

\ellipticalarc axes ratio 1:2  -180 degrees from 2.5 0 center at 2.5 1    
\plot 2.5 0  3 0  /
\plot 2.5 2  3 2  /
\ellipticalarc axes ratio 1:2   180 degrees from 3 0 center at 3 1    

\ellipticalarc axes ratio 1:2   170 degrees from 3.5 0 center at 3.5 1    
\put{$\mathcal F(T)$} at 0.8 1
\put{$\mathcal N(T)$} at 2.75 1
\put{$\mathcal Q(T)$} at 4.6 1
\plot 3.5 0  5.5 0  5.5 2  3.5 2 /
\setdots <.5mm>
\plot 2 0  2.5 0 / 
\plot 2 2  2.5 2 /
\plot 3.5 0  2.5 0 / 
\plot 3.5 2  2.5 2 /
\endpicture}
$$
suggests a complete symmetry between the roles of $\mathcal F(T)$ and $\mathcal Q(T)$,
but there is some {\bf asymmetry} involved which we should mention.
Whereas the subcategory $\mathcal Q(T)$ is generated by $\mathcal N(T)$,
the subcategory $\mathcal F(T)$ is not necessarily cogenerated by  $\mathcal N(T)$
(but it is always cogenerated by $\mathcal G(T)$).

As an example, we 
consider again the linearly oriented quiver of type $\mathbb A_3$,
with indecomposable projective modules $P(1) \subset P(2) \subset P(3)$. This time, we 
take as tilting module $T$ the minimal cogenerator
(see the left picture below),
 here $\nu(T) = P(3)$, and $\nu'(T) = P(3)/S(1)\oplus S(3).$
$$
{\beginpicture
\setcoordinatesystem units <.6cm,.6cm>
\put{\beginpicture
\multiput{} at 0 0  4 2 /
\multiput{$\bullet$} at   2 2  3 1  4 0  /
\plot 0 0  2 2  4 0 /
\plot 1 1  2 0  3 1 /
\put{$\ssize S(1)$} at 0 -.4  
\put{$\ssize S(2)$} at 2 -.4
\put{$\ssize T_3 = S(3)$} at 4 -.4
\put{$\ssize T_2$} [l] at 3.3 1
\put{$\ssize T_1= P(3)$} [r] at 1.7 2
\endpicture} at 0 0
\put{\beginpicture
\multiput{} at 0 0  4 2 /
\multiput{$\bullet$} at   2 2  /
\plot 0 0  2 2  4 0 /
\plot 1 1  2 0  3 1 /

\multiput{$\bigcirc$} at 2 2 /
\put{$\ssize \mathcal F(T)$} at -.8 1.4
\put{$\ssize \mathcal N(T)$} [l] at 2.4 2.4
\put{$\ssize \mathcal Q(T)$} at 4.8 1.5
\multiput{$*$} at 0 0  1 1  2 0  3 1  4 0 /
\setdashes <1mm>
\ellipticalarc axes ratio 1.5:1 360 degrees from 1 -.6 center at 1 0.35
\ellipticalarc axes ratio 1:1.1 360 degrees from 3.5 -.5 center at 3.5 0.45
\endpicture} at 8 0
\endpicture}
$$
On the right, we show the corresponding torsion triple.
Here, $\mathcal F(T)$ is not cogenerated by $\mathcal N(T)$.
	\medskip 

Also, $\mathcal Q(T)$ does not determine the torsion triple
(in contrast to $\mathcal F(T)$). We use the same algebra $\Lambda$ as before
and present two non-isomorphic tilting modules $T,T'$ with $Q(T) = Q(T')$.
$$
{\beginpicture
\setcoordinatesystem units <.6cm,.6cm>
\put{\beginpicture
\multiput{} at 0 0  4 2 /
\multiput{$\bullet$} at   2 2  3 1  4 0  /
\plot 0 0  2 2  4 0 /
\plot 1 1  2 0  3 1 /
\setdashes <1mm>
\ellipticalarc axes ratio 1:1.1 360 degrees from 3.5 -.5 center at 3.5 0.45
\put{$\ssize \mathcal Q(T)$} at 4.8 1.5
\put{$T$} at 2 -1
\endpicture} at 0 0
\put{\beginpicture
\multiput{} at 0 0  4 2 /
\multiput{$\bullet$} at   2 2  3 1  2 0  /
\plot 0 0  2 2  4 0 /
\plot 1 1  2 0  3 1 /
\put{$T'$} at 2 -1
\setdashes <1mm>
\ellipticalarc axes ratio 1:1.1 360 degrees from 3.5 -.5 center at 3.5 0.45
\put{$\ssize \mathcal Q(T')$} at 4.8 1.5
\endpicture} at 8 0
\endpicture}
$$
Let us repeat: given a tilting module $T$, the partial tilting module $\nu'(T)$ 
determines $T$ uniquely (see the Corollary \ref{determines}), but the class $\mathcal Q(T)$
of modules generated by $\nu'(T)$ does not determine $T$.
	\medskip 

\subsubsection{\bf Remark.}
As we see, any tilting torsion pair $(\mathcal F(T),\mathcal G(T))$
is given by a linear form $\alpha$ on $K_0(\Lambda)$
and in this way, we obtain a sincere exceptional subcategory $\mathcal N(\alpha)$
contained in $\mathcal G(T)$.
But we should remark that there are linear forms $\alpha$ on $K_0(\Lambda)$ with
$\mathcal N(\alpha)$ being a sincere exceptional  subcategory such that 
$\mathcal Q(\alpha)$ is not generated by $\mathcal N(\alpha)$ (thus they do not come from a tilting module),
and also dually, $\mathcal F(\alpha)$ is not cogenerated by $\mathcal N(\alpha)$
(thus they do not come from a cotilting module).
An example is presented in the note N\,\ref{example2}.
	\bigskip 

\subsubsection{\bf The general case of support-tilting modules.} Let $T$ be a support-tilting module.
As we have mentioned in Section \ref{support-tilting-def},
$\mathcal F(T)$ denotes the full subcategory of all
modules $M$ with $\Hom(T,M) = 0.$ As for a tilting module, let 
$\mathcal N(T)$ be the thick subcategory generated
by $\nu(T)$ and $\mathcal Q(T)$ the full subcategory of all modules generated by $\nu'(T).$ 
	
\begin{theorem}\label{version4} {\bf (Ingalls-Thomas).}  Let $T$ be a support-tilting module. Let $P$ be 
a projective module such that the top of $P$ is the direct sum of all simple
modules outside of the support of $T$.
Then  the linear form
$$
 \alpha = \langle \nu'(T),-\rangle - 
 \langle P,-\rangle  
$$
defines the torsion triple 
$(\mathcal F(T),\mathcal N(T),\mathcal Q(T))$. 
\end{theorem}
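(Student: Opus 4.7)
The plan is to reduce to the tilting-module case already handled in \ref{version3} by exploiting the support algebra, and then to show that the extra summand $-\langle P,-\rangle$ contributes only the correction needed on modules whose support escapes $\mathrm{supp}(T)$.

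First I would set $\bar{\Lambda}=\Lambda_T$ and identify $\mo\bar{\Lambda}$ with the full subcategory $\mathcal M_T\subseteq\mo\Lambda$ of modules whose composition factors lie in $\mathrm{supp}(T)$; this is a thick subcategory closed under sub- and factor modules. Since $T$ is by definition a tilting $\bar{\Lambda}$-module, and since $\Hom_\Lambda(T,M)=\Hom_{\bar{\Lambda}}(T,M)$ for $M\in\mathcal M_T$, the triple $(\mathcal F(T),\mathcal N(T),\mathcal Q(T))$ in $\mo\Lambda$ restricts to the torsion triple attached to $T\in\mo\bar{\Lambda}$. By construction $\mathcal N(T),\mathcal Q(T)\subseteq\mathcal M_T$, whereas $\mathcal F(T)$ contains in particular every simple module outside $\mathrm{supp}(T)$.

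The key computation is the following. Because $\Lambda$ is hereditary and $P$ is projective we have $\langle P,M\rangle=\dim_k\Hom(P,M)$, and a standard adjunction identifies this with $\sum_{i}[M:S(i)]\dim_k\End(S(i))$ summed over the simples $S(i)$ occurring in $\topp P$. Since those simples are precisely the simples outside $\mathrm{supp}(T)$, I deduce
$$\langle P,M\rangle=0\ \text{for}\ M\in\mathcal M_T,\qquad\langle P,M\rangle>0\ \text{whenever $M$ has a composition factor outside $\mathrm{supp}(T)$.}$$
Thus on $\mathcal M_T$ the form $\alpha$ agrees with $\langle\nu'(T),-\rangle$, which by \ref{version3} applied to the tilting $\bar{\Lambda}$-module $T$ already defines the correct torsion triple inside $\mo\bar{\Lambda}$.

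Using the uniqueness lemma about torsion triples (three compatible inclusions force equality), it then remains to check the three set-theoretic inclusions $\mathcal Q(T)\subseteq\mathcal Q(\alpha)$, $\mathcal N(T)\subseteq\mathcal N(\alpha)$ and $\mathcal F(T)\subseteq\mathcal F(\alpha)$. The first two are immediate from the previous paragraph because both $\mathcal Q(T)$ and $\mathcal N(T)$ sit inside $\mathcal M_T$, and every factor of such a module also lies in $\mathcal M_T$. The main obstacle (and the step I expect to require the most care) is the inclusion $\mathcal F(T)\subseteq\mathcal F(\alpha)$, where the support condition can fail for submodules. I would treat it by the dichotomy already set up: given a nonzero submodule $M'$ of some $M\in\mathcal F(T)$, necessarily $M'\in\mathcal F(T)$, so $\Hom(T,M')=0$ and in particular $\Hom(\nu'(T),M')=0$, whence $\langle\nu'(T),M'\rangle=-\dim\Ext^1(\nu'(T),M')\le 0$. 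If $M'\in\mathcal M_T$ the argument in \ref{version3} applied over $\bar{\Lambda}$ upgrades this to a strict inequality $\langle\nu'(T),M'\rangle<0$, while $\langle P,M'\rangle=0$, giving $\alpha(M')<0$. If on the other hand $M'$ has a composition factor outside $\mathrm{supp}(T)$, the key computation forces $\langle P,M'\rangle>0$, and combined with $\langle\nu'(T),M'\rangle\le 0$ this again yields $\alpha(M')<0$. The uniqueness lemma then closes the argument.
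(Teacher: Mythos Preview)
Your proof is correct and follows essentially the same route as the paper's: reduce to the three inclusions $\mathcal F(T)\subseteq\mathcal F(\alpha)$, $\mathcal N(T)\subseteq\mathcal N(\alpha)$, $\mathcal Q(T)\subseteq\mathcal Q(\alpha)$ via the uniqueness lemma, dispose of the last two by noting that $\alpha$ agrees with $\langle\nu'(T),-\rangle$ on $\mathcal M_T$ and invoking \ref{version3} over $\Lambda_T$, and handle the first by the dichotomy on whether a nonzero submodule $M'$ has support inside $\mathrm{supp}(T)$ or not. Your write-up is in fact a bit more explicit than the paper's note N\,\ref{support-tilting}, for instance in spelling out why $\langle\nu'(T),M'\rangle\le 0$ when $\Hom(T,M')=0$.
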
 

\begin{proof}
For the proof, see the note N\,\ref{support-tilting}.
\end{proof} 
	\medskip 

\subsection{The Ingalls-Thomas bijections} We are going to present
a large number of sets which correspond bijectively to the 
set of exceptional antichains.
	\medskip 

\subsubsection{\bf Normal partial-tilting modules and support-tilting modules.} 
We show a bijection between normal partial tilting  modules 
and support-tilting modules. It is sufficient to look at the support of the
modules, thus to show a bijection between sincere normal modules
and tilting modules.
	\medskip 
 
One direction is given by normalization
$$
{\beginpicture
\setcoordinatesystem units <2cm,1cm> 
\put{\{sincere normal modules\}} at -1.2 0
\put{\{tilting modules\}} at 2.2 0
\setdashes <1mm>
\arr{0.1 0.1}{0.9 0.1}
\setsolid
\arr{0.9 -.1}{0.1 -.1}
\put{$\nu(-)$} at 0.5 -.4
\endpicture}
$$

The uniqueness of the normalization 
shows that the map $\nu$ going from right to left is well-defined. 

The map $\nu$ is injective when applied to tilting modules:  
if $T, T'$ are tilting modules with $\nu(T) =
\nu(T'),$ then $T$ and $T'$ are isomorphic. Namely,
$T'$ is generated by $\nu(T') = \nu(T)$, thus
by $T$. But $T'\in \mathcal G(T)$ means that $\Ext^1(T,T') = 0.$
Similarly, we see that $T$ belongs to $\mathcal G(T'),$ thus $\Ext^1(T',T) = 0.$
This shows that $T\oplus T'$ is without self-extensions, thus the number
of isomorphism classes of indecomposable direct summand of $T\oplus T'$
is equal to the number of simple modules, therefore $\add (T\oplus T') =
\add T = \add T'$. Since we assume that $T, T'$ are multiplicity-free,
$T$ and $T'$ are isomorphic.
	
In order to see that $\nu$ is
surjective, we need to find for any sincere normal 
partial tilting module $N$ a
tilting module $T$
with $\nu(T) = N,$  thus we need 
a module $N'$, such that $N'$ is generated by $N$, and 
second, $N\oplus N'$ is a tilting module. The existence (and
unicity) of $N'$ is well-known, see the note N\,\ref{Bongartz}.  We call $N'$ 
a {\it factor complement} for  $N$. 
	\medskip 

\subsubsection{} We need two further definitions:
The antichains $A = \{A_1,\dots,A_t\}$ and $A' = \{A'_1,\dots,A'_{t'}\}$ are said to
be {\it isomorphic,} provided the modules $\bigoplus_i A_i$ and $\bigoplus_j A'_j$ are isomorphic. 
	
If $\mathcal C$ is a subcategory of a module category 
and $C\in \mathcal C$, then $C$ is said to be a {\it cover} of $\mathcal C$ provided any
module in $\mathcal C$ is generated by $C$.
	\medskip 

\begin{theorem}\label{IT-bijections} {\bf (Ingalls-Thomas).} Let $\Lambda$ be a hereditary artin algebra. 
There are bijections between the following data:
\begin{enumerate}
\item[\rm(1)] Exceptional antichains.
\item[\rm(1$'$)] Exceptional subcategories.
\item[\rm(1$''$)] Normal partial tilting modules.
\item[\rm(1$'''$)] Conormal partial tilting modules.
\item[\rm(2)] Support-tilting modules.
\item[\rm(3)] Torsion classes with a cover.
\item[\rm(3$'$)] Torsionfree classes with a cocover.
\end{enumerate}  
 	\smallskip

  If $\Lambda$ is in addition representation-finite, then
all antichains are exceptional, all thick subcategories have a cover and a cocover,
all torsion classes have a cover. all torsionfree classes have a cocover.
\end{theorem}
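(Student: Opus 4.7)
The strategy is to set up two ``hubs'' — exceptional antichains on one side, support-tilting modules on the other — and identify each of the remaining five classes with one of these two hubs. The bridge between the hubs will be the pair of constructions $T \mapsto \nu(T)$ (Roiter normalization) and $N \mapsto N \oplus N'$ (Bongartz complement inside the support algebra), after which the three ``torsion-class'' items (3), (3$'$) and the conormal item (1$'''$) fit in automatically.

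First I would handle the antichain side. The bijection (1) $\leftrightarrow$ (1$'$), via $A \mapsto \mathcal{E}(A)$ with inverse $\mathcal{C} \mapsto S(\mathcal{C})$, is essentially the content of Section 2.2; exceptionality of $A$ matches acyclicity of the quiver of $\mathcal{E}(A)$. To pass from (1$'$) to (1$''$), I would send an exceptional subcategory $\mathcal{C} \simeq \mo\Lambda'$ to its progenerator viewed as a $\Lambda$-module, which is multiplicity-free, has no self-extensions, and is normal since the indecomposable projectives of $\Lambda'$ each appear once and no proper summand generates another. Conversely, for a normal partial tilting module $N$ the construction $\mathcal{N}(N) = \mathcal{E}(\Delta(N))$ of Section 2.2 produces the corresponding exceptional subcategory with $N$ as its progenerator; the two constructions are mutually inverse by the very definition of $\Delta(N)$ through the sink maps $(*)$. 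The analogous bijection (1$'$) $\leftrightarrow$ (1$'''$) is then obtained by applying $\Hom_\Lambda(-,E)$ for a minimal injective cogenerator $E$, which exchanges projective with injective and normal with conormal.

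The passage (1$''$) $\leftrightarrow$ (2) proceeds via Roiter's lemma together with the Bongartz complement: a support-tilting $T$ is sent to $\nu(T)$; conversely, a normal partial tilting $N$ is viewed as sincere over its support algebra $\Lambda_N$, and $N \oplus N'$ is formed where $N'$ is the Bongartz complement of $N$ in $\mo\Lambda_N$. Uniqueness of normalization makes these mutually inverse. The bijection (2) $\leftrightarrow$ (3) sends $T$ to $\mathcal{G}(T)$: by Theorem \ref{version4} the module $T$ itself is a cover of $\mathcal{G}(T)$, and conversely, given a torsion class $\mathcal{G}$ with cover $C$, one extracts from the indecomposable Ext-projective objects of $\mathcal{G}$ together with the projective covers of the simples annihilated by $\mathcal{G}$ a support-tilting module whose associated torsion class is $\mathcal{G}$. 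The bijection (2) $\leftrightarrow$ (3$'$) follows by the same duality $\Hom_\Lambda(-,E)$ used for (1$'''$).

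For the representation-finite addendum, any antichain $A$ generates $\mathcal{E}(A) \simeq \mo\Lambda'$ for a hereditary artin algebra $\Lambda'$ which remains representation-finite; by the Gabriel-Dlab-Ringel theorem the quiver of $\Lambda'$ is a disjoint union of Dynkin diagrams, hence acyclic, so $A$ is automatically exceptional. Every such thick subcategory carries both a progenerator and a minimal injective cogenerator, supplying covers and cocovers. For a torsion class $\mathcal{G}$ in $\mo\Lambda$, finiteness of $\ind\Lambda$ forces only finitely many Ext-projective indecomposables in $\mathcal{G}$, whose direct sum is the required cover; the dual argument handles torsionfree classes. The main obstacle will be the converse direction of (2) $\leftrightarrow$ (3): reconstructing a support-tilting module from the abstract data of a torsion class together with a cover demands a careful identification of Ext-projectivity with direct summands of a normal module, together with the reappearance of the non-normal summands via the Bongartz complement, and it is precisely here that the stability-theoretic description of Theorem \ref{version4} — via the linear form $\langle \nu'(T),-\rangle - \langle P,-\rangle$ — is the indispensable bookkeeping device.
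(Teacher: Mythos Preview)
Your overall architecture matches the paper's, but the bridge $(1'') \leftrightarrow (2)$ is broken. You take $N'$ to be the \emph{Bongartz} complement of $N$ in $\mo\Lambda_N$, but the Bongartz complement $\beta(N)$ is (when $N$ is sincere) \emph{cogenerated} by $N$, not generated by it, so there is no reason for $\nu(N \oplus \beta(N)) = N$. Concretely, in $\mo\Lambda_3$ take $N = P(3) \oplus S(2)$: this is sincere, normal, and partial tilting, but $\beta(N) = P(2)$, which is not generated by $N$ (indeed $\Hom(N,P(2))=0$), and one computes $\nu(N \oplus P(2)) = P(3) \oplus P(2) \neq N$. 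What you need is the dual construction, the \emph{factor complement} (the co-Bongartz complement $\beta'(N)$, built by universally founding the minimal injective cogenerator), which \emph{is} generated by $N$ and hence satisfies $\nu(N \oplus \beta'(N)) = N$.

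There is a second error in $(3)\to(2)$: you propose to adjoin ``the projective covers of the simples annihilated by $\mathcal G$'' to the Ext-projectives of $\mathcal G$. These extra summands should not be there --- a support-tilting module $T$ with $\mathcal G(T)=\mathcal G$ has support exactly equal to the support of $\mathcal G$, and the indecomposable Ext-projectives of $\mathcal G$ already form $\add T$. The paper's construction is to normalize a cover $C$ of $\mathcal G$, observe that $\nu(C)$ has no self-extensions (this requires a separate argument, cited from \cite{[R8]}), take its factor complement $B$, and set $T=\nu(C)\oplus B$; the linearity Theorem~\ref{version4} is not used at all in this step. Finally, your argument that every antichain is exceptional in the representation-finite case is circular: you invoke $\mathcal E(A)\simeq\mo\Lambda'$, but the existence of such a $\Lambda'$ is precisely the exceptionality you are trying to prove. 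The correct argument is that representation-finite hereditary algebras are representation-directed, so the Ext-quiver of any set of bricks is automatically acyclic.
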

		 \bigskip

Several of these bijections have been mentioned already. 
We have added here the torsion classes (3), since classical tilting theory asserts
the bijection between (2) and (3):
$$
{\beginpicture
\setcoordinatesystem units <2cm,1cm> 
\put{\{support tilting modules\}} at -1.2 0
\put{\{torsion classes with a cover\}} at 2.2 0
\arr{0.1 0.1}{0.9 0.1}
\arr{0.9 -.1}{0.1 -.1}
\put{$\mathcal G(-)$} at 0.5 .4
\put{$\Ext$-projectives} at 0.5 -.4
\endpicture}
$$
attaching to any support tilting module $T$ the torsion class $\mathcal G(T)$ 
of all the modules generated by $T$, and to any torsion class $\mathcal C$ with a cover
the direct sum of all indecomposable $\Ext^1$-projective modules, one from each
isomorphism class. 
	\bigskip

The impressive list of bijections provided by this theorem means of course,
that in case $\Lambda$ is representation finite, say of Dynkin type $\Delta_n$, all the counting problems 
(as discussed in Chapter 1) have the same answer, namely  
$\a(\Delta_n)$.
	\bigskip 

Here are some additional arguments for establishing the asserted bijections:
	\smallskip 
 
First, we may show the following: If $T$ is a support-tilting module and $\mathcal G = \mathcal G(T)$, then
$\add T$ is the class of the $\Ext$-projective modules in $\mathcal G$. 
Tilting theory asserts that $\mathcal G$ is the class of $\Lambda(T)$-modules $M$ such that $\Ext^1(T,M) = 0$.
Let $M$ be in $\mathcal G$ and $g\!:T' \to M$ be a right $T$-approximation of $M$. Then $g$ is
surjective and the kernel $M'$ of $g$ satisfies $\Ext^1(T,M') = 0$, thus belongs to $\mathcal G$. 
If $M$ is $\Ext$-projective, then the exact sequence $0 \to M' \to T' \to M \to 0$ splits, thus $M$ is in
$\add T.$ This shows that the $\Ext$-projective modules in $\mathcal G$ are just the modules in $\add T.$

From (2) to (3); If $T$ is a partial tilting module, let $\mathcal G(T)$ be the class of modules
generated by $T$.  
Then it is well-known (and easy to see) 
that $T$ is a torsion class. Of course, $T$ is a cover for $\mathcal G(T)$.

From (3) to (2): If $\mathcal C$ is a torsion class with a cover $C$, then we attach to it a module $T$
such that $\add T$ is the class of $\Ext$-projective modules in $\mathcal G$. In order to do so, we need to know
that the class $\mathcal E$ of $\Ext$-projective modules
in $\mathcal C$ is finite, say $\mathcal G = \add T$ for some module $T$. We also have to show that $T$
is support-tilting. 

With $C$ also its normalization $\nu(C)$ is a cover. A normal cover of a torsion class has no self-extension
(see Proposition 1 of \cite{[R8]}). 
Let $B$ be a factor complement for $\nu(C).$ As we have seen,
$T = \nu(C)\oplus B$ is a support-tilting module. Since $B$ is generated by $\nu(C)$, we have $\mathcal G(T) =
\mathcal G(\nu(C)) = \mathcal G(C) = \mathcal C.$ But we have shown already that $\add T$ is the class of 
$\Ext$-projective modules in $\mathcal G(T)$. 

From (2) to (3) to (2): Let us start with a support-tilting module $T$ and attach to it $\mathcal G = \mathcal G(T)$. 
As we have seen, the class of $\Ext$-projectives in $\mathcal G$ is $\add T$. We choose $T'$ with $\add T' = \add T$.
But this just means that $T, T'$ are Morita equivalent.

From (3) to (2) to (3). We start with a torsion class $\mathcal C$ with a cover, we choose a support-tilting module $T$
with $\mathcal C = \mathcal G(T),$  thus we are back at $\mathcal C$. 
       \medskip 

We have used duality, in order to add some further conditions.
	\bigskip

\subsubsection{\bf Remark} 
The bijections between the set (1$'$) of thick subcategories $\mathcal C$ 
and the sets (1), (1$''$) and (1$'''$) 
can be reformulated as follows:
In an abelian category we may look at the semi-simple, the
projective and the injective objects: the set of simple objects in $\mathcal C$ is
an antichain in $\mo \Lambda$, a minimal projective generator in $\mathcal C$
is a normal partial tilting module, a minimal injective cogenerator
is a conormal partial tilting module.  

Conversely, let us start with (1), (1$''$) or (1$'''$). It has been
mentioned already that starting with an antichain $A$, we take the full subcategory $\mathcal E(A)$.
Starting with a normal partial tilting module $P$, 
the corresponding thick subcategory $\mathcal C$ consists of all modules
which arise as the cokernel of a map in $\add P$ (in this way, we specify projective
presentations of the objects in $\mathcal C).$ Dually, starting with a conormal 
partial tilting module $I$, 
the corresponding thick subcategory $\mathcal C$ consists of all modules
which arise as the kernel of a map in $\add I$ (in this way, we specify injective
presentations of the objects in $\mathcal C).$ 
	\bigskip

\subsection{Perpendicular pairs and exceptional sequences}
	\medskip 

If $M$ is a module, the number of isomorphism classes of indecomposable
direct summands of $M$ will be called the {\it rank} of $M$ and denoted by $\rank M$.
Of course, the rank of the regular representation ${}_\Lambda\Lambda$ is just the
rank of the Grothendieck group $K_0(\Lambda)$. In general, if $\mathcal A$ is a length category,
we call the rank of the Grothendieck group $K_0(\mathcal A)$ the rank of $\mathcal A.$
Thus, if $\Lambda$ is an artin algebra, the rank of $\mo\Lambda$ is  
the rank of any projective generator and of any injective cogenerator, and this is
the number of simple modules.
	\medskip 

We assume in this section that $\Lambda$ is a hereditary artin algebra of rank $n$.
	\medskip

\subsubsection{} Let $\mathcal U, \mathcal V$ be classes of modules. Let
$\mathcal V^\perp$ be the full subcategory of all modules $U$ with $\Hom(\mathcal V,U) = 0$ and 
$\Ext^1(\mathcal V,U) = 0$.
Dually, let ${}^\perp\mathcal U$ be the 
full subcategory of all modules $V$ with $\Hom(V,\mathcal U) = 0$ and $\Ext^1(V,\mathcal U) = 0$.
It is easy to see that subcategories of the form $\mathcal V^\perp$  or ${}^\perp\mathcal U$ 
are thick subcategories. 

For example, if $S$ is a simple module and $P(S)$ is a projective cover of $S$, then
$P(S)^\perp = \mo  \Lambda/\langle e_S\rangle,$ where $e_S$ is a primitive idempotent of $\Lambda$
with $e_SS \neq 0$ and $\langle e_S\rangle$ is the two-sided ideal generated by $e_S$.
	\medskip

We call $(\mathcal U,\mathcal V)$ a {\it perpendicular pair} provided $\mathcal U = \mathcal V^\perp$
and $\mathcal V = {}^\perp \mathcal U$ (thus, provided $\Hom(V,U) = 0 = \Ext^1(V,U)$ for all $U\in 
\mathcal U$ and $V\in \mathcal V$, and such that the classes $\mathcal U$ and $\mathcal V$ are maximal with
this property); we should point out that writing first $\mathcal U$, then $\mathcal V$, corresponds
to our preference of writing first $\mathcal F$, then $\mathcal G$ when dealing with a torsion pair
$(\mathcal F,\mathcal G)$.

In these lecture, we only will be interested in perpendicular pairs
$(\mathcal U,\mathcal V)$ such that $\mathcal V$ is exceptional (as we will see, 
$\mathcal V$ is exceptional if and only if $\mathcal U$ is exceptional).
But we should note that also other perpendicular pairs play
a role in the representation theory of artin algebras, see the note N2.8.

\begin{lemma}\label{perp-gen}
Let $\mathcal V$ be an exceptional subcategory, let $N$ be a projective generator
of $\mathcal V$. Then $\mathcal V^\perp = N^\perp.$
\end{lemma}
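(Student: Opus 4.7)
The inclusion $\mathcal{V}^\perp \subseteq N^\perp$ is immediate since $N \in \mathcal{V}$, so the task is to establish $N^\perp \subseteq \mathcal{V}^\perp$. The plan is to use the fact that every object of $\mathcal{V}$ admits a short projective resolution in $\mathcal{V}$ by summands of $N$, then apply $\Hom_\Lambda(-,X)$ to convert the vanishing of $\Hom$ and $\Ext^1$ on $\add N$ into the same vanishing on all of $\mathcal{V}$.

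First I would unpack what ``exceptional subcategory'' and ``projective generator'' give us. Since $\mathcal{V}$ is exceptional, it is equivalent to $\mo \Lambda'$ for some hereditary artin algebra $\Lambda'$, and a projective generator $N$ corresponds to ${}_{\Lambda'}\Lambda'$. Because $\Lambda'$ is hereditary, every object of $\mo \Lambda'$ has projective dimension at most one; translating back, every $V\in\mathcal{V}$ fits into a short exact sequence
$$0 \to N_1 \to N_0 \to V \to 0$$
inside $\mathcal{V}$ with $N_0,N_1 \in \add N$. The key point is that $\mathcal{V}$ is a \emph{thick} subcategory of $\mo\Lambda$, so its embedding functor is exact; hence this sequence is also short exact in $\mo\Lambda$, and all $\Ext$ computations performed below can be done in the ambient category.

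Now let $X\in N^\perp$, i.e.\ $\Hom_\Lambda(N,X)=0=\Ext^1_\Lambda(N,X)$. For any $V\in \mathcal{V}$, apply $\Hom_\Lambda(-,X)$ to the sequence above and use that $\Lambda$ itself is hereditary, so $\Ext^2_\Lambda(V,X)=0$. This yields the exact sequence
$$
0 \to \Hom(V,X) \to \Hom(N_0,X) \to \Hom(N_1,X) \to \Ext^1(V,X) \to \Ext^1(N_0,X) \to \Ext^1(N_1,X) \to 0.
$$
Since $N_0,N_1\in \add N$ and $X\in N^\perp$, the four outer terms involving $N_0$ and $N_1$ vanish, forcing $\Hom(V,X)=0$ and $\Ext^1(V,X)=0$. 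Thus $X\in \mathcal{V}^\perp$, which is the desired inclusion.

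I do not foresee a serious obstacle here; the only subtlety is to justify that $N$ is actually a projective object of $\mathcal{V}$ in a sufficiently strong sense, namely that the short exact sequence ending in $V$ realizes a genuine projective resolution in $\mo\Lambda$ rather than merely in $\mathcal{V}$. This is exactly where thickness of $\mathcal{V}$ (closure under kernels, cokernels, and extensions) is used, and it is a standard consequence of the hypotheses rather than a point requiring additional work.
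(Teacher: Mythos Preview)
Your proof is correct and follows essentially the same approach as the paper: both use the short exact sequence $0\to N_1\to N_0\to V\to 0$ with $N_0,N_1\in\add N$ and apply $\Hom_\Lambda(-,X)$ to force the vanishing of $\Hom(V,X)$ and $\Ext^1(V,X)$. The paper handles $\Hom(V,X)=0$ slightly more directly by noting that $V$ is a quotient of some power of $N$, and it does not bother extending the long exact sequence out to $\Ext^2$ (your use of $\Ext^2_\Lambda=0$ is correct but unnecessary, since the segment $\Hom(N_1,X)\to\Ext^1(V,X)\to\Ext^1(N_0,X)$ already suffices).
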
 

\begin{proof} We only have to show that 
$N^\perp \subseteq \mathcal V^\perp.$ Let $U$ belong to $N^\perp$. We have to show that
$\Hom(\mathcal V,U) = 0$ and $\Ext^1(\mathcal V,U) = 0$. 
Let $V\in \mathcal V$. Since $V$ is generated by
$N$, the condition $\Hom(N,U) = 0$ implies that $\Hom(V,U) = 0.$ Next, we show
that $\Ext^1(V,U) = 0.$ Since $N$ is a projective generator of $\mathcal V$,
there is an exact sequence $0 \to N' \to N'' \to V \to 0$ with $N',N''$ in $\add N.$
It follows that there is an exact sequence $\Hom(N',U) \to \Ext^1(V,U) \to \Ext^1(N'',U).$
Since $U$ belongs to $N^\perp$, it follows that $\Ext^1(V,U) = 0.$  
\end{proof}

\subsubsection{}
Let $N$ be a partial tilting module. 
Given any module $X$, let us define $\gamma_N(X)$
as follows: Choose an embedding $X \to X'$ such that $X'/X$ is isomorphic to a direct sum
of copies of $N$ and such that $\Ext^1(N,X') = 0.$ Such an embedding exists: We assume that 
$k$ is a commutative artinian ring and that
$\Lambda$ is a $k$-algebra which is module-finite as a $k$-module. We take a 
finite generating set $(\epsilon_i)_{i=1}^t$ 
of $\Ext^1(N,X)$ say as a $k$-module and form the ``universal extension''
$0 \to X \to X' \to N^t \to 0$ (so that the $t$ canonical embeddings $N \to N^t$
induce the extensions $\epsilon_1,\dots,\epsilon_t$). 

Let $\gamma_N(X) = X'/X'',$
where $X''$ is the $N$-trace of $X'$ (this is the sum of all images of maps $N \to X'$). 
Note that this construction is functorial: any map $f\!: X_1 \to X_2$ can be lifted to a map
$f'\;X'_1 \to X'_2$, but this lifting is not necessarily unique. Of course, $f'$ maps
$X''_1$ into $X''_2$, thus it induces a map $\gamma_N(X_1) \to \gamma_N(X_2)$ and this map
is uniquely determined by $f$. Namely, if $f'\!:X'_1 \to X'_2$ is a map with zero restriction
$f'|X_1$, then $f'$ factors through $X'_1/X_1$. Since $X'_1/X_1$ is a direct sum of copies of 
$N$, we see that the image of $f'$ is contained in $X''_2$, thus the induced map
$\gamma_N(X_1) \to \gamma_N(X_2)$ is the zero map. 
	\medskip

\begin{lemma}\label{perp-gen2}
Let $N$ be a partial tilting module. Then the functor 
$\gamma_N$ maps $\mo\Lambda$ onto $N^\perp$ and its restriction to $N^\perp$ is the identity
functor. The module $\gamma_N({}_\Lambda \Lambda)$ is a projective generator for $N^\perp.$
\end{lemma}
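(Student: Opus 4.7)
My plan is to work systematically with the two short exact sequences built into the construction, namely the universal extension
$$0 \to X \to X' \to N^t \to 0, \qquad \Ext^1(N,X') = 0,$$
and the trace sequence
$$0 \to X'' \to X' \to \gamma_N(X) \to 0,$$
extracting everything from the long exact $\Hom(N,-)$ and $\Hom(-,Y)$ sequences. The key ancillary fact I would record first is that since $X''$ is $N$-generated by construction, there is an epimorphism $N^s \twoheadrightarrow X''$; combining this with $\Ext^1(N,N^s) = 0$ (as $N$ is partial tilting) and the vanishing of $\Ext^2$ (as $\Lambda$ is hereditary) yields $\Ext^1(N,X'') = 0$.

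To see that $\gamma_N(X) \in N^\perp$, I would apply $\Hom(N,-)$ to the trace sequence. The $\Ext^1$-part reads
$$\Ext^1(N,X') \to \Ext^1(N, \gamma_N(X)) \to \Ext^2(N, X'') = 0,$$
and since $\Ext^1(N,X') = 0$ by construction, $\Ext^1(N, \gamma_N(X)) = 0$. The $\Hom$-part reads
$$\Hom(N,X') \to \Hom(N, \gamma_N(X)) \to \Ext^1(N, X'') = 0,$$
but every map $N \to X'$ has image inside the trace $X''$ and hence dies in $\gamma_N(X)$, so $\Hom(N, \gamma_N(X)) = 0$. For the second assertion, if $X$ already lies in $N^\perp$, then $\Ext^1(N,X) = 0$ permits $t = 0$ and thus $X' = X$, while $\Hom(N,X) = 0$ forces $X'' = 0$; hence $\gamma_N(X) = X$.

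For the projective generator statement, set $P = \gamma_N({}_\Lambda \Lambda)$ and pick any $Y \in N^\perp$. Applying $\Hom(-,Y)$ to the universal extension $0 \to \Lambda \to \Lambda' \to N^t \to 0$, and using $\Hom(N,Y) = \Ext^1(N,Y) = \Ext^1(\Lambda,Y) = 0$, I obtain $\Ext^1(\Lambda',Y) = 0$ and a natural isomorphism $\Hom(\Lambda',Y) \cong \Hom(\Lambda,Y) \cong Y$. Now applying $\Hom(-,Y)$ to the trace sequence $0 \to \Lambda'' \to \Lambda' \to P \to 0$ and using $\Hom(\Lambda'',Y) = 0$ (since $\Lambda''$ is $N$-generated and $\Hom(N,Y) = 0$), I deduce $\Ext^1(P,Y) = 0$ together with $\Hom(P,Y) \cong Y$. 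The vanishing of $\Ext^1(P,Y)$ for every $Y \in N^\perp$ makes $P$ projective inside the thick subcategory $N^\perp$; and the isomorphism $\Hom(P,Y) \cong Y$ shows that for a generating tuple $y_1,\dots,y_r$ of $Y$ the corresponding maps $P \to Y$ assemble into an epimorphism $P^r \twoheadrightarrow Y$, so $P$ generates $N^\perp$ as required.

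The main obstacle, really the heart of the argument, is establishing the two trace-vanishing facts $\Ext^1(N,X'') = 0$ and $\Hom(\Lambda'',Y) = 0$; both rest on the simple but essential observation that the trace submodules are $N$-generated, after which hereditariness and the partial tilting property of $N$ reduce all three assertions to routine long exact sequence chases.
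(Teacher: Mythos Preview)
Your proof is correct and follows essentially the same approach as the paper: both establish $\gamma_N(X)\in N^\perp$ via the two defining exact sequences, and both prove projectivity of $P=\gamma_N({}_\Lambda\Lambda)$ by applying $\Hom(-,Y)$ to the sequences $0\to\Lambda\to\Lambda'\to N^t\to 0$ and $0\to\Lambda''\to\Lambda'\to P\to 0$. The one minor difference is in the generator part: the paper lifts a free cover $({}_\Lambda\Lambda)^s\twoheadrightarrow Y$ through the functor $\gamma_N$, whereas you extract the isomorphism $\Hom(P,Y)\cong Y$ from the same two sequences and read off the surjection $P^r\twoheadrightarrow Y$ directly---a slightly more economical variant of the same idea.
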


\begin{proof} 
Let $X$ be any $\Lambda$-module. The epimorphism $X' \to X'/X''$ induces
a surjection $\Ext^1(N,X') \to \Ext^1(N,X'/X'')$. Since
$\Ext^1(N,X') = 0$, we see that $\Ext^1(N,X'/X'') = 0$. 
Since $N$ is a partial tilting module, it is easy to see that for any module $X'$ with
$N$-trace $X''$, the $N$-trace of $X'/X''$ is zero. This shows that $\Hom(N,X'/X'') = 0$. 
Altogether, we see that $\gamma_N(X) = X'/X''$ belongs to $N^\perp.$ 

If $X$ belongs to $N^\perp$, then $\Ext^1(N,X)= 0$ shows that we can take $X' = X$ with 
the identity map $X \to X$ as the required embedding. Since $\Hom(N,X) = 0$, the $N$-trace
$X''$ of $X' = X$ is zero, thus $\gamma_N(X) = X'/X'' = X/0 = X.$

Let $M = \gamma_N({}_\Lambda \Lambda)$ and $X\in N^\perp$. There is an epimorphism
$f\!:F \to X$, where $F$ is a free $\Lambda$-module, say $F = ({}_\Lambda \Lambda)^s$ for some
$s\ge 0$. The map $f$ can be extended to a map
$f'\!:F' \to X'=X$. Since the restriction $f$ of $f'$ is surjective, also $f'$ is surjective.
$f'$ maps $F''$ to $X'' = 0$, thus $f'$ induces a map $F'/F'' \to X'/X'' = X$ which again
has to be surjective. But this is the map $\gamma_N(f)$, and $F'/F'' = M^s$.
This shows that there is a surjective map $M^s \to X$ for some $s\ge 0$, therefore $M$
is a cogenerator for $N^\perp$. 

It remains to be seen that $M$ is projective in $N^\perp$. Let $X$ be a module in $N^\perp$.
There is an exact sequence
$0 \to \Lambda \to \Lambda' \to N^t \to 0$. Since $\Ext^1(\Lambda,X) = 0$ and $\Ext^1(N,X) = 0$,
we see that $\Ext^1(\Lambda',X) = 0$. The exact sequence $0 \to \Lambda'' \to \Lambda' \to M \to 0$
yields an exact sequence $\Hom(\Lambda'',X) \to \Ext^1(M,X) \to \Ext^1(\Lambda',X)$.
We know already that the last term $\Ext^1(\Lambda',X) = 0.$ But we also have
$\Hom(\Lambda'',X) = 0$, since $\Lambda''$ is generated by $N$ and $\Hom(N,X) = 0.$
These two assertions imply that $\Ext^1_\Lambda(M,X) = 0.$ 
Since $N^\perp$ is an exact abelian subcategory of $\mo\Lambda$, the vanishing 
$\Ext^1_\Lambda(M,X) = 0$ for all $X$ in $N^\perp$ shows that $M$ is projective in $N^\perp$. 
\end{proof} 

\begin{theorem}\label{perp-perp}
 Let $(\mathcal U,\mathcal V)$ be a perpendicular pair. Then $\mathcal U$ is an exceptional
subcategory if and only if $\mathcal V$ is an exceptional subcategory.
\end{theorem}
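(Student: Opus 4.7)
The plan is to prove the forward implication directly from Lemmas \ref{perp-gen} and \ref{perp-gen2}, and then deduce the reverse implication by duality.

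For the forward direction, I would assume that $\mathcal V$ is exceptional. Then $\mathcal V$ admits a projective generator $N$, which we may take to be basic (that is, multiplicity-free). Because $\mathcal V$ is thick in $\mo\Lambda$, it is closed under extensions, so every element of $\Ext^1_\Lambda(N,N)$ is represented by an extension inside $\mathcal V$, giving $\Ext^1_\Lambda(N,N) = \Ext^1_{\mathcal V}(N,N) = 0$. Hence $N$ is a partial tilting $\Lambda$-module. Lemma \ref{perp-gen} then yields $\mathcal U = \mathcal V^\perp = N^\perp$, and Lemma \ref{perp-gen2} furnishes the projective generator $\gamma_N({}_\Lambda\Lambda)$ of $N^\perp = \mathcal U$. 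In particular $\mathcal U$ has a generator, so $\mathcal U$ is exceptional (and is the module category of a hereditary artin algebra, since $N^\perp$ inherits hereditariness as a thick subcategory of $\mo\Lambda$).

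For the reverse direction, I would invoke the standard $k$-duality $D\!:\mo\Lambda \to \mo\Lambda^{\op}$. This is an exact contravariant equivalence, it sends thick subcategories to thick subcategories, and since $\Lambda$ is hereditary it interchanges $\Hom$ with $\Ext^1$ in a controlled way; one checks directly that $(D\mathcal V,\, D\mathcal U)$ is a perpendicular pair in $\mo\Lambda^{\op}$ whenever $(\mathcal U,\mathcal V)$ is one in $\mo\Lambda$. Moreover, $D$ sends an exceptional subcategory of $\mo\Lambda$ (equivalent to $\mo\Lambda'$ for some hereditary artin algebra $\Lambda'$) to an exceptional subcategory of $\mo\Lambda^{\op}$ (equivalent to $\mo(\Lambda')^{\op}$). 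Thus if $\mathcal U$ is exceptional, so is $D\mathcal U$, and the forward direction applied to the perpendicular pair $(D\mathcal V, D\mathcal U)$ in $\mo\Lambda^{\op}$ yields that $D\mathcal V$ is exceptional; applying $D$ once more gives $\mathcal V$ exceptional.

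The main obstacle I anticipate is bookkeeping rather than substance: one must verify carefully that the duality $D$ really does convert the perpendicular pair condition into itself with the two subcategories swapped, and that the property ``has a generator'' (the chosen definition of exceptional) transports along $D$, which it does because an injective cogenerator of $\mathcal U$ is sent by $D$ to a projective generator of $D\mathcal U$. Once this checking is in place, both implications reduce to a single application of Lemmas \ref{perp-gen} and \ref{perp-gen2}.
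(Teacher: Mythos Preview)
Your proposal is correct and follows essentially the same route as the paper: the forward direction via Lemmas \ref{perp-gen} and \ref{perp-gen2} applied to a projective generator of $\mathcal V$, and the reverse direction by $k$-duality. Your explicit verification that $N$ is a partial tilting $\Lambda$-module (needed to invoke Lemma \ref{perp-gen2}) and your spelling-out of how duality swaps the perpendicular pair are details the paper leaves implicit, but the argument is the same.
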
 

\begin{proof} Assume that $\mathcal V$ is an exceptional subcategory. 
Let $N$ be a projective generator of $\mathcal V$.
As we have seen in Lemma \ref{perp-gen}
we have $\mathcal V^\perp = N^\perp.$ Lemma \ref{perp-gen2} asserts that
$\gamma_N({}_\Lambda \Lambda)$ is a projective generator for $N^\perp = \mathcal V^\perp.$
This shows that $\mathcal U = \mathcal V^\perp$ is also an exceptional subcategory.

The reverse implication follows by duality.
\end{proof} 

\subsubsection{\bf Exceptional sequences.}
Recall that a sequence
$(E_1,\dots,E_t)$ of indecomposable $\Lambda$-modules is said to be {\it exceptional}
provided $\Ext^1(E_i,E_j) = 0$ for $i \ge j$ and $\Hom(E_i,E_j) = 0$ for $i > j$
(in case $t = 2$, one calls it an {\it exceptional pair}). 

\begin{prop}  Let $(E_1,\dots,E_t)$ be an exceptional sequence in $\mo\Lambda$ with thick
closure $\mathcal E$. Then  $\mathcal E$ is an exceptional subcategory of rank $t$.
\end{prop}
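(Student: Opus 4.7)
Proof plan. The argument proceeds by induction on $t$.

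Base case $t=1$: the vanishing $\Ext^1(E_1,E_1)=0$ together with the indecomposability of $E_1$ over the hereditary algebra $\Lambda$ forces $\End(E_1)$ to be a division ring (a standard consequence of hereditariness), so $E_1$ is a brick without self-extensions, and the thick closure of $\{E_1\}$ is $\add E_1\simeq\mo\End(E_1)^{\op}$, an exceptional subcategory of rank $1$.

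Inductive step: let $\mathcal W$ be the thick closure of $(E_2,\dots,E_t)$, which by the inductive hypothesis is exceptional of rank $t-1$; fix a projective generator $N$ of $\mathcal W$. The exceptional-sequence axioms give $\Hom(E_j,E_1)=\Ext^1(E_j,E_1)=0$ for all $j\geq 2$, and using the six-term $\Hom$-$\Ext^1$ exact sequence together with the hereditary hypothesis one sees that the class $\{X\in\mo\Lambda\mid\Hom(X,E_1)=0=\Ext^1(X,E_1)\}$ is thick. Since it contains $E_2,\dots,E_t$ it contains $\mathcal W$, so $E_1\in\mathcal W^\perp=N^\perp$. By Lemma~\ref{perp-gen} and Lemma~\ref{perp-gen2}, $N^\perp$ is itself an exceptional subcategory of $\mo\Lambda$, with projective generator $\gamma_N(\Lambda)$, and inside the hereditary abelian category $N^\perp$ the object $E_1$ remains a brick without self-extensions.

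For the rank count, a $K_0$-argument is decisive. The Gram form $\langle E_i,E_j\rangle=\dim\Hom(E_i,E_j)-\dim\Ext^1(E_i,E_j)$ is upper triangular with respect to the ordering $1,\dots,t$ (by the exceptional-sequence vanishings) and has positive diagonal entries $\dim\End(E_i)>0$ (since each $E_i$ is a brick). Consequently $[E_1],\dots,[E_t]$ are linearly independent in $K_0(\Lambda)$, and a fortiori in $K_0(\mathcal E)$. Since $\mathcal E$ is obtained from $\{E_1,\dots,E_t\}$ by iterated kernels, cokernels and extensions, $K_0(\mathcal E)$ is generated as an abelian group by these very classes, so $K_0(\mathcal E)\cong\mathbb Z^t$ and $\mathcal E$ has exactly $t$ isomorphism classes of simple objects.

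To upgrade this to the assertion that $\mathcal E$ is exceptional, I amalgamate the projective generator $N$ of $\mathcal W$ with a construction over $E_1$ in $N^\perp$. Forming the universal extension
\[
0\longrightarrow E_1\longrightarrow\widetilde E_1\longrightarrow N^{r}\longrightarrow 0\qquad(r=\dim\Ext^1(N,E_1))
\]
produces $\widetilde E_1\in N^\perp$ with $\Ext^1(N,\widetilde E_1)=0$; combined with the vanishings $\Ext^1(E_1,N)=0=\Ext^1(E_1,E_1)$ supplied by $E_1\in\mathcal W^\perp$, one checks $\Ext^1(T,T)=0$ for $T:=N\oplus\widetilde E_1$, and the displayed sequence shows that $E_1$, hence all of $\mathcal E$, lies in the thick closure of $T$. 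The main obstacle will be the concluding step: promoting this self-orthogonal generating object $T$ to a genuine projective generator of the ambient abelian category $\mathcal E$ (so that $\mathcal E\simeq\mo\End(P)^{\op}$ for an appropriate summand $P$ of rank $t$). This is where the full force of Lemma~\ref{perp-gen2}, applied inductively along the perpendicular pair structure, is needed: the projective generator of $\mathcal E$ is assembled from $N$ and the $N^\perp$-projective cover of $E_1$, the compatibility of the two sides being controlled by the vanishing $\Hom(\mathcal W,E_1)=\Ext^1(\mathcal W,E_1)=0$ established in the first step.
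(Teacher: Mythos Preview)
Your approach is different from the paper's and contains a genuine error. The paper does not argue by induction on $t$ at all: it uses the braid group action on exceptional sequences of fixed length $t$ (discussed later, in Section~3.6) to replace $(E_1,\dots,E_t)$ by an exceptional sequence $(A_1,\dots,A_t)$ with the \emph{same} thick closure in which the $A_i$ are pairwise orthogonal. Such an orthogonal exceptional sequence is an exceptional antichain of cardinality $t$, and its thick closure is then visibly an exceptional subcategory of rank $t$.

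The error in your argument is the claim that $E_1\in\mathcal W^\perp$ supplies $\Ext^1(E_1,N)=0$. Membership in $\mathcal W^\perp$ only gives $\Hom(\mathcal W,E_1)=0=\Ext^1(\mathcal W,E_1)$; it says nothing about $\Ext^1(E_1,\mathcal W)$. A concrete counterexample: for $\Lambda=\Lambda_2$ take the exceptional sequence $(E_1,E_2)=(S(2),S(1))$. Then $\mathcal W=\add S(1)$, $N=S(1)$, and $\Ext^1(E_1,N)=\Ext^1(S(2),S(1))\neq 0$. In this case $r=\dim\Ext^1(N,E_1)=0$, so your $\widetilde E_1=S(2)$ and $T=S(1)\oplus S(2)$ has $\Ext^1(T,T)\neq 0$; the construction collapses.

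The fix is to reverse the direction of the universal extension: form $0\to N^s\to\widehat E_1\to E_1\to 0$ with $s$ controlling $\Ext^1(E_1,N)$. Then $\Ext^1(\widehat E_1,N)=0$ by construction, and $E_1\in N^\perp$ gives $\Ext^1(N,E_1)=0$, hence $\Ext^1(N,\widehat E_1)=0$ and $\Ext^1(\widehat E_1,\widehat E_1)=0$; now $T'=N\oplus\widehat E_1$ is a partial tilting module with thick closure $\mathcal E$. (In the example above, $\widehat E_1=P(2)$ and $T'=\Lambda_2$, as it should be.) To finish you still need that $\mathcal E$ has a projective generator; the cleanest way is to normalize $T'$ via Roiter's lemma (Section~2.2): $\nu(T')$ is a normal partial tilting module with the same thick closure, and by the paper's bijection $(1')\leftrightarrow(1'')$ this exhibits $\mathcal E=\mathcal N(\nu(T'))$ as an exceptional subcategory. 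Your $K_0$ argument for the rank is correct and then pins down the rank as $t$.
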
 
	
Let us stress that for arbitrary exceptional modules 
$E_1,E_2$, the thick closure of $E_1,E_2$ may have 
arbitrarily large rank, see the note N\,\ref{large-rank}.

For the proof of the proposition, we refer to \cite{[R4]}. Actually, the main arguments will be outlined
in Section 3.5, where we discuss the braid group action on the set of exceptional sequences
of fixed length. This braid group action replaces successively an exceptional pair $(E,E')$ by
an exceptional pair $(E',E'')$ or $(E'',E)$ with $E''$ being contained in the thick closure
of $E,E'$. Using the braid group action, one obtains from
the given exceptional sequence $(E_1,\dots,E_t)$ an exceptional sequence $(A_1,\dots,A_t)$
with the same thick closure, such that the modules $A_1,\dots,A_t$ are pairwise orthogonal
(thus, they form an exceptional antichain). Of course, the thick closure of an 
exceptional antichain of cardinality $t$ is an exceptional subcategory of rank $t$.
	
\begin{cor}\label{thick-closure}
Let $(E_1,\dots,E_t)$ be an exceptional sequence in $\mo\Lambda$ with thick
closure $\mathcal E$. Then $t\le n$. If $t = n$, then $\mathcal E = \mo\Lambda$.
\end{cor}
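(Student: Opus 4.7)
The plan is to derive both assertions from the preceding Proposition, which identifies $\mathcal{E}$ as an exceptional subcategory of rank $t$. I pass from the given exceptional sequence to the associated exceptional antichain $A_1,\dots,A_t$ consisting of the simple objects of $\mathcal{E}$, reordered so that $\Ext^1(A_i,A_j)=0$ whenever $i\le j$.

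For the bound $t\le n$, observe that pairwise orthogonality of the bricks $A_i$ together with the above ordering force the Gram matrix $(\langle A_i,A_j\rangle)_{i,j}$ of the Euler form on $K_0(\Lambda)$ to be upper triangular with strictly positive diagonal entries $\dim_k\End(A_i)$; hence it is nonsingular, so $\bdim A_1,\dots,\bdim A_t$ are linearly independent in $K_0(\Lambda)\cong\mathbb{Z}^n$ and $t\le n$.

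For the equality case I proceed by induction on $n$, the rank-one case being trivial. In the inductive step, Lemma~\ref{perp-gen2} provides $A_1^\perp$ with the projective generator $\gamma_{A_1}({}_\Lambda\Lambda)$, so $A_1^\perp\simeq\mo\Lambda'$ for a hereditary artin algebra $\Lambda'$. The triangular-Gram-matrix argument applied inside $A_1^\perp$, combined with the observation that every $X\in A_1^\perp$ satisfies $\langle A_1,X\rangle=0$ (and that this is a nontrivial constraint on $K_0(\Lambda)$, since $\langle A_1,A_1\rangle>0$), shows $\rank\Lambda'\le n-1$; on the other hand, $A_2,\dots,A_n$ all lie in $A_1^\perp$ (by pairwise orthogonality and the chosen ordering) and are linearly independent in $K_0(\Lambda)$, forcing $\rank\Lambda'=n-1$. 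The induction hypothesis applied to the antichain $(A_2,\dots,A_n)$ in $A_1^\perp\simeq\mo\Lambda'$ then shows its thick closure in $A_1^\perp$ is all of $A_1^\perp$, so $\mathcal{E}$ contains both $A_1$ and $A_1^\perp$.

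To conclude $\mathcal{E}=\mo\Lambda$ it remains to show that any thick subcategory $\mathcal{T}$ of $\mo\Lambda$ containing $A_1$ and $A_1^\perp$ is all of $\mo\Lambda$. Given $X\in\mo\Lambda$, I apply the universal-extension construction used in the proof of Lemma~\ref{perp-gen2}: the sequence $0\to X\to X'\to A_1^t\to 0$ with $\Ext^1(A_1,X')=0$ together with the $A_1$-trace $X''\subseteq X'$ yields $\gamma_{A_1}(X)=X'/X''\in A_1^\perp\subseteq\mathcal{T}$, and writing $X''$ as the image of the canonical map $A_1^s\to X'$, a long-exact-sequence computation shows its kernel $K$ also lies in $A_1^\perp\subseteq\mathcal{T}$; closure of $\mathcal{T}$ under cokernels then places $X''$ in $\mathcal{T}$, closure under extensions places $X'$ in $\mathcal{T}$, and closure under kernels places $X$ in $\mathcal{T}$. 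The principal technical hurdle is the clean identification $\rank A_1^\perp=n-1$; once that is in hand, the trace manipulation that finishes the proof is essentially formal.
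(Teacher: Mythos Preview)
Your proof is correct, but it takes a genuinely different route from the paper's. The paper argues via the projective generator $N$ of $\mathcal E$: since $N$ is a partial tilting $\Lambda$-module with $t$ indecomposable summands, the standard fact that a partial tilting module has at most $n$ summands gives $t\le n$; and when $t=n$, so that $N$ is tilting, the Bongartz-type sequence $0\to{}_\Lambda\Lambda\to N'\to N''\to 0$ with $N',N''\in\add N$ places ${}_\Lambda\Lambda$ (hence every module) in the thick closure of $N$. Your argument instead works with the simple objects $A_1,\dots,A_t$ of $\mathcal E$: the Euler-form Gram matrix gives linear independence of their dimension vectors (a minor slip: with your ordering the matrix is lower, not upper, triangular---but of course still invertible), and the equality case is handled by induction through $A_1^\perp$, using the $\gamma_{A_1}$ construction of Lemma~\ref{perp-gen2} to show that $A_1$ together with $A_1^\perp$ generate $\mo\Lambda$ as a thick subcategory.

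What each approach buys: the paper's is considerably shorter and leans on tilting theory that has already been developed in Chapter~2. Yours is more self-contained---the Gram-matrix argument for $t\le n$ avoids invoking the bound on partial tilting modules---and along the way you essentially prove the special case $\rank A_1^\perp=n-1$ of Theorem~\ref{perp-rank} before that theorem is stated, so your route partially anticipates what comes next rather than merely preparing for it. One small remark on your rank computation for $A_1^\perp$: the cleanest way to get $\rank\Lambda'\ge n-1$ is to run your Gram-matrix argument \emph{inside} $\mo\Lambda'$ on the antichain $A_2,\dots,A_n$, rather than appealing to their independence in $K_0(\Lambda)$; both work, but the former avoids having to think about the map $K_0(\Lambda')\to K_0(\Lambda)$.
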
 
	
\begin{proof}\label{thick-closure2} Let $N$ be a projective generator of $\mathcal E$. Then $N$ is a partial tilting module of
rank $t$. This shows that $t\le n$. Now assume that $t = n$. Then $N$ is a tilting module.
But the thick closure of a tilting module is $\mo \Lambda$. Namely, there is an exact
sequence of the form $0\to {}_\Lambda\Lambda \to N' \to N'' \to 0$ with $N',N'' \in \add(T)$. This shows that
${}_\Lambda\Lambda$ belongs to the thick closure of $N$. Of course, any module belongs to the thick closure
of ${}_\Lambda\Lambda$.
\end{proof} 

\begin{cor} Let $\mathcal C' \subseteq \mathcal C$ be exceptional subcategories of
$\mo\Lambda$. Let $r$ be the rank of $\mathcal C$ and $r'$ the rank of $\mathcal C'$. Then $r'\le r$
and if $r' = r$, then $\mathcal C' = \mathcal C.$
\end{cor}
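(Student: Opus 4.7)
The plan is to reduce the statement to the previously established Corollary \ref{thick-closure} by working inside the ambient exceptional category $\mathcal C$ rather than inside $\mo\Lambda$ itself. Since $\mathcal C$ is exceptional, there is a hereditary artin algebra $\Lambda'$ together with an equivalence $\mathcal C \simeq \mo\Lambda'$, and the rank of $\Lambda'$ is $r$. Under this equivalence, $\mathcal C'$ corresponds to a thick subcategory of $\mo\Lambda'$ which is again exceptional (equivalent to $\mo\Lambda''$ for some hereditary artin algebra $\Lambda''$ of rank $r'$), so it is enough to prove the statement in the case $\mathcal C = \mo\Lambda'$.

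Next, since $\mathcal C'$ is an exceptional subcategory of $\mo\Lambda'$, it admits a projective generator $N$ (namely a minimal projective generator of the abelian category $\mathcal C'$). Viewing $N$ as a $\Lambda'$-module, the arguments of Section 2.2 show that $N$ is a normal partial tilting $\Lambda'$-module of rank $r'$, and that $\mathcal C'$ coincides with the thick closure of $N$ in $\mo\Lambda'$. Since a partial tilting module over a hereditary artin algebra of rank $r$ has at most $r$ pairwise non-isomorphic indecomposable direct summands, we obtain $r' \le r$.

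Finally, suppose $r' = r$. Then $N$ is a partial tilting $\Lambda'$-module of rank equal to the number of simple $\Lambda'$-modules, so $N$ is a tilting $\Lambda'$-module. As in the proof of Corollary \ref{thick-closure2}, there exists a short exact sequence
\[
0 \to {}_{\Lambda'}\Lambda' \to N'' \to N''' \to 0
\]
with $N'', N''' \in \add(N)$, so ${}_{\Lambda'}\Lambda'$ belongs to the thick closure of $N$ in $\mo\Lambda'$, which is $\mathcal C'$. Since every $\Lambda'$-module lies in the thick closure of ${}_{\Lambda'}\Lambda'$, we conclude $\mathcal C' = \mo\Lambda' = \mathcal C$.

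The main technical point to check carefully is that passing to the equivalence $\mathcal C \simeq \mo\Lambda'$ is legitimate, i.e.\ that thick and exceptional subcategories, as well as projective generators and tilting objects, transport under categorical equivalences of module categories; this is routine but needs to be invoked explicitly. Once that is granted, the argument is simply Corollary \ref{thick-closure} applied in the new ambient hereditary category.
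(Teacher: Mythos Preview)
Your proof is correct and follows exactly the same approach as the paper: identify $\mathcal C$ with $\mo\Lambda'$ for a hereditary artin algebra $\Lambda'$ of rank $r$ via the defining equivalence, and then invoke Corollary~\ref{thick-closure} inside $\mo\Lambda'$. The paper states this in two sentences, while you have (correctly) unpacked the details of why $\mathcal C'$ has a projective generator that is a partial tilting module and why the tilting case forces $\mathcal C' = \mo\Lambda'$, but the underlying argument is identical.
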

	
\begin{proof} 
Since $\mathcal C$ is an exceptional subcategory, it is equivalent to the module category
of a hereditary artin algebra of rank $r$. The assertions follow directly from Corollary
\ref{thick-closure}.
\end{proof} 

\begin{theorem}\label{perp-rank}
 If $\mathcal V$ is an exceptional subcategory of $\mo\Lambda$ and has rank $t$, then
$\mathcal V^\perp$ has rank $n-t$.
\end{theorem}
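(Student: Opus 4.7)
My plan is to exhibit a tilting $\Lambda$-module that exposes both $\mathcal V$ and $\mathcal V^\perp$ directly. Let $N$ be a minimal projective generator of $\mathcal V$; by Theorem \ref{perp-perp} and the preceding discussion, $\mathcal V$ is an exceptional subcategory of rank $t$ and $N$ is a normal partial tilting module with $\rank N = t$. By Lemma \ref{perp-gen}, $\mathcal V^\perp = N^\perp$, and by Lemma \ref{perp-gen2}, $M := \gamma_N({}_\Lambda\Lambda)$ is a projective generator of $N^\perp$; let $M_0$ be the multiplicity-free module obtained by keeping one copy of each isomorphism class of indecomposable summand of $M$. Then $\rank \mathcal V^\perp = \rank M_0$, so it suffices to prove $\rank M_0 = n - t$.

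I will show that $T := N \oplus M_0$ is a tilting module of $\Lambda$. It is multiplicity-free: each of $N$ and $M_0$ is multiplicity-free, and $M \in N^\perp$ forces $\Hom(M,N) = 0$, so no indecomposable summand is shared. For $\Ext^1_\Lambda(T,T) = 0$, I check four vanishings. First, $\Ext^1(N,N) = 0$ since $N$ is partial tilting, and $\Hom(M,N) = 0 = \Ext^1(M,N)$ since $M \in N^\perp$. Next, $\Ext^1(N, M) = 0$: applying $\Hom(N,-)$ to the defining sequence $0 \to \Lambda'' \to \Lambda' \to M \to 0$ yields a long exact sequence in which $\Ext^1(N,\Lambda') = 0$ (built into the universal extension $0\to\Lambda\to\Lambda'\to N^s\to 0$) on one side and $\Ext^2(N,\Lambda'') = 0$ (heredity of $\Lambda$) on the other, squeezing $\Ext^1(N,M)$ to zero. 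Finally, $\Ext^1_\Lambda(M,M) = 0$: the thick subcategory $N^\perp$ is an exact abelian subcategory of $\mo\Lambda$, so $\Ext^1_\Lambda(M,M) = \Ext^1_{N^\perp}(M,M)$, and the latter vanishes because $M$ is projective in $N^\perp$.

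To conclude $\rank T = n$ (which then gives $\rank M_0 = n - t$), I will prove the thick closure $\mathcal E(T) = \mo\Lambda$ and then invoke the general principle that a partial tilting module $T'$ over a hereditary artin algebra is a projective generator of its thick closure, so that $\Hom(T',-)$ induces an equivalence $\mathcal E(T') \simeq \mo \End(T')^{\op}$; in particular $\rank \mathcal E(T') = \rank T'$. For $\mathcal E(T) = \mo\Lambda$: in the sequences above, the submodule $\Lambda''$ is the $N$-trace of $\Lambda'$, hence a quotient of some $N^r$, and therefore lies in $\mathcal V = \mathcal E(N) \subseteq \mathcal E(T)$; this places $\Lambda'$ and then $\Lambda$ in $\mathcal E(T)$, and since every $\Lambda$-module is a quotient of a free module while $\mathcal E(T)$ is closed under cokernels, we get $\mathcal E(T) = \mo\Lambda$. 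The principal technical burden is the general rank fact just invoked; it is proved by showing inductively along $\add T'$-filtrations that $\Ext^1_\Lambda(T',-)$ vanishes on all of $\mathcal E(T')$ (using $\Ext^2 = 0$), then lifting surjections to conclude every object of $\mathcal E(T')$ is a quotient of some $(T')^m$, so Morita--Gabriel applies.
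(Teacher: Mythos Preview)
Your proof has a genuine gap: the claim ``$\Hom(M,N) = 0 = \Ext^1(M,N)$ since $M \in N^\perp$'' confuses $N^\perp$ with ${}^\perp N$. The condition $M \in N^\perp$ means $\Hom(N,M) = 0 = \Ext^1(N,M)$, not the reverse. (Your subsequent paragraph arguing $\Ext^1(N,M) = 0$ is then redundant --- it already follows from $M \in N^\perp$.) The vanishing you actually need, $\Ext^1(M,N) = 0$, is \emph{false} in general. Take $\Lambda$ the path algebra of $1 \leftarrow 2$ and $N = S(1) = P(1)$. Since $N$ is projective, $\Ext^1(N,\Lambda) = 0$ and $\Lambda' = \Lambda$; the $N$-trace $\Lambda''$ equals $S(1) \oplus \soc P(2) \cong S(1)^2$, so $M = \Lambda/\Lambda'' \cong S(2)$. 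But $\Ext^1(M,N) = \Ext^1(S(2),S(1)) \cong k \neq 0$, so $T = N \oplus M_0 = S(1) \oplus S(2)$ is \emph{not} a partial tilting module. Thus the strategy of exhibiting $N \oplus M_0$ as a tilting module cannot work as stated.

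The paper's proof proceeds quite differently: it first bounds $\rank\mathcal V^\perp \le n - t$ by concatenating complete exceptional sequences of $\mathcal V^\perp$ and of $\mathcal V$ into a single exceptional sequence in $\mo\Lambda$, then establishes the reverse inequality by induction on $n$. The key case $t = 1$ splits according to whether the exceptional module $N$ is projective (where $N^\perp = \mo\Lambda/\langle e\rangle$ has rank $n-1$ explicitly) or not (where the Bongartz complement $\beta(N)$ lies in $N^\perp$ and already has rank $n-1$). For $t \ge 2$ one peels off the last module $N_t$ of a complete exceptional sequence of $\mathcal V$ and applies the inductive hypothesis inside $N_t^\perp$. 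If you want to repair a direct, non-inductive approach, note that the Bongartz tilting module $N \oplus \beta(N)$ does have rank $n$, giving $\rank\beta(N) = n - t$; but relating $\rank\beta(N)$ to $\rank N^\perp$ still needs work, since $\beta(N)$ need not lie in $N^\perp$ (in the same example $\beta(S(1)) = P(2) \notin S(1)^\perp$).
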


\begin{proof} As we have seen in Theorem \ref{perp-perp},
the category $\mathcal U = \mathcal V^\perp$ is again
an exceptional subcategory, thus it has finite rank, say rank $s$. 
Take an exceptional sequence  $(M_1,\dots,M_s)$  in $\mathcal C^\perp$ and an
exceptional sequence  $(N_1,\dots,N_t)$  in $\mathcal C.$ 
Then $(M_1,\dots,M_s,N_1,\dots,N_t)$ is an exceptional sequence in $\mo\Lambda$, thus
$s+t \le n$ by Corollary \ref{thick-closure}.
As a consequence, we only have to show that $s \ge n-t$.

We use induction on $n$. 
If $n = 1$, then the only exceptional subcategories are $0$ and $\mo\Lambda$,
and the assertion is clear.

Thus, let us assume that $n\ge 2$. We consider first the case $t = 1,$ thus $\mathcal C = \add N$
for some exceptional module $N$. There are two different cases: $N$ may be projective or not.
If $N = \Lambda e$ is projective, with $e$ a primitive idempotent of $\Lambda$, then 
$N^\perp = \mo \Lambda/\langle e\rangle.$ Of course, $\mo \Lambda/\langle e\rangle$
has rank $n-1$, as required.

Second, assume that $N$ is not projective. Let $\beta(N)$ be the Bongartz complement for $N$,
see N\,\ref{Bongartz}. Then $\beta(N)$ belongs to $N^\perp$ and has rank $n-1$. 
This shows that $N^\perp$ has rank at least $n-1$. 
	\medskip 

Now, let $t \ge 2,$ and take an exceptional sequence $(N_1,\dots,N_t)$  in $\mathcal C.$ 
We look at the exceptional subcategory $N_t^\perp$. As we have seen, $N_t^\perp$ has rank $n-1$. 
The modules 
$N_1,\dots,N_{t-1}$ belong to $N_t^\perp$, also the subcategory $\mathcal C^\perp$ belongs to $N_t^\perp$.
We denote by $\mathcal C'$
the thick closure of $N_1,\dots,N_{t-1}$. The subcategory $\mathcal C'$ has rank $t-1$
and its perpendicular subcategory inside $N_t^\perp$ is just $\mathcal C^\perp.$
By induction, we know that the rank of $\mathcal C^\perp$ is $(n-1)-(t-1) = n-t$.
\end{proof}

\begin{cor} Let $(E_1,\dots,E_t)$ be an exceptional sequence in $\mo\Lambda$.
Then there is an exceptional sequence $(M_1,\dots,M_s,E_1,\dots,E_t)$ with $s = n-t.$
\end{cor}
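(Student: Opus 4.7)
The plan is to apply Theorem \ref{perp-rank} to the thick closure of the given sequence and to fill up the perpendicular subcategory with any exceptional sequence there. Let $\mathcal E$ denote the thick closure of $(E_1,\dots,E_t)$. By the proposition preceding Corollary \ref{thick-closure}, $\mathcal E$ is an exceptional subcategory of rank $t$. Theorem \ref{perp-rank} (together with Theorem \ref{perp-perp}) then guarantees that $\mathcal E^\perp$ is again an exceptional subcategory, of rank exactly $s = n-t$.

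Next I would observe that any exceptional subcategory of rank $s$ contains an exceptional sequence of length $s$: since $\mathcal E^\perp$ is categorically equivalent to $\mo\Lambda'$ for some hereditary artin algebra $\Lambda'$ with exactly $s$ simple modules, we can order the indecomposable projective $\Lambda'$-modules as $P(1),\dots,P(s)$ compatibly with the valued quiver of $\Lambda'$ (meaning that any arrow $[P(j)]\to [P(i)]$ in the quiver forces $j<i$). Standardly, such a projective sequence is exceptional, and transporting it across the equivalence produces an exceptional sequence $(M_1,\dots,M_s)$ in $\mathcal E^\perp \subseteq \mo\Lambda$.

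It remains to check that the concatenation $(M_1,\dots,M_s,E_1,\dots,E_t)$ is itself exceptional in $\mo\Lambda$. The vanishing conditions internal to the $M$-block and internal to the $E$-block already hold. For the cross conditions, we must verify $\Hom(E_j,M_i)=0$ and $\Ext^1(E_j,M_i)=0$ for all $1\le i\le s$ and $1\le j\le t$; but this is immediate from the definition of the perpendicular category, since $E_j\in\mathcal E$ and $M_i\in\mathcal E^\perp$.

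There is really no serious obstacle: the machinery of Theorem \ref{perp-rank} does all the work. The only point that deserves care — and that I would state explicitly rather than gloss over — is that an exceptional subcategory of rank $s$ actually admits an exceptional sequence of length $s$. This is a clean consequence of the categorical equivalence with $\mo\Lambda'$ together with the standard fact (cf.\ the proof of Corollary \ref{thick-closure2}) that the indecomposable projectives of a hereditary artin algebra, suitably ordered, form a complete exceptional sequence.
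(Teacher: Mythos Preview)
Your proof is correct and follows essentially the same approach as the paper: form the thick closure $\mathcal E$, pass to $\mathcal E^\perp$, and pick a complete exceptional sequence there. The paper's proof is a terse two-line version of exactly this argument; you have simply made explicit the steps it leaves implicit (why $\mathcal E^\perp$ has rank $n-t$, why it contains an exceptional sequence of that length, and why the concatenation is exceptional).
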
 

\begin{proof} Let $\mathcal E$ be the thick closure of $E_1,\dots,E_t$ and 
take for $(M_1,\dots,M_s)$ any exceptional sequence in $\mathcal E^\perp$ of cardinality $s$.
Such a sequence exists, since $\mathcal E$ is exceptional and has rank $s$.
\end{proof}

An exceptional sequence $(E_1,\dots,E_t)$ will be said to be {\it complete} provided
$t = \rank(\Lambda)$. The corollary asserts that any exceptional sequence can be completed.
	\medskip 

\subsection{Torsion pairs and perpendicular pairs}
We assume again that $\Lambda$ is a hereditary artin algebra of rank $n$.
We are going to look at torsion pairs $(\mathcal F, \mathcal G)$ such that $\mathcal G$ has a cover
and at perpendicular pairs $(\mathcal U, \mathcal V)$ such that $\mathcal V$ is an exceptional
subcategory. We will see that there is a one-to-one correspondence between these
torsion pairs and these perpendicular pairs.
	\medskip 

\subsubsection{}
We recall the following. Let $(\mathcal U, \mathcal V)$ be a perpendicular pair. According to 
Theorem \ref{perp-perp},
  $\mathcal V$ is an exceptional subcategory if and only if $\mathcal U$ is an
exceptional subcategory. If this holds, then Theorem \ref{perp-rank}
asserts that $\rank \mathcal U + \rank\mathcal V = n$. 
Let us now deal with torsion pairs. 
	\medskip

\begin{lemma}\label{cogen}
Let $N$ be a normal partial tilting module. 
Then the modules $F$ with $\Hom(N,F) = 0$ are the modules cogenerated by $N^\perp.$
\end{lemma}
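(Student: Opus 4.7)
Proof plan. For the direction ``cogenerated by $N^\perp$ implies $\Hom(N,F)=0$'': if $F$ embeds in a finite direct sum $U^n$ with $U\in N^\perp$, left-exactness of $\Hom(N,-)$ together with $\Hom(N,U)=0$ give $\Hom(N,F)\hookrightarrow \Hom(N,U^n)=0$. This direction is immediate.

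For the converse, I apply the functor $\gamma_N$ constructed in the proof of Lemma \ref{perp-gen2}. Explicitly, given $F$ with $\Hom(N,F)=0$, form the universal extension $0\to F\to F'\to N^t\to 0$ with $\Ext^1(N,F')=0$, let $F''$ be the $N$-trace of $F'$, and set $\gamma_N(F):=F'/F''$. By that lemma, $\gamma_N(F)\in N^\perp$. The natural composite $\eta_F\!:F\hookrightarrow F'\twoheadrightarrow F'/F''$ has kernel $F\cap F''$, so the entire problem reduces to verifying $F\cap F''=0$; once this is shown, $\eta_F$ is the required embedding into an object of $N^\perp$.

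A first easy observation is that any nonzero $f\!:N\to F'$ projects nontrivially onto $N^t$: otherwise $f$ factors through $F$, contradicting $\Hom(N,F)=0$. Viewing $\gamma_N$ as the left adjoint to the inclusion $N^\perp\hookrightarrow \mo\Lambda$, the universal property identifies $K:=F\cap F''$ as the largest submodule of $F$ annihilated by every morphism $F\to U$ with $U\in N^\perp$; thus $\Hom(K,U)=0$ for all $U\in N^\perp$, and since $K\subseteq F$ we also have $\Hom(N,K)=0$. Applying $\gamma_N$ to $K$ itself then forces the unit $\eta_K$ to vanish, placing $K$ inside the $N$-trace of its own universal extension $K'$.

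The main obstacle is deducing $K=0$ from the dual orthogonality $\Hom(N,K)=\Hom(K,N^\perp)=0$. The simple case is transparent: for a simple module $S$ with $\Hom(N,S)=0$, containment of $S$ in its own $N$-trace would force $S$ to be a nonzero quotient of $N$, contradicting $\Hom(N,S)=0$; equivalently $\gamma_N(S)\in N^\perp$ contains $S$ as a submodule. To propagate this to arbitrary $K$, I would examine the socle filtration: each simple $S\subseteq K\subseteq F$ satisfies $\Hom(N,S)=0$, so by the simple case one obtains an embedding $S\hookrightarrow \gamma_N(S)\in N^\perp$, and extending these maps to nonzero morphisms out of $K$, using that $\gamma_N(S)$ is injective within the abelian subcategory $N^\perp$ together with the normality of $N$ (which ensures that $N$ is the minimal projective generator of $\mathcal N(N)$, so that the perpendicular calculus behaves well), will yield a nonzero morphism $K\to N^\perp$, contradicting $\Hom(K,N^\perp)=0$. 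This dévissage from the simple case to arbitrary $K$ is the technical heart of the argument.
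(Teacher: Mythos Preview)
Your strategy---use the functor $\gamma_N$ of Lemma~\ref{perp-gen2} and show that the unit $\eta_F\colon F\to\gamma_N(F)$ is injective when $\Hom(N,F)=0$---is sound and genuinely different from the paper's proof. The paper never touches $\gamma_N$; instead it works with the antichain $\Delta(N)=\{\Delta(1),\dots,\Delta(t)\}$ of simple objects of $\mathcal N(N)$, ordered so that $\Ext^1(\Delta(j),\Delta(i))=0$ for $j\le i$, and builds a tower $F=F_0\subset F_1\subset\cdots\subset F_t$ by successive universal extensions by the $\Delta(i)$. The exceptional ordering is what makes the inductive verification of $\Hom(\Delta(j),F_i)=0$ and $\Ext^1(\Delta(j),F_i)=0$ go through. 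Your route is more conceptual but requires a correct argument for $K=0$, and that is where the proposal breaks down.

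There are two real gaps. First, the implication ``$K$ is killed by every map $F\to U$ with $U\in N^\perp$, hence $\Hom(K,U)=0$'' is not valid: a map $K\to U$ need not extend to $F$, so you have only shown that the restriction map $\Hom(F,U)\to\Hom(K,U)$ vanishes, not that the target is zero. Second, the simple case argument is incorrect: if $S$ is simple and $S\subseteq S''$ (the $N$-trace of $S'$), you cannot conclude that $S$ is a quotient of $N$. A simple submodule of an $N$-generated module is merely a \emph{subquotient} of some $N^m$, which does not contradict $\Hom(N,S)=0$. Consequently the d\'evissage built on this case has no foundation.

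Here is a direct argument for $K=0$ that completes your approach. Choose a surjection $q\colon N^s\twoheadrightarrow F''$ and let $\bar q\colon N^s\to N^t$ be the composite with $F''\hookrightarrow F'\twoheadrightarrow N^t$. Since $q$ is surjective and $K=\ker(F''\to N^t)$, we have $q(\ker\bar q)=K$. Now $\bar q$ is a morphism in the thick subcategory $\mathcal N(N)$, which is equivalent to $\mo\Lambda'$ for some hereditary $\Lambda'$, and $N$ is a projective generator there; hence $\ker\bar q\subseteq N^s$ is projective in $\mathcal N(N)$, i.e.\ $\ker\bar q\in\add N$. Therefore $K$ is generated by $N$, and since $K\subseteq F$ with $\Hom(N,F)=0$, we get $K=0$.
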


\begin{proof}
First, assume that $F$ is cogenerated by $N^\perp,$ thus, there is an embedding
$u\!:F \to X$ with $X\in N^\perp$. If $f\!:N \to F$, then $uf\!:N \to X$ is zero, thus also $f = 0.$

Conversely, let $\Delta(N) = \{\Delta(1),\dots,\Delta(t)\}$ 
be the antichain corresponding to $N$, thus this is an
antichain consisting of modules which are cokernels of maps in $\add N$ such that $N$ belongs
to $\mathcal E(\Delta(N))$. As we know, $\Delta(N)$ is an exceptional antichain, thus we can assume that
$\Ext^1(\Delta(j),\Delta(i)) = 0$ for $j\le i$ (note that often one deals with the opposite
ordering, but here we deviate from the usual ordering).
Since any $\Delta(i)$ is generated by $N$,
any module $F'$ with $\Hom(N,F') = 0$ satisfies $\Hom(\Delta(i),F') = 0$
for all $i.$ 

Let $F_0 = F$. 
Using induction, we construct for $1\le i \le t$ exact sequences
$$
\hbox{\beginpicture
  \setcoordinatesystem units <1cm,.7cm>
\put{$0 \to F_{i-1} \to F_i \overset{p_i}\longrightarrow \Delta(i)^{m_i} \to 0$} [l] at 0 0
\put{$(*)$} [r] at -3 0
\put{} at 9 0  
\endpicture}
$$
such that $\Hom(N,F_i) = 0$ and $\Ext^1(\Delta(j),F_i) = 0$ for $1\le j \le i$
and such that $m_i$ is chosen minimal. 
Note that the conditions $\Hom(N,F_i) = 0$ and $\Ext^1(\Delta(j),F_i) = 0$ for $1\le j \le i$,
are satisfied for $i = 0.$ Assume that the module $F_{i-1}$ has
been constructed already. We take for $(*)$ the universal extension of $F_{i-1}$ from
above, using copies of $\Delta(i)$. Thus, by construction, $\Ext^1(\Delta(i),F_i) = 0.$
Also, $\Hom(\Delta(i),F_i) = 0.$ Namely, a non-zero homomorphism $f\!:\Delta(i) \to F_i$
cannot map into $F_{i-1}$, since $\Hom(\Delta(i),F_{i-1}) = 0$, therefore $p_if \neq 0.$
However, $\Delta(i)$ is a brick, thus $p_if$ has to be a split monomorphism. But this
contradicts the minimality of $m_i$. 

We also have $\Hom(\Delta(j),F_i) = 0$ for all $j\neq i$, since $\Hom(\Delta(j),\Delta(i)) = 0$
and, by induction $\Hom(\Delta(j),F_{i-1}) = 0$. 
Finally, we assert that $\Ext^1(\Delta(j),F_i) = 0$ for all $j <i$. This follows directly from
the induction hypothesis that $\Ext^1(\Delta(j),F_{i-1}) = 0$ for all $j < i$
and the fact that $\Ext^1(\Delta(j),\Delta(i)) = 0$ for $j < i.$ This completes the inductive
construction.

We have obtained in this way an embedding $F \to F_t$ and $F_t$ satisfies both
$\Hom(N,F_i) = 0$ and $\Ext^1(\Delta(j),F_t) = 0$ for $1\le j \le t$, therefore also
$\Ext^1(N,F_t) = 0$. This shows that $F_t$ belongs to $N^\perp.$ 
\end{proof} 

As a consequence, there is the following result is due to Smal\o{} \cite{[Sm]}.
	
\begin{theorem} {\bf (Smal\o{})} 
Let $(\mathcal F,\mathcal G)$ be a torsion pair. Then $\mathcal G$ has
a cover if and only if $\mathcal F$ has a cocover. If $N$ is a minimal cover of $\mathcal G$ and
$M$ is a minimal cover of $\mathcal F$, then $\rank M + \rank N = n$.
\end{theorem}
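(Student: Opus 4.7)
The plan is to translate the condition that $\mathcal{G}$ has a cover into the language of normal partial tilting modules via the Ingalls--Thomas bijections, and then apply the perpendicular-pair results to construct a cocover of $\mathcal{F}$ of complementary rank.

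First, by Theorem~\ref{IT-bijections} the existence of a cover for $\mathcal{G}$ is equivalent to the existence of a (unique) support-tilting module $T$ with $\mathcal{G} = \mathcal{G}(T)$, and a minimal cover is then the normal partial tilting summand $N := \nu(T)$. Since $\nu'(T)$ is generated by $N$, we have the equalities $\mathcal{F} = \mathcal{F}(T) = \{X : \Hom(T,X) = 0\} = \{X : \Hom(N,X) = 0\}$, and by Lemma~\ref{cogen} this last class is precisely the class of modules cogenerated by $N^{\perp}$.

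Next I invoke the perpendicular-pair machinery. By Lemma~\ref{perp-gen} we have $N^{\perp} = \mathcal{N}(T)^{\perp}$, where $\mathcal{N}(T)$ is the exceptional subcategory generated by $N$. Theorem~\ref{perp-perp} then ensures that $N^{\perp}$ is itself an exceptional subcategory, and Theorem~\ref{perp-rank} gives $\rank N^{\perp} = n - \rank N$. Taking $M$ to be a minimal injective cogenerator of the exceptional (hence module-categorical) subcategory $N^{\perp}$, the $\Lambda$-module $M$ is a conormal partial tilting module of rank $n - \rank N$. Every module of $\mathcal{F}$ is cogenerated by $N^{\perp}$, and every module of $N^{\perp}$ is cogenerated by $M$, so $M$ is a cocover of $\mathcal{F}$ with $\rank M + \rank N = n$. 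The reverse implication---that a cocover of $\mathcal{F}$ forces a cover of $\mathcal{G}$---follows by applying the same argument in $\mo \Lambda^{\op}$.

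The main technical obstacle is justifying that the $M$ produced is genuinely a \emph{minimal} cocover. Any cocover $M'$ of $\mathcal{F}$ must in particular cogenerate every module of $N^{\perp} \subseteq \mathcal{F}$, and hence cogenerate all simple objects of the abelian category $N^{\perp}$; their injective envelopes inside $N^{\perp}$ must therefore appear as summands of the conormalization $\nu^{*}(M')$, which forces $\rank \nu^{*}(M') \geq \rank N^{\perp} = n - \rank N$. Coupled with the dual Ingalls--Thomas correspondence (bijecting minimal cocovers with conormal partial tilting modules), this identifies $M$ as the unique minimal cocover up to isomorphism and completes the rank equality.
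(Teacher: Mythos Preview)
Your proof is correct and follows essentially the same route as the paper: identify a minimal cover $N$ as a normal partial tilting module, use Lemma~\ref{cogen} to describe $\mathcal{F}$ as the modules cogenerated by $N^\perp$, and take $M$ to be the minimal cogenerator of the exceptional subcategory $N^\perp$, whose rank is $n - \rank N$ by Theorem~\ref{perp-rank}.

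Your final paragraph, however, is both unnecessary and not fully justified. You have already observed that $M$ is a conormal partial tilting module; since $N^\perp$ is a full subcategory, a summand of $M$ cogenerated by the rest in $\mo\Lambda$ is equally cogenerated by the rest inside $N^\perp$, so conormality in $N^\perp$ gives conormality in $\mo\Lambda$. By the dual of Roiter's normalization lemma, a conormal cocover \emph{is} the minimal cocover, so you are already done. The argument you offer instead---that the indecomposable injectives of $N^\perp$ must appear as summands of the conormalization of an arbitrary cocover $M'$---does not follow from the mere fact that $M'$ cogenerates the simples of $N^\perp$, and should be dropped.
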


\begin{proof}
Let $N$ be a minimal generator of $\mathcal G$ and $\mathcal N$ the thick closure of $N$.
Then $N$ is a normal
partial tilting module. A module $F$ belongs to $\mathcal F$ if and only if 
$\Hom(N,F) = 0$, thus, according to Lemma \ref{cogen}
if and only if $F$ is cogenerated by $N^\perp.$
Since $\mathcal M = N^\perp$ is an exceptional subcategory, it has a minimal cogenerator, say $M$.
Then $M \in N^\perp \subseteq \mathcal F$ and any module $F\in \mathcal F$ is cogenerated by $M$.
This shows that $M$ is a minimal cocover of $\mathcal F$. 

We have $\rank N = \rank \mathcal N$ and 
$\rank M = \rank N^\perp$. According to Theorem \ref{perp-rank}, we have
$$
   \rank N + \rank M = \rank \mathcal N + \rank \mathcal M = n.
$$
\end{proof}

\subsubsection{}
Let us summarize the previous considerations. 

\begin{theorem}\label{tors-perp}
Let $(\mathcal F, \mathcal G)$ be a torsion pair. 
Assume that $N$ is a minimal cover of $\mathcal G$ and $M$ a minimal cocover of $\mathcal F$. 
If $\mathcal N$ is the thick closure of $N$ and $\mathcal M$ the thick closure of $M$, then 
$(\mathcal M,\mathcal N)$ is a perpendicular pair and the subcategories $\mathcal N$ and $\mathcal M$ are
exceptional.
	\smallskip 

Conversely, assume that $(\mathcal U,\mathcal V)$ is a perpendicular pair such that $\mathcal V$
is exceptional. Let $\mathcal G$ be the modules
generated by modules in $\mathcal V$, let
$\mathcal F$ be the modules cogenerated by modules in $\mathcal U$. Then $(\mathcal F,\mathcal G)$ is a torsion pair
such that $\mathcal G$ has a cover and $\mathcal F$ a cocover. 
	\smallskip 

We obtain in this way a bijection between the torsion pairs
$(\mathcal F,\mathcal G)$ such that $\mathcal G$ has a cover, and the perpendicular pairs 
$(\mathcal U,\mathcal V)$ such that $\mathcal V$ is exceptional. 
\end{theorem}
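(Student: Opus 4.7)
The plan is to exploit the results already established, particularly Smal{\o}'s theorem (just proved) together with Theorems \ref{perp-perp} and \ref{perp-rank} and Lemma \ref{cogen}, to verify both directions and to show they are mutually inverse. I will write $\mathcal V^\perp$ and ${}^\perp\mathcal U$ as before.

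First I would handle the direction (torsion pair to perpendicular pair). Start with a torsion pair $(\mathcal F,\mathcal G)$ such that $\mathcal G$ has a cover. By the argument in Smal{\o}'s theorem, the minimal cover $N$ is a normal partial tilting module, so its thick closure $\mathcal N$ is an exceptional subcategory, and $\mathcal F = \{F : \Hom(N,F)=0\}$ coincides with the class of modules cogenerated by $N^\perp$ via Lemma \ref{cogen}. Applying Lemma \ref{perp-gen} gives $N^\perp = \mathcal N^\perp$, so the minimal cocover $M$ of $\mathcal F$ is a minimal cogenerator of $\mathcal N^\perp$, and hence its thick closure $\mathcal M$ equals $\mathcal N^\perp$. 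By Theorem \ref{perp-perp}, $\mathcal M$ is then also exceptional. It remains to show $\mathcal N = {}^\perp\mathcal M$. Clearly $\mathcal N \subseteq {}^\perp(\mathcal N^\perp) = {}^\perp\mathcal M$. Both sides are exceptional subcategories, and by Theorem \ref{perp-rank} applied twice we get $\rank({}^\perp\mathcal M) = n - \rank\mathcal M = n - (n-\rank\mathcal N) = \rank\mathcal N$; so the inclusion is an equality by the corollary stating that exceptional subcategories of equal rank, one contained in the other, coincide.

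Next I would treat the reverse direction (perpendicular pair to torsion pair). Let $(\mathcal U,\mathcal V)$ be a perpendicular pair with $\mathcal V$ exceptional, and let $N$ be a projective generator of $\mathcal V$, which is a normal partial tilting module. The class $\mathcal G$ of modules generated by $\mathcal V$ is exactly the class of modules generated by $N$, and by the standard argument (using $\Ext^1(N,N)=0$ together with $\Lambda$ being hereditary, so that any quotient of $N^a$ has vanishing $\Ext^1(N,-)$) this is closed under factor modules and extensions, hence is a torsion class. The torsion-free class of this torsion pair is $\{F : \Hom(N,F) = 0\} = \{F:\Hom(\mathcal V,F)=0\}$, and by Lemma \ref{cogen} this coincides with the class of modules cogenerated by $N^\perp = \mathcal V^\perp = \mathcal U$, which is the $\mathcal F$ in the statement. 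Thus $(\mathcal F,\mathcal G)$ is a torsion pair, $N$ is a cover for $\mathcal G$, and by Smal{\o}'s theorem $\mathcal F$ then has a cocover.

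Finally I would verify the two round trips. Starting from a torsion pair $(\mathcal F,\mathcal G)$ with minimal cover $N$ and producing the perpendicular pair $(\mathcal M,\mathcal N)$ of thick closures: re-applying the second construction produces modules generated by $\mathcal N$ (equivalently by $N$), which is again $\mathcal G$, and modules cogenerated by $\mathcal M = \mathcal N^\perp$, which by Lemma \ref{cogen} is again $\mathcal F$. Conversely, starting with an exceptional perpendicular pair $(\mathcal U,\mathcal V)$ and forming the associated torsion pair, a minimal cover of $\mathcal G$ is a projective generator $N$ of $\mathcal V$ (its thick closure being $\mathcal V$ itself), and the corresponding minimal cocover of $\mathcal F$ has thick closure $N^\perp = \mathcal V^\perp = \mathcal U$; so we recover $(\mathcal U,\mathcal V)$.

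I expect the subtlest point to be the identification $\mathcal N = {}^\perp(\mathcal N^\perp)$ and the analogous $\mathcal M = (\mathcal M^\perp)^\perp$-type identities used in the round-trip check: the inclusions are immediate, but equality requires both Theorem \ref{perp-perp} (to know both sides are exceptional) and Theorem \ref{perp-rank} (to control the ranks), and it is this double perpendicular closure property that is the technical heart of the bijection.
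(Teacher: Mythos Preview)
Your proof is correct and follows essentially the same route as the paper: both directions are assembled from Lemma~\ref{cogen}, Lemma~\ref{perp-gen}, Smal\o's theorem, and Theorems~\ref{perp-perp} and~\ref{perp-rank}. The only difference is that the paper does not spell out the round-trip verification at this point but instead remarks that the bijectivity assertion is part of the Ingalls--Thomas Theorem~\ref{IT-bijections}; your explicit check via the rank argument $\mathcal N = {}^\perp(\mathcal N^\perp)$ is a perfectly good (and more self-contained) substitute.
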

	\bigskip

Let us assume that $(\mathcal F, \mathcal G)$ is a torsion pair and that $N$ is a minimal cover of $\mathcal G$.
Then $N$ is a normal partial tilting module.
Let $N\oplus N'$ be a support tilting module with $N'$ generated by $N$.
We denote by $\mathcal Q$ the full subcategory of all modules generated by $N'$.
Dually, let $M$ be a minimal cocover of $\mathcal F$, thus $M$ is a conormal partial tilting module.
Let $M\oplus M'$ be a support tilting module with $M'$ cogenerated by $M$ and let
$\mathcal P$ be the full subcategory of all modules cogenerated by $M'.$
The four subcategories $\mathcal P, \mathcal M, \mathcal N, \mathcal Q$ should be seen as being 
arranged as follows:
$$
\hbox{\beginpicture
  \setcoordinatesystem units <.7cm,.7cm>
\multiput{} at 0 0  8 2 /
\put{$\mathcal P$} at .95 1
\put{$\mathcal M$} at 3 1
\put{$\mathcal N$} at 5 1
\put{$\mathcal Q$} at 7.05 1
\plot 0 0  8 0  8 2  0 2  0 0 /
\setdashes <.95mm>
\plot 1.9 0  1.9 2 /
\plot 2.1 0  2.1 2 /
\plot 3.9 0  3.9 2 /
\plot 4.1 0  4.1 2 /
\plot 5.9 0  5.9 2 /
\plot 6.1 0  6.1 2 /
\setshadegrid span <.7mm>
\vshade 0 0 2  1.9 0 2 /
\vshade 2.1 0 2  3.9 0 2 /
\vshade 4.1 0 2  5.9 0 2 /
\vshade 6.1 0 2  8 0 2 /

\betweenarrows {$\mathcal F$} from 0 -0.5  to 3.9 -0.5 
\betweenarrows {$\mathcal G$} from 4.1 -0.5  to 8 -0.5 
\endpicture}
$$
with no maps backwards, and such that any module $Z$ has a filtration
$$
 Z = Z_0 \supseteq Z_1 \supseteq Z_2 \supseteq Z_3 \supseteq Z_4  = 0
$$ 
such that
$$
  Z_0/Z_1 \in \mathcal P,\quad 
  Z_1/Z_2 \in \mathcal M,\quad 
  Z_2/Z_3 \in \mathcal N,\quad 
  Z_3/Z_4 \in \mathcal Q,
$$
(thus we deal with what one may call a {\it torsion quadruple} $(\mathcal P,\mathcal M,\mathcal N,\mathcal Q)$; it
refines the given torsion pair $(\mathcal F,\mathcal G)$, since 
$\mathcal F = \mathcal P\above \mathcal M$ and $\mathcal G = \mathcal N\above Q$). 

The bijection between torsion pairs $(\mathcal F,\mathcal G)$ such that $\mathcal G$ has a cover and
perpendicular pairs $(\mathcal M,\mathcal N)$ of exceptional subcategories may be remembered
as follows: 
 $$
\hbox{\beginpicture
  \setcoordinatesystem units <.7cm,.7cm>
\put{\beginpicture
\multiput{} at 0 0  8 2 /
\put{$\mathcal F$} at 2 1
\put{$\mathcal G$} at 6 1
\plot 0 0  8 0  8 2  0 2  0 0 /
\setdashes <.95mm>
\plot 3.9 0  3.9 2 /
\plot 4.1 0  4.1 2 /
\setshadegrid span <.7mm>
\vshade 0 0 2  3.9 0 2 /
\vshade 4.1 0 2  8  0 2 /
\endpicture} at 0 0
\put{\beginpicture
\multiput{} at 0 0  8 2 /
\put{$\mathcal M$} at 3 1
\put{$\mathcal N$} at 5 1
\plot 0 0  8 0  8 2  0 2  0 0 /
\setdashes <.95mm>
\plot 2.1 0  2.1 2 /
\plot 3.9 0  3.9 2 /
\plot 4.1 0  4.1 2 /
\plot 5.9 0  5.9 2 /
\setshadegrid span <.7mm>
\vshade 2.1 0 2  3.9 0 2 /
\vshade 4.1 0 2  5.9 0 2 /
\endpicture} at 0 -2.7
\endpicture}
$$
There is given a normal partial tilting module $N$ such that $\mathcal G$ are the modules
generated by $N$, whereas $\mathcal N$ is the thick closure of $N$. And, 
there is given a conormal partial tilting module $M$ such that $\mathcal F$ are the modules
generated by $M$, whereas $\mathcal M$ is the thick closure of $M$. 
It remains to mention that the bijectivity assertion is, of course,
part of the Ingalls-Thomas Theorem.
For the relationship between a minimal cover $N$ for $\mathcal G$ and a minimal
cocover $M$ for $\mathcal F$, see the note N\,\ref{relationship}.
	\medskip 

\subsection{Partial orderings on the set of antichains}
For any  hereditary artin algebra $\Lambda$, one may consider 
the set of antichains in $\mo \Lambda$ and its subset of
exceptional antichains. There are several partial orderings on these sets which should not
be confused. 
	\smallskip

\begin{itemize}
\item {$\le_s\ $} the set-inclusion of antichains,
\item {$\le{\phantom{{}_s}}$} the set-inclusion of the corresponding thick subcategories, thus
 $A\le A'$ means that every element of $A$ has a filtration by objects in 
 in $A',$ thus every element of $A$ belongs to $\mathcal E(A')$.
\item {$\le_t\ $} the set-inclusion of the corresponding torsion classes, 
 thus
 $A\le_t A'$ means that every element of $A$ is generated by a module in $\mathcal E(A'),$
\item {$\le_f\, $} the set-inclusion of the corresponding torsionfree classes,
  thus
 $A\le_f A'$ means that every element of $A$  is cogenerated by a module in 
 in $\mathcal E(A').$
\end{itemize}
	
Already the case $\mathbb A_2$ shows that all these partial orderings may be different.
$$
 \hbox{\beginpicture
 \setcoordinatesystem units <.9cm,1cm>
\put
 {\beginpicture
\put{} at 0 3.2
\put
 {\beginpicture
 \setcoordinatesystem units <.15cm,.15cm>
  \multiput{$\circ$} at 0 0  1 1  2 0 /
  \multiput{$\bullet$} at 0 0  2 0 / 
 \endpicture} at  0 2 
\put
 {\beginpicture
 \setcoordinatesystem units <.15cm,.15cm>
  \multiput{$\circ$} at 0 0  1 1  2 0 /
  \multiput{$\bullet$} at 0 0  / 
 \endpicture} at  -1 1 
\put
 {\beginpicture
 \setcoordinatesystem units <.15cm,.15cm>
  \multiput{$\circ$} at 0 0  1 1  2 0 /
  \multiput{$\bullet$} at 1 1 / 
 \endpicture} at  0 1
\put
 {\beginpicture
 \setcoordinatesystem units <.15cm,.15cm>
  \multiput{$\circ$} at 0 0  1 1  2 0 /
  \multiput{$\bullet$} at  2 0 / 
 \endpicture} at  1 1 
\put
 {\beginpicture
 \setcoordinatesystem units <.15cm,.15cm>
  \multiput{$\circ$} at 0 0  1 1  2 0 /
  \multiput{$\bullet$} at / 
 \endpicture} at  0 0 
\plot -.3 0.3  -.7 .7 /
\plot 0 .3  0 .7 /
\plot .3 0.3  .7 .7 /
\plot -.7 1.3  -.3 1.7 /
\plot  .7 1.3   .3 1.7 /
\put{$\le_s$} at 0 -.7
\endpicture} at 0 0

\put
 {\beginpicture
\put{} at 0 3.2
\put
 {\beginpicture
 \setcoordinatesystem units <.15cm,.15cm>
  \multiput{$\circ$} at 0 0  1 1  2 0 /
  \multiput{$\bullet$} at 0 0  2 0 / 
 \endpicture} at  0 2 
\put
 {\beginpicture
 \setcoordinatesystem units <.15cm,.15cm>
  \multiput{$\circ$} at 0 0  1 1  2 0 /
  \multiput{$\bullet$} at 0 0  / 
 \endpicture} at  -1 1 
\put
 {\beginpicture
 \setcoordinatesystem units <.15cm,.15cm>
  \multiput{$\circ$} at 0 0  1 1  2 0 /
  \multiput{$\bullet$} at 1 1 / 
 \endpicture} at  0 1
\put
 {\beginpicture
 \setcoordinatesystem units <.15cm,.15cm>
  \multiput{$\circ$} at 0 0  1 1  2 0 /
  \multiput{$\bullet$} at  2 0 / 
 \endpicture} at  1 1 
\put
 {\beginpicture
 \setcoordinatesystem units <.15cm,.15cm>
  \multiput{$\circ$} at 0 0  1 1  2 0 /
  \multiput{$\bullet$} at / 
 \endpicture} at  0 0 
\plot -.3 0.3  -.7 .7 /
\plot 0 .3  0 .7 /
\plot 0 1.3  0 1.7 /
\plot .3 0.3  .7 .7 /
\plot -.7 1.3  -.3 1.7 /
\plot  .7 1.3   .3 1.7 /
\put{$\le$} at 0 -.7
\endpicture} at 4 0

\put
 {\beginpicture

\put
 {\beginpicture
 \setcoordinatesystem units <.15cm,.15cm>
  \multiput{$\circ$} at 0 0  1 1  2 0 /
  \multiput{$\bullet$} at 0 0  2 0 / 
 \endpicture} at  0 3 
\put
 {\beginpicture
 \setcoordinatesystem units <.15cm,.15cm>
  \multiput{$\circ$} at 0 0  1 1  2 0 /
  \multiput{$\bullet$} at 0 0  / 
 \endpicture} at  -1 1.5 
\put
 {\beginpicture
 \setcoordinatesystem units <.15cm,.15cm>
  \multiput{$\circ$} at 0 0  1 1  2 0 /
  \multiput{$\bullet$} at 1 1 / 
 \endpicture} at  1 2
\put
 {\beginpicture
 \setcoordinatesystem units <.15cm,.15cm>
  \multiput{$\circ$} at 0 0  1 1  2 0 /
  \multiput{$\bullet$} at  2 0 / 
 \endpicture} at  1 1 
\put
 {\beginpicture
 \setcoordinatesystem units <.15cm,.15cm>
  \multiput{$\circ$} at 0 0  1 1  2 0 /
  \multiput{$\bullet$} at / 
 \endpicture} at  0 0 
\plot -.2 0.3  -.9 1.2 /
\plot .3 0.3  .7 .7 /
\plot -.9 1.8  -.2 2.7 /
\plot 1 1.3  1 1.7 /
\plot 0.7 2.3  0.3 2.7 /
\put{$\le_t$} at 0 -.7
\endpicture} at 8 0

\put
 {\beginpicture

\put
 {\beginpicture
 \setcoordinatesystem units <.15cm,.15cm>
  \multiput{$\circ$} at 0 0  1 1  2 0 /
  \multiput{$\bullet$} at 0 0  2 0 / 
 \endpicture} at  0 3 
\put
 {\beginpicture
 \setcoordinatesystem units <.15cm,.15cm>
  \multiput{$\circ$} at 0 0  1 1  2 0 /
  \multiput{$\bullet$} at 0 0  / 
 \endpicture} at  -1 1 
\put
 {\beginpicture
 \setcoordinatesystem units <.15cm,.15cm>
  \multiput{$\circ$} at 0 0  1 1  2 0 /
  \multiput{$\bullet$} at 1 1 / 
 \endpicture} at  -1 2
\put
 {\beginpicture
 \setcoordinatesystem units <.15cm,.15cm>
  \multiput{$\circ$} at 0 0  1 1  2 0 /
  \multiput{$\bullet$} at  2 0 / 
 \endpicture} at  1 1.5 
\put
 {\beginpicture
 \setcoordinatesystem units <.15cm,.15cm>
  \multiput{$\circ$} at 0 0  1 1  2 0 /
  \multiput{$\bullet$} at / 
 \endpicture} at  0 0 
\plot .2 0.3  .9 1.2 /
\plot -.3 0.3  -.7 .7 /
\plot .9 1.8  .2 2.7 /
\plot -1 1.3  -1 1.7 /
\plot -.7 2.3  -.3 2.7 /
\put{$\le_f$} at 0 -.7
\endpicture} at 12 0

\endpicture}
$$
  
We see in this case: the partial ordering $\le_s$ does not yield a lattice,
the three other partial orderings yield lattices 
(the definition of a lattice will be recalled in N\,\ref{lattice}, 
And this is a general fact
in case $\Lambda$ is representation-finite:

The partial 
ordering $\le$ yields a lattice, namely the lattice of thick subcategories.
The partial 
ordering $\le_t$ yields a lattice, namely the lattice of torsion subcategories.
The partial 
ordering $\le_f$ yields a lattice, namely the lattice of torsionfree subcategories.
	\medskip

The partial ordering $\le$ seems to be the most import one. This is the 
structure on $A(\mo \Lambda)$ which we will discuss in the next chapter. (But let us add
that also
the lattice of torsion subcategories, as well as dually, the lattice of
torsionfree subcategories are relevant, see the note N\,\ref{torsion}.)
	\bigskip\bigskip 

{\bf Notes.}
	\medskip 

\begin{note}\label{linearity}
{\bf Proof of Lemma \ref{lin}.}
Assume that $P$ belongs to $\mathcal F = 
\mathcal F(\alpha)$, that $R, R'$ belong to $\mathcal N = \mathcal N(\alpha)$ and 
that $Q$ belongs to $\mathcal Q = \mathcal Q(\alpha)$. 
	
Let $f\!:R \to P$ be a non-zero homomorphism. The image $X$
is a non-zero submodule of $P$, thus $\alpha(X) < 0$ and a factor module of $R$, thus 
$\alpha(X) \ge 0$, a contradiction. 
	
Let $f\!:Q \to P$ be a non-zero homomorphism. The image $X$
is a non-zero submodule of $P$, thus $\alpha(X) < 0$ and a non-zero factor module of $Q$, thus 
$\alpha(X) \ge 0$, a contradiction. 
	
Let $f\!:Q \to R$ be a non-zero homomorphism. The image $X$
is a submodule of $R$, thus $\alpha(X) \le 0$ and a non-zero factor module of $Q$, thus 
$\alpha(X) \ge 0$, a contradiction. 
	
Let $f\!:R \to R'$ be a homomorphism. The image $X$ of $f$ is a submodule of $R'$, thus $\alpha(X) \le 0$,
and a factor module of $R$, thus $\alpha(X) \ge 0$. This shows that $\alpha(X) = 0.$
Any submodule $X'$ of $X$ is a submodule of $R'$, thus $\alpha(X') \le 0.$ Thus
$X$ belongs to $\mathcal N.$ Let us show that $\mathcal N$ is closed under kernels.
Since $\mathcal N$ is closed under images,  we may assume that $f\!:R \to R'$ is an epimorphism and
consider its kernel $Y$. We have $\alpha(Y) = \alpha(R) - \alpha(R') = 0.$ Also, a submodule $Y'$ of
$Y$ is a submodule of $R$, thus $\alpha(Y') \le 0$. This shows that $Y$ belongs to $\mathcal N$.
In order to show that $\mathcal N$ is closed under cokernels, it is sufficient to look at the
cokernel $Z$ of a monomorphisms $f\!:R \to R'$. We have $\alpha(Z) = \alpha(R) - \alpha(R') = 0.$
Also, any factor module $Z'$ of $Z$ is a factor module of $R'$, thus $\alpha(Z') \ge 0$. This shows
that $Z$ belongs to $\mathcal N.$ 
	
In order to show that $\mathcal N$ is closed under 
extensions, let $N$ be a module with a submodule $N'$ such that both $N'$ and $N/N'$ belong to 
$\mathcal N$. We have $\alpha(N) = \alpha(N') + \alpha(N/N') = 0.$ Let $U$ be a submodule of
$N$ and $U' = U\cap N'$. Then $U'$ is a submodule of $N'$, thus $\alpha(U') \le 0.$ 
Since $U/U' = U/(U\cap N') \simeq (U+N')/N'$ is isomorphic to a submodule of $N/N'$, we 
$N$ belongs to $\mathcal N.$
	
It remains to be shown that every module $M$ has a filtration 
$M'' \subseteq M' \subseteq M$ with $M''\in \mathcal Q,\ 
M'/M''\in \mathcal N,\  M/M'\in \mathcal F.$ As a preparation, we show that 
$\mathcal F$ and $\mathcal Q$ are closed under extensions.
	
Let $N$ be a module with a submodule $N'$ such that both $N'$ and $N/N'$ belong to 
$\mathcal F$. We have to look at non-zero submodules $U$ of $N$.
Consider the submodule $U\cap N'$ of $U$ with factor module
$U/(U\cap N') \simeq (U+N')/N'$, thus, we have $\alpha(U) = \alpha(U\cap N') + \alpha((U+N')/N').$
Both summands are non-positive, since we evaluate $\alpha$ at submodules of modules 
in $\mathcal F$. Since $U$ is non-zero, at least one of the modules 
$U\cap N',\ (U+N')/N'$ is non-zero and therefore
at least one of the values $\alpha(U\cap N'),\ \alpha((U+N')/N')$ is negative,
thus $\alpha(U) < 0.$
	
Similarly, take a module $N$ with a submodule $N'$ such that both $N'$ and $N/N'$ belong to 
$\mathcal Q$. To show that $N$ is in $\mathcal Q$, we have 
to look at non-zero factor modules of $N$. Thus, let $U$ be a proper submodule of $N$.
The module $N/U$ has the submodule $(U+N')/U \simeq N'/(U\cap N')$,
with factor module isomorphic to $N/(U+N')$, thus
$\alpha(N/U) = \alpha(N'/(U\cap N')) + \alpha(N/(U+N'))$.
Both summands are non-negative, since we deal with factor modules of modules 
in $\mathcal Q$. Since $N/U$ is non-zero, at least one of $N'/(U\cap N'),\ N/(U+N')$ 
is non-zero, thus
at least one $\alpha(N'/(U\cap N')),\ \alpha(N/(U+N'))$ is positive,
therefore $\alpha(N/U) > 0.$
	
Now we show that every module has a filtration with factors in 
$\mathcal F,\ \mathcal N,\ \mathcal Q.$
First, we show that a module $X$ which has no non-zero factor module
in $\mathcal F$ and no non-zero submodule in $\mathcal Q$ belongs to  
$\mathcal N.$ 
Assume that there is a submodule $X'$ of $X$ with $\alpha(X') > 0$. Choose such a 
submodule $X'$  which is minimal with this property. We claim that 
$X'$ belongs to $\mathcal Q$. Namely, if $X'/U$ is a non-zero
factor module of $X'$ (here $U$ is a submodule of $X'$), then the minimality of $X'$
implies that $\alpha(U) \le 0$, thus $\alpha(X'/U) = \alpha(X')-\alpha(U) > 0$. 
But we assume that $X$ has no non-zero submodule in $\mathcal Q.$ This
contradiction shows that $\alpha(X') \le 0$ for any submodule $X'$ of $X$.
By duality, we have $\alpha(X'') \le 0$ for all factor modules $X''$ of $X$.
In particular, we have both $\alpha(X) \ge 0$ and $\alpha(X) \le 0$, thus $\alpha(X) = 0.$
Altogether we see that $X$ belongs to $\mathcal N.$
	
Second case: Let $N$ be a module without any non-zero submodule in $\mathcal Q$. 
Let $N'$ be a submodule of $N$ such that $N/N'$ belongs to $\mathcal F$ and
which is minimal with this property. Since $\mathcal F$ is closed under extensions, 
we know that no proper factor module of $N'$ belongs to $\mathcal F.$
According to the first case, we see that $N'$ belongs to
$\mathcal N.$
	
Now we  look at the general case. Thus, let $M$ be an arbitrary module.
Let $M''$ be a submodule of $M$ which belongs to $\mathcal Q$ and
which is maximal with this property. Since we know that $\mathcal Q$ is closed
under extensions, we know that $M/M''$ has no non-zero submodule which belongs to
$\mathcal Q.$ By the second case, the module $N = M/M''$ has a submodule $N'$
in $\mathcal N$ such that $N/N'$ belongs to $\mathcal F.$ Let $N' = M'/M''$ for
some $M'' \subseteq M' \subseteq M.$ Then $M'/M''$ is in $\mathcal N$, whereas
$M/M' \simeq N/N'$ belongs to
$\mathcal F.$ This completes the proof.
\end{note}
	\medskip

\begin{note}\label{examples}
{\bf Examples of torsion pairs.} Let us stress that the tilting torsion 
pairs, even the support-tilting torsion pairs, are quite special torsion pairs.
These are linear torsion pairs, but {\it torsion pairs are usually not linear.}

For example, take the path algebra $\Lambda$ of the Kronecker quiver 
$$
{\beginpicture
\setcoordinatesystem units <1.5cm,1cm>
\put{} at 2 -.3
\multiput{$\circ$} at 2 0  3 0 /
\put{$\ssize 1$} at 2 0.2
\put{$\ssize 2$} at 3 0.2
\arr{2.8 0.1}{2.2 0.1}
\arr{2.8 -.1}{2.2 -.1}
\endpicture}\ ,
$$
take the simple regular representation $R(0) = (k,k;1,0)$ and let $\mathcal G$ be the
torsion class generated by $R(0)$: it consists of one homogeneous tube and all the
preinjective modules, the corresponding torsionfree class consists of the remaining
tubes and all the preprojective modules. 
In particular, the simple regular representations and
 $R(1) = (k,k;1,1)$ and  $R(\infty) = (k,k;0,1)$ both belong to $\mathcal F$, but it is
not possible to distinguish say $R(0)$ and $R(1)$, using linear forms on $K_0(\Lambda)$.
	
Also: {\it Not all linear torsion pairs are tilting torsion pairs or at least 
support-tilting torsion pairs.} Again, let $\Lambda$ be the path algebra of the
Kronecker quiver and consider the defect function: it takes negative values on the
preprojective Kronecker modules, positive values on the preinjective Kronecker modules
and vanishes on the regular Kronecker modules.
	\medskip 

It is easy to write down all the linear forms of $K_0(\Lambda)$ for the Kronecker algebra
$\Lambda$, since $K_0(\Lambda) = \mathbb Z^2$. We work with the basis
 $e(1) = (1,0) = \bdim S(1),\ e(2) = (0,1) = \bdim S(2).$
In general, given a pair $a,b$ of integers, we define the linear form 
$\alpha_{(a,b)}\!:K_0(\Lambda ) \to \mathbb Z$ by
$\alpha_{(a,b)}(d_1,d_2) = ad_1+bd_2.$ 

Of importance is the linear form $\alpha = \alpha_{(-1,1)}$, thus  
$\alpha(d_1,d_2) = -d_1+d_2$. This is the defect function. 
The classification of the indecomposable Kronecker modules asserts: Any indecomposable Kronecker module belongs to
one of the three classes $\mathcal F(\alpha), \mathcal N(\alpha), \mathcal Q(\alpha)$, the modules in $\mathcal F(\alpha)$ are the preprojective
modules, those in $\mathcal N(\alpha)$ the regular modules, those in $\mathcal Q(\alpha)$ the preinjective modules. 
	
Let us mention some other
special cases in order to outline the variety of possibilities. 
If $a = b = 0$, then $(\mathcal F(\alpha),\mathcal N(\alpha),\mathcal Q(\alpha)) =
(0,\mo  \Lambda , 0).$ If both numbers $a,b$ are positive, then we have
$(\mathcal F(\alpha),\mathcal N(\alpha),\mathcal Q(\alpha)) = (0,0,\mo  \Lambda )$. 
If both numbers $a,b$ are negative, then we have
$(\mathcal F(\alpha),\mathcal N(\alpha),\mathcal Q(\alpha)) = (\mo  \Lambda ,0,0)$. 
	 
For $(a,b) = (-4,3)$, the category $\mathcal N(\alpha_{(-4,3)})$ is $\add P(3)$. The indecomposable
modules in $\mathcal F(\alpha_{(-4,3)})$ are the Kronecker modules $P(0),P(1),P(2)$, and the modules in
$\mathcal Q(\alpha_{(-4,3)})$ are the Kronecker modules without a direct summand of the form $P(0),\dots,P(3).$ 
	 
For $(a,b) = (-5,3)$, the category $\mathcal N(\alpha_{(-5,3)})$ is the zero category. The indecomposable
modules in $\mathcal F(\alpha_{(-5,3)})$ are the Kronecker modules $P(0),P(1)$, and the modules in
$\mathcal Q(\alpha_{(-5,3)})$ are the Kronecker modules without a direct summand of the form $P(0),P(1).$
	\medskip

Finally, let us remark that {\it not every linear torsion pair is defined by
a linear form with values in $\mathbb Z$.} As an example, we can take any wild hereditary
algebra. If $\Lambda$ is the $n$-Kronecker algebra with $n\ge 3$, say with
simple projective module $S$ and simple injective module $T$, define $\alpha$
by $\alpha(S) = -1$ and $\alpha(T) = \sqrt 2$. It is not difficult to show that
the torsion pair $(\mathcal F(\alpha),
\mathcal G(\alpha))$ cannot be defined by a rational linear form. Other obvious examples 
are provided by tubular algebras.
\end{note}
	\medskip

\begin{note}\label{Roiter}
{\bf Proof of Roiter's Normalization Lemma}  (following \cite{[R5]}). 
We use the following facts, here $X,Y$ are modules over an artin algebra.

(a) {\it Let $(f_1,\dots,f_t,g)\!:X \to X^t\oplus Y$ be an injective map for some $t$,
with all the maps $f_i$ in the radical of $\End(X)$. Then $X$ is cogenerated by $Y$.}

(b) {\it Let $(f_1,\dots,f_t,g)\!:X^t \oplus Y \to X$ be a surjective map for some natural number $t$,
with all the maps $f_i$ in the radical of $\End(X)$, then $Y$ generates $X$.}

\begin{proof} (a) Assume that the radical $J$ of $\End(X)$ satisfies $J^m = 0$.
Let $W$ be the set of all compositions $w$ of at most $m-1$ maps of the form $f_i$ with $1\le i \le t$
(including $w = 1_X$).  
We claim that $(gw)_{w\in W}\!:X \to Y^{|W|}$ is injective. Take a non-zero element $x$
in $X$. Then there is $w\in W$ such that $w(x)\neq 0$ and $f_iw(x) = 0$ for $1\le i\le t$.
Since $(f_1,\dots,f_t,g)$ in injective and $w(x) \neq 0$, we have $(f_1,\dots,f_t,g)(w(x)) \neq 0.$ 
But $f_iw(x) = 0$ for $1\le i \le t,$ thus $g(w(x)) \neq 0.$ This completes the proof.
    
(b) This follows by duality.   
\end{proof} 
     
{\bf Normalization lemma.}
{\it Let $M = M_0\oplus M_1 = M_0'\oplus M_1'$ be direct decompositions of a module $M$
over an artin algebra and assume that both modules 
$M_0$ and $M'_0$ generate $M$. 
Then there is a module $N$ which generates $M$ and which is isomorphic to a direct summand of $M_0$ and $M'_0$.}
	\medskip 
     
\begin{proof} 
We may assume that $M$ is multiplicity free. Write 
$M_0 \simeq N\oplus C,\ M_0' \simeq N \oplus C',$ such that $C, C'$
have no indecomposable direct summand in common. 
Now, $N\oplus C$ generates $N\oplus C'$ generates $N\oplus C$ generates $C$.
We see that $N\oplus C$ generates $C$, such that the maps $C \to C$ used 
belong to the radical of $\End(C)$ (since they factor through $\add(N\oplus C')$
and no indecomposable direct summand of $C$ belongs to $\add(N\oplus C')$).
According to (b), $N$ generates $C$, thus it generates $M$.
\end{proof}
\end{note}
	\medskip

\begin{note}\label{simple}
{\bf The simple $\Gamma(T)$-modules.} Let $T$ be a tilting module and 
$\Gamma(T) = \End(T)^\op$. A simple $\Gamma(T)$
module belongs either to $\mathcal Y(T)$ or to $\mathcal X(T)$, since we deal with 
a torsion pair $(\mathcal Y(T),\mathcal X(T)$ (actually, this torsion 
pair is even a split torsion pair, 
thus any indecomposable module belongs to one of the two subcategories).
The presentation of the classical tilting theory in the paper \cite{[HR2]}
used this fact quite prominently, but the actual structure 
of the simple $\Gamma(T)$-modules has not been discussed
there.
The new approach to the classical tilting theory stresses the relevance
of thick subcategories, and, as it turns out, it is the structure of the simple
$\Gamma(T)$-modules which plays a decisive role!
	\medskip 

Let $T = \bigoplus_{i=1}^n T_i$  be a tilting module with indecomposable direct summands $T_i$.
The indecomposable projective $\Gamma(T)$-modules
are of the form $\Hom(T,T_i)$, let us denote the top of $\Hom(T,T_i)$ by 
$S'(i)$.
	\medskip

For any $i$, let $T^{(i)} = T/T_i = \bigoplus_{j\neq i} T_j$ and
let $g_i\!:T^i\to T_i$ be a minimal right $T^{(i)}$-approximation of $T_i.$
	\smallskip 

First case.
{\it If $T_i$ is a direct summand of $\nu(T)$, then $g_i$ is injective, say with
cokernel $\Delta(i).$ Then $\Hom(T,\Delta(i))$
is a simple $\Gamma(T)$-module, 
it belongs to $\mathcal Y(T)$ and its projective cover
is $\Hom(T,T_i),$ thus $S'(i) = \Hom(T,\Delta(i))$.}

Proof: The map $g_i$ cannot be surjective, since otherwise $T_i$ would belong to $\nu'(T)$.
But then $g_i$ has to be injective since $\Ext^1(T_i,T^i) = 0.$ Thus,  
there is an exact sequence of the form
$$
\hbox{\beginpicture
  \setcoordinatesystem units <1cm,.7cm>
\put{$ 0 \to T^i \overset{g_i}\longrightarrow T_i \overset{p_i}\longrightarrow \Delta(i) \to 0$} [l] at 0 0
\endpicture}
$$
(just as the sequence $(*)$). We apply the functor $\Hom(T,-)$ and obtain the
exact sequence
$$
\hbox{\beginpicture
  \setcoordinatesystem units <1cm,.7cm>
\put{$0 \to \Hom(T,T^i)  \overset{\Hom(T,g_i)}\longrightarrow \Hom(T,T_i) \overset{\Hom(T,p_i)}\longrightarrow \Hom(T,\Delta(i)) \to 0$} [l] at 0 0
\put{$(**)$} [r] at -1 0
\put{} at 12 0 
\endpicture}
$$
(the right exactness follows from $\Hom(T,T^i) = 0$). 
Now, $\Hom(T,T_i)$ is indecomposable projective and $Y(i) = \Hom(T,\Delta(i))$ is 
a factor module. In order to see that $Y(i)$ is simple, we only have to show that 
any proper submodule $U$ of $\Hom(T,T_i)$ is contained in the image of $\Hom(T,g_i)$. 
It is sufficient to show this for any proper submodule $U$ which is local. Such a submodule
is a factor module of some $\Hom(T,T_j)$. Thus there is given a map $h\!:T_j \to T_i$
which is not an isomorphism such that $\Hom(T,h)$ maps into $U$. Since the endomorphism ring
of $T_i$ is a division ring, we must have $j\neq i$ and therefore $h$ factors through
$g_i$. But this is what we wanted to show. 

Thus we see: The module  $Y(i) = \Hom(T,\Delta(i))$ is simple. Obviously, $(**)$
is a minimal projective resolution of $Y(i)$ as a $\Gamma(T)$-module. What is of importance
for us is the fact that  $Y(i)$ belongs to $\mathcal Y(T)$, but this is clear, since
$Y(i)  = \Hom(T,\Delta(i))$. 
	\medskip

Second case.
{\it If $T_i$ is a direct summand of $\nu'(T)$, 
then $g_i$ is surjective, say with
kernel $U(i).$ Then $\Ext^1(T,U(i))$
is a simple $\Gamma(T)$-module, it belongs to $\mathcal X(T)$ and its projective cover
is $\Hom(T,T_i),$ thus $S'(i) = \Ext^1(T,U(i)).$}
	\smallskip 
Proof. To see that $g_i$ is surjective, we only have to note that $T_i$ is generated by
$\nu(T)$, thus by $T^{(i)}$. We apply $\Hom(T,-)$ to the exact sequence
$$
 0 \to U(i) \overset{u_i}\longrightarrow T^i \overset{g_i}\longrightarrow T_i \to 0
$$ 
and obtain the exact sequence
$$
\hbox{\beginpicture
  \setcoordinatesystem units <1cm,.7cm>
\put{$0 \to \Hom(T,U(i))  \overset{\Hom(T,u_i)}\longrightarrow \Hom(T,T^i) 
\overset{\Hom(T,g_i)}\longrightarrow \Hom(T,T_i) \overset{d_i}\longrightarrow \Ext^1(T,U(i)) \to 0.$} [l] at 0 0
\put{$(***)$} [l] at 0 -1
\put{} at 9 0 
\endpicture}
$$
The map $g_i$ is not a split epimorphism, since $T_i$ does not belong to $\add T^{(i)}$.
It follows that $\Hom(T,g_i)$ is not surjective, thus $\Ext^1(T,U(i)) \neq 0.$
Since $\Hom(T,T_i)$ is indecomposable projective, the map $d_i$ is a projective cover.
In particular, $\Ext^1(T,U(i))$ is a local module. with top $S'(i)$.

We look at the dimension vector of $\Ext^1(T,U(i)).$ For any index $j$,
the simple module $S'(j)$ occurs as a composition factor of $\Ext^1(T,U(i))$ if
and only if $\Ext^1(T_j,U(i))\neq 0.$  

Consider an index $j\neq i$ and apply $\Hom(T_j,-)$ to $g_i$. We obtain the exact sequence
$$
\hbox{\beginpicture
  \setcoordinatesystem units <1cm,.7cm>
\put{$ \Hom(T_j,T^i) \overset{\Hom(T_j,g_i)}\longrightarrow \Hom(T_j,T_i) \to \Ext^1(T_j,U(i)) \to \Ext^i(T_j,T^i).
$} [l] at 0 0
\endpicture}
$$
The last term is zero since $\Ext^1(T,T) = 0$. The map $\Hom(T_j,g_i)$ is surjective,
since $g_i$ is a $T^{(i)}$-approximation and $T_j$ is a direct summand of 
$T^{(i)}$. This shows that $\Ext^1(T_j,U(i)) = 0.$ 

As a consequence, all composition factors of $\Ext^1(T,U(i))$ are equal to
$S'(i)$. But the quiver of $\Gamma(T)$ has no loops, thus $\Ext^1(T,U(i)) =
S'(i)$. In particular, $S'(i)$ belongs to $\mathcal X(T)$. 
	\medskip 

{\bf The projective dimension of the simple $\Gamma(T)$-modules $S'(i)$.} If
$T_i$ is a direct summand of $\nu(T)$, then we have seen that $S'(i)$ belongs to 
$\mathcal Y(T)$. All the modules in $\mathcal Y(T)$ have projective dimension at most 1, thus
this holds true for $S'(i)$. A minimal projective resolution of $S'(i)$ is given
by the sequence $(**)$. On the other hand, in case $T_i$ is a direct summand of 
$\nu'(T)$, then $S'(i)$ belongs to $\mathcal X(T)$, thus the projective dimension of
$S'(i)$ may be $2$. In case $U(i)$ belongs to $\mathcal F(T)$, the exact sequence
$(***)$ shows that the projective dimension of $S'(i)$ is equal to $1$ (it cannot
be zero, since $T^i\neq 0$; of course, we know that the indecomposable projective
$\Gamma(T)$-modules belong to $\mathcal Y(T)$, thus not to $\mathcal X(T)$). On the other hand,
if $U(i)$ does not belong to $\mathcal F(T)$, then $\Hom(T,U(i)\neq 0$ and the projective
dimension of $S'(i)$ is equal to 2.
	\medskip

{\bf The subcategory $\phi (\mathcal N(T))$.} As we know, the functor 
$\phi = \Hom(T,-)$ yields an equivalence $\mathcal G(T) \to \mathcal Y(T)$. Since $\Ext^1(T,-)$
vanishes on the category $\mathcal Y(T)$, the functor $\phi$ is exact on exact sequences
in $\mathcal Y(T)$ (we mean: exact sequences of $\mo \Lambda$ with all terms lying in
$\mathcal Y(T)$). Now the subcategory $\mathcal N(T)$ consists of all $\Lambda$-modules with
a filtration with factors of the from $\Delta(i)$, thus $\phi(\mathcal N(T))$ consists
of all the $\Gamma(T)$-modules with a composition series with all factors in $\mathcal Y(T)$.
This can be reformulated as follows: 
{\it $\phi(\mathcal N(T))$ is the largest Serre subcategory
of $\mo \Gamma(T)$ which lies inside $\mathcal Y(T)$.}
	\medskip

{\bf The set of simple $\Gamma(T)$-modules which belong to $\mathcal Y(T)$ is closed
under successors in the quiver of $\Gamma(T).$} For the proof, assume that
$S'(i),S'(j)$ are simple $\Gamma(T)$-modules with $\Ext^1(S'(i),S'(j)) \neq 0$, and that
$S'(i)$ belongs to $\mathcal Y(T)$. We have to show that also $S'(j)$ belongs to $\mathcal Y(T)$. 
Since $S'(i)$ belongs to $\mathcal Y(T)$, the module $T_i$ is a direct summand of $\nu(T)$.
But then also $T^i$ belongs to $\add \nu(T)$. On the other hand, $\Ext^1(S'(i),S'(j)) \neq 0$
means that $T_j$ is a direct summand of $T^i$, thus in $\add \nu(T).$ It follows
that $S'(j)$ belongs to $\mathcal Y(T)$.
\end{note}
	\medskip 

\begin{note}\label{example2}{\bf An example.}
Here is an example {\it of a linear form $\alpha$ on $K_0(\Lambda)$ 
with $\mathcal N(\alpha)$ a sincere exceptional subcategory such that
$\mathcal Q(\alpha)$ is not generated by $\mathcal N(\alpha)$ and 
and such that 
$\mathcal F(\alpha)$ is not cogenerated by $\mathcal N(\alpha)$.}
	\medskip

We take as $\Lambda$ the path algebra of a quiver of type $\mathbb A_5$, as shown on the left.
We draw the
Auslander-Reiten quiver of $\Lambda$, but replace the indecomposable modules $M$
by the corresponding values $\alpha(M)$. It is easy to check that these are indeed the
values of an additive function on $K_0(\Lambda)$.
$$
{\beginpicture
\setcoordinatesystem units <.8cm,.6cm>
\multiput{} at 0 4.7 0 -2 /
\put{$-2$} at 0 4
\put{$1$} at 2 4
\put{$1$} at 4 4
\put{$-1$} at 1 3
\put{$2$} at 3 3
\put{$2$} at 5 3
\put{$0$} at 2 2
\put{$3$} at 4 2
\put{$3$} at 6 2
\put{$-2$} at 1 1
\put{$1$} at 3 1
\put{$4$} at 5 1
\put{$-1$} at 0 0
\put{$-1$} at 2 0
\put{$2$} at 4 0
\setdashes <1mm>
\plot  .5 4.5    1.5 3.5  1.5 1.5  3 0  3 -1 /
\plot  .7 4.5    1.7 3.5  2.5 2.7  2.5  0.8  3.3  0  3.3 -1 /
\setdots <1mm>
\put{$\mathcal F(\alpha)$}  at 1 -1.5    
\plot 0 0  4 4  6 2  4 0  0 4 /
\plot 1 1  2 0  5 3 /
\plot 1 3  2 4  5 1 /
\put{$\mathcal F(\alpha)$}  at 1 -1.5    
\put{$\mathcal N(\alpha)$}  at 3.1 -1.5    
\put{$\mathcal Q(\alpha)$}  at 5 -1.5
\setsolid
\circulararc 360 degrees from 2.4 0  center at 2 0 
\circulararc 360 degrees from 2.4 4  center at 2 4 

\multiput{$\circ$} at -4 4  -3 3  -2 2  -3 1  -4 0 / 
\arr{-3.2 3.2}{-3.8 3.8}
\arr{-2.2 2.2}{-2.8 2.8}
\arr{-3.2 .8}{-3.8 .2}
\arr{-2.2 1.8}{-2.8 1.2}
   
\endpicture}
$$
 The separation between the subcategories
$\mathcal F(\alpha),\mathcal N(\alpha), \mathcal Q(\alpha)$ is indicated by dashed lines.
In particular, we see that $\mathcal N(\alpha) = \add(M)$, where $M$ is 
the unique sincere indecomposable module. Two modules (or better, their values under
$\alpha$) have been encircled: the upper one belongs to $\mathcal Q(\alpha)$, it is not
generated by $M$, the lower one belongs to $\mathcal F(\alpha)$, it is not
cogenerated by $M$.
\end{note}
	\bigskip

\begin{note}\label{support-tilting}
{\bf Proof of Theorem \ref{version4}.} 
It is sufficient to show the inclusions 
$$ 
 \mathcal F(T)\subseteq \mathcal F(\alpha),\  
 \mathcal N(T)\subseteq \mathcal N(\alpha),\  
 \mathcal Q(T)\subseteq \mathcal Q(\alpha).
$$
The last two assertions concern modules $M$ with support in $\bold S$, where $\bold S$
is the support of $T$. If the
support of $M$ is in $\bold S$, then $\Hom(P,M) = 0$, thus $\alpha$ and $\langle \nu'(T),-\rangle$
coincide on all submodules and all factor modules of $M$. 
Thus, we only have to show the first inclusion. Assume that $M$ belongs to $\mathcal F(T)$,
let $M'$ be a non-zero submodule of $M$. If the support of $M'$ is not contained in $\bold S$,
then $\Hom(P,M') \neq 0$ (and $\Hom(\nu'(T).M) = 0$, thus $\alpha(M') < 0.$
If  the support of $M'$ is contained in $\bold S$, then $M'$ is a non-zero 
$\Lambda_T$-module with $\Hom(T,M') = 0$, thus we are in the setting of dealing with tilting
modules and, as we have seen, $\langle \nu'(T),M'\rangle < 0.$ 
\end{note}
	\bigskip 

\begin{note}\label{Bongartz}
{\bf The Bongartz complement and the dual construction.}
Let $T$ be a partial tilting module. 
Choose a universal extension of $\Lambda$ by copies of $T$,
say $0 \to \Lambda \to \Lambda' \to T^t \to 0$ with $t\in \mathbb N$, such that $\Ext^1(T,\Lambda') = 0$,
or, equivalently, such that the connecting homomorphism $\Hom(T,T^t) \to \Ext^1(T,\Lambda)$
is surjective. It is easy to see that $\Ext^1(T\oplus \Lambda',T\oplus \Lambda') = 0.$
Namely, first of all, we have $\Ext^1(T,T) = 0$ and $\Ext^1(T,\Lambda') = 0.$ Next,
$\Ext^1(\Lambda,T) = 0$ and $\Ext^1(T,T) = 0$ imply that $\Ext^1(\Lambda',T) = 0.$
Finally, $\Ext^1(\Lambda,\Lambda') = 0$ and $\Ext^1(T,\Lambda') = 0$ imply that
$\Ext^1(\Lambda',\Lambda') = 0.$

The defining sequence for $\Lambda'$ shows that $\Lambda'\oplus T$ is 
Morita equivalent to a tilting module, say to $\beta(T)\oplus T$, where $\beta(T)$
is a direct summand of $\Lambda'$. The module $\beta(T)$ is called a {\it Bongartz complement}
 $\beta(T)$ for $T$, it is uniquely determined by $T$ up to isomorphism. 
	\medskip

{\it If $T$ is an exceptional module and not projective, then $\Hom(T,\beta(T)) = 0$, thus
$\beta(T)$ belongs to $T^\perp.$}
Proof. Denote the inclusion map $\Lambda \to \Lambda'$ by $u$, the projection map 
$\Lambda' \to T^t$ by $q$, thus $qu = 0.$ We can assume that $t$ is minimal. Let $f\!:T \to \Lambda'$
be a homomorphism. We claim that $qf = 0$. Otherwise, there is a split epimorphism $p:T^t \to T$ 
such that $pqf$ is non-zero. But a non-zero endomorphism of $T$ is invertible, therefore $qf$ is
a split epimorphism. But this implies that $t$ is not minimal, a contradiction. It follows from $qf = 0$
that the image of $f$ is projective, thus a direct summand of $T$. Since $T$ is indecomposable and
not projective, it follows that $f = 0.$
	\medskip

{\it If $T$ is a partial tilting module and sincere, then $\beta(T)$ is
cogenerated by $T$.} Proof, following \cite{[R5]}.
As we know, a sincere partial tilting module is faithful, thus 
$\Lambda$ is cogenerated by $T$. Any embedding $v\!:\Lambda \to T^s$ gives rise to a
commutative diagram with exact rows
$$
{\beginpicture
\setcoordinatesystem units <1.6cm,1.2cm>
\put{$0$} at 0 0
\arr{0.3 0}{0.6 0} 
\put{$\Lambda$} at 1 0
\arr{1.3 0}{1.6 0} 
\put{$\Lambda'$} at 2 0
\arr{2.3 0}{2.6 0} 
\put{$T^t$} at 3 0
\arr{3.3 0}{3.6 0} 
\put{$0$} at 4 0

\arr{1  -.3}{1  -.7} 
\arr{2  -.3}{2  -.7} 
\plot 3.02 -.3  3.02 -.7 /
\plot 2.98 -.3  2.98 -.7 /

\put{$0$} at 0 -1
\arr{0.3 -1}{0.6 -1} 
\put{$T^a$} at 1 -1
\arr{1.3 -1}{1.6 -1} 
\put{$M$} at 2 -1
\arr{2.3 -1}{2.6 -1} 
\put{$T^t$} at 3 -1
\arr{3.3 -1}{3.6 -1} 
\put{$0$} at 4 -1

\put{$v$} at 1.2 -.5
\put{$v'$} at 2.2 -.5
\put{$g$} at 2.45 0.2
\endpicture}
$$

Since $\Ext^1(T,T) = 0$, the lower sequence splits, thus $\Lambda'$ is cogenerated by $T$.
As a consequence, also $\beta(T)$ is cogenerated by $T$.
	\medskip

{\bf The sub complement for a partial tilting module.}
Let $T$ be a partial tilting module, say with support algebra
$\Lambda(T)$. We call the Bongartz complement $\beta({}_{\Lambda(T)}T)$
for $T$ (considered as a $\Lambda(T)$-module!), 
the {\it sub complement} for $T$. Since ${}_{\Lambda(T)}T$ is a faithful
$\Lambda(T)$-module, the sub complement for $T$ is cogenerated by $T$.
	\medskip 

{\bf The dual construction.} Again, we start with a
 partial tilting module $T$. We denote by $Z$ a minimal cogenerator
for $\mo \Lambda$. We choose 
a universal ``foundation'' of $Z$ by copies of $T$, that means an exact sequence
$0 \to T^t \to Z' \to Z \to 0$ with $t\in \mathbb N$ such that $\Ext^1(Z',T) = 0$
(such a foundation is often called a universal extension from below).  
Then $Z\oplus Z'$ is Morita equivalent to 
a tilting module, say $T\oplus \beta'(T)$ with 
$\beta'(T)$ is a direct summand of $Z'$. The module $\beta'(T)$ is called a 
{\it co-Bongartz complement} for $T$, it is 
uniquely determined by $T$ up to isomorphism. There are the dual assertions:
	\medskip

{\it If $T$ is an exceptional module and not injective, then $\Hom(\beta'(T),T) = 0$, thus
$\beta'(T)$ belongs to ${}^\perp T.$}
	\medskip

{\it If $T$ is a  partial tilting module and sincere, then $\beta'(T)$ is
generated by $T$.} 
	\medskip

{\bf The factor complement for a partial tilting module.}
Let $T$ be a  partial tilting module, say with support algebra
$\Lambda(T)$. We call the co-Bongartz complement $\beta'({}_{\Lambda(T)}T)$
of $T$ (considered as a $\Lambda(T)$-module!) 
the {\it factor complement} for $T$. Since ${}_{\Lambda(T)}T$ is a faithful
$\Lambda(T)$-module, we see that the factor complement for $T$ is generated by $T$.
\end{note}
	\medskip 
	
\begin{note}\label{large-rank} {\bf Thick closures of large rank.}
{\it Let $E_1,E_2$ be exceptional modules and $\mathcal E$ the thick closure of $E_1,E_2$.
The rank of $\mathcal E$ may be arbitrarily large.}
	\smallskip 

Example: Let $Q$ be the $n$-subspace quiver with sink $0$. Let $E_1 = S(0)$ and $E_2 = I(0)$.
Both are exceptional modules. There is an embedding of $E_1$ into $E_2$, its cokernel is the
direct sum of the simple modules $S(i)$ with $i\neq 0.$ This shows that the thick closure is the
category of all representations of $Q$, this category has rank $n+1$. Note that for $n\ge 2$, neither
$(E_1,E_2)$  
nor $(E_2,E_1)$ is an exceptional sequence, since $\Hom(E_1,E_2) \neq 0$ and $\Ext^1(E_2,E_1) \neq 0.$
\end{note}
	\medskip 

\begin{note}\label{relationship}
{\bf The relationship between a minimal cover $N$ for $\mathcal G$
and a minimal cocover $M$ for $\mathcal F$, where $(\mathcal F,\mathcal G)$ is a torsion pair.}
	\medskip 

Let $(\mathcal F,\mathcal G)$ be a torsion pair with minimal cover $N$ for $\mathcal G$
and minimal cocover $M$ for $\mathcal F$. The corresponding perpendicular pair is 
$(\mathcal M,\mathcal N)$, where $\mathcal N$ is the thick closure of $N$ and $\mathcal M$ the thick closure
of $M$. And we know that $N$ is a minimal generator for the abelian category $\mathcal N$,
whereas $M$ is a minimal cogenerator for the abelian category $\mathcal M$. Let $N'$ be
the factor complement for $N$ and $M'$ the sub complement for $M$. 
There is a 
direct procedure to obtain $M\oplus M'$ from $N\oplus N'$ and vice versa:
	\medskip 

{\it Let $(\mathcal F,\mathcal G)$ be a torsion pair with minimal cover $N$ for $\mathcal G$
and minimal cocover $M$ for $\mathcal F$. Let $N'$ be
the factor complement for $N$ and $M'$ the sub complement for $M$. 
Write $N = N^P\oplus N^C$, where $N^P$ is projective and $N^C$ has no indecomposable
projective direct summand. 
Write $M = M^I\oplus M^C$, where $M^I$ is injective and $M^C$ has no indecomposable
injective direct summand. 

{\rm(a)} $\tau N^C = M'$ and $\tau N' = M^C$.

{\rm(b)} The module $N^P$ is the direct sum of all indecomposable projective modules in $\mathcal G$,
and $M^I$ is the direct sum of all indecomposable injective modules in $\mathcal F$.

{\rm(c)} The module $N\oplus N'$ is a tilting $\Lambda_{\mathcal G}$-module, the 
module $M\oplus M'$ is a tilting $\Lambda_{\mathcal F}$-module. 

(Here, $\Lambda_{\mathcal G}$ is the factor algebra of $\Lambda$ modulo the annihilator of 
$\mathcal G$, and $\Lambda_{\mathcal F}$ is the factor algebra of $\Lambda$ modulo the annihilator of 
$\mathcal F$.)}
	\medskip 

Let us sketch the position of the modules involved: 
 $$
\hbox{\beginpicture
  \setcoordinatesystem units <.7cm,.6cm>
\multiput{} at 0 -1.2  8 4 /
\plot -4 0  4 0  4 4  12 4 /
\plot  0 1  12 1 /
\plot -4 2  12 2 /
\plot -4 3  8 3 /
\put{$N^P$} at 4.7 3.5
\put{$N^C$} at 4.7 2.5
\put{$N'$} at 4.7 1.5

\put{$M'$} at 3.3 2.5
\put{$M^C$} at 3.3 1.5
\put{$M^I$} at 3.3 0.5

\put{$\mathcal F$} at 0 -.7
\put{$\mathcal G$} at 8 -.7

\put{$ N $} at 8.7 3
\put{$\ssize = N^P\oplus N^C$} at 10 3
\put{$\mathcal N$} at 12 3
\put{$\mathcal Q$} at 12 1.5
\put{$ M $} at -2.6 .98
\put{$ \ssize = M^I\oplus M^C$} at -1.3 1
\put{$\mathcal M$} at -4 1
\put{$\mathcal P$} at -4 2.5

\setshadegrid span <.7mm>
\vshade 0 0 3  4 0 3  4.01 1 4  8 1 4 /
\endpicture}
$$
The columns in the middle are related by the Auslander-Reiten translation $\tau$: 
it shifts the right column to the left one. To be precise: $\tau$ sends $N^P$ to zero, 
$N^C$ to $M'$ and
$N'$ to $M^C$ (dually, $\tau^-$ sends $M^I$ to zero, $M^C$ to $N'$ and
$M'$ to $N^C$).
	\medskip

For a proof, we refer to Smal\o{} \cite{[Sm]}.
\end{note}
	\bigskip 

\begin{note}\label{lattice}
{\bf Lattices.} 
Let $P$ be a poset. If $a,b$ are elements of $P$, and the supremum of $a,b$ exists, then we 
denote it by $a\vee b$ and call it the {\it join} of $a$ and $b$ (by definition, $a\vee b$ is an element of $P$ such that $a \le a\vee b,\ b\le a\vee b$ and such that for any element $c$
with $a \le c,\ b \le c$ we have $a\vee b \le c$; of course, if $a\vee b$ exists, it is uniquely
determined by $a$ and $b$). Dually, if the infimum  of $a,b$ exists, then we 
denote it by $a\wedge b$ and call it the {\it meet} of $a$ and $b$. The poset $P$ is said to be
a {\it lattice} provided meet and join exist for any two elements $a,b\in P$.
\end{note}
	\medskip 

\begin{note}\label{torsion}
{\bf The lattice of torsion subcategories.}
Here are some references: 
The restriction of the ordering $\le_t$ to the set of sincere exceptional antichains (or, equivalently,
to the set of tilting modules) has been studied in detail by Happel-Unger \cite{[HU]}, 
for the use of this ordering, see the references in this paper.

The ordering $\le_t$ for the exceptional antichains
has been investigated for  path algebras of quivers
by Iyama-Reiten-Thomas-Todorov \cite{[IRTT]}, 
for general hereditary artin algebra $\Lambda$ see \cite{[R5]}.
Of course, we obtain in this way just the poset of torsion classes with covers
(and these are the functorially finite torsion classes).
In case $\Lambda$ is connected, 
this poset is a lattice if and only if  $\Lambda$ is representation finite
or has just $2$ simple modules.
\end{note}

\vfill\eject
\section{\bf The poset $\A(\mo \Lambda)$ of exceptional antichains}

Let $\Lambda$ be 
a hereditary artin algebra. We consider
 {\bf antichains} in the category $\mo\Lambda$. 
Let us recall that such an antichain $A$ consists of pairwise orthogonal bricks 
and an antichain $A$ is said to be exceptional provided the
quiver of $A$ (with vertices the elements $A_i$ of $A$ and with an arrow from $A_i$ 
to $A_j$ whenever $\Ext^1(A_i,A_j) \neq 0$) has no oriented cyclic paths.
For any hereditary artin algebra, let $\A(\mo\Lambda)$ be the 
{\bf set of exceptional antichains.} This is a poset with respect to the
ordering $A \le A'$ provided any element of $A$
has a filtration with factors in $A'$ (or, equivalently, provided the
extension closure of $A$ is contained in the extension closure of $A'$).
If $\Lambda$ is representation-finite, then $\A(\mo\Lambda)$ is a
lattice (see Theorem \ref{is-lattice}). 
If $\Lambda$ is representation-infinite, then, in general, $\A(\mo\Lambda)$ 
is not a lattice, but also these posets are of interest! This chapter
is devoted to the study of the posets $\A(\mo\Lambda)$ (for the use of the
wording ``exceptional'' see N\,\ref{exceptional}).

As an important example, we consider in Chapter 4
the path algebra $\Lambda_n$ of the linearly oriented quiver 
of type $\mathbb A_n$:
$$
 1 \leftarrow 2 \leftarrow \cdots \leftarrow n
$$
Thus, for $n = 3$, we deal with the
quiver $Q$ shown left, its Auslander-Reiten quiver is shown on the right:
$$\beginpicture
  \setcoordinatesystem units <0.6cm,0.6cm>
\put{\beginpicture
\put{$1$} at 0 0
\put{$2$} at 1 0
\put{$3$} at 2 0
\arr{0.7 0}{0.3 0}
\arr{1.7 0}{1.3 0}
\put{$Q$} at -1 .5
\endpicture} at 0 0 
\put{\beginpicture
  \setcoordinatesystem units <0.4cm,0.4cm>
\multiput{$\circ$} at 0 0  1 1  2 2  3 1  4 0  2 0 /
\arr{0.3 0.3}{0.7 0.7}
\arr{1.3 0.7}{1.7 0.3}
\arr{2.3 0.3}{2.7 0.7}
\arr{3.3 0.7}{3.7 0.3}
\arr{1.3 1.3}{1.7 1.7}
\arr{2.3 1.7}{2.7 1.3}
\endpicture} at 6 0 
\endpicture
$$
Here is the lattice $\A(\mo\Lambda_3)$. By definition, its
elements are antichains, thus sets of (at most 3) indecomposable $\Lambda_3$-modules;
we use bullets in the Auslander-Reiten quiver to mark such an antichain. 
$$
\hbox{\beginpicture
  \setcoordinatesystem units <2cm,2cm>

\put{$\beginpicture
  \setcoordinatesystem units <0.2cm,0.2cm>
  \multiput{$\sssize \circ$} at 0 0  1 1  -1 1  0 2  -2 0  2 0 /
  \multiput{$\ssize \bullet$} at  /
 \endpicture$} at 0 0

\put{$\beginpicture
  \setcoordinatesystem units <0.2cm,0.2cm>
  \multiput{$\sssize \circ$} at 0 0  1 1  -1 1  0 2  -2 0  2 0 /
  \multiput{$\ssize \bullet$} at -2 0 /
 \endpicture$} at -2.5 1
\put{$\beginpicture
  \setcoordinatesystem units <0.2cm,0.2cm>
  \multiput{$\sssize \circ$} at 0 0  1 1  -1 1  0 2  -2 0  2 0 /
  \multiput{$\ssize \bullet$} at  -1 1 /
 \endpicture$} at -1.5 1
\put{$\beginpicture
  \setcoordinatesystem units <0.2cm,0.2cm>
  \multiput{$\sssize \circ$} at 0 0  1 1  -1 1  0 2  -2 0  2 0 /
  \multiput{$\ssize \bullet$} at  0 2 /
 \endpicture$} at -.5 1
\put{$\beginpicture
  \setcoordinatesystem units <0.2cm,0.2cm>
  \multiput{$\sssize \circ$} at 0 0  1 1  -1 1  0 2  -2 0  2 0 /
  \multiput{$\ssize \bullet$} at  0 0 /
 \endpicture$} at 0.5 1
\put{$\beginpicture
  \setcoordinatesystem units <0.2cm,0.2cm>
  \multiput{$\sssize \circ$} at 0 0  1 1  -1 1  0 2  -2 0  2 0 /
  \multiput{$\ssize \bullet$} at  1 1 /
 \endpicture$} at 1.5 1
\put{$\beginpicture
  \setcoordinatesystem units <0.2cm,0.2cm>
  \multiput{$\sssize \circ$} at 0 0  1 1  -1 1  0 2  -2 0  2 0 /
  \multiput{$\ssize \bullet$} at  2 0 /
 \endpicture$} at 2.5  1
\put{$\beginpicture
  \setcoordinatesystem units <0.2cm,0.2cm>
  \multiput{$\sssize \circ$} at 0 0  1 1  -1 1  0 2  -2 0  2 0 /
  \multiput{$\ssize \bullet$} at -2 0  0 0 /
 \endpicture$} at -2.5 2
\put{$\beginpicture
  \setcoordinatesystem units <0.2cm,0.2cm>
  \multiput{$\sssize \circ$} at 0 0  1 1  -1 1  0 2  -2 0  2 0 /
  \multiput{$\ssize \bullet$} at -2 0  1 1  /
 \endpicture$} at -1.5 2
\put{$\beginpicture
  \setcoordinatesystem units <0.2cm,0.2cm>
  \multiput{$\sssize \circ$} at 0 0  1 1  -1 1  0 2  -2 0  2 0 /
  \multiput{$\ssize \bullet$} at  -2 0  2 0 /
 \endpicture$} at -.5 2
\put{$\beginpicture
  \setcoordinatesystem units <0.2cm,0.2cm>
  \multiput{$\sssize \circ$} at 0 0  1 1  -1 1  0 2  -2 0  2 0 /
  \multiput{$\ssize \bullet$} at  0 0  0 2 /
 \endpicture$} at 0.5 2
\put{$\beginpicture
  \setcoordinatesystem units <0.2cm,0.2cm>
  \multiput{$\sssize \circ$} at 0 0  1 1  -1 1  0 2  -2 0  2 0 /
  \multiput{$\ssize \bullet$} at  -1 1  2 0 /
 \endpicture$} at 1.5 2
\put{$\beginpicture
  \setcoordinatesystem units <0.2cm,0.2cm>
  \multiput{$\sssize \circ$} at 0 0  1 1  -1 1  0 2  -2 0  2 0 /
  \multiput{$\ssize \bullet$} at  0 0  2 0 /
 \endpicture$} at 2.5 2

\put{$\beginpicture
  \setcoordinatesystem units <0.2cm,0.2cm>
  \multiput{$\sssize \circ$} at 0 0  1 1  -1 1  0 2  -2 0  2 0 /
  \multiput{$\ssize \bullet$} at  -2 0  0 0  2 0 /
 \endpicture$} at  0 3

\plot -0.4 0.2 -2.3 0.7 /
\plot -0.2 0.2 -1.4 0.7 /
\plot -0.1 0.2 -0.5 0.7 /
\plot .1 0.2 0.5 0.7 /
\plot .2 0.2 1.4 0.7 /
\plot .4 0.2 2.3 0.7 /

\plot -0.4 2.8 -2.3 2.3 /
\plot -0.2 2.8 -1.4 2.3 /
\plot -0.1 2.8 -0.5 2.3 /
\plot .1 2.8 0.5 2.3 /
\plot .2 2.8 1.4 2.3 /
\plot .4 2.8 2.3 2.3 /

\plot -2.6 1.2  -2.6 1.8 /
\plot -2.5 1.2  -1.6 1.8 /
\plot -2.4 1.2  -0.6 1.8 /

\plot -1.6 1.2  -2.5 1.8 /
\plot -1.4 1.2   1.4 1.8 /

\plot -.6 1.2  -1.5 1.8 /
\plot -.5 1.2   0.4 1.8 /
\plot -.4 1.2   1.5 1.8 /

\plot .4 1.2  -2.4 1.8 /
\plot .5 1.2   .5 1.8 /
\plot .6 1.2   2.4 1.8 /

\plot 1.4 1.2  -1.4 1.8 /
\plot 1.6 1.2   2.5 1.8 /

\plot 2.4 1.2  -.4 1.8 /
\plot 2.5 1.2   1.6 1.8 /
\plot 2.6 1.2   2.6 1.8 /

\endpicture}
$$

It turns out that the lattices $\A(\mo\Lambda_n)$ can be identified 
with the lattices of non-crossing
partitions (see Theorem \ref{classical});
these lattices have attracted a lot of interest in recent years
and we will provide in Chapter 4 a short survey on the relevance of these lattices.
The lattices of non-crossing partitions are related to the Coxeter groups
of type $\mathbb A$, corresponding lattices have been defined for any finite
Coxeter group, and corresponding posets for all Coxeter groups, namely the posets
of generalized non-crossing partitions. They will be introduced in Section \ref{main}.

The representation theory of hereditary artin algebras (or, more generally, of hereditary artinian
rings) can be used in order to provide a categorification of the posets of
generalized non-crossing partitions and it is the aims of this chapter
to provide a direct access to this categorification. 
Since the lectures just deal with
artin algebras (and not artinian rings in general), we restrict
to Coxeter groups which arise as Weyl groups for some 
symmetrizable generalized Cartan matrix. We will show in Section \ref{main} that
for any hereditary artin algebra $\Lambda$ of type $\Delta$, the poset $\A(\mo\Lambda)$ is
isomorphic to the poset $\NC(W(\Lambda),c(\Lambda))$, where $W(\Lambda)$ is the
Weyl group of type $\Delta$ and $c(\Lambda)$ is the Coxeter element in $W$ corresponding
to the orientation of $\Delta$ given by $\Lambda$.
	\medskip 

\subsection{The poset $\A(\mo\Lambda)$: Definition and first properties}
Let $\Lambda$ be a hereditary artin algebra. Recall that a full subcategory 
$\mathcal A$ of $\mo\Lambda$ is called 
{\it exceptional} provided it is thick and its quiver has no oriented cyclic paths. We
denote by $\A(\mo\Lambda)$ the set
of exceptional subcategories of $\mo\Lambda$. This is a poset
with respect to set-theoretical inclusion. Equivalently, we may consider $\A(\mo\Lambda)$
as the set of exceptional antichains in $\mo\Lambda$.
In terms of antichains, the
partial ordering has to be formulated as follows: given two exceptional antichains $A,B$, 
we write $A\le B$ provided any element $A_i$ of the antichain $A$ has a filtration with
factors belonging to $B$. 	
	\medskip 

\subsubsection{\bf The layers of $\A(\mo\Lambda)$.}
For any natural number $t$, let
$\A_t(\mo\Lambda)$ be the subset of $\A(\mo\Lambda)$ given by the exceptional subcategories with
rank equal to $t$. Equivalently, we may consider $\A_t(\mo\Lambda)$ as the set of exceptional
antichains of cardinality $t$.

\begin{no-text}
{\rm The poset $\A(\mo\Lambda)$ has a smallest element, namely the zero subcategory $0$, and
a greatest element, namely $\mo\Lambda$ itself. Thus $\A_0(\mo\Lambda) = \{0\}$ and
$\A_n(\mo\Lambda) = \{\mo\Lambda\}$, where $n$ is the rank of $\Lambda$.}
\end{no-text}

\begin{no-text}
The elements of $\A_1(\mo\Lambda)$ are the subcategories $\add X$,
where $X$ is an exceptional module. {\rm Namely, here we deal with antichains of cardinality $1$,
thus with bricks (recall that a brick is a module whose endomorphism ring is a division ring;
in particular, a brick is indecomposable), and to say that the quiver of such an antichain 
has no oriented cyclic paths 
just means that it consists of a single vertex and no arrow, thus we deal with an 
indecomposable module without self-extensions.}
\end{no-text}

If $P$ is a poset, then $p\in P$ is said to have {\it height} $t$
provided any chain $p_0 < p_1 < \cdots < p_s \le p$ has length $s\le t$ and there is such a chain
with $s = t.$

\begin{no-text}
Let $\mathcal A$ belong to $\A(\mo\Lambda)$. Then $\mathcal A$  has height $t$ in $\A(\mo\Lambda)$ 
if and only if $\mathcal A$ has rank $t$ {\rm (thus belongs to
$\A_t(\mo\Lambda)$).}
\end{no-text}

\begin{proof} 
Let 
$$
 \mathcal A_0 \subset \mathcal A_1 \subset \cdots \subset \mathcal A_s \subseteq \mathcal A
$$
be a chain of exceptional subcategories. Then, according to Corollary
\ref{thick-closure2}, we have
$$
 \rank\mathcal A_0 < \rank\mathcal A_1 < \cdots < \rank\mathcal A_s \le \rank\mathcal A = t,
$$
thus $s \le t.$ On the other hand, let $\mathcal A = \mathcal E(A_1,\dots,A_t)$, where 
$A = \{A_1,\dots,A_t\}$ is an antichain of cardinality $t$
whose quiver has no oriented cyclic path.  
Let $\mathcal A(i) = \mathcal E(A_1,\dots,A_i)$. 
Since $\{A_1,\dots,A_i\}$ is an antichain whose quiver has no oriented cyclic path, 
$\mathcal A(i)$ belongs to
$\A_i(\mo\Lambda)$ and there is the inclusion chain
$$
 \mathcal A(0) \subset \mathcal A(1) \subset \cdots \subset \mathcal A(t) = \mathcal A.
$$
Thus shows that $\mathcal A$ has height $t$ as an element of $\A(\mo\Lambda)$.
\end{proof}

\subsubsection{\bf Independence under BGP-reflections.}
As we will see in \ref{change}, changing the orientation of the quiver of $\Lambda$, 
the corresponding posets $\A(\mo\Lambda)$ may be non-isomorphic. But if we use BGP-reflections,
thus changing the orientation of all the arrows at a sink, then we obtain isomorphic posets.
	\medskip

\begin{prop} Let $\Lambda'$ be obtained from $\Lambda$ by changing the orientation
at a sink $x$. Then the posets $\A(\mo\Lambda)$ and $\A(\mo\Lambda')$ are isomorphic.
\end{prop}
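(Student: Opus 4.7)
The plan is to upgrade the set-theoretic bijection $\eta\colon \A_r(\mo\Lambda)\to \A_r(\mo\Lambda')$ from the proof of Lemma \ref{BGP} to a poset isomorphism by using that the BGP functor $\rho_x$ is an equivalence on the appropriate subcategories. Let $S = S(x)$ denote the simple $\Lambda$-module at $x$ (which is simple projective, as $x$ is a sink), and $S' = S'(x)$ the simple $\Lambda'$-module at $x$ (which is simple injective, as $x$ is now a source). The standard facts about BGP-reflections give an equivalence between the full subcategories $\mathcal M\subset\mo\Lambda$ and $\mathcal M'\subset\mo\Lambda'$ of modules without $S$, resp.\ $S'$, as a direct summand, with quasi-inverse $\rho_x^-$; moreover this equivalence is exact on short exact sequences of modules in $\mathcal M$ and preserves $\Hom$ and $\Ext^1$ between such modules. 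I would use this as the main technical input.

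Next I would verify that $\eta$ sends exceptional antichains to exceptional antichains. Recall the construction: if $S\in A$, set $\eta(A)=(A\setminus\{S\})\cup\{S'\}$ (the remaining $A_i$ satisfy $\Hom(S,A_i)=0$, hence lie in $\mathcal M$ and may be regarded as $\Lambda'$-modules); if $S\notin A$, set $\eta(A)=\rho_x(A)$. That $\eta(A)$ is an antichain is immediate from the equivalence on $\mathcal M$. For exceptionality, the $\Ext$-quivers of $A$ and $\eta(A)$ agree on all arrows not incident to $S$ (resp.\ $S'$). The arrows incident to $S$ in the $\Ext$-quiver of $A$ all point away from $S$ (because $S$ is simple projective, so $\Ext^1(-,S)=0$), while the arrows incident to $S'$ in $\eta(A)$ all point into $S'$ (because $S'$ is simple injective). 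Consequently, an oriented cycle in the $\Ext$-quiver of $\eta(A)$ would transport back to an oriented cycle in that of $A$, contradicting exceptionality. The same argument in reverse shows $\eta$ restricts to bijections on exceptional antichains, so $\eta\colon\A(\mo\Lambda)\to\A(\mo\Lambda')$ is a bijection.

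Finally I would prove that $\eta$ preserves and reflects the order. Suppose $A\le B$, i.e.\ $\mathcal E(A)\subseteq\mathcal E(B)$; the task is to show $\mathcal E(\eta(A))\subseteq\mathcal E(\eta(B))$. Since $A\le B$ and $S$ is simple in $\mo\Lambda$, membership $S\in A$ forces $S\in B$, so only three cases arise. If $S\notin A$ and $S\notin B$, then all modules involved lie in $\mathcal M$, and $\rho_x$ (being exact and fully faithful on $\mathcal M$) transports any $B$-filtration of an $A_i$ to an $\eta(B)$-filtration of $\rho_x(A_i)=\eta(A)_i$. If $S\in A$ and $S\in B$, the case is essentially trivial since both $\eta(A)$ and $\eta(B)$ just swap $S$ for $S'$ and reinterpret the remaining modules. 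The main obstacle is the mixed case $S\notin A$, $S\in B$: here each $A_i\in\mathcal M$ has a filtration whose factors are either elements of $B\setminus\{S\}\subset\mathcal M$ or copies of $S$. I would handle this by induction on the length of such a filtration; the key point is that $\rho_x$, although it kills $S$ itself, turns a short exact sequence $0\to M'\to M\to S\to 0$ (with $M\in\mathcal M$) into a short exact sequence $0\to S'\to \rho_x(M)\to\rho_x(M')\to 0$ in $\mo\Lambda'$, because $S'$ is simple injective and arises as the cokernel contribution of the reflection. Iterating this and using the $\mathcal M$-equivalence on the non-$S$ factors produces the required filtration of $\rho_x(A_i)$ by elements of $\eta(B)=(B\setminus\{S\})\cup\{S'\}$. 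Applying the same argument to $\eta^{-1}=\eta'$ (defined via $\rho_x^-$ at the source $x$ of $\Lambda'$) yields the reverse implication, so $\eta$ is an isomorphism of posets.
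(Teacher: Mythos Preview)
Your overall strategy matches the paper's: use the bijection $\eta$ from Lemma~\ref{BGP} and verify it respects the order by a three-case analysis. Two points need correction.

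\textbf{Minor (directions at $S$, $S'$).} You write that $S$ simple projective gives $\Ext^1(-,S)=0$; in fact projectivity gives $\Ext^1(S,-)=0$. Thus in the $\Ext$-quiver of $A$ the vertex $S$ is a \emph{sink} (all arrows point into $S$), and dually $S'$ is a \emph{source} in the $\Ext$-quiver of $\eta(A)$. With this correction your acyclicity argument still goes through.

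\textbf{Main gap (the mixed case).} Your inductive step rests on a short exact sequence $0\to M'\to M\to S\to 0$ with $M\in\mathcal M$. But since $x$ is a sink, $S=S(x)$ is projective, so any such sequence splits and $S$ becomes a summand of $M$, contradicting $M\in\mathcal M$. More generally, in a $B$-filtration of $A_i$, the factor $S$ can never appear at the top of a module in $\mathcal M$, and the intermediate filtration terms $M_j$ typically acquire $S$-summands, so the hypothesis ``$M\in\mathcal M$'' cannot be maintained through the induction. The paper avoids this by exploiting the projectivity of $S$ globally rather than step by step: for each $A_i$ one has canonical sequences
\[
0 \to S^{u} \to A_i \to M \to 0 \quad\text{and}\quad 0 \to M \to \rho_x(A_i) \to (S')^{v} \to 0,
\]
where $M$ is a $\Lambda_0$-module. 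Since $\mathcal E(B)$ is thick and contains $A_i$ and $S$, it contains the cokernel $M$; as $M$ has no $S$-composition factor, its $B$-filtration uses only $B\setminus\{S\}$. Then the second sequence exhibits $\rho_x(A_i)$ as an extension of $(S')^v$ by $M$ inside $\mathcal E(\eta(B))$. This replaces your filtration induction with a single two-step argument.
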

	
\begin{proof} 
We have constructed in the proof of Lemma \ref{BGP} a bijection 
$$
 \eta\!:\A(\mo\Lambda) \to \A(\mo\Lambda')
$$ 
as follows: Let $A$ be an antichain in $\mo\Lambda$. If $S(x)$ belongs to $A$, then we may consider
all elements of $A$ also as $\Lambda'$-modules, and we put $\eta(A) = A$. If $S(x)$ does not
belong to $A$, then $\eta(A)$ is obtained from $A$ by applying the BGP-reflection functor $\rho_x$
to all elements of $A$. We have to show that $\eta$ preserves and reflects the partial ordering. 

Assume that $A \le  B$ are antichains in $\mo\Lambda$. If $S(x)$ belongs to $A$, then also to $B$
(namely, $A \le  B$ asserts that $S(x)$ has a filtration with factors in $B$, but $S(x)$ has only
trivial filtrations). Let $\Lambda_0$ be obtained from $\Lambda$ (or $\Lambda'$) by deleting the
vertex $x$. Let $A_i$ be an element of $A$  different from $S(x)$. Since $A \le  B$, the module $A_i$
has a filtration with factors in $B$, but all these factors are $\Lambda_0$-modules, thus $A_i$
considered as a $\Lambda'$-module, has a filtration with factors in $\eta(B)$. Therefore $\eta(A) \le  \eta(B)$.

Next assume that $S(x)$ belongs neither to $A$ nor to $B$. If $A_i$ belongs to $A$, there is a 
filtration with factors $B_{ij}$ in $B$. Since $\sigma_x$ is an exact functor from the
category of $\Lambda$-modules without direct summands of the form $S_\Lambda(x)$ to the 
category of $\Lambda'$-modules without direct summands of the form $S_{\Lambda'}(x)$, we see
that $\sigma_x(A_i)$ has a filtration with factors $\sigma(B_{ij}).$ 

It remains to consider the case that $S(x)$ does not belong to $A$, but belongs to $B$.
As a consequence, $\eta(B) = B.$ 
Let $A_i$ be an element of $A$. There is an exact sequence
$$
 0 \to S(x)^u \to A_i \to M \to 0
$$
such that $M$ is a $\Lambda_0$-module, and an exact sequence
$$
 0 \to M \to \sigma_x(A_i) \to S(x)^v \to 0,
$$
with natural numbers $u,v$.
Let $\mathcal B = \mathcal E(B)$, this is the thick closure of $B$. Since $A \le  B$, we know that
$A_i$ belongs to $\mathcal B$. Since $S(x)$ belongs to $\mathcal B$, also the cokernel $M$
in the first exact sequence belongs to $\mathcal B$. The second sequence now shows that
$\sigma_x(A_i)$ belongs to $\mathcal B$, thus has a filtration with factors in $B$. 
Therefore $\eta(A) \le  \eta(B)$. 
	\smallskip 

This shows that $\eta$ preserves the ordering. A similar argument using the BGP-reflection functors
at sources shows that $\eta^{-1}$ also preserves the ordering. 
\end{proof}
	\medskip 

\subsection{The poset $\A(\mo\Lambda)$: Is it a lattice?} We will show that in case 
$\Lambda$ is representation-finite, $\A(\mo\Lambda)$ is a lattice. If $\Lambda$ is
not representation-finite, then  $\A(\mo\Lambda)$ usually is not a lattice.
	\medskip 

\subsubsection{} First, we consider the representation-finite case.

\begin{theorem}\label{is-lattice}
If $\Lambda$ is representation-finite, then
$\A(\mo\Lambda)$ is a lattice, and in this case, the meet is given by the set-theoretical intersection.
\end{theorem}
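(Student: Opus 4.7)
The plan is to first show that the set-theoretic intersection of two exceptional subcategories is itself an exceptional subcategory, and then deduce the existence of joins by a standard finite-lattice argument.

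First I would verify that if $\mathcal{A},\mathcal{B}$ are thick subcategories of $\mo\Lambda$, then $\mathcal{A}\cap \mathcal{B}$ is again a thick subcategory. Indeed, if $f\colon X\to Y$ is a morphism between objects lying in both $\mathcal{A}$ and $\mathcal{B}$, then $\ker f$ and $\operatorname{coker} f$ lie in each of $\mathcal{A}$ and $\mathcal{B}$ (since each is closed under kernels and cokernels), so they lie in the intersection; and an extension of objects of $\mathcal{A}\cap\mathcal{B}$ by objects of $\mathcal{A}\cap\mathcal{B}$ lies in each, hence in the intersection. So $\mathcal{A}\cap\mathcal{B}$ is thick.

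Next I would invoke the bijection between thick subcategories and antichains (Section 2.4, or directly: the simple objects of a thick subcategory form an antichain, and the thick subcategory is recovered as its extension closure). So $\mathcal{A}\cap\mathcal{B}$ corresponds to some antichain $A$ in $\mo\Lambda$. Since $\Lambda$ is representation-finite, Theorem~\ref{IT-bijections} tells us that \emph{every} antichain in $\mo\Lambda$ is exceptional. Consequently $\mathcal{A}\cap\mathcal{B}$ lies in $\A(\mo\Lambda)$. Since it is obviously the largest thick (hence exceptional) subcategory contained in both $\mathcal{A}$ and $\mathcal{B}$, it is the meet $\mathcal{A}\wedge\mathcal{B}$ in the poset $\A(\mo\Lambda)$, and it coincides with the set-theoretical intersection, as claimed.

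Finally, to produce joins, I would use the standard fact that a finite poset that has a greatest element and admits all pairwise meets is automatically a lattice: given $\mathcal{A},\mathcal{B}\in\A(\mo\Lambda)$, define
\[
   \mathcal{A}\vee\mathcal{B} \;=\; \bigwedge \{\,\mathcal{C}\in\A(\mo\Lambda)\mid \mathcal{A}\subseteq\mathcal{C},\ \mathcal{B}\subseteq\mathcal{C}\,\},
\]
where the intersection on the right is over a non-empty set (it contains $\mo\Lambda$) and is finite because $\A(\mo\Lambda)$ is finite in the representation-finite case. By the previous step this iterated meet exists in $\A(\mo\Lambda)$ and is manifestly the least upper bound of $\mathcal{A}$ and $\mathcal{B}$. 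The only delicate point in the argument is the passage from thick to exceptional, and that is precisely where representation-finiteness enters; without it, one could not conclude that the intersection, thought of via its simple objects, yields an antichain whose $\operatorname{Ext}$-quiver has no oriented cycles, and indeed we will see that $\A(\mo\Lambda)$ need not be a lattice in the representation-infinite case.
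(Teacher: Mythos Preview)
Your proof is correct and follows essentially the same approach as the paper. The paper's argument is slightly more compressed: it observes that thick subcategories are closed under arbitrary intersections (hence form a complete lattice), and that in the representation-finite case every thick subcategory is exceptional, so $\A(\mo\Lambda)$ coincides with this complete lattice. Your argument unpacks the same ideas and obtains joins via the finiteness of $\A(\mo\Lambda)$ rather than via completeness, which is a harmless variation since the set is indeed finite in the representation-finite case.
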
 
	
\begin{proof}
The set of thick subcategories is closed under (set-theoretical) intersections, 
thus it is a complete lattice. If $\Lambda$ is representation-finite, then any thick
subcategory is exceptional, thus in this case $\A(\mo\Lambda)$ is just the
lattice of thick subcategories. 
\end{proof}

\subsubsection{\bf Example}\label{example-meet}
{\it The set-theoretical intersection of two elements $\mathcal X_1, \mathcal X_2$ 
of
$\A(\mo\Lambda)$ may not belong to $\A(\mo\Lambda)$, whereas the meet $\mathcal X_1\wedge \mathcal X_2$
in $\A(\mo\Lambda)$ still may exist,} as the following example shows:

Let $\Lambda$ be the path algebra of the quiver $\widetilde {\mathbb A}_{2,1}$
$$
\hbox{\beginpicture
  \setcoordinatesystem units <1cm,1cm>
\multiput{$\circ$} at 0 0  1 0.35  2 0 /
\arr{0.8 0.3}{0.2 0.1}
\arr{1.8 0.1}{1.2 0.3}
\arr{1.8 0}{0.2 0}
\put{$\alpha$} at 0.45 0.4
\put{$\beta$} at 1.6 0.4
\endpicture}
$$
Let $P, P',Q, Q'$ be the indecomposable modules with dimension vectors
$$
 \bdim P = 100,\ \bdim P' = 110,\ \bdim Q = 011,\ \bdim Q' = 001.
$$ 
Then these are exceptional
modules and $\mathcal X = \mathcal E(P,Q)$ and $\mathcal X' = \mathcal E(P',Q')$ are exceptional subcategories. 
The category $\mathcal X$ consists of all representations $M$ of the quiver
such that $M_\alpha$ is bijective, the category 
$\mathcal X'$ consists of all representations $M$ of the quiver
such that $M_\beta$ is bijective. Thus $\mathcal X\cap \mathcal X'$ consists of all representations
$M$ such that both $M_\alpha$ 
and $M_\beta$ are bijective, this is a thick subcategory with infinitely many simple objects,
all having self-extensions (it is just the full subcategory of $\mo\Lambda$ given by the
homogeneous tubes). Since $\mathcal X\cap \mathcal X'$ contains no exceptional module, we have
$\mathcal X\wedge \mathcal X' = 0$ in $\A(\mo\Lambda)$.
	\medskip 

\subsubsection{\bf Example.}\label{example-not-lattice}  
{\it If $\Lambda$ is the path algebra of a quiver of type $\widetilde{\mathbb A}_{2,2}$,
then $\A(\mo\Lambda)$ is not a lattice.} We consider the following quiver:
$$
\hbox{\beginpicture
  \setcoordinatesystem units <1cm,.6cm>
\multiput{$\circ$} at 0 0  1 1  1 -1  2 0 /
\arr{0.8 0.8}{0.2 0.2}
\arr{0.8 -.8}{0.2 -.2}
\arr{1.8 0.2}{1.2 0.8}
\arr{1.8 -.2}{1.2 -.8}
\put{$\alpha\strut$} at 0.4 -.86
\put{$\beta\strut$} at 1.6 -.9
\put{$\ssize 0$} at -.2 0
\put{$\ssize 1$} at 1 1.4
\put{$\ssize 1'$} at 1 -1.4
\put{$\ssize 2$} at 2.2 0
\endpicture}
$$
We consider the following elements of $\A(\mo\Lambda)$:
$$
\hbox{\beginpicture
  \setcoordinatesystem units <1cm,1cm>
\put{\beginpicture
\plot 0 0.3  0 1.7 /
\plot 2 0.3  2 1.7 /
\plot 0.3 0.3  1.7 1.7 /
\plot 1.7 0.3  1.2 0.8 /
\plot 0.8 1.2  0.3 1.7 /

\put{$\mathcal E\Bigl(\smallmatrix  &1\cr
                               0 & & 0 \cr
                                 &0 \endsmallmatrix\Bigr) = \mathcal X_1$} at -.9 0
\put{$\mathcal X_2 = \mathcal E\Bigl(\smallmatrix  &0\cr
                               1 & & 1 \cr
                                 &1 \endsmallmatrix\Bigr)$} at 3 0
\put{$\mathcal E\Bigl(\smallmatrix  &0\cr
                               1 & & 0 \cr
                                 &0 \endsmallmatrix,\
                  \smallmatrix  &1\cr
                               0 & & 0 \cr
                                 &0 \endsmallmatrix,\
                  \smallmatrix  &0\cr
                               0 & & 1 \cr
                                 &1 \endsmallmatrix\Bigr) = \mathcal Y_1$} at -1.9 2
\put{$\mathcal Y_2 =  \mathcal E\Bigl(\smallmatrix  &0\cr
                               1 & & 0 \cr
                                 &1 \endsmallmatrix,\
                  \smallmatrix  &1\cr
                               0 & & 0 \cr
                                 &0 \endsmallmatrix,\
                  \smallmatrix  &0\cr
                               0 & & 1 \cr
                                 &0 \endsmallmatrix\Bigr)$} at 3.9 2
\put{} at 0 2.5
\endpicture} at 0 0
\endpicture}
$$
thus, $\mathcal X_1 = \add S(1),\ \mathcal X_2 = \add \tau S(1)$, and $\mathcal Y_2 = S(1')^\perp,\
\mathcal Y_1 = (\tau S(1'))^\perp$.

Let us show that $\mathcal X_1,\mathcal X_2$ have no supremum in $\A(\mo\Lambda)$. Assume, for the contrary, that $\mathcal C$ is the supremum, 
then $\mathcal X_1 \subset \mathcal C \subset \mathcal Y_1$ shows that
$\mathcal C$ has rank $2$. But there is no exceptional subcategory $\mathbb C$ 
which has
rank $2$ and contains $\mathcal X_1$ and $\mathcal X_2.$ Namely, if $C_1,C_2$ are 
the simple objects in $\mathcal C$, then the objects in
$\mathcal X_1$ and in $\mathcal X_2$ must have filtrations with factors of the form $C_1,C_2$, but this
implies that $C_1,C_2$ are the indecomposable modules in $\mathcal X_1$ and $\mathcal X_2$, and thus
the quiver of $\mathcal C$ is the oriented cycle with two vertices and two arrows. 

It follows that $\mathcal Y_1,\mathcal Y_2$ have no infimum in $\A(\mo\Lambda)$. Namely, the infimum 
of $\mathcal Y_1,\mathcal Y_2$ would be an exceptional subcategory of rank $2$ and 
containing $\mathcal X_1,\mathcal X_2$, thus the supremum of $\mathcal X_1,\mathcal X_2.$
	\medskip

One may enlarge $\A(\mo\Lambda)$ by inserting a thick subcategory $\mathcal C$ which is both a supremum
of $\mathcal X_1,\mathcal X_2$ and an infimum of $\mathcal Y_1,\mathcal Y_2$:
$$
\hbox{\beginpicture
  \setcoordinatesystem units <.8cm,.8cm>
\put{\beginpicture
\plot 0.3 0.3  0.7 0.7 /
\plot 1.7 0.3  1.3 0.7 /
\plot 0.3 1.7  0.7 1.3 /
\plot 1.7 1.7  1.3 1.3 /

\put{$\mathcal X_1$} at 0 0
\put{$\mathcal X_2$} at 2 0
\put{$\mathcal Y_1$} at 0 2
\put{$\mathcal Y_2$} at 2 2
\put{$\bigcirc$} at 1 1
\put{$\bullet$} at 1 1
\put{$\mathcal C$} at 1.4 1
\endpicture} at 0 0
\endpicture}
$$
There are many possibilities to do so: the supremum of $\mathcal X_1,\mathcal X_2$ in the lattice of thick 
subcategories is the extension closure of $\mathcal X_1,\mathcal X_2$, this is just a single tube of rank 2
in $\mo\Lambda$ (namely the tube containing the simple representation $S(1)$).
The infimum of $\mathcal Y_1,\mathcal Y_2$ in the lattice of thick subcategories
consists of the representations $M$ with $M_\alpha$ and $M_\beta$ both being bijective, 
these are all the regular modules without indecomposable direct summands in the rank 2 tube 
containing the simple representation $S(1')$.

\begin{lemma}\label{meet} If  $\mathcal X$ is an exceptional subcategory
of rank at most $2$, then the meet $\mathcal X\wedge \mathcal Y$ exists,
for any exceptional subcategory $\mathcal Y$.
\end{lemma}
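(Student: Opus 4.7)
I would case-split on $\rank\mathcal X$, the nontrivial case being rank $2$, and exploit the rigidity provided by Corollary~\ref{thick-closure2}: every proper exceptional subcategory of a rank-$2$ exceptional subcategory has rank at most $1$. In the low-rank cases everything is immediate. If $\rank\mathcal X = 0$, then $\mathcal X = 0$ and $\mathcal X\wedge\mathcal Y = 0$; if $\rank\mathcal X = 1$, write $\mathcal X = \add E$, and rigidity gives the only exceptional subcategories of $\mathcal X$ as $0$ and $\mathcal X$, so the meet is $\mathcal X$ when $E\in\mathcal Y$ and is $0$ otherwise.

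Now assume $\rank\mathcal X = 2$ and $\mathcal X\not\subseteq\mathcal Y$ (the case $\mathcal X\subseteq\mathcal Y$ being trivial). Any common lower bound $\mathcal Z\in\A(\mo\Lambda)$ is a proper exceptional subcategory of $\mathcal X$, so $\rank\mathcal Z\le 1$; thus $\mathcal Z = 0$ or $\mathcal Z = \add E$ for some exceptional $E\in\mathcal X\cap\mathcal Y$. The proof reduces to the uniqueness claim that \emph{at most one isomorphism class of exceptional module lies in $\mathcal X\cap\mathcal Y$}; given this, the set of common lower bounds in $\A(\mo\Lambda)$ is a chain whose maximum supplies the meet $\mathcal X\wedge\mathcal Y$.

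To prove uniqueness I would argue by contradiction: if $E_1\not\cong E_2$ are two exceptional modules both lying in $\mathcal X\cap\mathcal Y$, I would derive $\mathcal X\subseteq\mathcal Y$. Let $\mathcal T$ denote the smallest thick subcategory of $\mo\Lambda$ containing $\{E_1, E_2\}$; by thickness of $\mathcal X$ and $\mathcal Y$ we have $\mathcal T\subseteq\mathcal X\cap\mathcal Y$. Granting that $\mathcal T$ is itself exceptional (the main obstacle, addressed next), the two non-isomorphic indecomposables $E_1, E_2\in\mathcal T$ force $\rank\mathcal T\ge 2$; combined with $\mathcal T\subseteq\mathcal X$ and Corollary~\ref{thick-closure2} this yields $\mathcal T = \mathcal X$. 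Hence $\mathcal X = \mathcal T\subseteq\mathcal Y$, the desired contradiction.

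The main obstacle is thus the exceptionality of the thick closure $\mathcal T$ of $\{E_1, E_2\}$. In the representation-finite case this is automatic, as every thick subcategory of a representation-finite module category is exceptional. In general it may be verified via the perpendicular calculus developed in Section~2.5, applying Theorems~\ref{perp-perp} and~\ref{perp-rank} inside the rank-$2$ ambient $\mathcal X$ (writing $E_i^\perp = \add F_i$ within $\mathcal X$ and analysing the cases $F_1\cong F_2$ and $F_1\not\cong F_2$ separately), or equivalently by invoking the Hubery--Krause axiomatic framework~\cite{[HK]}.
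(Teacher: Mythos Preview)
Your overall architecture matches the paper's: split by the rank of $\mathcal X$, and in the rank-$2$ case reduce to the claim that two non-isomorphic exceptional modules $E_1,E_2$ in $\mathcal X\cap\mathcal Y$ force $\mathcal X\subseteq\mathcal Y$. The paper organises the casework slightly differently (counting how many exceptional modules lie in $\mathcal X\cap\mathcal Y$: none, one, or at least two), but the content is the same.

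Where you and the paper diverge is in the handling of the key step. You take the thick closure $\mathcal T$ of $\{E_1,E_2\}$ and try to show $\mathcal T$ is \emph{exceptional}, so that a rank comparison via Corollary~\ref{thick-closure2} forces $\mathcal T=\mathcal X$. You correctly flag exceptionality of $\mathcal T$ as the obstacle, but your proposed resolutions (perpendicular calculus on the $F_i=E_i^\perp$, or an appeal to \cite{[HK]}) are vague and somewhat heavy. The paper bypasses this entirely: it invokes the elementary note N\,\ref{rank-2}, which shows directly that in any rank-$2$ hereditary module category the \emph{abelian} closure (under kernels and cokernels alone) of two non-isomorphic exceptional modules is already the whole category. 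Thus the thick closure of $\{E_1,E_2\}$ inside $\mathcal X$ is automatically all of $\mathcal X$, with no need to establish exceptionality along the way; since $\mathcal Y$ is thick, $\mathcal X\subseteq\mathcal Y$ follows. The argument in N\,\ref{rank-2} is a short induction on the position of one of the modules in the preprojective component, and is more elementary than anything your sketch proposes.
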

	
\begin{proof} 
If $\mathcal X\cap \mathcal Y$ contains no exceptional modules, then clearly
$\mathcal X\wedge \mathcal Y = 0$. If $\mathcal X\cap \mathcal Y$ contains precisely one 
exceptional module $M$, then $\mathcal X\wedge \mathcal Y = \add M$. Finally, if
If $\mathcal X\cap \mathcal Y$ contains at least two  
exceptional modules $M_1,M_2$, then the condition $\rank\mathcal X \le 2$ 
implies that $\rank \mathcal X = 2$ and that $\mathcal X$ has to be the thick closure of $M_1,M_2$
(see N\,\ref{rank-2}), thus $\mathcal X \subseteq \mathcal Y$ and therefore
$\mathcal X\wedge \mathcal Y = \mathcal X$.
\end{proof} 	

By duality we see: {\it If $\Lambda$ has rank $n$, and $\mathcal X$ is an exceptional subcategory
of $\mo\Lambda$ of rank at least $n-2,$ then the join $\mathcal X \vee \mathcal Y$ exists for any
exceptional subcategory $\mathcal Y$.}

\begin{cor} If  $\Lambda$ is a hereditary artin algebra of rank at most 3, 
then the
poset $\A(\mo\Lambda)$ always 
is a lattice.
\end{cor}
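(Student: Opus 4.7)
The plan is to reduce everything to Lemma \ref{meet} and its dual, handling the trivial ranks separately. Let $n = \rank\Lambda \le 3$ and let $\mathcal X,\mathcal Y \in \A(\mo\Lambda)$. We need to exhibit both $\mathcal X \wedge \mathcal Y$ and $\mathcal X \vee \mathcal Y$.

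First I would settle meets. If either $\mathcal X$ or $\mathcal Y$ has rank $\le 2$, Lemma \ref{meet} directly supplies $\mathcal X \wedge \mathcal Y$. Since $n \le 3$, the only remaining case is $n=3$ with both $\mathcal X$ and $\mathcal Y$ of rank $3$, in which case both equal $\mo\Lambda$ (by Corollary \ref{thick-closure}, an exceptional subcategory of rank $n$ inside $\mo\Lambda$ must be all of $\mo\Lambda$), so the meet is $\mo\Lambda$ itself.

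Next I would settle joins by the dual argument. The duality remark following Lemma \ref{meet} asserts that if $\mathcal X$ has rank at least $n-2$, then $\mathcal X \vee \mathcal Y$ exists. For $n \le 2$ this condition is automatic, so joins always exist in those cases. For $n=3$, the condition becomes $\rank \mathcal X \ge 1$; so if $\rank\mathcal X \ge 1$ or $\rank\mathcal Y \ge 1$, we are done. The only remaining situation is when both have rank $0$, i.e.\ both are the zero subcategory, and then $\mathcal X \vee \mathcal Y = 0$ trivially.

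The one point that needs a moment's care is the dual of Lemma \ref{meet}: its proof uses duality of hereditary artin algebras, which is internal to $\mo\Lambda$ via $D = \Hom_k(-,E)$; this interchanges generators and cogenerators of exceptional subcategories and sends an exceptional subcategory of rank $t$ in $\mo\Lambda$ to one of rank $t$ in $\mo\Lambda^{\op}$, swapping joins and meets. I do not anticipate any obstacle here, since the rank-$\le 2$ hypothesis in Lemma \ref{meet} only uses the rigidity statement from N\,\ref{rank-2} (two exceptional modules in a rank-$2$ exceptional subcategory determine it), which is self-dual. Putting the cases together gives the claim.
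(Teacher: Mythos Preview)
Your argument is exactly the paper's approach, just with the case analysis written out (the paper's entire proof is ``Lemma \ref{meet} asserts the existence of meets, by duality, we also have joins''). The casework for both meets and joins is correct.

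One slip in your last paragraph, though: the $k$-duality $D=\Hom_k(-,E)$ does \emph{not} swap joins and meets on $\A(\mo\Lambda)$. Although $D$ is a contravariant equivalence $\mo\Lambda\to\mo\Lambda^{\op}$, on subcategories it is inclusion-preserving ($\mathcal A\subseteq\mathcal B$ implies $D(\mathcal A)\subseteq D(\mathcal B)$), so the induced bijection $\A(\mo\Lambda)\to\A(\mo\Lambda^{\op})$ is order-preserving and hence \emph{preserves} meets and joins. Applying Lemma \ref{meet} to $\Lambda^{\op}$ via $D$ just gives back Lemma \ref{meet} for $\Lambda$, not its dual. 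The duality that actually converts ``$\rank\mathcal X\le 2\Rightarrow\mathcal X\wedge\mathcal Y$ exists'' into ``$\rank\mathcal X\ge n-2\Rightarrow\mathcal X\vee\mathcal Y$ exists'' is the order-reversing anti-automorphism $\delta\colon\mathcal A\mapsto\mathcal A^\perp$ of $\A(\mo\Lambda)$ (Theorem \ref{anti}), which sends rank $t$ to rank $n-t$; its ingredients (Theorems \ref{perp-perp} and \ref{perp-rank}) are already available from Chapter 2. Since the paper itself only says ``by duality'' here, this does not affect the validity of your proof---your case analysis stands once the dual statement is granted---but your justification of that step should be rerouted through $\delta$ rather than $D$.
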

	
The Lemma \ref{meet} asserts the existence of meets, by duality,
we also have joins.
	\medskip 

\subsubsection
{\bf Example.}\label{example-lattice}
 {\it If $Q$ is the quiver of type $\widetilde{\mathbb A}_{3,1}$, 
$$
\hbox{\beginpicture
  \setcoordinatesystem units <1cm,.6cm>
\multiput{$\circ$} at 0 0  1 1  2 1  3 0 /
\arr{0.8 0.8}{0.2 0.2}
\arr{1.8 1}{1.2 1}
\arr{2.8 0.2}{2.2 0.8}
\arr{2.8 0}{.2 0}
\put{$\ssize 0$} at -.2 0
\put{$\ssize 1$} at 1 1.4
\put{$\ssize 2$} at 2 1.4
\put{$\ssize 3$} at 3.2 0
\endpicture}
$$
then $\A(\mo kQ)$ is a lattice.}
	\medskip 

Proof. 
We want to show that any two elements $\mathcal X\neq  \mathcal Y$ 
in $\A(\mo\Lambda)$ have a join
(the self duality then shows that any two elements also have a meet). 

Of course, if $\mathcal X$
has rank $3$, then any exceptional subcategory containing $\mathcal X$ and $\mathcal Y$
has to have rank at least $4$, thus must be equal to $\mo\Lambda$. Thus we may
assume that both $\mathcal X, \mathcal Y$ have rank at most $2$. As we also know, 
we can assume that neither $\mathcal X$ not $\mathcal Y$ has rank 2.
Thus it remains to consider the case that 
both subcategories $\mathcal X,\mathcal Y$ have rank 1, thus we deal with two exceptional
modules $X,Y$. Let us denote by $\mathcal Z$ the thick closure of $X,Y$.

If at least one of the modules $X,Y$ is preprojective, then we show that $\mathcal Z$
 is exceptional 
(and therefore the join of $\add X$ and $\add Y$). 
Using reflection functors, we can assume that one of these modules, say $X$,
is simple projective. 
Let $Y'$ be the factor module of $Y$ modulo the trace of $X$ in $Y$. Then $Y'$
is a representation of a quiver of type $\mathbb A_3$, thus its thick closure
$\mathcal Z'$ is exceptional. The simple objects in $\mathcal Z'$ together with $S$ are 
an exceptional antichain and these are the simple objects of $\mathcal Z$,
thus $\mathcal Z$ is exceptional.
By duality, we similarly can assume that none of the modules $X,Y$ is preinjective.

Thus, assume that both $X,Y$ are regular modules. In case they form an
exceptional pair, then again $\mathcal Z$ is exceptional. Thus, at least one of the
modules $X,Y$ has regular rank $2$. If both have regular rank $2$, then
$\mathcal Z$ contains all three non-homogeneous simple regular modules (the modules
with dimension vector $1001, 0100, 0010$), thus $\mathcal Z$ is the tube which contains $X,Y$
(of course, this is not an exceptional subcategory). If $\mathcal C$ is a thick subcategory
which contains $\mathcal Z$ properly, then $\mathcal C$ has to have rank $4$, thus it is
just $\mathcal C = \mo\Lambda$. This shows that $\mo\Lambda$ is the join of
$\add X$ and $\add Y$.

It remains to consider the case that one of the modules $X,Y$ has regular rank 1, 
the other regular rank 2 and that this is an orthogonal pair. Thus, up to duality, there are
two cases: $\{X,Y\} = \{1001,0110\}$ and $\{X,Y\} = \{0100,1011\}$. In the first case,
there is just one exceptional subcategory $\mathcal C$ of rank 3 which contains $X,Y$, namely the
thick closure of $\{1000,0110,0001\}$. Similarly, in the second case,
there is just one exceptional subcategory $\mathcal C$ of rank 3 which contains $X,Y$, namely the
thick closure of $\{1000,0100,0011\}$. In both cases the respective subcategory 
$\mathcal C$ is the join of 
$\add X$ and $\add Y$.
	\medskip 

The examples \ref{example-not-lattice} and \ref{example-lattice} concern 
quivers with the same underlying graph,
they differ only by the choice of the orientation (thus by the choice of a Coxeter element
in the Weyl group). This shows:
	\medskip

\begin{no-text}\label{change}
If $Q, Q'$ are quivers with the same underlying graph, but different orientations,
the posets $\A(\mo kQ)$ and $\A(\mo kQ')$ may be non-isomorphic.
\end{no-text}
	\medskip

\subsection{The poset $\A(\mo\Lambda)$: Intervals}
Given elements $a \le b$ in a poset $P$, we denote by $[a,b] = \{p\in P\mid a \le p \le b \}$ 
the (closed) interval of elements between $a$ and $b$. The aim of this section is to show
that any interval $[\mathcal A,\mathcal B]$
in $\A(\mo\Lambda)$ is isomorphic  to a poset of the form $\A(\mathcal C)$, where
$\mathcal C$ is an element of $\A(\mo\Lambda)$ (thus to a poset which is again of the form $\A(\mo\Lambda')$
for some hereditary artin algebra $\Lambda'$), see Theorem \ref{interval}.
	\medskip 

\subsubsection{} We begin with an analysis of the poset structure of $\A(\mo\Lambda)$.

\begin{lemma}\label{lemma1}
 Let $\mathcal A \subseteq \mathcal V$ be exceptional subcategories of $\mo\Lambda,$
let $a$ be the rank of $\mathcal A$ and $v$ the rank of $\mathcal V$.
Then both subcategories $\mathcal V\cap {}^\perp\!\!\mathcal A$ and $\mathcal V\cap \mathcal A^\perp$ 
are exceptional subcategories of rank $v-a$, they are the meet 
$\mathcal V\wedge {}^\perp\!\!\mathcal A$, and $\mathcal V\wedge \mathcal A^\perp$ in $\A(\mo\Lambda)$, respectively.
\end{lemma}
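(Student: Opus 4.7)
The plan is to carry out the perpendicular calculus of Section 2.5 \emph{inside} the ambient exceptional subcategory $\mathcal V$ rather than inside $\mo\Lambda$. Because $\mathcal V$ is an exceptional subcategory of rank $v$, it is categorically equivalent to $\mo\Lambda'$ for some hereditary artin algebra $\Lambda'$ of rank $v$, and under this equivalence $\mathcal A$ corresponds to an exceptional subcategory of $\mo\Lambda'$ of rank $a$. Theorem~\ref{perp-perp} and Theorem~\ref{perp-rank} may then be applied to the pair $(\Lambda',\mathcal A)$ to produce inside $\mathcal V$ a perpendicular subcategory that is exceptional of rank $v-a$.

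The key technical step, and the one I would spend the most care on, is identifying the perpendiculars computed inside $\mathcal V$ with the intersections named in the statement. Concretely, write $\mathcal A^{\perp_{\mathcal V}}=\{X\in\mathcal V\mid \Hom_{\mathcal V}(\mathcal A,X)=0=\Ext^1_{\mathcal V}(\mathcal A,X)\}$ and similarly ${}^{\perp_{\mathcal V}}\mathcal A$. Since $\mathcal V$ is thick in $\mo\Lambda$, the inclusion $\mathcal V\hookrightarrow\mo\Lambda$ is exact and closed under extensions, so for objects $X,Y\in\mathcal V$ we have $\Hom_{\mathcal V}(X,Y)=\Hom_\Lambda(X,Y)$ and, crucially, $\Ext^1_{\mathcal V}(X,Y)=\Ext^1_\Lambda(X,Y)$ (any extension in $\mo\Lambda$ of two objects of $\mathcal V$ has middle term in $\mathcal V$). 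Hence
\[
 \mathcal V\cap{}^\perp\!\!\mathcal A \;=\; {}^{\perp_{\mathcal V}}\mathcal A, \qquad \mathcal V\cap\mathcal A^\perp \;=\; \mathcal A^{\perp_{\mathcal V}}.
\]
Both right‐hand sides are exceptional subcategories of $\mathcal V$ of rank $v-a$ by Theorems~\ref{perp-perp} and~\ref{perp-rank} applied inside $\mathcal V\simeq\mo\Lambda'$, and an exceptional subcategory of $\mathcal V$ is automatically an exceptional subcategory of $\mo\Lambda$ (being thick in $\mathcal V$ and $\mathcal V$ being thick in $\mo\Lambda$, it is thick in $\mo\Lambda$; its quiver is the same).

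It remains to verify the meet statement. Recall that the partial ordering on $\A(\mo\Lambda)$ corresponds, via $A\mapsto\mathcal E(A)$, to set-theoretic inclusion of thick subcategories. Thus $\mathcal V\cap{}^\perp\!\!\mathcal A$ is contained in both $\mathcal V$ and ${}^\perp\!\!\mathcal A$, and is therefore a lower bound in $\A(\mo\Lambda)$. Conversely, any $\mathcal C\in\A(\mo\Lambda)$ with $\mathcal C\subseteq\mathcal V$ and $\mathcal C\subseteq{}^\perp\!\!\mathcal A$ satisfies $\mathcal C\subseteq\mathcal V\cap{}^\perp\!\!\mathcal A$ on the nose, so this is the greatest lower bound: $\mathcal V\wedge{}^\perp\!\!\mathcal A=\mathcal V\cap{}^\perp\!\!\mathcal A$ in $\A(\mo\Lambda)$. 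The argument for $\mathcal V\wedge\mathcal A^\perp$ is identical (or one may invoke the duality announced in the outline).

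The main obstacle, as already noted, is the verification that $\Ext^1$ is absolute with respect to thick embeddings, so that perpendicularity in $\mathcal V$ and in $\mo\Lambda$ cut out the same objects. Everything else — the rank count, the exceptional property, and the fact that intersection realises the meet — is then a clean application of results already established in Chapter~2, plus the trivial observation that in a poset ordered by inclusion, intersections that happen to lie in the poset are automatically meets.
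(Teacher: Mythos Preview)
Your proof is correct and follows essentially the same approach as the paper: both reduce to computing perpendiculars inside $\mathcal V\simeq\mo\Lambda'$ and then invoke the results of Chapter~2. You are in fact more explicit than the paper about the identification $\mathcal V\cap\mathcal A^\perp=\mathcal A^{\perp_{\mathcal V}}$, justifying carefully that $\Ext^1$ is absolute for the thick embedding $\mathcal V\hookrightarrow\mo\Lambda$; the paper simply asserts this identification without comment.
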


\begin{proof} We consider $\mathcal A^\perp$. 
In order to show that 
$\mathcal V\cap \mathcal A^\perp$ is an exceptional subcategory of rank $v-a,$ we can assume
that $\mathcal V = \mo\Lambda'$ for some hereditary artin algebra $\Lambda'$.  

Thus, we deal with an exceptional subcategory $\mathcal A$ of
$\mo\Lambda'$, and $\mathcal A^\perp \cap \mathcal V$
is just the right perpendicular
category  for $\mathcal A$ inside $\mo\Lambda'$, thus let us write 
 $\mathcal V  \cap \mathcal A^\perp = \mathcal A^{\perp(\mo \Lambda')}$. 
The claim is that $\mathcal A^{\perp(\mo \Lambda')}$ is exceptional and 
that the rank of $\mathcal A^{\perp(\mo \Lambda')}$
is $\rank(\Lambda') - \rank(\mathcal A) = v-a$. 
But this has been shown in Chapter 2. 

Since $\mathcal V\cap \mathcal A^\perp$ is an exceptional subcategory of $\mo\Lambda$,
it is the infimum $\mathcal V\wedge \mathcal A^\perp$
of $\mathcal V$ and $\mathcal A^\perp$ in $\A(\mo\Lambda)$.

In the same way (or using duality) one deals with ${}^\perp\!\!\mathcal A$.
\end{proof} 
	
\begin{lemma}\label{lemma2}
 Let $\mathcal A$ and $\mathcal B$ be exceptional subcategories of $\mo\Lambda$,
of rank $a$ and $b$, respectively. We assume that $\mathcal A \subseteq \mathcal B^\perp$
or, equivalently, that $\mathcal B \subseteq {}^\perp\!\!\mathcal A$.
Then the join $\mathcal A\vee \mathcal B$ exists in $\A(\mo\Lambda)$, it is an exceptional
subcategory of rank $a+b$, and we have
$$
 \mathcal A\vee \mathcal B = ({}^\perp\!\!\mathcal A\cap {}^\perp \mathcal B)^\perp = 
{}^\perp(\mathcal A^\perp \cap \mathcal B^\perp).
$$
\end{lemma}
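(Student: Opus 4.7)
The plan is to construct the candidate for $\mathcal A\vee \mathcal B$ explicitly as the thick closure of $\mathcal A\cup \mathcal B$, show that this is exceptional of rank $a+b$ by building a concatenated exceptional sequence, and finally deduce the two perpendicular formulas from Theorem \ref{perp-perp}.

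First I would choose antichains $\{A_1,\dots,A_a\}$ and $\{B_1,\dots,B_b\}$ of simple objects of $\mathcal A$ and $\mathcal B$. Since the quiver of the exceptional category $\mathcal A$ has no oriented cycles, we may order the $A_i$ so that $(A_1,\dots,A_a)$ is an exceptional sequence in $\mo \Lambda$ (pairwise orthogonality plus topological ordering of the $\Ext^1$-quiver); similarly for the $B_j$. The hypothesis $\mathcal A\subseteq \mathcal B^\perp$ translates into $\Hom(B_j,A_i)=0=\Ext^1(B_j,A_i)$ for all $i,j$, which are exactly the relations needed to make the concatenation $(A_1,\dots,A_a,B_1,\dots,B_b)$ an exceptional sequence of length $a+b$ in $\mo \Lambda$. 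By the Proposition in Section~2.5, its thick closure $\mathcal C$ is an exceptional subcategory of $\mo \Lambda$ of rank $a+b$, and clearly $\mathcal C = \mathcal E(\mathcal A\cup \mathcal B)$, so $\mathcal C$ contains both $\mathcal A$ and $\mathcal B$.

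Next I would show that $\mathcal C$ is the join in $\A(\mo \Lambda)$. Any exceptional subcategory $\mathcal D$ containing $\mathcal A$ and $\mathcal B$ is in particular closed under extensions, so it contains every module having a filtration with factors in $\mathcal A\cup \mathcal B$; thus $\mathcal D\supseteq \mathcal E(\mathcal A\cup \mathcal B)=\mathcal C$. This shows $\mathcal C=\mathcal A\vee \mathcal B$.

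For the perpendicular identities, the key observation is that $\mathcal A^\perp\cap \mathcal B^\perp = (\mathcal A\cup\mathcal B)^\perp = \mathcal C^\perp$: the first equality is immediate from the definition, and the second holds because vanishing of $\Hom(X,-)$ and $\Ext^1(X,-)$ is preserved under extensions (and inside an abelian subcategory under kernels and cokernels, which is automatic by the hereditary five-term sequences). Dually, ${}^\perp \mathcal A\cap {}^\perp \mathcal B = {}^\perp \mathcal C$. Since $\mathcal C$ is exceptional, Theorem \ref{perp-perp} yields the perpendicular pairs $(\mathcal C^\perp,\mathcal C)$ and $(\mathcal C,{}^\perp \mathcal C)$, so $\mathcal C = {}^\perp(\mathcal C^\perp) = {}^\perp(\mathcal A^\perp\cap \mathcal B^\perp)$ and $\mathcal C=({}^\perp \mathcal C)^\perp = ({}^\perp\mathcal A\cap {}^\perp\mathcal B)^\perp$, completing the proof.

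The main obstacle I anticipate is the seemingly trivial but careful verification of the concatenation step: the ordering of the $A_i$ and $B_j$ must be chosen compatibly, and one should check that none of the $A_i$ or $B_j$ are themselves sensitive to the ordering within their own sequence (this uses that pairwise orthogonal exceptional modules can always be arranged as an exceptional sequence, a fact implicit in the proof of the Proposition in Section~2.5). Beyond that, the perpendicular formulas are straightforward once one knows that $\mathcal C$ is exceptional, because Theorem \ref{perp-perp} is doing the heavy lifting.
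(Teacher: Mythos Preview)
Your proof is correct, but takes a genuinely different route from the paper's. You build $\mathcal C$ from below as the thick closure of a concatenated exceptional sequence $(A_1,\dots,A_a,B_1,\dots,B_b)$, invoke the Proposition on thick closures of exceptional sequences to get rank $a+b$, and then derive the perpendicular formulas from $\mathcal C^\perp = \mathcal A^\perp\cap\mathcal B^\perp$ together with the fact that $(\mathcal C^\perp,\mathcal C)$ is a perpendicular pair. The paper instead works from above: it applies Lemma~\ref{lemma1} to the inclusion $\mathcal A\subseteq\mathcal B^\perp$ to see that $\mathcal B^\perp\cap\mathcal A^\perp$ is exceptional of rank $n-b-a$, hence ${}^\perp(\mathcal A^\perp\cap\mathcal B^\perp)$ is exceptional of rank $a+b$, and then verifies directly via perpendicular-category manipulations (using $\mathcal A={}^\perp(\mathcal A^\perp)$, etc.) that this is the join. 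No exceptional sequences enter the paper's argument at this point. Your approach is more constructive and makes the link to exceptional sequences explicit; the paper's approach foregrounds Lemma~\ref{lemma1}, which it then reuses immediately in the interval theorem.

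One small point: your assertion ``$\mathcal C=\mathcal E(\mathcal A\cup\mathcal B)$'' (thick closure equals extension closure) is not needed and is not obviously justified, since $\mathcal A\cup\mathcal B$ is not an antichain. In your join argument you should simply say that any exceptional $\mathcal D\supseteq\mathcal A\cup\mathcal B$ is thick, hence contains the thick closure $\mathcal C$; that is all you use. Likewise, in the perpendicular step the equality $(\mathcal A\cup\mathcal B)^\perp=\mathcal C^\perp$ follows cleanly from the fact that ${}^\perp X$ is itself a thick subcategory, so containing $\mathcal A\cup\mathcal B$ forces it to contain the thick closure $\mathcal C$.
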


 $$
\hbox{\beginpicture
  \setcoordinatesystem units <.8cm,.8cm>
\multiput{$\bullet$} at 0 2  1 1  2 0  3 1  4 2  2 2 /
\plot 0 2  2 0  4 2 /
\setdashes <1mm>
\plot 1 1  2 2  3 1 /
\put{$\mathcal B^\perp$} at -.4 2.2 
\put{$\mathcal A$} at .7 0.8
\put{$\mathcal A\vee \mathcal B$} at 2 2.4 
\put{$0$} at 2 -.3
\put{$\mathcal B$} at 3.3 0.8
\put{${}^\perp\!\!\mathcal A$} at 4.4 2.2 
\endpicture}
$$

\begin{proof}   We assume that $\mathcal A \subseteq \mathcal B^\perp$ (and thus we also have
 ${}^\perp\!\!\mathcal A \supseteq {}^\perp(\mathcal B^\perp) = \mathcal B$).

Note that $\mathcal B^\perp$ is exceptional with rank $n-b$.
According to Lemma \ref{lemma1},
we know that $\mathcal B^\perp \cap \mathcal A^\perp$ is an exceptional subcategory of rank
$n-b-a$. It follows that ${}^\perp(\mathcal B^\perp \cap \mathcal A^\perp)$ is an exceptional
subcategory of rank $n-(n-b-a) = a+b.$ 

We claim that ${}^\perp(\mathcal A^\perp \cap \mathcal B^\perp)$ is the join of $\mathcal A$ and
$\mathcal B$ in $\A(\mo\Lambda)$. First, we have to show that both $\mathcal A$ and $\mathcal B$ are contained in
${}^\perp(\mathcal A^\perp \cap \mathcal B^\perp)$.
Since $\mathcal A^\perp \supseteq \mathcal A^\perp\cap \mathcal B^\perp$,
we see that $\mathcal A = {}^\perp(\mathcal A^\perp) \subseteq {}^\perp(\mathcal A^\perp\cap \mathcal B^\perp)$;
 similarly, we have $\mathcal B \subseteq {}^\perp(\mathcal A^\perp\cap \mathcal B^\perp)$.
Now assume that there is an exceptional subcategory $\mathcal C$ with $\mathcal A \subseteq \mathcal C$
and $\mathcal B \subseteq \mathcal C.$ It follows from $\mathcal A \subseteq \mathcal C$ that
$\mathcal A^\perp \supseteq \mathcal C^\perp$. Similarly, we have $\mathcal B^\perp \supseteq \mathcal C^\perp$.
Thus $\mathcal A^\perp\cap \mathcal B^\perp \supseteq \mathcal C^\perp$,
and therefore ${}^\perp(\mathcal A^\perp\cap \mathcal B^\perp) \subseteq {}^\perp(\mathcal C^\perp) = \mathcal C$.
This shows that ${}^\perp(\mathcal A^\perp \cap \mathcal B^\perp) = \mathcal A\vee \mathcal B$.

In the same way, one shows that $({}^\perp\!\!\mathcal A\cap {}^\perp \mathcal B)^\perp = \mathcal A\vee \mathcal B$.
\end{proof}
	\bigskip

\begin{prop} Let $\mathcal A$ be an exceptional subcategory of $\mo\Lambda$.
Then the poset $[\mathcal A,\mo\Lambda]$ is isomorphic to the poset $\A({}^\perp\!\!\mathcal A)$
using the maps
$$
 \mathcal V \mapsto \mathcal V\cap{}^\perp\!\!\mathcal A, \quad \text{and} \quad
 \mathcal U \mapsto \mathcal U\vee \mathcal A. 
$$
for $\mathcal V$ in $[\mathcal A,\mo\Lambda]$ and $\mathcal U$ in $\A({}^\perp\!\!\mathcal A)$.
\end{prop}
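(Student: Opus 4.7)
My plan is to verify the two maps are well-defined, order-preserving, and mutually inverse, exploiting the rank formulas of Lemmas \ref{lemma1} and \ref{lemma2}.

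First I would check well-definedness. If $\mathcal{V}\in [\mathcal A,\mo\Lambda]$ has rank $v$ and $a=\rank\mathcal A$, then Lemma \ref{lemma1} gives that $\mathcal V\cap{}^\perp\!\!\mathcal A$ is an exceptional subcategory of rank $v-a$, and by construction it lies inside ${}^\perp\!\!\mathcal A$, so the map $\mathcal V\mapsto \mathcal V\cap{}^\perp\!\!\mathcal A$ sends $[\mathcal A,\mo\Lambda]$ into $\A({}^\perp\!\!\mathcal A)$. Conversely, if $\mathcal U\in\A({}^\perp\!\!\mathcal A)$, the containment $\mathcal U\subseteq{}^\perp\!\!\mathcal A$ is equivalent to $\mathcal A\subseteq\mathcal U^\perp$, so Lemma \ref{lemma2} ensures $\mathcal U\vee\mathcal A$ exists and is an exceptional subcategory of rank $\rank\mathcal U+a$. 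Since it contains $\mathcal A$, it lies in $[\mathcal A,\mo\Lambda]$.

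Next I would show the two maps are mutually inverse by a rank argument. Starting from $\mathcal V\in [\mathcal A,\mo\Lambda]$ and setting $\mathcal U=\mathcal V\cap{}^\perp\!\!\mathcal A$, both $\mathcal U$ and $\mathcal A$ are contained in $\mathcal V$, so $\mathcal U\vee\mathcal A\subseteq\mathcal V$. By Lemma \ref{lemma2} the join has rank $(v-a)+a=v=\rank\mathcal V$, and by the corollary in Section 2.5 (stating that an inclusion of exceptional subcategories of equal rank is an equality) this forces $\mathcal U\vee\mathcal A=\mathcal V$. Conversely, starting from $\mathcal U\in\A({}^\perp\!\!\mathcal A)$ and setting $\mathcal V=\mathcal U\vee\mathcal A$, we have $\mathcal U\subseteq\mathcal V$ and $\mathcal U\subseteq{}^\perp\!\!\mathcal A$, hence $\mathcal U\subseteq\mathcal V\cap{}^\perp\!\!\mathcal A$. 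Lemma \ref{lemma1} applied to $\mathcal A\subseteq\mathcal V$ gives $\rank(\mathcal V\cap{}^\perp\!\!\mathcal A)=\rank\mathcal V-a=\rank\mathcal U$, and the same rank-equality principle yields the desired equality.

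Finally, monotonicity is immediate: the intersection map is obviously order-preserving, and if $\mathcal U\subseteq\mathcal U'$ inside ${}^\perp\!\!\mathcal A$ then the universal property of the join (or the explicit formula $\mathcal U\vee\mathcal A={}^\perp(\mathcal U^\perp\cap\mathcal A^\perp)$, which is monotone in $\mathcal U$) gives $\mathcal U\vee\mathcal A\subseteq\mathcal U'\vee\mathcal A$. Together with the inverse bijection this shows the two maps are poset isomorphisms.

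The only delicate point is the use of the equality-of-rank criterion to conclude equality of subcategories: all other steps are direct applications of Lemmas \ref{lemma1} and \ref{lemma2}. I do not anticipate any obstacle beyond making sure that the join $\mathcal U\vee\mathcal A$ is indeed available (which is guaranteed precisely because $\mathcal U\subseteq{}^\perp\!\!\mathcal A$, the hypothesis of Lemma \ref{lemma2}).
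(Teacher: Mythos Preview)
Your proposal is correct and follows essentially the same approach as the paper: establish well-definedness via Lemmas \ref{lemma1} and \ref{lemma2}, then prove the two composites are identities by producing an inclusion and invoking the rank-equality criterion for exceptional subcategories. The only cosmetic difference is that where you argue $\mathcal U\vee\mathcal A\subseteq\mathcal V$ directly from the universal property of the join (both $\mathcal U$ and $\mathcal A$ lie in $\mathcal V$), the paper unpacks the explicit formula $\mathcal U\vee\mathcal A={}^\perp(\mathcal U^\perp\cap\mathcal A^\perp)$ and manipulates perpendicular categories to reach the same inclusion; the rank comparison and the appeal to the equal-rank-implies-equality corollary are identical.
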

	
 $$
\hbox{\beginpicture
  \setcoordinatesystem units <.8cm,.8cm>
\put{\beginpicture
\multiput{$\bullet$} at 1 0  0 1  1 2  2 1  2 3  3 2 /
\plot 1 0  0 1  2 3  3 2  1 0 /
\setdashes <1mm>
\arr{1 2}{1.8 1.2}
\put{$0$} at 1.2 -0.2  
\put{$\mathcal A$} at -.4 1
\put{$\mathcal V$} at .6 2.1
\put{$\mo\Lambda$} at 2 3.4
\put{${}^\perp\!\!\mathcal A$\strut} at 3.4 2  
\put{$\mathcal V \cap {}^\perp\!\!\mathcal A$} at 2.7 0.8   

\endpicture} at 0 0
\put{\beginpicture
\multiput{$\bullet$} at 1 0  0 1  1 2  2 1  2 3  3 2 /
\plot 1 0  0 1  2 3  3 2  1 0 /
\setdashes <1mm>
\arr{2 1}{1.2 1.8}
\put{$0$} at 1.2 -0.2  
\put{$\mathcal A$} at -.4 1
\put{$\mathcal U \vee \mathcal A$} at .2 2.2
\put{$\mo\Lambda$} at 2 3.4
\put{${}^\perp\!\!\mathcal A$\strut} at 3.4 2  
\put{$\mathcal U $} at 2.3 0.8   
\endpicture} at 5 0
\endpicture}
$$

\begin{proof} Lemma \ref{lemma1}
asserts that given $\mathcal V$ in $[\mathcal A,\mo\Lambda]$, the subcategory
$\mathcal V\cap{}^\perp\!\!\mathcal A$ belongs to $\A(\mathcal {}^\perp A)$. Of course, the map 
$\mathcal V \mapsto \mathcal V\cap{}^\perp\!\!\mathcal A$ is order preserving. Lemma \ref{lemma2}
asserts
that given $\mathcal U$ in $\A(\mathcal {}^\perp A)$, the join $\mathcal U\vee \mathcal A$ exists, and, of course
$\mathcal U\vee\mathcal A$ belongs to the interval $[\mathcal A,\mo\Lambda]$. Also here we see immediately
that the map $\mathcal U \mapsto \mathcal U\vee \mathcal A$ is order preserving. 

It remains to be shown that the maps defined here are inverse to each other, thus
we have to show that
$$
 {}^\perp((\mathcal V\cap{}^\perp\!\!\mathcal A)^\perp\cap\mathcal A^\perp) = \mathcal V\quad
\text{and}\quad 
 {}^\perp(\mathcal U^\perp\cap\mathcal A^\perp)\cap{}^\perp\!\!\mathcal A = \mathcal U
$$
for $\mathcal V$ in $[\mathcal A,\mo\Lambda]$ and $\mathcal U$ in $\A(\mathcal {}^\perp A)$.

First, let us show the inclusion 
${}^\perp((\mathcal V\cap{}^\perp\!\!\mathcal A)^\perp\cap\mathcal A^\perp) \subseteq \mathcal V$.
The inclusion $\mathcal V\cap{}^\perp\!\!\mathcal A \subseteq \mathcal V$ implies that 
$(\mathcal V\cap{}^\perp\!\!\mathcal A)^\perp \supseteq \mathcal V^\perp$, thus
$(\mathcal V\cap{}^\perp\!\!\mathcal A)^\perp\cap\mathcal A^\perp \supseteq \mathcal V^\perp\cap\mathcal A^\perp$
and $\mathcal V^\perp\cap\mathcal A^\perp = \mathcal V^\perp$, since $\mathcal A \subseteq \mathcal V$
so that $\mathcal A^\perp \supseteq \mathcal V^\perp.$
But 
$(\mathcal V\cap{}^\perp\!\!\mathcal A)^\perp\cap\mathcal A^\perp \supseteq \mathcal V^\perp$
implies that
${}^\perp((\mathcal V\cap{}^\perp\!\!\mathcal A)^\perp\cap\mathcal A^\perp) \subseteq 
{}^\perp(\mathcal V^\perp) = \mathcal V.$ In addition, we show that the exceptional subcategories 
${}^\perp((\mathcal V\cap{}^\perp\!\!\mathcal A)^\perp\cap\mathcal A^\perp)$ and $\mathcal V$ have the same
rank (namely $v$), as a consequence, they have to be equal.
 
Similarly, we show the inclusion $\mathcal U \subseteq 
{}^\perp(\mathcal U^\perp\cap\mathcal A^\perp)\cap{}^\perp\!\!\mathcal A$. By assumption, $\mathcal U \subseteq
{}^\perp\!\!\mathcal A$, thus we only have to show that $\mathcal U \subseteq 
{}^\perp(\mathcal U^\perp\cap\mathcal A^\perp)$. But 
$\mathcal U^\perp \supseteq \mathcal U^\perp\cap\mathcal A^\perp,$ 
therefore $\mathcal U = {}^\perp(\mathcal U^\perp) \subseteq {}^\perp(\mathcal U^\perp\cap\mathcal A^\perp)$. 
And again we show that we deal with exceptional subcategories of the same rank:
Since $\mathcal U \subseteq {}^\perp\!\!\mathcal A$, Lemma \ref{lemma2}
asserts that the rank of
$\mathcal A\vee \mathcal U = {}^\perp(\mathcal U^\perp\cap\mathcal A^\perp)$ is $a+u$. Now
$\mathcal A\vee \mathcal U \supseteq \mathcal A$, thus by Lemma
\ref{lemma1} the rank of
$(\mathcal A\vee \mathcal U)\cap{}^\perp\!\!\mathcal A$ is $a+u-a = u.$ This completes the proof. 
\end{proof}

 There is the corresponding assertion invoking $\mathcal A^\perp$:

\begin{prop} Let $\mathcal A$ be an exceptional subcategory of $\mo\Lambda$.
Then the poset $[\mathcal A,\mo\Lambda]$ is isomorphic to the poset $\A(\mathcal A^\perp )$
using the maps
$$
 \mathcal V \mapsto \mathcal V\cap\mathcal A^\perp , \quad \text{and} \quad
 \mathcal U \mapsto \mathcal U\vee \mathcal A. 
$$
for $\mathcal V$ in $[\mathcal A,\mo\Lambda]$ and $\mathcal U$ in $\A(^\perp\mathcal A)$.
\end{prop}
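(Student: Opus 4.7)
The plan is to mirror the proof of the immediately preceding proposition, replacing ${}^\perp\mathcal A$ by $\mathcal A^\perp$ throughout and invoking the ``other halves'' of Lemmas~\ref{lemma1} and \ref{lemma2}. First I would verify the two maps are well-defined and order-preserving. For $\mathcal V\in[\mathcal A,\mo\Lambda]$, Lemma~\ref{lemma1} (applied to the inclusion $\mathcal A\subseteq\mathcal V$) gives that $\mathcal V\cap\mathcal A^\perp$ is an exceptional subcategory of rank $v-a$; since it is contained in $\mathcal A^\perp$, it lies in $\A(\mathcal A^\perp)$. For $\mathcal U\in\A(\mathcal A^\perp)$, the inclusion $\mathcal U\subseteq\mathcal A^\perp$ (equivalently $\mathcal A\subseteq\mathcal U^\perp$ after taking perpendiculars inside $\mo\Lambda$) is exactly the hypothesis of Lemma~\ref{lemma2}, so the join $\mathcal U\vee\mathcal A$ exists, is exceptional of rank $u+a$, and contains $\mathcal A$, hence lies in $[\mathcal A,\mo\Lambda]$. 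Both assignments are clearly order-preserving.

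Next I would show the two compositions are the identity, which is the substantive part. For $\mathcal V\in[\mathcal A,\mo\Lambda]$ I must check $(\mathcal V\cap\mathcal A^\perp)\vee\mathcal A=\mathcal V$. The inclusion $\subseteq$ is immediate, since both $\mathcal V\cap\mathcal A^\perp$ and $\mathcal A$ sit inside $\mathcal V$, and $\mathcal V$ is closed under joins of exceptional subcategories it contains. Equality then follows from a rank comparison: the left-hand side has rank $(v-a)+a=v$ by Lemma~\ref{lemma2}, matching the rank of $\mathcal V$, and an inclusion of exceptional subcategories of equal rank is an equality (Corollary~\ref{thick-closure2}'s consequence stated after it).

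Symmetrically, for $\mathcal U\in\A(\mathcal A^\perp)$ I must check $(\mathcal U\vee\mathcal A)\cap\mathcal A^\perp=\mathcal U$. The inclusion $\supseteq$ is immediate: $\mathcal U$ sits inside $\mathcal U\vee\mathcal A$ by definition of join and inside $\mathcal A^\perp$ by assumption. For the rank count, Lemma~\ref{lemma2} gives $\rank(\mathcal U\vee\mathcal A)=u+a$, and since $\mathcal A\subseteq\mathcal U\vee\mathcal A$, Lemma~\ref{lemma1} yields $\rank((\mathcal U\vee\mathcal A)\cap\mathcal A^\perp)=(u+a)-a=u=\rank(\mathcal U)$, so the inclusion is an equality.

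The only real obstacle is making sure the ``dual'' versions of Lemmas~\ref{lemma1} and \ref{lemma2} that one needs are genuinely available; but Lemma~\ref{lemma1} is stated symmetrically for ${}^\perp\mathcal A$ and $\mathcal A^\perp$, and Lemma~\ref{lemma2} provides the symmetric formula $\mathcal A\vee\mathcal B={}^\perp(\mathcal A^\perp\cap\mathcal B^\perp)$, so the argument above proceeds verbatim from the one for $[\mathcal A,\mo\Lambda]\simeq\A({}^\perp\mathcal A)$, with ${}^\perp(-)$ and $(-)^\perp$ interchanged. No new ingredients are required.
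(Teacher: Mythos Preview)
Your proof is correct and follows exactly the approach the paper intends (the paper gives no separate proof, only ``There is the corresponding assertion invoking $\mathcal A^\perp$''), namely mirroring the preceding proposition with the roles of ${}^\perp\mathcal A$ and $\mathcal A^\perp$ swapped and using the same inclusion-plus-rank-count strategy. One small slip to fix: in your parenthetical, $\mathcal U\subseteq\mathcal A^\perp$ is equivalent to $\mathcal A\subseteq{}^\perp\mathcal U$, not to $\mathcal A\subseteq\mathcal U^\perp$; this is harmless, however, since $\mathcal U\subseteq\mathcal A^\perp$ is itself precisely the hypothesis of Lemma~\ref{lemma2} (with the lemma's $\mathcal A$ taken to be your $\mathcal U$ and the lemma's $\mathcal B$ taken to be your $\mathcal A$).
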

	
 $$
\hbox{\beginpicture
  \setcoordinatesystem units <.8cm,.8cm>
\put{\beginpicture
\multiput{$\bullet$} at 2 0  0 2  1 1  2 2  2 2  3 1  1 3 /
\plot 2 0  0 2  1 3  3 1  2 0 /
\setdashes <1mm>
\arr{2 2}{1.2 1.2}
\put{$0$} at 2.2 -0.2  
\put{$\mathcal A$} at 3.4 1 
\put{$\mathcal V$} at 2.3 2.1
\put{$\mo\Lambda$} at 1 3.4
\put{$\mathcal A^\perp$\strut} at -.5 2 
\put{$\mathcal V\cap \mathcal A^\perp$} at 0.2 0.8

\endpicture} at 0 0
\put{\beginpicture
\multiput{$\bullet$} at 2 0  0 2  1 1  2 2  2 2  3 1  1 3 /
\plot 2 0  0 2  1 3  3 1  2 0 /
\setdashes <1mm>
\arr{1 1}{1.8 1.8}
\put{$0$} at 2.2 -0.2  
\put{$\mathcal A$} at 3.4 1 
\put{$\mathcal U\vee \mathcal A$} at 2.9 2.1
\put{$\mo\Lambda$} at 1 3.4
\put{$\mathcal A^\perp$\strut} at -.5 2 
\put{$\mathcal U$} at 0.7 0.8   

\endpicture} at 5 0
\endpicture}
$$
\begin{theorem}\label{interval}
 Let $\mathcal A\subseteq \mathcal B$  be exceptional subcategories of $\mo\Lambda$.
Then there is a poset isomorphism
$$
 [\mathcal A,\mathcal B] \longrightarrow \A(\mathcal B\cap \mathcal A^\perp)
$$ 
defined by $\mathcal V \mapsto \mathcal V\cap \mathcal A^\perp$ 
for $\mathcal A \subseteq \mathcal V \subseteq \mathcal B$, its inverse sends $\mathcal U$ to $\mathcal U\vee \mathcal A$.

Similarly,  there is a poset isomorphism
$$
 [\mathcal A,\mathcal B] \longrightarrow \A(\mathcal B\cap {}^\perp\!\!\mathcal A)
$$ 
defined by $\mathcal V \mapsto \mathcal V\cap {}^\perp\!\!\mathcal A$ 
for $\mathcal A \subseteq \mathcal V \subseteq \mathcal B$, its inverse sends $\mathcal U$ to $\mathcal U\vee \mathcal A$.
\end{theorem}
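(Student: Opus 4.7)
The plan is to reduce the statement to the special case $\mathcal{B}=\mo\Lambda$ already established in the two preceding propositions, by exploiting the fact that an exceptional subcategory $\mathcal{B}$ of $\mo\Lambda$ is itself equivalent to the module category $\mo\Lambda'$ of a hereditary artin algebra $\Lambda'$. So the first move is to rewrite the interval $[\mathcal{A},\mathcal{B}]$ taken in $\A(\mo\Lambda)$ as the corresponding ``top interval'' $[\mathcal{A},\mo\Lambda']$ taken in $\A(\mo\Lambda') = \A(\mathcal{B})$. What needs checking for this identification is that the notion of exceptional subcategory is transitive: if $\mathcal{A}\subseteq\mathcal{V}\subseteq\mathcal{B}$ then $\mathcal{V}$ is exceptional in $\mo\Lambda$ iff it is exceptional in $\mathcal{B}$. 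One direction is immediate from the definition, and the other follows because $\mathcal{B}$ is a thick subcategory of $\mo\Lambda$, so being closed under kernels, cokernels and extensions inside $\mathcal{B}$ is the same as being closed under them inside $\mo\Lambda$, and the Ext-quiver condition is intrinsic to $\mathcal{V}$.

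The second step is to identify the right perpendicular category of $\mathcal{A}$ computed \emph{inside} $\mathcal{B}$ with the category $\mathcal{B}\cap \mathcal{A}^\perp$ from the statement. Here $\perp$ in the statement is computed in the ambient $\mo\Lambda$. Again this reduces to the fact that $\mathcal{B}$ is a thick subcategory: the embedding $\mathcal{B}\hookrightarrow \mo\Lambda$ is exact, so $\Ext^1_\mathcal{B}(X,Y) = \Ext^1_\Lambda(X,Y)$ for $X,Y\in \mathcal{B}$, and of course the Hom-groups agree. Consequently an object $M\in\mathcal{B}$ lies in the right perpendicular category of $\mathcal{A}$ inside $\mathcal{B}$ if and only if $M\in \mathcal{A}^\perp$ in the ambient sense.

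With these two identifications in hand, the theorem follows directly by applying the previous proposition to the hereditary artin algebra $\Lambda'$ realizing $\mathcal{B}$: that proposition yields a poset isomorphism $[\mathcal{A},\mathcal{B}]\to \A(\mathcal{A}^{\perp_\mathcal{B}}) = \A(\mathcal{B}\cap \mathcal{A}^\perp)$ given by $\mathcal{V}\mapsto \mathcal{V}\cap\mathcal{A}^\perp$, with inverse $\mathcal{U}\mapsto \mathcal{U}\vee\mathcal{A}$. The second isomorphism, using ${}^\perp\mathcal{A}$ in place of $\mathcal{A}^\perp$, is obtained by running exactly the same argument with left perpendicular categories, invoking the dual of the earlier proposition.

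The only slightly delicate point---which I would flag as the main obstacle---is to check that the join $\mathcal{U}\vee\mathcal{A}$ appearing in the inverse map is the same whether computed in $\A(\mo\Lambda)$ or in $\A(\mathcal{B})$. This is where Lemma \ref{lemma2} does the work: since $\mathcal{U}\subseteq \mathcal{B}\cap\mathcal{A}^\perp\subseteq \mathcal{A}^\perp$, the join $\mathcal{U}\vee \mathcal{A}$ exists in $\A(\mo\Lambda)$ and has rank $\rank\mathcal{U}+\rank\mathcal{A}\le \rank\mathcal{B}$; moreover by construction it is contained in $\mathcal{B}$, and its characterization as the smallest exceptional subcategory containing both $\mathcal{U}$ and $\mathcal{A}$ is ambient-independent, so it also represents the join inside $\A(\mathcal{B})$. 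Once this is in place, order-preservation, bijectivity, and the explicit inverse formula all transfer from the preceding proposition to the stated general interval, completing the proof.
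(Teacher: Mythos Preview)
Your approach is correct and is essentially what the paper intends: the theorem is stated without an explicit proof immediately after the two propositions for the case $\mathcal B=\mo\Lambda$, and the natural way to deduce it is precisely your reduction---replace $\mo\Lambda$ by $\mathcal B\simeq\mo\Lambda'$ and apply the propositions there. Your identification of $\A(\mathcal B)$ with the exceptional subcategories of $\mo\Lambda$ contained in $\mathcal B$, and of the internal perpendicular $\mathcal A^{\perp_{\mathcal B}}$ with $\mathcal B\cap\mathcal A^\perp$, are exactly the checks required.

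One small point: when you write that the ambient join $\mathcal U\vee\mathcal A$ ``by construction is contained in $\mathcal B$'', this deserves a word of justification. The cleanest way is to use the explicit formula from Lemma~\ref{lemma2}: since $\mathcal A,\mathcal U\subseteq\mathcal B$ we have $\mathcal A^\perp,\mathcal U^\perp\supseteq\mathcal B^\perp$, hence $\mathcal A^\perp\cap\mathcal U^\perp\supseteq\mathcal B^\perp$ and therefore $\mathcal U\vee\mathcal A={}^\perp(\mathcal A^\perp\cap\mathcal U^\perp)\subseteq{}^\perp(\mathcal B^\perp)=\mathcal B$. Once this is in hand, the two joins agree (both have rank $\rank\mathcal U+\rank\mathcal A$ and one contains the other, so Corollary~\ref{thick-closure2} gives equality), and the rest of your argument goes through.
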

	
This shows 
that any interval $[\mathcal A,\mathcal B]$
in $\A(\mo\Lambda)$ is isomorphic  to a poset of the form $\A(\mathcal C)$, where
$\mathcal C$ is an element of $\A(\mo\Lambda)$, namely $\mathcal C = \mathcal B\cap \mathcal A^\perp$ or 
$\mathcal C = \mathcal B\cap {}^\perp\!\!\mathcal A$. 
		\medskip 

Let us discuss two special cases.
	\medskip

\subsubsection{\bf  Neighbors.}\label{neighbors}
Recall that a pair $p,p'$ of elements of a poset $P$ are called {\it neighbors}
provided $p<p'$ and such that the interval $[p,p']$ contains no other element of $P$.
A direct consequence of the proposition is the following assertion:
	
\begin{prop}
Let $\mathcal A \subseteq \mathcal B$ be exceptional subcategories of $\mo\Lambda$. 
Then these are neighbors in the poset $\A(\mo\Lambda)$ 
if and only if
the rank of $\mathcal A$ and $\mathcal B$ differs by $1$.

If $\mathcal A\subset \mathcal B$ are neighbors in $\A(\mo\Lambda)$, then there are
(uniquely determined) exceptional modules $M_{\mathcal A}^{\mathcal B}$ and 
$N_{\mathcal A}^{\mathcal B}$ such that
$$
 \add M_{\mathcal A}^{\mathcal B} = \mathcal B\cap \mathcal A^\perp \quad \text{and}
  \quad
 \add N_{\mathcal A}^{\mathcal B} = \mathcal B\cap {}^\perp\!\!\mathcal A.
$$
\end{prop}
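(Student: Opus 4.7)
The plan is to deduce the entire statement from Theorem \ref{interval} by classifying those exceptional subcategories $\mathcal C$ for which $\A(\mathcal C)$ has exactly two elements. Applying Theorem \ref{interval} to the interval $[\mathcal A,\mathcal B]$ gives a poset isomorphism $[\mathcal A,\mathcal B] \to \A(\mathcal C)$ with $\mathcal C = \mathcal B\cap \mathcal A^\perp$, and by Lemma \ref{lemma1} we have $\rank\mathcal C = \rank\mathcal B - \rank\mathcal A$. So the question ``are $\mathcal A,\mathcal B$ neighbors?'' is immediately transferred to the question: ``does $\A(\mathcal C)$ consist of just its top and bottom element?''.

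The next step is therefore to check: for an exceptional subcategory $\mathcal D$ (equivalent to $\mo\Lambda'$ for some hereditary artin algebra $\Lambda'$ with $\rank\Lambda' = \rank\mathcal D$), the poset $\A(\mathcal D)$ has exactly two elements if and only if $\rank\mathcal D = 1$. Indeed, if $\rank\mathcal D = 0$ then $\mathcal D = 0$ and $\A(\mathcal D) = \{0\}$; if $\rank\mathcal D = 1$ then $\Lambda'$ is a division ring, so $\mathcal D$ has a unique (up to isomorphism) simple object $M$, every object is a direct sum of copies of $M$, and $\A(\mathcal D) = \{0,\add M\}$; and if $\rank\mathcal D \ge 2$ then $\mathcal D$ has (at least) two non-isomorphic simple objects $S_1, S_2$, each of which is a brick by Schur's Lemma, so the four distinct subcategories $0,\ \add S_1,\ \add S_2,\ \mathcal D$ all lie in $\A(\mathcal D)$.

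Combining the two steps, $\mathcal A \subset \mathcal B$ are neighbors in $\A(\mo\Lambda)$ if and only if $\rank(\mathcal B\cap\mathcal A^\perp) = 1$, i.e.\ if and only if $\rank\mathcal B - \rank\mathcal A = 1$. Moreover, when this holds, the analysis of rank-$1$ exceptional subcategories shows $\mathcal B\cap\mathcal A^\perp = \add M$ for some exceptional module $M$, unique up to isomorphism since it is the unique simple (hence the unique indecomposable) object of the category; we define $M_\mathcal A^\mathcal B := M$. The analogous argument applied to the second isomorphism in Theorem \ref{interval}, $[\mathcal A,\mathcal B] \to \A(\mathcal B\cap {}^\perp\!\!\mathcal A)$, yields the exceptional module $N_\mathcal A^\mathcal B$ with $\add N_\mathcal A^\mathcal B = \mathcal B\cap {}^\perp\!\!\mathcal A$.

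I do not expect any serious obstacle: everything needed is already packaged in Theorem \ref{interval} and Lemma \ref{lemma1} (i.e.\ that $\mathcal B\cap\mathcal A^\perp$ and $\mathcal B\cap {}^\perp\!\!\mathcal A$ really are exceptional of rank $\rank\mathcal B - \rank\mathcal A$). The only point requiring a moment's care is the uniqueness of $M_\mathcal A^\mathcal B$ and $N_\mathcal A^\mathcal B$, which follows because a rank-$1$ exceptional subcategory, being equivalent to the module category of a division ring, has a unique isomorphism class of indecomposable objects.
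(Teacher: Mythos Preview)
Your proposal is correct and follows essentially the same route as the paper. The paper's own proof is extremely terse—it merely notes that the proposition is ``a direct consequence'' of Theorem \ref{interval} and that, for neighbors, $\mathcal B\cap\mathcal A^\perp$ and $\mathcal B\cap{}^\perp\!\!\mathcal A$ are exceptional of rank $1$, hence of the form $\add M$ for an exceptional module—so your write-up simply spells out the details the paper leaves implicit.
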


\begin{proof} If $\mathcal A\subset \mathcal B$ are neighbors, any of the subcategories
$\mathcal B\cap \mathcal A^\perp$ and $\mathcal B\cap {}^\perp\!\!\mathcal A$ is  exceptional and of rank $1$,
thus the additive category generated by an exceptional module.
\end{proof} 
	
Section \ref{max-chains} will be devoted to the study of maximal chains in $\A(\mo\Lambda)$. Given a maximal
chain, we will deal with the corresponding sequence of modules 
$M_{\mathcal A}^{\mathcal B},N_{\mathcal A}^{\mathcal B}$ where $\mathcal A\subset
\mathcal B$ belong to the chain. Also, as an example, we will exhibit all the modules
$M_{\mathcal A}^{\mathcal B},N_{\mathcal A}^{\mathcal B}$ for the case of $\Lambda = \Lambda_3$.
	\bigskip 

\subsubsection
{\bf Intervals of height 2.} Such an interval is isomorphic to $\A(\mo\Lambda')$, where
$\Lambda'$ is a hereditary artin algebra with precisely two simple modules. In case $\Lambda'$ 
is representation-finite, it is of type $\mathbb A_1\times\mathbb A_1,\ \mathbb A_2, \mathbb B_2,$ or $\mathbb G_2.$
In case $\Lambda'$ is representation-infinite, it has an infinite preprojective component
and an infinite preinjective component, and the exceptional modules are just the indecomposable
modules which are preprojective or preinjective. 

Here are these possibilities.
 $$
\hbox{\beginpicture
  \setcoordinatesystem units <.6cm,.7cm>
\put{\beginpicture
  \setcoordinatesystem units <.7cm,.7cm>
\multiput{$\bullet$} at 0 0  -1 1  1 1  0 2 /
\plot 0 0  -1 1  0 2  1 1  0 0 /
\put{$\mathbb A_1\!\!\times\! \mathbb A_1$} at 0 -1 
\endpicture} at 0 0
\put{\beginpicture
  \setcoordinatesystem units <.7cm,.7cm>
\multiput{$\bullet$} at 0 0  -1 1  1 1  0 2  0 1 /
\plot 0 0  -1 1  0 2  1 1  0 0 /
\plot 0 0  0 2 /
\put{$\mathbb A_2$} at 0 -1 
\endpicture} at 4 0
\put{\beginpicture
  \setcoordinatesystem units <.7cm,.7cm>
\multiput{$\bullet$} at 0 0  -1 1  1 1  0 2  -.33 1  0.33 1 /
\plot 0 0  -1 1  0 2  1 1  0 0 /
\plot 0 0  -.33 1  0 2  .33 1  0 0 /
\put{$\mathbb B_2$} at 0 -1 
\endpicture} at 8 0
\put{\beginpicture
  \setcoordinatesystem units <.7cm,.7cm>
\multiput{$\bullet$} at 0 0  -1 1  1 1  0 2  -0.6 1  -.2 1  .2 1  .6 1 /
\plot 0 0  -1 1  0 2  1 1  0 0 /
\plot 0 0  -.6 1  0 2  -.2 1  0 0  .2 1  0 2  -.2 1  0 0  .6 1  0 2 /
\put{$\mathbb G_2$} at 0 -1 
\endpicture} at 12 0
\put{\beginpicture
  \setcoordinatesystem units <.7cm,.7cm>
\multiput{$\bullet$} at  0 0   0 2  -0.6 1  -.2 1  .2 1  .6 1  /
\plot   -1 .667  0 0  1 .667  /
\plot   -1 1.333  0 2  1 1.333  /
\plot 0 0  -.6 1  0 2  -.2 1  0 0  .2 1  0 2  -.2 1  0 0  .6 1  0 2 /
\plot -.65 1.333  0 2  .65 1.333 /
\plot -.65  .667  0 0  .65  .667 /
\setdots <1mm>
\plot -1 .667  -1.5 1  -1 1.333 /
\plot  1 .667   1.5 1   1 1.333 /
\multiput{$\cdots$} at -1.1 1  1.1 1 /
\put{infinite} at 0 -.95
\endpicture} at 16.2 0
\endpicture}
$$
In particular, an interval of height 2 is never a chain. 
	\bigskip

{\bf Remark.} Let us stress already here that the posets $\A(\mo\Lambda)$
with $\Lambda$ of rank $2$ (and thus all intervals of height 2) are not only
lattices, but come equipped with a cyclic rotation, in case $\Lambda$ is 
representation-finite, and with a totally ordering of $\A_1(\mo\Lambda)$
in case $\Lambda$ is representation-infinite. The neighbors in $\A_1(\mo\Lambda)$
are just the exceptional pairs of $\Lambda$-modules. We will use this in the proof
of Theorem \ref{braid-exc}.

A direct way to see the cyclic or total ordering of $\A_1(\mo\Lambda)$
is to look at the Auslander-Reiten quiver of $\Lambda$. 
If $\Lambda$ is not connected (the case $\mathbb A_1\times \mathbb A_1$), the
indecomposable modules are the simple modules, say $S$ and $T$ and both
$(S,T)$ and $(T,S)$ are exceptional pairs.
If $\Lambda$ is representation-finite and connected, say with $m$
indecomposable modules (here $m = 3,4,$ or $6$), we may order them
as $(X_1,\dots, X_m)$ such that $\Hom(X_i,X_{i+1})\neq 0$ for $1\le i < m.$
Then the pairs $(X_i,X_{i+1})$ with $1\le i < m$ 
as well as the pair $(X_m,X_1)$ are the exceptional pairs.
Finally, if $\Lambda$ is representation-infinite, the exceptional modules are
just the indecomposable modules which are preprojective or preinjective. We label
the indecomposable preprojective modules as $X_0,X_1,\dots,X_i,\dots,$
the indecomposable preinjective modules as $\dots,X_{i},\dots, X_{-2}, X_{-1}$
such that $\Hom(X_i,X_{i+1})\neq 0$ for $i\le -2$ and for $i\ge 0.$ Then
the pairs $(X_i,X_{i+1})$ with $i\in \mathbb Z$ are the exceptional pairs.
	\medskip

\subsection{The poset $\A(\mo\Lambda)$: Automorphisms and anti-automorphisms}
As we will see, the poset $\A(\mo\Lambda)$ is self-dual and has, in general,
many automorphisms.
	\medskip 

\subsubsection{}\label{delta}
 If $\mathcal A$ is an exceptional subcategory of $\mo\Lambda$, let 
$\delta(\mathcal A) = \mathcal A^\perp$. 

\begin{theorem}\label{anti} The map $\delta$ is a poset anti-automorphism of
$\A(\mo\Lambda)$.
\end{theorem}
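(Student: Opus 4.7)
The plan is to establish three properties of $\delta$: (a) it maps $\A(\mo\Lambda)$ into itself, (b) it reverses the inclusion order, and (c) it is bijective.

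For (a), given an exceptional subcategory $\mathcal{A}$ with projective generator $N$, Lemma \ref{perp-gen} gives $\mathcal{A}^\perp = N^\perp$, and Lemma \ref{perp-gen2} then shows that $N^\perp$ has the projective generator $\gamma_N({}_\Lambda\Lambda)$, so $\delta(\mathcal{A}) = \mathcal{A}^\perp$ is again exceptional. (One may also invoke Theorem \ref{perp-perp} directly once the perpendicular pair $(\mathcal{A}^\perp,\mathcal{A})$ is recognized, but the direct argument suffices here.) For (b), the order-reversing property is immediate from the definition: if $\mathcal{A}\subseteq \mathcal{B}$, then any module $U$ with $\Hom(\mathcal{B},U)=0=\Ext^1(\mathcal{B},U)$ automatically satisfies the analogous conditions for $\mathcal{A}$, so $\mathcal{B}^\perp\subseteq \mathcal{A}^\perp$.

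For (c), I would introduce the dual operation $\delta'(\mathcal{A}) = {}^\perp\mathcal{A}$. By the arguments dual to those in (a) and (b), $\delta'$ is likewise a well-defined order-reversing map $\A(\mo\Lambda)\to \A(\mo\Lambda)$. I claim $\delta'$ is a two-sided inverse for $\delta$. The inclusion $\mathcal{A}\subseteq {}^\perp(\mathcal{A}^\perp)$ holds tautologically from the definitions of $(-)^\perp$ and ${}^\perp(-)$. Applying Theorem \ref{perp-rank} twice shows that both $\mathcal{A}$ and ${}^\perp(\mathcal{A}^\perp)$ are exceptional subcategories of rank $\rank\mathcal{A}$. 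The corollary following Theorem \ref{perp-rank} (an inclusion of exceptional subcategories of equal rank is an equality) then forces $\mathcal{A} = {}^\perp(\mathcal{A}^\perp) = (\delta'\circ\delta)(\mathcal{A})$, and the identity $\delta\circ\delta' = \mathrm{id}$ follows symmetrically.

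There is no real obstacle here: the result is essentially a bookkeeping exercise on top of the substantive work already done in Chapter 2. The only nontrivial ingredient is the rank equality of Theorem \ref{perp-rank}, since without it the tautological inclusion $\mathcal{A}\subseteq {}^\perp(\mathcal{A}^\perp)$ would not suffice to upgrade to equality, and consequently $\delta$ would only be known to be an order-reversing map rather than an anti-automorphism.
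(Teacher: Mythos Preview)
Your proof is correct and follows essentially the same approach as the paper's. The only cosmetic difference is in how you justify the key equality $\mathcal{A} = {}^\perp(\mathcal{A}^\perp)$: the paper invokes the perpendicular-pair language from Chapter~2 directly, whereas you spell out the underlying rank argument via Theorem~\ref{perp-rank} and its corollary, which is arguably more explicit.
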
 
	
\begin{proof} 
Of course, we always have $\mathcal A \subseteq {}^\perp(\mathcal A^\perp)$. In case $\mathcal A$
is an exceptional subcategory, we have the equality 
$\mathcal A = {}^\perp(\mathcal A^\perp)$ (otherwise usually not, see the 
note N\,\ref{thick-perp-perp}). Namely, we have shown in
Chapter 2 that in this case $(\mathcal A^\perp,\mathcal A)$ is a perpendicular pair, but this just
means that $\mathcal A = {}^\perp(\mathcal A^\perp)$. Similarly, for $\mathcal B$ an exceptional subcategory, 
we have $\mathcal B = ({}^\perp\mathcal B)^\perp$, thus $\delta$ is a bijective map
from $\A(\mo\Lambda)$ to itself 
with inverse $\delta^{-1}(\mathcal A) = {}^\perp\mathcal A.$ 

Of course, $\delta$ reverses the partial ordering: $\mathcal A \subseteq \mathcal B$ implies that 
$\mathcal A^\perp \supseteq \mathcal B^\perp$. Similarly, $\delta^{-1}$ reverses the partial ordering.
Thus $\delta$ is a poset anti-automorphism.
\end{proof} 
	
{\bf Remark.} In the case $\Lambda = \Lambda_n$, the anti-automorphism $\delta$ is just
the Kreweras complement (as introduced by Kreweras in 1972), see Theorem \ref{Kreweras}.
	\bigskip

\begin{cor} The map $\delta$ provides a bijection between
$\A_t(\mo\Lambda)$ and $\A_{n-t}(\mo\Lambda)$, for $0 \le t \le n.$
\end{cor}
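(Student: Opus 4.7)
The plan is to combine the two main ingredients that have already been established in the paper: the fact that $\delta$ is a bijective poset anti-automorphism of $\A(\mo\Lambda)$ (Theorem \ref{anti}), and the rank formula for perpendicular subcategories (Theorem \ref{perp-rank}), which asserts that an exceptional subcategory $\mathcal V$ of rank $t$ has perpendicular $\mathcal V^\perp$ of rank $n-t$.

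First I would note that by Theorem \ref{anti}, $\delta\!:\A(\mo\Lambda)\to\A(\mo\Lambda)$ is a bijection with inverse $\delta^{-1}(\mathcal A) = {}^\perp\mathcal A$. Thus its restriction to any subset of $\A(\mo\Lambda)$ remains injective. It therefore suffices to check that $\delta$ carries $\A_t(\mo\Lambda)$ into $\A_{n-t}(\mo\Lambda)$ and that $\delta^{-1}$ carries $\A_{n-t}(\mo\Lambda)$ into $\A_t(\mo\Lambda)$.

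Next, let $\mathcal A\in\A_t(\mo\Lambda)$, so $\mathcal A$ is an exceptional subcategory of rank $t$. Theorem \ref{perp-rank} applies directly to give $\rank(\mathcal A^\perp) = n-t$; moreover, by Theorem \ref{perp-perp}, $\mathcal A^\perp$ is again exceptional (since $(\mathcal A^\perp,\mathcal A)$ is a perpendicular pair with $\mathcal A$ exceptional). Hence $\delta(\mathcal A) = \mathcal A^\perp\in \A_{n-t}(\mo\Lambda)$. A symmetric argument applied to $\delta^{-1}(\mathcal B) = {}^\perp\mathcal B$ for $\mathcal B\in\A_{n-t}(\mo\Lambda)$ shows that $\delta^{-1}$ sends $\A_{n-t}(\mo\Lambda)$ into $\A_t(\mo\Lambda)$. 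Combining these two inclusions with the bijectivity of $\delta$ on the full poset yields the desired bijection $\A_t(\mo\Lambda)\to\A_{n-t}(\mo\Lambda)$.

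There is essentially no obstacle here; the statement is a direct corollary of results already in hand. The only small point to be careful about is that Theorem \ref{perp-rank} is stated for a single exceptional subcategory inside $\mo\Lambda$, so one must observe that $\mathcal A^\perp$ itself is exceptional (guaranteed by Theorem \ref{perp-perp}) before one is entitled to speak of its rank in the sense used to define $\A_{n-t}(\mo\Lambda)$.
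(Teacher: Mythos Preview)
Your proof is correct and is essentially the argument the paper has in mind: the corollary is stated without proof immediately after Theorem \ref{anti}, and the only missing ingredient beyond bijectivity of $\delta$ is that $\rank(\mathcal A^\perp)=n-\rank(\mathcal A)$, which is exactly Theorem \ref{perp-rank} (with Theorem \ref{perp-perp} ensuring $\mathcal A^\perp$ is exceptional). An equivalent way to phrase it, also available from the paper, is to use the identification of $\A_t(\mo\Lambda)$ with the set of elements of height $t$ (see Section 3.1) together with the fact that a poset anti-automorphism of a graded poset of height $n$ interchanges height $t$ and height $n-t$; but this amounts to the same thing.
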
 
	
The special case $t=1$ should be mentioned explicitly:
{\it The map $\delta$ provides a bijection between
the elements in $\A_{n-1}(\mo\Lambda)$ and the exceptional modules.}
	\medskip

Note that the restriction of $\delta$ to $\A_{n-1}(\mo\Lambda)$ can be written as follows:
if $\mathcal A$ belongs to $\A_{n-1}(\mo\Lambda)$, then
$\delta(\mathcal A) = \add N_{\mathcal A}^{\mo\Lambda}.$ 
	\bigskip

We define a permutation $\overline \tau$ on the set of isomorphism classes of
indecomposable $\Lambda$-modules as follows: if $X$ is indecomposable, let
$$
  \overline\tau X = \left\{ \begin{matrix} \ \tau X &\ \text{if $X$ is not projective,} \cr\cr
     \ I(S) &\text{if $X = P(S)$,} \qquad\qquad
  \end{matrix} \right.
$$
where $\tau = D\Tr$ is the Auslander-Reiten translation, $P(S)$ the projective cover of the
simple module $S$ and $I(S)$ the injective envelop of $S$. We call $\overline\tau$ the
{\it extended Auslander-Reiten translation} for $\mo\Lambda$.
	\bigskip 

\begin{theorem}\label{AR} The map $\delta^2$ is a poset automorphism of
$\A(\mo\Lambda)$ and we have 
$$ 
 \delta^2(\mathcal A) = \overline\tau(\mathcal A)
$$ 
for any $\mathcal A \in \A(\mo\Lambda).$
\end{theorem}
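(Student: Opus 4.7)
The plan is to establish the first assertion immediately and then reduce the identification $\delta^2(\mathcal A)=\overline\tau(\mathcal A)$ to a computation on rank-one subcategories, transporting the formula to higher rank by the automorphism property of $\delta^2$.

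The first assertion is immediate from Theorem \ref{anti}: the square of a poset anti-automorphism is a poset automorphism. By Theorem \ref{perp-rank} applied twice, $\delta^2$ preserves the rank of an exceptional subcategory, so it restricts to a bijection on each layer $\A_t(\mo\Lambda)$. To interpret the second assertion, $\overline\tau(\mathcal A)$ should be read as the additive closure of $\{\overline\tau X : X\in\ind(\mathcal A)\}$; the claim is that this set of modules is precisely $\ind\delta^2(\mathcal A)$.

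Next, I would handle the rank-one case: for an exceptional module $M$, show $\delta^2(\add M) = (M^\perp)^\perp = \add\overline\tau M$. By Theorem \ref{perp-rank}, $(M^\perp)^\perp$ has rank one, so it equals $\add Y$ for a unique indecomposable exceptional $Y$; it suffices to exhibit $\overline\tau M$ inside $(M^\perp)^\perp$. When $M$ is not projective, the Auslander--Reiten formula over a hereditary artin algebra gives $\overline{\Hom}(N,\tau M)\cong D\,\Ext^1(M,N)$, whose right side vanishes for $N\in M^\perp$; the passage from $\overline{\Hom}$ to $\Hom$ requires analyzing the possible factorisations through an injective direct summand of $\tau M$, which one controls using that $N\in M^\perp$. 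A dual computation, or applying $D$ and $\tau^{-1}$ consistently, yields $\Ext^1(N,\tau M)=0$. When $M=P(S)$ is projective, $M^\perp$ is equivalent to $\mo\bigl(\Lambda/\langle e_S\rangle\bigr)$, and one verifies directly that $I(S)$ lies in $(M^\perp)^\perp$ by computing Hom and Ext against a projective generator of $M^\perp$.

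For a general $\mathcal A\in\A(\mo\Lambda)$, the rank-one case upgrades to the full statement purely by formal properties of the automorphism $\delta^2$. Indeed, for any indecomposable $X\in\mathcal A$ one has $\add X\subseteq\mathcal A$, hence $\add\overline\tau X=\delta^2(\add X)\subseteq\delta^2(\mathcal A)$, giving $\overline\tau X\in\ind\delta^2(\mathcal A)$. Conversely, any indecomposable $Y\in\delta^2(\mathcal A)$ yields $\add Y\subseteq\delta^2(\mathcal A)$; applying $\delta^{-2}$ and using that the rank-one formula also yields $\delta^{-2}(\add Y)=\add\overline\tau^{-1}Y$ (since $\overline\tau$ is a bijection on the set of indecomposable $\Lambda$-modules), we find $\overline\tau^{-1}Y\in\ind\mathcal A$, so $Y$ is of the form $\overline\tau X$ with $X\in\ind\mathcal A$. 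Combining the two inclusions, $\ind\delta^2(\mathcal A)=\{\overline\tau X:X\in\ind\mathcal A\}$, which is the assertion. The main obstacle is the rank-one computation for non-projective $M$: the Auslander--Reiten formula naturally involves $\overline{\Hom}$, and one must rule out nonzero maps $N\to\tau M$ factoring through injectives; the projective case $M=P(S)$ is handled separately, since $\tau$ is then undefined and $\overline\tau M=I(S)$ enters directly.
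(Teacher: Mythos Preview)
Your proposal is correct and follows essentially the same line as the paper: the first assertion comes from Theorem \ref{anti}, then one reduces to rank one and splits into the projective and non-projective cases. You are actually more explicit than the paper about why the rank-one case suffices; the paper simply asserts this reduction, while you spell out the two inclusions using that $\delta^2$ is an order automorphism.

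Where you lose efficiency is in the rank-one non-projective case. For a hereditary artin algebra the Auslander--Reiten formula holds on the nose, without passing to stable $\Hom$: one has $\Hom(Y,\tau X)\cong D\Ext^1(X,Y)$ for all $X,Y$. The paper uses this directly, obtaining $\Hom(Y,\tau X)=D\Ext^1(X,Y)=0$ and $\Ext^1(Y,\tau X)=D\Hom(\tau X,\tau Y)=D\Hom(X,Y)=0$ in two lines, so your worry about factorisations through injectives and your vague ``dual computation'' for the Ext vanishing are unnecessary. In the projective case $X=P(S)$ the paper is also more direct: modules in $P(S)^\perp$ have no composition factor $S$, so $\Hom(Y,I(S))=0$, and $\Ext^1(Y,I(S))=0$ since $I(S)$ is injective.
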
 
	
\begin{proof} The first assertion follows directly from Theorem \ref{anti}. 
In order to show the
second assertion, it is sufficient to consider the case when $\mathcal A$ has rank one. 
Thus, let $\mathcal A = \add X$ where $X$ is an exceptional module. Let us show that
$\overline\tau(X)$ belongs to $\delta^2(\mathcal A) = \mathcal A^\perp{}^\perp.$
Thus, given $Y\in \mathcal A^\perp$, we claim that $\Hom(Y,\overline\tau X) = 0$
and $\Ext^1(Y,\overline\tau X) = 0$.

First, assume that
$X = P(S)$ for some simple module $S$, thus $\overline\tau(X) = I(S)$. 
Then $P(S)^\perp$ consists of the $\Lambda$-modules
$Y$ which do not have $S$ as a composition factor, thus $\Hom(Y,I(S)) = 0$.
Of course, also $\Ext^1(Y,I(S)) = 0,$ since $I(S)$ is injective.

There is the second case that $X$ is not projective. Let $Y\in\mathcal A^\perp$,
thus $\Hom(X,Y) = 0$ and $\Ext^1(X,Y) = 0.$ Since $X$ is indecomposable and not
projective, $\Hom(Y,\tau X) = D\Ext^1(X,Y) = 0$ and 
$\Ext^1(Y,\tau X) = D\Hom(\tau X, \tau Y) =
D\Hom(X,Y) = 0.$

Now $\overline\tau(X)$ is non-zero and belongs to $\delta^2(\mathcal A)$; since 
$\delta^2(\mathcal A)$ has rank 1, it follows  that 
$\delta^2(\mathcal A) = \add \overline\tau(X) = \overline\tau(\mathcal A)$.
\end{proof} 
	
For example, let $\Lambda_3$ be the path algebra $\Lambda$
of a linearly ordered quiver $Q$ of type $\mathbb A_3$. Then the permutation $\overline\tau$ 
of $A_1(\mo\Lambda_3)$ (or of the indecomposable $\Lambda_3$-modules) has two orbits:
$$
\hbox{\beginpicture
  \setcoordinatesystem units <1cm,1cm>
\put{\beginpicture
\put{$(0 0 1)$} at 1 0 
\put{$(0 1 0)$} at 0 1 
\put{$(1 0 0)$} at -1 0 
\put{$(1 1 1)$} at 0 -1 
\circulararc 45 degrees from 0.95 0.25 center at 0 0
\circulararc -45 degrees from  -.95 .25 center at 0 0
\circulararc -45 degrees from 0.95 -.25 center at 0 0
\circulararc 45 degrees from  -.95 -.25 center at 0 0
\plot 0.9 0.24  1 0.26 /
\plot -.9 -.24  -1 -.26 /
\plot -.52 0.87  -.47 0.77 /
\plot  .52 -.87   .47 -.77 /
\arr{.5 .84}{.45 .87}
\arr{-.5 -.84}{-.45 -.87}
\arr{-.941 .26}{-.947 .24}
\arr{.941 -.26}{.947 -.24}
\endpicture} at 0 0
\put{\beginpicture
\put{$(0 1 1)$} at 1 0 
\put{$(1 1 0)$} at -1 0 
\arr{-.941 .26}{-.947 .24}
\arr{.941 -.26}{.947 -.24}
\circulararc 150 degrees from 0.95 0.25 center at 0 0
\circulararc 150 degrees from  -.95 -.25 center at 0 0
\plot 0.9 0.24  1 0.26 /
\plot -.9 -.24  -1 -.26 /
\endpicture} at 4 0
\endpicture}
$$
Looking at the lattice $\A(\mo\Lambda_3)$, the elements of rank 1 behave differently: 
four of the six elements have 3 upper neighbors, these are the elements in the first
$\overline\tau$-orbit, the remaining two form the second 
$\overline\tau$-orbit, each of them has only 2 upper neighbors.

\begin{cor} The automorphism $\delta^2$ of $\A(\mo\Lambda)$ is the identity if and only if
$\mo\Lambda$ is a semismiple algebra.
\end{cor}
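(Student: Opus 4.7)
The plan is to use Theorem \ref{AR}, which identifies $\delta^2$ with the action of the extended Auslander--Reiten translation $\overline\tau$ on $\A(\mo\Lambda)$. Since the rank-one elements of $\A(\mo\Lambda)$ are precisely the additive categories $\add X$ generated by exceptional modules $X$, it suffices to check when $\overline\tau$ fixes every exceptional module (up to isomorphism): if it fixes each $\add X$ with $X$ exceptional, it must also fix every thick closure of such modules, hence all of $\A(\mo\Lambda)$.

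First I would dispose of the easy direction. If $\Lambda$ is semisimple, then every indecomposable $\Lambda$-module is simple and simultaneously projective and injective, so $\overline\tau(S)=I(S)=S$ for each simple $S$. Consequently $\overline\tau$ is the identity on isomorphism classes of indecomposables, and Theorem \ref{AR} yields $\delta^2 = \mathrm{id}$.

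For the converse, I would argue by contrapositive: assume $\Lambda$ is not semisimple and produce an exceptional module not fixed by $\overline\tau$. Non-semisimplicity is equivalent to the valued quiver $Q(\Lambda)$ having at least one arrow (otherwise $\Lambda$ is a product of division rings). Because $\Lambda$ is hereditary, $Q(\Lambda)$ has no oriented cycles, so it admits a source $i$, and by assumption we may choose such a source possessing at least one outgoing arrow. Then $S(i)$ receives no non-split extension from any other simple, so $I(S(i))=S(i)$; on the other hand, the outgoing arrow forces $S(j)$ to appear in $\rad P(S(i))$ for some $j\neq i$, so $P(S(i))\neq S(i)$. Since every indecomposable projective is exceptional, $P(S(i))$ lies in $\A_1(\mo\Lambda)$, and
\[
\overline\tau\bigl(P(S(i))\bigr) \;=\; I(S(i)) \;=\; S(i) \;\neq\; P(S(i)),
\]
so $\delta^2$ is not the identity on $\A_1(\mo\Lambda)$.

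The only delicate point is the assertion that $Q(\Lambda)$ is acyclic when $\Lambda$ is hereditary; this is standard (oriented cycles in the $\Ext^1$-quiver would force infinite global dimension via a minimal projective resolution argument, contradicting hereditariness). Everything else is a direct application of the identification $\delta^2 = \overline\tau$ already established in Theorem \ref{AR}.
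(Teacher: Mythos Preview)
Your proof is correct and is exactly the argument the paper has in mind; the corollary is stated immediately after Theorem \ref{AR} without proof, and your use of $\delta^2=\overline\tau$ together with the observation that a non-semisimple hereditary artin algebra has some projective $P(S(i))\not\cong I(S(i))$ is the intended one-line justification.
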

	\medskip

\subsubsection{\bf Remark.} 
{\it If $\mathcal A$ is a thick subcategory of $\mo\Lambda$, then $\delta^2(\mathcal A)$
has the same rank as $\mathcal A$, but may not be equivalent (as a category) to $\mathcal A$.}

Here is an example. Let $\Lambda$ be the path algebra of the $3$-subspace quiver $Q$ as shown below
on the left. To the right, we show twice the Auslander-Reiten quiver of $\Lambda$; first we
mark the antichain $P(1), S(2),S(3)$ by bullets, on the right we mark in the same way the
modules $\overline\tau P(1) = I(1), \overline\tau S(2) = \tau S(2), \overline\tau S(3) = \tau S(3)$

$$
\hbox{\beginpicture
  \setcoordinatesystem units <1cm,1cm>
\put{\beginpicture
  \setcoordinatesystem units <.7cm,.7cm>
\arr{0.7 0.7}{0.3 0.3}
\arr{0.7 0}{0.3 0}
\arr{0.7 -.7}{0.3 -.3}
\put{$ 0$} at 0 0
\put{$ 1$} at 1 1
\put{$ 2$} at 1 0
\put{$ 3$} at 1 -1
\put{$Q$} at -.5 1.5
\endpicture} at 0 0
\put{\beginpicture
  \setcoordinatesystem units <.7cm,.7cm>
\multiput{$\circ$} at 0 0  1 1  1 0  1 -1  2 0  3 1  3 0  3 -1  4 0  5 1  5 0  5 -1  /
\arr{0.3 0.3}{0.7 0.7}
\arr{0.3 0}{0.7 0}
\arr{0.3 -.3}{0.7 -.7}

\arr{2.3 0.3}{2.7 0.7}
\arr{2.3 0}{2.7 0}
\arr{2.3 -.3}{2.7 -.7}

\arr{4.3 0.3}{4.7 0.7}
\arr{4.3 0}{4.7 0}
\arr{4.3 -.3}{4.7 -.7}

\arr{1.3 0.7}{1.7 0.3}
\arr{1.3 0}{1.7 0}
\arr{1.3 -.7}{1.7 -.3}

\arr{3.3 0.7}{3.7 0.3}
\arr{3.3 0}{3.7 0}
\arr{3.3 -.7}{3.7 -.3}
\put{$\ssize P(1)$} at 1 1.3
\put{$\ssize S(2)$} at 5.6 0
\put{$\ssize S(3)$} at 5.6 -1
\multiput{$\bullet$} at 1 1  5 0  5 -1 /
\put{} at 0 -1.3

\endpicture} at 4 0
\put{\beginpicture
  \setcoordinatesystem units <.7cm,.7cm>
\multiput{$\circ$} at 0 0  1 1  1 0  1 -1  2 0  3 1  3 0  3 -1  4 0  5 1  5 0  5 -1  /
\arr{0.3 0.3}{0.7 0.7}
\arr{0.3 0}{0.7 0}
\arr{0.3 -.3}{0.7 -.7}

\arr{2.3 0.3}{2.7 0.7}
\arr{2.3 0}{2.7 0}
\arr{2.3 -.3}{2.7 -.7}

\arr{4.3 0.3}{4.7 0.7}
\arr{4.3 0}{4.7 0}
\arr{4.3 -.3}{4.7 -.7}

\arr{1.3 0.7}{1.7 0.3}
\arr{1.3 0}{1.7 0}
\arr{1.3 -.7}{1.7 -.3}

\arr{3.3 0.7}{3.7 0.3}
\arr{3.3 0}{3.7 0}
\arr{3.3 -.7}{3.7 -.3}
\multiput{$\bullet$} at 5 1  3 0  3 -1 /
\put{} at 0 1.3
\put{$\ssize I(1)$} at 5.6 1
\put{$\ssize \tau S(2)$} at 3  -.3
\put{$\ssize \tau S(3)$} at 3 -1.3
\endpicture} at 9 0
\endpicture}
$$
Let $\mathcal A$ be the thick subcategory generated by the antichain $P(1),S(2),S(3)$,
thus  $\mathcal A' = \delta^2(\mathcal A)$ is the thick subcategory generated by the modules
$I(1) = S(1),\tau S(2),$ $\tau S(3)$, thus by the antichain $P(2), P(3), S(1)$.
Here are the corresponding quivers:
$$
\hbox{\beginpicture
  \setcoordinatesystem units <1cm,1cm>
\put{\beginpicture
  \setcoordinatesystem units <1cm,.7cm>
\put{$P(1)$} at 0 0 
\put{$S(2)$} at 1 1
\put{$S(3)$} at 1 -1
\arr{0.7 0.7}{0.3 0.3}
\arr{0.7 -.7}{0.3 -.3}
\put{$Q(\mathcal A)$} at -1 1.5
\endpicture} at 0 0  
\put{\beginpicture
  \setcoordinatesystem units <1cm,.7cm>
\put{$P(2)$} at 0 1
\put{$S(3)$} at 0 -1
\put{$S(1)$} at 1 0
\arr{0.7 0.3}{0.3 0.7}
\arr{0.7 -.3}{0.3 -.7}
\put{$Q(\mathcal A')$} at -1.5 1.5
\endpicture} at 5 0  

\endpicture}
$$
	\bigskip

As we will see in Proposition \ref{delta-square-a_n}, the algebra $\Lambda = \Lambda_n$: 
has the property that for any thick subcategory $\mathcal A$, the subcategory
$\delta^2(\mathcal A)$ is equivalent to $\mathcal A$.

	\bigskip 
\subsection{The poset $\A(\mo\Lambda)$: Maximal chains, complete 
exceptional sequences}\label{max-chains}
Given a pair $\mathcal A\subset \mathcal B$ of neighbors in $\A(\mo\Lambda)$, we have denoted 
in Section \ref{neighbors} by
$M_{\mathcal A}^{\mathcal B}$ the unique indecomposable module in $\mathcal B\cap \mathcal A^\perp$, and by
$N_{\mathcal A}^{\mathcal B}$ the unique indecomposable module in $\mathcal B\cap {}^\perp\!\!\mathcal A$.
	\medskip 

\subsubsection{}
Recall that a sequence $(M_1,\dots,M_t)$ of $\Lambda$-modules is called an {\it exceptional
sequence} provided all the modules $M_i$ are exceptional and $\Hom(M_j,M_i) = 0 = \Ext(M_j,M_i)$
for all $i < j.$ There is the following equivalent inductive definition: The empty sequence is
exceptional, and for $t\ge 1$, the sequence $(M_1,\dots,M_t)$ is exceptional if and only 
$(M_1,\dots,M_{t-1})$ is exceptional and $M_t$ is an exceptional module in
${}^\perp\!\left(\add\{M_1,\dots,M_{t-1}\}\right),$ or, equivalently, if and only if $(M_2,\dots,M_t)$ 
is exceptional and 
$M_1$ is an exceptional module in $\left(\add\{M_2,\dots,M_t\}\right)^\perp$.
 
Recall that an exceptional sequence $(M_1,\dots,M_t)$ is said to be complete provided
$t = \rank(\Lambda).$ 
Two exceptional sequences $(M_1,\dots,M_t)$ and $(M'_1,\dots,M'_{t'})$
are called {\it isomorphic} provided $t = t'$ and $M_i$ is isomorphic to $M'_i$, for $1\le i\le t.$
We denote by $\E(\mo\Lambda)$ the set of (isomorphism classes of) complete exceptional sequences
in $\mo\Lambda$.

There are some obvious complete exceptional sequences: Assume that the quiver of $\Lambda$ has the
vertex set $\{1,2,\dots,n\}$ and that for any arrow $i \leftarrow j$ we have $i < j$. Given a vertex
$i$, we denote as usual the corresponding simple module by $S(i)$, the projective cover
of $S(i)$ by $P(i)$, the injective envelope of $S(i)$ by $I(i)$. Then the following sequences
$$
 \bigl(P(1),P(2),\dots,P(n)\bigr),\quad \bigl(S(n),S(n-1),\dots,S(1)\bigr),\quad 
 \bigl(I(1),I(2),\dots,I(n)\bigr)
$$
are complete exceptional sequences. 
	\medskip 

\begin{theorem}\label{neighbors1}
The isomorphism classes of complete exceptional sequences of $\Lambda$-modules
correspond bijectively to the maximal chains in the poset $\A(\mo\Lambda).$

A bijection
	\smallskip

\Rahmen{$
 N\!: \M(\A(\mo\Lambda))   \longrightarrow \E(\mo\Lambda))
$}
	\smallskip 

\noindent
is given as follows: Let 
$$
\hbox{\beginpicture
  \setcoordinatesystem units <1cm,1cm>
\put{$0 = \mathcal A(0) \subset \mathcal A(1) \subset \cdots \subset \mathcal A(n) = \mo\Lambda$}
  [l] at 0 0
\put{$(*)$} [r] at -3 0
\put{} at 10 0 
\endpicture}
$$
be a maximal chain in $\A(\mo\Lambda)$, then
$$
 N(\mathcal A(0),\dots,\mathcal A(n)) =  
\bigl(N^{\mathcal A(1)}_{\mathcal A(0)},N^{\mathcal A(2)}_{\mathcal A(1)},\cdots, N^{\mathcal A(n)}_{\mathcal A(n-1)}
 \bigr).
$$
Conversely, if 
$(N_1,N_2,\dots,N_n)$ is a complete
exceptional sequence, let $\mathcal A(i)$ be the thick closure of $N_1,\dots,N_i$.
then we obtain a maximal chain $(*)$ in $\A(\mo\Lambda).$
\end{theorem}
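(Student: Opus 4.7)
The plan is to verify the two maps are well-defined and mutually inverse, using the structural results from Sections 3.3 and 2.5.

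\medskip

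\textbf{First direction.} I would start with a maximal chain $0 = \mathcal A(0) \subset \mathcal A(1) \subset \cdots \subset \mathcal A(n) = \mo\Lambda$ and check that the tuple $(N_1,\dots,N_n)$ with $N_i = N^{\mathcal A(i)}_{\mathcal A(i-1)}$ is a complete exceptional sequence. Each consecutive pair $\mathcal A(i-1) \subset \mathcal A(i)$ has ranks differing by $1$ (since the chain is maximal of length $n$, and intermediate ranks are forced by the height computation in Section 3.1.1), so the Proposition in Section 3.3.1 provides the unique indecomposable $N_i$ generating $\mathcal A(i)\cap {}^\perp\!\!\mathcal A(i-1)$. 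In particular each $N_i$ is exceptional (it generates an exceptional subcategory of rank $1$). For the orthogonality conditions with $i<j$, observe that $\mathcal A(i)\subseteq \mathcal A(j-1)$, so $N_i\in\mathcal A(j-1)$ while $N_j\in {}^\perp\!\!\mathcal A(j-1)$, yielding $\Hom(N_j,N_i)=0=\Ext^1(N_j,N_i)$ directly from the definition of ${}^\perp$.

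\medskip

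\textbf{Second direction.} Starting from a complete exceptional sequence $(N_1,\dots,N_n)$, let $\mathcal A(i)$ be the thick closure of $\{N_1,\dots,N_i\}$. Since $(N_1,\dots,N_i)$ is still an exceptional sequence, the Proposition in Section 2.5.3 asserts $\mathcal A(i)$ is exceptional of rank $i$. The chain of inclusions is obvious, and $\mathcal A(n)=\mo\Lambda$ by Corollary 2.5.5. Maximality follows since rank strictly increases at each step (and by the height discussion, no refinement is possible).

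\medskip

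\textbf{Mutual inverse.} For the composition ``chain $\to$ sequence $\to$ chain'': I must show the thick closure $\mathcal A'(i)$ of $\{N_1,\dots,N_i\}$ equals $\mathcal A(i)$. Each $N_k$ ($k\le i$) lies in $\mathcal A(k)\subseteq \mathcal A(i)$, so $\mathcal A'(i)\subseteq \mathcal A(i)$; since both are exceptional of rank $i$, Corollary 2.5.6 forces equality. For the composition ``sequence $\to$ chain $\to$ sequence'': setting $\mathcal A(i)$ = thick closure of $\{N_1,\dots,N_i\}$, I need to verify that the unique indecomposable in $\mathcal A(i)\cap{}^\perp\!\!\mathcal A(i-1)$ is $N_i$. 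Clearly $N_i\in\mathcal A(i)$; and $N_i\in{}^\perp\!\!\mathcal A(i-1)$ follows because the exceptional-sequence hypothesis gives $\Hom(N_i,N_j)=0=\Ext^1(N_i,N_j)$ for all $j<i$, and ${}^\perp(-)$ applied to the antichain of simples of $\mathcal A(i-1)$ coincides with ${}^\perp\!\!\mathcal A(i-1)$ since $\mathcal A(i-1)$ is the extension closure of those simples, and the exceptional sequence $(N_1,\dots,N_{i-1})$ can be mutated (via the braid action recalled in Section 2.5.3) into an exceptional antichain generating the same thick subcategory $\mathcal A(i-1)$.

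\medskip

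\textbf{Main obstacle.} The subtle point is the last one: passing from orthogonality against the exceptional sequence $(N_1,\dots,N_{i-1})$ to orthogonality against all of $\mathcal A(i-1)$. The ordered vanishings $\Hom(N_i,N_j)=\Ext^1(N_i,N_j)=0$ for $j<i$ are not obviously equivalent to $N_i\in {}^\perp\!\!\mathcal A(i-1)$, because the sequence $(N_1,\dots,N_{i-1})$ need not itself be an antichain. The remedy is to use the braid mutation recalled in the proof sketch of Proposition 2.5.3: it produces an antichain $(A_1,\dots,A_{i-1})$ with the same thick closure $\mathcal A(i-1)$, and then $N_i\in{}^\perp\!\!\mathcal A(i-1)$ follows because ${}^\perp\!\!\mathcal A(i-1)={}^\perp\{A_1,\dots,A_{i-1}\}$ (the simples of the abelian category $\mathcal A(i-1)$). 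Once this is in place, the bijection and its verification fall into line.
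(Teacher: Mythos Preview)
Your proof is correct and follows the same approach as the paper (which treats the theorem as essentially immediate from the preceding setup and gives no separate proof). However, your ``main obstacle'' dissolves without any appeal to braid mutations. You already have $\Hom(N_i,N_j)=0=\Ext^1(N_i,N_j)$ for all $j<i$, which says precisely that $N_1,\dots,N_{i-1}\in N_i^\perp$. But the paper notes at the start of Section~2.5.1 that subcategories of the form $\mathcal V^\perp$ are thick; hence $N_i^\perp$ is thick, so it contains the thick closure $\mathcal A(i-1)$ of $\{N_1,\dots,N_{i-1}\}$. This is exactly the statement $N_i\in{}^\perp\!\!\mathcal A(i-1)$. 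Equivalently, ${}^\perp(\add\{N_1,\dots,N_{i-1}\})={}^\perp\!\!\mathcal A(i-1)$, which is how the paper phrases the inductive characterisation of exceptional sequences at the beginning of Section~3.5.1.

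Your braid-mutation remedy would also work if you apply the mutations $\sigma_1,\dots,\sigma_{i-2}$ to the \emph{full} sequence $(N_1,\dots,N_n)$ (so that $N_i$ is carried along and remains right-perpendicular to the first $i-1$ entries), but as stated---mutating only $(N_1,\dots,N_{i-1})$ and then asserting orthogonality of $N_i$ to the resulting antichain---it is slightly circular, and in any case unnecessary.
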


\begin{addendum}\label{neighbors2}
{\it There is a second bijection
	\smallskip

\Rahmen{$
 M\!: \M(\A(\mo\Lambda))   \longrightarrow \E(\mo\Lambda))
$}
	\smallskip 

\noindent
given as follows: Starting with a maximal chain $(*)$, let
$$
 M(\mathcal A(0),\dots,\mathcal A(n)) =  
\bigl(M^{\mathcal A(n)}_{\mathcal A(n-1)}, \cdots,,M^{\mathcal A(2)}_{\mathcal A(1)},M^{\mathcal A(1)}_{\mathcal A(0)} 
 \bigr).
$$
Conversely, if 
$(M_1,M_2,\dots,M_n)$ is a complete
exceptional sequence, let $\mathcal A(i)$ be the thick closure of $M_{n-i+1},\dots,M_n$.
then we obtain a maximal chain $(*)$ in $\A(\mo\Lambda).$}
\end{addendum}
	\bigskip 

\subsubsection{\bf Example: The algebra $\Lambda_2$.}
For the algebra $\Lambda_2$ with Auslander-Reiten quiver
$$\beginpicture
  \setcoordinatesystem units <0.6cm,0.6cm>
\put{$1$} at 0 0
\put{$P$} at 1 1
\put{$2$} at 2 0
\arr{0.3 0.3}{0.7 0.7}
\arr{1.3 0.7}{1.7 0.3}
\setdots <1mm>
\plot 0.6 0  1.4 0 /
\endpicture
$$
there are precisely three exceptional sequence, namely the sequence
$(1,P)$ of the indecomposable projectives, the sequence
$(P,2)$ of the indecomposable injectives, and the sequence $(2,1)$ of the simple modules.
	\medskip 

Similarly, the lattice $\A(\mo\Lambda_2)$ has precisely three maximal chains.
The map $N$ attaches to these chains in $\A(\mo\Lambda_2)$
the exceptional sequences as shown below (they have to
be read going upwards):

$$\hbox{\beginpicture
  \setcoordinatesystem units <1cm,1cm>
\put{$N$} at -2.5 1
\put{\beginpicture
  \setcoordinatesystem units <1cm,1cm>
 \put{$\beginpicture
  \setcoordinatesystem units <0.2cm,0.2cm>
  \multiput{$\sssize \circ$} at 0 0  1 1  2 0  /
  \multiput{$\ssize \bullet$} at   /
  \endpicture$} at  0 0
 \put{$\beginpicture
  \setcoordinatesystem units <0.2cm,0.2cm>
  \multiput{$\sssize \circ$} at 0 0  1 1  2 0  /
  \multiput{$\ssize \bullet$} at  0 0 /
  \endpicture$} at  -1 1
 \put{$\beginpicture
  \setcoordinatesystem units <0.2cm,0.2cm>
  \multiput{$\sssize \circ$} at 0 0  1 1  2 0  /
  \multiput{$\ssize \bullet$} at  1 1 /
  \endpicture$} at  0 1
 \put{$\beginpicture
  \setcoordinatesystem units <0.2cm,0.2cm>
  \multiput{$\sssize \circ$} at 0 0  1 1  2 0  /
  \multiput{$\ssize \bullet$} at  2 0 /
  \endpicture$} at  1 1
 \put{$\beginpicture
  \setcoordinatesystem units <0.2cm,0.2cm>
  \multiput{$\sssize \circ$} at 0 0  1 1  2 0  /
  \multiput{$\ssize \bullet$} at  0 0  2 0 /
  \endpicture$} at  0 2
\plot -.4 .3  -.9 .7 /
\plot -.4 1.7  -.9 1.3 /
\setdots <1mm>
  \plot  0 .3   0 .7 /
  \plot  .4 .3  .9 .7 /
  \plot  0 1.7  0 1.3 /
  \plot  .4 1.7   .9 1.3 /
\multiput{$1$} at -.9 0.45  /
\multiput{$P$} at  -.9 1.55 /

\endpicture} at 0 0 
\put{\beginpicture
  \setcoordinatesystem units <1cm,1cm>
 \put{$\beginpicture
  \setcoordinatesystem units <0.2cm,0.2cm>
  \multiput{$\sssize \circ$} at 0 0  1 1  2 0  /
  \multiput{$\ssize \bullet$} at   /
  \endpicture$} at  0 0
 \put{$\beginpicture
  \setcoordinatesystem units <0.2cm,0.2cm>
  \multiput{$\sssize \circ$} at 0 0  1 1  2 0  /
  \multiput{$\ssize \bullet$} at  0 0 /
  \endpicture$} at  -1 1
 \put{$\beginpicture
  \setcoordinatesystem units <0.2cm,0.2cm>
  \multiput{$\sssize \circ$} at 0 0  1 1  2 0  /
  \multiput{$\ssize \bullet$} at  1 1 /
  \endpicture$} at  0 1
 \put{$\beginpicture
  \setcoordinatesystem units <0.2cm,0.2cm>
  \multiput{$\sssize \circ$} at 0 0  1 1  2 0  /
  \multiput{$\ssize \bullet$} at  2 0 /
  \endpicture$} at  1 1
 \put{$\beginpicture
  \setcoordinatesystem units <0.2cm,0.2cm>
  \multiput{$\sssize \circ$} at 0 0  1 1  2 0  /
  \multiput{$\ssize \bullet$} at  0 0  2 0 /
  \endpicture$} at  0 2
  \plot  0 .3   0 .7 /
  \plot  0 1.7  0 1.3 /
\setdots <1mm>
  \plot  .4 .3  .9 .7 /
  \plot  .4 1.7   .9 1.3 /
\plot -.4 .3  -.9 .7 /
\plot -.4 1.7  -.9 1.3 /
\multiput{$P$} at -.18 .45   /
\multiput{$2$} at  -.16 1.55 /

\endpicture} at 4 0 
\put{\beginpicture
  \setcoordinatesystem units <1cm,1cm>
 \put{$\beginpicture
  \setcoordinatesystem units <0.2cm,0.2cm>
  \multiput{$\sssize \circ$} at 0 0  1 1  2 0  /
  \multiput{$\ssize \bullet$} at   /
  \endpicture$} at  0 0
 \put{$\beginpicture
  \setcoordinatesystem units <0.2cm,0.2cm>
  \multiput{$\sssize \circ$} at 0 0  1 1  2 0  /
  \multiput{$\ssize \bullet$} at  0 0 /
  \endpicture$} at  -1 1
 \put{$\beginpicture
  \setcoordinatesystem units <0.2cm,0.2cm>
  \multiput{$\sssize \circ$} at 0 0  1 1  2 0  /
  \multiput{$\ssize \bullet$} at  1 1 /
  \endpicture$} at  0 1
 \put{$\beginpicture
  \setcoordinatesystem units <0.2cm,0.2cm>
  \multiput{$\sssize \circ$} at 0 0  1 1  2 0  /
  \multiput{$\ssize \bullet$} at  2 0 /
  \endpicture$} at  1 1
 \put{$\beginpicture
  \setcoordinatesystem units <0.2cm,0.2cm>
  \multiput{$\sssize \circ$} at 0 0  1 1  2 0  /
  \multiput{$\ssize \bullet$} at  0 0  2 0 /
  \endpicture$} at  0 2
 \plot  .4 .3  .9 .7 /
 \plot  .4 1.7   .9 1.3 /
\setdots <1mm>
\plot -.4 .3  -.9 .7 /
\plot -.4 1.7  -.9 1.3 /
  \plot  0 .3   0 .7 /
  \plot  0 1.7  0 1.3 /
\multiput{$1$} at  .9 1.55 /
\multiput{$2$} at .9 0.45  /

\endpicture} at 8 0 
\endpicture}
$$
	
The map $M$ attaches to the maximal chains in $\A(\mo\Lambda_2)$ the 
exceptional sequences as shown next (now they have to
be read going downwards):
$$\hbox{\beginpicture
  \setcoordinatesystem units <1cm,1cm>
\put{$M$} at -2.5 1
\put{\beginpicture
  \setcoordinatesystem units <1cm,1cm>
 \put{$\beginpicture
  \setcoordinatesystem units <0.2cm,0.2cm>
  \multiput{$\sssize \circ$} at 0 0  1 1  2 0  /
  \multiput{$\ssize \bullet$} at   /
  \endpicture$} at  0 0
 \put{$\beginpicture
  \setcoordinatesystem units <0.2cm,0.2cm>
  \multiput{$\sssize \circ$} at 0 0  1 1  2 0  /
  \multiput{$\ssize \bullet$} at  0 0 /
  \endpicture$} at  -1 1
 \put{$\beginpicture
  \setcoordinatesystem units <0.2cm,0.2cm>
  \multiput{$\sssize \circ$} at 0 0  1 1  2 0  /
  \multiput{$\ssize \bullet$} at  1 1 /
  \endpicture$} at  0 1
 \put{$\beginpicture
  \setcoordinatesystem units <0.2cm,0.2cm>
  \multiput{$\sssize \circ$} at 0 0  1 1  2 0  /
  \multiput{$\ssize \bullet$} at  2 0 /
  \endpicture$} at  1 1
 \put{$\beginpicture
  \setcoordinatesystem units <0.2cm,0.2cm>
  \multiput{$\sssize \circ$} at 0 0  1 1  2 0  /
  \multiput{$\ssize \bullet$} at  0 0  2 0 /
  \endpicture$} at  0 2
\plot -.4 .3  -.9 .7 /
\plot -.4 1.7  -.9 1.3 /
\setdots <1mm>
  \plot  0 .3   0 .7 /
  \plot  .4 .3  .9 .7 /
  \plot  0 1.7  0 1.3 /
  \plot  .4 1.7   .9 1.3 /
\multiput{$1$} at -.9 0.45  /
\multiput{$2$} at  -.9 1.55 /

\endpicture} at 0 0 
\put{\beginpicture
  \setcoordinatesystem units <1cm,1cm>
 \put{$\beginpicture
  \setcoordinatesystem units <0.2cm,0.2cm>
  \multiput{$\sssize \circ$} at 0 0  1 1  2 0  /
  \multiput{$\ssize \bullet$} at   /
  \endpicture$} at  0 0
 \put{$\beginpicture
  \setcoordinatesystem units <0.2cm,0.2cm>
  \multiput{$\sssize \circ$} at 0 0  1 1  2 0  /
  \multiput{$\ssize \bullet$} at  0 0 /
  \endpicture$} at  -1 1
 \put{$\beginpicture
  \setcoordinatesystem units <0.2cm,0.2cm>
  \multiput{$\sssize \circ$} at 0 0  1 1  2 0  /
  \multiput{$\ssize \bullet$} at  1 1 /
  \endpicture$} at  0 1
 \put{$\beginpicture
  \setcoordinatesystem units <0.2cm,0.2cm>
  \multiput{$\sssize \circ$} at 0 0  1 1  2 0  /
  \multiput{$\ssize \bullet$} at  2 0 /
  \endpicture$} at  1 1
 \put{$\beginpicture
  \setcoordinatesystem units <0.2cm,0.2cm>
  \multiput{$\sssize \circ$} at 0 0  1 1  2 0  /
  \multiput{$\ssize \bullet$} at  0 0  2 0 /
  \endpicture$} at  0 2
  \plot  0 .3   0 .7 /
  \plot  0 1.7  0 1.3 /
\setdots <1mm>
  \plot  .4 .3  .9 .7 /
  \plot  .4 1.7   .9 1.3 /
\plot -.4 .3  -.9 .7 /
\plot -.4 1.7  -.9 1.3 /
\multiput{$P$} at -.18 .45   /
\multiput{$1$} at  -.16 1.55 /

\endpicture} at 4 0 
\put{\beginpicture
  \setcoordinatesystem units <1cm,1cm>
 \put{$\beginpicture
  \setcoordinatesystem units <0.2cm,0.2cm>
  \multiput{$\sssize \circ$} at 0 0  1 1  2 0  /
  \multiput{$\ssize \bullet$} at   /
  \endpicture$} at  0 0
 \put{$\beginpicture
  \setcoordinatesystem units <0.2cm,0.2cm>
  \multiput{$\sssize \circ$} at 0 0  1 1  2 0  /
  \multiput{$\ssize \bullet$} at  0 0 /
  \endpicture$} at  -1 1
 \put{$\beginpicture
  \setcoordinatesystem units <0.2cm,0.2cm>
  \multiput{$\sssize \circ$} at 0 0  1 1  2 0  /
  \multiput{$\ssize \bullet$} at  1 1 /
  \endpicture$} at  0 1
 \put{$\beginpicture
  \setcoordinatesystem units <0.2cm,0.2cm>
  \multiput{$\sssize \circ$} at 0 0  1 1  2 0  /
  \multiput{$\ssize \bullet$} at  2 0 /
  \endpicture$} at  1 1
 \put{$\beginpicture
  \setcoordinatesystem units <0.2cm,0.2cm>
  \multiput{$\sssize \circ$} at 0 0  1 1  2 0  /
  \multiput{$\ssize \bullet$} at  0 0  2 0 /
  \endpicture$} at  0 2
 \plot  .4 .3  .9 .7 /
 \plot  .4 1.7   .9 1.3 /
\setdots <1mm>
\plot -.4 .3  -.9 .7 /
\plot -.4 1.7  -.9 1.3 /
  \plot  0 .3   0 .7 /
  \plot  0 1.7  0 1.3 /
\multiput{$P$} at  1 1.55 /
\multiput{$2$} at .9 0.45  /

\endpicture} at 8 0 
\endpicture}
$$
	\bigskip

\subsubsection{\bf Example: The algebra $\Lambda_3$.}
As a second example, we consider the algebra $\Lambda_3$ (with the following quiver $Q$);
we denote the simple
module $S(i)$ just by $i$, and put $P = P(2),\ W = P(3) = I(1),\  I = I(2).$
$$
\beginpicture
  \setcoordinatesystem units <0.6cm,0.6cm>
\put{\beginpicture
\put{$1$} at 0 0
\put{$2$} at 1 0
\put{$3$} at 2 0
\arr{0.7 0}{0.3 0}
\arr{1.7 0}{1.3 0}
\put{$Q$} at -1 .5
\endpicture} at 0 0 
\put{\beginpicture
  \setcoordinatesystem units <0.6cm,0.6cm>
\put{$1$} at 0 0
\put{$P$} at 1 1
\put{$W$} at 2 2
\put{$2$} at 2 0
\put{$I$} at 3 1 
\put{$3$} at 4 0
\arr{0.3 0.3}{0.7 0.7}
\arr{1.3 0.7}{1.7 0.3}
\arr{2.3 0.3}{2.7 0.7}
\arr{3.3 0.7}{3.7 0.3}
\arr{1.3 1.3}{1.7 1.7}
\arr{2.3 1.7}{2.7 1.3}
\endpicture} at 6 0 
\endpicture
$$
Here is the lattice $\A(\mo\Lambda_3)$.  
We have added the modules $N^{\mathcal B}_{\mathcal A}$ for every pair $\mathcal A < \mathcal B$
of neighbors to the (dotted) line connecting $\mathcal A$ and $\mathcal B$:

$$
\hbox{\beginpicture
  \setcoordinatesystem units <2cm,2cm>
\put{$t=3$} at 3.35 3
\put{$2$} at 3.5 2
\put{$1$} at 3.5 1
\put{$0$} at 3.5 0

\put{$\beginpicture
  \setcoordinatesystem units <0.2cm,0.2cm>
  \multiput{$\sssize \circ$} at 0 0  1 1  -1 1  0 2  -2 0  2 0 /
  \multiput{$\ssize \bullet$} at  /
 \endpicture$} at 0 0

\put{$\beginpicture
  \setcoordinatesystem units <0.2cm,0.2cm>
  \multiput{$\sssize \circ$} at 0 0  1 1  -1 1  0 2  -2 0  2 0 /
  \multiput{$\ssize \bullet$} at -2 0 /
 \endpicture$} at -2.5 1
\put{$\beginpicture
  \setcoordinatesystem units <0.2cm,0.2cm>
  \multiput{$\sssize \circ$} at 0 0  1 1  -1 1  0 2  -2 0  2 0 /
  \multiput{$\ssize \bullet$} at  -1 1 /
 \endpicture$} at -1.5 1
\put{$\beginpicture
  \setcoordinatesystem units <0.2cm,0.2cm>
  \multiput{$\sssize \circ$} at 0 0  1 1  -1 1  0 2  -2 0  2 0 /
  \multiput{$\ssize \bullet$} at  0 2 /
 \endpicture$} at -.5 1
\put{$\beginpicture
  \setcoordinatesystem units <0.2cm,0.2cm>
  \multiput{$\sssize \circ$} at 0 0  1 1  -1 1  0 2  -2 0  2 0 /
  \multiput{$\ssize \bullet$} at  0 0 /
 \endpicture$} at 0.5 1
\put{$\beginpicture
  \setcoordinatesystem units <0.2cm,0.2cm>
  \multiput{$\sssize \circ$} at 0 0  1 1  -1 1  0 2  -2 0  2 0 /
  \multiput{$\ssize \bullet$} at  1 1 /
 \endpicture$} at 1.5 1
\put{$\beginpicture
  \setcoordinatesystem units <0.2cm,0.2cm>
  \multiput{$\sssize \circ$} at 0 0  1 1  -1 1  0 2  -2 0  2 0 /
  \multiput{$\ssize \bullet$} at  2 0 /
 \endpicture$} at 2.5  1
\put{$\beginpicture
  \setcoordinatesystem units <0.2cm,0.2cm>
  \multiput{$\sssize \circ$} at 0 0  1 1  -1 1  0 2  -2 0  2 0 /
  \multiput{$\ssize \bullet$} at -2 0  0 0 /
 \endpicture$} at -2.5 2
\put{$\beginpicture
  \setcoordinatesystem units <0.2cm,0.2cm>
  \multiput{$\sssize \circ$} at 0 0  1 1  -1 1  0 2  -2 0  2 0 /
  \multiput{$\ssize \bullet$} at -2 0  1 1  /
 \endpicture$} at -1.5 2
\put{$\beginpicture
  \setcoordinatesystem units <0.2cm,0.2cm>
  \multiput{$\sssize \circ$} at 0 0  1 1  -1 1  0 2  -2 0  2 0 /
  \multiput{$\ssize \bullet$} at  -2 0  2 0 /
 \endpicture$} at -.5 2
\put{$\beginpicture
  \setcoordinatesystem units <0.2cm,0.2cm>
  \multiput{$\sssize \circ$} at 0 0  1 1  -1 1  0 2  -2 0  2 0 /
  \multiput{$\ssize \bullet$} at  0 0  0 2 /
 \endpicture$} at 0.5 2
\put{$\beginpicture
  \setcoordinatesystem units <0.2cm,0.2cm>
  \multiput{$\sssize \circ$} at 0 0  1 1  -1 1  0 2  -2 0  2 0 /
  \multiput{$\ssize \bullet$} at  -1 1  2 0 /
 \endpicture$} at 1.5 2
\put{$\beginpicture
  \setcoordinatesystem units <0.2cm,0.2cm>
  \multiput{$\sssize \circ$} at 0 0  1 1  -1 1  0 2  -2 0  2 0 /
  \multiput{$\ssize \bullet$} at  0 0  2 0 /
 \endpicture$} at 2.5 2

\put{$\beginpicture
  \setcoordinatesystem units <0.2cm,0.2cm>
  \multiput{$\sssize \circ$} at 0 0  1 1  -1 1  0 2  -2 0  2 0 /
  \multiput{$\ssize \bullet$} at  -2 0  0 0  2 0 /
 \endpicture$} at  0 3

\setdots <1mm>

\plot -0.4 0.2 -2.3 0.7 /
\plot -0.2 0.2 -1.4 0.7 /
\plot -0.1 0.2 -0.5 0.7 /
\plot .1 0.2 0.5 0.7 /
\plot .2 0.2 1.4 0.7 /
\plot .4 0.2 2.3 0.7 /

\plot -0.4 2.8 -2.3 2.3 /
\plot -0.2 2.8 -1.4 2.3 /
\plot -0.1 2.8 -0.5 2.3 /
\plot .1 2.8 0.5 2.3 /
\plot .2 2.8 1.4 2.3 /
\plot .4 2.8 2.3 2.3 /

\plot -2.6 1.2  -2.6 1.8 /
\plot -2.5 1.2  -1.6 1.8 /
\plot -2.4 1.2  -0.6 1.8 /

\plot -1.6 1.2  -2.5 1.8 /
\plot -1.4 1.2   1.4 1.8 /

\plot -.6 1.2  -1.5 1.8 /
\plot -.5 1.2   0.4 1.8 /
\plot -.4 1.2   1.5 1.8 /

\plot .4 1.2  -2.4 1.8 /
\plot .5 1.2   .5 1.8 /
\plot .6 1.2   2.4 1.8 /

\plot 1.4 1.2  -1.4 1.8 /
\plot 1.6 1.2   2.5 1.8 /

\plot 2.4 1.2  -.4 1.8 /
\plot 2.5 1.2   1.6 1.8 /
\plot 2.6 1.2   2.6 1.8 /

\multiput{$1$} at -1.2 0.4 
   .2 1.3    1.2 1.3       2.15 1.3  
   1.2 2.6  / 

\multiput{$P$} at -0.7 0.4  
   -2.6 1.3    2.37 1.3 
   -.25 2.6   / 

\multiput{$W$} at  -0.25 0.4 
    -2.35 1.3  -1.25 1.3   .5 1.3 
   -1.2 2.6 /

\multiput{$2$} at .25 0.4  
    -1.75 1.3   -.45 1.3   2.6 1.3 
    0.7 2.6   /

\multiput{$I$} at   .7 0.4 
     -.75 1.3      .8 1.3  
    0.25 2.6  /

\multiput{$3$} at 1.2 0.4 
    -2.1 1.3   -.15 1.3    1.7 1.3 
    -.7 2.6 /

\endpicture}
$$
We obtain all the complete exceptional sequences {\bf going up} in the lattice and using the
modules $N_{\mathcal A}^{\mathcal B}$. For example, on the left hand side, we obtain in this way
the sequence $(1,P,W)$ of indecomposable projective modules, on the right hand side one sees 
the sequence $(3,2,1)$ of the simple modules, whereas the exceptional sequence $(W,I,3)$ of the
indecomposable injective modules can be traced somewhere in the middle).
	\bigskip 

Here is again the lattice $\A(\mo\Lambda_3)$, now we show the modules $M^{\mathcal B}_{\mathcal A}$ 
for every pair $\mathcal A < \mathcal B$ of neighbors:
$$
\hbox{\beginpicture
  \setcoordinatesystem units <2cm,2cm>
\put{$t=3$} at 3.35 3
\put{$2$} at 3.5 2
\put{$1$} at 3.5 1
\put{$0$} at 3.5 0

\put{$\beginpicture
  \setcoordinatesystem units <0.2cm,0.2cm>
  \multiput{$\sssize \circ$} at 0 0  1 1  -1 1  0 2  -2 0  2 0 /
  \multiput{$\ssize \bullet$} at  /
 \endpicture$} at 0 0

\put{$\beginpicture
  \setcoordinatesystem units <0.2cm,0.2cm>
  \multiput{$\sssize \circ$} at 0 0  1 1  -1 1  0 2  -2 0  2 0 /
  \multiput{$\ssize \bullet$} at -2 0 /
 \endpicture$} at -2.5 1
\put{$\beginpicture
  \setcoordinatesystem units <0.2cm,0.2cm>
  \multiput{$\sssize \circ$} at 0 0  1 1  -1 1  0 2  -2 0  2 0 /
  \multiput{$\ssize \bullet$} at  -1 1 /
 \endpicture$} at -1.5 1
\put{$\beginpicture
  \setcoordinatesystem units <0.2cm,0.2cm>
  \multiput{$\sssize \circ$} at 0 0  1 1  -1 1  0 2  -2 0  2 0 /
  \multiput{$\ssize \bullet$} at  0 2 /
 \endpicture$} at -.5 1
\put{$\beginpicture
  \setcoordinatesystem units <0.2cm,0.2cm>
  \multiput{$\sssize \circ$} at 0 0  1 1  -1 1  0 2  -2 0  2 0 /
  \multiput{$\ssize \bullet$} at  0 0 /
 \endpicture$} at 0.5 1
\put{$\beginpicture
  \setcoordinatesystem units <0.2cm,0.2cm>
  \multiput{$\sssize \circ$} at 0 0  1 1  -1 1  0 2  -2 0  2 0 /
  \multiput{$\ssize \bullet$} at  1 1 /
 \endpicture$} at 1.5 1
\put{$\beginpicture
  \setcoordinatesystem units <0.2cm,0.2cm>
  \multiput{$\sssize \circ$} at 0 0  1 1  -1 1  0 2  -2 0  2 0 /
  \multiput{$\ssize \bullet$} at  2 0 /
 \endpicture$} at 2.5  1
\put{$\beginpicture
  \setcoordinatesystem units <0.2cm,0.2cm>
  \multiput{$\sssize \circ$} at 0 0  1 1  -1 1  0 2  -2 0  2 0 /
  \multiput{$\ssize \bullet$} at -2 0  0 0 /
 \endpicture$} at -2.5 2
\put{$\beginpicture
  \setcoordinatesystem units <0.2cm,0.2cm>
  \multiput{$\sssize \circ$} at 0 0  1 1  -1 1  0 2  -2 0  2 0 /
  \multiput{$\ssize \bullet$} at -2 0  1 1  /
 \endpicture$} at -1.5 2
\put{$\beginpicture
  \setcoordinatesystem units <0.2cm,0.2cm>
  \multiput{$\sssize \circ$} at 0 0  1 1  -1 1  0 2  -2 0  2 0 /
  \multiput{$\ssize \bullet$} at  -2 0  2 0 /
 \endpicture$} at -.5 2
\put{$\beginpicture
  \setcoordinatesystem units <0.2cm,0.2cm>
  \multiput{$\sssize \circ$} at 0 0  1 1  -1 1  0 2  -2 0  2 0 /
  \multiput{$\ssize \bullet$} at  0 0  0 2 /
 \endpicture$} at 0.5 2
\put{$\beginpicture
  \setcoordinatesystem units <0.2cm,0.2cm>
  \multiput{$\sssize \circ$} at 0 0  1 1  -1 1  0 2  -2 0  2 0 /
  \multiput{$\ssize \bullet$} at  -1 1  2 0 /
 \endpicture$} at 1.5 2
\put{$\beginpicture
  \setcoordinatesystem units <0.2cm,0.2cm>
  \multiput{$\sssize \circ$} at 0 0  1 1  -1 1  0 2  -2 0  2 0 /
  \multiput{$\ssize \bullet$} at  0 0  2 0 /
 \endpicture$} at 2.5 2

\put{$\beginpicture
  \setcoordinatesystem units <0.2cm,0.2cm>
  \multiput{$\sssize \circ$} at 0 0  1 1  -1 1  0 2  -2 0  2 0 /
  \multiput{$\ssize \bullet$} at  -2 0  0 0  2 0 /
 \endpicture$} at  0 3

\setdots <1mm>

\plot -0.4 0.2 -2.3 0.7 /
\plot -0.2 0.2 -1.4 0.7 /
\plot -0.1 0.2 -0.5 0.7 /
\plot .1 0.2 0.5 0.7 /
\plot .2 0.2 1.4 0.7 /
\plot .4 0.2 2.3 0.7 /

\plot -0.4 2.8 -2.3 2.3 /
\plot -0.2 2.8 -1.4 2.3 /
\plot -0.1 2.8 -0.5 2.3 /
\plot .1 2.8 0.5 2.3 /
\plot .2 2.8 1.4 2.3 /
\plot .4 2.8 2.3 2.3 /

\plot -2.6 1.2  -2.6 1.8 /
\plot -2.5 1.2  -1.6 1.8 /
\plot -2.4 1.2  -0.6 1.8 /

\plot -1.6 1.2  -2.5 1.8 /
\plot -1.4 1.2   1.4 1.8 /

\plot -.6 1.2  -1.5 1.8 /
\plot -.5 1.2   0.4 1.8 /
\plot -.4 1.2   1.5 1.8 /

\plot .4 1.2  -2.4 1.8 /
\plot .5 1.2   .5 1.8 /
\plot .6 1.2   2.4 1.8 /

\plot 1.4 1.2  -1.4 1.8 /
\plot 1.6 1.2   2.5 1.8 /

\plot 2.4 1.2  -.4 1.8 /
\plot 2.5 1.2   1.6 1.8 /
\plot 2.6 1.2   2.6 1.8 /

\multiput{$1$} at -1.2 0.4 
   -1.75 1.3  
    -.75 1.3 
    2.15 1.3  
    0.7 2.6  / 

\multiput{$P$} at -0.7 0.4  
   -.15 1.3  
   .2 1.3  
   0.25 2.6   / 

\multiput{$M$} at  -0.25 0.4 
    .5 1.3  1.2 1.3
    2.4 1.3 
    1.2 2.6 /

\multiput{$2$} at .25 0.4  
   -2.6 1.3  
   -.45 1.3  
    1.7 1.3  
   -0.7 2.6   /

\multiput{$I$} at   .7 0.4 
   -2.4 1.3  
    2.6 1.3 
  -.25 2.6 /

\multiput{$3$} at 1.2 0.4 
   -2.18 1.3  -1.25 1.3  .8 1.3 
   -1.2 2.6 /

\endpicture}
$$
We obtain all the complete exceptional sequences {\bf going down} in the lattice, using the
modules $M_{\mathcal A}^{\mathcal B}$. (For example, on the left hand side, we obtain in this way
the sequence $(3,2,1)$ of simple modules, on the right hand side there is the sequence
$(M,I,3)$ of the indecomposable injective modules.) 
	\bigskip

For an inductive formula for determining the number of complete exceptional sequences we refer
to \cite{[ONAFR]}. 
	\medskip 

\subsection{The braid group operation on $\E(\mo\Lambda)$ and on $\M(\A(\mo\Lambda))$}
	\bigskip

Recall that the braid group $B_n$ on $n$ strands is the group with generators
$\sigma_1,\dots \sigma_{n-1}$ and the relations
	\smallskip

\Rahmen{$\begin{matrix} 
 \sigma_i\sigma_{i+1}\sigma_i = \sigma_{i+1}\sigma_{i}\sigma_{i +1} &
 \text{for} & 1 \le i \le n-2 \cr
 \sigma_i\sigma_j = \sigma_j\sigma_i & \text{for} & |i-j| \ge 2 \end{matrix} $.}
	\bigskip 


The braid group $B_n$ operates on $\E= \E(\mo\Lambda)$ as follows:
Let $E = (E_1,\dots,E_n)$ be a complete exceptional sequence in $\mo\Lambda$ (and we may
assume that $n\ge 2$). 
Let $\mathcal E(i)$ be the thick subcategory generated by $E_i$ and $E_{i+1}$ and
$\overline \tau_i$ the extended Auslander-Reiten translation for $\mathcal E(i)$.
Then we define
$$
 \sigma_i(E_1,\dots,E_n) = (E_1,\dots,E_{i-1},\overline\tau_iE_{i+1},E_i,E_{i+2},\dots,E_n).
$$
Note that $\sigma_i(E_1,\dots,E_n)$ is the unique complete exceptional sequence which is of the form
$(E_1,\dots,E_{i-1},?,E_i,E_{i+2},\dots,E_n).$
	\medskip
It is easy to see that the braid group relations
are satisfied (see the note N\,\ref{braid-group-relations}).
	\bigskip

\subsubsection{}
We can use the map 
$$
 M:\M(\A(\mo\Lambda)) \to \E(\mo\Lambda)
$$
defined in Theorem \ref{neighbors1} (or also the map $N$ defined in the Addendum 
\ref{neighbors2}) 
in order to provide 
a corresponding braid group operation on the set  $\M(\A(\mo\Lambda))$.

\begin{theorem}\label{braid-exc}
 The braid group acts transitively on the set $\bold E$ of complete exceptional
sequences.
\end{theorem}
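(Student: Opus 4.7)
The plan is induction on $n = \rank\Lambda$. For $n \le 1$ the claim is vacuous, and for $n = 2$ it follows from the remark at the end of Section 3.3: the rank-$1$ exceptional subcategories of $\mo\Lambda$ form either a cyclic ordering (when $\Lambda$ is representation-finite) or a $\mathbb{Z}$-linear ordering (when representation-infinite), in which the exceptional pairs are precisely the consecutive ordered pairs. A direct calculation (as illustrated for type $\mathbb{A}_2$: $\sigma_1$ cyclically permutes the three exceptional pairs $(S_1,P),(S_2,S_1),(P,S_2)$) confirms that $\sigma_1$ advances the exceptional pair by one step in this ordering, so $B_2 \cong \mathbb{Z}$ acts transitively on $\E(\mo\Lambda)$.

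For the inductive step with $n \ge 3$, given complete exceptional sequences $E = (E_1,\ldots,E_n)$ and $F = (F_1,\ldots,F_n)$, the plan is to produce a braid word $\beta \in B_n$ such that $(\beta \cdot E)_1 = F_1$. Once this equality of first terms is secured, the remaining $n-1$ terms of $\beta \cdot E$ and of $F$ both form complete exceptional sequences in the exceptional subcategory $(\add F_1)^\perp$, which has rank $n-1$ by Theorem 2.5.7. The inductive hypothesis applied inside this subcategory, acting via the subgroup $\langle \sigma_2,\ldots,\sigma_{n-1}\rangle \cong B_{n-1}$ of $B_n$, then completes the transformation of $\beta \cdot E$ into $F$.

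The central step is thus the first-term reduction: for any complete exceptional sequence $E$ and any exceptional module $X$, to produce a braid word $\beta$ with $(\beta \cdot E)_1 = X$. Via the bijection $N : \M(\A(\mo\Lambda)) \to \E(\mo\Lambda)$ of Theorem 3.5.1, $E$ corresponds to a maximal chain $\mathcal{C} : 0 = \mathcal{A}(0) \subset \mathcal{A}(1) \subset \cdots \subset \mathcal{A}(n) = \mo\Lambda$ with $\add E_1 = \mathcal{A}(1)$, so the target becomes a chain whose bottom non-trivial element is $\add X$. Each $\sigma_i$ corresponds to a local modification of $\mathcal{C}$ at level $i$ within the rank-$2$ interval $[\mathcal{A}(i-1),\mathcal{A}(i+1)]$, and the base case $n=2$ ensures that these modifications act transitively on the intermediate rank-$i$ elements of every such interval. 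Hence the problem reduces to showing that any two maximal chains of $\A(\mo\Lambda)$ are connected by a sequence of local modifications.

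This connectedness of maximal chains is the main obstacle, and its resolution is the technical heart of the argument. The plan is a secondary induction on $n$: first build a target chain $\mathcal{C}'$ beginning with $\add X$ by extending $X$ to a complete exceptional sequence using a Bongartz complement (see the note on Bongartz complements at the end of Chapter 2), and then walk from $\mathcal{C}$ to $\mathcal{C}'$ by modifying levels in a coordinated order, descending from level $n-1$ down to level $1$. At each level $i$, one uses rank-$2$ transitivity within $[\mathcal{A}(i-1),\mathcal{A}(i+1)]$ together with the inductive hypothesis applied to the sub-poset $\A(\mathcal{A}(i+1))$ (of rank at most $n-1$). The delicate point, to be handled by maintaining a suitable invariant throughout the descent, is that the level-$i$ modification needed to bring $\add X$ into $\mathcal{A}(i)$ is only possible once $\add X \subseteq \mathcal{A}(i+1)$ has been arranged; this is precisely what makes the coordination and the secondary induction necessary.
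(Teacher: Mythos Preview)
Your strategy—induction on $n$, matching the first terms, then recursing in the perpendicular category—is natural but differs from the paper's proof and contains a real gap.

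A minor point first: with the paper's convention ($\Hom(E_i,E_j)=0=\Ext^1(E_i,E_j)$ for $i>j$), once the first terms agree the remaining $n-1$ terms lie in ${}^\perp(\add F_1)$, not in $(\add F_1)^\perp$. This does not affect the logic, since both perpendicular categories are exceptional of rank $n-1$.

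The real problem is in the coordinated descent. You assert that at each level $i$ one may invoke the inductive hypothesis on $\A(\mathcal A(i+1))$ ``(of rank at most $n-1$)''. But at the top of the descent, $i=n-1$, one has $\mathcal A(i+1)=\mo\Lambda$ of rank $n$, so the hypothesis is unavailable precisely where it is needed most. What remains to be shown at that step is that, using rank-$2$ moves together with rearrangements of the chain inside the current $\mathcal A(n-1)$ (which \emph{are} covered by induction), one can always pass to a chain whose $\mathcal A(n-1)$ contains $X$. Under the anti-automorphism $\delta$ this amounts to connectedness of the graph whose vertices are the exceptional $\Lambda$-modules and whose edges join modules lying in a common rank-$2$ exceptional subcategory. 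You have not proved this, and your remark about ``maintaining a suitable invariant'' does not indicate how; the obvious attempts to establish it tend to presuppose braid transitivity, so the reduction has not gained ground.

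The paper avoids this difficulty by inducting on the total length $|E|=\sum_i|E_i|$ rather than on $n$. The key observation is purely rank $2$: in any rank-$2$ exceptional subcategory, some power of the single braid generator carries an exceptional pair to the pair of simple objects of that subcategory, and unless the pair was already orthogonal this strictly decreases $|E|$. One then shows that whenever $\Hom(E_i,E_j)\neq 0$ for some $i<j$ (choosing $j-i$ minimal), a braid word built from shifts $\sigma_{j-2}\cdots\sigma_i$ followed by a power of $\sigma_{j-1}$ reduces $|E|$. Hence every $E$ is braid-equivalent to an antichain, which by Corollary~\ref{thick-closure} must consist of the simple $\Lambda$-modules; the admissible orderings of the simples are then related by swaps of orthogonal neighbours. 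No global chain-connectedness statement is required.
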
 
	
In the case of the path algebra of a quiver, this result is due to Crawley-Boevey \cite{[CB]}, for the
general case, we refer to  \cite{[R5]}.
	
\begin{proof} Let $n =\rank \Lambda$.
Let $E = (E_1,\dots,E_n)$ be an exceptional sequence. We show that using the braid group operation, 
we obtain from $E$ an exceptional antichain. We show this by induction on 
the sum $|E|$ of the length of the modules $E_i$. 

The proof is based on the following observation: 
For any $1\le i \le n-1$, there is some $t = t(i)$ such that 
$$
 \sigma_i^t(E) =
(E_1,\dots,E_{i-1},T,S,E_{i+2},\dots, E_n)
$$ 
with $\Hom(T,S) = 0,$ and either we have already $\Hom(E_i,E_{i+1}) = 0$ or else 
$|\sigma_i^t(E)| < |E|.$ 
	\medskip 

Thus, assume that $\Hom(E_i,E_j) \neq 0$ for some $i\neq j.$ We show that there is a braid
group element $\gamma$ such that $|\gamma(E)| < |E|.$ 
	\medskip 

Since $E$ is exceptional, we must have $i < j$
and we can assume that $j-i$ is minimal. Let $f\!:E_i \to E_j$ be a non-zero map. Such a map is either
injective or surjective. Up to duality, we can assume that $f$ is injective.
We define $E' = \sigma_{j-2}\cdots\sigma_i(E)$ and show that $|E'| = |E|$ and $|\sigma_{j-1}^t(E')| = |E'|$, for some
$t$, thus $|\gamma(E)| < |E|$ for $\gamma = \sigma_{j-1}^t\sigma_{j-2}\cdots\sigma_i$.

Thus, first let us consider any $s$ with  $i < s < j.$ 
By the minimality of $j-i$, we have $\Hom(E_i,E_s) = 0$. We also have 
$\Ext^1(E_i,E_s) = 0$ (namely, the map $f\!:E_i\to E_j$ is injective, thus 
$$
 \Ext^1(f,E_s)\!:\Ext^1(E_j,E_s)\to \Ext^1(E_i,E_s)
$$ 
is surjective, but on the other hand $\Ext^1(E_j,E_s) = 0$, since $j > s$).
Thus, applying the braid group element $\sigma_{j-2}\cdots\sigma_i$ to $E$, we deal just with a
permutation: the entry $E_i$ is shifted from the position $i$ to the position $j-1$, we obtain
$$
 E' = \sigma_{j-2}\cdots\sigma_i(E) = (E_1,\dots,E_{i-1},E_{i+1},\cdots,E_{j-1},E_i,E_j,\cdots,E_n).
$$
and $|E'| = |E|$.

Second,  we apply powers of $\sigma_{j-1}$ to $E'$. We have 
$$
 \sigma_{j-1}^t(E') = (E'_1,\dots,E'_{j-2},T,S,E'_{j+1},\dots, E'_n)
$$ 
for some $t$ with $\Hom(T,S) = 0$. 
Now $E_i,E_j$ both belong to $\mathcal E(S,T)$ and
at most one of these modules is simple in $\mathcal E(S,T)$, it follows that $|\sigma^t(E')| < |E'| = |E|.$
	\medskip

Finally, we note that an element $E = (E_1,\dots, E_n)$ in $\bold E$ which is an antichain consists of
the simple modules. Namely, $\mathcal E$ is an exceptional subcategory of $\mo\Lambda$ of
rank $n$, thus according to Corollary \ref{thick-closure} 
we have $\mo\Lambda = \mathcal E$. Thus, every simple
$\Lambda$-module has a filtration with factors in $E$ and therefore belongs to $E.$

Of course, the antichains in $\bold E$ are obtained from each other by a sequence of transpositions
of simple modules which do not extend each other, thus by elements of the braid group.  
\end{proof} 
	\medskip

\subsection{Generalized non-crossing partitions}\label{main}
We come now to the 
central concept. 
	\medskip

\subsubsection{\bf The absolute length.} 
We start with a Coxeter group $W = (W,S)$,
thus $W$ is a group, $S$ is a finite set of generators of $W$
such that $s^2 = 1$ and $(st)^{m(s,t)} = 1$ where $m(s,t) = m(t,s)\ge 2$ 
are natural numbers or the symbol $\infty$, and these are the defining relations. The
cardinality of $S$ is called the {\it rank} of $(W,S)$. If $(W,S)$ is a Coxeter
group of rank $n$, the elements of the form $s_1s_2\cdots s_n$, where $s_1,\dots,s_n$ 
are the elements of $S$ (in some order) are called the {\it Coxeter elements} of $(W,S).$

The elements which are conjugate to an element in $S$ are called {\it reflections}, we denote
by $T$ the set of reflections. 
Any element $w\in W$ is a product of elements from $S$, thus a product of reflections,
and we write $|w|_a$ for the least number $t$ such that $w$ can be written as a product
of $t$ reflections. If $v,w\in W$, write $v\le_a w$ provided $|v|_a+|v^{-1}w|_a = |w|_a;$
this is a partial ordering on $W$ called the {\it absolute ordering}. If $v \le_a w$, we say
that $v$ is an {\it absolute factor} of $w$. 

If $w$ is of absolute length $n$, say $w = t_1t_2\cdots t_n$
with reflections $t_1,\dots,t_n$, then obviously any prefix word $t_1t_2\cdots t_r$ with $r\le n$
is an absolute factor of $w$
of absolute length $r$, but actually any word obtained from $t_1t_2\cdots t_n$
by deleting $n-r$ letters is an absolute factor of $w$ of absolute length $r$.
Namely, consider the case that we delete the letter $t_i$, thus, we 
consider the word $uv$ where $u = t_1\cdots  t_{i-1}$ and $v = t_{i+1}\cdots t_n$.
Then $(uv)^{-1}w = v^{-1}u^{-1}ut_iv = v^{-1}t_iv$ is in $T$, thus of absolute length 1.
Of course, we see directly that $uv = t_i\cdot t_{i-1}t_{i+1}\cdots t_n$ is of absolute
length at most $n-1.$ 
	\medskip 

\subsubsection{\bf The definition.}
The poset 
$$
 \NC(W,c) = \{w\in W\mid w\le_a c \}
$$
(using $\le_a$ as ordering) is called the poset of {\it generalized non-crossing partitions
of type $(W,c)$.} Of course, if $c,c'$ are conjugate Coxeter elements in $W$, then the posets
$\NC(W,c)$ and $\NC(W,c')$ are isomorphic. 
In case the graph $\Delta(W,S)$ is a tree, in particular, if $W$ is finite, then 
all Coxeter elements are conjugate. But in general, there are
Coxeter elements $c,c'$ which are not conjugate. 

We have seen above that the posets $\A(\mo\Lambda)$, where $\Lambda$ is the path algebra
of the quiver $\widetilde{\mathbb A}_{2,2}$ or $\widetilde{\mathbb A}_{3,1}$ 
behave differently, see the examples \ref{example-not-lattice} and 
\ref{example-lattice}. Note that the quivers $\widetilde{\mathbb A}_{2,2}$, 
 $\widetilde{\mathbb A}_{3,1}$  have the same underlying graph,
but they are distinguished by the orientation, 
thus by the choice of a Coxeter element.
As we have seen, 
the posets $\A(\mo\Lambda)$ are not
isomorphic (one is a lattice, the other one not). As we will see in 
Theorem \ref{IST}, this means that
the corresponding posets $\NC(W(\Lambda),c(\Lambda))$ 
are not isomorphic, thus the Coxeter elements $c(\Lambda)$ cannot be
conjugate.
	\medskip 

For any natural number $r$, let $\NC_r(W,c)$ be the subset of $\NC(W,c)$ of all 
elements of absolute length $r$. Using the sets $\NC_r(W,c)$ with
$0\le r \le n$, the poset $\NC(W,c)$ is layered:
If $v <_a w$ are neighbors in $\NC_r(W,c)$ and $w$ belongs to
$\NC_r(W,c)$, then $v$ belongs to $\NC_{r-1}(W,c).$  
The set $\NC_0(W,c)$ consists just of the identity element of $W$, 
the set $\NC_n(W,c)$ of the single element $c$.
	\bigskip 

The absolute length was introduced Brady-Watt and Bessis in 2002 and 2003 in analogy to the 
usual length function $|-|$ which has been considered in Lie theory much earlier, see the
note N\,\ref{Coxeter-length}. 
	\bigskip

\subsubsection{\bf Example. The case $\mathbb A_n$.} 
The corresponding Weyl group is $W = S_{n+1}$, 
the symmetric group on $n+1$ letters $1,2,\dots,n+1$.
Usually we will consider the Coxeter element $c_{n+1} = (n,n+1)\cdots(23)(12) = (n+1,n,\cdots,2,1)$.

For $n=3$, the poset $(W,\le_a)$ looks as
follows:
$$
\hbox{\beginpicture
\setcoordinatesystem units <.66cm,1.5cm>

\put{} at 10 0
\multiput{$\bullet$} at 10 0  
  5 1  7 1  9 1  11 1  13 1  15 1
0 2  2 2  4 2  6 2  8 2  10 2  12 2  14 2  16 2  18 2  20 2  
  5 3  7 3  9 3  11 3  13 3  15 3  /
\plot 10 0  5 1 /
\plot 10 0  7 1 /
\plot 10 0  9 1 /
\plot 10 0  11 1 /
\plot 10 0  13 1 /
\plot 10 0  15 1 /

\put{$\ssize (21)$} at 5 .8
\put{$\ssize (31)$} at 7 .8
\put{$\ssize (41)$} at 9 .8
\put{$\ssize (32)$} at 11 .8
\put{$\ssize (42)$} at 13 .8
\put{$\ssize (43)$} at 15 .8

\put{$\ssize (321)$} at  0 1.8
\put{$\ssize (421)$} at 2 1.8
\put{$\ssize (43)(21)$} at 4 1.8
\put{$\ssize (32)(41)$} at 6 1.8
\put{$\ssize (431)$} at 8 1.8
\put{$\ssize (432)$} at 10 1.8

\setsolid
\setlinear
\plot 5 1  0 2  7 1  8 2  15 1  10 2  11 1  6 2  9 1  2 2  5 1 /
\plot 5 1  4 2  15 1 /
\plot 0 2  11 1  /
\plot 2 2  13 1  10 2 /
\plot  9 1  8 2 /
\plot 0 2  5 3  2 2  /
\plot 4 2  5 3  6 2 /
\plot 8 2  5 3  10 2 /

\setdashes <2mm>

\put{$\ssize (42)(31)$} at 12 1.8
\plot 7 1  12 2  13 1 /

\put{$\ssize (231)$} at 14 1.8
\plot 5 1  14 2  7 1 /
\plot 11 1  14 2 /

\put{$\ssize (241)$} at 16 1.8
\plot 5 1  16 2  9 1 /
\plot 13 1  16 2 /

\put{$\ssize (341)$} at 18 1.8
\plot 7 1  18 2  9 1 /
\plot 15 1  18 2 / 

\put{$\ssize (342)$} at 20 1.8
\plot 11 1  20 2  13 1 /
\plot 15 1  20 2 /

\put{$\ssize (4321)$} at  5 3.2

\put{$\ssize (3421)$} at  7 3.2
\plot 12 2  7 3  4 2 /
\plot 0 2  7 3  2 2 /
\plot 20 2  7 3  18 2 /

\put{$\ssize (4231)$} at  9 3.2
\plot 6 2  9 3  12 2  /
\plot 14 2  9 3  2 2   /
\plot 8 2  9 3  20 2  /

\put{$\ssize (2431)$} at  11 3.2
\plot 4 2  11 3  12 2 /
\plot 14 2  11 3  2 2 /
\plot 8 2  11 3  10 2  /

\put{$\ssize (3241)$} at  13 3.2
\plot 6 2  13 3  12 2 /
\plot 18 2  13 3 16 2 /
\plot 0 2  13 3  10 2 /

\put{$\ssize (2341)$} at  15 3.2
\plot 4 2  15 3  6 2 /
\plot 18 2  15 3  16 2 /
\plot 14 2  15 3  20 2 /

\endpicture}
$$

The solid edges describe the subset $\NC(S_4,c_4)$, the dashed edges are the remaining ones. 
The following picture shows just $\NC(S_4,c_4)$:
$$
\hbox{\beginpicture
  \setcoordinatesystem units <2cm,2cm>

\put{$(1)$} at 0 0

\put{$(21)$} at -2.5 .92

\put{$(31)$} at -1.5 .92

\put{$(41)$} at -.5 .92

\put{$(32)$} at .5 .92

\put{$(42)$} at 1.5 .92

\put{$(43)$} at 2.5 .92

\put{$(32)(21)$} at -2.5 2.08
\put{$\ssize = (321)$} at -2.5 1.92

\put{$(42)(21)$} at -1.5 2.08
\put{$\ssize = (421)$} at -1.5 1.92

\put{$(43)(21)$} at -.5 2.08
\put{$\ssize = (43)(21)$} at -.5 1.92

\put{$(32)(41)$} at .5 2.08
\put{$\ssize = (32)(41)$} at 0.5 1.92

\put{$(43)(31)$} at 1.5 2.08
\put{$\ssize = (431)$} at 1.5 1.92

\put{$(43)(32)$} at 2.5 2.08
\put{$\ssize = (432)$} at 2.5 1.92

\put{$(43)(32)(21)$} at 0 3.08
\put{$\ssize = (4321)$} at  0 2.92

\plot -0.4 0.2 -2.3 0.7 /
\plot -0.2 0.2 -1.4 0.7 /
\plot -0.1 0.2 -0.5 0.7 /
\plot .1 0.2 0.5 0.7 /
\plot .2 0.2 1.4 0.7 /
\plot .4 0.2 2.3 0.7 /

\plot -0.4 2.8 -2.3 2.3 /
\plot -0.2 2.8 -1.4 2.3 /
\plot -0.1 2.8 -0.5 2.3 /
\plot .1 2.8 0.5 2.3 /
\plot .2 2.8 1.4 2.3 /
\plot .4 2.8 2.3 2.3 /

\plot -2.6 1.2  -2.6 1.8 /
\plot -2.5 1.2  -1.6 1.8 /
\plot -2.4 1.2  -0.6 1.8 /

\plot -1.6 1.2  -2.5 1.8 /
\plot -1.4 1.2   1.4 1.8 /

\plot -.6 1.2  -1.5 1.8 /
\plot -.5 1.2   0.4 1.8 /
\plot -.4 1.2   1.5 1.8 /

\plot .4 1.2  -2.4 1.8 /
\plot .5 1.2   .5 1.8 /
\plot .6 1.2   2.4 1.8 /

\plot 1.4 1.2  -1.4 1.8 /
\plot 1.6 1.2   2.5 1.8 /

\plot 2.4 1.2  -.4 1.8 /
\plot 2.5 1.2   1.6 1.8 /
\plot 2.6 1.2   2.6 1.8 /

\endpicture}
$$
Here the permutations $\pi$ are written as products of reflections as well
as in cycle notation, but we note the following inconsistency: The cycle
$(a_1,a_2,\dots,a_t)$ is the permutation which sends $a_i$ to $a_{i+1}$, for
$1\le i < t$ and $a_t$ to $a_1$, thus one uses the shift to the right. On the other hand,
the product $\pi\pi'$ of two permutations $\pi,\pi'$
means that we apply first $\pi'$, then $\pi$. In particular,
the product $(32)(21)$is the permutation 
$(32)(21) =\left( \smallmatrix 1&2&3\cr
                               3&1&2 \endsmallmatrix\right)$,
and this is the cycle $\psi = (321) = (3,\psi(3),\psi^2(3))$.
	\medskip 

We will see in Chapter 4 that the lattice $\NC(S_n,c_n)$ can be identified in
a natural way with the classical lattice $\NC(n)$ of the non-crossing partitions as
introduced by Kreweras in \cite{[Kr]}, see Theorem \ref{classical-identification}.
	\bigskip

\subsubsection{\bf The map $\cox\!:\A(\mo\Lambda)\to \NC(W(\Lambda),c(\Lambda))$.}
	\medskip 

Given an exceptional module $E$, we denote by $\rho_E$ the reflection with respect to
$\bdim E$, this is a reflection in the Coxeter group $W(\Lambda),c(\Lambda))$.

We define a map 
$$
 \cox\!:\A(\mo\Lambda) \to W(\Lambda)
$$
as follows: Let $\mathcal A$ be an exceptional subcategory of $\mo\Lambda$ of rank $t$, then 
$$
 \cox(\mathcal A) = \rho_{A_1}\cdots\rho_{A_t},
$$
where $\{A_1,\dots,A_t\}$ is an exceptional antichain with  $\mathcal A = \mathcal E(A_1,\dots,A_t)$.
Note that the composition $\rho_{A_1}\cdots\rho_{A_t}$ is independent from the choice
of $\{A_1,\dots,A_t\}$: namely a different choice  $\{A'_1,\dots,A'_t\}$ 
with $\mathcal A = \mathcal E(A'_1,\dots,A'_t)$
is obtained from
$\{A_1,\dots,A_t\}$ by just permuting modules $A_i,A_j$ with 
$\rho_{A_i}\rho_{A_j} = \rho_{A_j}\rho_{A_i}$. 
	\medskip

In particular, we have
	\smallskip

\Rahmen{$\cox(\mo\Lambda) = c(\Lambda).$}
	\bigskip 
The main observation is the following: 
	\medskip

\begin{lemma} Let $(E_1,\dots,E_n)$ and $(E'_1,\dots,E'_n)$ be complete
exceptional sequences in $\mo\Lambda$. Then
$$
 \rho_{E_1}\cdots\rho_{E_n} = \rho_{E'_1}\cdots\rho_{E'_n}.
$$
\end{lemma}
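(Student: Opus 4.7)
The plan is to combine two ingredients: the transitivity of the braid group action on $\E(\mo\Lambda)$ (Theorem \ref{braid-exc}), and the invariance of the reflection product under a single generator $\sigma_i$. Granting both, any two complete exceptional sequences are related by a word in the $\sigma_i^{\pm 1}$, and the reflection product is preserved at every step, so the two products agree.

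First I would reduce to a purely rank-$2$ statement. The generator $\sigma_i$ only modifies the $i$-th and $(i+1)$-st entries, replacing the exceptional pair $(E_i,E_{i+1})$ by $(\overline{\tau}_i E_{i+1}, E_i)$ inside the rank-$2$ exceptional subcategory $\mathcal E(i)$ generated by $E_i$ and $E_{i+1}$; the remaining entries are untouched and hence their reflections cancel from both sides. Thus it suffices to establish
\[
\rho_{E_i}\,\rho_{E_{i+1}} \;=\; \rho_{\overline{\tau}_i E_{i+1}}\,\rho_{E_i}
\]
for an arbitrary exceptional pair $(E,F)=(E_i,E_{i+1})$.

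Next I would compute both sides on the two-dimensional subspace of $K_0(\Lambda)\otimes_{\mathbb Z}\mathbb R$ spanned by $\bdim E$ and $\bdim F$ (both sides act as the identity on the orthogonal complement, since $\bdim\overline{\tau}_i E_{i+1}$ lies in the $\mathbb Z$-span of $\bdim E,\bdim F$ by the classification of indecomposables in $\mathcal E(i)$). The key identity I would verify is
\[
\bdim \overline{\tau}_i E_{i+1} \;=\; \pm\,\rho_E\bigl(\bdim F\bigr).
\]
This is a direct rank-$2$ check: if $F$ is not projective in $\mathcal E(i)$, the almost split sequence in $\mathcal E(i)$ gives $\bdim\tau F + \bdim F = \bdim(\text{middle term}) \in \mathbb Z\cdot\bdim E$, forcing $\bdim\tau F = \bdim F - 2\,\frac{\langle\bdim E,\bdim F\rangle}{\langle\bdim E,\bdim E\rangle}\bdim E = \rho_E(\bdim F)$ after computing the Euler form in the rank-$2$ exceptional category; if $F$ is projective in $\mathcal E(i)$, then $E$ must be the simple projective in $\mathcal E(i)$ and $\overline{\tau}_i F$ is the injective envelope of the simple $\bdim F - \bdim E$, where the same identity is checked directly. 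Granted this, the standard Weyl-group relation $\rho_{g(v)} = g\,\rho_v\,g^{-1}$ (applied with $g=\rho_E$ and $v=\bdim F$, and noting $\rho_{-x}=\rho_x$) yields $\rho_{\overline{\tau}_i E_{i+1}} = \rho_E\,\rho_F\,\rho_E^{-1}$, which after right-multiplying by $\rho_E$ is exactly the desired identity.

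Finally, the conclusion follows by induction on the length of a braid word converting $(E_1,\dots,E_n)$ into $(E'_1,\dots,E'_n)$, whose existence is guaranteed by Theorem \ref{braid-exc}. The main obstacle is the rank-$2$ identity $\bdim \overline{\tau}_i E_{i+1} = \pm\rho_E(\bdim F)$: it is elementary but requires a careful, case-distinguishing bookkeeping of the almost split sequences (and of the boundary case where one of $E$, $F$ is projective or injective in $\mathcal E(i)$, where $\overline{\tau}$ is defined via injective envelopes rather than $D\Tr$), using the species description of the Euler form on $\mathcal E(i)$ so that the argument covers non-simply-laced types as well.
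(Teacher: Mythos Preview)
Your proposal is correct and follows essentially the same route as the paper: reduce via the transitive braid group action to a single generator $\sigma_i$, hence to a rank-$2$ exceptional pair $(E,E')\mapsto(E'',E)$, invoke the identity $\bdim E'' = \pm\,\rho_E(\bdim E')$, and conclude $\rho_{E''}=\rho_E\rho_{E'}\rho_E^{-1}$. The only difference is that the paper simply asserts the rank-$2$ dimension-vector identity as known, whereas you sketch a verification via almost split sequences; your acknowledgement that this step needs careful case-by-case bookkeeping (projective/injective boundary cases, species Euler form) is apt, since your description of the middle term of the AR-sequence is not literally correct in all cases, but the identity itself is standard and the argument goes through.
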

	
\begin{proof}  Since the braid group operates transitively on the set of exceptional
sequences, it is sufficient to assume that $(E'_1,\dots,E'_n) = \sigma_i(E_1,\dots,E_n)$,
thus we can assume that $n=2$ and that we deal with exceptional pairs of the form
$(E,E')$ and $(E'',E).$ But in this case $\bdim E'' = \pm\,\rho_E(\bdim E')$ and therefore
$\rho_{E''} = \rho_E\rho_{E'}(\rho_E)^{-1},$ thus $\rho_E\rho_{E'} = \rho_{E''}\rho_E,$
thus $\rho_{E''}\rho_E = \rho_E\rho_{E'}.$
\end{proof} 

\begin{cor}\label{prod-exc-sequence} If $(E_1,\dots,E_n)$ is any complete exceptional sequence in
$\mo\Lambda$, then 
	\smallskip

\Rahmen{$\rho_{E_1}\cdots\rho_{E_n} = c(\Lambda).$}
	\smallskip 

\end{cor}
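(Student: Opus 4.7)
The plan is to leverage the preceding lemma as follows: since the product $\rho_{E_1}\cdots\rho_{E_n}$ is independent of the choice of complete exceptional sequence, it suffices to exhibit a \emph{single} complete exceptional sequence whose associated product of reflections equals $c(\Lambda)$ by construction. The natural candidate is the sequence of simple modules, taken in the reverse of the total ordering $x(1),\dots,x(n)$ of vertices which was used to define $c_\Omega = \rho_{x(n)}\cdots\rho_{x(2)}\rho_{x(1)}$ in Section 1.1.2.

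Concretely, I would set $E_k = S(x(n-k+1))$ for $1 \le k \le n$ and verify that $(E_1,\dots,E_n)$ is a complete exceptional sequence. The $\Hom$-vanishing $\Hom(E_i,E_j)=0$ for $i>j$ is automatic since these are pairwise non-isomorphic simples. For $\Ext^1(E_i,E_j)=0$ when $i \ge j$: the diagonal case $i=j$ follows from the absence of loops in $Q(\Lambda)$ (which is implied by representation-finiteness, or more generally by the hereditary hypothesis together with the convention on $\Delta$); for $i>j$, a nonzero $\Ext^1(S(x(n-i+1)),S(x(n-j+1)))$ would yield an arrow $x(n-i+1)\to x(n-j+1)$ in $Q(\Lambda)$, equivalently $x(n-j+1) \leftarrow x(n-i+1)$, and by the defining property of our ordering this would force $n-j+1 < n-i+1$, i.e.\ $i<j$, a contradiction.

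Having established that $(E_1,\dots,E_n)$ is a complete exceptional sequence, the computation of the product is immediate: because $\bdim S(x(i))$ is precisely the $i$-th simple root, we have $\rho_{E_k} = \rho_{S(x(n-k+1))} = \rho_{x(n-k+1)}$, so that
\[
  \rho_{E_1}\rho_{E_2}\cdots\rho_{E_n} \;=\; \rho_{x(n)}\rho_{x(n-1)}\cdots\rho_{x(1)} \;=\; c_\Omega \;=\; c(\Lambda),
\]
which is exactly the definition of the Coxeter element attached to the orientation of $Q(\Lambda)$. Combined with the lemma, this proves the corollary for an \emph{arbitrary} complete exceptional sequence.

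There is no real obstacle here, since the lemma supplies all the heavy lifting (it encodes the invariance under the braid action and thus under reordering of the reflections up to conjugation). The only point requiring care is the verification that the simples in reverse ordering form an exceptional sequence, which is a direct bookkeeping exercise with the convention $i\leftarrow j \Rightarrow i<j$; no further structural input is needed.
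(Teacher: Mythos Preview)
Your proposal is correct and follows essentially the same approach as the paper. The paper states the corollary without an explicit proof, relying on the preceding boxed identity $\cox(\mo\Lambda) = c(\Lambda)$ (which already encodes that the simple modules give the Coxeter element) together with the lemma; you have simply made this implicit step explicit by verifying that $(S(x(n)),\dots,S(x(1)))$ is a complete exceptional sequence whose reflection product is $c(\Lambda)$ by definition.
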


\begin{prop} 
Let $\mo\Lambda$ be of rank $n$ and 
let $\mathcal A$ be an exceptional subcategory of rank $m$.
Then there is the following product formula in $W(\Lambda)$:
	\smallskip 

\Rahmen{$\cox(\mathcal A)\cdot\cox({}^\perp\mathcal A) = \cox(\mo\Lambda)).$}
\end{prop}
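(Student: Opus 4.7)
The plan is to reduce the product formula to Corollary \ref{prod-exc-sequence} by exhibiting a single complete exceptional sequence whose first part generates $\mathcal A$ and whose second part generates ${}^\perp\!\!\mathcal A$. Once we have such a sequence, applying the definition of $\cox$ to each part and the Corollary to the whole immediately yields the identity.

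First I would invoke Theorem \ref{perp-rank} (or equivalently Theorem \ref{perp-perp}) to ensure that ${}^\perp\!\!\mathcal A$ is itself an exceptional subcategory, of rank $n-m$. Choose an exceptional antichain $\{A_1,\dots,A_m\}$ whose extension closure is $\mathcal A$; by definition of exceptionality the Ext-quiver has no oriented cycle, so after a topological ordering we may assume $(A_1,\dots,A_m)$ is an exceptional sequence inside $\mathcal A$. Do the same inside ${}^\perp\!\!\mathcal A$ to obtain an exceptional sequence $(B_1,\dots,B_{n-m})$ generating it.

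Next, the key step: show that the concatenation
$$
 (A_1,\dots,A_m,\,B_1,\dots,B_{n-m})
$$
is a complete exceptional sequence in $\mo\Lambda$. The vanishings internal to each block hold by construction; the cross-block vanishings $\Hom(B_k,A_j)=0$ and $\Ext^1(B_k,A_j)=0$ for every $k,j$ are precisely the defining property of the perpendicular pair $({}^\perp\!\!\mathcal A,\mathcal A)$ (here I would cite the discussion in Section~2.5 and Theorem~\ref{perp-perp}). Since the total length is $m+(n-m)=n=\rank\Lambda$, completeness follows. This is the only non-trivial point and, given the results already established in Chapter~2, it is essentially immediate; the real content is just recognizing that the perpendicular pair gives exactly the Hom/Ext vanishings that an exceptional sequence demands across the cut.

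Finally, by Corollary \ref{prod-exc-sequence} applied to this complete exceptional sequence,
$$
 \rho_{A_1}\cdots\rho_{A_m}\cdot\rho_{B_1}\cdots\rho_{B_{n-m}} \;=\; c(\Lambda) \;=\; \cox(\mo\Lambda).
$$
By the definition of $\cox$ on an exceptional subcategory (and the well-definedness noted after the definition), the first factor equals $\cox(\mathcal A)$ and the second equals $\cox({}^\perp\!\!\mathcal A)$, giving the desired product formula. I expect the only subtle bookkeeping to be the order conventions: one must verify that the antichain orderings chosen to be exceptional sequences are compatible with the convention $\Ext^1(E_i,E_j)=0$ for $i\ge j$ and $\Hom(E_i,E_j)=0$ for $i>j$, so that putting the $A$-block before the $B$-block (not the reverse) is what matches ${}^\perp\!\!\mathcal A$ rather than $\mathcal A^\perp$.
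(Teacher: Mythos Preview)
Your proposal is correct and follows essentially the same approach as the paper: pick complete exceptional sequences in $\mathcal A$ and in ${}^\perp\!\!\mathcal A$, concatenate them to a complete exceptional sequence in $\mo\Lambda$, and apply Corollary~\ref{prod-exc-sequence}. Your extra care about the ordering convention and about using antichains (so that the identification with $\cox(\mathcal A)$ is literally the definition) is sound and, if anything, slightly more explicit than the paper's version.
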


\begin{proof}  Let $(E_1,\dots,E_m)$ be a complete exceptional sequence of $\mathcal A$
and let $(E_{m+1},\dots,E_n)$ be a complete exceptional sequence of $^\perp\mathcal A.$
Then $(E_1,\dots, E_n)$ is a complete exceptional sequence of $\mo\Lambda$.
We have 
$\cox(\mathcal A) = \sigma_{E_1}\cdots\sigma_{E_m}$, 
$\cox(\mathcal A^\perp) = \sigma_{E_{m+1}}\cdots\sigma_{E_m}$, and 
$\cox(\mo\Lambda) = \sigma_{E_1}\cdots\sigma_{E_n}$, thus
$$
 \cox(\mathcal A)\cdot\cox({}^\perp\mathcal A) 
 = \sigma_{E_1}\cdots\sigma_{E_m}\cdot \sigma_{E_{m+1}}\cdots\sigma_{E_m}
 = \sigma_{E_1}\cdots\sigma_{E_n}
 = \cox(\mo\Lambda)).
$$
\end{proof}

In particular, since
$$
 |\cox(\mathcal A)|_a = m, \quad 
 |\cox({}^\perp\mathcal A)|_a = n-m, \quad 
$$
we see in this way that for any exceptional subcategory $\mathcal A$, we have
$$
 \cox(\mathcal A) \le_a \cox(\mo\Lambda) = c(\Lambda),
$$
thus $\cox(\mathcal A)$ belongs to $\NC(W(\Lambda),c(\Lambda))$.

\begin{theorem}\label{IST} {\bf (Igusa-Schiffler-Thomas).}
The map
	\smallskip

\Rahmen{$\cox\!:\A(\mo\Lambda) \longrightarrow \NC(W(\Lambda),c(\Lambda))$.}
	\smallskip

\noindent
is an isomorphism of posets.
\end{theorem}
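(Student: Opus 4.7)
The plan is to interpose the set $\E(\mo\Lambda)$ of complete exceptional sequences between $\A(\mo\Lambda)$ and $\NC(W(\Lambda),c(\Lambda))$, showing that the natural map $\Phi\!:(E_1,\ldots,E_n)\mapsto(\rho_{E_1},\ldots,\rho_{E_n})$, with $n=\rank\Lambda$, from complete exceptional sequences to reduced reflection factorizations of $c(\Lambda)$ is a $B_n$-equivariant bijection; then the bijectivity of $\cox$ is read off by passing to equivalence classes modulo the braid action on initial segments.

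First I would handle order-preservation: given $\mathcal{A}\subseteq\mathcal{B}$ with $\rank\mathcal{A}=m\le t=\rank\mathcal{B}$, Theorem \ref{interval} furnishes the exceptional subcategory $\mathcal{B}\cap\mathcal{A}^\perp$ of rank $t-m$, and concatenating a complete exceptional sequence of $\mathcal{A}$ with one of $\mathcal{B}\cap\mathcal{A}^\perp$ produces one of $\mathcal{B}$. Applying Corollary \ref{prod-exc-sequence} inside $\mathcal{B}$ gives $\cox(\mathcal{B})=\cox(\mathcal{A})\cdot\cox(\mathcal{B}\cap\mathcal{A}^\perp)$ with absolute lengths adding to $t$, hence $\cox(\mathcal{A})\le_a \cox(\mathcal{B})$.

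Next the braid-equivariant bijection $\Phi$ would be established. The map is well-defined by Corollary \ref{prod-exc-sequence}. On a rank-$2$ thick subcategory the identity $\bdim(\overline\tau_i E_{i+1})=\pm\,\rho_{E_i}(\bdim E_{i+1})$ yields $\rho_{\overline\tau_i E_{i+1}}=\rho_{E_i}\rho_{E_{i+1}}\rho_{E_i}^{-1}$, so the braid action of Section 3.5 matches the Hurwitz action on factorizations, making $\Phi$ equivariant. Injectivity follows from two observations: the reflection $\rho_E$ determines $\bdim E$ up to sign, positivity pins down $\bdim E$, and for hereditary artin algebras an exceptional module is determined by its dimension vector. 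For surjectivity, combine Theorem \ref{braid-exc} (transitivity of $B_n$ on $\E(\mo\Lambda)$) with the theorem of Bessis (building on Deligne) that the Hurwitz action on reduced reflection factorizations of $c(\Lambda)$ is transitive; since the factorization coming from the exceptional sequence of simple modules lies in the image of $\Phi$, equivariance together with both transitivities forces $\Phi$ to be surjective.

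Finally the theorem is deduced. For surjectivity of $\cox$, given $w\le_a c(\Lambda)$ with $|w|_a=r$, extend any reduced factorization $w=\rho_1\cdots\rho_r$ to one of $c(\Lambda)$, apply $\Phi^{-1}$ to obtain a complete exceptional sequence $(E_1,\ldots,E_n)$, and take $\mathcal{A}=\mathcal{E}(E_1,\ldots,E_r)$; this is an exceptional subcategory of rank $r$ with $\cox(\mathcal{A})=w$. For injectivity of $\cox$, if $\cox(\mathcal{A})=\cox(\mathcal{A}')=w$ then both have rank $r=|w|_a$ and admit complete exceptional sequences mapping to reduced factorizations of $w$; since $B_r$ acts transitively on complete exceptional sequences inside each exceptional subcategory (Theorem \ref{braid-exc} applied internally, via the equivalence $\mathcal{A}\simeq\mo\Lambda'$ for some hereditary artin algebra $\Lambda'$) and on reduced factorizations of $w$, compatibility through $\Phi$ identifies the two collections and forces $\mathcal{A}=\mathcal{A}'$. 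Order-reflection is the prefix version of the same argument: $\cox(\mathcal{A})\le_a\cox(\mathcal{B})$ lifts through $\Phi^{-1}$ to a complete exceptional sequence of $\mathcal{B}$ whose first $r$ terms generate a copy of $\mathcal{A}$ inside $\mathcal{B}$. The hard part will be the surjectivity of $\Phi$, which requires importing Bessis's Hurwitz-transitivity theorem on reduced factorizations of Coxeter elements; once this deep combinatorial input is available, everything is controlled by the equivariance and the base-point provided by the simple or projective modules.
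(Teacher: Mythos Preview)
Your approach coincides with the paper's: build the $B_n$-equivariant bijection between complete exceptional sequences and reduced reflection factorizations of $c(\Lambda)$ (the paper denotes it $\rho$ rather than $\Phi$), then deduce bijectivity and order-compatibility of $\cox$. Two refinements are worth noting. First, since $\Lambda$ is an arbitrary hereditary artin algebra, the Weyl group $W(\Lambda)$ need not be finite; Bessis's theorem treats only finite Coxeter groups, so for the general statement the paper invokes the Igusa--Schiffler theorem that $B_n$ acts transitively on $\F(W,c)$ for any Coxeter group $W$. Second, your injectivity argument for $\cox$ appeals to transitivity of $B_r$ on reduced factorizations of an arbitrary $w\le_a c$, which is stronger than transitivity for $c$ itself and is not the statement you have just imported. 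The paper avoids this with a cleaner trick: given $\cox(\mathcal A)=\cox(\mathcal A')$, extend a reduced factorization coming from $\mathcal A$ to one of $c(\Lambda)$ by some tail $(t_{r+1},\ldots,t_n)$; since the two length-$r$ products agree, the \emph{same} tail also completes the factorization coming from $\mathcal A'$. Corollary~\ref{IS-cor}(1) then forces both extended $n$-tuples to arise from complete exceptional sequences in $\mo\Lambda$, and both $\mathcal A$ and $\mathcal A'$ are identified with $\mathcal B^\perp$, where $\mathcal B$ is the thick closure of the modules corresponding to the tail.
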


In case $\Lambda$ is of finite or tame representation type, the result is due to
Ingalls and Thomas \cite{[IT]}, for the general case, see \cite{[IS]} and also \cite{[K2]}.
The proof will be outlined 
in the next section.
	\medskip  

\subsection{The braid group operation on $\F(W,c)$} 
Given any group $G$, the braid group $B_n$ operates on the set $G^n$ of $n$-tuples of
elements of $G$ as follows:
$$
 \sigma_i(g_1,\dots,g_n) = (g_1,\dots,g_{i-1},g_ig_{i+1}g_i^{-1},g_i,g_{i+1},\dots,g_n)
$$
we shift 
the $i$-th entry one place to the right and insert on the $i$-place the conjugate 
$g_ig_{i+1}g_i^{-1}$ of $g_{i+1}$; 
this operation is sometimes called the {\it Hurwitz action}, see for example \cite{[BDSW]}.  
Note that for $\sigma_i(g_1,\dots,g_n) = (g_1',\dots,g_n')$,
the products coincide: 
$$
  g_1g_2\cdots g_n = g_1'g_2'\cdots g_n'.
$$
	\medskip 

\subsubsection{} 
Given a Coxeter group $(W,S)$ of rank $n$ with set of reflections $T$, one may consider the
set $T^n$  of $n$-tuples of reflections. Since $T$ is closed under conjugation, 
the Hurwitz action sends $T^n$ into itself. If $c$ is an element of $W$ (of interest for us is
the case that $c$ is Coxeter element), we denote by $\F(W,c)$ the set of $n$-tuples $(t_1,\dots,t_n)$
of elements of $T$ with product $t_1t_2\cdots t_n = c$.
There is the following theorem parallel to Theorem \ref{braid-exc}
(for the proof, we refer to \cite{[IS]}).
	\bigskip

\begin{theorem} {\bf (Igusa-Schiffler).} Let $(W,S)$ be a Coxeter group of rank $n$ and 
$c$ a Coxeter element. The braid group $B_n$ operates transitively on the set $\F(W,c)$.
\end{theorem}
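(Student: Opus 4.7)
The plan is to bootstrap from the already-established transitivity of the braid group on complete exceptional sequences (Theorem \ref{braid-exc}), using the categorification machinery of Section 3.6. Pick a hereditary artin algebra $\Lambda$ whose Weyl group and Coxeter element are $(W,c)$; such a $\Lambda$ exists because we are in the symmetrizable Cartan setting. The strategy is to set up a $B_n$-equivariant surjection from $\E(\mo\Lambda)$ onto $\F(W,c)$ and push transitivity across it.

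First I would define
\[
 \Phi\!:\E(\mo\Lambda) \longrightarrow \F(W,c),\qquad (E_1,\dots,E_n)\mapsto (\rho_{E_1},\dots,\rho_{E_n}),
\]
which lands in $\F(W,c)$ by Corollary \ref{prod-exc-sequence}. Equivariance is a short verification: if $\sigma_i$ replaces $(E_i,E_{i+1})$ by $(\overline{\tau}_i E_{i+1},E_i)$, then $\bdim \overline{\tau}_i E_{i+1}$ is (up to sign) $\rho_{E_i}(\bdim E_{i+1})$, so $\rho_{\overline{\tau}_i E_{i+1}}=\rho_{E_i}\rho_{E_{i+1}}\rho_{E_i}^{-1}$, which is exactly the Hurwitz formula for $\sigma_i$ acting on reflection tuples.

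Next I would prove that $\Phi$ is surjective by induction on $n$. Given $(t_1,\dots,t_n)\in\F(W,c)$, the initial reflection satisfies $t_1\le_a c$, so by the rank-$1$ case of Theorem \ref{IST} (Ingalls-Schiffler-Thomas) one has $t_1=\rho_{E_1}$ for a unique exceptional module $E_1$. The product formula $\cox(\mathcal{A})\cdot\cox({}^\perp\!\mathcal{A})=\cox(\mo\Lambda)$ from Section 3.6, applied with $\mathcal{A}=\add E_1$, shows that $t_2\cdots t_n=\rho_{E_1}c$ is the Coxeter element $\cox({}^\perp\!\add E_1)$ of the perpendicular subcategory ${}^\perp\!\add E_1$. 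By Theorem \ref{perp-rank} this perpendicular subcategory is exceptional of rank $n-1$, hence equivalent to $\mo\Lambda'$ for some hereditary artin algebra $\Lambda'$. Applying Theorem \ref{IST} to $\Lambda'$, the tuple $(t_2,\dots,t_n)$ lies in $\F(W(\Lambda'),c(\Lambda'))$, so the inductive hypothesis produces an exceptional sequence $(E_2,\dots,E_n)$ in ${}^\perp\!\add E_1$ realising it. Prepending $E_1$ yields a complete exceptional sequence in $\mo\Lambda$ mapping to $(t_1,\dots,t_n)$ under $\Phi$. Given surjectivity and equivariance of $\Phi$, the transitivity of $B_n$ on $\E(\mo\Lambda)$ (Theorem \ref{braid-exc}) immediately descends to transitivity on $\F(W,c)$.

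The main obstacle is the inductive step, and in particular the identification of $\rho_{E_1}c$ with the Coxeter element of the algebra $\Lambda'$ realising ${}^\perp\!\add E_1$: one must know not only that $\rho_{E_1}c$ lies in $\NC(W(\Lambda'),\cdot)$ but that the Coxeter element there is exactly $\rho_{E_1}c$, rather than some other conjugate prescribed by a different orientation. This is the content of the compatibility between the Ingalls-Schiffler-Thomas isomorphism and passage to perpendicular subcategories, and it is what makes the entire scheme depend substantively on Theorem \ref{IST}. Once that compatibility is secured, the induction is clean and the theorem follows without further combinatorial work.
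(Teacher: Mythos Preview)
Your scheme is circular within the paper's logical framework. The paper does not give its own proof of this theorem at all: it simply refers to \cite{[IS]}, where the argument is carried out entirely inside the Coxeter group, with no representation theory. In the paper's development, the Igusa--Schiffler transitivity theorem is then \emph{used} to prove Corollary~\ref{IS-cor} (surjectivity of $\rho\!:\E(\mo\Lambda)\to\F(W,c)$), which is used to prove Proposition~\ref{bijection}, which is used to prove that $\cox$ is bijective, which finally yields Theorem~\ref{IST}. So Theorem~\ref{IST} is a \emph{consequence} of the statement you are trying to prove, not an input to it.

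Your inductive step needs, at the very first move, that the reflection $t_1\le_a c$ is of the form $\rho_{E_1}$ for some exceptional $\Lambda$-module $E_1$. You call this ``the rank-$1$ case of Theorem~\ref{IST}'', but that is the rank-$1$ \emph{layer} of the isomorphism $\cox\!:\A(\mo\Lambda)\to\NC(W,c)$ for the full rank-$n$ algebra $\Lambda$, not a statement about rank-$1$ algebras; it is precisely the surjectivity of $\cox$ on $\NC_1$, which in the paper comes from Igusa--Schiffler. Outside Dynkin type this is genuinely nontrivial: there exist real roots that are not real Schur roots, so not every reflection in $W$ is $\rho_E$ for an exceptional $E$, and one needs an argument that the condition $t\le_a c$ forces $t$ to lie at a real Schur root. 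That argument is, in effect, the content of \cite{[IS]}. Your proposal correctly identifies the equivariance of $\Phi$ and the reduction ``surjective $+$ equivariant $\Rightarrow$ transitive'', but the surjectivity step cannot be closed without either importing the Coxeter-theoretic proof from \cite{[IS]} or supplying an independent representation-theoretic argument for the level-$1$ surjectivity---neither of which you have done.
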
 
	
Given an exceptional sequence $E = (E_1,\dots,E_n)$
we define 
$$
 \rho(E) = (\rho_{E_1},\dots,\rho_{E_n}),
$$ 
this is an $n$-tuple in $T$
and according to Corollary \ref{prod-exc-sequence}, the product is just the Coxeter element $c$, thus $\rho(E)$
belongs to $\F(W(\Lambda),c(\Lambda),$ this shows that we obtain a map
$$
 \rho\!:\E(\mo\Lambda) \to \F(W(\Lambda),c(\Lambda)).
$$
	\medskip

\begin{cor}\label{IS-cor}

{\rm(1)} If $E = (E_1,\dots,E_n)$ is a sequence of exceptional modules and 
$\rho_{E_1}\cdots\rho_{E_n} = c(\Lambda),$
then $E$ is an exceptional sequence.

{\rm (2)} The map $\rho\!:\E(\mo\Lambda) \to \F(W(\Lambda),c(\Lambda))$ is a 
$B_n$-equivariant bijection.
\end{cor}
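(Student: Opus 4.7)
The plan is to establish (2) first---the Hurwitz-equivariant bijection---and then deduce (1) as an immediate consequence. So throughout let $n = \rank\Lambda$ and let $\rho$ denote the map $E \mapsto (\rho_{E_1},\dots,\rho_{E_n})$, which lands in $\F(W(\Lambda),c(\Lambda))$ by Corollary~\ref{prod-exc-sequence}.

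First I would check $B_n$-equivariance. The braid group acts on $\E(\mo\Lambda)$ as defined in Section~3.5, and on $\F(W(\Lambda),c(\Lambda))$ via the Hurwitz action. The only thing to verify is that $\rho$ intertwines the two generators $\sigma_i$. This reduces to the rank-$2$ case of a pair $(E,E')$, where applying $\sigma_i$ produces a pair $(E'',E)$ with $\bdim E'' = \pm\rho_E(\bdim E')$. Consequently $\rho_{E''} = \rho_E\rho_{E'}\rho_E^{-1}$, which matches exactly the Hurwitz formula. (This reflection identity is already used implicitly in the proof of the lemma preceding Corollary~\ref{prod-exc-sequence}.)

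Next, surjectivity and injectivity of $\rho$. For surjectivity, pick any complete exceptional sequence, e.g.\ the sequence $(P(1),\dots,P(n))$ of indecomposable projectives; its image lies in $\F(W(\Lambda),c(\Lambda))$. Since $B_n$ acts transitively on $\F(W(\Lambda),c(\Lambda))$ by the Igusa--Schiffler theorem, and since the image of $\rho$ is $B_n$-stable by equivariance, the image is all of $\F(W(\Lambda),c(\Lambda))$. For injectivity, the reflection $\rho_{E}$ determines the hyperplane orthogonal to $\bdim E$, and since $\bdim E$ is a positive (real Schur) root, it is recovered from this hyperplane; exceptional modules are in bijection with positive real Schur roots via $\bdim$, so $E$ is determined by $\rho_E$. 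Hence the tuple $(\rho_{E_1},\dots,\rho_{E_n})$ determines the tuple $(E_1,\dots,E_n)$ up to isomorphism.

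Finally, (1) follows from (2) as follows. Let $(E_1,\dots,E_n)$ be any sequence of exceptional modules (not yet known to be exceptional as a sequence) with $\rho_{E_1}\cdots\rho_{E_n}=c(\Lambda)$. Then $(\rho_{E_1},\dots,\rho_{E_n})\in\F(W(\Lambda),c(\Lambda))$. By the surjectivity of $\rho$ in~(2), there exists a genuine complete exceptional sequence $(E_1',\dots,E_n')$ with $\rho_{E_i'}=\rho_{E_i}$ for all $i$, and by the dimension-vector argument used for injectivity we have $E_i'\cong E_i$ for all $i$. Hence $(E_1,\dots,E_n)$ is itself exceptional. The main obstacle is the Igusa--Schiffler transitivity theorem, which is the genuine non-trivial input and is quoted from~\cite{[IS]}; the remaining equivariance and injectivity arguments are essentially bookkeeping once the reflection formula $\rho_{E''}=\rho_E\rho_{E'}\rho_E^{-1}$ and the classification of exceptional modules by positive real Schur roots are in hand.
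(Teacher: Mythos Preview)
Your proposal is correct and follows essentially the same approach as the paper: establish injectivity of $E\mapsto\rho_E$ at the level of individual exceptional modules, verify $B_n$-equivariance via the conjugation formula $\rho_{E''}=\rho_E\rho_{E'}\rho_E^{-1}$, and obtain surjectivity by combining equivariance with the Igusa--Schiffler transitivity theorem starting from one known exceptional sequence. The only cosmetic difference is that the paper uses the sequence of simple modules as its base point for surjectivity while you use the projectives, and you spell out explicitly how (1) is deduced from the surjectivity plus the module-level injectivity, which the paper leaves implicit.
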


	\medskip

\begin{proof}  The map which attaches to an exceptional module $X$ the reflection $\rho_X$ is
an injective map from the set of isomorphism classes of exceptional modules to the set $T$,
thus the map $\rho\!:\E(\mo\Lambda) \to T^n$ is injective. As we know, the image of this map
$\rho$ lies in $\F(W(\Lambda),c(\Lambda))$. If $1\le i < n$, and $E$ is an exceptional
sequence, then we know that $\rho(\sigma_i(E)) = \sigma_i\rho(E)$. It follows that
the map $\rho\!:\E(\mo\Lambda) \to \F(W(\Lambda),c(\Lambda))$ is $B_n$-equivariant.
In order to show that this map is surjective, consider an element $(t_1,\dots,t_n)$ in
$\F(W(\Lambda),c(\Lambda))$. 

Let $S_1,\dots,S_n$ be the simple $\Lambda$-modules ordered in such a way that 
$(S_1,\dots,S_n)$ is an exceptional sequence.
Let $\rho_{S_i} = s_i$, thus , thus $\rho(S_1,\dots,S_n)
= s_1\cdots s_n = \cox(\Lambda)$
and therefore $(s_1,\dots,s_n)$ belongs to $\F(W(\Lambda),c(\Lambda))$.
Since $B_n$ operates transitively on $\F(W(\Lambda),c(\Lambda))$,
there is an element $g\in B_n$ such that $(t_1,\dots,t_n) = g(s_1,\dots,s_n)$.
If we apply $g$ to $(S_1,\dots,S_n),$ we obtain an exceptional sequence
$(E_1,\dots,E_n)$, and 
$$
 \rho(E_1,\dots,E_n) = \rho(g(S_1,\dots,S_n)) = g\rho(S_1,\dots,S_n) = g(s_1,\dots,s_n) = 
 (t_1,\dots,t_n).
$$
\end{proof}

For $0\le r \le n$ let us denote by $\F_r(W,c)$ the set of $r$-tuples $(t_1,\dots,t_r)$ in $T$
which can be completed to an $n$-tuple $(t_1,\dots,t_n)\in \F(W,c)$.
And  we denote by $\E_r(\mo\Lambda)$ the set of exceptional sequences in $\mo\Lambda$
of length $r$.
	\medskip

\begin{prop}\label{bijection} The map 
$$
 \rho\!:\E_r(\mo\Lambda) \to \F_r(W(\Lambda),c(\Lambda))
$$
defined by $\rho(E_1,\dots,E_r) = (\rho_{E_1},\dots,\rho_{E_r})$  is a bijection.
\end{prop}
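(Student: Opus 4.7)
The plan is to reduce Proposition \ref{bijection} to the already-established case $r = n$ (Corollary \ref{IS-cor}(2)) by using completion of exceptional sequences on one side and of $r$-tuples in $T$ to elements of $\F(W,c)$ on the other.

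For \textbf{injectivity}, I would simply observe that the map $X \mapsto \rho_X$ from isomorphism classes of exceptional $\Lambda$-modules to the set $T$ of reflections in $W(\Lambda)$ is injective (as recorded in the proof of Corollary \ref{IS-cor}), since an exceptional module is determined by its dimension vector. Thus if $(E_1,\dots,E_r)$ and $(E'_1,\dots,E'_r)$ are exceptional sequences with $(\rho_{E_1},\dots,\rho_{E_r}) = (\rho_{E'_1},\dots,\rho_{E'_r})$, then $E_i \cong E'_i$ for every $i$.

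For \textbf{surjectivity}, I would start with an element $(t_1,\dots,t_r) \in \F_r(W(\Lambda),c(\Lambda))$. By the definition of $\F_r(W,c)$, this $r$-tuple extends to some $n$-tuple $(t_1,\dots,t_n) \in \F(W(\Lambda),c(\Lambda))$. By Corollary \ref{IS-cor}(2), there exists a complete exceptional sequence $(E_1,\dots,E_n)$ with $\rho(E_1,\dots,E_n) = (t_1,\dots,t_n)$. The truncation $(E_1,\dots,E_r)$ is again an exceptional sequence (any initial segment of an exceptional sequence is exceptional, directly from the inductive definition given at the beginning of Section \ref{max-chains}), and its image under $\rho$ is $(t_1,\dots,t_r)$ by construction.

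The argument is essentially bookkeeping; I do not expect any serious obstacle since the heavy lifting — the bijection in the complete case and the completability of exceptional sequences (Corollary in Section 2.5) — is already in place. The only subtle point to verify is that $\F_r(W,c)$ is defined precisely as the set of $r$-tuples extendable to an element of $\F(W,c)$, which matches the completability property of exceptional sequences on the module-theoretic side; the two notions of ``truncation/extension'' thus fit together cleanly via $\rho$.
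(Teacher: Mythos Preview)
Your proposal is correct and follows essentially the same route as the paper. The paper's own proof is even terser: it records well-definedness (completability of exceptional sequences shows the image lands in $\F_r$) and surjectivity (extend to an $n$-tuple, apply Corollary \ref{IS-cor}(2), truncate), leaving injectivity implicit from the already-stated fact that $X\mapsto\rho_X$ is injective on exceptional modules.
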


\begin{proof}
Since any exceptional sequence $(E_1,\dots,E_r)$ can be completed to a
complete exceptional sequence $(E_1,\dots,E_n)$, we see that our map $\rho$ sends
$\E_r(\mo\Lambda)$ into $\F_r(W(\Lambda),c(\Lambda))$. 

On the other hand, given an element
$(t_1,\dots,t_r)$ in $\F_r(W(\Lambda),c(\Lambda))$, it 
can be completed to an $n$-tuple $(t_1,\dots,t_n)\in \F(W,c)$. As we have seen,
$(t_1,\dots,t_n) = \rho(E_1,\dots,E_n)$ for some complete exceptional sequence $(E_1,\dots,E_n)$
and therefore $(t_1,\dots,t_r) = \rho(E_1,\dots,E_r)$.
\end{proof} 
	
Let us stress that the injectivity assertion implies the following:
	\medskip 

{\it If $E_1,\dots,E_r$ are exceptional modules such that $\rho_{E_1}\cdots\rho_{E_r} = \cox(\mathcal A)$,
where $\mathcal A$ is exceptional of rank $r$, then all the modules $E_1,\dots,E_r$ belong to
$\mathcal A$.}
	\bigskip

\begin{theorem} The map $\cox\!:  \A_r(\mo\Lambda) \to \NC_r(W(\Lambda),c(\Lambda))$
is bijective, and there is the following diagram
	\smallskip  

\Rahmen
{$$
\hbox{\beginpicture
  \setcoordinatesystem units <4cm,1.2cm>
\put{$\E_r(\mo\Lambda)$} at 0 0
\put{$\F_r(W(\Lambda),c(\Lambda))$} at 1 0
\put{$\A_r(\mo\Lambda)$} at 0 -1
\put{$\NC_r(W(\Lambda),c(\Lambda))$} at 1 -1
\arr{0.4 0}{0.6 0}
\arr{0.4 -1}{0.6 -1}
\arr{0 -.3}{0 -.7}
\arr{1 -.3}{1 -.7}
\put{$\rho$} at 0.5 0.2
\put{$\cox$} at 0.5 -0.8
\put{$\gamma$} at 0.05 -0.5
\put{$\mu$} at 1.06 -0.5
\endpicture}
$$}
	\smallskip

\noindent
where $\gamma$ sends any exceptional sequence $E$ to the exceptional subcategory generated by $E$ 
and $\mu$ is the multiplication map which sends $(t_1,\dots,t_r)$
to $\mu(t_1,\dots,t_r) = t_1\cdots t_r$.
\end{theorem}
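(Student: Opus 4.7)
The plan is to verify commutativity of the diagram first, and then use it in tandem with Proposition~\ref{bijection} to deduce both surjectivity and injectivity of $\cox$ on the $r$-th layer. Throughout, I would fix $r$ and work with exceptional subcategories of rank exactly $r$ and with exceptional sequences of length exactly $r$, noting that the thick closure of an exceptional sequence of length $r$ is known (from Chapter~2, via Corollary~\ref{thick-closure2} and the discussion around Proposition~\ref{bijection}) to be an exceptional subcategory of rank $r$, so that $\gamma$ is well-defined into $\A_r(\mo\Lambda)$.

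For commutativity, I would start with an exceptional sequence $E=(E_1,\dots,E_r)$ and let $\mathcal A=\gamma(E)$. Pick an exceptional antichain $\{A_1,\dots,A_r\}$ generating $\mathcal A$, so that by definition $\cox(\mathcal A)=\rho_{A_1}\cdots\rho_{A_r}$. Since the braid group $B_r$ acts transitively on the set of exceptional sequences inside $\mathcal A$ (the proof of Theorem~\ref{braid-exc} applied to $\mathcal A\simeq\mo\Lambda'$ gives this), there is a braid $g\in B_r$ with $g(A_1,\dots,A_r)=(E_1,\dots,E_r)$. The key observation made in the proof of Corollary~\ref{prod-exc-sequence}, namely $\rho_{E''}\rho_E=\rho_E\rho_{E'}$ whenever $(E'',E)=\sigma_i(E,E')$, says exactly that the $\rho$-image of a braid move is the Hurwitz move, which preserves the total product. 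Hence $\rho_{E_1}\cdots\rho_{E_r}=\rho_{A_1}\cdots\rho_{A_r}=\cox(\mathcal A)$, i.e.\ $\mu\circ\rho=\cox\circ\gamma$.

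For surjectivity, take $w\in\NC_r(W(\Lambda),c(\Lambda))$. By definition $w\le_a c(\Lambda)$, so there exist reflections $t_1,\dots,t_r$ with $t_1\cdots t_r=w$ that can be extended to a factorization of $c(\Lambda)$ by $n-r$ further reflections, i.e.\ $(t_1,\dots,t_r)\in\F_r(W(\Lambda),c(\Lambda))$. Proposition~\ref{bijection} gives a (unique) exceptional sequence $(E_1,\dots,E_r)$ with $\rho_{E_i}=t_i$; then $\mathcal A=\gamma(E_1,\dots,E_r)\in\A_r(\mo\Lambda)$ and by the commutativity just proved, $\cox(\mathcal A)=\mu(t_1,\dots,t_r)=w$.

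For injectivity, suppose $\cox(\mathcal A)=\cox(\mathcal A')$ for $\mathcal A,\mathcal A'\in\A_r(\mo\Lambda)$. Choose any exceptional sequence $(A'_1,\dots,A'_r)$ in $\mathcal A'$; then by commutativity $\rho_{A'_1}\cdots\rho_{A'_r}=\cox(\mathcal A')=\cox(\mathcal A)$. Now invoke the consequence of Proposition~\ref{bijection} stated immediately after its proof: if $\rho_{E_1}\cdots\rho_{E_r}=\cox(\mathcal A)$ with $\mathcal A$ exceptional of rank $r$, then every $E_i$ lies in $\mathcal A$. Thus $A'_i\in\mathcal A$ for all $i$, so $\mathcal A'\subseteq\mathcal A$, and since both have rank $r$ we conclude $\mathcal A=\mathcal A'$. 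The main obstacle I anticipate is establishing the commutativity cleanly; once the identity $\rho\circ\sigma_i=\sigma_i\circ\rho$ (Hurwitz compatibility) is pinned down and combined with transitivity of the braid action on exceptional sequences inside a fixed exceptional subcategory, the surjectivity and injectivity steps are essentially formal consequences of Proposition~\ref{bijection} and the note that follows it.
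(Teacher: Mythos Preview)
Your proposal is correct and follows essentially the same approach as the paper. The only cosmetic difference is in the injectivity step: you invoke the consequence of Proposition~\ref{bijection} (stated just before the theorem) to conclude that the $A'_i$ lie in $\mathcal A$, whereas the paper re-runs that argument inline by extending both factorizations to length~$n$, applying Corollary~\ref{IS-cor}(1), and identifying both $\mathcal A$ and $\mathcal A'$ with $\mathcal B^\perp$ for a common complement $\mathcal B$; the underlying content is identical.
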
 

\begin{proof} It is clear that the diagram commutes. 

By the definition of $\NC_r(W(\Lambda),c(\Lambda))$, the map $\mu$ is surjective.
Since $\rho$ is bijective and $\mu$ is surjective, we see that $\cox$ is surjective.

Let us show that $\cox$ is injective. Let $\mathcal A, \mathcal A'$ be exceptional subcategories of rank $r$
of $\mo\Lambda$ such that $\cox(\mathcal A) = \cox(\mathcal A').$ 
Let $E = (E_1,\dots,E_r)$ be a complete exceptional sequence in $\mathcal A$ and
$(E_1',\dots,E_r')$ a complete exceptional sequences in $\mathcal A'$. 
Let $t_i = \rho_{E_i}$ and $t'_i = \rho_{E_i'}$, then  
$\cox(E) = t_1\cdots t_r$ and 
$\cox(E') = t'_1\cdots t'_r$. 
Now $(t_1,\dots,t_r)$ belongs to $\F_r(W(\Lambda),c(\Lambda))$, thus $(t_1,\dots,t_r)$
can be completed to an element $(t_1,\dots,t_n)$ in $\F(W(\Lambda),c(\Lambda))$.
But also the element $(t'_1,\dots,t'_r,t_{r+1},\dots,t_n)$ belongs to 
$\F(W(\Lambda),c(\Lambda))$, since it is an $n$-tuple of elements of $T$ with product
equal to $c(\Lambda)$. For $r+1\le i \le n$, write $t_i = \rho_{E_i}$ for some 
exceptional module $E_i$. 

It follows from part (1) of Corollary \ref{IS-cor} that both $(E_1,\dots,E_r,E_{r+1},\dots,E_n)$
and  $(E'_1,\dots,E'_r,E_{r+1},\dots,E_n)$ are (complete) exceptional sequences of $\mo\Lambda$.
Let $\mathcal B$ be the thick subcategory generated by $E_{r+1},\dots,E_n$.
Since both $E_1,\dots,E_r$ as well as $E'_1,\dots,E'_r$
are complete exceptional sequences in $\mathcal B^\perp$, we see that
$\mathcal A = \gamma(E_1,\dots,E_r) = \mathcal B^\perp = \gamma(E'_1,\dots,E'_r) = \mathcal A'$.
\end{proof}

	\medskip 

\subsubsection{\bf Proof of Theorem \ref{IST}.} We have seen already that
$$
 \cox\!:  \A(\mo\Lambda) \to \NC(W(\Lambda),c(\Lambda))
$$
is bijective. It remains to be shown:
Given 
exceptional subcategories $\mathcal A, \mathcal A'$ of $\mo(\Lambda)$, then $\mathcal A \subseteq \mathcal A'$
if and only if $\cox\mathcal A \le_a \cox\mathcal A'.$

Of course, if $\mathcal A \subseteq \mathcal A'$, then take a complete exceptional sequence 
$(E_1,\dots,E_r)$ of $\mathcal A,$
extend it to a complete exceptional sequence $(E_1,\dots,E_s)$ of $\mathcal A'$. It follows that
$$
 \cox(\mathcal A) = \rho_{E_1}\cdots\rho_{E_r} \le_a \rho_{E_1}\cdots\rho_{E_s} = \cox(\mathcal A').
$$

Conversely, assume that $\cox(\mathcal A) \le_a \cox(\mathcal A').$ Again, let  
$(E_1,\dots,E_r)$ be a complete exceptional of $\mathcal A,$ thus $\cox(\mathcal A) =
\rho_{E_1}\cdots\rho_{E_r}$.
Since $\cox(\mathcal A) \le_a \cox(\mathcal A'),$ we find reflections $t_{r+1},\dots,t_s$
such that $\cox(\mathcal A') = \rho_{E_1}\cdots\rho_{E_r}t_{r+1}\cdots t_s$, where $s$ is the
rank of $\mathcal A'$.
According to Proposition \ref{bijection}, 
it is possible to write $t_i = \rho_{E_i}$ for $r+1\le i \le s$, 
where the $E_i$ are exceptional modules.
Then $(E_1,\dots,E_s)$
is a sequence of $s$ exceptional modules with product $\cox(\mathcal A')$, and $\mathcal A'$ has
rank $s$, thus all
the modules $E_1,\dots,E_s$ belong to $\mathcal A'.$ In particular, $\mathcal A \subseteq \mathcal A'.$	
	\bigskip\bigskip 

{\bf Notes.}
	\medskip 

\begin{note}\label{exceptional}
{\bf Exceptionality.}
We follow here a long and well-established tradition to call an indecomposable module without
self-extensions an exceptional module, as started by Crawley-Boevey \cite{[CB]} in analogy to
the exceptional vector bundles discussed by Rudakov \cite{[Ru1]} and his collaborateurs. But
the reader should be aware that this terminology seems to be misleading in our context.
What is called exceptional has to be considered, in a non-commutative setting, 
not as an extraordinary feature, but as a quite typical behaviour. Of course,
dealing with commutative rings, only few modules will be exceptional.
Thus, from a commutative perspective, the existence
of large numbers of exceptional vector bundles was rated as an exceptional behaviour
(after all, algebraic geometry is often considered as part of commutative algebra).

\end{note}

\begin{note}\label{rank-2} {\bf Artin algebras of rank 2.}
{\it If $\rank\Lambda = 2$, 
and $X_1\neq X_2$ are non-isomorphic exceptional $\Lambda$-modules, 
then the abelian closure of the set $\{X_1,X_2\}$} 
(the closure under kernels,
cokernels) is $\mo\Lambda$. 

\begin{proof}
The assertion is clear if $X_1,X_2$ are orthogonal
(namely, in this case $X_1,X_2$ are just the simple $\Lambda$-modules. Thus we can assume that
$\Hom(X_1,X_2) \neq 0.$ Up to duality, we can assume that $X_1$, is
preprojective and we use induction on the number of predecessors of $X_1.$ 
If $X_1$ is simple, then we obtain the second simple module as
a direct summand of the cokernel of a map $X_1^t \to X_2.$ 
If $X_1$ is not simple, then $X_1$
generates $X_2$, say there is a surjective map $f\!:X_1^t \to X_2.$ If $X_1'$ is an indecomposable
direct summand of $X_1$, then $X_1'$ has less predecessors than $X_1.$
\end{proof} 
\end{note}

\begin{note}\label{thick-perp-perp} {\bf Thick subcategories $\mathcal X$ with proper inclusion 
$\mathcal X \subset {}^\perp(\mathcal X^\perp)$.}
Consider, for example, the Kronecker algebra $\Lambda$ with sink $0$ and source $1$.
Let $\mathcal N$ be the 
subcategory of regular modules (these are the direct sums of indecomposable modules $M$
with $\dim M_0 = \dim M_1$). This is a thick subcategory and $\mathcal N^\perp = 0$, thus 
${}^\perp(\mathcal N^\perp) = \mo\Lambda$. 
\end{note}

\begin{note}\label{braid-group-relations} 
{\bf The braid group relations.} The relations $\sigma_i\sigma_j = \sigma_j\sigma_i$
for $|i-j| \ge 2$ are obviously satisfied. In order to show that 
$\sigma_i\sigma_{i+1}\sigma_i = \sigma_{i+1}\sigma_{i}\sigma_{i +1}$, we can assume that
$i=1$ and $n = 3$. Thus, let $E = (E_1,E_2,E_3)$ be a complete exceptional
sequence and let $\overline{\tau_*}$ be the extended Auslander-Reiten translation for the
thick subcategory generated by $E_1$ and $E_3$. One sees immediately that both
$\sigma_1\sigma_2\sigma_1E$ and $\sigma_2\sigma_1\sigma_2E$ are of the form
$(?,\overline{\tau_*}E_3,E_1)$. But there is just one complete exceptional sequence of this
form. 

\end{note}\begin{note}\label{Coxeter-length} {\bf The length function on a Coxeter group.}
Given a 
Coxeter group $(W,S)$ and an element $w\in W$, one denotes by
$|w|$ the least number $t$ such that $w$ can be written as a product
of $t$ elements of $S$. If $v,w\in W$, then one sets $v\le w$ provided $|v|+|v^{-1}w| = |w|.$
For the properties of this ordering, we refer to \cite{[B]}, and \cite{[H1], [H2]}.
\end{note}

\vfill\eject 
\section{\bf The hereditary artin algebra $\Lambda_n$}

\subsection{The lattice $\NC(n)$ of non-crossing partitions of an $n$-element set}
	\medskip
 
\subsubsection{}
A {\it partition} of a set $S$ 
is a set of pairwise disjoint non-empty subsets $S_i$ of $S$ such that $S = \bigcup_i S_i$;
the subsets $S_i$ are called the {\it parts.} We will denote the set of all partitions of
$S$ by $\Pi(S)$, and we write $\Pi(n)$ instead of $\Pi(\{1,2,\dots,n\}).$ 

In order to visualize such a partition,
one sometimes uses arcs, as in the following example: 
 \ \beginpicture
  \setcoordinatesystem units <0.2cm,0.2cm>
  \multiput{} at 0 2  /
  \multiput{$\sssize \bullet$} at  0 0.3  1 0.3  2 0.3  3 0.3 /
  \circulararc 180 degrees from 1 1 center at 0.5 1 
  \circulararc 180 degrees from 3 1 center at 2 1 
  \plot 0 0.3  0 1 /
  \plot 1 0.3  1 1 /
  \plot 3 0.3  3 1 /  
 \endpicture \ \
depicts the partition $\{\{1,2,4\},\{3\}\}$ of the set $\{1,2,3,4\};$
here, the bullets are the integers $1,2,3,4$ 
(in this order), the arcs indicate that the
corresponding integers belong to the same part, the parts are just the transitive closure.
To be more precise, let us assume that $S = \{1,2,\dots,n\}$.
An {\it arc} of the partition $P$ of $S = \{1,2,\dots,n\}$
is a pair $[a,b]$ of integers $1\le a < b \le n+1$ such that $a,b$ belong to the same part of $P$
whereas no element $c$ with $a < c < b$ belongs to this part. We denote by $A(P)$ the set of arcs
of $P$. 
We call a set of arcs a {\it partition arc set} provided for any
pair of arcs $[a,b], [a',b']$ in the set, we have $a = a'$ iff $b = b'.$ 

\begin{lemma}\label{part-arc} Let $S = \{1,2,\dots,n\}.$ 
The map $A$ provides a bijection between the partitions of $S$
and the partition arc sets in $S$.
\end{lemma}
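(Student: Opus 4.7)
The plan is to construct an explicit two-sided inverse to $A$, which simultaneously shows $A$ is well-defined, injective and surjective onto the set of partition arc sets.

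First I would confirm that $A(P)$ is always a partition arc set. For each part $\{s_1<s_2<\cdots<s_k\}$ of $P$, the arcs it contributes to $A(P)$ are exactly $[s_1,s_2],[s_2,s_3],\dots,[s_{k-1},s_k]$; each element of the part appears as a left endpoint of at most one such arc and as a right endpoint of at most one such arc. Since distinct parts use disjoint elements of $S$, each $a\in S$ is the left endpoint of at most one arc in $A(P)$, and similarly for right endpoints, so $A(P)$ satisfies the partition arc set condition.

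Next I would describe the inverse. Given a partition arc set $\mathcal A$, regard each arc $[a,b]$ as a directed edge $a\to b$ (with $a<b$). The partition arc set condition translates precisely to saying that in this directed graph every vertex has in-degree at most one and out-degree at most one. Hence the connected components of the underlying undirected graph are disjoint directed paths $v_1\to v_2\to\cdots\to v_k$ with $v_1<v_2<\cdots<v_k$. Let $B(\mathcal A)$ be the partition of $S$ whose parts are the vertex sets of these paths, together with a singleton part $\{a\}$ for every $a\in S$ that appears in no arc of $\mathcal A$.

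Finally I would check that $A$ and $B$ are mutually inverse. For $B(A(P))=P$: the arcs $[s_i,s_{i+1}]$ coming from a part $\{s_1<\cdots<s_k\}$ form exactly the directed path on that part, so $B$ recovers the same partition. For $A(B(\mathcal A))=\mathcal A$: a path $v_1<\cdots<v_k$ becomes a part in which no element lies strictly between consecutive $v_i,v_{i+1}$, so $[v_i,v_{i+1}]$ is an arc of $B(\mathcal A)$ and these are all the arcs that part contributes, reproducing $\mathcal A$ exactly. The only nontrivial step is the structural observation that the partition arc set condition forces the arcs to assemble into disjoint ascending chains; once this is in hand, everything else is bookkeeping, and it is also exactly the point where configurations such as $\{[1,3],[1,4]\}$ or $\{[1,4],[2,4]\}$ — which do not come from any partition — get ruled out.
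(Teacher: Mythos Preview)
Your proof is correct and takes essentially the same approach as the paper: both construct the inverse by forming the partition whose parts are the connected components (equivalently, the equivalence classes) generated by the arcs, the key point being that the partition arc set condition forces these components to be increasing chains. Your directed-graph formulation (in/out-degree at most one, hence disjoint ascending paths) is just a slightly different phrasing of the paper's ``smallest equivalence relation'' description.
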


\begin{proof} See the note 
N\,\ref{partitions-arcs}.  
\end{proof} 
	 
A partition is said to be {\it non-crossing}
provided given elements $i < i' < j < j'$ with $i,j$ in the same part and $i',j'$ in the same
part, then all elements belong to the same part (this means that arcs do not intersect properly.)
For $n = 4$, all partitions but one are non-crossing, the exception is 
the partition $\{\{1,3\},\{2,4\}\}$ with picture
\ \beginpicture
  \setcoordinatesystem units <0.2cm,0.2cm>
  \multiput{} at 0 2  /
  \multiput{$\sssize \bullet$} at  0 0.3  1 0.3  2 0.3  3 0.3 /
  \circulararc 180 degrees from 2 1 center at 1 1 
  \circulararc 180 degrees from 3 1 center at 2 1 
  \plot 0 0.3  0 1 /
  \plot 1 0.3  1 1 /
  \plot 2 0.3  2 1 /
  \plot 3 0.3  3 1 /  
 \endpicture \ . \quad	
There is the following (quite obvious) 
characterization of the set of arcs of a non-crossing partition.
	\medskip 

\begin{lemma}\label{arcs}
 A set $A$ of pairs $[a,b]$ with $1 \le a < b \le n$ is the set of arcs
of a non-crossing partition $P\in \NC(n)$ if and only if the following condition is satisfied:
If $[a,b], [a',b'] \in A$ and $a\le a' < b\le b'$, then $a=a'$ and $b=b'.$
\end{lemma}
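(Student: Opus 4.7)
The plan is to verify both directions of Lemma~\ref{arcs} directly. The ``only if'' direction is a short case analysis using the definitions of arc and non-crossing. The ``if'' direction requires first extracting from $A$ a partition $P$ via Lemma~\ref{part-arc}, and then ruling out crossings in $P$ by constructing explicit arcs that would violate the hypothesis.

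For the forward implication, I would assume $P\in\NC(n)$ and take $[a,b],[a',b']\in A(P)$ with $a\le a'<b\le b'$. If $a=a'$, then $a,b,b'$ all lie in one part; since $[a',b']$ is an arc, no element of that part lies strictly between $a'=a$ and $b'$, so $b<b'$ is impossible and $b=b'$. If instead $a<a'$, let $P_1\ni a,b$ and $P_2\ni a',b'$ be the parts in question. The coincidence $P_1=P_2$ would place $a'$ strictly inside the arc $[a,b]$ in its own part, contradicting the arc condition; if $P_1\neq P_2$, then $b<b'$ produces a proper crossing $a<a'<b<b'$ forbidden by non-crossingness, while $b=b'$ forces $b\in P_1\cap P_2$. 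Every subcase of $a<a'$ collapses, so necessarily $a=a'$ and $b=b'$.

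For the converse, the first observation is that the hypothesis already implies the partition-arc-set condition of Lemma~\ref{part-arc}: if $[a,b],[a',b']\in A$ share a left endpoint $a=a'$, applying the hypothesis with the pairs in one order if $b\le b'$ and in the reverse order if $b'<b$ forces $b=b'$, and the symmetric argument handles a shared right endpoint. Lemma~\ref{part-arc} then produces a partition $P$ with $A(P)=A$, and it remains to verify $P\in\NC(n)$.

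The technical heart of the argument, and the step I expect to be the main obstacle, is to manufacture from a putative crossing of $P$ an actual crossing pair of arcs in $A$. Assume $P$ is not non-crossing and choose witnesses $i<i'<j<j'$ with $i,j$ in some part $P_1$ and $i',j'$ in a different part $P_2$. I would set
\[
  a=\max\{x\in P_1:x\le i'\},\qquad b=\min\{x\in P_1:x>i'\},
\]
obtaining $i\le a<i'<b\le j$ (where $a<i'$ because $a=i'$ would force $i'\in P_1$, contradicting $P_1\neq P_2$), so that $[a,b]\in A$. The interval $(a,b)$ contains $i'\in P_2$, so I can set $a'=\max\{x\in P_2:a<x<b\}$ and $b'=\min\{x\in P_2:x>a'\}$; the element $j'\in P_2$ with $j'>i'\ge a'$ guarantees that $b'$ exists, and the maximality of $a'$ inside $(a,b)\cap P_2$ forces $b'\ge b$, with $b'=b$ ruled out by $P_1\neq P_2$. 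Hence $[a',b']\in A$ and $a<a'<b<b'$, so the hypothesis yields $a=a'$, a contradiction. The delicate point is precisely the \emph{extremal} choice of $a'$ and $b'$ inside the interval $(a,b)$: this is what ensures $[a',b']$ is genuinely an arc of $P$, not merely a pair of same-part elements.
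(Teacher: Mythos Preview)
Your proof is correct, and in fact the paper provides no proof at all: it introduces the lemma as a ``quite obvious'' characterization and leaves it unverified, so there is no argument to compare against.

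One minor slip to fix: in the converse direction you write ``the element $j'\in P_2$ with $j'>i'\ge a'$ guarantees that $b'$ exists,'' but the inequality $i'\ge a'$ is backwards. Since $a'=\max\{x\in P_2:a<x<b\}$ and $i'$ belongs to that set, you have $a'\ge i'$, not the reverse. The existence of $b'$ is still immediate, just via a different chain: $a'<b\le j<j'$ with $j'\in P_2$, so $j'>a'$ and $b'$ is well-defined. Everything after that point goes through as you wrote it.
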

	\bigskip

We denote by $\NC(n)$ the poset of non-crossing partitions of the set $\{1,2,\dots,n\}$;
the order which is used is the (opposite of the) {\it refinement} order: 
to write parts as disjoint union of subsets. If $P, P'$ are partitions of the same set, we write $P \le P'$ provided any part of $P$ is contained in a part of $P'$, or,
equivalently, provided any part of $P'$ is the union of parts of $P$.
The unique minimal element of $\NC(n)$ 
is the partition with $n$ parts, the unique maximal element is the partition with just one part. 
The {\it layers} are formed by the partitions with a fixed number of parts.
The poset $\NC(n)$ has $n$ different layers $\NC_t(n),$ with $0\le t \le n\!-\!1$, where
$\NC_t(n)$ denotes the set of non-crossing partitions with $n-t$ parts.
	\medskip 

All the posets $\NC(n)$ are lattices (this is not quite obvious, but it will be shown later), 
they are self-dual, and have further
interesting properties. For example, there are only two possibilities
for the intervals of height 2, namely  
$$\hbox{\beginpicture
  \setcoordinatesystem units <.5cm,.5cm>
\put{\beginpicture 
\multiput{$\bullet$} at 0 0  -1 1  1 1  0 2 /
\plot 0 0  -1 1  0 2  1 1  0 0  /
\endpicture} at 0 0
\put{\beginpicture 
\multiput{$\bullet$} at 0 0  -1 1  0 1  1 1  0 2 /
\plot 0 0  -1 1  0 2  1 1  0 0  0 1  0 2 /
\endpicture} at 5 0
\endpicture}
$$ 
	\medskip 

Here is the 
lattice $\NC(4)$ of non-crossing partitions of a set with 4 elements.

$$
\hbox{\beginpicture
  \setcoordinatesystem units <2cm,2cm>
\put{$t=3$} at 3.35 3
\put{$2$} at 3.5 2
\put{$1$} at 3.5 1
\put{$0$} at 3.5 0
\put{$\beginpicture
  \setcoordinatesystem units <0.15cm,0.2cm>
  \multiput{} at 0 2  /
  \multiput{$\sssize \bullet$} at  0 0.3  1 0.3  2 0.3  3 0.3 /
 \endpicture$} at 0 0

\put{$\beginpicture
  \setcoordinatesystem units <0.2cm,0.2cm>
  \multiput{} at 0 2  /
  \multiput{$\sssize \bullet$} at  0 0.3  1 0.3  2 0.3  3 0.3 /
  \circulararc 180 degrees from 1 1 center at 0.5 1 
  \plot 0 0.3  0 1 /
  \plot 1 0.3  1 1 /
 \endpicture$} at -2.5 1
\put{$\beginpicture  
  \setcoordinatesystem units <0.2cm,0.2cm>
  \multiput{} at 0 2  /
  \multiput{$\sssize \bullet$} at  0 0.3  1 0.3  2 0.3  3 0.3 /
  \circulararc 180 degrees from 2 1 center at 1 1 
  \plot 0 0.3  0 1 /
  \plot 2 0.3  2 1 /
 \endpicture$} at -1.5 1
\put{$\beginpicture
  \setcoordinatesystem units <0.2cm,0.2cm>
  \multiput{} at 0 2  /
  \multiput{$\sssize \bullet$} at  0 0.3  1 0.3  2 0.3  3 0.3 /
  \circulararc 180 degrees from 3 1 center at 1.5 1 
  \plot 0 0.3  0 1 /
  \plot 3 0.3  3 1 /  
 \endpicture$} at -.5 1
\put{$\beginpicture
  \setcoordinatesystem units <0.2cm,0.2cm>
  \multiput{} at 0 2  /
  \multiput{$\sssize \bullet$} at  0 0.3  1 0.3  2 0.3  3 0.3 /
  \circulararc 180 degrees from 2 1 center at 1.5 1 
  \plot 1 0.3  1 1 /
  \plot 2 0.3  2 1 /
 \endpicture$} at 0.5 1
\put{$\beginpicture
  \setcoordinatesystem units <0.2cm,0.2cm>
  \multiput{} at 0 2  /
  \multiput{$\sssize \bullet$} at  0 0.3  1 0.3  2 0.3  3 0.3 /
  \circulararc 180 degrees from 3 1 center at 2 1 
  \plot 1 0.3  1 1 /
  \plot 3 0.3  3 1 /  
 \endpicture$} at 1.5 1
\put{$\beginpicture
  \setcoordinatesystem units <0.2cm,0.2cm>
  \multiput{} at 0 2  /
  \multiput{$\sssize \bullet$} at  0 0.3  1 0.3  2 0.3  3 0.3 /
  \circulararc 180 degrees from 3 1 center at 2.5 1 
  \plot 2 0.3  2 1 /
  \plot 3 0.3  3 1 /  
 \endpicture$} at 2.5  1
\put{$\beginpicture
  \setcoordinatesystem units <0.2cm,0.2cm>
  \multiput{} at 0 2  /
  \multiput{$\sssize \bullet$} at  0 0.3  1 0.3  2 0.3  3 0.3 /
  \circulararc 180 degrees from 1 1 center at 0.5 1 
  \circulararc 180 degrees from 2 1 center at 1.5 1 
  \plot 0 0.3  0 1 /
  \plot 1 0.3  1 1 /
  \plot 2 0.3  2 1 /
 \endpicture$} at -2.5 2
\put{$\beginpicture
  \setcoordinatesystem units <0.2cm,0.2cm>
  \multiput{} at 0 2  /
  \multiput{$\sssize \bullet$} at  0 0.3  1 0.3  2 0.3  3 0.3 /
  \circulararc 180 degrees from 1 1 center at 0.5 1 
  \circulararc 180 degrees from 3 1 center at 2 1 
  \plot 0 0.3  0 1 /
  \plot 1 0.3  1 1 /
  \plot 3 0.3  3 1 /  
 \endpicture$} at -1.5 2
\put{$\beginpicture
  \setcoordinatesystem units <0.2cm,0.2cm>
  \multiput{} at 0 2  /
  \multiput{$\sssize \bullet$} at  0 0.3  1 0.3  2 0.3  3 0.3 /
  \circulararc 180 degrees from 1 1 center at 0.5 1 
  \circulararc 180 degrees from 3 1 center at 2.5 1 
   \plot 0 0.3  0 1 /
  \plot 1 0.3  1 1 /
  \plot 2 0.3  2 1 /
  \plot 3 0.3  3 1 /  
 \endpicture$} at -.5 2
\put{$\beginpicture
  \setcoordinatesystem units <0.2cm,0.2cm>
  \multiput{} at 0 2  /
  \multiput{$\sssize \bullet$} at  0 0.3  1 0.3  2 0.3  3 0.3 /
  \circulararc 180 degrees from 2 1 center at 1.5 1 
  \circulararc 180 degrees from 3 1 center at 1.5 1 
  \plot 0 0.3  0 1 /
  \plot 1 0.3  1 1 /
  \plot 2 0.3  2 1 /
  \plot 3 0.3  3 1 /  
 \endpicture$} at 0.5 2
\put{$\beginpicture
  \setcoordinatesystem units <0.2cm,0.2cm>
  \multiput{} at 0 2  /
  \multiput{$\sssize \bullet$} at  0 0.3  1 0.3  2 0.3  3 0.3 /
  \circulararc 180 degrees from 3 1 center at 2.5 1 
  \circulararc 180 degrees from 2 1 center at 1 1 
  \plot 0 0.3  0 1 /
  \plot 2 0.3  2 1 /
  \plot 3 0.3  3 1 /  
 \endpicture$} at 1.5 2
\put{$\beginpicture
  \setcoordinatesystem units <0.2cm,0.2cm>
  \multiput{} at 0 2  /
  \multiput{$\sssize \bullet$} at  0 0.3  1 0.3  2 0.3  3 0.3 /
  \circulararc 180 degrees from 2 1 center at 1.5 1 
  \circulararc 180 degrees from 3 1 center at 2.5 1 
  \plot 1 0.3  1 1 /
  \plot 2 0.3  2 1 /
  \plot 3 0.3  3 1 /  
 \endpicture$} at 2.5 2

\put{$\beginpicture
  \setcoordinatesystem units <0.2cm,0.2cm>
  \multiput{} at 0 2  /
  \multiput{$\sssize \bullet$} at  0 0.3  1 0.3  2 0.3  3 0.3 /
  \circulararc 180 degrees from 1 1 center at 0.5 1 
  \circulararc 180 degrees from 2 1 center at 1.5 1 
  \circulararc 180 degrees from 3 1 center at 2.5 1 
  \plot 0 0.3  0 1 /
  \plot 1 0.3  1 1 /
  \plot 2 0.3  2 1 /
  \plot 3 0.3  3 1 /  
 \endpicture$} at  0 3

\plot -0.4 0.2 -2.3 0.7 /
\plot -0.2 0.2 -1.4 0.7 /
\plot -0.1 0.2 -0.5 0.7 /
\plot .1 0.2 0.5 0.7 /
\plot .2 0.2 1.4 0.7 /
\plot .4 0.2 2.3 0.7 /

\plot -0.4 2.8 -2.3 2.3 /
\plot -0.2 2.8 -1.4 2.3 /
\plot -0.1 2.8 -0.5 2.3 /
\plot .1 2.8 0.5 2.3 /
\plot .2 2.8 1.4 2.3 /
\plot .4 2.8 2.3 2.3 /

\plot -2.6 1.2  -2.6 1.8 /
\plot -2.5 1.2  -1.6 1.8 /
\plot -2.4 1.2  -0.6 1.8 /

\plot -1.6 1.2  -2.5 1.8 /
\plot -1.4 1.2   1.4 1.8 /

\plot -.6 1.2  -1.5 1.8 /
\plot -.5 1.2   0.4 1.8 /
\plot -.4 1.2   1.5 1.8 /

\plot .4 1.2  -2.4 1.8 /
\plot .5 1.2   .5 1.8 /
\plot .6 1.2   2.4 1.8 /

\plot 1.4 1.2  -1.4 1.8 /
\plot 1.6 1.2   2.5 1.8 /

\plot 2.4 1.2  -.4 1.8 /
\plot 2.5 1.2   1.6 1.8 /
\plot 2.6 1.2   2.6 1.8 /

\endpicture}
$$
	\medskip

The lattices $\NC(n)$ are typical examples of the lattices studied in Chapter 3:
as we will see, they can be identified with the lattices $\A(\mo\Lambda)$, 
where $\Lambda$ is hereditary of type $\Bbb A$ (see Theorem \ref{classical}).
But let us stress already here the index shift: the lattice $\NC(n)$ is 
considered to be of type $\mathbb A_{n-1}$).
	\medskip

Another way to visualize partitions of a set $S$ is to arrange the vertices of $S$ consecutively on
a circle and to consider for every part $\{a_1<a_2<\dots < a_m\}$ the convex polygon with vertices
$a_1,a_2,\dots,a_m$ (this just means to draw for $m\ge 3$ another arc, namely joining $a_m$ and $a_1$). Here are two examples:
$$
\hbox{\beginpicture
  \setcoordinatesystem units <1cm,1cm>
\put{\beginpicture
  \setcoordinatesystem units <.4cm,.4cm>
\multiput{} at 0 0  7 3.5 /
\multiput{$\bullet$} at 0 0  1 0  2 0  3 0  4 0  5 0  6 0  7 0 /
\put{$\ssize 1$} at 0 -.6 
\put{$\ssize 2$} at 1 -.6 
\put{$\ssize 3$} at 2 -.6 
\put{$\ssize 4$} at 3 -.6 
\put{$\ssize 5$} at 4 -.6 
\put{$\ssize 6$} at 5 -.6 
\put{$\ssize 7$} at 6 -.6 
\put{$\ssize 8$} at 7 -.6 
\plot 0 0.3  0 1 /
\plot 1 0.3  1 1 /
\plot 2 0.3  2 1 /
\plot 4 0.3  4 1 /
\plot 5 0.3  5 1 /
\plot 6 0.3  6 1 /
\plot 7 0.3  7 1 /
\circulararc 180 degrees from 2 1 center at 1.5 1
\circulararc 180 degrees from 4 1 center at 3 1
\circulararc 180 degrees from 5 1 center at 2.5 1
\circulararc 180 degrees from 6 1 center at 5.5 1
\circulararc 180 degrees from 7 1 center at 5.5 1
\endpicture} at 0 0
\put{\beginpicture
  \setcoordinatesystem units <1.2cm,1.2cm>
\setdots <1mm>
\multiput{} at -1 -1  1 1 /
\circulararc 360 degrees from 1 0 center at 0 0
\setsolid
\multiput{$\bullet$} at 1 0  0 1  -1 0  0 -1  0.71 .71  -.71 .71  0.71 -.71  -.71 -.71 /
\put{$\ssize 1$} at 1.2 0 
\put{$\ssize 2$} at 0.85 0.85
\put{$\ssize 3$} at 0 1.2  
\put{$\ssize 4$} at -0.85 0.85
\put{$\ssize 5$} at -1.2 0 
\put{$\ssize 6$} at -0.85 -.85
\put{$\ssize 7$} at 0 -1.2  
\put{$\ssize 8$} at 0.85 -.85
\plot 1 0  -.71  -.71  0 -1  1 0 / 
\plot 0.71 0.71  0 1  -1 0  0.71 -0.71  0.71 0.71 /
\endpicture} at 0 -3

\put{\beginpicture
  \setcoordinatesystem units <.4cm,.4cm>
\multiput{} at 0 0  7 3.5 /
\multiput{$\bullet$} at 0 0  1 0  2 0  3 0  4 0  5 0  6 0  7 0 /
\put{$\ssize 1$} at 0 -.6 
\put{$\ssize 2$} at 1 -.6 
\put{$\ssize 3$} at 2 -.6 
\put{$\ssize 4$} at 3 -.6 
\put{$\ssize 5$} at 4 -.6 
\put{$\ssize 6$} at 5 -.6 
\put{$\ssize 7$} at 6 -.6 
\put{$\ssize 8$} at 7 -.6 
\plot 0 0.3  0 1 /
\plot 1 0.3  1 1 /
\plot 2 0.3  2 1 /
\plot 4 0.3  4 1 /
\plot 5 0.3  5 1 /
\plot 6 0.3  6 1 /
\plot 7 0.3  7 1 /
\circulararc 180 degrees from 2 1 center at 1.5 1
\circulararc 180 degrees from 4 1 center at 3 1
\circulararc 180 degrees from 5 1 center at 2.5 1
\circulararc 180 degrees from 6 1 center at 5.5 1
\circulararc 180 degrees from 7 1 center at 6.5 1
\endpicture} at 5 0
\put{\beginpicture
  \setcoordinatesystem units <1.2cm,1.2cm>
\setdots <1mm>
\multiput{} at -1 -1  1 1 /
\circulararc 360 degrees from 1 0 center at 0 0
\setsolid
\multiput{$\bullet$} at 1 0  0 1  -1 0  0 -1  0.71 .71  -.71 .71  0.71 -.71  -.71 -.71 /
\put{$\ssize 1$} at 1.2 0 
\put{$\ssize 2$} at 0.85 0.85
\put{$\ssize 3$} at 0 1.2  
\put{$\ssize 4$} at -0.85 0.85
\put{$\ssize 5$} at -1.2 0 
\put{$\ssize 6$} at -0.85 -.85
\put{$\ssize 7$} at 0 -1.2  
\put{$\ssize 8$} at 0.85 -.85
\plot 1 0  -.71  -.71  0 -1  .71 -.71  1 0 / 
\plot 0.71 0.71  0 1  -1 0  0.71 0.71 /
\endpicture} at 5 -3

\put{$\{\{1,6,7\},\{2,3 ,5,8\},\{4\}\}$} at 0 1.5
\put{$\{\{1,6,7,8\},\{2,3 ,5\},\{4\}\}$} at 5 1.5

\endpicture}
$$
The partition on the left is crossing, that on the right is non-crossing.
	\bigskip
 
The lattice of non-crossing partitions was introduced and studied by {\bf Kreweras}
(1972). Actually, Becker was considering non-negative partitions
already in 1948, he showed that
the number of elements of $\NC(n)$ is the Catalan number $C_n = \frac1{n+1}\binom{2n}n$. 
For a survey concerning the set of non-crossing partitions we may refer to Simion (2000).

The lattice of non-crossing partitions plays a decisive role in {\bf free probability theory.}
The free probability theory was initiated by Voiculescu (1985), the main
ingredient is the use of non-commuting variables (in contrast to the commuting
ones in ordinary probability theory). Many calculations are rather similar to the
classical case, but surprisingly, some turn out to become easier. 
Namely, it is common in classical probability theory 
that formulas involve the summation over all partitions
of some finite set. Looking at the corresponding
sums in free probability theory it happens quite frequently that several of the summands 
are zero, namely those indexed by crossing partitions, thus, in this case
it is sufficient to sum over the non-crossing partitions. For example,
this happens for the so-called central limit theorem. Speicher (1990)
gave a proof of this theorem using systematically non-crossing partitions.
One should be aware that for $n$ large, 
the number of non-crossing partitions
of an $n$-element set is much smaller than the number of all partitions.
As we saw above, for $n = 4,$ there are 15 partitions and all but one are non-crossing.
But for $n = 20$, there are $51\,724\,158\,235\,372$ partitions, and only 
$6\,564\,120\,420$ are
non-crossing. Also, as soon as one deals just with non-crossing partitions,
one may rely on properties of the lattice of non-crossing partitions ---
and, as we have already mentioned, the lattice of non-crossing partitions has
some quite surprising properties. 
The book by Nica and Speicher (2006) is presently the main reference for the use of
non-crossing partitions in free probability theory. 
	\medskip

\subsubsection{}
Let us compare for a moment the lattice of all partitions with the lattice of
non-crossing partitions, for some fixed $n$. The set of non-crossing partitions is
of course a part of the lattice of all partitions (it is not a sublattice, 
but at least a meet sublattice). Here again the case $n=4$:
$$
\hbox{\beginpicture
  \setcoordinatesystem units <2cm,2cm>

\put{$\beginpicture
  \setcoordinatesystem units <0.15cm,0.2cm>
  \multiput{} at 0 2  /
  \multiput{$\sssize \bullet$} at  0 0.3  1 0.3  2 0.3  3 0.3 /
 \endpicture$} at 0 0

\put{$\beginpicture
  \setcoordinatesystem units <0.2cm,0.2cm>
  \multiput{} at 0 2  /
  \multiput{$\sssize \bullet$} at  0 0.3  1 0.3  2 0.3  3 0.3 /
  \circulararc 180 degrees from 1 1 center at 0.5 1 
  \plot 0 0.3  0 1 /
  \plot 1 0.3  1 1 /
 \endpicture$} at -2.5 1
\put{$\beginpicture  
  \setcoordinatesystem units <0.2cm,0.2cm>
  \multiput{} at 0 2  /
  \multiput{$\sssize \bullet$} at  0 0.3  1 0.3  2 0.3  3 0.3 /
  \circulararc 180 degrees from 2 1 center at 1 1 
  \plot 0 0.3  0 1 /
  \plot 2 0.3  2 1 /
 \endpicture$} at -1.5 1
\put{$\beginpicture
  \setcoordinatesystem units <0.2cm,0.2cm>
  \multiput{} at 0 2  /
  \multiput{$\sssize \bullet$} at  0 0.3  1 0.3  2 0.3  3 0.3 /
  \circulararc 180 degrees from 3 1 center at 1.5 1 
  \plot 0 0.3  0 1 /
  \plot 3 0.3  3 1 /  
 \endpicture$} at -.5 1
\put{$\beginpicture
  \setcoordinatesystem units <0.2cm,0.2cm>
  \multiput{} at 0 2  /
  \multiput{$\sssize \bullet$} at  0 0.3  1 0.3  2 0.3  3 0.3 /
  \circulararc 180 degrees from 2 1 center at 1.5 1 
  \plot 1 0.3  1 1 /
  \plot 2 0.3  2 1 /
 \endpicture$} at 0.5 1
\put{$\beginpicture
  \setcoordinatesystem units <0.2cm,0.2cm>
  \multiput{} at 0 2  /
  \multiput{$\sssize \bullet$} at  0 0.3  1 0.3  2 0.3  3 0.3 /
  \circulararc 180 degrees from 3 1 center at 2 1 
  \plot 1 0.3  1 1 /
  \plot 3 0.3  3 1 /  
 \endpicture$} at 1.5 1
\put{$\beginpicture
  \setcoordinatesystem units <0.2cm,0.2cm>
  \multiput{} at 0 2  /
  \multiput{$\sssize \bullet$} at  0 0.3  1 0.3  2 0.3  3 0.3 /
  \circulararc 180 degrees from 3 1 center at 2.5 1 
  \plot 2 0.3  2 1 /
  \plot 3 0.3  3 1 /  
 \endpicture$} at 2.5  1
\put{$\beginpicture
  \setcoordinatesystem units <0.2cm,0.2cm>
  \multiput{} at 0 2  /
  \multiput{$\sssize \bullet$} at  0 0.3  1 0.3  2 0.3  3 0.3 /
  \circulararc 180 degrees from 1 1 center at 0.5 1 
  \circulararc 180 degrees from 2 1 center at 1.5 1 
  \plot 0 0.3  0 1 /
  \plot 1 0.3  1 1 /
  \plot 2 0.3  2 1 /
 \endpicture$} at -2.5 2
\put{$\beginpicture
  \setcoordinatesystem units <0.2cm,0.2cm>
  \multiput{} at 0 2  /
  \multiput{$\sssize \bullet$} at  0 0.3  1 0.3  2 0.3  3 0.3 /
  \circulararc 180 degrees from 1 1 center at 0.5 1 
  \circulararc 180 degrees from 3 1 center at 2 1 
  \plot 0 0.3  0 1 /
  \plot 1 0.3  1 1 /
  \plot 3 0.3  3 1 /  
 \endpicture$} at -1.5 2
\put{$\beginpicture
  \setcoordinatesystem units <0.2cm,0.2cm>
  \multiput{} at 0 2  /
  \multiput{$\sssize \bullet$} at  0 0.3  1 0.3  2 0.3  3 0.3 /
  \circulararc 180 degrees from 1 1 center at 0.5 1 
  \circulararc 180 degrees from 3 1 center at 2.5 1 
   \plot 0 0.3  0 1 /
  \plot 1 0.3  1 1 /
  \plot 2 0.3  2 1 /
  \plot 3 0.3  3 1 /  
 \endpicture$} at -.5 2
\put{$\beginpicture
  \setcoordinatesystem units <0.2cm,0.2cm>
  \multiput{} at 0 2  /
  \multiput{$\sssize \bullet$} at  0 0.3  1 0.3  2 0.3  3 0.3 /
  \circulararc 180 degrees from 2 1 center at 1.5 1 
  \circulararc 180 degrees from 3 1 center at 1.5 1 
  \plot 0 0.3  0 1 /
  \plot 1 0.3  1 1 /
  \plot 2 0.3  2 1 /
  \plot 3 0.3  3 1 /  
 \endpicture$} at 0.5 2
\put{$\beginpicture
  \setcoordinatesystem units <0.2cm,0.2cm>
  \multiput{} at 0 2  /
  \multiput{$\sssize \bullet$} at  0 0.3  1 0.3  2 0.3  3 0.3 /
  \circulararc 180 degrees from 3 1 center at 2.5 1 
  \circulararc 180 degrees from 2 1 center at 1 1 
  \plot 0 0.3  0 1 /
  \plot 2 0.3  2 1 /
  \plot 3 0.3  3 1 /  
 \endpicture$} at 1.5 2
\put{$\beginpicture
  \setcoordinatesystem units <0.2cm,0.2cm>
  \multiput{} at 0 2  /
  \multiput{$\sssize \bullet$} at  0 0.3  1 0.3  2 0.3  3 0.3 /
  \circulararc 180 degrees from 2 1 center at 1.5 1 
  \circulararc 180 degrees from 3 1 center at 2.5 1 
  \plot 1 0.3  1 1 /
  \plot 2 0.3  2 1 /
  \plot 3 0.3  3 1 /  
 \endpicture$} at 2.5 2

\put{$\beginpicture
  \setcoordinatesystem units <0.2cm,0.2cm>
  \multiput{} at 0 2  /
  \multiput{$\sssize \bullet$} at  0 0.3  1 0.3  2 0.3  3 0.3 /
  \circulararc 180 degrees from 1 1 center at 0.5 1 
  \circulararc 180 degrees from 2 1 center at 1.5 1 
  \circulararc 180 degrees from 3 1 center at 2.5 1 
  \plot 0 0.3  0 1 /
  \plot 1 0.3  1 1 /
  \plot 2 0.3  2 1 /
  \plot 3 0.3  3 1 /  
 \endpicture$} at  0 3

\put{$\beginpicture
  \setcoordinatesystem units <0.2cm,0.2cm>
  \multiput{} at 0 2  /
  \multiput{$\sssize \bullet$} at  0 0.3  1 0.3  2 0.3  3 0.3 /
  \circulararc 180 degrees from 2 1 center at 1 1 
  \circulararc 180 degrees from 3 1 center at 2 1 
  \plot 0 0.3  0 1 /
  \plot 1 0.3  1 1 /
  \plot 2 0.3  2 1 /
  \plot 3 0.3  3 1 /  
 \endpicture$} at 3.5 2

\setdashes <1mm>
\plot 3.3 2.2  0.6 2.8 /
\plot -1.2 1.2  3.2 1.8 /
\plot  1.7 1.2  3.4 1.8 /

\setsolid
\plot -0.4 0.2 -2.3 0.7 /
\plot -0.2 0.2 -1.4 0.7 /
\plot -0.1 0.2 -0.5 0.7 /
\plot .1 0.2 0.5 0.7 /
\plot .2 0.2 1.4 0.7 /
\plot .4 0.2 2.3 0.7 /

\plot -0.4 2.8 -2.3 2.3 /
\plot -0.2 2.8 -1.4 2.3 /
\plot -0.1 2.8 -0.5 2.3 /
\plot .1 2.8 0.5 2.3 /
\plot .2 2.8 1.4 2.3 /
\plot .4 2.8 2.3 2.3 /

\plot -2.6 1.2  -2.6 1.8 /
\plot -2.5 1.2  -1.6 1.8 /
\plot -2.4 1.2  -0.6 1.8 /

\plot -1.6 1.2  -2.5 1.8 /
\plot -1.4 1.2   1.4 1.8 /

\plot -.6 1.2  -1.5 1.8 /
\plot -.5 1.2   0.4 1.8 /
\plot -.4 1.2   1.5 1.8 /

\plot .4 1.2  -2.4 1.8 /
\plot .5 1.2   .5 1.8 /
\plot .6 1.2   2.4 1.8 /

\plot 1.4 1.2  -1.4 1.8 /
\plot 1.6 1.2   2.5 1.8 /

\plot 2.4 1.2  -.4 1.8 /
\plot 2.5 1.2   1.6 1.8 /
\plot 2.6 1.2   2.6 1.8 /

\endpicture}
$$

The number of partitions of an $n$-element set into exactly $k$ non-empty parts is the 
{\it Stirling number of the second kind} $\left\{n \atop t\right\}$
(we use the so-called Karamata notation, as advertised by Knuth \cite{[Kn]}, 
a competing notation would be
$S(n, k)$). The sequence of the Stirling numbers of the second kind form the triangle  A048993
in Sloane's OEIS \cite{[Sl]}.

The partitions with $n-1$ parts are just given by a two-element subset (the remaining parts are
singletons), the number is $\binom n2 = |\Phi_+(\mathbb A_{n-1})|$.
The partitions with $n-1$ parts are of course non-crossing.  

If we look at partitions with $2$ parts, the number is $2^{n-1}-1$. (There are $2^n$ ordered pairs of complementary subsets $S$ and $S'$. In one case, $S$ is empty, and in another $S'$ is empty, so $2^n - 2$ 
ordered pairs of non-empty 
subsets remain. Finally, since we want unordered pairs rather than ordered pairs we have to divide this last number by $2$).
On the other hand, the number of non-crossing partitions with $2$ parts is again 
$\binom n2$: as we have mentioned, the lattice of non-crossing partitions is selfdual, a very remarkable property.
		\medskip

Here is a sketch for $n = 7$, the shaded part indicates the set of non-crossing
partitions inside the set of all partitions. On the right, we have written the 
corresponding numbers
of partitions with $7-t$ parts, first the number $|\NC_t(7)|$ 
of non-crossing partitions with $7-t$ parts, 
then the number $\left\{7\atop 7-t\right\}$ of all partitions with $7-t$ parts; 
always $0 \le t \le 6$.
$$
\hbox{\beginpicture
  \setcoordinatesystem units <.5cm,.5cm>
\put{\beginpicture
\multiput{} at 0 0  0 6 /
\plot 0 0  1 1 /
\plot 0 0  -1 1 /
\setquadratic
\plot 1 1  1.85 1.55  3 2 /
\plot 3 2  5 4   0 6 /
\plot -1 1  -1.85 1.55  -3 2 /
\plot -3 2  -5 4   0 6 /
\setdashes <2mm>
\setlinear
\plot 0 0  1 1 /
\plot 0 0  -1 1 /
\plot 0 6  1 5 /
\plot 0 6  -1 5 /
\setquadratic
\plot 1 1   2.5 3    1 5 /
\plot -1 1  -2.5 3  -1 5 /
\setlinear
\setdots <1mm>
\plot -7 0  7 0 /
\plot -7 1  7 1 /
\plot -7 2  7 2 /
\plot -7 3  7 3 /
\plot -7 4  7 4 /
\plot -7 5  7 5 /
\plot -7 6  7 6 /
\setshadegrid span <.4mm>
\hshade 0 0 0  <,,z,z> 1 -1 1  <,,z,z> 2 -2.3 2.3  <,,z,z> 3 -2.6 2.6 
    4 -2.3 2.3  <,,z,z>  5 -1 1  <,,z,z>  6 0 0 /
\put{$\ssize 1 $} at 8 6
\put{$\ssize 21$} at 8 5
\put{$\ssize 105$} at 8 4
\put{$\ssize 175$} at 8 3
\put{$\ssize 105$} at 8 2
\put{$\ssize 21$} at 8 1
\put{$\ssize 1$} at 8 0
\put{$\ssize |\NC_{t}(7)|$} at 8 -1.4

\put{$\ssize 1 $} at 11 6
\put{$\ssize 63$} at 11 5
\put{$\ssize 301$} at 11 4
\put{$\ssize 350$} at 11 3
\put{$\ssize 140$} at 11 2
\put{$\ssize 21$} at 11 1
\put{$\ssize 1$} at 11 0
\put{$\ssize \left\{7\atop 7-t\right\}$} at 11 -1.4

\put{$\ssize t $} at -7 -1.4
\put{$\ssize 6 $} at -7 6
\put{$\ssize 5$} at -7 5
\put{$\ssize 4$} at -7 4
\put{$\ssize 3$} at -7 3
\put{$\ssize 2$} at -7 2
\put{$\ssize 1$} at -7 1
\put{$\ssize 0$} at -7 0

\endpicture} at 0 0
\endpicture}
$$
	\medskip
The partition lattices have the following layers 
$$
 \smallmatrix 1 \endsmallmatrix\qquad 
 \smallmatrix 1 \cr 1\endsmallmatrix\qquad 
 \smallmatrix 1 \cr 3 \cr 1 \endsmallmatrix\qquad 
 \smallmatrix 1 \cr 7 \cr 6 \cr 1\endsmallmatrix\qquad 
 \smallmatrix 1 \cr 15 \cr 25 \cr 10\cr 1\endsmallmatrix\qquad 
 \smallmatrix 1 \cr 31 \cr 90 \cr 65 \cr 15\cr 1 \endsmallmatrix\qquad 
 \smallmatrix 1 \cr 63 \cr 301 \cr 350\cr 140 \cr 21\cr 1\endsmallmatrix\qquad 
 \smallmatrix 1 \cr 127 \cr 966 \cr 1701\cr 1050\cr 266\cr 28 \cr 1\endsmallmatrix\quad \dots 
$$
whereas the lattices of non-crossing partitions have the layers 
$$
 \smallmatrix 1 \endsmallmatrix\qquad 
 \smallmatrix 1 \cr 1\endsmallmatrix\qquad 
 \smallmatrix 1 \cr 3 \cr 1 \endsmallmatrix\qquad 
 \smallmatrix 1 \cr 6 \cr 6 \cr 1\endsmallmatrix\qquad 
 \smallmatrix 1 \cr 10 \cr 20 \cr 10\cr 1\endsmallmatrix\qquad 
 \smallmatrix 1 \cr 15 \cr 50 \cr 50 \cr 15\cr 1 \endsmallmatrix\qquad 
 \smallmatrix 1 \cr 21 \cr 105 \cr 175 \cr  105 \cr 21\cr 1\endsmallmatrix\qquad 
 \smallmatrix 1 \cr 28 \cr 196 \cr 490 \cr 490 \cr 196 \cr 28 \cr 1\endsmallmatrix\quad \dots 
$$

The number of all partitions of a set of cardinality $n$ is
called the {\it Bell numbers} $B_n$ (thus $B_n = |\Pi(n)|$), whereas
the number of non-crossing partition in $\Pi(n)$ is the 
Catalan number $C_n$.
Here is a comparison of the Bell and Catalan numbers for $n\le 10$: 
$$
\hbox{\beginpicture
  \setcoordinatesystem units <.9cm,.6cm>
\multiput{} at 0 1.2  0 -.2 /
\put{$n$} at  -.2 1
\put{$B_n$} at -.2 -1 
\put{$C_n$} at -.2 0
\put{$0$} at .9 1
\put{$1$} at 1.9 1
\put{$2$} at 2.9 1
\put{$3$} at 3.9 1
\put{$4$} at 4.8 1
\put{$5$} at 5.8 1
\put{$6$} at 6.7 1
\put{$7$} at 7.8 1
\put{$8$} at 9.1 1
\put{$9$} at 10.5 1
\put{$10$} at 12 1

\put{$1$} [r] at  1 -1
\put{$1$} [r] at  2 -1
\put{$2$} [r] at  3 -1
\put{$5$} [r] at  4 -1
\put{$15$} [r] at  5 -1
\put{$52$} [r] at  6 -1
\put{$203$} [r] at  7 -1
\put{$877$} [r] at 8.1 -1
\put{$4140$} [r] at  9.5 -1
\put{$21147$} [r] at  11 -1
\put{$115975$} [r] at  12.5 -1

\put{$1$} [r] at  1 0
\put{$1$} [r] at  2 0
\put{$2$} [r] at  3 0
\put{$5$} [r] at  4 0
\put{$14$} [r] at  5 0
\put{$42$} [r] at  6 0
\put{$132$} [r] at  7 0
\put{$429$} [r] at 8.1 0
\put{$1430$} [r] at  9.5 0
\put{$4862$} [r] at  11 0
\put{$16796$} [r] at  12.5 0
\plot 0.55 1.3  0.55 -1.3 /
\plot -1 0.5  12.8 0.5 /
\endpicture}
$$
It is well-known (and easy to see) that $\lim_{n\to \infty} C_n/B_n = 0$.
	\medskip 

\subsection{The categorification of $\NC(n)$} 
Let us return to  the path algebra $\Lambda_n$
of the linearly oriented quiver of type $\mathbb A_n$:
$$
 1 \leftarrow 2 \leftarrow \cdots \leftarrow n
$$
and recall that for any hereditary artin algebra $\Lambda$, we denote by 
$\A(\mo\Lambda)$ the set of (isomorphism classes of)
exceptional antichains in $\mo\Lambda$.
As we know, the exceptional antichains correspond bijectively to the exceptional subcategories,
let us denote here
by $\underline{\mathcal A}(\mo\Lambda)$ the poset of exceptional subcategories
of $\mo\Lambda$ (with ordering being given by the set-theoretical inclusion). 
Usually, we will identify the two sets $\A(\mo\Lambda)$ and $\underline{\mathcal A}(\mo\Lambda)$, 
but for the purpose of the following dictionary, 
we should make a distinction. We denote by $\mathcal E$ and $S$
the canonical poset isomorphisms
\smallskip
 
\Rahmen{$\beginpicture
  \setcoordinatesystem units <2cm,1cm>
\put{$\A(\mo\Lambda_n)$} at 0 0
\put{$\underline{\mathcal A}(\mo\Lambda_n)$} at 2 0
\arr{0.6 0.1}{1.4 0.1}
\arr{1.4 -.1}{0.6 -.1}
\put{$\mathcal E$} at 1 0.3
\put{$S $} at 1 -.3
\endpicture$}
\smallskip

\noindent
Here, $\mathcal E$ maps the antichain $A$ to the full subcategory $\mathcal E(A)$ 
of all modules with a filtration
with factors in $A$, whereas the inverse bijection $S$ 
sends the exceptional subcategory $\mathcal A$ to the 
set $S (\mathcal A)$ of simple objects in $\mathcal A$.
	\bigskip 

\subsubsection{\bf The indecomposable $\Lambda_n$-modules.}

We deviate from the usual notation 
for the indecomposable representation of $\Lambda_n$. We will 
denote the indecomposable projective module of length $j$ by $[1,j\!+\!1]$ and the factor module
$[1,j\!+\!1]/[1,i]$ with $1\le i \le j$ by $[i,j+1]$.
In particular, the simple $\Lambda_n$-modules are denoted by
$S(i) = [i,i+1]$, and the indecomposable module with socle $S[i]$ and
length $t$ by $[i,i+t]$.
(The usual labeling of the indecomposable
module of length $t$ uses its composition factors, thus a sequence of $t$ consecutive numbers.)
Let us repeat: In this chapter, the indecomposable $\Lambda_n$-modules 
are given by the intervals $[a,b]$ with $1\le a < b \le n+1$ and 
the support of the module $[a,b]$ are the simple modules $S(x) = [x,x+1]$
with $a\le x \le b-1$, in particular, the length of $[a,b]$ is $b-a$.

Using this notation, the Auslander-Reiten quiver of $\Lambda_3$
looks as follows:
$$
\hbox{\beginpicture
  \setcoordinatesystem units <1.2cm,1.2cm>
\put{$[1,2]$} at 0 0 
\put{$[2,3]$} at 2 0 
\put{$[3,4]$} at 4 0 
\put{$[1,3]$} at 1 1 
\put{$[2,4]$} at 3 1 
\put{$[1,4]$} at 2 2 
\arr{0.3 0.3}{0.7 0.7}
\arr{1.3 1.3}{1.7 1.7}
\arr{2.3 0.3}{2.7 0.7}
\arr{1.3 0.7}{1.7 0.3}
\arr{2.3 1.7}{2.7 1.3}
\arr{3.3 0.7}{3.7 0.3}
\setdots <1mm>
\plot  .5 0  1.5 0 /
\plot 2.5 0  3.5 0 /
 \endpicture}
$$
As we have mentioned, the simple $\Lambda_n$-modules are the modules of the form $S(i) = [i,i+1]$
with $1\le i \le n$. Note that $\Ext^1(S(j),S(i))\neq 0$ if and only if $j = i+1$, thus
$\Ext^1([a,a+1],[c,c+1])\neq 0$ if and only if $c = a+1$.
Also, we should mention that all the indecomposable $\Lambda_n$-modules
are bricks (after all, $\Lambda_n$ is representation directed). 

The essential (and trivial) 
observation which will be used in this chapter
is already suggested by our notation: {\it there is a bijection between
the arcs in $\{1,2,\dots,n+1\}$ and the indecomposable $\Lambda_n$-modules,} both the arcs
and the indecomposable modules are denoted by the intervals $[a,b]$ with $1\le a < b \le n+1.$
	\medskip 

\subsubsection{\bf The map $A\!:\NC(n+1) \to \A(\mo \Lambda_n)$.}
We have defined $A(P)$ for any partition $P$ of $\{1,2,\dots,n+1\}$
as the set of arcs $[a,b]$ for $P$, see Lemma \ref{part-arc}.
Of course, we may interpret such an arc $[a,b]$ also as an indecomposable $\Lambda_n$-module.
Thus, given a partition $P\in \Pi(n+1)$, 
let $A(P)$ be the set of all the arcs of $P$, or as the set of the corresponding 
$\Lambda_n$-modules.
	
\begin{prop}\label{prop-partitions-arc}
 Let $P$ be a partition of $\{1,2,\dots,n+1\}$. Then $P$ is non-crossing
if and only if $A(P)$ is an antichain in $\mo\Lambda_n$.
\end{prop}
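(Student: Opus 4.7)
The plan is to reduce the proposition to a direct computation of the homomorphism spaces between the interval modules $[a,b]$ over $\Lambda_n$, and then to compare the non-vanishing criterion with the arc condition of Lemma \ref{arcs}.

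First I would record the relevant module-theoretic facts for the linearly oriented quiver $1 \leftarrow 2 \leftarrow \cdots \leftarrow n$. Each indecomposable module $[a,b]$ is a brick: an endomorphism restricts to a scalar on each one-dimensional vector space $M_i$, and compatibility with the identity structure maps forces these scalars to coincide. The submodules of $[a,b]$ are precisely the modules $[a,c]$ with $a \le c \le b$, and the quotients are precisely the $[c,b]$ with $a \le c \le b$. Consequently, a non-zero homomorphism $[a,b]\to[c,d]$ has image simultaneously a quotient of $[a,b]$ (hence of the form $[x,b]$) and a submodule of $[c,d]$ (hence of the form $[c,y]$); matching $[x,b]=[c,y]$ forces $x=c$ and $y=b$, so
\begin{equation*}
 \Hom([a,b],[c,d]) \neq 0 \quad \Longleftrightarrow \quad a \le c < b \le d.
\end{equation*}

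Second I would combine this equivalence with Lemma \ref{arcs}. By Lemma \ref{part-arc}, $A(P)$ is a set of pairwise distinct pairs $[a,b]$ with $1 \le a < b \le n+1$, and the corresponding indecomposable $\Lambda_n$-modules are therefore pairwise non-isomorphic bricks. Two distinct elements $[a,b], [a',b'] \in A(P)$ with $a \le a'$ violate the hypothesis of Lemma \ref{arcs} exactly when $a \le a' < b \le b'$ with $(a,b) \neq (a',b')$; by the displayed equivalence, this is precisely the condition that $\Hom([a,b],[a',b']) \neq 0$. Hence $A(P)$ satisfies the arc condition of Lemma \ref{arcs} if and only if any two of its distinct elements are $\Hom$-orthogonal in both directions, i.e., form an antichain. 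Since Lemma \ref{arcs} asserts that the arc condition holds for $A(P)$ precisely when $P$ is non-crossing, the proposition follows.

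The only step that requires any genuine work is the $\Hom$-computation, which is immediate from the submodule/quotient lattice of interval modules; I do not anticipate any serious obstacle.
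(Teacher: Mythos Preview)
Your proof is correct and follows essentially the same approach as the paper: both reduce to the computation $\Hom([a,b],[c,d]) \neq 0 \iff a \le c < b \le d$ and then match non-vanishing of $\Hom$ against the crossing condition. The only difference is packaging---you route the comparison through Lemma~\ref{arcs}, while the paper works directly from the definition of a crossing and, in the converse direction, explicitly uses the partition arc-set property (distinct arcs have distinct left endpoints and distinct right endpoints) to upgrade $a \le c$, $b \le d$ to strict inequalities; you should make that same appeal explicit when you fix the ordering $a \le a'$, since otherwise the case $a = a'$, $b' < b$ is not covered by your stated equivalence.
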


For further equivalent properties, see the note N\,\ref{non-crossing-partitions-arcs}.
	 
\begin{proof} First, assume that $P$ is crossing, thus there are two arcs $[a,b], [c,d]$ which are crossing.
We may assume that $a < c$, thus $a < c < b < d.$ But then $[c,b]$ is a non-zero factor module
of $[a,c]$ and also a submodule of $[c,d]$. It follows that $\Hom([a,b],[c,d]) \neq 0,$ thus the modules
$[a,b],[c,d]$ are non-isomorphic, but not orthogonal, and therefore $A(P)$ is not an antichain in
$\mo\Lambda_n$. Conversely, assume that $A(P)$ is not an antichain. Then there are different arcs
$[a,b],[c,d]$ such that $\Hom([a,b],[c,d])\neq 0.$ This implies that 
$a \le c < b \le d$. If $a=c$, then $c,b,d$ belong to the same part, thus
also $b = d$, since $[c,d]$ is an arc. This is a contradiction, since we assume that $[a,b]$ and
$[c,d]$ are different. Therefore $a < c$. Similarly, we see that $b < d$. This shows that
the arcs $[a,b]$ and $[c,d]$ are crossing. 
\end{proof} 
	
It follows that the restriction of the map $A$ defined on $\Pi(n+1)$ 
is a bijection $A\!:\NC(n+1) \to \A(\mo\Lambda_n).$ 

\begin{theorem}\label{classical} The map  $A\!:\NC(n+1) \to \A(\mo\Lambda_n)$ is a poset
isomorphism. 
\end{theorem}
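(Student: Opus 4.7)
Plan: The bijectivity of $A$ has already been established by combining Lemma 4.1 (which gives a bijection between partitions of $\{1,\ldots,n+1\}$ and partition arc sets) with Proposition 4.2 (which asserts that under this bijection non-crossing partitions correspond exactly to exceptional antichains in $\mo\Lambda_n$). So the remaining content is that both $A$ and its inverse preserve the partial orderings. Recall that $P \le P'$ in $\NC(n+1)$ means every part of $P$ is contained in some part of $P'$, while $A \le A'$ in $\A(\mo\Lambda_n)$ means $\mathcal E(A) \subseteq \mathcal E(A')$.

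The key intermediate step I would establish is an explicit description of the thick subcategory $\mathcal E(A(P))$ for a non-crossing partition $P$ with parts $B_1,\ldots,B_k$: its indecomposable objects are precisely the modules $[a,b]$ with $a$ and $b$ lying in a common part of $P$. To prove this, I would use that for indecomposable $\Lambda_n$-modules, non-trivial extensions take the "concatenation" form $0 \to [a,b] \to [a,d] \to [b,d] \to 0$, i.e., $\Ext^1([c,d],[a,b])\ne 0$ exactly when $c = b$. Restricting to arcs within a single part $B_i = \{x_1 < \cdots < x_m\}$, the Ext-quiver of $\{[x_j,x_{j+1}] : 1 \le j < m\}$ is linearly oriented of type $\mathbb A_{m-1}$, so its thick closure is equivalent to $\mo\Lambda_{m-1}$ with indecomposables the modules $[x_j,x_\ell]$ for $j<\ell$. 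Because $P$ is non-crossing, arcs from distinct parts are pairwise orthogonal (by the same $\Hom$-computation as in Proposition 4.2, since any Hom between them would force a strict crossing of intervals), so $\mathcal E(A(P))$ decomposes into a direct product of these blocks and its indecomposables are exactly the claimed set.

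With this description in hand, both directions of order-preservation are immediate. If $P \le P'$, every arc $[x_j,x_{j+1}] \in A(P)$ has both endpoints in a single part of $P$, hence in a single part of $P'$, so $[x_j,x_{j+1}]$ is an indecomposable of $\mathcal E(A(P'))$; thus $\mathcal E(A(P)) \subseteq \mathcal E(A(P'))$. Conversely, if $\mathcal E(A(P)) \subseteq \mathcal E(A(P'))$, then each arc of $P$ lies in $\mathcal E(A(P'))$, which by the description forces its two endpoints into a common part of $P'$; iterating along consecutive arcs inside a fixed part $B = \{x_1 < \cdots < x_m\}$ of $P$ yields by transitivity that all of $B$ sits inside a single part of $P'$, i.e.\ $P \le P'$.

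The main obstacle is the description of the indecomposables of $\mathcal E(A(P))$. The delicate point is ruling out extra indecomposables: one must verify that arcs from different parts of $P$ remain orthogonal under the thick-closure operation, so that $\mathcal E(A(P))$ is the direct product of the per-part subcategories rather than something larger. This reduces, via the $\Ext^1$-computation above, to the observation (already implicit in the proof of Proposition 4.2) that a $\Hom$ or $\Ext^1$ between indecomposables $[a,b]$ and $[c,d]$ with $a \ne c$ or $b \ne d$ arises exactly when the two intervals overlap in a strictly crossing way, which is forbidden by the non-crossing hypothesis on $P$.
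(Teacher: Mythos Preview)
Your overall strategy is valid, but there is a technical error in your $\Ext^1$ computation that you should fix: it is not true that $\Ext^1([c,d],[a,b])\neq 0$ exactly when $c=b$. The correct condition is $a<c\le b<d$; for instance the Auslander--Reiten sequence $0\to[1,3]\to[1,4]\oplus[2,3]\to[2,4]\to 0$ exhibits $\Ext^1([2,4],[1,3])\neq 0$ with $c=2\neq 3=b$. Fortunately your argument survives the correction. For arcs within a single part $\{x_1<\cdots<x_m\}$, the condition $x_j<x_k\le x_{j+1}<x_{k+1}$ together with $k>j$ still forces $k=j+1$, so the Ext-quiver is indeed linear $\mathbb A_{m-1}$. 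For indecomposables $[a,b]$ and $[c,d]$ drawn from different parts of $P$, the four endpoints are pairwise distinct (parts are disjoint), so the adjacency case $c=b$ cannot occur and the remaining possibility $a<c<b<d$ is a genuine crossing, excluded by hypothesis. Hence the block decomposition of $\mathcal E(A(P))$ holds as you claim.

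Compared with the paper: the paper's proof of the converse direction is shorter and bypasses the full description of indecomposables in $\mathcal E(A(P'))$. It uses only that every submodule of the interval module $[a,b]$ has the form $[a,b']$ with $b'\le b$, so a filtration of $[a,b]$ with factors in $A(P')$ amounts to a chain $a=b_0<b_1<\cdots<b_t=b$ with each $[b_i,b_{i+1}]$ an arc of $P'$; this immediately places $a$ and $b$ in the same part of $P'$. Your characterization of the indecomposables of $\mathcal E(A(P))$ is correct and is stated in the paper (in the discussion following the Addendum), but the theorem itself is established without it.
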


\begin{proof}
It remains to show: If $P,P'$ are non-crossing partitions, then $P \le P'$
if and only if $A(P) \le A(P').$ 

First, assume that $P \le P'$. Let $[a,b] \in A(P)$. Then $a < b$ and $a,b$ belong to the
same part of $P$. Since $P \le P'$, the elements $a,b$ belong to the same part of $P'$,
thus, there are arcs $[a_i,a_{i+1}] \in A(P')$ with $1\le i < t$ such that $a = a_1$ 
and $a_t = b$. But this means that the $\Lambda_n$-module $[a,b]$ has a filtration with
factors $[a_i,a_{i+1}]$ and $1\le i < t$. These modules $[a_i,a_{i+1}]$ belong to $A(P')$.
Thus, we see that $[a,b]$ belongs to $A(P')$.

Conversely, assume that $A(P) \le A(P')$. We want to that $P \le P'$. Let 
$\{a_1 < a_a < \cdots < a_t\}$ be a part of $P$.  The pairs $[a_i,a_{i+1}]$ with $1\le i < t$
are arcs for $P$. Since $A(P) \le A(P')$, we see that any interval $[a_i,a_{i+1}]$
can be refined, say $a_i = a_{i1} < a_{i2} < \cdots < a_{is_{i}} = a_{i+1}$ such that
the intervals $[a_{ij},a_{i,j+1}]$ with $1 \le j < s_i$ belong to $A(P')$.
It follows that the elements  $a_i = a_{i1},a_{i2}, \cdots ,a_{is_{i}} = a_{i+1}$
belong to the same part of $P'$, thus all the $a_i$ with $1\le i < t$ belong to the
same part of $P'$. This shows that $P \le P'$.
\end{proof}

\begin{addendum} 
If we define $\mathcal A = \mathcal E(A),$
there is the following commutative diagram of poset isomorphisms:
\smallskip 

\Rahmen{$\beginpicture
  \setcoordinatesystem units <3cm,.9cm>
\put{$\NC(n+1)$} at 0 0
\put{$\A(\mo\Lambda_n)$} at 1.1 1
\put{$\underline{\mathcal A}(\mo\Lambda_n)$} at 1.1 -1
\arr{0.3 0.3}{0.7 0.9}
\arr{0.3 -.3}{0.7 -.9}
\put{$A$} at 0.4 0.8
\put{$\mathcal A$} at 0.4 -.8
\put{$\mathcal E$} at 0.85 0
\put{$S $} at 1.15 0
\arr{.95 0.7}{.95 -.7}
\arr{1.05 -.7}{1.05 0.7}
\endpicture$}
\smallskip

\noindent
\end{addendum}
	\medskip 

The map $\mathcal A$ provides a bijection between the non-crossing
partitions of the set $S = \{1,2,\dots,n+1\}$ and the thick subcategories
of $\mo\Lambda_n$. 
The structure of the thick subcategories $\mathcal A$ in $\mo\Lambda_n$ 
is very restricted. The essential observation is the following:

\begin{prop}\label{thick-a_n} 
Any connected thick subcategory $\mathcal A$
of $\mo\Lambda_n$ of rank $t$ is equivalent (as a category)
to $\mo\Lambda_t$.
\end{prop}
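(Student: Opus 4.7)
The plan is to use the bijection $A\!:\NC(n+1)\to \A(\mo\Lambda_n)$ from Theorem \ref{classical} to translate the thick subcategory $\mathcal A$ into a non-crossing partition $P$, and then show that connectedness forces $P$ to consist of one nontrivial block of cardinality $t+1$ surrounded by singletons. The simple objects of $\mathcal A$ will then be an explicit chain of consecutive intervals, and comparing their $\Hom$ and $\Ext^1$ data with those of the simples of $\Lambda_t$ will give the asserted equivalence.

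In more detail, first I would identify the simple objects of $\mathcal A$ with the arcs of a non-crossing partition $P\in \NC(n+1)$. By Proposition \ref{prop-partitions-arc}, these arcs form the antichain $S(\mathcal A)$, and by the subsequent discussion each such simple is an interval module $[a,b]$. Next I would compute $\Ext^1([c,d],[a,b])$ for two distinct arcs: since every indecomposable $\Lambda_n$-module is an interval (and a brick), a non-split extension $0\to[a,b]\to E\to[c,d]\to 0$ is again an interval, which forces $E=[a,d]$ and hence $b=c$. Conversely, whenever $b=c$ there is the obvious non-split sequence $0\to[a,b]\to[a,d]\to[b,d]\to 0$. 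So two arcs of $P$ are connected by an edge of the $\Ext$-quiver of $\mathcal A$ exactly when they share an endpoint, and in that case the $\Ext^1$ in question is one-dimensional over $k$.

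The next step uses connectedness. Since different parts of a partition are disjoint, arcs from two different parts share no endpoint and therefore have vanishing mutual $\Ext^1$; thus the connected components of the $\Ext$-quiver of $\mathcal A$ correspond bijectively to the nontrivial blocks of $P$. The hypothesis that $\mathcal A$ is connected of rank $t$ therefore forces $P$ to have exactly one nontrivial block $\{a_1<a_2<\cdots<a_{t+1}\}$, whose arcs are the $t$ consecutive intervals $[a_i,a_{i+1}]$, $1\le i\le t$. These simples are pairwise orthogonal bricks with $\End=k$ and $\Ext^1([a_{i+1},a_{i+2}],[a_i,a_{i+1}])=k$, all other $\Ext^1$'s between them being zero; i.e.\ the valued quiver of $\mathcal A$ is the linearly oriented $\mathbb A_t$.

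Finally I would invoke the categorification already in place: $\mathcal A$, being thick in $\mo\Lambda_n$ (a representation-finite hereditary category), is exceptional, hence equivalent to $\mo\Lambda'$ for some hereditary artin algebra $\Lambda'$. But $\Lambda'$ is determined up to Morita equivalence by its valued quiver together with the division rings $\End(S)$ of its simples, and we just identified these data with those of $\Lambda_t$. Therefore $\Lambda'\cong\Lambda_t$ and $\mathcal A\simeq\mo\Lambda_t$. I expect the only delicate point to be this last identification: one must argue cleanly that matching valued $\Ext$-quiver plus matching endomorphism rings of simples suffices to force Morita equivalence of the ambient hereditary algebras. This is standard for simply-laced types over a field, and here it is particularly easy since every simple of $\mathcal A$ has endomorphism ring exactly $k$.
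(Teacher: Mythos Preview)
Your proof is correct, but it takes a different route from the paper's.

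The paper argues locally: it reduces to rank~$3$ (the quiver of a connected exceptional subcategory is linearly oriented~$\mathbb A_t$ as soon as no vertex has two incoming or two outgoing arrows, and that is a rank-$3$ check), and then works directly in the Auslander--Reiten quiver of~$\Lambda_n$. Given a sink~$X$ of~$Q(\mathcal A)$ with top~$S$, the paper observes that the indecomposables~$Y$ orthogonal to~$X$ with $\Ext^1(Y,X)\neq 0$ all lie on a single ray (those with socle~$\tau^{-1}S$), and no two modules on a ray are orthogonal; hence there cannot be two sources, and dually no two sinks. This argument never invokes the bijection with non-crossing partitions.

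Your approach is global and combinatorial: you pull~$\mathcal A$ back through Theorem~\ref{classical} to a non-crossing partition~$P$, compute $\Ext^1$ between arcs, and read off that the connected components of the $\Ext$-quiver are exactly the nontrivial blocks of~$P$. This is cleaner once Theorem~\ref{classical} is in hand, and it yields the explicit simples $[a_i,a_{i+1}]$ at once. One small point to tighten: your claim that the middle term~$E$ of a non-split extension of two arcs is again an interval uses that the two arcs are orthogonal bricks (so $\End(E)=k$, forcing~$E$ indecomposable); this is true here since they lie in an antichain, but it is worth saying. The final identification step is fine: an exceptional subcategory of a representation-finite hereditary $k$-algebra with all simples having $\End=k$ and $\Ext^1$ one-dimensional is determined by its quiver, which you have shown is the linear~$\mathbb A_t$.
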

	
\begin{proof} It is easy to see that we only have to deal with $\mathcal A$ being of rank $3$. Thus, 
let $\mathcal A$ be a connected thick subcategory of $\mo\Lambda_n$ of rank $3$. We show that the quiver
of $\mathcal A$ cannot have two sources.

Let $X$ be a sink in $Q(\mathcal A)$, let $S$ be its top and $S' = \tau^{-1}S$.
$$
\hbox{\beginpicture
  \setcoordinatesystem units <.3cm,.3cm>
\multiput{} at 0 0  16 8 /
\plot 0 0  8 8  16 0 /
\plot 9 7  5 3  8 0  12 4 /
\plot 10 0  13 3 /
\setdots <1mm>
\plot 0 0  16 0 /
\put{$X$} at 4.4 3
\put{$S$} at 8 -.8
\put{$S'$} at 10 -.8
\setdashes <1mm>
\plot 9 1  10 0 /
\setshadegrid span <.5mm>
\vshade 5 3 3 <,z,,> 8 0 6 <z,z,,> 9 1 7 <z,,,> 12 4 4 /
\endpicture}
$$
The shaded rectangle are the modules $Z$ with $\Hom(X,Z) \neq 0$.
The indecomposable modules $Y$ such that $X,Y$ is an orthogonal pair
and $\Ext^1(Y,X) \neq 0$ are the modules $Y$ with $\soc Y = S'$, these are the modules 
in the ray starting at $S'$. 

There is no pair of orthogonal modules $Y$ with $\soc Y = S'$.
This shows that the quiver of $\mathcal A$ cannot have two sources. 
By duality, it also cannot have two sinks. Thus $\mathcal A$
has to be equivalent to $\mo\Lambda_3.$
\end{proof}

The aim of our further considerations 
is to provide a dictionary between the combinatorial assertions concerning
$\NC(n\!+\!1)$ and the representation theoretical assertions concerning $\A(\mo\Lambda_n)$ and 
$\underline{\mathcal A}(\mo\Lambda_n)$. 
	\medskip 

\subsubsection{\bf Pairs of arcs.} 
For a non-crossing partition $P$, there are three types of pairs $\{[a,b],[c,d]\}$
of different arcs. We may assume that $a\le c.$ As we have seen above, this implies that $a < c$
and similarly, we have $b \neq d$. 
$$
\hbox{\beginpicture
  \setcoordinatesystem units <1cm,1cm>
\put{Case 1: $b=c$} at 0 2.2
\put{\beginpicture
  \setcoordinatesystem units <.4cm,.4cm>
\multiput{$\bullet$} at 0 0  3 0  5 0 /
\plot 0 0  0 1 /
\plot 3 0  3 1 /
\plot 5 0  5 1 /
\circulararc 180 degrees from 3 1 center at 1.5 1
\circulararc 180 degrees from 5 1 center at 4 1
\put{$a$} at 0 -1
\put{$b=c$} at 3 -1
\put{$d$} at 5 -1
\endpicture} at 0 0
\put{Case 2: $b<c$} at 4 2.2
\put{\beginpicture
  \setcoordinatesystem units <.4cm,.4cm>
\multiput{$\bullet$} at 0 0  3 0  4 0  6 0 /
\plot 0 0  0 1 /
\plot 3 0  3 1 /
\plot 4 0  4 1 /
\plot 6 0  6 1 /
\circulararc 180 degrees from 3 1 center at 1.5 1
\circulararc 180 degrees from 6 1 center at 5 1
\put{$a$} at 0 -1
\put{$b$} at 3 -1
\put{$c$} at 4 -1
\put{$d$} at 6 -1
\endpicture} at 4 0
\put{Case 3: $c < b$} at 8 2.2
\put{\beginpicture
  \setcoordinatesystem units <.4cm,.4cm>
\multiput{$\bullet$} at 0 0  2 0  4 0  5 0 /
\plot 0 0  0 1 /
\plot 2 0  2 1 /
\plot 4 0  4 1 /
\plot 5 0  5 1 /
\circulararc 180 degrees from 4 1 center at 3 1
\circulararc 180 degrees from 5 1 center at 2.5 1
\put{$a$} at 0 -1
\put{$c$} at 2 -1
\put{$d$} at 4 -1
\put{$b$} at 5 -1
\endpicture} at 8 0
\endpicture}
$$

In case 3, the support of the module $[c,d]$ is a proper subset of the support of
$[a,b]$; actually, the module $M = [a,b]$ has a filtration $0 \subset M' \subset M'' \subset M$
with non-zero modules $M'$ and $M/M''$, such that $M''/M'$ is isomorphic to $[c,d]$.
In the cases 1 and 2, the modules $[a,b],[c,d]$ have disjoint support. 
In case 1, we have $\Ext^1([c,d],[a,b]) \neq 0$, namely, there is a non-split exact sequence
$$
 0 \to [a,b] \to [a,d] \to [c,d] \to 0, 
$$
whereas 
$\Ext^1([c,d],[a,b]) = 0$ in case 2 (and also in case 3). 
	\medskip 

Let us repeat that we define $\mathcal A = \mathcal F(A)$. Equivalently, we may specify
directly the indecomposable objects of $\mathcal A(P)$, these are the $\Lambda_n$-modules of the
form $[a,b]$ where $a < b$ are elements in the same block of $P$.
The parts of $P$ of cardinality $t\ge 2$ correspond bijectively to the blocks of $\mathcal A$
of rank $t-1$. Here is the recipe: 
Given a part $\{a_1<\dots<a_t\}$  of $P$, one sends it under $A$ to the antichain
$[a_1,a_2],[a_2,a_3],\dots,[a_{t-1},a_t]$ in $\mo\Lambda_n$, and $\mathcal A(P)$ is just the thick
closure of this antichain. The simple objects $[a_1,a_2],[a_2,a_3],\dots,[a_{t-1},a_t]$ 
in $\mathcal A(P)$ have a quiver of the form $1\leftarrow 2 \leftarrow \cdots \leftarrow t\!-\!1$
and the indecomposables in this block are the $\Lambda_n$-modules of the form $[a_i,a_j]$
with $1\le i < j\le t.$ 

The main point to have in mind is the following: the arcs of $P$ are identified with the simple
objects in $\mathcal A(P)$. In particular, the number of arcs of $A$ is the rank of $\mathcal A(A).$ 
For example, the partition $P$ with just one part (and therefore with 
arcs between $i$ and $i+1$, for all $1\le i \le n$) is mapped under $\mathcal A$ 
to the full module category $\mo\Lambda_n$. 
The subcategory $\mathcal A(P)$ is sincere if and only if 
$1$ and $n+1$ belong to the same part of $P$.
	\medskip

\subsubsection{\bf Maximal elements in $\NC(n+1)$.}
Let $P$ be a maximal element in $\NC(n+1)$, thus $P$ consists of precisely two parts $P_1$ and
$P_2$ and we may assume that $1$ belongs to $P_1$.
There are two different cases:

Case 1: The elements $1$ and $n+1$ do not belong to the same part, thus 
$n+1$ belongs to $P_2$. Let $x$ be the maximal element of
$P_1$. Then the arcs are just the pairs $i,i+1$ with $1\le i \le n$ and $i\neq x$.
Here is an  example:
$$
\hbox{\beginpicture
\setcoordinatesystem units <.4cm,.4cm>
\multiput{$\bullet$} at 0 0  1 0  2 0  3 0  4 0  5 0  6 0  7 0  8 0  /
\put{} at 0 2
\plot 0 0  0 1 /
\plot 1 0  1 1 /
\plot 2 0  2 1 /
\plot 3 0  3 1 /
\plot 4 0  4 1 /
\plot 5 0  5 1 /
\plot 6 0  6 1 /
\plot 7 0  7 1 /
\plot 8 0  8 1 /
\circulararc 180 degrees from 1 1 center at 0.5 1
\circulararc 180 degrees from 2 1 center at 1.5 1
\circulararc 180 degrees from 3 1 center at 2.5 1
\circulararc 180 degrees from 5 1 center at 4.5 1
\circulararc 180 degrees from 6 1 center at 5.5 1
\circulararc 180 degrees from 7 1 center at 6.5 1
\circulararc 180 degrees from 8 1 center at 7.5 1
\endpicture}
$$

Case 2: The elements $1$ and $n+1$ are in the same part, thus
now also $n+1$ belongs to $P_1$. Let
$x$ be the largest number with $x < P_2$ and $y$ the largest element of $P_2$. Then
the arcs are the pairs $i,i+1$ with $1\le i \le n$ and $i\notin\{x,y\}$ as well as
$x,y+1$. Here is an example: 
$$
\hbox{\beginpicture
\setcoordinatesystem units <.4cm,.4cm>
\put{} at 0 3.5
\multiput{$\bullet$} at 0 0  1 0  2 0  3 0  4 0  5 0  6 0  7 0  8 0  /
\put{} at 0 3
\plot 0 0  0 1 /
\plot 1 0  1 1 /
\plot 2 0  2 1 /
\plot 3 0  3 1 /
\plot 4 0  4 1 /
\plot 5 0  5 1 /
\plot 6 0  6 1 /
\plot 7 0  7 1 /
\plot 8 0  8 1 /
\circulararc 180 degrees from 7 1 center at 4.5 1

\circulararc 180 degrees from 1 1 center at 0.5 1
\circulararc 180 degrees from 2 1 center at 1.5 1
\circulararc 180 degrees from 4 1 center at 3.5 1
\circulararc 180 degrees from 5 1 center at 4.5 1
\circulararc 180 degrees from 6 1 center at 5.5 1

\circulararc 180 degrees from 8 1 center at 7.5 1
\put{$x\strut$} at 2 -0.7
\put{$y\strut$} at 6 -0.7
\endpicture}
$$
(there is also the special case of $y = x+1$; in this case $P_2$ is a singleton).
	\medskip

Here is the reformulation in terms of $\mathcal A(P)$. In the first case,
$\mathcal A(P)$ is a Serre subcategory of $\mo\Lambda_n$; 
it consists of all $\Lambda_n$-modules such that a fixed
simple module (in the example, it is $S(x)$) is not a composition factor (in particular,
$\mathcal A(P)$ is not sincere).

In the second case, $\mathcal A$ is sincere. The simple objects in $\mathcal A$ are the simple
modules $S(i)$ with $i\notin\{x,y\}$ as well as the module $M = [x,y+1]$ corresponding  
to the arc from $x$ to $y+1$. Note that $M$, considered as a $\Lambda_n$-module,  
has a filtration $M' \subseteq M'' \subset M$
with $M'$ and $M/M''$ being simple $\Lambda_n$-modules (namely $M' = S(x), M/M'' = S(y)$),
such that $M''/M'$ belongs to $\mathcal A$ (it may be zero, otherwise it is indecomposable).
	\bigskip

\subsubsection{\bf The poset isomorphism $\iota_n\!:\NC(n\!+\!1) \to \NC(S_n,c_n)$.}
There are the poset isomorphisms 
$$
 A\!:\NC(n\!+\!1) \to \A(\mo\Lambda_{n})\quad \text{and}\quad 
 \cox\!:  \A(\mo\Lambda_n) \to \NC(W(\Lambda_n),c(\Lambda_n))
$$
as established in Theorem  \ref{classical} and Theorem \ref{IST}, 
respectively. Of course, $W(\Lambda_n) = S_n$
and $c(\Lambda_n) = c_n = (n,n-1,\dots,2,1)$. 
The composition 
yields an isomorphism 
$$
 \iota_n = \cox A \!:\NC(n\!+\!1) \to \NC(S_n,c_n). 
$$
Thus, there is the following theorem:

\begin{theorem}\label{classical-identification}
The posets $\NC(n\!+\!1)$ are $\NC(S_n,c_n)$ isomorphic. There is a commutative diagram
	\smallskip 

\Rahmen{$
\hbox{\beginpicture
  \setcoordinatesystem units <3cm,1.3cm>
\put{$\NC(n\!+\!1)$} at 0 1
\put{$\A(\mo\Lambda_{n})$} at 0 0 
\put{$\iota_n$} at 0.5 1.2
\arr{0 0.7}{0 0.3}
\arr{0.3 1}{0.7 1}
\put{$\NC(S_n,c_n)$} at 1.1 1 
\put{$A$} at 0.1 0.5
\arr{0.3 0.3}{0.7 0.7}
\put{$\cox$} at 0.6 0.35
\endpicture}
$}
	\smallskip

\noindent
of poset isomorphisms.
\end{theorem}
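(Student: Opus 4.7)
The plan is to obtain $\iota_n$ essentially for free by composing two poset isomorphisms that are already in place. By Theorem \ref{classical}, the map $A\colon \NC(n{+}1) \to \A(\mo\Lambda_n)$ is a poset isomorphism, and by Theorem \ref{IST}, the map $\cox\colon \A(\mo\Lambda) \to \NC(W(\Lambda),c(\Lambda))$ is a poset isomorphism for every hereditary artin algebra $\Lambda$. I define $\iota_n := \cox\circ A$; then the triangle commutes by definition, and $\iota_n$ is a poset isomorphism as the composition of two poset isomorphisms. Hence the first assertion of the theorem follows from the identification of the target $\NC(W(\Lambda_n),c(\Lambda_n))$ with $\NC(S_n,c_n)$.

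The only genuinely new verification is therefore to compute $W(\Lambda_n)$ and $c(\Lambda_n)$ explicitly. Since $\Lambda_n$ is of Dynkin type $\mathbb{A}_n$, its Weyl group is (by definition) the Weyl group of type $\mathbb{A}_n$, i.e.\ the appropriate symmetric group acting on the set indexing the simple roots. For the Coxeter element, I plan to invoke the recipe of Section 1.1.2: one needs an ordering $x(1),\dots,x(n)$ of the simple roots compatible with the orientation (arrows $i\leftarrow j$ force $i<j$), and then $c(\Lambda_n)=\rho_{x(n)}\cdots \rho_{x(1)}$. For the linear orientation $1\leftarrow 2\leftarrow\cdots\leftarrow n$ the natural choice $x(i)=i$ is compatible, giving $c(\Lambda_n)=\rho_n\rho_{n-1}\cdots\rho_1$. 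Identifying each $\rho_i$ with the corresponding transposition and tracing where each letter is sent (applying the factors from right to left) yields the long cycle $c_n=(n,n{-}1,\dots,2,1)$, matching the target of $\iota_n$.

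The main obstacle, in so far as there is one, is purely bookkeeping: one has to keep careful track of the index shift between the $(n{+}1)$-element set whose non-crossing partitions make up the source $\NC(n{+}1)$, the $n$ simple $\Lambda_n$-modules $S(i)=[i,i{+}1]$ parametrized by arcs in $\{1,\dots,n{+}1\}$ (as set up in Section 4.2), and the rank $n$ Weyl group on the target side. All the substantive mathematics — the bijection between non-crossing partitions and exceptional antichains (Theorem \ref{classical}), and the categorification of generalized non-crossing partitions (Theorem \ref{IST}) — has been carried out earlier in the paper, so this final theorem is really just the statement that specialising the general Ingalls--Schiffler--Thomas picture to $\Lambda=\Lambda_n$ recovers the classical Kreweras lattice.
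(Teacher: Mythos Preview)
Your proposal is correct and matches the paper's own argument essentially verbatim: the paper also defines $\iota_n = \cox \circ A$ as the composition of the isomorphisms from Theorem~\ref{classical} and Theorem~\ref{IST}, after simply noting that $W(\Lambda_n)$ and $c(\Lambda_n)$ are the symmetric group and the long cycle. Your additional elaboration on how the Coxeter element is computed from the linear orientation via the recipe of Section~1.1.2 is a welcome clarification of what the paper dismisses with ``Of course''.
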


{\bf Remark.} Here, in order to identify $\NC(n\!+\!1)$ and $\NC(S_n,c_n)$ we have used as intermediate
step the categorification $\A(\mo\Lambda_{n}).$ This seems to be a proper procedure 
in lectures devoted to mathematicians working in representation theory, but it hides
the purely combinatorial nature of the identification $\iota_n$. 
Actually, the identification $\iota_n$ 
has to be considered as the starting point for the theory of
generalized non-crossing partitions. 
	\medskip

\subsection{Perpendicular pairs and the Kreweras complement}
We have defined in section \ref{delta} the anti-automorphism $\delta$ of
$\underline{\mathcal A}(\mo\Lambda_n)$ by $\delta(\mathcal A) = \mathcal A^\perp.$
We are going to show that in terms of non-crossing partitions, this is precisely the 
Kreweras complement as introduced by Kreweras \cite{[Kr]}.
	\medskip 

\subsubsection{\bf Perpendicular pairs.} Let $P$ be a non-crossing partition of $n+1$ elements. Besides the arcs
(which as before we will draw as solid lines) we will consider also coarcs and draw them using dashes lines.
By definition, the {\it coarcs} of $P$ 
are the pairs $[a,z+1]$, where $a \le z\le n$ and $a$ is the minimal
element of a part of $P$, whereas $z$ is its maximal element.  Thus, {\it 
if $p$ is the number of parts of $P$, then
the number of coarcs is $p-1$}. Namely, the coarcs correspond bijectively to the parts which do not
contain the element $n+1$). As a consequence, the number of arcs and coarcs together is $n$ (since
the number of arcs is $n+1-p$ and no coarc is an arc). We denote the set of coarcs by ${}^\perp A(P)$.

As an example, consider the partition $P =\bigl\{ \{1,4,7,\},\{2,3\},\{5\},\{6\},\{8,9,10\}\bigr\}$. 
On the left, we
show the usual arc diagram for $P$, on the right, we have added the coarcs: 
the part $\{1,4,7\}$
gives the coarc $[1,8]$, the part $\{2,3\}$ gives the coarc $[2,4]$, the singletons $\{5\}$ and $\{6\}$
give the coarcs $[5,6]$ and $[6,7]$, respectively (there is no coarc corresponding to the part $\{8,9,10\}$,
since this is the part which contains the element $n+1$): 
$$
\hbox{\beginpicture
\setcoordinatesystem units <.4cm,.4cm>
\put{\beginpicture 
\multiput{$\bullet$} at 0 0  1 0  2 0  3 0  4 0  5 0  6 0  7 0  8 0  9 0 /
\put{$A(P)$} at -1.5 1 
\put{} at 0 2.5
\plot 0 0  0 1 /
\plot 1 0  1 1 /
\plot 2 0  2 1 /
\plot 3 0  3 1 /
\plot 6 0  6 1 /
\plot 7 0  7 1 /
\plot 8 0  8 1 /
\plot 9 0  9 1 /
\circulararc 180 degrees from 2 1 center at 1.5 1
\circulararc 180 degrees from 3 1 center at 1.5 1
\circulararc 180 degrees from 6 1 center at 4.5 1
\circulararc 180 degrees from 8 1 center at 7.5 1
\circulararc 180 degrees from 9 1 center at 8.5 1
\put{$\ssize 1$} at 0 -.4
\put{$\ssize 2$} at 1 -.4
\put{$\ssize 3$} at 2 -.4
\put{$\ssize 4$} at 3 -.4
\put{$\ssize 5$} at 4 -.4
\put{$\ssize 6$} at 5 -.4
\put{$\ssize 7$} at 6 -.4
\put{$\ssize 8$} at 7 -.4
\put{$\ssize 9$} at 8 -.4
\put{$\ssize 10$} at 9 -.4
\put{} at 0 -2.5
\endpicture} at 0 0

\put{\beginpicture 
\multiput{$\bullet$} at 0 0  1 0  2 0  3 0  4 0  5 0  6 0  7 0  8 0  9 0 /
\put{$A(P)$} at 10.5 1 
\put{${}^\perp\!A(P)$} at 10.5 -1.2 
\put{} at 0 2.5
\plot 0 0  0 1 /
\plot 1 0  1 1 /
\plot 2 0  2 1 /
\plot 3 0  3 1 /
\plot 6 0  6 1 /
\plot 7 0  7 1 /
\plot 8 0  8 1 /
\plot 9 0  9 1 /
\circulararc 180 degrees from 2 1 center at 1.5 1
\circulararc 180 degrees from 3 1 center at 1.5 1
\circulararc 180 degrees from 6 1 center at 4.5 1
\circulararc 180 degrees from 8 1 center at 7.5 1
\circulararc 180 degrees from 9 1 center at 8.5 1

\setdashes <1mm>
\plot 0 0  0 -1 /
\plot 1 0  1 -1 /
\plot 3 0  3 -1 /
\plot 4 0  4 -1 /
\plot 5 0  5 -1 /
\plot 6 0  6 -1 /
\plot 7 0  7 -1 /
\circulararc 90 degrees from 0 -1 center at 1.5 -1
\circulararc -90 degrees from 7 -1 center at 5.5 -1
\circulararc -180 degrees from 3 -1 center at 2 -1
\circulararc -180 degrees from 5 -1 center at 4.5 -1
\circulararc -180 degrees from 6 -1 center at 5.5 -1
\plot 1.5 -2.5  5.5 -2.5 /
\put{} at 0 -2.5
\endpicture} at 14 0

\endpicture}
$$ 
One should observe that the dashed lines (the coarcs) are the arcs of a new partition, namely of
the partition $\bigl\{\{1,8\},\{2,4\},\{3\},\{5,6,7\},\{9\},\{10\}\bigr\}$. Here is the general argument:

\begin{prop}\label{perp}
 The set ${}^\perp A(P)$ is the set of simple objects in ${}^\perp \mathcal A(P)$,
in particular it is an antichain in $\mo\Lambda_n$, thus the arc set of a non-crossing partition. 
\end{prop}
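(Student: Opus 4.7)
The plan is to verify, by direct Hom/Ext calculation in $\mo\Lambda_n$, that every coarc module $[a,z+1]$ belongs to ${}^\perp\mathcal A(P)$ and that distinct coarcs are pairwise orthogonal, and then to close by a rank count that forces these modules to be exactly the simple objects of ${}^\perp\mathcal A(P)$.

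First I would record the homological data for $\Lambda_n$. The submodules of $[a,b]$ are the intervals $[a,a']$ with $a\le a'\le b$ and its factor modules are the $[a',b]$, so an image argument yields
\[
  \Hom_{\Lambda_n}([a,b],[c,d])\ne 0 \iff a\le c<b\le d,
\]
and each nonzero Hom space is one-dimensional; in particular every indecomposable is a brick. A non-split extension of $[a,b]$ by $[c,d]$ in $\mo\Lambda_n$ must assemble into an indecomposable $[c,b]$, which forces
\[
  \Ext^1_{\Lambda_n}([a,b],[c,d])\ne 0 \iff a=d.
\]

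Next, fix a coarc $[a,z+1]$ coming from a part $B$ of $P$ with $\min B=a$ and $\max B=z\le n$, and consider an indecomposable $[c,d]\in\mathcal A(P)$, so that $c<d$ both lie in a common part $B'$ of $P$. A nonzero map $[a,z+1]\to[c,d]$ requires $a\le c\le z$ and $d\ge z+1$: if $B'=B$ then $d\le z$, contradiction; if $B'\ne B$ then $c\in\{a,z\}$ would force $B\cap B'\ne\emptyset$, so $a<c<z<d$ with $a,z\in B$ and $c,d\in B'$, a proper crossing of the two parts, which is forbidden by the hypothesis that $P$ is non-crossing. Similarly $\Ext^1([a,z+1],[c,d])\ne 0$ would require $a=d\in B'$, so $a\in B\cap B'$ forces $B'=B$, and then $c<a=\min B$ gives a contradiction. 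Hence $[a,z+1]\in{}^\perp\mathcal A(P)$. Applying the same case analysis to two distinct coarcs $[a_1,z_1+1],[a_2,z_2+1]$ attached to parts $B_1\ne B_2$ (with say $a_1<a_2$), the Hom inequalities would force $a_1\le a_2\le z_1\le z_2$ with no endpoint coincidences permitted (since each $a_i,z_i$ lies only in $B_i$), producing a crossing $a_1<a_2<z_1<z_2$ of $B_1$ and $B_2$; so the coarcs form an antichain of pairwise orthogonal bricks.

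To finish, let $p$ be the number of parts of $P$. Counting arcs gives $\rank\mathcal A(P)=\sum_i(|B_i|-1)=n+1-p$, so by Theorems \ref{perp-perp} and \ref{perp-rank} the category ${}^\perp\mathcal A(P)$ is exceptional of rank $p-1$, which is precisely the number of coarcs. Because $\Lambda_n$ is representation-finite the antichain of coarcs is automatically exceptional; its thick closure is then an exceptional subcategory of ${}^\perp\mathcal A(P)$ having the same rank, so by the rank-monotonicity corollary for exceptional subcategories in Section 2.5 the two coincide. Consequently the coarcs are exactly the simple objects of ${}^\perp\mathcal A(P)$, and being an antichain of indecomposable $\Lambda_n$-modules they form the arc set of a unique non-crossing partition by Proposition \ref{prop-partitions-arc}. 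The main difficulty is not algebraic but combinatorial: one has to check that every potentially nontrivial Hom- or Ext-configuration between a coarc and a module of $\mathcal A(P)$ translates cleanly into a genuine crossing of parts of $P$; once this dictionary is in place the rank count is mechanical.
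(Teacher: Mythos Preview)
Your overall strategy matches the paper's: verify that each coarc lies in ${}^\perp\mathcal A(P)$, show the coarcs form an antichain, and then invoke the rank equality $\rank\,{}^\perp\mathcal A(P)=p-1=\#\{\text{coarcs}\}$ to conclude. The Hom-vanishing argument and the rank count are fine.

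However, your $\Ext$-formula is wrong, and this is a genuine gap. You claim that a non-split extension of $[a,b]$ by $[c,d]$ must have indecomposable middle term $[c,b]$, forcing $a=d$. This is false: for instance the Auslander--Reiten sequence
\[
0 \longrightarrow [1,3] \longrightarrow [1,4]\oplus[2,3] \longrightarrow [2,4] \longrightarrow 0
\]
in $\mo\Lambda_3$ is non-split with decomposable middle term, so $\Ext^1([2,4],[1,3])\ne 0$ although $2\ne 3$. The correct criterion, obtained from the Auslander--Reiten formula $\Ext^1([a,b],[c,d])\cong D\Hom([c,d],[a-1,b-1])$ (for $a\ge 2$; the case $a=1$ is projective), is
\[
\Ext^1_{\Lambda_n}([a,b],[c,d])\ne 0 \iff c<a\le d<b.
\]
With this formula your argument does go through: $\Ext^1([a,z+1],[c,d])\ne 0$ forces $c<a\le d\le z$; if $B'=B$ then $c<a=\min B$ with $c\in B$ is impossible, while if $B'\ne B$ then disjointness of $B,B'$ turns the weak inequalities into strict ones, giving $c<a<d<z$ with $c,d\in B'$ and $a,z\in B$, a crossing. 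So the repair is mechanical, but as written your $\Ext$-step only handles the special case $a=d$ and leaves the remaining configurations unexamined.
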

	
\begin{proof}  First, let us use Lemma \ref{arcs}  in order to
show that the coarcs are the arcs of a non-crossing partition.
Thus, assume that $[a,z+1]$ and $[a',z'+1]$ are coarcs and $a\le a' < z+1\le z'+1$. It
follows that $a \le a' \le z \le z'$.
If $a = a'$, or $a' = z$ of $z = z'$, 
then the coarcs are given by the same part of $P$, thus we have $a = a'$ and $z = z'$ (therefore
also $z+1 = z'+1$). Thus it remains to consider the case that $a < a' < z < z'$. But this is impossible,
since we assume that $P$ is non-crossing. 

As a consequence, the set of coarcs is the set of simple objects in some thick subcategory $\mathcal X$
of $\mo\Lambda_n$. We claim that $\mathcal X \subseteq {}^\perp\mathcal A(P)$. It is sufficient to
show the following: if $[x,y]$ is an arc and $[a,z+1]$ is a coarc, then we have
$\Hom([a,z+1],[x,y]) = 0 = \Ext^1([a,z+1],[x,y])$. 

First, assume for the contrary that $\Hom([a,z+1],[x,y]) \neq 0$. Since the $\Lambda_n$-top of
$[a,z+1]$ is $S(z)$, we see that $S(z)$ has to be a composition factor of $[x,y]$, thus $x\le z < y.$
We must have $x < z$, since otherwise $z$ and $y$ belong to the same part, but then $z$
is not maximal in this part. Since $P$ is non-crossing, we must have $x \le a$. Using again that
$z$ and $y$ belong to different parts, we see that $x < a$. The $\Lambda_n$-socle of $[x,y]$ is
$S(x)$. Since we assume that $\Hom([a,z+1],[x,y]) \neq 0$, the simple module $S(x)$ has to be
a composition factor of $[a,z+1]$. But this implies that $a \le x$, a contradiction. This shows
that $\Hom([a,z+1],[x,y]) = 0$.

A similar argument shows that for $a\ge 2$ we have $\Hom([x,y],[a-1,z]) \neq 0$. The Auslander-Reiten
formula implies that in this case $\Ext^1([a,z+1],[x,y]) = 0$. Of course, if $a = 1$, then 
$[a,z+1]$ is projective, thus also in this case $\Ext^1([a,z+1],[x,y]) = 0.$ 

We have shown that $\mathcal X \subseteq {}^\perp\mathcal A(P)$. Now the subcategories 
$\mathcal X$ and ${}^\perp\mathcal A(P)$ have the same rank, namely $p-1$, where $p$ is the number of parts 
of $P$. Thus, $\mathcal X = {}^\perp\mathcal A(P)$.
\end{proof}

\subsubsection{\bf The Kreweras complement.} 
It follows from Proposition \ref{perp} that 
the coarcs determine uniquely the arcs, the rule to obtain the arcs from the
coarcs is just a dual procedure: 

Let $F$ be a partition with arc set $A(F)$. 
For any part of $F$ with 
minimal element $a > 1$ and maximal element $b$, take the arc $[a-1,b]$.
Denote by $F^\perp$ the set of these arcs. The partition generated by $F^\perp$
is called the {\it Kreweras complement} $\kappa(F)$ of $F$.

For example, starting with 
$F = \bigl\{\{1,8\},\{2,4\},\{3\},\{5,6,7\},\{9\},\{10\}\bigl\}$
we obtain the partition
$\kappa(F) =\bigl\{ \{1,4,7,\},\{2,3\},\{5\},\{6\},\{8,9,10\}\bigr\}$. 
On the left, we
show the arc diagram for $F$ using dotted arcs, on the right, we have added 
the arcs of $\kappa(F)$ as solid arcs. The arcs of $F$ are drawn as dotted lines, 
since they are the 
coarcs for $\kappa(F)$. 

$$
\hbox{\beginpicture
\setcoordinatesystem units <.4cm,.4cm>
\put{\beginpicture 
\put{$F$} at -1 -1 
\multiput{$\bullet$} at 0 0  1 0  2 0  3 0  4 0  5 0  6 0  7 0  8 0  9 0 /
\put{} at 0 2.5
\put{$\ssize 1$} at 0 .5
\put{$\ssize 2$} at 1 .5
\put{$\ssize 3$} at 2 .5
\put{$\ssize 4$} at 3 .5
\put{$\ssize 5$} at 4 .5
\put{$\ssize 6$} at 5 .5
\put{$\ssize 7$} at 6 .5
\put{$\ssize 8$} at 7 .5
\put{$\ssize 9$} at 8 .5
\put{$\ssize 10$} at 9 .5
\setdashes <1mm>
\plot 0 0  0 -1 /
\plot 1 0  1 -1 /
\plot 3 0  3 -1 /
\plot 4 0  4 -1 /
\plot 5 0  5 -1 /
\plot 6 0  6 -1 /
\plot 7 0  7 -1 /
\circulararc 90 degrees from 0 -1 center at 1.5 -1
\circulararc -90 degrees from 7 -1 center at 5.5 -1
\circulararc -180 degrees from 3 -1 center at 2 -1
\circulararc -180 degrees from 5 -1 center at 4.5 -1
\circulararc -180 degrees from 6 -1 center at 5.5 -1
\plot 1.5 -2.5  5.5 -2.5 /
\put{} at 0 -2.5
\endpicture} at 0 0

\put{\beginpicture 
\multiput{$\bullet$} at 0 0  1 0  2 0  3 0  4 0  5 0  6 0  7 0  8 0  9 0 /
\put{$F$} at 10.3 -1 
\put{$\kappa(F)$} at 10.7 1 
\put{} at 0 2.5
\plot 0 0  0 1 /
\plot 1 0  1 1 /
\plot 2 0  2 1 /
\plot 3 0  3 1 /
\plot 6 0  6 1 /
\plot 7 0  7 1 /
\plot 8 0  8 1 /
\plot 9 0  9 1 /
\circulararc 180 degrees from 2 1 center at 1.5 1
\circulararc 180 degrees from 3 1 center at 1.5 1
\circulararc 180 degrees from 6 1 center at 4.5 1
\circulararc 180 degrees from 8 1 center at 7.5 1
\circulararc 180 degrees from 9 1 center at 8.5 1

\setdashes <1mm>
\plot 0 0  0 -1 /
\plot 1 0  1 -1 /
\plot 3 0  3 -1 /
\plot 4 0  4 -1 /
\plot 5 0  5 -1 /
\plot 6 0  6 -1 /
\plot 7 0  7 -1 /
\circulararc 90 degrees from 0 -1 center at 1.5 -1
\circulararc -90 degrees from 7 -1 center at 5.5 -1
\circulararc -180 degrees from 3 -1 center at 2 -1
\circulararc -180 degrees from 5 -1 center at 4.5 -1
\circulararc -180 degrees from 6 -1 center at 5.5 -1
\plot 1.5 -2.5  5.5 -2.5 /
\put{} at 0 -2.5
\endpicture} at 13 0

\endpicture}
$$ 
The part $\{2,4\}$ of $F$ yields the arc $[1,4]$ in $\kappa(F)$, the part $\{3\}$
in $F$ yields the arc $[2,3]$ in $\kappa(F)$, and so on. 
	\medskip

{\bf Remark.} The usual definition of the Kreweras complement of a non-crossing 
partition $F$ of $S = \{1,2,\dots,n\}$ is as follows: One considers the totally
ordered set 
$$
 1 < \overline 1 < 2 < \overline 2 < \cdots < n < \overline n 
$$
and takes as $\kappa(F) \in \NC(\overline 1,\overline 2,\dots,\overline n) 
\simeq \NC(1,2,\dots,n)$ the largest partition such that $F \sqcup \kappa(F)$
is a non-crossing partition of $\{1,\overline 1,\dots,n,\overline n\}.$
It is not difficult to verify that the two definitions of $\kappa$ coincide.

\begin{theorem}\label{Kreweras}
The following diagram commutes
	\smallskip 

\Rahmen{$\beginpicture
\setcoordinatesystem units <1.5cm,1.5cm>
\multiput{$\NC(n\!+\!1)$} at 0 1  2 1 /
\multiput{$\A(\mo\Lambda_n)$} at 0 0  2 0 /
\multiput{$A$} at 0.2 0.55  2.2 0.55 /
\arr{0 0.7}{0 0.3}
\arr{2 0.7}{2 0.3}
\arr{0.6 1}{1.4 1}
\arr{0.7 0}{1.3 0}
\put{$\delta$} at 1 0.2
\put{$\kappa$} at 1 1.2
\endpicture$}
\end{theorem}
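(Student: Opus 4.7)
The plan is to reduce the commutativity $A\kappa = \delta A$ to the subcategory identity $\mathcal A(\kappa(P)) = \mathcal A(P)^\perp$ for every $P\in\NC(n+1)$, where $\mathcal A(Q) := \mathcal E(A(Q))$ is the thick closure of the antichain of arcs. This reduction is immediate from the canonical bijection $\mathcal E \leftrightarrow S$ between antichains and exceptional subcategories and the definition $\delta(\mathcal A) = \mathcal A^\perp$: applying $S$ to both sides of the subcategorical equality recovers the antichain equality $A(\kappa(P)) = \delta A(P)$.

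I will first do the rank bookkeeping. If $P$ has $p$ parts, then $|A(P)| = (n+1)-p$ (a $k$-element part contributes $k-1$ arcs), so $\rank \mathcal A(P) = (n+1)-p$ and Theorem \ref{perp-rank} gives $\rank \mathcal A(P)^\perp = p-1$. By definition $\kappa(P)$ contributes exactly one arc for each part of $P$ not containing $1$, hence $|A(\kappa(P))| = p-1$. A short check, which I would do directly, shows $\kappa(P)$ is itself non-crossing (any would-be crossing of two arcs $[a_1-1,b_1], [a_2-1,b_2]$ of $\kappa(P)$ would, in the case $a_2 \le b_1$, produce a crossing $a_1 < a_2 < b_1 < b_2$ of distinct parts of $P$, violating the non-crossing hypothesis on $P$), so Proposition \ref{prop-partitions-arc} identifies $A(\kappa(P))$ as an exceptional antichain of size $p-1$.

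The heart of the argument is the inclusion $A(\kappa(P)) \subseteq \mathcal A(P)^\perp$. Fix a Kreweras arc $M = [q_0-1, q_k]$ arising from a part $Q = \{q_0 < \cdots < q_k\}$ of $P$ with $q_0 > 1$; it suffices to check $\Hom([x,y], M) = 0 = \Ext^1([x,y], M)$ for every arc $[x,y]$ of $P$, these being the simple objects of $\mathcal A(P)$. The elementary criterion $\Hom_{\Lambda_n}([c,d],[c',d']) \neq 0 \iff c \le c' < d \le d'$ for interval modules, combined with the linear-orientation identity $\tau[x,y] = [x-1, y-1]$ (valid for $x>1$; the case $x=1$ makes $[x,y]$ projective and forces $\Ext^1 = 0$ automatically), converts both non-vanishing conditions into arithmetic inequalities. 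A non-zero $\Hom$ would force $x < q_0 \le y \le q_k$ with $\{x,y\}$ lying in some part $Q'$ of $P$; if $Q' = Q$ then $x \ge q_0$, contradiction, and otherwise $Q' \cap Q = \emptyset$ excludes $y \in \{q_0, q_k\}$, yielding the forbidden crossing $x < q_0 < y < q_k$ with $\{x,y\} \subseteq Q'$ and $\{q_0,q_k\} \subseteq Q$. The Auslander–Reiten formula $\Ext^1([x,y], M) \cong D\Hom(M, \tau[x,y])$ reduces the $\Ext^1$ case to the same criterion, now producing the parallel crossing $q_0 < x < q_k < y$, equally impossible.

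Finally, $A(\kappa(P))$ is an antichain of cardinality $p-1$ sitting inside the exceptional subcategory $\mathcal A(P)^\perp$ of the same rank $p-1$; its thick closure is therefore an exceptional subcategory of $\mathcal A(P)^\perp$ of equal rank, hence equal to $\mathcal A(P)^\perp$ (by the rank-monotonicity of proper inclusions of exceptional subcategories, a direct consequence of Corollary \ref{thick-closure} applied inside $\mathcal A(P)^\perp$). Consequently $A(\kappa(P)) = S(\mathcal A(P)^\perp) = \delta A(P)$, as required. The main obstacle is the central case analysis: one must recognize that every possible non-vanishing of $\Hom$ or $\Ext^1$ between the relevant interval modules is precisely a crossing configuration in $P$, and keep the two directions (with and without the $\tau$-shift) organized so that the non-crossing hypothesis on $P$ visibly rules out both.
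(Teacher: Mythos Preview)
Your proof is correct and follows essentially the same approach as the paper. The paper establishes the analogous statement for the left perpendicular via Proposition~\ref{perp} (showing that the coarcs of $P$ are the simple objects of ${}^\perp\mathcal A(P)$ by the same Hom/Ext computations and rank count you use), and then derives Theorem~\ref{Kreweras} from this by observing that $\kappa$ is the dual procedure; you instead carry out the direct right-perpendicular version, which is the same computation with the roles of source and target swapped.
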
 

Thus, if we use $A$ in order to identify $\NC(n\!+\!1)$ and $\A(\mo\Lambda_n)$,
then $\delta$ and $\kappa$ coincide.
	\bigskip 

\subsubsection{\bf The automorphism $\delta^2$ of $\A(\mo\Lambda_n)$.}

\begin{prop}\label{delta-square-a_n}
If $\mathcal A$ is a thick subcategory of $\mo \Lambda_n$, then
$\delta^2(\mathcal A)$ is equivalent, as a category, to $\mathcal A.$
\end{prop}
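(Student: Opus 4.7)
The plan is to invoke Theorem \ref{AR}, which identifies $\delta^2$ with the extended Auslander--Reiten translation $\overline\tau$, and then to compute $\overline\tau$ explicitly on the exceptional antichains of $\mo\Lambda_n$ via the bijection $A$ of Theorem \ref{classical}, showing that $\delta^2$ corresponds to a cyclic rotation of the underlying non-crossing partition.

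First I would record the effect of $\overline\tau$ on the indecomposable $\Lambda_n$-modules, which in the interval notation $[a,b]$ of this chapter reads $\overline\tau[a,b] = [a-1,b-1]$ whenever $a\ge 2$, while each projective module $[1,b]$ satisfies $\overline\tau[1,b] = I(b-1) = [b-1,n+1]$, since the top of $[1,b]$ is $S(b-1)$.

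Next I would verify that this action corresponds, under $A$, to the cyclic rotation $\sigma\colon i\mapsto i-1 \pmod{n+1}$ on non-crossing partitions of $\{1,\dots,n+1\}$. For the (unique) part $P_1 = \{1 = x_1 < x_2 < \cdots < x_k\}$ containing $1$, its arcs $[1,x_2],[x_2,x_3],\dots,[x_{k-1},x_k]$ are sent by $\overline\tau$ to $[x_2-1,n+1],[x_2-1,x_3-1],\dots,[x_{k-1}-1,x_k-1]$, which Lemma \ref{part-arc} identifies as the arcs of the part $\{x_2-1,\dots,x_k-1,n+1\} = \sigma(P_1)$; for the remaining parts $P_j$, the arcs merely shift by $-1$, producing exactly the arcs of $\sigma(P_j)$. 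A singleton $\{1\}$ in $P$ contributes no arcs and matches the singleton $\{n+1\}$ in $\sigma(P)$.

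Since the cyclic rotation $\sigma$ manifestly preserves the multiset of part cardinalities, and since Proposition \ref{thick-a_n} implies that every connected thick subcategory of $\mo\Lambda_n$ of rank $t$ is equivalent to $\mo\Lambda_t$ — so that the categorical equivalence class of a thick subcategory is determined by the multiset of ranks of its connected components, which is the multiset of part cardinalities of $P$ shifted by $-1$ — the conclusion will follow. The main bookkeeping obstacle will be verifying that the ``wrap-around'' produced by $\overline\tau$ on the projective arcs $[1,x]$ correctly assembles with the uniform shift on the remaining arcs to yield precisely the arc set of a rotated non-crossing partition; once this combinatorial identification is in place, the equivalence $\delta^2(\mathcal A) \simeq \mathcal A$ is immediate.
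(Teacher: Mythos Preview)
Your overall strategy---interpreting $\delta^2$ as the cyclic rotation $\sigma$ on non-crossing partitions---is correct and more informative than the paper's argument, but the verification you sketch has a genuine error. When the part $P_1 = \{1 = x_1 < x_2 < \cdots < x_k\}$ has $k\ge 3$, the images $[x_2-1,n+1],\,[x_2-1,x_3-1],\dots,[x_{k-1}-1,x_k-1]$ are \emph{not} the arcs of $\sigma(P_1)$: two of them share the left endpoint $x_2-1$, so this is not a partition arc set, Lemma~\ref{part-arc} does not apply, and in fact these modules do not even form an antichain (e.g.\ $[x_2-1,x_3-1]$ embeds in $[x_2-1,n+1]$). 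The actual arcs of $\sigma(P_1)$ are $[x_2-1,x_3-1],\dots,[x_{k-1}-1,x_k-1],[x_k-1,n+1]$; the discrepancy is that you obtain $[x_2-1,n+1]$ where you need $[x_k-1,n+1]$. The underlying misconception is that $\overline\tau$ applied to the \emph{simple} objects of $\mathcal A(P)$ should give the simple objects of $\delta^2(\mathcal A(P))$; since $\overline\tau$ is not an exact functor, this fails.

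The fix is easy and uses what Theorem~\ref{AR} actually provides: the \emph{indecomposables} of $\delta^2(\mathcal A)$ are precisely the $\overline\tau(X)$ for $X$ indecomposable in $\mathcal A$. The indecomposables of $\mathcal A(P)$ are all $[a,b]$ with $a<b$ in the same part of $P$, and one checks directly that $\overline\tau$ maps these bijectively to the indecomposables of $\mathcal A(\sigma(P))$: if $a\ge 2$ then $[a,b]\mapsto[a-1,b-1]$ and $a,b$ same part of $P$ iff $a-1,b-1$ same part of $\sigma(P)$; if $a=1$ then $[1,b]\mapsto[b-1,n+1]$ and $1,b$ same part of $P$ iff $b-1,n+1$ same part of $\sigma(P)$. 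This establishes the rotation and the rest of your argument goes through.

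By contrast, the paper's proof is shorter and avoids the explicit rotation entirely: it simply notes that $\delta^2$ is a poset automorphism, hence preserves rank and connectedness of each block $\mathcal A_i$, and then Proposition~\ref{thick-a_n} (any connected thick subcategory of $\mo\Lambda_n$ of rank $t$ is equivalent to $\mo\Lambda_t$) finishes immediately. Your approach buys the extra information that $\delta^2$ is the Kreweras-square rotation; the paper's buys brevity.
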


\begin{proof} Let $\mathcal A$ be the product of the connected subcategories
$\mathcal A_1,\dots,\mathcal A_s$. Theorem \ref{AR} shows that $\delta^2(\mathcal A_i)
= \overline{\tau}(\mathcal A_i).$ In particular,  $\delta^2(\mathcal A_i)$ is a connected thick
subcategory of $\mo\Lambda_n$ with the same rank as $\mathcal A_i$, thus. according to Proposition 
\ref{thick-a_n}, the categories $\delta^2(\mathcal A_i)$ and $\mathcal A_i$ are
equivalent.
\end{proof}
	\medskip

\subsection{Non-crossing partitions and binary trees}\label{binary-section}
There is an interesting relationship
between non-crossing partitions and binary trees. 
	\medskip 

\subsubsection{\bf Binary trees.}
Binary trees (sometimes called rooted binary trees) are defined inductively.
The empty set is a binary tree, a non-empty binary tree is 
a triple $(L,s,R)$, where $L,R$ are binary trees and $s$ is a singleton, called the root.
We draw binary trees as graphs with two kinds of edges, solid ones and dashed ones.
The definition is again by induction: Let $B = (L,s,R)$ be a binary tree. We 
take the graphs corresponding to $L$ and $R$ and an additional vertex with
label $s$; if $L$ is non-empty, we
connect $s$ with the root of $L$ by a dashed edge; if $R$ is non-empty, we
connect $s$ with the root of $R$ by a solid edge. We call the roots of $L$ and $R$ 
the {\it successors} of $s$. In this way, we see that 
any vertex of $B$ has at most two successors. 

Here are two examples of binary trees $B$ of cardinality $7$, we usually draw the root (contrary
to its name) at the top of the picture, in both cases we have marked it by $s$.

$$
\hbox{\beginpicture
\setcoordinatesystem units <.5cm,.5cm>
\put{\beginpicture
\multiput{$\bullet$} at -.2 0  0.8 1  1.7 0  2 2  3.2 1  2.3 0  4.2 0 /
\plot 1 0.8  1.5 0.3 /
\plot 2.2 1.8  2.8 1.3 /
\plot 3.5 0.7  4 0.2 /
\setdashes <.9mm>
\plot 0 0.2  0.6 0.8 /
\plot 1 1.2  1.6 1.7 /
\plot 2.5 0.2  3 0.8 /
\put{$s$} at 2 2.5 
\endpicture} at 0 0 
\put{\beginpicture
\multiput{$\bullet$} at 0 0  1 1  2 2  1 3  2 0  3 1  4 0 /
\plot 1.2 0.8  1.8 0.2 /
\plot 1.2 2.8  1.8 2.2 /
\plot 2.2 1.8  2.8 1.2 /
\plot 3.2 0.8  3.8 0.2 / 
\setdashes <.9mm>
\plot 0.2 0.2  0.8 0.8 /
\plot 1.2 1.2  1.8 1.8 /
\put{$s$} at 1.3 3.4 
\endpicture} at 7 0 

\endpicture}
$$
For the binary tree $B = (L,s,R)$ on the left, both binary trees $L,R$ have cardinality $3$. On the
right, $L$ is empty and the cardinality of $R$ is $6$.

There is an {\it intrinsic numbering} $\nu_B$ of the vertices of a binary tree $B$. We use again induction.
Of course, if $B$ is empty, nothing has to be done. Now assume $B = (L,s,R)$ and let $L$ be of cardinality $t$. Let $x$ be a vertex of $B$. Then we define
$$
\nu_B(x) = \left\{\begin{matrix} \nu_L(x) &  &\quad x\in L, \cr
                          t+1      & \quad\text{if} &\quad x=s, \cr
                          \nu_R(x)+t+1 & &\quad x\in R.  \end{matrix} \right.
$$
Here is the intrinsic numbering of the binary trees with 3 vertices:
$$
\hbox{\beginpicture
\setcoordinatesystem units <.5cm,.5cm>
\put{\beginpicture
\put{$1$} at 0 0 
\put{$2$} at 1 1
\put{$3$} at 2 2
\setdashes <1mm>
\plot 0.3 0.3  0.7 0.7 / 
\plot 1.3 1.3  1.7 1.7 / 
\endpicture} at 0 0

\put{\beginpicture
\put{$2$} at 2 0 
\put{$1$} at 1 1
\put{$3$} at 2 2
\plot 1.7 0.3  1.3 0.7 / 
\setdashes <1mm>
\plot 1.3 1.3  1.7 1.7 / 
\endpicture} at 4 0

\put{\beginpicture
\put{} at 0 0 
\put{$1$} at 0 1 
\put{$2$} at 1 2
\put{$3$} at 2 1
\plot 1.7 1.3  1.3 1.7 / 
\setdashes <1mm>
\plot 0.3 1.3  0.7 1.7 / 
\endpicture} at 8 0

\put{\beginpicture
\put{$1$} at 0 2 
\put{$3$} at 1 1
\put{$2$} at 0 0
\plot 0.3 1.7  0.7 1.3 / 
\setdashes <1mm>
\plot 0.7 0.7  0.3 0.3 / 
\endpicture} at 12 0

\put{\beginpicture
\put{$1$} at 0 2
\put{$2$} at 1 1
\put{$3$} at 2 0
\plot 1.7 0.3  1.3 0.7 / 
\plot 0.7 1.3  0.3 1.7 / 
\setdashes <1mm>
\endpicture} at 16 0

\endpicture}
$$
	\medskip 

Let $P$ be a non-crossing partition of $n+1$ elements. We define the graph $B(P) = A(P)\cup{}^\perp\! A(P)$
as follows: its vertices are the numbers $\{1,2,\dots,n+1\}$ and we use the arcs and the coarcs of $P$
as edges. It is a labeled graph: any edge is either solid (if it is an arc) or dashed 
(if it is a coarc).
	\medskip

\begin{prop}\label{binary}
 The labeled graph $B(P) = A(P)\cup {}^\perp\! A(P)$ is a binary tree.
\end{prop}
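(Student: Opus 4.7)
The strategy is to make the binary tree structure on $B(P)$ explicit by defining a parent map on the vertex set and then verifying that this map produces exactly the edges of $B(P)$, that iterating it lands at $n+1$, and that each vertex has at most one arc-child and at most one coarc-child. Let $Q$ denote the part of $P$ containing $n+1$ and let $Q_v$ denote the part containing $v$. Define $p \colon \{1, \dots, n\} \to \{1, \dots, n+1\}$ by three cases: if $v \in Q$, set $p(v)$ to be the successor of $v$ in $Q$; if $v \notin Q$ and $v = \min Q_v$, set $p(v) = \max Q_v + 1$; and if $v \notin Q$ and $v > \min Q_v$, set $p(v)$ to be the predecessor of $v$ in $Q_v$. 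The claim is that $p$ is the parent function of a binary tree on $B(P)$ rooted at $n+1$, with the arc-child of each vertex designated as its right (solid) child and the coarc-child as its left (dashed) child.

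First I would check that the pairs $\{v, p(v)\}$ range without repetition over the edges of $B(P)$. The arcs inside $Q$ arise from the first case (applied to the non-maximal elements of $Q$); the arcs inside any other part $R$ arise from the third case (applied to the non-minimal elements of $R$); and the coarc $[\min R, \max R + 1]$ for each part $R \ne Q$ arises from the second case applied to $v = \min R$. Since $|A(P)| + |{}^\perp\! A(P)| = (n+1-p) + (p-1) = n$ matches the $n$ pairs $\{v, p(v)\}$, the two edge sets agree.

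The key step is to show that iterating $p$ from any vertex terminates at $n+1$, which is what ensures that $p$ genuinely defines a rooted tree. Consider the potential $\phi(v) = \max Q_v$. Within $Q$, iteration uses successors and strictly increases $v$ until reaching $n+1$, with $\phi$ constant at $n+1$. Within a part $R \ne Q$, iteration descends by predecessors to $\min R$ with $\phi$ constant at $\max R$, whereupon the ``jump'' sends $\min R$ to $\max R + 1$, which lies in some part $R'$, and $\max R' \ge \max R + 1 > \max R$ gives a strict increase of $\phi$. Bounded above by $n+1$ and strictly increasing across jumps, $\phi$ forces the iteration to enter $Q$ after finitely many jumps and then terminate at $n+1$.

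Finally, the binary structure is extracted by analysing the preimage $p^{-1}(v)$ of each vertex: an arc-child of $v$ arises either from the first case (the predecessor of $v$ in $Q$, when $v \in Q$) or from the third case (the successor of $v$ in $Q_v$, when $v \notin Q$), and these two scenarios are mutually exclusive, so at most one arc-child exists; a coarc-child arises only through the second case, namely $u = \min R$ for the unique part $R \ne Q$ with $\max R = v - 1$, so at most one coarc-child exists. The main obstacle is the termination argument, where the potential $\phi$ must be carefully controlled across jumps; the remaining verifications are bookkeeping that follows directly from the definitions.
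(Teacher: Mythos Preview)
Your proof is correct. The explicit parent map $p$, the bijection $v\mapsto\{v,p(v)\}$ between $\{1,\dots,n\}$ and the edge set, the termination argument via the potential $\phi(v)=\max Q_v$, and the case analysis of $p^{-1}(v)$ all go through cleanly, and you obtain a valid binary tree structure on $B(P)$ rooted at $n+1$.

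The paper's route is shorter and less constructive. It counts the $n$ edges, then shows connectedness by observing that any $x<n+1$ is linked by arcs to the minimum $a$ of its part and then by the coarc $[a,z+1]$ to a strictly larger vertex; a connected graph on $n+1$ vertices with $n$ edges is a tree. For the binary structure it notes that each vertex $x$ is the left endpoint of at most one arc $[x,y]$ and the right endpoint of at most one coarc $[a,x]$, and declares these the solid and dashed children of $x$. Your termination argument is essentially this connectedness step made explicit with a potential, and your child analysis is the same observation read through $p^{-1}$ rather than directly on the arc/coarc shapes.

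One substantive difference worth flagging: your tree is rooted at $n+1$, whereas the paper's is rooted at $\min Q$ (the solid chain inside $Q$ is oriented oppositely in the two versions). Both are legitimate binary tree structures on the same edge-labelled graph, so both prove the proposition as stated. But the paper's choice is not accidental: with root $\min Q$, the intrinsic numbering $\nu_B$ recovers the original labels $1,\dots,n+1$, and this is precisely what feeds into the subsequent theorem giving bijections $\mathbf B(n)\to\NC(n)$. Your rooting does not have that property (for instance $\nu_B(n+1)=1$ whenever $|Q|\ge 2$), so if you continue into that theorem you would need to switch to the paper's orientation.
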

	
\begin{proof} As we know, the number of edges is $n$. Let us show for $x < n+1$ 
that there is a path in $B(P)$ starting at $x$ and ending at a vertex $x'$ with 
$x < x'$ (it follows then by induction that there is a path from $x$ to $n+1$). Let $a$ be the
minimal element and $z$ the maximal element of the part of $P$ which contains $x$. Then $x$ and $a$ are
connected by a sequence of arcs, and $[a,z+1]$ is a coarc. Thus $x$ is connected in $B(P)$ by a path
from $x$ to $x' = z+1$ and $x \le z < z+1$. 

Since $B(P)$ is a connected graph with $n$ edges and $n+1$ vertices, we see that $B(P)$ is 
a tree. If $x$ is a vertex, there is at most one vertex $y$ such that $[x,y]$ is an arc,
and at most one vertex $a$ such that $[a,x]$ is a coarc. This shows that $B(P)$ is a binary
tree using the arcs a solid edges and the coarcs as dashed edges.
\end{proof}
	
\subsubsection{}
Let us return to the example  
 $P =\bigl\{ \{1,4,7,\},\{2,3\},\{5\},\{6\},\{8,9,10\}\bigr\}$.
On the left, we reproduce the picture of the binary tree $B(P)$ as
shown above, on the right we rearrange the
vertices in order to see better the binary tree structure
(in particular, now the edges are straight lines, the root $8$ is the top vertex, 
and the successors of a vertex $x$ are below $x$). 
 $$
\hbox{\beginpicture
\setcoordinatesystem units <.4cm,.35cm>
\put{\beginpicture 
\multiput{$\bullet$} at 0 0  1 0  2 0  3 0  4 0  5 0  6 0  7 0  8 0  9 0 /
\put{} at 0 2.5
\plot 0 0  0 1 /
\plot 1 0  1 1 /
\plot 2 0  2 1 /
\plot 3 0  3 1 /
\plot 6 0  6 1 /
\plot 7 0  7 1 /
\plot 8 0  8 1 /
\plot 9 0  9 1 /
\circulararc 180 degrees from 2 1 center at 1.5 1
\circulararc 180 degrees from 3 1 center at 1.5 1
\circulararc 180 degrees from 6 1 center at 4.5 1
\circulararc 180 degrees from 8 1 center at 7.5 1
\circulararc 180 degrees from 9 1 center at 8.5 1

\setdashes <1mm>
\plot 0 0  0 -1 /
\plot 1 0  1 -1 /
\plot 3 0  3 -1 /
\plot 4 0  4 -1 /
\plot 5 0  5 -1 /
\plot 6 0  6 -1 /
\plot 7 0  7 -1 /
\circulararc 90 degrees from 0 -1 center at 1.5 -1
\circulararc -90 degrees from 7 -1 center at 5.5 -1
\circulararc -180 degrees from 3 -1 center at 2 -1
\circulararc -180 degrees from 5 -1 center at 4.5 -1
\circulararc -180 degrees from 6 -1 center at 5.5 -1
\plot 1.5 -2.7  5.5 -2.7 /
\put{} at 0 -2.5
\endpicture} at 0 0
\put{\beginpicture 
\setcoordinatesystem units <.6cm,.6cm>
\put{$1$} at -1 4
\put{$2$} at -1.5 2
\put{$3$} at -.6 1
\put{$4$} at 0.7 3
\put{$5$} at 0 0 
\put{$6$} at 1 1
\put{$7$} at 2 2
\put{$8$} at 1 5
\put{$9$} at 2.3 4
\put{$10$} at 3.6 3
\plot 1.3 4.7  1.95 4.25 /
\plot 2.6 3.7  3.25 3.2 /
\plot -.6 3.8  0.4 3.15 /
\plot 1.1 2.7  1.6 2.3 /
\plot -1.2 1.7  -.8 1.3 /
\setdashes <1mm>
\plot -.6 4.2  0.7 4.8 /
\plot 0.3 0.3  0.7 0.7 /
\plot 1.3 1.3  1.7 1.7 /
\plot -1.1 2.2  0.4 2.9 /
\endpicture} at 12 0
\endpicture}
$$ 
	\medskip 

We note the following: 
{\it The root of $B(P)$ is the minimal element of the part of $P$ which contains 
$n+1$ and the maximal element of the part of ${}^\perp P$ which contains $1$.} In terms of
$\mo\Lambda_n$, the root of $B(P)$ corresponds to cutting the indecomposable sincere $\Lambda$-module
$I = [1,n+1]$ into a submodule $M$ which belongs to ${}^\perp \mathcal A(P)$ and its factor module
$I/M$ which belongs to $\mathcal A(P)$. Namely, there is the following lemma:


\begin{lemma} Let $I/M$ be the largest factor module of $I$ which belongs to $\mathcal A(P)$.
Then $M$ belongs to ${}^\perp \mathcal A(P)$.
\end{lemma}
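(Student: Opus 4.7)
The plan is to identify $M$ explicitly as a projective submodule of $I$ and then verify the two perpendicularity vanishings directly, using the non-crossing property of $P$ only in the final combinatorial step.

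First I would describe the submodule lattice of $I = [1,n+1]$: its submodules are exactly the $[1,k]$ for $1\le k\le n+1$, with corresponding quotients $I/[1,k] = [k,n+1]$. Since the indecomposables in $\mathcal{A}(P)$ are precisely the intervals $[c,d]$ whose endpoints share a part of $P$, the quotient $[k,n+1]$ belongs to $\mathcal{A}(P)$ if and only if $k$ and $n+1$ lie in a common part. Taking $k$ as small as possible therefore gives $M = [1,a]$, where $a$ is the minimal element of the part $P_0$ of $P$ that contains $n+1$ (the case $a=1$ is trivial since then $M=0$).

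Next I would exploit that $M = [1,a] = P(a-1)$ is the indecomposable projective with top $S(a-1)$. Projectivity immediately kills $\mathrm{Ext}^1(M,-)$, so the task reduces to verifying $\mathrm{Hom}(M,X)=0$ for every indecomposable $X \in \mathcal{A}(P)$. Using the standard identification $\mathrm{Hom}(P(a-1),X) = X_{a-1}$, this amounts to the purely combinatorial statement that no indecomposable $[c,d] \in \mathcal{A}(P)$ satisfies $c \le a-1 < d$.

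The main (and only nontrivial) step is this combinatorial check, and it is precisely where the non-crossing hypothesis enters. Suppose such a $[c,d]$ existed, with $c,d$ in some common part $P'$ of $P$. If $P' = P_0$, then $c \le a-1$ would lie in $P_0$, contradicting $a = \min P_0$. If $P' \ne P_0$, then $d = n+1$ would force $P' = P_0$, so $d < n+1$, giving the configuration $c < a < d < n+1$ with $\{c,d\}\subseteq P'$ and $\{a,n+1\}\subseteq P_0$ — a crossing of $P_0$ and $P'$, contradicting $P \in \mathrm{NC}(n+1)$. I expect this crossing/minimality argument to be the crux; everything else is routine bookkeeping with intervals and with the explicit description of indecomposables in $\mathcal{A}(P)$.
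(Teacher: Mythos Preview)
Your proof is correct and complete. In fact it is cleaner and more explicit than the paper's printed argument, which is rather sketchy: the paper assumes $\Hom(M,X)\neq 0$ for some $X$ in the antichain $A(P)$, notes that $M\neq 0$ so $\tau(I/M)=\rad I/\rad M$, and observes that such an $X$ cannot be injective (since it would then be a factor of $I$ lying in $\mathcal A(P)$ of larger length than $I/M$) --- but the argument stops there, without finishing the contradiction and without addressing the $\Ext^1$-vanishing at all.

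Your approach avoids these gaps by identifying $M=[1,a]=P(a-1)$ explicitly. Projectivity kills $\Ext^1(M,-)$ outright, and reduces the $\Hom$-vanishing to the purely combinatorial claim that no interval $[c,d]$ with $c,d$ in a common part of $P$ can satisfy $c\le a-1<d$. Your dichotomy (either $P'=P_0$, contradicting minimality of $a$, or $P'\neq P_0$, producing a crossing with $\{a,n+1\}$) is exactly right. One cosmetic point: when $P'\neq P_0$ you pass from $a\le d$ to the strict inequality $a<d$; this is justified because $a\in P_0$ and $d\in P'$ lie in different parts, hence $a\neq d$, but it would not hurt to say so explicitly.
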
 
	
\begin{proof} Assume that $X$ belongs to $A(P)$ and $\Hom(M,X) \neq 0.$ In particular, we have $M \neq 0$, thus
$I/M$ is not projective and $\tau(I/M) = \rad I/\rad M$. Now $X$ cannot be injective, since it has
$\topp M$ as a composition factor (it would be a factor module of $I$ which belongs to $\mathcal A(P)$,
and of larger length than $I/M$). 
\end{proof}

These considerations indicate an interesting interpretation of the 
base set of $\NC(n+1)$, these are the numbers $1,2,\dots,n\!+\!1$: we may call these elements 
the possible {\it cuts},
since they serve to describe the cuts of the module $[1,n+1]$, or also the cuts of the quiver of $\Lambda_n$.
The Lemma yields the cut $s$,
where $S(s)$ is the socle of $I/M$ provided $I/M\neq 0$, and $s = n+1$ in case $I/M = 0$.
	\bigskip

\subsubsection{} 
Let us denote by $\B(n)$ the set of binary trees with $n$ vertices. The previous considerations
may be summarized as follows:

\begin{theorem} Let $B$ be a binary tree with $n$ vertices and use the intrinsic numbering. 
The set $A(B)$ of solid edges of $B$ is a non-crossing partition, also the set of dashed edges of $B$ is
a non-crossing partitions, it is just ${}^\perp A(B)$. The maps 
	\smallskip

\Rahmen{$\beginpicture
\setcoordinatesystem units <2cm,1cm>
\put{$\B(n)$} at 1 1
\multiput{$\NC(n)$} at 0 0  2 0 /
\arr{0.7 0.8}{0.3 0.3}
\arr{1.3 0.8}{1.7 0.3}
\put{$A(-)\strut$} at 1.7 0.7
\put{${}^\perp\! A(-)\strut$} at 0.2 0.7 
\endpicture$}
	\smallskip

\noindent 
are bijective maps.
\end{theorem}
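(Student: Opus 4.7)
The strategy is to reduce everything to a single claim (i): for every $B \in \B(n)$ with its intrinsic numbering, the set $A(B)$ of solid edges is the arc set of a non-crossing partition. Once (i) is in hand, the theorem follows by combining Proposition \ref{binary}, Lemma \ref{part-arc}, Proposition \ref{perp}, and the classical equality $|\NC(n)| = C_n = |\B(n)|$.

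To prove (i), I would appeal to Lemma \ref{arcs}. Suppose $[a,b], [a',b'] \in A(B)$ satisfy $a \le a' < b \le b'$; the goal is $a = a'$ and $b = b'$. The edge $[a,b]$ joins a vertex $s$ of label $a$ to its right successor $r$ of label $b$. Writing the subtree rooted at $s$ as $(L_s, s, R_s)$ and in turn $R_s = (L', r, R')$, the recursive intrinsic numbering applied to this subtree forces the left subtree $L'$ of $r$ to have label range exactly $\{a+1,\dots,b-1\}$, since $b = a + |L'| + 1$. Thus every vertex with label strictly between $a$ and $b$ lies in $L'$, and any solid edge leaving such a vertex has its other endpoint in the subtree beneath it, hence still in $L'$, forcing $b' \le b-1 < b$. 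This rules out $a < a' < b$; and the case $a = a'$ forces $s = s'$ (labels determine vertices) and hence $b = b'$. So $A(B)$ satisfies the condition of Lemma \ref{arcs} and is the arc set of some unique non-crossing partition $\psi(B) \in \NC(n)$.

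Next, by Proposition \ref{binary} the map $\phi\!: \NC(n) \to \B(n)$ given by $\phi(P) = B(P) = A(P) \cup {}^\perp A(P)$ is well-defined, and the solid edges of $B(P)$ are by construction $A(P)$, so $\psi \circ \phi = \mathrm{id}_{\NC(n)}$ by Lemma \ref{part-arc}. Hence $\phi$ is injective, and since $|\NC(n)| = C_n = |\B(n)|$, it is a bijection with inverse $\psi = A(-)$. For the dashed-edge statement, each $B \in \B(n)$ now equals $\phi(\psi(B)) = A(\psi(B)) \cup {}^\perp A(\psi(B))$, so the dashed edges of $B$ equal ${}^\perp A(\psi(B))$; by Proposition \ref{perp} this is the arc set of the Kreweras complement $\kappa(\psi(B))$. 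Thus ${}^\perp A(-) = \kappa \circ \psi$ is also a bijection $\B(n) \to \NC(n)$.

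The principal difficulty is (i): the careful unwinding of the recursive intrinsic numbering to identify precisely which vertices carry labels strictly between $a$ and $b$, and then the observation that solid edges out of those vertices cannot escape into the range $\{b,\dots,n\}$. Once this local structural fact is in place, the dashed-edge statement and the bijectivity of both maps follow formally from already-proved results together with the Catalan cardinality comparison.
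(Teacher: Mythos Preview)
The paper does not supply a proof; it states the theorem as a summary of Propositions~\ref{binary} and~\ref{perp} together with the Catalan cardinality equality. Your approach goes further: rather than deducing that $A(B)$ is non-crossing a posteriori from the surjectivity of $P\mapsto B(P)$, you give a direct verification of claim (i) via Lemma~\ref{arcs}. That argument is correct---the identification of the label range $\{a+1,\dots,b-1\}$ with the left subtree $L'$ of $r$ is the right unwinding of the intrinsic numbering, and it immediately rules out $a<a'<b\le b'$.

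One step deserves more care. When you assert $\psi\circ\phi=\mathrm{id}$ because ``the solid edges of $B(P)$ are by construction $A(P)$'', you are tacitly using that the given labels $1,\dots,n$ on $B(P)$ agree with the intrinsic numbering of the underlying abstract binary tree. What is by construction is only that the solid edges form $A(P)$ under the \emph{original} labels; for $\psi$ (which reads off solid edges via the intrinsic numbering) to recover $A(P)$, you need $B(P)$ to be a binary search tree. The immediate parent--child relations are BST-consistent (a solid child has larger label, a dashed child smaller, straight from the definitions of arc and coarc), but extending this to entire subtrees genuinely uses that $P$ is non-crossing---a locally BST-consistent labeled binary tree need not be a BST. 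A short argument, parallel in spirit to your proof of (i), closes this gap; once that is in place, your proof is complete.
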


The composition of the maps from left to right
$$
 \NC(n)  \overset{({}^\perp\! A(-))^{-1}}\longrightarrow \B(n) \overset{A(-)} \longrightarrow \NC(n)
$$
is just the Kreweras complement $\kappa$ defined by 
$\kappa(A) = A^\perp$ for $A\in \NC(n)$,
see Theorem \ref{Kreweras}.
	\bigskip

\subsection{The $n^{n-2}$-problems:
 Maximal chains in $\NC(n)$, parking functions, labeled trees} 
	
We have mentioned in Chapter 1 that $\c(\mathbb A_n) = (n+1)^{n-1}$: the number of
complete exceptional sequences for a quiver of type $\mathbb A_n$ is given by $(n+1)^{n-1}$.
There is a wealth of mathematical counting problems with the answer $(n+1)^{n-1}$ or $n^{n-2}$
and one tries to find canonical bijections.
	\medskip

\subsubsection{} {\bf Here are some of the $n^{n-2}$-problems:}
	\medskip

{\bf (a) Maximal chains in the lattice $\NC(n)$ of non-crossing partitions.} 
	\medskip 

Let us denote by $\M(\NC(n))$ the set of maximal chains in the lattice $\NC(n)$. Then
	\smallskip

\Rahmen{$|\M(\NC(n))| = (n+1)^{n-1}$.}
	\smallskip

\noindent
We hope that the
further discussions in this section will provide a better understanding of this equality. 
	\bigskip 

{\bf (b) The set $[1,n]^{n-2}$.} Of course, $n^{n-2}$ counts the number of (set-theoretical) functions 
$$
 [1,n\!-\!2] \to [1,n];
$$
let us denote the set of these functions by $[1,n]^{n-2}$, we may consider this set
as the set of sequences $c_1,\dots,c_{n-2}$ of integers $c_i$ with $1\le i \le n.$
	\medskip

{\bf (c) Parking functions.} 
We denote by $\P(n)$ the set of  
{\it parking functions} for $n$ cars, 
these are the endofunctions $f$ of the set $[1,n] = \{1,2,\dots,n\}$
such that $\{x\mid f(x) \le i\}$ has cardinality at least $i$ for all $i$, or, equivalently, provided
the non-decreasing rearrangement $b_1,\dots,b_n$ of the value sequence $f(1),\dots,f(n)$
satisfies $b_i\le i$ for all $i$. Instead of looking at the function $f\!:[1,n] \to [1,n]$,
we also may call the corresponding $n$-tuple $(f(1),\dots,f(n))$
a parking function for $n$ cars (the coefficients of such an $n$-tuple are positive integers
and it follows directly from the definition of a parking function that they are bounded by $n$). 
For example, the parking functions for $n=3$ are obtained from 
$$
 (1,1,1),\ (1,1,2),\ (1,1,3),\ (1,2,2),\ (1,2,3)
$$
by taking all rearrangements; thus there are precisely 
$16 = 1+3+3+3+6$ parking functions for $n = 3$.

Parking functions were introduced in 1966 by Konheim and Weiss (and by Pyke in 1959).
Why are these functions called parking functions? Consider $n$ cars $C_1,\dots,C_n$ and $n$
parking lots labeled $1,2,\dots,n$. The cars arrive in arbitrary order at the parking lot, and the 
car $C_i$ is required to park in the lot $f(i)$ provided it is free, otherwise in the 
next free parking lot. 
{\it The function $f$ is a parking function if and only if any car finds a parking lot
if the cars arrive in some fixed order, and in this case any car finds a parking lot if the cars
arrive in any order.}
$$
\hbox{\beginpicture
\setcoordinatesystem units <1.4cm,1cm>
\multiput{} at 0 0  9 0 /
\arr{4.5 1}{6.5 1}
\multiput{\car} at 0.5 1  2.5 1  3.5 1 /
\plot 0 0.5  5 0.5 /
\plot 5 0  9 0 /
\plot 5 0  5 0.5 /
\plot 6 0  6 0.5 /
\plot 7 0  7 0.5 /
\plot 8 0  8 0.5 /
\plot 9 0  9 0.5 /
\setdashes <1mm>
\plot 5 0.5  9 0.5 /
\put{$C_1$} at 3.4 1.6
\put{$C_2$} at 2.4 1.6
\put{$\cdots$} at 1.5 1 
\put{$C_n$} at 0.4 1.6
\put{$1$} at 5.5 0.25 
\put{$2$} at 6.5 0.25 
\put{$\cdots$} at 7.5 0.25 
\put{$n$} at 8.5 0.25 
\put{parking lots} at 7 -.3
\endpicture}
$$

We have:
	\medskip
 
\Rahmen{$|\P(n)| = (n+1)^{n-1}$.}
	\bigskip

The proof follows immediately from the following lemma:
	\medskip

\begin{lemma} For any sequence $a = (a_1,\dots,a_n) \in [1,n+1]^n$, there are $n+1$ elements
$a'\in  [1,n+1]^n$ such that $a'-a \mod n+1$ is constant and precisely one of 
these elements $a'$ is a parking function for $n$ cars.
\end{lemma}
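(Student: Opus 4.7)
The plan is to invoke Pollak's classical circular parking argument. First I would consider the action of the cyclic group $\mathbb{Z}/(n+1)$ on $[1,n+1]^n$ by $a \mapsto a+k \pmod{n+1}$ componentwise, with representatives chosen in $\{1,2,\dots,n+1\}$. A direct check on any single coordinate shows this action is free, so every orbit has cardinality exactly $n+1$; this gives the first half of the lemma. It then remains to prove that each orbit contains precisely one parking function.

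For this, I would imagine $n+1$ parking lots arranged on a circle and $n$ cars with preferences $a_1,\dots,a_n \in \{1,\dots,n+1\}$, each car first driving to its preferred lot and then proceeding clockwise to the next free lot if its preferred lot is taken. Since the number of cars is strictly less than the number of lots, every car parks successfully and exactly one lot remains empty at the end. The crucial observation is that replacing $a$ by $a+1 \pmod{n+1}$ has the effect of rotating the entire final configuration by one position clockwise; in particular the empty lot shifts by $+1 \pmod{n+1}$. Consequently, as $k$ ranges over $\{0,1,\dots,n\}$, the empty lot associated with $a+k$ takes each value in $\{1,\dots,n+1\}$ exactly once.

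The final step is the characterization: $a$ is a parking function for $n$ cars if and only if in the circular process lot $n+1$ is the empty lot. If $a$ is a parking function, then all entries lie in $[1,n]$ and the ordinary linear parking on lots $\{1,\dots,n\}$ already succeeds; the circular process mimics it and leaves lot $n+1$ untouched. Conversely, if lot $n+1$ stays empty throughout, then no car ever preferred $n+1$ (otherwise the first such car would find it free and park there) and no car ever overflowed from lot $n$ to lot $n+1$ (for the same reason), so all preferences lie in $[1,n]$ and the linear parking succeeds. Combining this characterization with the orbit analysis gives exactly one parking function per orbit, hence $|\mathbf{P}(n)| = (n+1)^n/(n+1) = (n+1)^{n-1}$.

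The main obstacle is making the rotation claim fully rigorous, i.e.\ verifying that shifting every preference by $+1 \pmod{n+1}$ induces exactly a $+1$ rotation of the final assignment of cars to lots. This follows by induction on the arrival order: if the first $j$ cars under preferences $a$ occupy lots $L_1,\dots,L_j$, then under preferences $a+1$ the same cars occupy $L_1+1,\dots,L_j+1 \pmod{n+1}$, because the clockwise search procedure commutes with the rotation of the circular track. Once this lemma is in place, the remaining deductions are straightforward and complete the proof.
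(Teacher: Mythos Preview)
Your proposal is correct and follows essentially the same approach as the paper: both use Pollak's circular parking argument with $n+1$ lots on a circle, identify the empty lot $l(a)$, observe that shifting preferences by $c$ rotates the empty lot by $c$, and characterize parking functions by $l(a)=n+1$. You give more detail than the paper does (the paper simply asserts the rotation property and the characterization as ``clear''), but the underlying argument is identical.
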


\begin{proof}[Proof, using again a parking lot simulation.] This time, $n+1$ parking lots
are arranged in a circle, labeled $1,2,\dots,n+1$ in consecutive order. 
There are $n$ cars $C_i$, where $1\le i \le n$, they come in the order $C_1,\dots,C_n$. 
The car $C_i$ starts at the parking lot
$a_i$, takes it, if it is free, otherwise it takes the next free lot (driving around the circle).
Since there are only $n$ cars, but $n+1$ lots, any car will find a free lot and at the end, 
precisely one of the lots, say $l(a)$ will remain free (we have fixed the order of the cars,
so that we don't have to show that $l(a)$ is well-defined). Clearly, $a$ is a parking function
for $n$ cars if and only if $l(a) = n+1$. On the other hand, 
if we fix some $c$ with $1\le c \le n+1$ and define $a' = (a'_1,\dots,a'_n)$ by $a'_i = a_i+c
\mod n+1$, for all $i$, then $l(a') \equiv l(a)+c \mod n+1.$ 
\end{proof} 

The formula  $|\P(n)| = (n+1)^{n-1}$ follows, since the cardinality of $[1,n+1]^n$ is $(n+1)^n$, 
thus $|\P(n)| = \frac1{n+1}(n+1)^n = (n+1)^{n-1}$.
Here is a reformulation in terms of bijections:
	\medskip

\begin{cor} For any sequence $a = (a_1,\dots,a_{n-1})\in [1,n+1]^{n-1}$, let $u(a) =  
(a_1,\dots,a_{n-1},1) \in [1,n+1]^n.$ If $b\in [1,n+1]^n$, let $p(b)$ be the parking function
for $n$ cars with $p(b)-b \mod n+1$ being constant. Then the map
	\smallskip

\Rahmen{$pu\!:[1,n+1]^{n-1} \longrightarrow \P(n)$}
	\smallskip\noindent
is a bijection.
\end{cor}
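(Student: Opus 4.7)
The plan is to deduce the corollary directly from the lemma by a counting and last-coordinate argument, without any further combinatorial work.

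First I would observe that both maps are well-defined: for $a\in[1,n+1]^{n-1}$, the tuple $u(a)=(a_1,\dots,a_{n-1},1)$ indeed lies in $[1,n+1]^n$ since $1\in[1,n+1]$, and then the lemma produces a unique parking function $p(u(a))\in\P(n)$. Next I would record the cardinality identity
$$
|[1,n+1]^{n-1}| \;=\; (n+1)^{n-1} \;=\; |\P(n)|,
$$
where the second equality is precisely the content of the lemma. So it suffices to show that $pu$ is injective.

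For injectivity, suppose $pu(a)=pu(a')$ with $a,a'\in[1,n+1]^{n-1}$. By the lemma, the equivalence relation on $[1,n+1]^n$ given by ``differ by a constant mod $n+1$'' has exactly one parking function in each class, so $pu(a)=pu(a')$ forces $u(a)$ and $u(a')$ to lie in the same equivalence class. Hence there is an integer $c$ with $u(a)_i-u(a')_i\equiv c\pmod{n+1}$ for all $i$. Evaluating at $i=n$ gives $1-1\equiv c\pmod{n+1}$, so $c\equiv 0$. Since all entries live in $[1,n+1]$, the coordinate-wise differences lie in $\{-n,\dots,n\}$, and being divisible by $n+1$ they must vanish. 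Thus $u(a)=u(a')$, and so $a=a'$.

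The main (and essentially only) point to watch is that the appended coordinate $1$ in the definition of $u$ is what pins down the shift constant $c$; any fixed value in $[1,n+1]$ would work equally well, since shifting by $c\in\{0,1,\dots,n\}$ permutes the possible last coordinates bijectively. I do not anticipate any obstacle beyond verifying this normalization, because everything else has already been set up in the proof of the lemma: the equivalence class of any $b\in[1,n+1]^n$ contains exactly $n+1$ elements, exactly one of which is a parking function, and exactly one of which has final coordinate $1$. Together with the cardinality match, injectivity yields that $pu$ is a bijection, completing the proof. (For surjectivity as an alternative finish: given $f\in\P(n)$, the unique shift of $f$ whose last coordinate is $1$ has the form $u(a)$ for a unique $a\in[1,n+1]^{n-1}$, and then $pu(a)=f$.)
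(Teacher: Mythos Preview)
Your proof is correct and is exactly the argument implicit in the paper: the paper presents the corollary as a ``reformulation in terms of bijections'' of the cardinality computation $|\P(n)|=(n+1)^{n-1}$ and gives no further proof, so your injectivity-plus-counting argument (with the appended coordinate $1$ pinning down the shift) is precisely what is intended.
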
  
	\medskip 

\subsubsection{\bf From maximal chains in $\M(\NC(n))$ to parking functions.} 
Starting with a maximal chain $A^{(0)},\dots,A^{(n)}$ in $\NC(n)$, Stanley  has defined
 a sequence 
$\lambda(A^{(0)},\dots,A^{(n)}) \in \mathbb N^n$
as follows: 
Assume that $P < P'$ are neighbors in $\NC(n),$ thus $P,P'$ are two 
non-crossing partitions of $\{1,2,\dots,n\}$ and there are two different parts $P_i,P_j$ of $P$
such that $P'$ is obtained from $P$ by replacing the parts $P_i,P_j$ by the disjoint union
$P_i\cup P_j$; we assume that the minimal element of $P_i$ is smaller than the minimal element of $P_j$
and define $\lambda'(P,P')$ as the maximal element belonging to $P_i$ and being smaller 
than all elements of $P_j$
(in particular, always we have $\lambda'(P,P') \le n-1$). Then there is the definition
$$
 \lambda(P^{(0)},\dots,P^{(n)}) = 
 \bigl(\lambda'(P^{(0)},P^{(1)}),\lambda'(P^{(1)},P^{(2)}),\dots,\lambda'(P^{(n-1)},P^{(1)})\bigr).
$$
For example, if we consider the maximal chain
$$
 1|2|3|4|5 < 1|25|3|4 < 125|3|4 < 125|34 < 12345
$$ 
in $\NC(5),$
we have
$$
 \lambda\bigl(1|2|3|4|5,1|25|3|4,125|3|4,125|34,12345\bigr) = (2,1,3,2).
$$
	\medskip 

\begin{theorem} {\bf (Stanley, 1997).} The map 
	\smallskip 

\Rahmen{$\lambda\!:\M(\NC(n+1)) \to \P(n)$.}
	\smallskip

\noindent 
is a bijection.
\end{theorem}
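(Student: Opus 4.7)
The plan is in three stages: well-definedness of $\lambda$ into $\P(n)$, equality of cardinalities, and injectivity (which together with equicardinality gives bijectivity). For well-definedness, I would use the standard criterion that $(c_1,\ldots,c_n)$ is a parking function iff $|\{i : c_i \le k\}| \ge k$ for each $1\le k \le n$. Given a chain $P^{(0)} < \cdots < P^{(n)}$ with labels $\lambda_i = \lambda'(P^{(i-1)}, P^{(i)})$, I would restrict each $P^{(i)}$ to $\{1,\ldots,k+1\}$; the restricted sequence is a chain of non-crossing partitions of a $(k+1)$-element set going from the discrete to the trivial partition, involving exactly $k$ non-stuttering merges. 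At any such step the merged blocks $P_a, P_b$ (with $\min P_a < \min P_b$) both meet $\{1,\ldots,k+1\}$, so $\min P_b \le k+1$ and hence $\lambda_i < \min P_b \le k+1$, giving $\lambda_i \le k$. The cardinality equality $|\M(\NC(n+1))| = |\P(n)| = (n+1)^{n-1}$ then follows from the lemma preceding the theorem together with the identifications $\NC(n+1) \simeq \A(\mo\Lambda_n)$ (Theorem \ref{classical}), $\M(\A(\mo\Lambda_n)) \simeq \E(\mo\Lambda_n)$ (Theorem \ref{neighbors1}), and $|\E(\mo\Lambda_n)| = \c(\mathbb A_n) = (n+1)^{n-1}$ from Table 1.

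For injectivity, I would argue by induction on $n$. Given a candidate label sequence $(\lambda_1,\ldots,\lambda_n)$, the label $\lambda_n$ constrains $P^{(n-1)}$ to a finite list of two-block non-crossing partitions $\{A,B\}$ of $\{1,\ldots,n+1\}$. For each such $\{A,B\}$ the interval $[P^{(0)}, P^{(n-1)}]$ in $\NC(n+1)$ factors as $\NC(A) \times \NC(B)$, so any maximal chain in it interleaves maximal chains of $\NC(A)$ and $\NC(B)$; correspondingly, the remaining labels $(\lambda_1,\ldots,\lambda_{n-1})$ split by membership in $A$ versus $B$ into two subsequences, which by the inductive hypothesis are themselves parking functions on $A$ and $B$ and uniquely determine the sub-chains. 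Hence the restriction of $\lambda$ to chains through $\{A,B\}$ is an injection onto a subset $X_{\{A,B\}} \subseteq \P(n)$ of cardinality $\binom{n-1}{|A|-1} |A|^{|A|-2} |B|^{|B|-2}$.

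The main obstacle is to show that the various $X_{\{A,B\}}$ are pairwise disjoint. This is where the cardinality identity becomes essential: since $\sum_{\{A,B\}} |X_{\{A,B\}}| = |\M(\NC(n+1))| = (n+1)^{n-1} = |\P(n)|$ and each $X_{\{A,B\}}$ is contained in $\P(n)$, the sizes can sum to $|\P(n)|$ only if the $X_{\{A,B\}}$ are pairwise disjoint (and their union is all of $\P(n)$). Pairwise disjointness immediately yields injectivity of $\lambda$, and together with the equicardinality established above this gives the desired bijection.
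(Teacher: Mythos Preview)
The paper does not supply its own proof of this theorem; it is attributed to Stanley and used as input. The paper's contribution here is the subsequent Theorem~\ref{soc}, which identifies Stanley's map $\lambda$ with the representation-theoretic map $\soc\colon \E(\mo\Lambda_n)\to\P(n)$ via a commutative diagram; the paper then explicitly remarks that a direct proof of the injectivity of $\soc$ (equivalently, of $\lambda$) would be of independent interest.

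Your well-definedness argument (Stage~1) and the cardinality count (Stage~2) are correct, and so is the inductive step showing that $\lambda$ is injective on the set of chains passing through any fixed coatom $\{A,B\}$. The gap is the final inference. From $\sum_{\{A,B\}}|X_{\{A,B\}}|=|\P(n)|$ together with $X_{\{A,B\}}\subseteq\P(n)$ you \emph{cannot} conclude that the $X_{\{A,B\}}$ are pairwise disjoint: take $\P(n)=\{a,b\}$ and $X_1=X_2=\{a\}$ for a two-set counterexample. The inequality $|\bigcup X_{\{A,B\}}|\le\sum|X_{\{A,B\}}|$ holds in general and is perfectly compatible with overlaps; what you would actually need is $|\bigcup X_{\{A,B\}}|=|\P(n)|$, i.e.\ surjectivity of $\lambda$, which is exactly what is still to be proved. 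So as written the argument is circular.

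The repair is to show directly that the label sequence determines the coatom. Every two-block non-crossing partition of $\{1,\dots,n+1\}$ has the form $B=[b,c]$, $A=[1,b-1]\cup[c+1,n+1]$, and you already have $b=\lambda_n+1$. For $i<n$, a $B$-merge label lies in $B$, hence in $[b,c-1]$, while an $A$-merge label lies in $A$, hence in $[1,b-1]\cup[c+1,n]$; in particular no $\lambda_i$ equals $c$. Since the $B$-labels (shifted by $b-1$) form a parking function of length $c-b$, one has $|\{i:\lambda_i\in[b,k]\}|\ge k-b+1$ for $b\le k\le c-1$, whereas $|\{i:\lambda_i\in[b,c]\}|=c-b$. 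Thus $c$ is the least $k\ge b$ with $|\{i:\lambda_i\in[b,k]\}|\le k-b$, so $\{A,B\}$ is recoverable from the labels. This supplies the missing disjointness and completes your inductive scheme.
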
 
	

Here is the case $n=3$. For any pair $P<P'$ of neighbors in $\NC(4)$ we have added
the number $\lambda'(P,P')$ to the (dotted) line connecting $P$ and $P'$:
	\vfill\eject

$$
\hbox{\beginpicture
  \setcoordinatesystem units <2cm,1.8cm>
\put{$t=3$} at 3.35 3
\put{$2$} at 3.5 2
\put{$1$} at 3.5 1
\put{$0$} at 3.5 0
\put{$\beginpicture
  \setcoordinatesystem units <0.15cm,0.2cm>
  \multiput{} at 0 2  /
  \multiput{$\sssize \bullet$} at  0 0.3  1 0.3  2 0.3  3 0.3 /
 \endpicture$} at 0 0

\put{$\beginpicture
  \setcoordinatesystem units <0.2cm,0.2cm>
  \multiput{} at 0 2  /
  \multiput{$\sssize \bullet$} at  0 0.3  1 0.3  2 0.3  3 0.3 /
  \circulararc 180 degrees from 1 1 center at 0.5 1 
  \plot 0 0.3  0 1 /
  \plot 1 0.3  1 1 /
 \endpicture$} at -2.5 1
\put{$\beginpicture  
  \setcoordinatesystem units <0.2cm,0.2cm>
  \multiput{} at 0 2  /
  \multiput{$\sssize \bullet$} at  0 0.3  1 0.3  2 0.3  3 0.3 /
  \circulararc 180 degrees from 2 1 center at 1 1 
  \plot 0 0.3  0 1 /
  \plot 2 0.3  2 1 /
 \endpicture$} at -1.5 1
\put{$\beginpicture
  \setcoordinatesystem units <0.2cm,0.2cm>
  \multiput{} at 0 2  /
  \multiput{$\sssize \bullet$} at  0 0.3  1 0.3  2 0.3  3 0.3 /
  \circulararc 180 degrees from 3 1 center at 1.5 1 
  \plot 0 0.3  0 1 /
  \plot 3 0.3  3 1 /  
 \endpicture$} at -.5 1
\put{$\beginpicture
  \setcoordinatesystem units <0.2cm,0.2cm>
  \multiput{} at 0 2  /
  \multiput{$\sssize \bullet$} at  0 0.3  1 0.3  2 0.3  3 0.3 /
  \circulararc 180 degrees from 2 1 center at 1.5 1 
  \plot 1 0.3  1 1 /
  \plot 2 0.3  2 1 /
 \endpicture$} at 0.5 1
\put{$\beginpicture
  \setcoordinatesystem units <0.2cm,0.2cm>
  \multiput{} at 0 2  /
  \multiput{$\sssize \bullet$} at  0 0.3  1 0.3  2 0.3  3 0.3 /
  \circulararc 180 degrees from 3 1 center at 2 1 
  \plot 1 0.3  1 1 /
  \plot 3 0.3  3 1 /  
 \endpicture$} at 1.5 1
\put{$\beginpicture
  \setcoordinatesystem units <0.2cm,0.2cm>
  \multiput{} at 0 2  /
  \multiput{$\sssize \bullet$} at  0 0.3  1 0.3  2 0.3  3 0.3 /
  \circulararc 180 degrees from 3 1 center at 2.5 1 
  \plot 2 0.3  2 1 /
  \plot 3 0.3  3 1 /  
 \endpicture$} at 2.5  1
\put{$\beginpicture
  \setcoordinatesystem units <0.2cm,0.2cm>
  \multiput{} at 0 2  /
  \multiput{$\sssize \bullet$} at  0 0.3  1 0.3  2 0.3  3 0.3 /
  \circulararc 180 degrees from 1 1 center at 0.5 1 
  \circulararc 180 degrees from 2 1 center at 1.5 1 
  \plot 0 0.3  0 1 /
  \plot 1 0.3  1 1 /
  \plot 2 0.3  2 1 /
 \endpicture$} at -2.5 2
\put{$\beginpicture
  \setcoordinatesystem units <0.2cm,0.2cm>
  \multiput{} at 0 2  /
  \multiput{$\sssize \bullet$} at  0 0.3  1 0.3  2 0.3  3 0.3 /
  \circulararc 180 degrees from 1 1 center at 0.5 1 
  \circulararc 180 degrees from 3 1 center at 2 1 
  \plot 0 0.3  0 1 /
  \plot 1 0.3  1 1 /
  \plot 3 0.3  3 1 /  
 \endpicture$} at -1.5 2
\put{$\beginpicture
  \setcoordinatesystem units <0.2cm,0.2cm>
  \multiput{} at 0 2  /
  \multiput{$\sssize \bullet$} at  0 0.3  1 0.3  2 0.3  3 0.3 /
  \circulararc 180 degrees from 1 1 center at 0.5 1 
  \circulararc 180 degrees from 3 1 center at 2.5 1 
   \plot 0 0.3  0 1 /
  \plot 1 0.3  1 1 /
  \plot 2 0.3  2 1 /
  \plot 3 0.3  3 1 /  
 \endpicture$} at -.5 2
\put{$\beginpicture
  \setcoordinatesystem units <0.2cm,0.2cm>
  \multiput{} at 0 2  /
  \multiput{$\sssize \bullet$} at  0 0.3  1 0.3  2 0.3  3 0.3 /
  \circulararc 180 degrees from 2 1 center at 1.5 1 
  \circulararc 180 degrees from 3 1 center at 1.5 1 
  \plot 0 0.3  0 1 /
  \plot 1 0.3  1 1 /
  \plot 2 0.3  2 1 /
  \plot 3 0.3  3 1 /  
 \endpicture$} at 0.5 2
\put{$\beginpicture
  \setcoordinatesystem units <0.2cm,0.2cm>
  \multiput{} at 0 2  /
  \multiput{$\sssize \bullet$} at  0 0.3  1 0.3  2 0.3  3 0.3 /
  \circulararc 180 degrees from 3 1 center at 2.5 1 
  \circulararc 180 degrees from 2 1 center at 1 1 
  \plot 0 0.3  0 1 /
  \plot 2 0.3  2 1 /
  \plot 3 0.3  3 1 /  
 \endpicture$} at 1.5 2
\put{$\beginpicture
  \setcoordinatesystem units <0.2cm,0.2cm>
  \multiput{} at 0 2  /
  \multiput{$\sssize \bullet$} at  0 0.3  1 0.3  2 0.3  3 0.3 /
  \circulararc 180 degrees from 2 1 center at 1.5 1 
  \circulararc 180 degrees from 3 1 center at 2.5 1 
  \plot 1 0.3  1 1 /
  \plot 2 0.3  2 1 /
  \plot 3 0.3  3 1 /  
 \endpicture$} at 2.5 2

\put{$\beginpicture
  \setcoordinatesystem units <0.2cm,0.2cm>
  \multiput{} at 0 2  /
  \multiput{$\sssize \bullet$} at  0 0.3  1 0.3  2 0.3  3 0.3 /
  \circulararc 180 degrees from 1 1 center at 0.5 1 
  \circulararc 180 degrees from 2 1 center at 1.5 1 
  \circulararc 180 degrees from 3 1 center at 2.5 1 
  \plot 0 0.3  0 1 /
  \plot 1 0.3  1 1 /
  \plot 2 0.3  2 1 /
  \plot 3 0.3  3 1 /  
 \endpicture$} at  0 3

\setdots <1mm>

\plot -0.4 0.2 -2.3 0.7 /
\plot -0.2 0.2 -1.4 0.7 /
\plot -0.1 0.2 -0.5 0.7 /
\plot .1 0.2 0.5 0.7 /
\plot .2 0.2 1.4 0.7 /
\plot .4 0.2 2.3 0.7 /

\plot -0.4 2.8 -2.3 2.3 /
\plot -0.2 2.8 -1.4 2.3 /
\plot -0.1 2.8 -0.5 2.3 /
\plot .1 2.8 0.5 2.3 /
\plot .2 2.8 1.4 2.3 /
\plot .4 2.8 2.3 2.3 /

\plot -2.6 1.2  -2.6 1.8 /
\plot -2.5 1.2  -1.6 1.8 /
\plot -2.4 1.2  -0.6 1.8 /

\plot -1.6 1.2  -2.5 1.8 /
\plot -1.4 1.2   1.4 1.8 /

\plot -.6 1.2  -1.5 1.8 /
\plot -.5 1.2   0.4 1.8 /
\plot -.4 1.2   1.5 1.8 /

\plot .4 1.2  -2.4 1.8 /
\plot .5 1.2   .5 1.8 /
\plot .6 1.2   2.4 1.8 /

\plot 1.4 1.2  -1.4 1.8 /
\plot 1.6 1.2   2.5 1.8 /

\plot 2.4 1.2  -.4 1.8 /
\plot 2.5 1.2   1.6 1.8 /
\plot 2.6 1.2   2.6 1.8 /

\multiput{$1$} at -1.2 0.4  -0.7 0.4  -0.25 0.4 
   -1.75 1.3  -.75 1.3  -.15 1.3  .2 1.3  .5 1.3  1.2 1.3
              2.15 1.3  2.4 1.3 
   0.25 2.6  0.7 2.6  1.2 2.6 / 
\multiput{$2$} at .25 0.4  .7 0.4 
   -2.6 1.3  -2.4 1.3  -.45 1.3  1.7 1.3  2.6 1.3 
   -0.7 2.6  -.25 2.6 /
\multiput{$3$} at 1.2 0.4 
   -2.18 1.3  -1.25 1.3  .8 1.3 
   -1.2 2.6 /

\endpicture}
$$
	\medskip 

Let us reformulate the map $\lambda$ in terms of $\M(\A(\mo\Lambda_n))$, where $\Lambda = \Lambda_n$ is the path algebra
of the linearly directed quiver of type $\mathbb A_n$ (so that $\A(\mo\Lambda_n)$ can be identified 
with $\A(\NC(n+1))$). It is sufficient to define the numbers $\lambda'(A,B)$, where 
$A < B$ are antichains in $\mo\Lambda_n$ of cardinality $t-1$ and $t$, respectively, for some $t$.
In case $A$ is a subset of $B$, say $B = A\cup\{B_t\},$ then $\lambda'(A,B) = i$ where $S(i) = \soc_\Lambda B_{t}$. 
Otherwise, there is some element $A_i$ in $A$ such that $A_i$ has a proper filtration with
factors in $B$. In this case, $\lambda'(A,B) = i$ where $S(i) = \soc_\Lambda A_i$. 

We have exhibited in Theorem \ref{neighbors1} and its Addendum \ref{neighbors2}
bijections between the set $\M(\A(\mo\Lambda_n))$ of all maximal chains in the poset
$\A(\mo\Lambda_n)$ and the set 
$\E(\mo\Lambda_n)$ of all complete exceptional sequences in the category $\mo\Lambda_n$.
We are going to describe Stanley's bijection 
in terms of complete exceptional sequences.
	\medskip

\subsubsection{} 
If $(X_1,\dots,X_t)$ is a sequence of indecomposable $\Lambda_n$-modules,  
we write $\soc(X_1,\dots,X_t) =
(s_1,\dots,s_t)$, where $\soc_\Lambda X_i = S(s_i)$, for $1\le i \le t$.
In this way,  $\soc(X_1,\dots,X_t)$
is an element of $[1,n]^t.$
	\medskip

\begin{theorem}\label{soc} If $E$ is complete exceptional sequence in $\mo\Lambda_n$, then
$\soc E$ belongs to $\P(n)$ and the map  
	\smallskip 

\Rahmen{$\soc\!:\E(\mo\Lambda_n) \to \P(n)$}
	\smallskip

\noindent 
is a bijection.

There is the following commutative diagram
	\smallskip 

\Rahmen{$
\hbox{\beginpicture
  \setcoordinatesystem units <3cm,1.4cm>
\put{$\M(\NC(n\!+\!1))$} at 0 2
\put{$\M(\A(\mo \Lambda_n))$} at 0 1
\put{$\E(\mo\Lambda_{n})$} at 0 0 
\arr{0 0.7}{0 0.3}
\arr{0 1.7}{0 1.3}
\put{$\P(n)$} at 1.1 1 
\put{$\M(A)$} at -.2 1.5
\put{$M$} at -.13 0.5
\arr{0.4 0.3}{0.8 0.7}
\arr{0.4 1.7}{0.8 1.3}
\put{$\soc$} at 0.6 0.25
\put{$\lambda$} at 0.6 1.75
\endpicture}
$}
\end{theorem}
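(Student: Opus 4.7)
The strategy is to exhibit $\soc\colon\E(\mo\Lambda_n)\to\P(n)$ as the bottom-right leg of a commutative square whose other three arrows are already known to be bijections: Stanley's theorem supplies the top arrow $\lambda\colon \M(\NC(n+1))\to\P(n)$, Theorem~\ref{classical} (applied to maximal chains) gives $\M(A)\colon \M(\NC(n+1))\to\M(\A(\mo\Lambda_n))$, and Theorem~\ref{neighbors1} together with Addendum~\ref{neighbors2} gives the bijection $M\colon \M(\A(\mo\Lambda_n))\to\E(\mo\Lambda_n)$. Once commutativity is established, the bijectivity of $\soc$ and the fact that its image lies in $\P(n)$ both come for free.

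The commutativity reduces to the following \emph{key lemma}: if $P<P'$ are covering neighbors in $\NC(n+1)$ and $\mathcal A:=\mathcal A(P)\subset \mathcal A':=\mathcal A(P')$ are the corresponding exceptional subcategories, then
\[
\soc_\Lambda M^{\mathcal A'}_{\mathcal A} \;=\; S\!\bigl(\lambda'(P,P')\bigr).
\]
Since every indecomposable $\Lambda_n$-module has the form $[a,b]$ with socle $S(a)$, this amounts to identifying the left endpoint of the interval labelling $M^{\mathcal A'}_{\mathcal A}$.

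To prove the key lemma I would unwrap what it means for $P<P'$ to be neighbors: exactly two parts $P_i,P_j$ of $P$ merge to form a new part of $P'$, labelled so that $\min P_i<\min P_j=:m_j$. The non-crossing condition on $P$ forces $P_j$ either to sit in a gap between two consecutive elements $a_k<a_{k+1}$ of $P_i$ (in which case $a_k=\lambda'(P,P')$), or to lie entirely to the right of $P_i$ (in which case $\lambda'(P,P')=\max P_i$). In either case a direct comparison of the arc sets $A(P)$ and $A(P')$ via Proposition~\ref{prop-partitions-arc} identifies the module $M^{\mathcal A'}_{\mathcal A}$ explicitly as the interval module $[\lambda'(P,P'),m_j]$, whose socle is $S(\lambda'(P,P'))$ by the convention of Section~4.2.

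The main technical obstacle will be verifying that this candidate module really does lie in $\mathcal A'\cap \mathcal A^\perp$. Membership in $\mathcal A'$ follows by exhibiting a filtration whose factors are precisely the arcs of $P'$ spanning the interval $[\lambda'(P,P'),m_j]$, which is immediate from the explicit description of the merged part. The perpendicularity to $\mathcal A$ requires, for each arc $[x,y]$ of $P$, the vanishing of both $\Hom$ and $\Ext^1$ between the candidate interval and the simple object $[x,y]\in\mathcal A$; using the hereditary Auslander--Reiten formula for $\Lambda_n$, both vanishings reduce to the combinatorial statement that no arc of $P$ can properly cross the interval $[\lambda'(P,P'),m_j]$, which is precisely the non-crossing property of $P$. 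Granted the key lemma, the diagram commutes up to the harmless book-keeping that $M$ reads the chain top-down while $\lambda$ reads it bottom-up; this reindexing does not affect membership in $\P(n)$, since the parking-function condition depends only on the multiset of values. Hence $\soc$ is a composition of bijections, and the theorem follows.
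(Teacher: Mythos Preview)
Your overall strategy is exactly that of the paper: reduce commutativity to the key lemma $\soc_\Lambda M^{\mathcal A'}_{\mathcal A}=S(\lambda'(P,P'))$, then deduce bijectivity of $\soc$ from Stanley's theorem together with the known bijections $\M(A)$ and $M$. That framework is sound.

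However, your explicit identification of $M^{\mathcal A'}_{\mathcal A}$ is wrong. You claim it is $[\lambda'(P,P'),m_j]$ with $m_j=\min P_j$; the correct module is $[\lambda'(P,P'),y]$ with $y=\max P_j$. A small example already breaks your candidate: take $n+1=4$, $P=\{1\}\,|\,\{2,3\}\,|\,\{4\}$ and $P'=\{1,2,3\}\,|\,\{4\}$. Here $P_i=\{1\}$, $P_j=\{2,3\}$, $\lambda'(P,P')=1$, $m_j=2$. Your candidate is $[1,2]=S(1)$, but $\mathcal A=\add S(2)$ and $\Ext^1(S(2),S(1))\neq 0$ (there is an arrow $2\to 1$), so $[1,2]\notin\mathcal A^\perp$. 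The paper's candidate $[1,3]$ does lie in $\mathcal A^\perp$. Your claimed reduction of the $\Ext^1$-vanishing to ``no arc of $P$ properly crosses $[\lambda'(P,P'),m_j]$'' is therefore false: there is no crossing in this example, yet $\Ext^1$ fails to vanish. The correct argument must use the right endpoint $y=\max P_j$; then one checks $\Hom(U,M)=0$ and $\Ext^1(U,M)=0$ for each simple $U=[u,v]\in\mathcal A$ by a case analysis on which part of $P$ contains $u,v$, using non-crossing of $P$ and the Auslander--Reiten formula (with the observation that $\tau^{-1}[x,y]=[x+1,y+1]$).

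Your remark about the ordering reversal between $M$ (top-down) and $\lambda$ (bottom-up) is well taken; the paper silently passes over this point.
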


In order to prove this, it is sufficient to show the following: 
{\it If $A<B$ are neighbors in $\NC(n+1)$ and
$\mathcal A, \mathcal B$ are the corresponding thick subcategories of $\mo\Lambda_n,$ then}
	\smallskip

\Rahmen{$\soc_{\Lambda} M^{\mathcal B}_{\mathcal A}= S(\lambda'(A,B))$ .} 
	\medskip 

\begin{proof}  Since $A < B$ are neighbors, there are two different parts $A_i,A_j$ of $A$
such that $B$ is obtained from $A$ by replacing the parts $A_i,A_j$ by the disjoint union
$A_i\cup A_j$; we assume that the minimal element $a_i$ of $A_i$ is smaller 
than the minimal element $a_j$ of $A_j$.
Then, by definition, $x = \lambda'(A,B)$ is the maximal element which belongs
to $A_i$ and is smaller 
than all elements of $A_j$. We denote by $y$ the maximal element of $A_j.$  
Since $x < a_j \le y$, the module $M = [x,y]$ is indecomposable and its $\Lambda$-socle is $S(x)$.
Since $x,y$ belong to the same part of $B$, we know that $M$ belongs to $\mathcal B$.

Let $U$ be a simple object in $\mathcal A$, say $U = [u,v]$ for some numbers $1\le u < v \le n+1$. 
We want to show that both $\Hom(U,M) = 0 = \Ext^1(U,M)$ so that $M$ belongs to $\mathcal A^\perp.$

Let us assume that $\Hom(U,M) \neq 0.$
Since $S(x)$ is the socle of $M$, we see that $S(x)$ is a composition factor of $U$,
thus $u \le x < v.$ Let us show that we even have $y < v$.
Now $U$ belongs to some
block of $\mathcal A$. First, assume that $U$ belongs to $\mathcal A_i.$ Then $v < a_j$ is impossible by the
maximality of $x$. Also, $v = a_j$ and $v = y$ are impossible, since $A_i\cap A_j = \emptyset$.
Finally, $a_j < v < y$ is impossible, since $A$ is non-crossing. It follows that $y < v.$
Note that $U$ cannot belong to $\mathcal A_j$, since $[x,x+1]$ is a composition factor of $U$, but
$x < a_j$ and $a_j$ is the minimal element $A_j$. It follows that $U$ belongs to a block $A_t$
with $t\notin\{i,j\}$. This block $\mathcal A_t$ is a block of $\mathcal B$, thus, since $B$
is non-crossing and $u < x < v$ it follows that ($u < a_i$ and) $y < v.$
But the condition $y < v$ implies that $\Hom([u,v],[x,y]) = 0,$ a contradiction.
Thus, we have $\Hom(U,M) = 0.$

The assertion $\Ext^1(U,M) = 0$  is clear in case $y = n+1$, since then
$M$ is injective in $\mo \Lambda$. Let us assume that $\Ext^1(U,M) \neq 0$, thus $y < n+1$
and the Auslander-Reiten formula shows that $\Hom([x+1,y+1],U) \neq 0$ (since $\tau_\Lambda^{-1} =
[x+1,y+1]$). It follows that $U$ has $[y,y+1]$ as $\Lambda$-composition factor, thus
$u\le y < v$. We claim that we even have $u \le x$. 

Now $U$ cannot belong to $\mathcal A_j$, since the indecomposable
modules in $\mathcal A_j$ are of the form $[z,z']$ with $z' \le y$. Assume that $U$ belongs to $\mathcal A_i.$
Then we must have $u = x$ (namely, in this case $x$ cannot be maximal in $A_i$, thus there is
$x'\in A_i$ with $x < x'$ and no other element of $A_i$ in-between; then $[x,x']$ is the only simple
object of $\mathcal A_i$ with $\Lambda$-composition factor $[y,y+1]$, therefore $U = [x,x']$).
Finally, if $U$ belongs to $\mathcal A_t$ with $t\notin\{i,j\}$, then $A_t$ is a 
part of $B$. Since $B$ is non-crossing and $U = [u,v]$ has the $\Lambda$-composition factor $[y,y+1]$, 
it follows that $u < a_i$ (and $y < v$), therefore $u < x$. 
Since $u \le x$, it follows that $\Hom([x+1,y+1],[u,v]) = 0$, a contradiction. Thus $\Ext^1(U,M) = 0.$

Altogether, we see that $M \in \mathcal B\cap\mathcal A^\perp,$ therefore $M = M^{\mathcal B}_{\mathcal A}.$
This completes the proof. 
\end{proof}

{\bf Remark.} 
It is easy to see directly: {\it If $M = (M_1,\dots,M_n)$ is a complete exceptional sequence
of $\Lambda_n$-modules, then $\soc M$ is a parking function} 
(see also N\,\ref{another-parking-function}).

\begin{proof} 
We have to show that for any integer $i$ with $1\le i \le n$, the number of modules $M_j$
such that $\soc M_j = S(i')$ for some $i' \le i$ is at most $i$.
Let us delete these modules $M_j$ from the sequence $M$. The remaining modules form an exceptional
sequence $M'$ of $\Lambda'$-modules, where $\Lambda'$ is the path algebra of the quiver
$i\!+\!1 \leftarrow \cdots \leftarrow n$, this quiver has $n-i$ vertices, thus any exceptional
sequence of $\Lambda'$-modules consists of  at most $n-i$ modules. This shows that we have deleted
from $M$ at least $i$ modules. 
\end{proof} 

Let us stress that the Stanley bijection shows, in particular, that {\it for $\Lambda = \Lambda_n$,
the map $M \mapsto \soc M$ defined on the set of complete exceptional sequences is injective}. 
It seems to be of interest to find a direct proof. But one should be aware that for here we deal
with a special feature of the $\Lambda_n$-modules which does not hold for an arbitrary
artin algebra (see the note N\,\ref{socle-of-exceptional-sequences}).
which looks at the quiver $\Bbb A_3$ with 2 sources. The note N\,\ref{algorithm-for-tilting-modules} 
presents an algorithm how to recover a tilting module $T$ from $\topp T$.
	\medskip

\subsubsection{} Using the $k$-duality $M\mapsto M^*$ from the
category $\mo\Lambda_n$ to the category $\mo\Lambda_n^\op$, 
we see that there is also the following bijection: If $(X_1,\dots,X_t)$ is an exceptional sequence in $\mo\Lambda_n$, then $(X_t^*,\dots,X_1^*)$ is an exceptional
sequence for $\Lambda' = \Lambda_n^{\text{op}}$. Let $S_{\Lambda'}(i) = S_\Lambda(\pi i),$ where
$\pi$ is the permutation of $\{1,2,\dots,n\}$ which reverses the order. 
	\medskip

\begin{theorem}  If $(X_1,\dots,X_t)$ is an exceptional sequence in $\mo\Lambda_n$, let 
$\topp(X_1,\dots,X_t) =
(t_n,\dots,t_1)$, where $\topp_\Lambda X_i= 
S(\pi t_i)$, for $1\le i \le t$. Then 
	\smallskip
 
\Rahmen{$\topp\!:\E(\mo\Lambda_n) \to \P(n)$}
	\smallskip

\noindent 
is a bijection.
\end{theorem}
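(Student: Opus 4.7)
The plan is to derive this theorem from Theorem \ref{soc} by applying the $k$-duality $D = \Hom_k(-,k)$ discussed in the paragraph preceding the theorem, rather than reproducing the proof of Theorem \ref{soc} in a dualized form. The opposite algebra $\Lambda_n^\op$ is the path algebra of the quiver $1\to 2\to\cdots\to n$, which after relabelling vertex $i$ as $\pi(i) = n+1-i$ is isomorphic to $\Lambda_n$. Under this identification, the simple $\Lambda_n^\op$-module supported at vertex $i$ corresponds to $S_\Lambda(\pi i)$, and composing with $D$ yields a contravariant equivalence of categories which I will denote $X \mapsto X^\flat$ from $\mo\Lambda_n$ to itself.

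First I would verify the two key compatibilities of $\flat$. The duality $D$ sends surjections to injections, so a surjection $X \twoheadrightarrow S_\Lambda(j)$ dualizes to an inclusion $S_{\Lambda^\op}(j) \hookrightarrow D(X)$, which under the relabelling becomes $S_\Lambda(\pi j) \hookrightarrow X^\flat$. Consequently $\topp_\Lambda X = S(\pi t)$ if and only if $\soc_\Lambda X^\flat = S(t)$. Next, since $\Hom_{\Lambda^\op}(X_j^*, X_i^*) = \Hom_\Lambda(X_i, X_j)$ and analogously for $\Ext^1$, the duality $D$ reverses the order of exceptional sequences. Combined with the relabelling, this shows that $(X_1,\dots,X_n)$ is a complete exceptional sequence in $\mo\Lambda_n$ if and only if $(X_n^\flat,\dots,X_1^\flat)$ is one, and that the assignment
\[
 \Phi\!:\E(\mo\Lambda_n)\longrightarrow \E(\mo\Lambda_n),\qquad
 (X_1,\dots,X_n)\longmapsto (X_n^\flat,\dots,X_1^\flat)
\]
is a bijection (it is a self-inverse contravariant operation up to natural isomorphism).

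With these tools in hand, the theorem follows. Writing $\topp_\Lambda X_i = S(\pi t_i)$, the compatibility above gives $\soc_\Lambda X_i^\flat = S(t_i)$. Hence the $i$-th entry of $\soc\bigl(\Phi(X_1,\dots,X_n)\bigr)$ equals $\soc_\Lambda X_{n+1-i}^\flat = S(t_{n+1-i})$, so that
\[
 \soc\bigl(\Phi(X_1,\dots,X_n)\bigr) \;=\; (t_n,t_{n-1},\dots,t_1) \;=\; \topp(X_1,\dots,X_n).
\]
Therefore the map $\topp$ is the composition $\soc\circ\,\Phi$. Since $\Phi$ is a bijection of $\E(\mo\Lambda_n)$ onto itself and $\soc\!:\E(\mo\Lambda_n)\to\P(n)$ is a bijection by Theorem \ref{soc}, the map $\topp$ is a bijection onto $\P(n)$ as well.

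The only point requiring any care is the bookkeeping with the reversal permutation $\pi$ and the order-reversal built into the definition of $\topp(X_1,\dots,X_n) = (t_n,\dots,t_1)$: both reversals conspire so that $\topp$ is literally equal to $\soc\circ\,\Phi$ rather than differing by an extra permutation. Once the two compatibilities of $\flat$ are established (the harder of which is verifying that $\flat$ really lands back in $\mo\Lambda_n$ with simples permuted by $\pi$, which is immediate from the shape of the $\mathbb A_n$-quiver), everything else is formal.
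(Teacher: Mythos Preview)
Your proof is correct and follows precisely the approach indicated in the paper: the paragraph immediately preceding the theorem already sketches that one applies the $k$-duality $M\mapsto M^*$ together with the relabelling $S_{\Lambda'}(i)=S_\Lambda(\pi i)$ to reduce to Theorem~\ref{soc}, and the theorem is then stated without further proof. You have simply made explicit the verification that $\topp=\soc\circ\Phi$, which is exactly what the paper leaves implicit.
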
 

Using the bijections $\soc$ and $\topp$ from $\E(\mo\Lambda_n)$ to $\P(n)$, 
we obtain a non-trivial involution 
$\xi = \topp\circ(\soc)^{-1}$ of $\P(n)$; for example, we have $\xi(1,1,1) = (1,2,3),$ and 
$\xi(3,2,1) = (3,2,1)$, see the note N\,\ref{soc-topp}.
	\bigskip

\subsubsection{} A parking function is said to be {\it primitive} provided it is weakly increasing.
	\smallskip

Given a tilting module $T$ with direct summands $T_i$, we may index the direct summands
in such a way that $E(T) = (T_1,\dots,T_n)$ is exceptional and the socle sequence is
non decreasing. Then {\it $E$ furnishes a bijection from the set of tilting modules to the
set of primitive parking functions.} See also N\,\ref{algorithm-for-tilting-modules}.
	\bigskip

\subsubsection{} We return to our attempt to provide an overview on some of 
the most relevant $n^{n-2}$-problems.  
  \medskip 

{\bf (d) Labeled trees.} The most famous $n^{n-2}$-problem seems to be the problem of counting labeled trees. 
For a solution of this problem one often refers to Cayley (1889), however, 
already Sylvester (1857) and Borchardt (1860) discussed the problem.
	\medskip 

A {\it labeled tree} $T$ with $n$ vertices is a tree with vertex set $\{1,2,\dots,n\}$.
We should stress that counting labeled trees means to count the actual number of such trees, not just the
number of isomorphism classes (usually, throughout of these lectures, counting meant to count 
isomorphism classes) --- but actually, we also could phrase it as follows: looking at labeled
trees $T,T'$, to say that they are isomorphic (as labeled trees) 
means that $T' = T$, and the only automorphism of a labeled
tree is the identity map. 

We denote by $\LT(n)$ the set of labeled trees with $n$ vertices.
Here are the pictures for $3$ vertices (there are $3^1$ 
possibilities):
$$
\hbox{\beginpicture
  \setcoordinatesystem units <.5cm,.5cm>
\put{\beginpicture
\multiput{$\bullet$} at 0 0  1 0  2 0 /
\plot 0 0  2 0 /
\put{$\ssize 1$} at 0 -0.4
\put{$\ssize 2$} at 1 -0.4
\put{$\ssize 3$} at 2 -0.4
\endpicture} at 0 0 
\put{\beginpicture
\multiput{$\bullet$} at 0 0  1 0  2 0 /
\plot 0 0  2 0 /
\put{$\ssize 1$} at 0 -0.4
\put{$\ssize 3$} at 1 -0.4
\put{$\ssize 2$} at 2 -0.4
\endpicture} at 4 0 
\put{\beginpicture
\multiput{$\bullet$} at 0 0  1 0  2 0 /
\plot 0 0  2 0 /
\put{$\ssize 2$} at 0 -0.4
\put{$\ssize 1$} at 1 -0.4
\put{$\ssize 3$} at 2 -0.4
\endpicture} at 8 0 
\endpicture}
$$
And here are the pictures for $4$ vertices (there are $4^2 = 16$
possibilities):
$$
\hbox{\beginpicture
  \setcoordinatesystem units <.45cm,.5cm>
\put{\beginpicture
\multiput{$\bullet$} at 0 0  1 0  2 0  1 1 /
\plot 0 0  1 1  1 0 /
\plot 1 1  2 0 /
\put{$\ssize 1$} at 1.3 1.2
\put{$\ssize 2$} at 0 -0.4
\put{$\ssize 3$} at 1 -0.4
\put{$\ssize 4$} at 2 -0.4
\endpicture} at 0 0 
\put{\beginpicture
\multiput{$\bullet$} at 0 0  1 0  2 0  1 1 /
\plot 0 0  1 1  1 0 /
\plot 1 1  2 0 /
\put{$\ssize 2$} at 1.3 1.2
\put{$\ssize 1$} at 0 -0.4
\put{$\ssize 3$} at 1 -0.4
\put{$\ssize 4$} at 2 -0.4
\endpicture} at 5 0 
\put{\beginpicture
\multiput{$\bullet$} at 0 0  1 0  2 0  1 1 /
\plot 0 0  1 1  1 0 /
\plot 1 1  2 0 /
\put{$\ssize 3$} at 1.3 1.2
\put{$\ssize 1$} at 0 -0.4
\put{$\ssize 2$} at 1 -0.4
\put{$\ssize 4$} at 2 -0.4
\endpicture} at 9 0 
\put{\beginpicture
\multiput{$\bullet$} at 0 0  1 0  2 0  1 1 /
\plot 0 0  1 1  1 0 /
\plot 1 1  2 0 /
\put{$\ssize 4$} at 1.3 1.2
\put{$\ssize 1$} at 0 -0.4
\put{$\ssize 2$} at 1 -0.4
\put{$\ssize 3$} at 2 -0.4
\endpicture} at 13 0 

\put{\beginpicture
\multiput{$\bullet$} at 0 0  1 0  2 0  3 0 /
\plot 0 0  3 0 /
\put{$\ssize 1$} at 0 -0.4
\put{$\ssize 2$} at 1 -0.4
\put{$\ssize 3$} at 2 -0.4
\put{$\ssize 4$} at 3 -0.4
\endpicture} at -5 -2.3 
\put{\beginpicture
\multiput{$\bullet$} at 0 0  1 0  2 0  3 0 /
\plot 0 0  3 0 /
\put{$\ssize 1$} at 0 -0.4
\put{$\ssize 2$} at 1 -0.4
\put{$\ssize 4$} at 2 -0.4
\put{$\ssize 3$} at 3 -0.4
\endpicture} at 0 -2.3
\put{\beginpicture
\multiput{$\bullet$} at 0 0  1 0  2 0  3 0 /
\plot 0 0  3 0 /
\put{$\ssize 1$} at 0 -0.4
\put{$\ssize 3$} at 1 -0.4
\put{$\ssize 2$} at 2 -0.4
\put{$\ssize 4$} at 3 -0.4
\endpicture} at 5 -2.3 
\put{\beginpicture
\multiput{$\bullet$} at 0 0  1 0  2 0  3 0 /
\plot 0 0  3 0 /
\put{$\ssize 1$} at 0 -0.4
\put{$\ssize 3$} at 1 -0.4
\put{$\ssize 4$} at 2 -0.4
\put{$\ssize 2$} at 3 -0.4
\endpicture} at 10 -2.3
\put{\beginpicture
\multiput{$\bullet$} at 0 0  1 0  2 0  3 0 /
\plot 0 0  3 0 /
\put{$\ssize 1$} at 0 -0.4
\put{$\ssize 4$} at 1 -0.4
\put{$\ssize 2$} at 2 -0.4
\put{$\ssize 3$} at 3 -0.4
\endpicture} at 15 -2.3
\put{\beginpicture
\multiput{$\bullet$} at 0 0  1 0  2 0  3 0 /
\plot 0 0  3 0 /
\put{$\ssize 1$} at 0 -0.4
\put{$\ssize 4$} at 1 -0.4
\put{$\ssize 3$} at 2 -0.4
\put{$\ssize 2$} at 3 -0.4
\endpicture} at 20 -2.3

\put{\beginpicture
\multiput{$\bullet$} at 0 0  1 0  2 0  3 0 /
\plot 0 0  3 0 /
\put{$\ssize 2$} at 0 -0.4
\put{$\ssize 1$} at 1 -0.4
\put{$\ssize 3$} at 2 -0.4
\put{$\ssize 4$} at 3 -0.4
\endpicture} at -5 -4 
\put{\beginpicture
\multiput{$\bullet$} at 0 0  1 0  2 0  3 0 /
\plot 0 0  3 0 /
\put{$\ssize 2$} at 0 -0.4
\put{$\ssize 1$} at 1 -0.4
\put{$\ssize 4$} at 2 -0.4
\put{$\ssize 3$} at 3 -0.4
\endpicture} at 0 -4
\put{\beginpicture
\multiput{$\bullet$} at 0 0  1 0  2 0  3 0 /
\plot 0 0  3 0 /
\put{$\ssize 3$} at 0 -0.4
\put{$\ssize 1$} at 1 -0.4
\put{$\ssize 2$} at 2 -0.4
\put{$\ssize 4$} at 3 -0.4
\endpicture} at 5 -4 
\put{\beginpicture
\multiput{$\bullet$} at 0 0  1 0  2 0  3 0 /
\plot 0 0  3 0 /
\put{$\ssize 3$} at 0 -0.4
\put{$\ssize 1$} at 1 -0.4
\put{$\ssize 4$} at 2 -0.4
\put{$\ssize 2$} at 3 -0.4
\endpicture} at 10 -4 
\put{\beginpicture
\multiput{$\bullet$} at 0 0  1 0  2 0  3 0 /
\plot 0 0  3 0 /
\put{$\ssize 4$} at 0 -0.4
\put{$\ssize 1$} at 1 -0.4
\put{$\ssize 2$} at 2 -0.4
\put{$\ssize 3$} at 3 -0.4
\endpicture} at 15 -4 
\put{\beginpicture
\multiput{$\bullet$} at 0 0  1 0  2 0  3 0 /
\plot 0 0  3 0 /
\put{$\ssize 4$} at 0 -0.4
\put{$\ssize 1$} at 1 -0.4
\put{$\ssize 3$} at 2 -0.4
\put{$\ssize 2$} at 3 -0.4
\endpicture} at 20 -4 
\endpicture}
$$

It has been shown by Borchardt (1860), Sylvester (1857) and 
Cayley (1889) that 
	\smallskip 

\Rahmen{$|\LT(n)| = n^{n-2}$.} 
	\medskip

A constructive proof has been given by Pr\"ufer (1918), by attaching to
any labeled tree $T$ a sequence
$c(T)$ of numbers, now called the {\it Pr\"ufer code} of $T$. It is defined as follows:

If $T$ has $n\ge 3$ vertices, let $x_1$ be the leaf with the lowest label
and $c_1$ its (unique) neighbor. Now remove the vertex $x_1$, we get
a tree $T_2$ with labels $\{1,\dots,n\}\setminus\{x_1\}$ and if $n\ge 4$ continue:
Let $x_2$ be the leaf with the lowest label
and $c_2$ its (unique) neighbor. Altogether we get two sequences
$$
\hbox{\beginpicture
  \setcoordinatesystem units <1cm,.8cm>
\put{$x_1,\quad x_2,\quad x_3,\quad ...,\quad x_{n-2},$} at 0 0
\put{$c_1,\quad c_2,\quad c_3,\quad ...,\quad c_{n-2}.$} at 0 -1
\endpicture}
$$
The Pr\"ufer code of $T$ is the sequence $c(T) = (c_1,c_2,\dots,c_{n-2})$.
This is a sequence of $n-2$ integers $c_i$ with $1\le c_i\le n$. The decisive 
fact is that one can recover $T$ from $c(T)$. 
	\medskip 

{\bf Example for calculating $c(T)$.} In any step $i=1,2,\dots,6$, the leaf with lowest label is
encircled; the label $c_i$ of its neighbor is exhibited below the tree. In this
way, the lower row shows the code $c = (c_1,\dots,c_6)$. 
$$
\hbox{\beginpicture
  \setcoordinatesystem units <.45cm,.5cm>
\put{\beginpicture
\multiput{$\bullet$} at 0 1  0 2  1 0  1 1  2 2  2 3  3 1  4 2 /
\plot 0 1  0 2  2 3  4 2 /
\plot 1 0  1 1  2 2  2 3 /
\plot 2 2  3 1 /
\put{$\ssize 1$} at 2.3 3.2 
\put{$\ssize 2$} at -.4 2
\put{$\ssize 3$} at 2.4 2.1
\put{$\ssize 4$} at 4.5 2
\put{$\ssize 5$} at 1.4 1
\put{$\ssize 6$} at -.4 1
\put{$\ssize 7$} at 3.4 1
\put{$\ssize 8$} at 1.4 0
\put{$\bigcirc$} at 4 2
\endpicture} at 0 0
\put{\beginpicture
\multiput{$\bullet$} at 0 1  0 2  1 0  1 1  2 2  2 3  3 1  /
\plot 0 1  0 2  2 3  /
\plot 1 0  1 1  2 2  2 3 /
\plot 2 2  3 1 /
\put{$\ssize 1$} at 2.3 3.2 
\put{$\ssize 2$} at -.4 2
\put{$\ssize 3$} at 2.4 2.1
\put{$\ssize 5$} at 1.4 1
\put{$\ssize 6$} at -.5 1
\put{$\ssize 7$} at 3.4 1
\put{$\ssize 8$} at 1.4 0
\put{$\bigcirc$} at 0 1
\endpicture} at 6 0

\put{\beginpicture
\multiput{$\bullet$} at   0 2  1 0  1 1  2 2  2 3  3 1  /
\plot 0 2  2 3  /
\plot 1 0  1 1  2 2  2 3 /
\plot 2 2  3 1 /
\put{$\ssize 1$} at 2.3 3.2 
\put{$\ssize 2$} at -.5 2
\put{$\ssize 3$} at 2.4 2.1
\put{$\ssize 5$} at 1.4 1
\put{$\ssize 7$} at 3.4 1
\put{$\ssize 8$} at 1.4 0
\put{$\bigcirc$} at 0 2
\endpicture} at 11 0
\put{\beginpicture
\multiput{$\bullet$} at    1 0  1 1  2 2  2 3  3 1  /
\plot 1 0  1 1  2 2  2 3 /
\plot 2 2  3 1 /
\put{$\ssize 1$} at 2.4 3.3 
\put{$\ssize 3$} at 2.4 2.1
\put{$\ssize 5$} at 1.4 1
\put{$\ssize 7$} at 3.4 1
\put{$\ssize 8$} at 1.4 0
\put{$\bigcirc$} at 2 3
\endpicture} at 16 0

\put{\beginpicture
\multiput{$\bullet$} at    1 0  1 1  2 2   3 1  /
\plot 1 0  1 1  2 2  /
\plot 2 2  3 1 /
\put{$\ssize 3$} at 2.4 2.1
\put{$\ssize 5$} at 1.4 1
\put{$\ssize 7$} at 3.5 1
\put{$\ssize 8$} at 1.4 0
\put{$\bigcirc$} at 3 1
\put{} at 2 3
\endpicture} at 20 0

\put{\beginpicture
\multiput{$\bullet$} at    1 0  1 1  2 2   /
\plot 1 0  1 1  2 2  /
\put{$\ssize 3$} at 2.5 2.1
\put{$\ssize 5$} at 1.4 1
\put{$\ssize 8$} at 1.4 0
\put{$\bigcirc$} at 2 2
\put{} at 1 3
\endpicture} at 24 0
\put{$c_i =$} at -2 -3
\put{$1$} at 1 -3 
\put{$2$} at 6 -3 
\put{$1$} at 11 -3 
\put{$3$} at 16 -3 
\put{$3$} at 20 -3 
\put{$5$} at 24 -3 
\endpicture}
$$

\begin{theorem}{\bf (Pr\"ufer, 1918).} The Pr\"ufer code $c(T)$ provides a bijection 
	\smallskip 

\Rahmen{$\LT(n) \to [1,n]^{n-2}$.}
	\smallskip

\end{theorem}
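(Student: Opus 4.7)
The plan is to prove that $c\!:\LT(n)\to [1,n]^{n-2}$ is a bijection by constructing its inverse explicitly. The approach proceeds in three stages: first, verify that the code is well-defined; second, establish a multiplicity lemma relating how often a label appears in the code to its degree in the tree; third, use this lemma to recover the tree from its code step by step.

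First I would check that $c$ is well-defined. At step $i$ of the algorithm (with $1\le i\le n-2$), the current tree $T_i$ has $n-i+1\ge 3$ vertices; since every finite tree with at least two vertices has at least two leaves, the leaf with lowest label in $T_i$ exists and is uniquely determined.

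The key lemma is the following: \emph{for any labeled tree $T$ on $n\ge 2$ vertices and any vertex $v$ of $T$, the label $v$ appears in $c(T)$ exactly $\deg_T(v)-1$ times.} This is proved by induction on $n$. For $n=2$ the code is empty and both sides vanish. For the inductive step, let $x_1$ be the leaf of $T$ with lowest label and $c_1$ its unique neighbor; the tree $T_2 = T\setminus\{x_1\}$ has code $(c_2,\ldots,c_{n-2})$ directly from the construction, and $\deg_{T_2}(w) = \deg_T(w)$ for all $w\neq c_1$ while $\deg_{T_2}(c_1) = \deg_T(c_1)-1$. An immediate consequence is that the leaves of $T$ are exactly the labels \emph{not} appearing in $c(T)$, so $x_1$ is the smallest label absent from $c(T)$.

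Iterating this observation gives an explicit reconstruction. Given any $(c_1,\ldots,c_{n-2})\in [1,n]^{n-2}$, define inductively
\[
 x_i = \min\bigl(\{1,\ldots,n\}\setminus(\{x_1,\ldots,x_{i-1}\}\cup\{c_i,c_{i+1},\ldots,c_{n-2}\})\bigr)
\]
for $1\le i\le n-2$, and let $\{x_{n-1},x_n\}$ be the two remaining labels (with $x_{n-1}<x_n$). Let $T'$ be the graph on $\{1,\ldots,n\}$ with edges $\{x_i,c_i\}$ for $1\le i\le n-2$ together with $\{x_{n-1},x_n\}$. I would then verify by induction on $n$ that $T'$ is a tree and that $c(T') = (c_1,\ldots,c_{n-2})$: removing $x_1$ from $T'$ produces precisely the graph obtained by the same reconstruction rule applied to the shorter code $(c_2,\ldots,c_{n-2})$ on the vertex set $\{1,\ldots,n\}\setminus\{x_1\}$, which by induction is a tree, and grafting the leaf $x_1$ back at $c_1$ yields a tree whose algorithm-produced first step is exactly $x_1,c_1$.

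The main obstacle is the bookkeeping in this last verification: one has to check carefully that the recursive reduction of the reconstruction rule matches the recursive reduction of the Pr\"ufer algorithm, in particular that the label chosen as $x_1$ by the reconstruction coincides with the lowest-labeled leaf of $T'$. This comes down to the observation that the labels absent from $(c_1,\ldots,c_{n-2})$ are exactly the degree-one vertices of $T'$, which is forced by the multiplicity lemma applied in both directions. Once the inverse map is shown to be two-sided, bijectivity of $c$ follows and hence $|\LT(n)| = n^{n-2}$.
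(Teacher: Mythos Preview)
Your proof is correct and follows the standard argument for Pr\"ufer's theorem: establish the degree--multiplicity lemma, deduce that the lowest-labeled leaf is the smallest label absent from the code, and reconstruct inductively. Note, however, that the paper does not actually give a proof of this statement; it is recorded as a classical result due to Pr\"ufer (1918), with only the definition of the code and a worked example provided. So there is no ``paper's own proof'' to compare against---you have supplied what the paper omits.
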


	\bigskip\bigskip 

{\bf Notes.}
	\medskip 

\begin{note}\label{partitions-arcs} {\bf The bijection between the partitions and partition arc sets.} 
We assume that $S = \{1,2,\dots,n\}$. Arcs in $S$ are pairs $[a,b]$
with $1 \le a < b \le n$. A set $U$ of arcs in $S$ is said to be a {\it partition arc set}
provided for $[a,b], [a',b']$ in $U$, we have $a = a'$ iff $b = b'.$ 

Given a partition $P$ of $S$, we have defined $A(P)$ as the set of arcs for $P$, where an arc
for $P$ is a pair $[a,b]$ with $1\le a < b \le n+1$ such that $a,b$ belong to the same part of $P$
whereas no element $c$ with $a < c < b$ belongs to this part. Claim: 
{\it $A(P)$ is a partition arc set.} Namely, assume that $[a,b], [a,b']$ are arcs for $P$. We
may assume that $b\le b'$. Now $a,b,b'$ belong to the same part of $P$. Since $[a,b']$ is an
arc for $P$, it follows that $b = b'$. On the other hand, assume that $[a,b], [a',b]$ are
arcs for $P$. We may assume that $a\le a'$. Thus  $a,b,b'$ belong to the same part of $P$. 
Since $[a,b']$ is an arc for $P$, it follows that $a = a'$. 

On the other hand, assume that $U$ is a partition arc set in $S$. Let $P$ be the smallest
equivalence relation defined by the arcs which belong to $U$. We claim that $U = A(P)$.
Namely, any part of $P$ is of the form $\{a_1 < a_2 < \dots < a_t\}$ such that the pairs
$[a_i,a_{i+1}]$ with $1\le i < t$ are arcs in $U$. 
\end{note}

\begin{note}\label{non-crossing-partitions-arcs}
{\bf Characterization of non-crossing partitions.}
The Proposition \ref{prop-partitions-arc} 
asserts that a partition is non-crossing if and only if $A(P)$ is an antichain. 
There are further ways to characterize non-crossing partitions using properties of $A(P)$:
	\medskip

Lemma. {\it 
 Let $P$ be a partition. Then the following assertions are equivalent:
\begin{enumerate} 
\item[\rm(i)] $P$ is non-crossing.
\item[\rm(ii)] $A(P)\cup\{0\}$ is closed under kernels.
\item[\rm(iii)] $A(P)\cup\{0\}$ is closed under images.
\item[\rm(iv)] $A(P)\cup\{0\}$ is closed under cokernels.
\end{enumerate}}
	\medskip 
	
The proof is left as an exercise. 

\end{note}

\begin{note}\label{another-parking-function} {\bf More parking functions.} 
{\it If $M = (M_1,\dots,M_n)$ 
is a complete exceptional sequence of $\Lambda_n$-modules, then also
the length sequence $|M| = (|M_1,|,\dots,|M_n|)$ is a parking function for $n$ cars.}
	\medskip

\begin{proof} Let $1\le i < n$ be an integer. Let $M'$ be obtained from $M = (M_1,\dots,M_n)$
by deleting all modules $M_j$ with $|M_j|\le i$. 
We have to show that at most $n-i$ modules remain. We consider the factor category
$\mathcal F = \mo\Lambda/\mathcal X$ of $\mo\Lambda$, where $\Lambda = \Lambda_n$ and
$\mathcal X$ is the ideal of all maps in $\mo\Lambda$
which factor through a direct sum of modules of length at most $i$. The Auslander-Reiten
quiver of $\Lambda_n$ shows that $\mo\Lambda_n/\mathcal X$ is equivalent to $\mo\Lambda'$,
with $\Lambda' = \Lambda_{n-i}.$
We claim that $M'$ is an exceptional sequence in $\mo\Lambda_{n-i}$ (therefore it contains at most
$n-i$ elements). Namely, consider two modules $M_s, M_t$ in $M$ of length at least $i+1$ such that
$s > t$. We have to show that $\Hom_{\mathcal F}(M_s,M_t) = 0 = \Ext^1_{\mathcal F}(M_s,M_t) $.
Of course, we have $\Hom_\Lambda(M_s,M_t) = 0$ and also $\Ext^1_\Lambda(M_s,M_t) = 0.$ 
The first equality implies immediately that also $\Hom_{\mathcal F}(M_s,M_t) = 0$. If $M_s$ is 
a projective $\Lambda$-module, then $M_s$ is also projective in $\mathcal F$, thus 
$\Ext^1_{\mathcal F}(M_s,M_t) = 0.$ If $M_s$ is not projective as a $\Lambda$-module, then
$\Ext^1_\Lambda(M_s,M_t) = 0$ implies that $\Hom_\Lambda(M_t,\tau_\Lambda M_s) = 0.$
and therefore $\Hom_{\mathcal F}(M_t,\tau_\Lambda M_s) = 0.$ But the Auslander-Reiten quivers
show that $\tau_\Lambda M_s = \tau_{\mathcal F}M_s,$ therefore we see that $\Ext^1_{\mathcal F}(M_s,M_t)= 0.$ 
\end{proof} 

However, the map $M \mapsto |M|$ from the set of complete exceptional sequences to the
set of parking functions is not bijective! If $n = 3$, the exceptional sequences $(1,3,P)$ and $(3,1,P)$
both are sent to the parking function $(1,1,2)$. On the other hand, there is no exceptional sequence
$M$ with $|M| = (1,2,2).$

\end{note} 

\begin{note}\label{socle-of-exceptional-sequences} {\bf The socle of a complete exceptional
sequence.} For $\Lambda = \Lambda_n$, the Stanley bijection shows that  
the map $M \mapsto \soc M$ defined on the set of complete exceptional sequences is injective. 
For a general hereditary artin algebra $\Lambda$, this assertion is not true, 
already the quiver $\mathbb A_3$
with two sources provides counterexamples: Let $1$ be the sink and $2,2'$ the sources. Then
$$
 \bigl(S(1),S(2),S(2')\bigr),\ \ 
 \bigl(S(1),S(2'),S(2)\bigr),\ \ 
 \bigl(S(2),S(2'),I(1)\bigr),\ \ 
 \bigl(S(2'),S(2),I(1)\bigr)
$$
are complete exceptional sequences $(M_1,M_2,M_3)$ with $\soc(M_1,M_2.M_3) = (1,1,1).$ 
\end{note}

\begin{note}\label{soc-topp} {\bf The functions $\soc$ and $\topp$ on the set of complete 
exceptional sequences.}
Here we show explicitly the bijections between $\M(\NC(\mo\Lambda_n))$ and $\P(n)$
given by the functions $\soc$ and $\topp$:
$$
\hbox{\beginpicture
\setcoordinatesystem units <1cm,.37cm>
\put{$111$} at 0 0 

\put{$112$} at 0 -2
\put{$121$} at 0 -3
\put{$211$} at 0 -4

\put{$113$} at 0 -6
\put{$131$} at 0 -7
\put{$311$} at 0 -8 

\put{$122$} at 0 -10
\put{$212$} at 0 -11
\put{$221$} at 0 -12 

\put{$123$} at 0 -14 
\put{$132$} at 0 -15
\put{$213$} at 0 -16
\put{$231$} at 0 -17
\put{$312$} at 0 -18
\put{$321$} at 0 -19

\put{$\ 1\ P\ M$} at 3 0 

\put{$P\ M\, 2$} at 3 -2
\put{$P\, \, 2\ M$} at 3 -3
\put{$2\ \, 1\ M$} at 3 -4

\put{$1\ M\ 3$} at 3 -6
\put{$1\,\ 3\ \,P$} at 3 -7
\put{$3\,\ 1\ \,P$} at 3 -8 

\put{$M\ 2\ I$} at 3 -10
\put{$2\ M\ I$} at 3 -11
\put{$2\ \,\,I\,\ 1$} at 3 -12 

\put{$M\ I\ 3$} at 3 -14 
\put{$M\ 3\ 2$} at 3 -15
\put{$I\ \,1\,\ 3$} at 3 -16
\put{$I\ \,3\,\ 1$} at 3 -17
\put{$3\ P\ 2$} at 3 -18
\put{$3\ \,2\,\ 1$} at 3 -19

\put{$123$} at 6 0 

\put{$212$} at 6 -2
\put{$122$} at 6 -3
\put{$132$} at 6 -4

\put{$113$} at 6 -6
\put{$213$} at 6 -7
\put{$231$} at 6 -8 

\put{$121$} at 6 -10
\put{$112$} at 6 -11
\put{$312$} at 6 -12 

\put{$111$} at 6 -14 
\put{$211$} at 6 -15
\put{$131$} at 6 -16
\put{$311$} at 6 -17
\put{$221$} at 6 -18
\put{$321$} at 6 -19

\put{$\soc(M_1,M_2,M_3)$} at 0 2
\put{$(M_1,M_2,M_3)$} at 3 2
\put{$\topp(M_1,M_2,M_3)$} at 6 2

\put{$\P(n)$} at 0 3.5
\put{$\M(\NC(\mo\Lambda_n)$} at 3 3.5
\put{$\P(n)$} at 6 3.5
\endpicture}
$$

\end{note} 
	\medskip 

\begin{note}\label{algorithm-for-tilting-modules}
{\bf Algorithm to recover a tilting module $T$ from its top.}
Assume that $T = \bigoplus_r T_r$ is a (multiplicity-free) tilting module. 
For any $i$, let $m_i$ be the number of summands of $T$ with top $i$.
We claim that we can recover $T$ from the numbers $m_i$ with $1\le i \le n.$

By induction on $i$, the position of the modules with top $i$ can be determined
as follows: Let $I$ be the set of integers $1\le i < n$ such that $m_i > 0$.
In case $i\in I$, we denote by $T(i)$ the direct summand of
$T$ with top $i$ which has maximal length. Let us denote by $R_i$ the rectangle
which is the support of the hammock starting at $\tau^-T(i)$. 

Assume we have determined the position of the modules with top $i' < i,$
thus we know the rectangles $R_{i'}$ with $i'\in I$ and $i'<i.$ Let $\mathcal Z(i)$
be the modules with top $i$ which belong to at least one of the rectangles 
$R_{i'}$ with $i'\in I$ and $i'<i.$
Then the direct summands of $T$ with top $i$ are the $m_i$ modules of smallest
possible length which belong to the coray $\mathcal C_i$ and not to $\mathcal Z(i)$
(the coray $\mathcal C_i$ consists of the modules with top $S(i)$, this is
just the southeast diagonal ending in $S(i)$).
(Namely, a direct summand $T'$ of $T$ with top $i$ cannot belong to a rectangle $R_{i'}$,
since otherwise $\Ext^1(T',T(i)) \neq 0.$ On the other hand, by assumption, there are
precisely $m_i$ direct summands of $T$ with top $i$; in particular, there
is one such module. Recall that $T(i)$ is the direct summand of
$T$ with top $i$ of largest length. Since $\Ext^1(T,T(i)) = 0$, also $\Ext^1(T,Y) = 0$
for any factor module $Y$ of $T(i)$. Let $Y$ be a factor module of $T(i)$ which does not
belong to $\mathcal Z(i)$. Then clearly $T \oplus Y$ has no
self-extensions, thus $Y$ has to be a direct summand of $T$. This shows that all the non-zero
factor modules of $T(i)$ which do not belong to $\mathcal Z(i)$ are direct summands of $T$.)

Let us look at the start of the induction, thus let $i$ be minimal with $m_i > 0$. Then
$\mathcal Z(i) = \emptyset$, thus the direct summands of $T$ with top $i$ are just the modules
with top $i$ and length at most $m_i$. 
	\medskip 

{\bf Example.} Let $T$ be a tilting module with top sequence $(3,3,4,4,6,8,8,8)$. Recall that $m_i$ 
denotes the number of indecomposable direct summands of $T$ with top $i$,
thus $(m_1,\dots,m_8) = (0,0,2,2,0,1,0,3)$. The $m_i$ 
direct summands of $T$ with top $i$ lie in the
coray $\mathcal C_i$. 
We have marked by a circle the $m_i$ modules with top $i$ of smallest possible length, an
arrow (and the number $m_i$) points to the coray $\mathcal C_i$. The aim is to shift
the vertices inside the coray $\mathcal C_i$ so that we obtain a tilting module. 
$$
\hbox{\beginpicture
\setcoordinatesystem units <.2cm,.2cm>
\put{\beginpicture
\multiput{} at 0 0 16 6 /
\setdots <1mm>
\plot 0 0  7 7  14 0 /
\plot 1 1  2 0  8 6 /
\plot 2 2  4 0  9 5 /
\plot 3 3  6 0  10 4 /
\plot 4 4  8 0  11 3 /
\plot 5 5  10 0  12 2 /
\plot 6 6  12 0  13 1 /
\multiput{$\circ$} at 3 1  4 0  
              5 1  6 0 
              10  0 
              12 2  13 1  14 0
              /
\setsolid
\put{$m_i$} at 24 -3.2 

\arr{6 -2}{5 -1}
\put{$2$} at 7 -3         
\arr{8 -2}{7 -1}
\put{$2$} at 9 -3         
\arr{12 -2}{11 -1}
\put{$1$} at 13 -3         
\arr{16 -2}{15 -1}
\put{$3$} at 17 -3
\endpicture} at 0 2 
\endpicture}
$$

First, we consider the coray $\mathcal C_3$, since $3$ is the smallest index $i$ with $m_i > 0.$
As we know, the direct summands of $T$ in this coray have to be those of smallest possible
length, thus of length $1$ and $2$.
In particular, $T(3)$ has length $2$. We shade the rectangle $R_3$ starting at $\tau^-T(3)$, see
the first of the following pictures.
Always, the vertices marked by a bullet are the direct summands of $T$ which are already determined
(and arrows point to the corays which have to be considered later).
The vertices marked 
by a star are the modules $\tau^-T(i)$ with $i\in I$, 
the corresponding rectangles $R_i$ starting at such vertices are shaded. 
$$
\hbox{\beginpicture
\setcoordinatesystem units <.2cm,.2cm>
\put{\beginpicture
\multiput{} at 0 0 16 8 /
\setdots <1mm>
\plot 0 0  7 7  14 0 /
\plot 1 1  2 0  8 6 /
\plot 2 2  4 0  9 5 /
\plot 3 3  6 0  10 4 /
\plot 4 4  8 0  11 3 /
\plot 5 5  10 0  12 2 /
\plot 6 6  12 0  13 1 /

\multiput{$\bullet$} at 3 1  4 0  / 
\multiput{$\star$} at 5 1 /
\setshadegrid span <.3mm>
\vshade 5 1 1 <z,z,,> 6 0 2 <z,z,,> 9 3 5  <z,z,,> 10 4 4 /
\setsolid
\arr{8 -2}{7 -1}
\put{$2$} at 9 -3         
\arr{12 -2}{11 -1}
\put{$1$} at 13 -3         
\arr{16 -2}{15 -1}
\put{$3$} at 17 -3

\put{$R_3$} at 11 5.5        
\endpicture} at -30 -10
\put{\beginpicture
\multiput{} at 0 0 16 8 /
\setdots <1mm>
\plot 0 0  7 7  14 0 /
\plot 1 1  2 0  8 6 /
\plot 2 2  4 0  9 5 /
\plot 3 3  6 0  10 4 /
\plot 4 4  8 0  11 3 /
\plot 5 5  10 0  12 2 /
\plot 6 6  12 0  13 1 /

\multiput{$\bullet$} at 3 1  4 0  
   3 3  4 2   / 
\multiput{$\star$} at 5 1  5 3 /
\setshadegrid span <.3mm>
\vshade 5 1 1 <z,z,,> 6 0 2 <z,z,,> 9 3 5  <z,z,,> 10 4 4 /
\setshadegrid span <.35mm>
\vshade 5 3 3  <z,z,,> 8 0 6 <z,z,,>  11 3 3 /
\setsolid
\arr{12 -2}{11 -1}
\put{$1$} at 13 -3         
\arr{16 -2}{15 -1}
\put{$3$} at 17 -3

\put{$R_4$} at 12 4
\endpicture} at -6 -10
\put{\beginpicture
\multiput{} at 0 0 16 8 /
\setdots <1mm>
\plot 0 0  7 7  14 0 /
\plot 1 1  2 0  8 6 /
\plot 2 2  4 0  9 5 /
\plot 3 3  6 0  10 4 /
\plot 4 4  8 0  11 3 /
\plot 5 5  10 0  12 2 /
\plot 6 6  12 0  13 1 /

\multiput{$\bullet$} at 3 1  4 0  
   3 3  4 2    
   10 0 / 
\multiput{$\star$} at 5 1  5 3  12 0  /
\setshadegrid span <.3mm>
\vshade 5 1 1 <z,z,,> 6 0 2 <z,z,,> 9 3 5  <z,z,,> 10 4 4 /
\vshade 11.8 -.5 0.5 <z,z,,> 13.2 0.5 1.5 /

\setshadegrid span <.35mm>
\vshade 5 3 3  <z,z,,> 8 0 6 <z,z,,>  11 3 3 /
\setsolid
\arr{16 -2}{15 -1}
\put{$3$} at 17 -3

\put{$R_6$} at 14.8 1.5          
\endpicture} at -30 -25
\put{\beginpicture
\multiput{} at 0 0 16 8 /
\setdots <1mm>
\plot 0 0  7 7  14 0 /
\plot 1 1  2 0  8 6 /
\plot 2 2  4 0  9 5 /
\plot 3 3  6 0  10 4 /
\plot 4 4  8 0  11 3 /
\plot 5 5  10 0  12 2 /
\plot 6 6  12 0  13 1 /

\multiput{$\bullet$} at 3 1  4 0  
   3 3  4 2    
        10 0  
   7 7  12 2  14 0  / 
\multiput{$\star$} at 5 1  5 3   12 0 /
\setshadegrid span <.3mm>
\vshade 5 1 1 <z,z,,> 6 0 2 <z,z,,> 9 3 5  <z,z,,> 10 4 4 /
\vshade 11.8 -.5 0.5 <z,z,,> 13.2 0.5 1.5 /
\setshadegrid span <.35mm>
\vshade 5 3 3  <z,z,,> 8 0 6 <z,z,,>  11 3 3 /
\put{} at 19 -3         
\endpicture} at -6 -25

\endpicture}
$$

In the second step, we have determined the direct summands of $T$ with top $4$.
The modules in $N(4)$
are the modules with top $4$ and length $1$ and $2$. There are $2$ remaining
modules on the coray $\mathcal C_4$. Since $m_4 = 2$, we see that the remaining
modules on the coray $\mathcal C_4$ all are direct summands of $T$. It follows that 
$T(4)$ has length $4$.
We have shaded the rectangle $R_4$ (starting at $\tau^-T(4)$).

The third step concerns the direct summands of $T$ with top $6$. Here,
$N(6)$ consists of the modules with top $6$ and length between $2$ and $5$. It follows
that $T(6)$ has length $1$ and we shade the rectangle $R_6$ (note that it consists just of two modules).

The final step deals with the modules with top $8$. Now $N(8)$ consists of the
modules with top $8$ and length $2,4,5,6,7$; the remaining three modules with top  $8$ have
to be the remaining summands of $T$; they have length $1, 3, 8$. 
	\medskip 

Note: we have considered a tilting module with top sequence $(3,3,4,4,6,8,8,8)$.
If we renumber the simple
modules using the permutation $\pi$, we see that we deal with the 
non-increasing parking function $(6,6,5,5,3,1,1,1)$. 
	\bigskip

{\bf How to find the factor complement for a normal partial tilting module?}
Let $N$ be a normal partial tilting module. We can assume in addition that $N$ is sincere, 
thus that $P(n)$ is a direct summand (namely: a sincere partial tilting module is faithful.
and any faithful module has $P(n)$ as direct summand).

To say that $N$ is normal means that $N$ has 
at most one direct summand, say $N(i)$, from the coray $\mathcal C_i$. Let $I$
be the set of indices $1\le i < n$ such that $i$ occurs in the top of $N$, thus 
$$
 N = P(n)\oplus \bigoplus_{i\in I} N(i).
$$
For $i\in I$, we define $R_i$ as the rectangle starting in $\tau^-N(i),$ and we denote by $R$
the union of the sets $R_i$. 

The factor complement for $N$ consists of all the proper non-zero factor modules of the
modules $N(i)$ which do not belong to $R$.
	\medskip

{\bf Example.} Consider $\Lambda_8$.
Let $N$ be the direct sum of the modules $[2]3,\ [4]4,\ 6,\ [8]8.$
Here is the Auslander-Reiten quiver.
On the left, we mark the modules $N(i)$ by bullets. The arrows below
point to the corresponding corays.
$$
\hbox{\beginpicture
\setcoordinatesystem units <.2cm,.2cm>
\put{\beginpicture
\setdots <1mm>
\multiput{} at 0 -3 16 9 /
\plot 0 0  7 7  14 0 /
\plot 1 1  2 0  8 6 /
\plot 2 2  4 0  9 5 /
\plot 3 3  6 0  10 4 /
\plot 4 4  8 0  11 3 /
\plot 5 5  10 0  12 2 /
\plot 6 6  12 0  13 1 /
\multiput{$\bullet$} at 
   3 1  3 3  10 0  7 7 /
\setsolid

\arr{6 -2}{5 -1}
\arr{8 -2}{7 -1}
\arr{12 -2}{11 -1}
\arr{16 -2}{15 -1}
\endpicture} at 0 0
\put{\beginpicture
\setcoordinatesystem units <.2cm,.2cm>
\multiput{} at 0 -1 16 7 /
\setdots <1mm>
\plot 0 0  7 7  14 0 /
\plot 1 1  2 0  8 6 /
\plot 2 2  4 0  9 5 /
\plot 3 3  6 0  10 4 /
\plot 4 4  8 0  11 3 /
\plot 5 5  10 0  12 2 /
\plot 6 6  12 0  13 1 /

\multiput{$\bullet$} at 3 1   
   3 3  10 0 
         7 7   / 
\multiput{$\circ$} at 
3 1  4 0  
   3 3  4 2    
        10 0  
   7 7  12 2  14 0  / 
\multiput{$\star$} at 5 1  5 3   12 0 /
\setshadegrid span <.3mm>
\vshade 5 1 1 <z,z,,> 6 0 2 <z,z,,> 9 3 5  <z,z,,> 10 4 4 /
\vshade 11.8 -.5 0.5 <z,z,,> 13.2 0.5 1.5 /
\setshadegrid span <.35mm>
\vshade 5 3 3  <z,z,,> 8 0 6 <z,z,,>  11 3 3 /
\setsolid
\arr{6 -2}{5 -1}
\arr{8 -2}{7 -1}
\arr{12 -2}{11 -1}
\arr{16 -2}{15 -1}
\endpicture} at 20 0
\endpicture}
$$
On the right, we have marked the modules $\tau^-N(i)$ with $i\in I$ by
a star $*$ and we have shaded the corresponding rectangles $R_i$. By definition,
$R$ is the union of these rectangles. 

The circles show the position of the direct summands of the factor complement:
these are the proper non-zero factor modules of the modules $N(i)$ which do not belong to
$R$.
Note that the bullets and the circles together provide a tilting module (the unique
tilting module with normalization $N$). 
	\bigskip\bigskip

Let us draw the corresponding non-crossing partition. We start with the module
$$
 N = [2]3\oplus [4]4\oplus 6\oplus [8]8,
$$ 
the corresponding antichain is
$$
\hbox{\beginpicture
\setcoordinatesystem units <1cm,1cm>
\put{$A$} [r] at -0.2 0
\put{$= \left\{ \ \Delta(3),\ \Delta(4),\ \Delta(6),\ \Delta(8)\ \right\}
 = \left\{ \ [2]3,\ [4]4,\ 6,\ [4]8\ \right\}
 = \left\{\ \smallmatrix 3 \cr 2 \endsmallmatrix,\ 
      \smallmatrix 4 \cr 3 \cr 2\cr 1 \endsmallmatrix,\ 
      \smallmatrix 6 \endsmallmatrix,\ 
      \smallmatrix 8 \cr 7 \cr 6 \cr 5  \endsmallmatrix\ \right\}$} [l] at 0 0
\put{$= \left\{ \ [2,4],\  [1,5],\ [6,7],\ [5,9]\ \right\}$} [l] at 0 -1 
\endpicture}
$$
thus, the non-crossing partition is as follows: 
$$
\hbox{\beginpicture
\setcoordinatesystem units <.4cm,.4cm>
\multiput{$\bullet$} at 0 0  1 0  2 0  3 0  4 0  5 0  6 0  7 0  8 0  /
\put{} at 0 3
\plot 0 0  0 1 /
\plot 1 0  1 1 /
\plot 3 0  3 1 /
\plot 4 0  4 1 /
\plot 5 0  5 1 /
\plot 6 0  6 1 /
\plot 8 0  8 1 /
\circulararc 180 degrees from 4 1 center at 2 1
\circulararc 180 degrees from 8 1 center at 6 1
\circulararc 180 degrees from 3 1 center at 2 1
\circulararc 180 degrees from 6 1 center at 5.5 1
\put{$\ssize 1$} at 0 -1
\put{$\ssize 2$} at 1 -1
\put{$\ssize 3$} at 2 -1
\put{$\ssize 4$} at 3 -1
\put{$\ssize 5$} at 4 -1
\put{$\ssize 6$} at 5 -1
\put{$\ssize 7$} at 6 -1
\put{$\ssize 8$} at 7 -1
\put{$\ssize 9$} at 8 -1
\endpicture}
$$
\end{note}

\begin{note}\label{non-nesting}
{\bf Antichains in $\Phi_+$,}  where $\Phi$ is a (finite) root system say of type $\Delta$.
In this last chapter we should have considered also
the set $\A(\Phi_+)$ of {\bf antichains}
in the root poset (the set of ``non-nesting partitions''). 

Let $\Lambda$ be a hereditary artin algebra of type $\Delta$. Then
	\smallskip

\Rahmen{$|\A(\Phi_+)| = |\A(\mo\Lambda)|,$}
	\smallskip

\noindent
thus, there are bijections between $|\A(\Phi_+)|$ and $|\A(\mo\Lambda)|$, but it 
is still an open problem to find a natural one, which takes into account that we even have:
	\smallskip

\Rahmen{$|\A_t(\Phi_+)| = |\A_t(\mo\Lambda)|$} 
	\bigskip 

The relationship between non-nesting and non-crossing is still a mystery.
\end{note}

\begin{note}\label{tilting-support-tilting}
{\bf The equality $\t(\mathbb A_n) = \t_{n+1}(\mathbb A_{n+1})$.} 
{\it There is a natural bijection between the tilting $\Lambda_{n+1}$-modules $T$
and the support-tilting $\Lambda_n$-modules $M$.} It is defined as follows:
	\medskip 

We denote by $\tau_{n+1}$ the Auslander-Reiten translation for $\mo\Lambda_{n+1}$.
Given $M$, let $I = \bigoplus_{i\in V} I(i)$, where $V$ is the set of vertices $1\le i \le n+1$
which do not belong to the support of $\tau_{n+1}^-M$ (and $I(i)$ is the injective module 
corresponding to the vertex $i$).
The Auslander-Reiten
formula for $\mo\Lambda_{n+1}$ asserts that $\Ext^1(I(i),M) = D\Hom(\tau_{n+1}^-M,I(S)) = 0$
for $i\in V$. Since $I$ is injective, we see that $\Ext^1(T\oplus I,T\oplus I) = 0.$ 
Let $s$ be the cardinality of the support of $M$. Then $V$ has cardinality $n+1-s$, thus 
$M\oplus I$ is the direct sum $t$ pairwise non-isomorphic indecomposable modules, thus $M\oplus I$
is a tilting $\Lambda_{n+1}$-module. 

Conversely, assume that $T$ is tilting $\Lambda_{n+1}$-module, let
$T_1,\dots,T_s$ be indecomposable non-injective direct summands of $T$. Then $M = \bigoplus_{i=1}^s$
is a support-tilting $\Lambda_n$-module. 

\end{note} 

\vfill\eject

\section{\bf Appendix}

In the appendix we will try to outline in which way the classical 
Catalan combinatorics could be seen
as the heart of the theory of finite sets, starting with the subsets of
cardinality two. Given a finite set $C$, we are going to find relations between
subsets, quotient sets and automorphisms of $C$. We follow considerations of Knuth \cite{[Kn]},
Biane \cite{[Bi]} and Armstrong \cite{[Ag]}. We start with an overview over some typical 
Catalan problems as discussed by Stanley \cite{[S1],[S2]}. 
	\bigskip

\subsection{What is Catalan combinatorics? A first answer} 
     
Catalan combinatorics is usually considered just as collecting a wealth 
of counting problems which are defined for all natural numbers $n$ such that
the answer $f(n)$ is  the Catalan number $f(n) = \frac1{n+1}\binom{2n}n$, or,
equivalently, such that $f(0) = 1$ and such that for $n\ge 1$ the recursion formula
$$
 f(n+1) = \sum_{t=0}^n f(t)f(n-t)
$$
is satisfied. These problems
concern finite sets with some decoration, with some additional structures. 
There are the famous collections by Stanley \cite{[S1],[S2]} which ask for 
direct enumerations and for providing bijections. The relationship between
any two of these problems is of varied nature: often only small changes are needed
to transform one problem into the other, but sometimes it takes some effort
to provide reasonable bijections. The literature is full of such examples, 
but no systematic treatment of the Catalan problems seems to be available.
The most important examples of Catalan problems are the non-crossing and the
non-nesting partitions. 	
	\medskip

\subsubsection{\bf Partitions of a finite set.}  
Given a partition $P$ of the set $\{1,2,\dots,n\}$, we consider its set $A(P)$ of arcs:
An {\it arc} of $P$ is a pair $(i,j)$ with $i < j$ which belong to the same part such that 
no element $x$ with $a < x < b$ 
belongs to this part (alternatively, we could use the cyclic order on the
set $\{1,2,\dots,n\}$ and convex polygons). We may consider the arcs as elements of the
positive root set $\Phi_+(\mathbb A_{n-1})$, thus the function $A$ provides an
embedding of the set $\Quot(n)$ of all partitions of $\{1,2,\dots,n\}$ into the set 
$\Sub(\Phi_+(\mathbb A_{n-1}))$:
	\smallskip 

\Rahmen{$\Quot(n)\ \subseteq \ \Sub(\Phi_+(\mathbb A_{n-1}))$.}
	\bigskip 

\subsubsection{\bf Non-crossing partitions.}
Let us start with the non-crossing partitions and with counting problems 
which are directly related. 
Recall that a partition of the set $\{1,2,\dots,n\}$
is said to be {\it non-crossing,} 
provided given elements $i < i' < j < j'$ with $i,j$ in the same part 
and $i',j'$ in the same part, then all elements belong to the same part. 
As we have mentioned in Chapter 4, we visualize a partition by drawing 
the vertices of the set $\{1,2,\dots,n\}$ in the natural order, 
as well as all the arcs (alternatively, we could use the cyclic order on the
set $\{1,2,\dots,n\}$ and convex polygons). 

As we have mentioned, the set $A(P)$ of arcs of a partition $P$ can be considered 
as a subset of $\Phi_+$,
thus, we may interpret the arcs as indecomposable $\Lambda_{n-1}$. We saw already
in Proposition \ref{prop-partitions-arc}, that a partition $P$ is non-crossing if and only if 
$A(P)$ is an {\bf antichains in $\mo \Lambda_{n-1}$}.
	\medskip 

In Proposition \ref{binary} 
we have outlined how to attach to a non-crossing partition $P$ a {\bf binary
tree} $B(P)$. 
A binary tree is called a {\bf complete binary trees,} provided any vertex has
either two or none successors. There is an obvious bijection between the
binary trees $B$ with $n\ge 1$ vertices and the complete binary trees $\widetilde B$ with $2n+1$
vertices: if $B$ is a binary tree with $n\ge 1$ vertices, then add to every vertex $x$ with $t \le 1$
successors $2-t$ successors, in order to obtain $\widetilde B$. Conversely, if
$\widetilde B$ is a complete binary tree with more than one vertices, 
delete all its leaves, in order to recover $B$.
	
Starting with a complete binary tree, label the leaves by pairwise different letters $a,b,c,\dots$,
going from left to right. The binary tree structure provides {\bf parentheses} for the word $abc\cdots$.
	\medskip

A permutation $\pi$ in $S_n$ is said to be a 
{\bf $231$-avoiding permutation} provided there are no numbers $a < b < c$
in $\{1,2,\dots,n\}$ with $\pi(b) < \pi(c) < \pi(a)$. These 
permutations are also said to be {\bf stack-sortable,} since such permutations
can be achieved by a computer using a single stack; thus, here, we are in the realm of 
computer science! 
The problem of sorting an input sequence using a single stack was first posed by Knuth (1968).
Given a binary tree $B$ with $n$ vertices, the corresponding stack-sortable permutation $\pi_B$
is defined inductively as follows: endow $B$ with its intrinsic numbering and 
if $B = (L,s,R)$, then $\pi_B(1) = s$, the values of $2,\dots,s$ under $\pi_B$
are $\pi_L(1),\dots,\pi_L(s-1)$, in this order, and the values of $s+1,\dots,n$ under $\pi_B$
are $\pi_R(1)+s,\dots,\pi_R(n-s)+s$, in this order. 
	\medskip

Here are various incarnations of the {\bf non-crossing partitions,} 
starting with the arc diagram, the antichains in the Auslander-Reiten quiver of 
$\mo\Lambda_{n-1}$, as well as the corresponding binary tree. We show the case
$n = 3$, as well as the
only nesting partition for $n = 4$. The leaves of the complete binary trees are 
marked as stars $\star$.

$$
\hbox{\beginpicture
\setcoordinatesystem units <.85cm,.8cm>
\put{arcs} at -2.5 0 
\put{\beginpicture
\setcoordinatesystem units <.4cm,.4cm>
\multiput{} at 0 0  2 2 /
\multiput{$\bullet$} at 0 0  1 0  2 0 /
\endpicture} at 0 0
\put{\beginpicture
\setcoordinatesystem units <.4cm,.4cm>
\multiput{} at 0 0  2 2 /
\multiput{$\bullet$} at 0 0  1 0  2 0 /
\plot 0 0  0 1 /
\plot 1 0  1 1 /
\circulararc 180 degrees from 1 1  center at 0.5 1 
\endpicture} at 2 0
\put{\beginpicture
\setcoordinatesystem units <.4cm,.4cm>
\multiput{} at 0 0  2 2 /
\multiput{$\bullet$} at 0 0  1 0  2 0 /
\plot 0 0  0 1 /
\plot 2 0  2 1 /
\circulararc 180 degrees from 2 1  center at 1 1 
\endpicture} at 4 0
\put{\beginpicture
\setcoordinatesystem units <.4cm,.4cm>
\multiput{} at 0 0  2 2 /
\multiput{$\bullet$} at 0 0  1 0  2 0 /
\plot 1 0  1 1 /
\plot 2 0  2 1 /
\circulararc 180 degrees from 2 1  center at 1.5 1 
\endpicture} at 6 0
\put{\beginpicture
\setcoordinatesystem units <.4cm,.4cm>
\multiput{} at 0 0  2 2 /
\multiput{$\bullet$} at 0 0  1 0  2 0 /
\plot 0 0  0 1 /
\plot 1 0  1 1 /
\plot 2 0  2 1 /
\circulararc 180 degrees from 1 1  center at 0.5 1 
\circulararc 180 degrees from 2 1  center at 1.5 1 
\endpicture} at 8 0
\put{\beginpicture
\setcoordinatesystem units <.4cm,.4cm>
\multiput{} at 0 0  3 2 /
\multiput{$\bullet$} at 0 0  1 0  2 0  3 0 /
\plot 0 0  0 1 /
\plot 1 0  1 1 /
\plot 2 0  2 1 /
\plot 3 0  3 1 /
\circulararc 180 degrees from 2 1  center at 1.5 1 
\circulararc 180 degrees from 3 1  center at 1.5 1 
\endpicture} at 10.5 0
\put{antichains} at -2.5 -1.75
\put{in $\mo\Lambda_2$} at -2.5 -2.25
\put{\beginpicture
\setcoordinatesystem units <.4cm,.4cm>
\multiput{} at 0 0  2 1 /
\plot 0 0  1 1  2 0 /
\multiput{$\bullet$} at   /
\endpicture} at 0 -2

\put{\beginpicture
\setcoordinatesystem units <.4cm,.4cm>
\multiput{} at 0 0  2 1 /
\plot 0 0  1 1  2 0 /
\multiput{$\bullet$} at 0 0  /
\endpicture} at 2 -2

\put{\beginpicture
\setcoordinatesystem units <.4cm,.4cm>
\multiput{} at 0 0  2 1 /
\plot 0 0  1 1  2 0 /
\multiput{$\bullet$} at  1 1 /
\endpicture} at 4 -2

\put{\beginpicture
\setcoordinatesystem units <.4cm,.4cm>
\multiput{} at 0 0  2 1 /
\plot 0 0  1 1  2 0 /
\multiput{$\bullet$} at  2 0 /
\endpicture} at 6 -2

\put{\beginpicture
\setcoordinatesystem units <.4cm,.4cm>
\multiput{} at 0 0  2 1 /
\plot 0 0  1 1  2 0 /
\multiput{$\bullet$} at  0 0  2 0  /
\endpicture} at 8 -2
\put{\beginpicture
\setcoordinatesystem units <.4cm,.4cm>
\multiput{} at 0 0  4 1 /
\plot 0 0  2 2  4 0 /
\plot 1 1  2 0  3 1 /
\multiput{$\bullet$} at  2 0   2 2  /
\endpicture} at 10.5 -2

\put{binary trees} at -2.5 -4
\put{\beginpicture
\setcoordinatesystem units <.4cm,.4cm>
\multiput{} at 0 0  2 2 /
\multiput{$\ssize \bullet$} at 2 2  1 1  0 0  /
\setdots <1mm>
\plot 0 0  2 2 / 
\put{$\ssize 3$} at 2.5 1.8
\put{$\ssize 2$} at 1.5 0.8
\put{$\ssize 1$} at  .5 -.2
\endpicture} at 0 -4
\put{\beginpicture
\setcoordinatesystem units <.4cm,.4cm>
\multiput{} at 0 0  1 2 /
\multiput{$\ssize \bullet$} at 1 0  0 1  1 2  /
\plot 0 1  1 0 /
\setdots <1mm>
\plot 1 2  0 1 / 
\put{$\ssize 3$} at 1.5 2
\put{$\ssize 1$} at -.5 1
\put{$\ssize 2$} at  1.5 -.2
\endpicture} at 2 -4
\put{\beginpicture
\setcoordinatesystem units <.4cm,.4cm>
\multiput{} at 0 0  1 2 /
\multiput{$\ssize \bullet$} at 0 2  1 1  0 0  /
\plot 1 1  0 2 / 
\setdots <1mm>
\plot 1 1  0 0 / 
\put{$\ssize 1$} at -.5 2
\put{$\ssize 3$} at 1.5 1
\put{$\ssize 2$} at  -.5 -.2
\endpicture} at 4 -4
\put{\beginpicture
\setcoordinatesystem units <.4cm,.4cm>
\multiput{} at 0 0  2 2 /
\plot 1 2  2 1 / 

\multiput{$\ssize \bullet$} at 0 1  1 2  2 1  /
\setdots <1mm>
\plot 1 2  0 1 / 
\put{$\ssize 2$} at 1.5 2.2
\put{$\ssize 3$} at 2.5 1
\put{$\ssize 1$} at -.5 1
\endpicture} at 6 -4
\put{\beginpicture
\setcoordinatesystem units <.4cm,.4cm>
\multiput{} at 0 0  2 2 /
\multiput{$\ssize \bullet$} at 2 0  1 1  0 2  /
\plot 0 2  2 0 / 
\put{$\ssize 1$} at -.5 2
\put{$\ssize 2$} at  .4 1
\put{$\ssize 3$} at  1.4 -.2
\endpicture} at 8 -4
\put{\beginpicture
\setcoordinatesystem units <.4cm,.4cm>
\multiput{} at 0 0  1 3 /
\multiput{$\ssize \bullet$} at 1 0  0 1  1 2  0 3  /
\setsolid
\plot 0 3  1 2 /
\plot 0 1  1 0 /
\setdots <1mm>
\plot 0 1  1 2 / 
\put{$\ssize 1$} at -.5 3
\put{$\ssize 4$} at 1.5 2
\put{$\ssize 2$} at  -.5 1
\put{$\ssize 3$} at  .5 -.2
\endpicture} at 10.5 -4

\put{complete} at -2.5 -5.75
\put{binary trees} at -2.5 -6.25
\put{\beginpicture
\setcoordinatesystem units <.25cm,.25cm>
\multiput{} at 0 0  2 2 /
\multiput{$\ssize \bullet$} at 2 2  1 1  0 0 /
\multiput{$\star$} at  -1 -1  1 -1  2 0  3 1  /
\plot 0 0  1 -1 /
\plot 1 1  2 0 /
\plot 2 2  3 1 /
\setdots <.5mm>
\plot -1 -1  2 2 / 
\put{$\ssize a$\strut} at -1.6 -1.2
\put{$\ssize b$\strut} at  1.6 -1.2
\put{$\ssize c$\strut} at 2.6 0
\put{$\ssize d$\strut} at 3.6 1
\endpicture} at 0 -6
\put{\beginpicture
\setcoordinatesystem units <.25cm,.25cm>
\multiput{} at 0 0  1 2 /
\multiput{$\ssize \bullet$} at 1 0  0 1  1 2 /
\multiput{$\star$} at  -1 0  0 -1  2 -1  2 1 /
\plot 0 1  2 -1 /
\plot 1 2  2 1 /
\plot 1 2  2 1 /
\setdots <.5mm>
\plot 1 2  -1 0 /
\plot 0 -1  1 0 /
\put{$\ssize a$\strut} at -1.7 0.2
\put{$\ssize b$\strut} at -.6 -1.4
\put{$\ssize c$\strut} at 2.6 -1.4
\put{$\ssize d$\strut} at 2.6 1
\endpicture} at 2 -6
\put{\beginpicture
\setcoordinatesystem units <.25cm,.25cm>
\multiput{} at 0 0  1 2 /
\multiput{$\ssize \bullet$} at 0 2  1 1  0 0 /
\multiput{$\star$} at  -1 -1  -1 1  1 -1  2 0 /
\plot 2 0  0 2 / 
\plot 0 0  1 -1 /
\setdots <.5mm>
\plot 1 1  -1 -1 / 
\plot 0 2  -1 1 /
\put{$\ssize a$\strut} at -1.6 1
\put{$\ssize b$\strut} at -1.6 -1
\put{$\ssize c$\strut} at  1.6 -1
\put{$\ssize d$\strut} at 2.6  0
\endpicture} at 4 -6
\put{\beginpicture
\setcoordinatesystem units <.25cm,.25cm>
\multiput{} at 0 0  2 2 /
\plot 1 2  3 0 / 
\plot 0 1  0.6 0 / 
\multiput{$\ssize \bullet$} at 0 1  1 2  2 1  /
\multiput{$\star$} at -1 0 .6 0  1.4 0  3 0 /  
\setdots <.5mm>
\plot 1 2  -1 0 / 
\plot 2 1  1.4 0 /
\put{$\ssize a$\strut} at -1.4 -.7
\put{$\ssize b$\strut} at  .4 -.7
\put{$\ssize c$\strut} at 1.5 -.7
\put{$\ssize d$\strut} at 3.4 -.7
\endpicture} at 6 -6
\put{\beginpicture
\setcoordinatesystem units <.25cm,.25cm>
\multiput{} at 0 0  2 2 /
\multiput{$\ssize \bullet$} at 2 0  1 1  0 2  /
\multiput{$\star$} at -1 1  0 0  1 -1  3 -1 /
\plot 0 2  3 -1 / 
\setdots <.5mm>
\plot 0 2  -1 1 /
\plot 1 1  0 0 /
\plot 2 0  1 -1 /
\put{$\ssize a$\strut} at -1.6 .8
\put{$\ssize b$\strut} at -.6 -.2
\put{$\ssize c$\strut} at .4 -1.2
\put{$\ssize d$\strut} at 3.6 -1.2
\endpicture} at 8 -6

\put{\beginpicture
\setcoordinatesystem units <.25cm,.25cm>
\multiput{} at 0 0  1 3 /
\multiput{$\ssize \bullet$} at 1 0  0 1  1 2  0 3  /
\multiput{$\star$} at -1 2  -1 0  0 -1  2 -1 2 1 /
\setsolid
\plot 0 3  2 1 /
\plot 0 1  2 -1 /
\setdots <1mm>
\plot 0 1  -1 0 /
\plot 0 1  1 2 / 
\plot 0 3  -1 2 /
\plot 1 0  0 -1 /
\put{$\ssize a$\strut} at -1.6 2
\put{$\ssize b$\strut} at -1.6 0
\put{$\ssize c$\strut} at -.6 -1.4
\put{$\ssize d$\strut} at 2.6 -1.4
\put{$\ssize e$\strut} at 2.6 0.8 
\endpicture} at 10.5 -6

\put{parantheses} at -2.5 -8
\put{$((ab)c)d$} at 0 -8
\put{$(a(bc))d$} at 2 -8
\put{$a((bc)d)$} at 4 -8
\put{$(ab)(cd)$} at 6 -8
\put{$a(b(cd))$} at 8 -8
\put{$a(b(cd))e)$} at 10.5 -8

\put{$231$-avoiding} at -2.5 -9.75
\put{permutations} at -2.5 -10.25
\put{\beginpicture
\setcoordinatesystem units <.4cm,.4cm>
\put{$\left[\smallmatrix 1 & 2 & 3 \cr
              3 & 2 & 1 \endsmallmatrix\right]$} at 0 0 
\endpicture} at 0 -10
\put{\beginpicture
\setcoordinatesystem units <.4cm,.4cm>
\put{$\left[\smallmatrix 1 & 2 & 3 \cr
              3 & 1 & 2 \endsmallmatrix\right]$} at 0 0 
\endpicture} at 2 -10
\put{\beginpicture
\setcoordinatesystem units <.4cm,.4cm>
\put{$\left[\smallmatrix 1 & 2 & 3 \cr
              1 & 3 & 2 \endsmallmatrix\right]$} at 0 0 
\endpicture} at 4 -10
\put{\beginpicture
\setcoordinatesystem units <.4cm,.4cm>
\put{$\left[\smallmatrix 1 & 2 & 3 \cr
              2 & 1 & 3 \endsmallmatrix\right]$} at 0 0 
\endpicture} at 6 -10
\put{\beginpicture
\setcoordinatesystem units <.4cm,.4cm>
\put{$\left[\smallmatrix 1 & 2 & 3 \cr
              1 & 2 & 3 \endsmallmatrix\right]$} at 0 0 
\endpicture} at 8 -10
\put{\beginpicture
\setcoordinatesystem units <.4cm,.4cm>
\put{$\left[\smallmatrix 1 & 2 & 3 & 4 \cr
              1 & 4 & 2 & 3  \endsmallmatrix\right]$} at 0 0 
\endpicture} at 10.5 -10

\setdots <1mm>
\plot -1.1 0.5  -1.1 -10.4 /
\plot 9.1 0.5  9.1 -10.4 /

\endpicture}
$$

The recursion formula $f(n+1) = \sum_{t=0}^n f(t)f(n-t)$ is most easily seen when looking at
the set of binary trees $B = (L,s,R)$. If $B$ is of cardinality $n+1$, then $L$ has to be
of cardinality $t$ with $0 \le t \le n$, this is the summation index.
If we fix $t$, then there are $f(t)$ possiblities for $L$
and $f(n-t)$ possibilities for $R$.
	\bigskip

\subsubsection{\bf Non-nesting partitions.}
A partition of the set $\{1,2,\dots,n\}$ is said to be {\it non-nesting,} 
provided given elements $i < i' < j < j'$ with $i,j'$ in the same part 
and $i',j$ in the same part, then all elements belong to the same part. 

Given a non-nesting partition, we consider again its  arc diagram. We recall
that the set $A(P)$ of arcs of a partition $P$ can be considered as a subset of $\Phi_+$.
Clearly, a partition $P$ is non-nesting if and only if 
$A(P)$ is an {\bf antichains in $\Phi_+$}.
	\bigskip

There is another model for the set of non-nesting partitions
of $\{1,2,\dots,n\}$, namely the set of {\bf Dyck paths} of length $2n$.
These are paths in the integral lattice $\mathbb Z^2$ from $(0,0)$ to $(2n,0)$, inside the
positive cone $\mathbb N^2$, using just
northeast and southeast arrows. Here is an example for $n = 6$.

$$
\hbox{\beginpicture
\setcoordinatesystem units <.3cm,.3cm>
\multiput{} at 0 0  12 6 /
\setdots <.7mm>
\setplotarea x from 0 to 12, y from 0 to 6
\grid {12} {6} 
\setsolid 
\plot 0 0  1 1  2 0  5 3   7 1  8 2  9 1  10 2  12 0 /
\endpicture}
$$  
We get a bijection between the Dyck paths of length $2n$ and the set of elements of
$\A(\Phi_+(\mathbb A_{n-1}))$ as follows: Attach to a Dyck path the set of its valleys:
this is an antichain in the root poset $\Phi_+(\mathbb A_{n-1})$.
For example, the valleys of the Dyck path shown above are the three vertices marked
by bullets:
$$
\hbox{\beginpicture
\setcoordinatesystem units <.3cm,.3cm>
\put{\beginpicture 
\multiput{} at 0 0  12 6 /
\setdots <.7mm>
\setplotarea x from 0 to 12, y from 0 to 6
\grid {12} {8} 
\setsolid 
\plot 0 0  1 1  2 0  5 3   7 1  8 2  9 1  10 2  12 0 /
\multiput{$\bullet$} at 2 0  7 1  9 1 /
\endpicture} at 0 0
\put{\beginpicture 
\multiput{} at 0 0  8 6 /
\setdots <.7mm>
\plot 0 0  4 4  8 0 /
\plot 1 1  2 0  5 3 /
\plot 2 2  4 0  6 2 /
\plot 3 3  6 0   7 1 /
\multiput{$\bullet$} at 0 0  5 1  7 1  /
\endpicture} at 16 0
\endpicture}
$$  
Clearly, {\it the set of valleys of a Dyck path is an antichain in $\Phi_+$ and any
antichain is obtained in this way.}
	
Dyck paths often are drawn as {\bf monotonic paths:}
as lattice paths from $(0,0)$ to $(n,n)$ using only east arrows and north
arrows of length $1$ and staying inside the triangle with vertices $(0,0), (n,0), (n,n)$.
The monotonic paths are obtained from the Dyck paths using a rotation by $225^\circ$
(or, alternatively, by a rotation by -45${}^\circ$ degrees followed by a vertical reflection). 
Monotonic paths can be encoded as words using the letters $X$ (for the east arrows) and $Y$
(for the north arrows). Thus, we deal with words using $n$ letters $X$, and $n$
letters $Y$, such that for any initial subword the number of letters $X$ is greater or
equal to the number of letters $Y$. These words are called {\bf Dyck words} of length $2n$.

Replacing in a Dyck word $X$ by an opening bracket ( and $Y$ by a closing bracket ),
we obtain $n$ pairs of {\bf parentheses} which are correctly matched.
Alternatively, we may use {\bf $(\pm 1)$-sequences}, these are sequences with $n$ entries
equal to $1$ and $n$ entries equal to $-1$. such that all (initial) partial sums are
non-negative.   

A permutation $\pi$ in $S_n$ is said to be a
{\bf $321$-avoiding permutation} provided there are no numbers $a < b < c$
in $\{1,2,\dots,n\}$ with $\pi(c) < \pi(b) < \pi(a)$. There is the following
bijection between monotone paths and $321$-avoiding permutations (see for example \cite{[CK], [Ro]}):
Let $\pi(1),\dots,\pi(n)$ be the value sequence of the permutation $\pi$. 
Define $y_1 = 0$. For $i \ge 2,$ let
$y_i = \pi(i)$ in case $i\ge 2$ and $p(i) < p(j)$ for some $j < i;$ otherwise let $y_i = y_{i-1}$.
The edges with endpoints $(i-1,y_i)$ and $(i,y_i)$ are the $X$-steps of the 
monotone path (the corresponding $Y$-steps are determined uniquely by knowing the $X$-steps).
Conversely, given a monotone path, the left-most $X$-steps in any row, say from 
$(i-1,y_i)$ and $(i,y_i)$, show the value $p(i) = y_i$. The remaining values $\pi(j)$ are
determined going downwards as the maximum of the numbers not yet used. 
	\medskip 

Let us exhibit
various incarnations of the {\bf non-nesting partitions,} for $n = 3$, as well as the
only crossing partition for $n = 4$. For the Dyck paths, we have encircled the valleys
(note that they correspond to the elements of the antichain).

$$
\hbox{\beginpicture
\setcoordinatesystem units <.85cm,.9cm>
\put{arcs} at -2.5 0 
\put{\beginpicture
\setcoordinatesystem units <.4cm,.4cm>
\multiput{} at 0 0  2 2 /
\multiput{$\bullet$} at 0 0  1 0  2 0 /
\endpicture} at 0 0
\put{\beginpicture
\setcoordinatesystem units <.4cm,.4cm>
\multiput{} at 0 0  2 2 /
\multiput{$\bullet$} at 0 0  1 0  2 0 /
\plot 0 0  0 1 /
\plot 1 0  1 1 /
\circulararc 180 degrees from 1 1  center at 0.5 1 
\endpicture} at 2 0
\put{\beginpicture
\setcoordinatesystem units <.4cm,.4cm>
\multiput{} at 0 0  2 2 /
\multiput{$\bullet$} at 0 0  1 0  2 0 /
\plot 0 0  0 1 /
\plot 2 0  2 1 /
\circulararc 180 degrees from 2 1  center at 1 1 
\endpicture} at 4 0
\put{\beginpicture
\setcoordinatesystem units <.4cm,.4cm>
\multiput{} at 0 0  2 2 /
\multiput{$\bullet$} at 0 0  1 0  2 0 /
\plot 1 0  1 1 /
\plot 2 0  2 1 /
\circulararc 180 degrees from 2 1  center at 1.5 1 
\endpicture} at 6 0
\put{\beginpicture
\setcoordinatesystem units <.4cm,.4cm>
\multiput{} at 0 0  2 2 /
\multiput{$\bullet$} at 0 0  1 0  2 0 /
\plot 0 0  0 1 /
\plot 1 0  1 1 /
\plot 2 0  2 1 /
\circulararc 180 degrees from 1 1  center at 0.5 1 
\circulararc 180 degrees from 2 1  center at 1.5 1 
\endpicture} at 8 0
\put{\beginpicture
\setcoordinatesystem units <.4cm,.4cm>
\multiput{} at 0 0  3 2 /
\multiput{$\bullet$} at 0 0  1 0  2 0  3 0 /
\plot 0 0  0 1 /
\plot 1 0  1 1 /
\plot 2 0  2 1 /
\plot 3 0  3 1 /
\circulararc 180 degrees from 2 1  center at 1 1 
\circulararc 180 degrees from 3 1  center at 2 1 
\endpicture} at 10.5 0
\put{antichains} at -2.5 -1.75
\put{in $\Phi_+$} at -2.5 -2.25
\put{\beginpicture
\setcoordinatesystem units <.4cm,.4cm>
\multiput{} at 0 0  2 1 /
\plot 0 0  1 1  2 0 /
\multiput{$\bullet$} at   /
\endpicture} at 0 -2

\put{\beginpicture
\setcoordinatesystem units <.4cm,.4cm>
\multiput{} at 0 0  2 1 /
\plot 0 0  1 1  2 0 /
\multiput{$\bullet$} at 0 0  /
\endpicture} at 2 -2

\put{\beginpicture
\setcoordinatesystem units <.4cm,.4cm>
\multiput{} at 0 0  2 1 /
\plot 0 0  1 1  2 0 /
\multiput{$\bullet$} at  1 1 /
\endpicture} at 4 -2

\put{\beginpicture
\setcoordinatesystem units <.4cm,.4cm>
\multiput{} at 0 0  2 1 /
\plot 0 0  1 1  2 0 /
\multiput{$\bullet$} at  2 0 /
\endpicture} at 6 -2

\put{\beginpicture
\setcoordinatesystem units <.4cm,.4cm>
\multiput{} at 0 0  2 1 /
\plot 0 0  1 1  2 0 /
\multiput{$\bullet$} at  0 0  2 0  /
\endpicture} at 8 -2
\put{\beginpicture
\setcoordinatesystem units <.4cm,.4cm>
\multiput{} at 0 0  4 1 /
\plot 0 0  2 2  4 0 /
\plot 1 1  2 0  3 1 /
\multiput{$\bullet$} at  1 1   3 1  /
\endpicture} at 10.5 -2

\put{Dyck paths} at -2.5 -4
\put{\beginpicture
\setcoordinatesystem units <.2cm,.2cm>
\multiput{} at 0 0  6 3 /
\multiput{$\ssize \bullet$} at 0 0  1 1  2 2  3 3  4 2  5 1  6 0   /
\multiput{$\bigcirc$} at /
\plot 0 0  3 3  6 0 /
\endpicture} at 0 -4
\put{\beginpicture
\setcoordinatesystem units <.2cm,.2cm>
\multiput{} at 0 0  6 3 /
\multiput{$\ssize \bullet$} at 0 0  1 1  2 0  3 1  4 2  5 1  6 0   /
\plot 0 0  1 1  2 0  4 2  6 0 /
\multiput{$\bigcirc$} at 2 0 /
\endpicture} at 2 -4
\put{\beginpicture
\setcoordinatesystem units <.2cm,.2cm>
\multiput{} at 0 0  6 3 /
\multiput{$\ssize \bullet$} at 0 0  1 1  2 2  3 1  4 2  5 1  6 0   /
\plot 0 0  2 2  3 1  4 2  6 0 /
\multiput{$\bigcirc$} at 3 1 /
\endpicture} at 4 -4
\put{\beginpicture
\setcoordinatesystem units <.2cm,.2cm>
\multiput{} at 0 0  6 3 /
\multiput{$\ssize \bullet$} at 0 0  1 1  2 2  3 1  4 0  5 1  6 0   /
\plot 0 0  2 2  4 0  5 1  6 0 /
\multiput{$\bigcirc$} at 4 0 /
\endpicture} at 6 -4
\put{\beginpicture
\setcoordinatesystem units <.2cm,.2cm>
\multiput{} at 0 0  6 3 /
\multiput{$\ssize \bullet$} at 0 0  1 1  2 0  3 1  4 0  5 1  6 0   /
\plot 0 0  1 1  2 0  3 1  4 0  5 1   6 0 /
\multiput{$\bigcirc$} at 2 0  4 0 /
\endpicture} at 8 -4
\put{\beginpicture
\setcoordinatesystem units <.2cm,.2cm>
\multiput{} at 0 0  8 3 /
\multiput{$\ssize \bullet$} at 0 0  1 1  2 2  3 1  4 2  5 1  6 2  7 1  8 0   /
\plot 0 0  2 2  3 1  4 2  5 1  6 2   8 0 /
\multiput{$\bigcirc$} at 3 1  5 1  /
\endpicture} at 10.5 -4
\put{monotone} at -2.5 -5.75
\put{paths} at -2.5 -6.25
\put{\beginpicture
\setcoordinatesystem units <.3cm,.3cm>
\multiput{} at 0 0  3 3 /
\multiput{$\ssize \bullet$} at 0 0  1 0  2 0  3 0  3 1  3 2  3 3   /
\plot 0 0  3 0  3 3 /
\endpicture} at 0 -6
\put{\beginpicture
\setcoordinatesystem units <.3cm,.3cm>
\multiput{} at 0 0  3 3 /
\multiput{$\ssize \bullet$} at 0 0  1 0  2 0  2 1  2 2  3 2  3 3   /
\plot 0 0  2 0  2 2  3 2  3 3 /
\endpicture} at 2 -6
\put{\beginpicture
\setcoordinatesystem units <.3cm,.3cm>
\multiput{} at 0 0  3 3 /
\multiput{$\ssize \bullet$} at 0 0  1 0  2 0  2 1  3 1  3 2  3 3   /
\plot 0 0  2 0  2 1  3 1  3 3 /
\endpicture} at 4 -6
\put{\beginpicture
\setcoordinatesystem units <.3cm,.3cm>
\multiput{} at 0 0  3 3 /
\multiput{$\ssize \bullet$} at 0 0  1 0  1 1  2 1  3 1   3 2  3 3   /
\plot 0 0  1 0  1 1  3 1  3 3 /
\endpicture} at 6 -6
\put{\beginpicture
\setcoordinatesystem units <.3cm,.3cm>
\multiput{} at 0 0  3 3 /
\multiput{$\ssize \bullet$} at 0 0  1 0  1 1  2 1  2 2  3 2  3 3   /
\plot 0 0  1 0  1 1  2 1  2 2  3 2  3 3 /
\endpicture} at 8 -6
\put{\beginpicture
\setcoordinatesystem units <.3cm,.3cm>
\multiput{} at 0 0  3 3 /
\multiput{$\ssize \bullet$} at 0 0  1 0  2 0  2 1  3 1  3 2  4 2  4 3  4 4   /
\plot 0 0  2 0  2 1  3 1  3 2  4 2  4 4   /
\endpicture} at 10.5 -6
\put{Dyck words} at -2.5 -8
\put{$\ss XXXYYY$} at 0 -8
\put{$\ss XXYYXY$} at 2 -8
\put{$\ss XXYXYY$} at 4 -8
\put{$\ss XYXXYY$} at 6 -8
\put{$\ss XYXYXY$} at 8 -8
\put{$\ss XXYXYXYY$} at 10.5 -8
\put{parentheses} at -2.5 -9
\put{$(\,(\,(\,)\,)\,)$} at 0 -9
\put{$(\,(\,)\,)\,(\,)$} at 2 -9
\put{$(\,(\,)\,(\,)\,)$} at 4 -9
\put{$(\,)\,(\,(\,)\,)$} at 6 -9
\put{$(\,)\,(\,)\,(\,)$} at 8 -9
\put{$(\,(\,)\,(\,)\,(\,)\,)$} at 10.5 -9

\put{$(\pm1)$-sequences} at -2.5 -9.6
\put{$\ssize(+++---)$} at 0 -9.6
\put{$\ssize(++--+-)$} at 2 -9.6
\put{$\ssize(++-+--)$} at 4 -9.6
\put{$\ssize(+-++--)$} at 6 -9.6
\put{$\ssize(+-+-+-)$} at 8 -9.6
\put{$\ssize(++-+-+--)$} at 10.5 -9.6

\put{$321$-avoiding} at -2.5 -10.25
\put{permutations} at -2.5 -10.75
\put{$\left[\smallmatrix 1 & 2 & 3 \cr
              1 & 2 & 3   \endsmallmatrix\right]$} at 0 -10.5 
\put{$\left[\smallmatrix 1 & 2 & 3 \cr
              1 & 3 & 2  \endsmallmatrix\right]$} at 2 -10.5
\put{$\left[\smallmatrix 1 & 2 & 3 \cr
              2 & 3 & 1  \endsmallmatrix\right]$} at 4 -10.5
\put{$\left[\smallmatrix 1 & 2 & 3 \cr
              2 & 1 & 3  \endsmallmatrix\right]$} at 6 -10.5
\put{$\left[\smallmatrix 1 & 2 & 3 \cr
              3 & 1 & 2  \endsmallmatrix\right]$} at 8 -10.5
\put{$\left[\smallmatrix 1 & 2 & 3 & 4 \cr
              3 & 4 & 1 & 2  \endsmallmatrix\right]$} at 10.5 -10.5

\setdots <1mm>
\plot -1.1 0.5  -1.1 -10.7 /
\plot 9.1 0.5  9.1 -10.7 /
\endpicture}
$$
	\medskip 

In order to verify again the recursion formula $f(n+1) = \sum_{t=0}^n f(t)f(n-t)$, 
consider for example the Dyck words. Any Dyck word $w$
of positive length can be written in a unique way in the form $Xw_1Yw_2$ where
$w_1,w_2$ are (possibly empty) Dyck words. For the Dyck words of length $2(n+1)$
use the summation index $t$, take as $w_1$ any  Dyck word of length $2t$ and
for $w_2$ any Dyck word of length $2(n-t).$
	\bigskip

\vfill\eject
\subsection{What is Catalan combinatorics? A second answer} 
Let us try to outline a general setting behind these Catalan problems.      
\medskip

\subsubsection{} 
The typical examples of Catalan problems concern finite sets with
some additional structure, but it seems to us that Catalan combinatorics 
should  be considered as a kind of heart of finite set theory in itself.
	 
It is the identification
\smallskip

\Rahmen{$\NC(n) \simeq \NC(S_n,c_n)$}
\smallskip

\noindent
which provides the essential hint
(and as we have seen, it also is the starting point for the general theory
when dealing with arbitrary Dynkin diagrams): it shows that the set $\NC(n)$ of
non-crossing partitions (as the typical Catalan object)
is an intrinsic invariant of the automorphism group $S_n$
of the finite set $C = \{1,2,\dots,n\}$. Of course, we need to specify not only
the set $C$ but in addition also a total
(or, at least, a cyclic) ordering of the set $C$ in order
to obtain the Coxeter element $c_n$, but actually, all the Coxeter elements $c$
are conjugate, and conjugations yield isomorphisms of the corresponding
subsets $\NC(S_n,c)$. Thus, we want to stress that
the identification $\NC(n) \simeq \NC(S_n,c_n)$ shows that this typical Catalan
object can be realized both as a subset of the set of all the partitions of $C$
(or, equivalently, of the set $\Quot(C)$ of all quotient sets of $C$), as 
well as as a subset of the automorphism group $\Aut(C)= S_n$ of $C$. Let us
expand these considerations by looking in more detail at the category of finite
sets, taking into account for any finite set $C$ also the set $\Sub(C)$ of all subsets
of $C$.

Modern mathematics considers mathematical structures as objects 
of a corresponding category. As many nice categories, the category of sets has the following two 
important properties: first, 
any morphism can be factored as an epimorphism followed by a monomorphism, and second,
a morphism which is both a monomorphism and an epimorphism is an isomorphism. 
In such a category $\mathcal C$, the basic information about any object is given by 
	\smallskip 

\begin{itemize}
\item the automorphism group $\Aut(C)$ of $C$,
\item the poset $\Sub(C)$ of all subobjects of $C$,
\item the poset $\Quot(C)$ of all quotient objects of $C$. 
\end{itemize}
	\smallskip 

\noindent 
Combining these data, one obtains further specifications about a given object $C$,
as well as about its relation to other objects $C'$, for example about the set of
morphisms $\mathcal C(C,C')$. In particular, we recover in this way also
the set $\End(C) = \mathcal C(C,C)$ of endomorphisms of $C$ (which of course could be
considered as one of the initial data), let us add this to the previous list:
	\smallskip 

\begin{itemize}
\item the semigroup $\End(C)$ of all endomorphisms of $C$.
\end{itemize}

        \smallskip

\noindent
Note that this set $\End(C)$ 
may be thought of as combining the data mentioned before: the group $\Aut(C)$
is a subgroup of $\End(C)$, whereas any quotient object as well as any non-empty subset of $C$
can be realized as the image of an endomorphism. 
Thus, let us concentrate on these four kinds of data in the special case of
the category of finite sets. 
	
In many categories $\mathcal C$ there are obvious relations between some of these
data. For example, if $\mathcal C$ is the module category of a ring, 
the subobjects of any $C$ correspond bijectively to the quotient objects. 

Our concern is the category of finite sets. For any finite set $C$,
all the data mentioned are again finite sets, but usually these sets are quite different.
Here is the table of the corresponding cardinalities for the sets $C$ of cardinality $n$
with $n\le 10$, this concerns the columns 2 to 5. The last three columns present the
numbers which we want to discuss. Here, $\Sub(C)'$ denotes the set of subsets of $C$
of cardinality at least $2$.  
$$
\hbox{\beginpicture
\setcoordinatesystem units <1.7cm,.47cm>
\put{$|C|$} at 0 0
\put{$|\End(C)|$} at 1 0
\put{$|\Aut(C)|$} at 2 0
\put{$|\Sub(C)|$} at 3 0
\put{$|\Quot(C)|$} at 4 0
\put{$|\Phi_+(\mathbb A_{n-1})|$} at 5.05 0
\put{$|\Sub(C)'|$} at 6.1 0
\put{$|\NC(n)|$} at 7 0

\put{$n$} at 0 -1
\put{$n^n$} at 1 -1
\put{$n!$} at 2 -1
\put{$2^n$} at 3 -1
\put{Bell} at 4 -1
\put{$\binom n 2$} at 5 -1         
\put{Euler} at 6 -1
\put{Catalan} at 7 -1

\put{$0$} at 0 -2
\put{$1$} at 1 -2
\put{$1$} at 2 -2
\put{$1$} at 3 -2 
\put{$1$} at 4 -2
\put{$0$} at 5 -2
\put{$0$} at 6 -2
\put{$1$} at 7 -2

\put{$1$} at 0 -3
\put{$1$} at 1 -3
\put{$1$} at 2 -3
\put{$2$} at 3 -3 
\put{$1$} at 4 -3
\put{$0$} at 5 -3
\put{$0$} at 6 -3
\put{$1$} at 7 -3

\put{$2$} at 0 -4
\put{$4$} at 1 -4
\put{$2$} at 2 -4
\put{$4$} at 3 -4 
\put{$2$} at 4 -4
\put{$1$} at 5 -4
\put{$1$} at 6 -4
\put{$2$} at 7 -4
    
\put{$3$} at 0 -5
\put{$27$} at 1 -5
\put{$6$} at 2 -5
\put{$8$} at 3 -5 
\put{$5$} at 4 -5
\put{$3$} at 5 -5
\put{$4$} at 6 -5
\put{$5$} at 7 -5

\put{$4$} at 0 -6
\put{$256$} at 1 -6
\put{$24$} at 2 -6
\put{$16$} at 3 -6 
\put{$15$} at 4 -6
\put{$6$} at 5 -6
\put{$11$} at 6 -6
\put{$14$} at 7 -6

\put{$5$} at 0 -7
\put{$3125$} at 1 -7
\put{$120$} at 2 -7
\put{$32$} at 3 -7
\put{$52$} at 4 -7
\put{$10$} at 5 -7
\put{$26$} at 6 -7
\put{$42$} at 7 -7

\put{$6$} at 0 -8
\put{$720$} at 2 -8
\put{$46656$} at 1 -8
\put{$64$} at 3 -8
\put{$203$} at 4 -8
\put{$15$} at 5 -8
\put{$57$} at 6 -8
\put{$132$} at 7 -8

\plot -.2 -1.5  7.4 -1.5 /
\plot .35 0.5  .35 -8.5 /

\setdots <1mm>
\plot 4.5 .5  4.5 -8.5 /
\plot 1.5 .5  1.5 -8.5 /
\endpicture}
$$
	\bigskip
 
Before we proceed, let us insert in which way $|\End(C)|$ can be obtained
from numbers of subobjects, of quotient object and of
automorphisms. where $C$ is a set of cardinality $n$. We denote by 
$\Sub(t,C)$ the set of subsets of $C$ of cardinality $t$,
of course $|\Sub(t,C)| = \binom nt$ is just a binomial coefficient. 
We denote by $\Quot(C,t)$ the set of partitions of $C$ with $t$
parts, thus $|\Quot(C,t)| = \left\{n \atop t \right\}$ is a
Stirling numbers of the second kind. Here is the decisive formula:
$$
\hbox{\beginpicture
\setcoordinatesystem units <1cm,.7cm>
\put{$|\End(C)|$} [r] at -.2 0
\put{$=\ \sum_{t=0}^n \ |\Quot(C,t)|\cdot|\Aut(t)|\cdot|\Sub(t,C)| $} [l] at 0 0
\put{$=\ \sum_{t=0}^n  \;\qquad \left\{n \atop t \right\}
                    \quad  \cdot\quad\ \ t!\quad\  \cdot \ \ \binom nt$} [l] at 0 -1
\endpicture}
$$
	\medskip 

Altogether, we obtain the following triangle (Sloane A090657), on the
left side, we exhibit the factorization 
$\left\{n \atop t \right\}\cdot t!\cdot \binom nt$,
on the right side, the corresponding product.
$$
\hbox{\beginpicture
\setcoordinatesystem units <.97cm,1cm>
\put{\beginpicture
\setcoordinatesystem units <.5cm,.5cm>
\put{$\ssize n$} at  -7  1
\put{$\ssize 0$} at  -7  0
\put{$\ssize 1$} at  -7  -1
\put{$\ssize 2$} at  -7  -2
\put{$\ssize 3$} at  -7  -3
\put{$\ssize 4$} at  -7  -4
\put{$\ssize 5$} at  -7  -5

\put{$\ssize 1\cdot 1\cdot 1$} at 0 0

\put{$\ssize 0\cdot 1\cdot 1$} at -1 -1
\put{$\ssize 1\cdot 1\cdot 1$} at 1 -1

\put{$\ssize 0\cdot 1\cdot 1$} at -2 -2
\put{$\ssize 1\cdot 1\cdot 2$} at 0 -2
\put{$\ssize 1\cdot 2\cdot 1$} at 2 -2

\put{$\ssize 0\cdot 1\cdot 1$} at -3 -3
\put{$\ssize 1\cdot 1\cdot 3$} at -1 -3
\put{$\ssize 3\cdot 2\cdot 3$} at 1 -3
\put{$\ssize 1\cdot 6\cdot 1$} at 3 -3

\put{$\ssize 0\cdot 1\cdot 1$} at -4 -4
\put{$\ssize 1\cdot 1\cdot 4$} at -2 -4
\put{$\ssize 7\cdot 2\cdot 6$} at 0 -4
\put{$\ssize 6\cdot 6\cdot 4$} at 2 -4
\put{$\ssize 1\cdot 24\cdot 1$} at 4 -4

\put{$\ssize 0\cdot 1\cdot 1$} at -5 -5
\put{$\ssize 1\cdot 1\cdot 5$} at -3 -5
\put{$\ssize 15\cdot 2\cdot 10$} at -1 -5
\put{$\ssize 25\cdot 6\cdot 10$} at 1 -5
\put{$\ssize 10\cdot 24\cdot 5$} at 3 -5
\put{$\ssize 1\cdot 120\cdot 1$} at 5 -5

\endpicture} at 0 0
\put{\beginpicture
\setcoordinatesystem units <.4cm,.5cm>
\put{$\ssize 1$} at 0 0
\put{$\ssize 0$} at -1 -1
\put{$\ssize 1$} at 1 -1

\put{$\ssize 0$} at -2 -2
\put{$\ssize 2$} at 0 -2
\put{$\ssize 2$} at 2 -2

\put{$\ssize 0$} at -3 -3
\put{$\ssize 3$} at -1 -3
\put{$\ssize 18$} at 1 -3
\put{$\ssize 6$} at 3 -3

\put{$\ssize 0$} at -4 -4
\put{$\ssize 4$} at -2 -4
\put{$\ssize 84$} at 0 -4
\put{$\ssize 144$} at 2 -4
\put{$\ssize 24$} at 4 -4

\put{$\ssize 0$} at -5 -5
\put{$\ssize 5$} at -3 -5
\put{$\ssize 300$} at -1 -5
\put{$\ssize 1500$} at 1 -5
\put{$\ssize 1200$} at 3 -5
\put{$\ssize 120$} at 5 -5

\put{sum} at  7  1
\put{$\ssize 1$} at  7.5  0
\put{$\ssize 1$} at  7.5  -1
\put{$\ssize 4$} at  7.5  -2
\put{$\ssize 37$} at  7.5  -3
\put{$\ssize 256$} at  7.5  -4
\put{$\ssize 3125$} at  7.5  -5

\endpicture} at 7 0

\endpicture}
$$

	\bigskip

\subsubsection{} 
Let us draw the attention to the role of the quotient sets $\Quot(C)$ 
in finite set theory. If $C$ is a set of cardinality $n$,  
the cardinality of $\Quot(C)$ is given by the Bell number $B_n$. 
There is an inductive way to determine the Bell numbers, but contrary to the
other cardinalities discussed here, there is no explicit formula. The binomial
coefficients which count $\Sub(C)$ are ubiquitous, they are taught already in school and
are used in all parts of mathematics, whereas the Bell numbers which count 
$\Quot(C)$ really stand in the shadow. 

Algebraists usually appreciate not only subobjects, but also quotient objects --- one of
the reason for the triumphal rise of the theory of abelian categories seems to be that it has put 
quotient objects on an equal footing with subjects. Of course, there are categories such as
the category of groups where the set of subobjects provide much more information than the
set of quotient objects. In contrast, one has to be aware that dealing with sets without any
additional structure, the wealth of quotient sets (thus the number of partitions) exceeds by far
that of the subsets. 

Looking at the wealth of quotient objects $\Quot(C)$ of a given finite set, 
one is tempted to look for proper subsets of $\Quot(C)$ which may be of relevance. 
This is the realm of the Catalan combinatorics: to single out important subsets of $\Quot(C)$
and to relate them to subsets of $\Aut(C)$.
	\medskip 

Some interesting subsets of $\Quot(C)$: 
\begin{itemize}
\item Non-crossing partitions.
\item Non-nesting partitions.
\end{itemize}

	\medskip 

Some interesting subsets of $\Aut(C)$: 
\begin{itemize}
\item The set of $w$ with $w \le_a c$ for some Coxeter element $c$.
\item Pattern avoiding permutations (for example avoiding the pattern $321$ or
   $132$).
\end{itemize}

	\bigskip

\subsubsection{}
We should stress that the three relevant sets  $\Sub(C),\Quot(C),\Aut(C)$ can be (and should
be) considered as posets with a natural level structure (here, $C$ is a set of  cardinality $n$):

\begin{itemize}
\item $\Sub(C)$ is a poset (even a lattice) 
  with respect to inclusion of subsets, and $\Sub(t,C)$ are the subsets of cardinality $t$,   
  with $0 \le t \le n$. Thus there are $n+1$ levels.
\item $\Quot(C)$ is a poset (again even a lattice)
 with respect to the inverse refinement order, the levels are given by the sets
 and $\Quot(C,n-t)$ of the partitions with $n-t$ parts, for $1\le t \le n$. Thus, there are $n$ levels.
\item $\Aut(C) = S_n$ is a poset with respect to the absolute order $\le_a$ 
  (see Chapter 3),
  and $\Aut_t(C)$ is the set of permutation with $n-t$ cycles, for $1\le t \le n$. Again, there
  are $n$ levels. 
\end{itemize}
	\smallskip

\noindent
Here are these posets for $n = 4.$ 
$$
\hbox{\beginpicture
\setcoordinatesystem units <1cm,1cm>
\put{\beginpicture
\setcoordinatesystem units <.3cm,.6cm>
\put{$\Sub(C)$} at 5 5 
\multiput{$\bullet$} at 5 0
   2 1  4 1  6 1  8 1  
   0 2  2 2  4 2  6 2  8 2  10 2
   2 3  4 3  6 3  8 3
   5 4 /
\plot 5 0  2 1  0 2  2 3  5 4 /
\plot 5 0  4 1  0 2  4 3  5 4 /
\plot 5 0  6 1  2 2 /
\plot 5 0  8 1  4 2  4 3 /
\plot 2 1  2 3 /
\plot 2 1  4 2 /
\plot 4 1  6 2  2 3 /
\plot 4 1  8 2  4 3 /
\plot 6 1  6 2  8 3  5 4 /
\plot 6 1  10 2  6 3  5 4 /
\plot 8 1  8 2  8 3  /
\plot 8 1  10 2  8 3  /
\plot 2 2   6 3 /
\plot 4 2  6 3 /
\put{${}^{\Sub(2,C)}$} [l] at 10 1.1 
\setdots <.5mm>
\setquadratic
\plot 0 1.5  5 1.5  10 1.5  11  2  10 2.5  5 2.5  0 2.5  -1 2  0 1.5 /
\endpicture} at 0 0 
\put{\beginpicture
\setcoordinatesystem units <.28cm,.6cm>
\put{} at 6 -1
\put{$\Quot(C)$} at 6 4
\multiput{$\bullet$} at 6 0  6 3
  1 1  3 1  5 1  7 1  9 1  11 1 
  0 2  2 2  4 2  6 2  8 2  10 2  12 2 /
\plot  6 0  1 1  0 2  6 3 /
\plot  6 0  3 1  0 2 /
\plot  6 0  5 1  2 2  6 3 /
\plot  6 0  7 1  0 2 /
\plot  6 0  9 1  2 2 /
\plot  6 0  11 1 6 2  6 3 / 
\plot 1 1  2 2 /
\plot 1 1  6 2 /
\plot 3 1 10 2  6 3 /
\plot 3 1  4 2  6 3 /
\plot 5 1  8 2  6 3 /
\plot 5 1 10 2 /
\plot 7 1  8 2 /
\plot 7 1  12 2  6 3 /
\plot 9 1  4 2 /
\plot 9 1  12 2 /
\plot 11 1  10 2 /
\plot 11 1  12 2 /
\put{${}^{\Quot(C,n-1)}$} [l] at 10 0.1 
\setdots <.5mm>
\setquadratic
\plot 1 .5  6 .5  11 .5  12  1  11 1.5  6 1.5  1 1.5  0 1  1 .5 /
\endpicture} at 4.3 0 
\put{\beginpicture
\setcoordinatesystem units <.25cm,.6cm>

\put{$\Aut(C)$} at 10 4
\put{${}^{\Aut_{1}(C)}$} [l] at 15 0.1 

\put{} at 10 -1
\multiput{$\bullet$} at 10 0  
  5 1  7 1  9 1  11 1  13 1  15 1
0 2  2 2  4 2  6 2  8 2  10 2  12 2  14 2  16 2  18 2  20 2  
  5 3  7 3  9 3  11 3  13 3  15 3  /
\plot 10 0  5 1 /
\plot 10 0  7 1 /
\plot 10 0  9 1 /
\plot 10 0  11 1 /
\plot 10 0  13 1 /
\plot 10 0  15 1 /
\setdots <.5mm>
\setquadratic
\plot 5 .5  10 .5  15 .5  16  1  15 1.5  10 1.5  5 1.5  4 1  5 .5 /

\setsolid
\setlinear
\plot 5 1  0 2  7 1  8 2  15 1  10 2  11 1  6 2  9 1  2 2  5 1 /
\plot 5 1  4 2  15 1 /
\plot 0 2  11 1  /
\plot 2 2  13 1  10 2 /
\plot  9 1  8 2 /
\plot 0 2  5 3  2 2  /
\plot 4 2  5 3  6 2 /
\plot 8 2  5 3  10 2 /

\setdashes <.5mm>

\plot 7 1  12 2  13 1 /
\plot 5 1  14 2  7 1 /
\plot 11 1  14 2 /

\plot 5 1  16 2  9 1 /
\plot 13 1  16 2 /

\plot 7 1  18 2  9 1 /
\plot 15 1  18 2 / 

\plot 11 1  20 2  13 1 /
\plot 15 1  20 2 /

\plot 12 2  7 3  4 2 /
\plot 0 2  7 3  2 2 /
\plot 20 2  7 3  18 2 /

\plot 6 2  9 3  12 2  /
\plot 14 2  9 3  2 2   /
\plot 8 2  9 3  20 2  /

\plot 4 2  11 3  12 2 /
\plot 14 2  11 3  2 2 /
\plot 8 2  11 3  10 2  /

\plot 6 2  13 3  12 2 /
\plot 18 2  13 3 16 2 /
\plot 0 2  13 3  10 2 /

\plot 4 2  15 3  6 2 /
\plot 18 2  15 3  16 2 /
\plot 14 2  15 3  20 2 /
\endpicture} at 9 0 

\endpicture}
$$
	\medskip

{\bf The set $\Phi_+(\mathbb A_{n-1})$.}
We have encircled by a dotted line 
the subsets 
$$
 \Sub(2,C),\ \Quot(C,n-1),\ \Aut_1(C).
$$ 
For any $n$, we claim that there are
canonical bijections between these sets. Note that the set $\Sub(C)_2$ may be considered 
as the set of $\Phi_+$ of positive roots for the Dynkin diagram $\mathbb A_{n-1}$, thus we claim that
	\smallskip

\Rahmen{$\Phi_+(\mathbb A_{n-1}) = \Sub(2,C) \simeq \Quot(C,n-1) \simeq \Aut_1(C).$}
	\smallskip

\noindent
The bijections are given as follows: A positive root in $\Phi_+(\mathbb A_{n-1})$ may be
considered as a pair $(x,y)$ of natural numbers $1\le x < y \le n$, or, equivalently
as the set $\{x,y\}$ in $\Sub(2,C)$. The corresponding partition of $\{1,2,\dots,n\}$ has
$n-1$ parts, namely the part $\{x,y\}$ as well as the remaining singletons; these are the
elements of $\Quot(C,n-1)$. The corresponding permutation exchanges $x$ and $y$ and fixes
the remaining elements, these are the elements of $\Aut_1(C).$ 
	\bigskip

\subsubsection{\bf 
 The canonical map $P\!:\Sub(C) \to \Quot(C)$.}
There is a canonical map
	\smallskip

\Rahmen{$
  P\!:\Sub(C) \to \Quot(C)
$}
	\smallskip 

\noindent 
defined as follows: If $U$ is a non-empty subset of $C$, then $P(U)$ is the
partition with $U$ as one part, the remaining parts being singletons; the image
$P(\emptyset)$ of the empty set is defined as the discrete partition (all parts are singletons).
	
Let us denote by $\Sub(C)'$ the set of subsets of $C$ of cardinality at least $2$.
If $C$ has cardinality $n$, then $\Sub(C)'$ has cardinality $2^n-(n+1)$, these numbers
are called the {\it Euler numbers} (see \cite{[Sl]}, A000295). 
Also, we denote by $\Quot(C)_c$ the set of all 
partitions of $C$ 
with precisely one part of cardinality greater than $1$ 
(the index $c$ stands for {\it connected}).

There is the canonical 
map
$$
 U\!:\Quot_c(C) \to \Sub(C)',
$$
which sends a partition in $\Quot(C)_c$ to the only part of cardinality greater than $1$.
The maps $U,P$ provide inverse poset isomorphisms
	\smallskip

\Rahmen{$ \Quot_c(C) \simeq \Sub(C)'.$}
	\smallskip

	\medskip 
Note that the poset $\Sub(C)'$ becomes a lattice if we add the empty set as zero
element.  Here are these lattices for $n=2,3,4$, and we encircle  
the set $\Sub(2,C)$ of minimal elements of $\Sub(C)'$ by a dotted line:
$$
\hbox{\beginpicture
\setcoordinatesystem units <1cm,1cm>
\put{\beginpicture
\setcoordinatesystem units <.3cm,.47cm>
\put{} at 10 4
\multiput{$\bullet$} at 10 2  10 1 /
\plot 10 1  10 2 /
\put{${}^{\Sub(2,C)}$} [l] at 11 1.5 
\setdots <.5mm>
\setquadratic
\plot 10 1.5  11  2  10 2.5   9 2  10 1.5 /
\put{$n=2$} at 10 0
\endpicture} at 0.2 0  
\put{\beginpicture
\setcoordinatesystem units <.3cm,.47cm>
\put{} at 5 4 
\multiput{$\bullet$} at 5 1
   3 2  5 2  7 2 
   5 3 /
\plot 5 1  3 2  5 3  5 1  7 2  5 3 /
\put{${}^{\Sub(2,C)}$} [l] at 7 1.1 
\setdots <.5mm>
\setquadratic
\plot 3 1.5  5 1.5  7 1.5  8  2  7 2.5  5 2.5  3 2.5  2 2  3 1.5 /
\put{$n=3$} at 5 0
\endpicture} at 3 0 

\put{\beginpicture
\setcoordinatesystem units <.3cm,.47cm>
\multiput{$\bullet$} at 5 1
   0 2  2 2  4 2  6 2  8 2  10 2
   2 3  4 3  6 3  8 3
   5 4 /
\plot 5 1  0 2  2 3  5 4 /
\plot 0 2  4 3  5 4 /
\plot 5 1  2 2 /
\plot 5 1  4 2  4 3 /
\plot 2 2  2 3 /
\plot 5 1  6 2  2 3 /
\plot 5 1  8 2  4 3 /
\plot 6 2  8 3  5 4 /
\plot 5 1  10 2  6 3  5 4 /
\plot 8 2  8 3  /
\plot  10 2  8 3  /
\plot 2 2   6 3 /
\plot 4 2  6 3 /
\put{${}^{\Sub(2,C)}$} [l] at 10 1.1 
\setdots <.5mm>
\setquadratic
\plot 0 1.5  5 1.5  10 1.5  11  2  10 2.5  5 2.5  0 2.5  -1 2  0 1.5 /
\put{$n=4$} at 5 0
\endpicture} at 7 0 
\multiput{} at 0 -1.5 /
\endpicture}
$$

\begin{prop} The partitions in $\Quot_c(C)$ are  both non-crossing and non-nesting.
\end{prop}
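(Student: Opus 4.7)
The plan is to observe that the structure of a partition $P \in \Quot_c(C)$ is so restricted that both the non-crossing and the non-nesting conditions reduce to a single trivial verification. By the definition of $\Quot_c(C)$, the partition $P$ has exactly one part $U$ with $|U|\ge 2$, and all remaining points of $C$ form singleton parts. Consequently, the only way that two \emph{distinct} elements $x, y\in C$ can lie in the same part of $P$ is for both to lie in $U$.

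First I would unpack the non-crossing condition. Suppose $i<i'<j<j'$ are elements of $C$ such that $i,j$ belong to one common part and $i',j'$ belong to one common part. Since $i\neq j$ lie in the same part, the observation above forces $i,j\in U$; since $i'\neq j'$ lie in the same part, the same reasoning gives $i',j'\in U$. Hence all four elements lie in the single part $U$, which is exactly what the non-crossing condition demands.

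Next I would repeat the verification verbatim for the non-nesting condition. Here the hypothesis is that $i<i'<j<j'$ with $i,j'$ in a common part and $i',j$ in a common part. Again each of the two pairs consists of distinct elements sharing a part, so each pair is contained in $U$; therefore all four elements lie in $U$, and the required conclusion holds.

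There is no genuine obstacle here: the argument is really a one-line consequence of the fact that in a partition belonging to $\Quot_c(C)$ the only non-trivial part is $U$. The entire content of the proposition is that this rigidity makes the non-crossing and non-nesting conditions automatic. Nothing more than the definitions of $\Quot_c(C)$, non-crossing, and non-nesting is required.
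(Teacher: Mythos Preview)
Your proof is correct and follows essentially the same approach as the paper: both arguments observe that since a partition in $\Quot_c(C)$ has only one part $U$ of cardinality at least two, any pair of distinct elements sharing a part must lie in $U$, so all four elements in either the crossing or nesting configuration are forced into $U$. The paper's proof is slightly terser but identical in substance.
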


\begin{proof} 
Consider a partition $P$ in $\Quot_c(C)$.
Assume that there are elements $a < b < c < d$ with $a,c$ belonging to the same part, and
$b,d$ belonging to the same part. Then we deal with parts of cardinality at least 2. Since 
we assume that $P$ belongs to $\Quot(C)_c$, there is precisely one part of cardinality
at least 2, thus all the elements $a,b,c,d$ belong to the same part. This shows that an
$P$ is non-crossing.

Similarly, one shows that $P$ is non-nesting.
\end{proof}

\subsubsection{\bf The categorification of $\Sub(C)' = \Quot_c(C)$.}
 The identification of $ \Quot_c(C)$ and $\Sub(C)'$ using the maps $U,P$
can be reformulated in terms of the categorification by $\mo\Lambda_{n-1}$ as follows:
Let $\A_c(\mo\Lambda_{n})$ be the set of (non-empty) antichains in $\mo\Lambda_n$ with 
connected quiver (or, equivalently, the set of all exceptional subcategories 
of $\mo\Lambda_n$ which are connected).

Let $2\le t\le n$. There is a bijection between
$\A_c(\mo\Lambda_{n-1})_{t-1}$ and $\Sub(t,C)$ as follows: A subset 
in $\Sub(t,C)$ is of the form $\{a_1,\dots,a_t\}$ with $1\le a_1 < \cdots < a_t\le n$,
it is sent to the antichain $\{(a_1,a_2),\dots,(a_{t-1},a_t)\}$, and this antichain belongs to
$\A_c(\mo\Lambda_{n-1})$. Thus, we see:
The map $A\!:\NC(n) \to \A(\mo\Lambda_{n-1})$ provides a bijection
	\smallskip

\Rahmen{$\Quot_c(C) \to \A_c(\mo\Lambda_{n-1})$,}
	\smallskip

\noindent
where $C$ is a set of cardinality $n$. 
	\bigskip 

\subsubsection{\bf The canonical map $O\!:\End(C) \to \Quot(C)$.} 
There is a canonical map
$$
 O\!:\End(C) \to \Quot(C),
$$
it is defined as follows: If $f$ is an endomorphism of $C$, the parts of $O(f)$ are the orbits
of $f$ in $C$. The orbit of $c\in C$ under $f$ is the smallest subset of $C$ which contains $c$
and which contains with every element $x$ also $f(x)$ and $f^{-1}(x)$ (of course, if $C$ is a
finite set and $f$ is invertible, the orbit of $c$ under $f$ consists of the elements $f^t(c)$
with $t\ge 0$). 
	\medskip 

The map $O$ is surjective, even the
restriction of $O$ to $\Aut(C)$ is surjective: Given a partition $P$ of $C$, we obtain
a permutation $f$ of $C$ with $O(f) = P$ as follows: For any part $P_i$ of $P$,
let $f|P_i$ by a cyclic permutation of $P_i$. 
	
For any total ordering $L$ of $C$, there is a map $\pi_L\!:\Quot(C) \to \Aut(C)$ such that
the composition
	\smallskip

\Rahmen{$\Quot(C) \overset{\pi_L}\longrightarrow \Aut(C)\overset O \longrightarrow \Quot(C)$}
	\smallskip

\noindent
is the identity map. The map $\pi_L$ is defined as follows: Let $P$ be a partition of $C$,
with parts $P_1,\dots,P_t$. If $P_i$ consists of the elements $a_{i,1},a_{i,2},\dots,a_{i,t(i)}$
we can assume that $a_{i,1} < a_{i,2} <\dots < a_{i,t(i)}$ with respect to the ordering $L$,
and we may consider the corresponding cycle $c_i = (a_{i,1},a_{i,2},\dots,a_{i,t(i)})$.
Then $\pi_L(P) = c_1\cdots c_t.$ Of course, this is a permutation of $C$ and $O(\pi_L(P)) = P.$
	\medskip

Note: {\it If $P$ is non-crossing with respect to $L$, then $\pi_L(P) \in W(S_n,c_L).$}
	\medskip
In general, one may ask which ``Catalan subsets'' of $\Aut(C)$ and of $\Quot(C)$
correspond to each other under the map $O$. 
	\bigskip

We have seen in Chapter 4 that the lattice $\NC(n)$ can be identified with $\NC(S_n,c_n)$.
	\medskip 

$$
\hbox{\beginpicture
  \setcoordinatesystem units <3cm,1.3cm>
\put{$\NC(n)$} at 0 1
\put{$\A(\mo\Lambda_{n-1})$} at 0 0 
\put{$\iota_n$} at 0.5 1.2
\arr{0 0.7}{0 0.3}
\arr{0.3 1}{0.7 1}
\put{$\NC(S_n,c_n)$} at 1.1 1 
\put{$\simeq$} at 0.1 0.5
\arr{0.3 0.3}{0.7 0.7}
\put{$\cox$} at 0.6 0.35
\put{} at 2.1 0 
\endpicture}
$$
	\smallskip 

\noindent
with $c_n = (n,n-1,\dots,2,1)$.
This map $\iota_n$ is part of the following larger commutative diagram:
$$
\hbox{\beginpicture
  \setcoordinatesystem units <3cm,1.3cm>
\put{$\Quot(n)$} at 0 2
\put{$\NC(n)$} at 0 1
\put{$\A(\mo\Lambda_{n-1})$} at 0 0 
\put{$\iota_n$} at 0.5 1.2
\arr{0 1.4}{0 1.7}
\arr{0 0.7}{0 0.3}
\arr{0.3 2}{0.7 2}
\arr{0.3 1}{0.7 1}
\put{$S_n$} at 1 2
\put{$\NC(S_n,c_n)$} at 1.1 1 
\arr{1 1.4}{1 1.7}
\put{$\Quot(n)$} at 2 2 
\arr{1.3 2}{1.6 2}
\put{$O$} at 1.45 2.2
\put{$\pi_n$} at 0.5 2.2
\put{$\iota_n$} at 0.1 1.5
\put{$\simeq$} at 0.1 0.5
\arr{0.3 0.3}{0.7 0.7}
\put{$\cox$} at 0.6 0.35
\setquadratic
\plot 0 1.4  -.03 1.3 -.06 1.4 /
\plot 1 1.4  0.97 1.3 0.94 1.4 /
\endpicture}
$$
Here, $\pi_n = \pi_L$ with the natural ordering $L = (1 < 2 < \cdots < n)$.

	\bigskip  

\subsubsection{} 
The interesting numbers are provided by the following chain of inclusion maps.
Here, we denote by $\iota_L\!:\NC(n) \to \Quot(C)$ the canonical inclusion after
we have identified $(C,L)$ with $\{1,2,\dots,n\}$, where $L$ is a total ordering of the
set $C$ of cardinality $n$. 
	\medskip
 
$$
\hbox{\beginpicture
\setcoordinatesystem units <2cm,.6cm>
\put{$C$} at 0 0.4
\put{$\Aut(C)$} at 5.3 0.4
\put{$\Sub(C)'$} at 2.05 0.4
\put{$\Quot(C)$} at 4.3 0.4
\put{$\Sub(2,C)$} at 1 .4
\put{$=\Phi_+(\mathbb A_{n-1})$} at 1 -.2
\put{$\NC(n)$} at 3.15 0.4
\put{$\subseteq$} at 1.54 0.4
\arr{2.45 0.4}{2.7 0.4}
\put{$P$} at 2.55 0.7
\put{$\iota_L$} at 3.7 0.7
\arr{3.55 0.4}{3.8 0.4}
\arr{4.7 0.4}{4.95 0.4}
\put{$\pi_L$} at 4.83 0.7

\put{$|C| = n$} at -.1 -1
\put{$n!$} at 5.3 -1
\put{Euler} at 2 -1
\put{Bell} at 4.3 -1
\put{$\binom n 2$} at 1 -1         
\put{Catalan} at 3.15 -1

\put{$0$} at 0 -2
\put{$1$} at 5.3 -2
\put{$0$} at 2 -2 
\put{$1$} at 4.3 -2
\put{$0$} at 1 -2
\put{$1$} at 3.15 -2

\put{$1$} at 0 -3
\put{$1$} at 5.3 -3
\put{$0$} at 2 -3 
\put{$1$} at 4.3 -3
\put{$0$} at 1 -3
\put{$1$} at 3.15 -3

\put{$2$} at 0 -4
\put{$2$} at 5.3 -4
\put{$1$} at 2 -4 
\put{$2$} at 4.3 -4
\put{$1$} at 1 -4
\put{$2$} at 3.15 -4
    
\put{$3$} at 0 -5
\put{$6$} at 5.3 -5
\put{$4$} at 2 -5 
\put{$5$} at 4.3 -5
\put{$3$} at 1 -5
\put{$5$} at 3.15 -5

\put{$4$} at 0 -6
\put{$24$} at 5.3 -6
\put{$11$} at 2 -6 
\put{$15$} at 4.3 -6
\put{$6$} at 1 -6
\put{$14$} at 3.15 -6

\put{$5$} at 0 -7
\put{$120$} at 5.3 -7
\put{$26$} at 2 -7
\put{$52$} at 4.3 -7
\put{$10$} at 1 -7
\put{$42$} at 3.15 -7

\put{$6$} at 0 -8
\put{$720$} at 5.3 -8
\put{$57$} at 2 -8
\put{$203$} at 4.3 -8
\put{$15$} at 1 -8
\put{$132$} at 3.15 -8

\plot -.2 -1.5  5.6 -1.5 /
\plot .35 0.5  .35 -8.5 /

\endpicture}
$$
	\medskip

All but the last two maps shown in the upper row are canonical, the last two $\iota_L$ and $\pi_L$ 
depend on the choice of $L$, say the choice of a linear
ordering such as $L = \{1 \leftarrow 2 \leftarrow 3 \leftarrow \cdots \leftarrow n\}$.

But note that $\pi_L$ actually only depends on the corresponding cyclic ordering
$$
\hbox{\beginpicture
  \setcoordinatesystem units <1cm,1cm>
\put{$3$} at 1 0 
\put{$n$} at 0 1 
\put{$1$} at -1 0 
\put{$2$} at 0 -1 
\circulararc 60 degrees from  -.95 -.25 center at 0 0
\circulararc -60 degrees from 0.95 -.25 center at 0 0
\circulararc -60 degrees from  -.95 .25 center at 0 0
\plot -.9 0.24  -1 0.26 /
\plot .27 1  .23 0.87 /
\plot .9 -.24  1 -.26 /
\plot -.27 -1  -.23 -.87 /

\arr{-.3 .94}{-.25 .97}
\arr{.3 -.94}{.25 -.97}
\arr{.941 .26}{.947 .24}
\arr{-.941 -.26}{-.947 -.24}
\setdashes <1mm>
\circulararc 60 degrees from 0.95 0.25 center at 0 0
\endpicture}
$$
	\bigskip\bigskip

\bibliographystyle{amsplain}

\vfill\eject 

\setcounter{tocdepth}{2}
\tableofcontents
\end{document}